\RequirePackage{etex}
\RequirePackage{easybmat}
\documentclass[]{report}
\usepackage{minitoc}
\usepackage[titletoc]{appendix}
\usepackage[utf8]{inputenc}
\usepackage[T1]{fontenc}
\usepackage{graphicx}
\usepackage{amsmath}
\usepackage{listings}
\usepackage{moreverb}
\usepackage{eurosym}
\usepackage{fancyvrb}
\usepackage{afterpage}
\pdfoutput=1
\usepackage[final]{pdfpages}
\usepackage[nottoc]{tocbibind}
\usepackage{listings}
\usepackage{amsthm}
\usepackage{amsfonts}
\usepackage{geometry}
\geometry{paperwidth=18cm,
left=2.5cm,
right=2.5cm,
paperheight=26cm,
top=3cm,
bottom=3cm
}
\usepackage{algorithm2e}
\usepackage{hyperref}
\usepackage{fixltx2e}
\usepackage{amssymb}
\usepackage{tikz}
\usepackage{tkz-graph}
\usetikzlibrary{calc}
\usepackage{float}
\usepackage{color}
\usepackage{lipsum}

%%% The following are for tikz for the braided reflection
\usepackage{multicol}
\usepackage{tikz-cd}
\usetikzlibrary{arrows, decorations.text, decorations.pathmorphing, decorations.markings, positioning, shapes}
\usetikzlibrary{decorations.markings}
\usepackage{feynmp}
\tikzset{
	fermion/.style = {draw = black, postaction = {decorate},decoration = {markings,mark = at position .55 with {\arrow{>}}}},
	vertex/.style = {draw,shape = circle,fill = black,minimum size = 2pt,inner sep = 0pt},
	fermionbar/.style={draw=black, postaction={decorate},
		decoration={markings,mark=at position .55 with {\arrow{<}}}}}
\usepackage{epsfig,graphics}
\usepackage{epigraph}
%%% End!

\newcommand*{\mathcolor}{}
\def\mathcolor#1#{\mathcoloraux{#1}}

\newtheorem{theorem}{Theorem}[section]
\newtheorem{lemma}[theorem]{Lemma}
\newtheorem{proposition}[theorem]{Proposition}
\newtheorem{cor}[theorem]{Corollary}
\newtheorem{definition}[theorem]{Definition}
\newtheorem{example}[theorem]{Example}
\newtheorem{conjecture}[theorem]{Conjecture}
\newtheorem{remark}[theorem]{Remark}

\newtheorem{convention}[theorem]{Convention}

\newtheorem*{theoremf}{Théorème}
\newtheorem*{lemmaf}{Lemme}
\newtheorem*{propositionf}{Proposition}

\newtheorem*{definitionf}{Définition}
\newtheorem*{examplef}{Exemple}
\newtheorem*{conjecturef}{Conjecture}
\newtheorem*{remarkf}{Remarque}

\newcommand\encircle[1]{%
  \tikz[baseline=(X.base)] 
    \node (X) [draw, shape=circle, inner sep=0] {\strut #1};}

\setcounter{tocdepth}{2}

\makeatletter
\def\v@rt#1#2{\m@th\ooalign{$\hfil#1|\hfil$\crcr$#1#2$}}
\def\captr{\mathrel{\mathpalette\v@rt\cap}}
\makeatother

\makeatletter

\newcommand{\Rmnum}[1]{\expandafter\@slowromancap\romannumeral #1@}
\makeatother

% Command for blank page

\newcommand\blankpage{
	\newpage
    \null
    \thispagestyle{empty}
    \addtocounter{page}{-3}
    \newpage}
    
\begin{document}

\clearpage
\thispagestyle{empty}
\hfill
\clearpage

\thispagestyle{empty}

\includegraphics[scale=0.6]{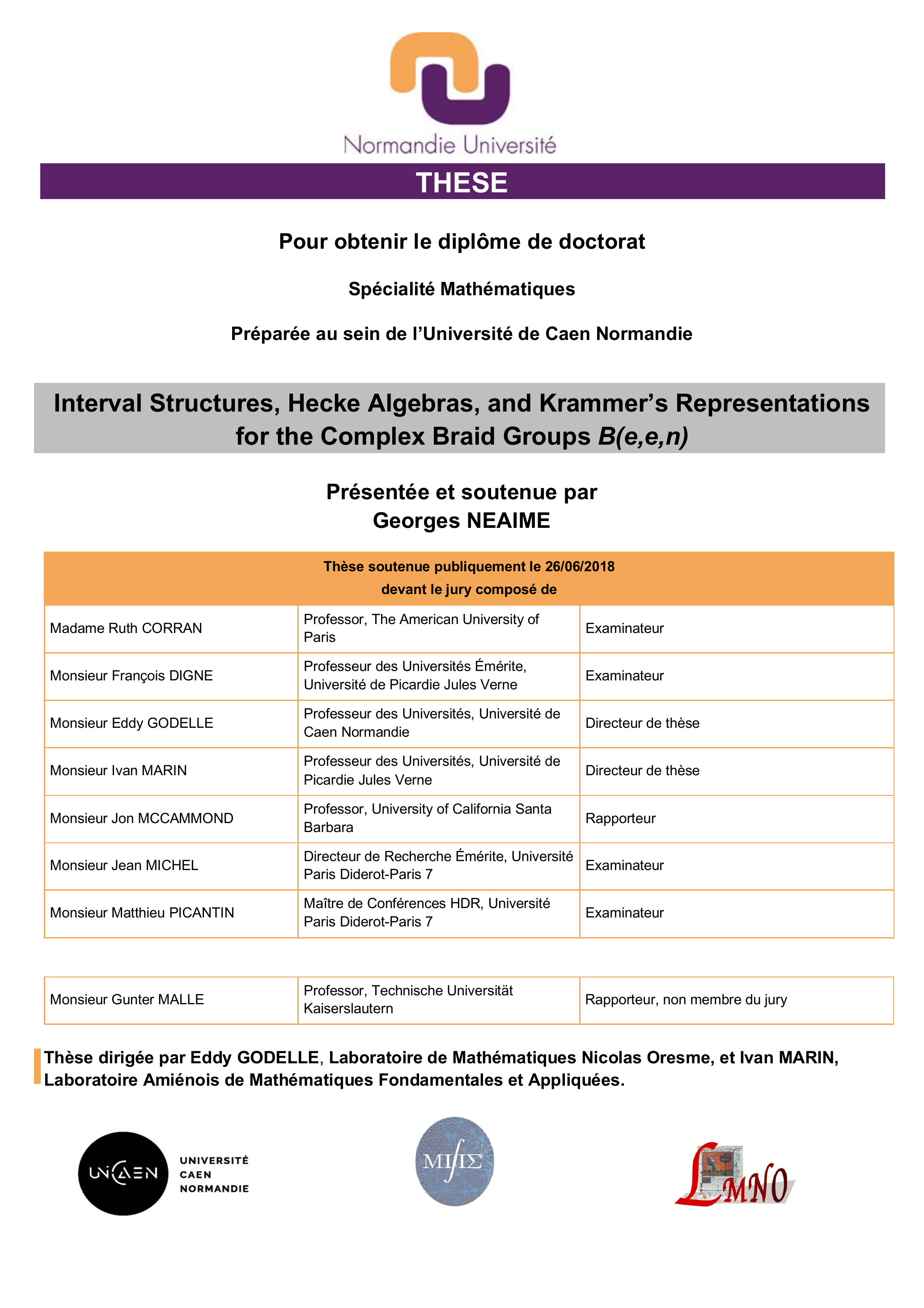}

\blankpage

\dominitoc

\chapter*{Remerciements}

\addstarredchapter{Remerciements}

\minitoc

Assis à mon bureau, dans la salle S3 117, du Laboratoire de Mathématiques Nicolas Oresme, en train de boire ma tasse de café, comme chaque matin, pour se booster avant de commencer le travail, quand quelqu’un frappe à ma porte.\\
\begin{tabular}{ll}
& --- Salut mon chou !\\
& --- Salut Arnaud, ça va ?\\
& --- Ouiii, très bien, et toi ?\\
& --- Nickel !\\
& ---  Bonne journée ! Boujous !\\
& --- Merci, à toi aussi !
\end{tabular}\\
J’ai refermé la porte du bureau en la laissant un peu entrouverte. \mbox{Je dois me concentrer !} Il ne reste plus que quelques semaines avant ma soutenance ! J’ai allumé mon PC et ouvert le fichier contenant ma thèse. J’ai commencé à défiler doucement les pages de ma dissertation. D’une page à la suivante, un sentiment de nostalgie a commencé à monter en moi et à m’envahir. Tout d’un coup, j’ai vu défiler à la vitesse de mon curseur les trois années de ma thèse.\\

L’aventure a commencé le 22 juin 2015 quand j’ai reçu un mail de J. Guaschi à 16h43 : « La réunion du conseil de l’École Doctorale vient de se terminer. Vous avez eu un contrat doctoral de l’Université de Caen Basse Normandie. » Ça y est, je suis accepté en thèse sous la direction de Eddy Godelle et Ivan Marin ! Il était convenu que je sois doctorant associé au Laboratoire Amiénois de Mathématiques Fondamentales et Appliquées où je passerai ma première année de thèse. La suite de la thèse sera au Laboratoire de Mathématiques Nicolas Oresme à Caen.\\

Dans le bureau C013 au fond du couloir du premier étage du LAMFA, assis sur une chaise rouge devant une table ronde, j’ai toujours été très concentré devant le tableau blanc sur lequel Ivan était en train de m’éclaircir sur mon sujet de thèse. Monoïdes. Représentations. Garside. Krammer. $B(e,e,n)$. BMW. Fidélité. Dans ce même bureau, des fois assis, des fois debout, des fois silencieux, des fois bavard, des fois triste, des fois content, j’ai emprisonné les questions stupides, les pensées et les émotions d’un jeune doctorant en Mathématiques.\\

Ivan, souviens-toi de ce jeune garçon qui a voulu préparer une thèse avec toi avant même son Master 2. Souviens-toi de ce jeune garçon qui a présenté son premier beamer devant toi au groupe de travail « Tresses » que tu as organisé pour tes étudiants. Souviens-toi de ce jeune garçon et de sa folie pendant nos soirées à l’Atelier d’Alex. Souviens-toi quand tu es venu à Caen un dimanche soir pour lui parler de GBNP. Souviens-toi des lettres que tu lui as envoyées pour l’aider à obtenir un modèle de la Krammer. Souviens-toi de son état lors de votre rdv Skype un lundi après-midi juste avant d’envoyer la dissertation aux rapporteurs. Tu étais toujours à côté de lui en le voyant grandir et réussir. Tu le vois aujourd’hui debout, devant son jury de thèse, en train de défendre ses travaux de recherche. Je ne sais pas comment exprimer ce lien entre Ivan et ses étudiants mais je pense qu'Eirini l’a très bien fait dans ses remerciements de thèse : « I would like to thank him because apart from being a teacher and a colleague, he has also been a friend and he has most successfully managed to keep all these features in balance. » Merci Ivan d’être un ami et un patron !\\

Avant de commencer ma thèse, j’ai connu Eddy le 1 avril 2015 quand il est venu à Amiens pour me rencontrer pour la première fois. À partir de la fin de ma première année de thèse, je passais souvent dans son bureau pour lui exposer les idées qui virevoltaient dans ma tête. Mes preuves étaient parfois longues et moches. Il m’a incité à remuer mes pensées afin de trouver les bonnes idées pour simplifier mes démonstrations et rendre mon résultat plus lisible et plus élégant. Eddy m’a aussi appris à soigner ma rédaction afin de bien écrire mon premier article : « Cette définition avant ce lemme, cette proposition est un théorème, tu es sûr que ça c’est une remarque, c’est moche, cette phrase est sans verbe, tu as oublié un point ici, pas clair… » Bref, du rouge partout ! Et je dois tout rédiger à nouveau !
%
%Eddy est quelqu’un qu’on trouve rapidement quand on a besoin de lui. 
%Une fois, je lui ai envoyé un texto le soir pour lui demander de me signer un document %urgent. Le lendemain, il est venu au bureau à 8h30 juste pour me le signer. Une autre fois, %je lui ai envoyé un mail à 23h pour lui poser une question et j’ai reçu sa réponse à 23h05. 
Il a aussi écouté plusieurs fois, avec attention, la chaîne Georges-Radio pendant 45 minutes dans la salle des séminaires du LMNO. Ava a été aussi présente dans la salle avec Eddy et moi. Elle préparait ses devoirs alors que je faisais la répétition de ma soutenance de thèse. Elle a pourtant remarqué que je parle assez vite. J’espère pouvoir parler plus lentement le jour de ma soutenance ! Je me souviens de la toute première répétition, à Milan, de ma soutenance devant Salim qui m'a donné beaucoup de remarques et de bons conseils.\\

Mon jury est exceptionnel ! Ruth Corran, François Digne, Jon McCammond, Jean Michel, Matthieu Picantin et mes deux directeurs de thèse, je vous remercie énormément. Je tiens aussi à remercier mes deux rapporteurs Gunter Malle et Jon McCammond. J’ai rencontré Jon pour la première fois à Caen lors de la conférence Winter Braids. Impressionné par son mini-cours et le dynamisme de ses exposés, j’ai demandé à Eddy si Jon pourrait être mon rapporteur et j’ai été très content quand j’ai appris qu’il a accepté de rapporter ma thèse et de venir à ma soutenance. Jon m’a aussi invité à l’UCSB (University of California Santa Barbara) pour présenter mes travaux de recherche dans le cadre du séminaire « Discrete Geometry and Combinatorics ». Malheureusement, je n’ai pas pu voyager aux USA à cause des procédures administratives (formulaire 5535) très lentes pour l’obtention d’un visa. Je pense aussi à Barbara Baumeister qui m’a fourni l’opportunité de préparer un postdoc et est venue de Bielefeld pour assister à ma soutenance.\\

Installé au bureau des doctorants au LAMFA, j’ai connu d’excellents camarades. Je revois mon frère Alexandre en train d’écouter mes histoires invraisemblables, ma sœur Eirini en train de me donner les bons conseils et m’encourager, Pierrot et nos discussions sans fin, Anne-So qui veut fumer une clope, Ruxi en train de m’apprendre les gros mots en chinois, Sylvain que j’empêche de travailler, Aktham et son bon café…  Vous m’avez fait un pot de départ à la fin de ma première année. Je me rappelle les messages chaleureux avec lesquels vous m’aviez dit au revoir :\\
\begin{tabular}{ll}
& —	Mon cher porte-parole, on continuera à Caen et à Amiens nos délires.\\
&—	C’est la vie pas le paradis.\\
& —	Je vous félicite pour l’obtention du numéro de M. Marin.\\
& —	Tes talents de comédien.\\
& —	Merci pour les chaînes de trombones, ça me manquera.
\end{tabular}\\

J’ai reçu il y a quelques semaines une invitation pour venir à Amiens faire un séminaire doctorants au LAMFA. Dans le mail diffusé aux doctorants, Alexandre a écrit une longue biographie sur l’orateur du séminaire. J’ai envie de partager avec vous quelques passages drôles de cette biographie :\\
\begin{small}\textit{« Des études universitaires en France et au Liban l'ont amené à étudier au près des tous meilleurs en Master 2 à l'Université Paris Diderot dans le cadre du master ATNA de l'UPJV. Lors de cette année, il a pu suivre des cours de Michel Broué et David Hernandez, il a aussi et surtout pu suivre des cours de Olivier Brunat et Ivan Marin\footnote{Olivier et Ivan sont les directeurs de thèse d'Alexandre.} qui sont des mathématiciens exceptionnels sans égal dans la communauté mondiale (ils sont tous les deux supérieurs à tous les autres mais non comparables entre eux).\\
    Son histoire avec Pr. Marin commence lors d'une réunion avant le début du master ATNA où c'est le coup de foudre mathématiques pour Georges et il décide en quelques minutes que Pr. Marin sera son directeur de thèse un an plus tard et le lui fait savoir de manière non équivoque. Un Master 2 brillant et une centaine de mails aux différents mathématiciens mentionnés par le futur directeur du laboratoire plus tard, il trouve un financement donné par l'Université de Caen et le spécialiste des structures monoïdales Eddy Godelle pour faire une thèse en co-direction avec Ivan Marin sur "trouver un point sur la variété". Il effectue la première année de sa thèse à un rythme insoutenable en enchaînant les exposés de qualité et en étant véritablement le cœur de l'équipe des doctorants cette année-là.\\
    Mais voilà, Georges est doux, Georges est frais mais Georges n'est vraiment pas pratique donc nous avons dû nous débarrasser de lui en Juin 2016 pour l'envoyer dans les contrées Normandes. Il vit depuis dans cette région obscure et on peut parfois apercevoir des tags\\ "I love LAMFA" sur les monuments historiques ou les copies de ses étudiants. Il sortira de cette région ce mercredi pour vous faire un exposé sur ce que la réclusion et l'abandon de la Picardie lui ont permis de chercher corps et âme faute de pouvoir faire autre chose. »}\end{small}\\

Arrivé à Caen, j’ai connu les meilleurs camarades du monde. Vu que je vous aime beaucoup, j’ai écrit de longs remerciements pour vous occuper pendant ma soutenance. Je vous imagine au fond de la salle des thèses en train de lire ces pages comme des innocents alors que je suis en train de parler !\\

Je tiens tout d’abord à remercier le personnel administratif du LMNO et tous les profs qui m’ont aidé pendant ces trois années de thèse surtout les membres de l’équipe GRAAL : Groupes, Représentations, Algèbre, Analyse, Logique. Je remercie aussi tous les gens du LMNO qui sont venus à ma soutenance. Je tiens à remercier particulièrement Anita et Axelle avec qui j’ai beaucoup papoté et qui m’ont aidé à préparer mes voyages en conférences, Marie qui m’a aidé avec énormément de gentillesse dans toute la procédure de soutenance, Emmanuelle qui m’a aidé à intégrer le monde de l’enseignement et à faire partie de l’équipe de vulgarisation mathématique, Christian, mon voisin de bureau, qui est toujours prêt à discuter avec moi et à me donner les bons conseils et Sonia, peux-tu me donner le secret pour garder constamment ton sourire sympathique ? Au risque de n’évoquer que les personnes qu’on côtoie à la fin d’une thèse, je tiens aussi à remercier tout le personnel administratif et les profs du LAMFA qui m’ont accompagné pendant ma première année.\\

Arnaud, tu es la seule personne, à ma connaissance, qui prend le reçu mais oublie de prendre ses billets du distributeur ! Tu as une folie et une intelligence qui font de toi un très bon camarade et c’est pour ça que je t’aime beaucoup !  Coumba, ma très ($\times$1M) très chère Coumba, la classe totale ! C’est le 26 juin et je n’ai pas encore fini le lemme 4.2.8 ! Rubén, tu connais toutes les aventures de Jorge el hombre, Jorge le farceur, Jorge le passéiste et surtout Jorge le machiste et l’ensemble de ses femmes $\{$Raymonde, Gertrude, Hermonde, Pascale, Yvette, Paola, Nathanaëlle, Adriana, Alma, Palma, Eva, Frida, Luna, Mercedes, Selena, Victoria, Bianca, Carmen, Lola, Gabriela$\}$.\\

Frank, doctorant (1991-??) en Mathématiques Appliquées et Dalal, nouvelle doctorante (2018-??) en Mathématiques Fondamentales me supportent dans le bureau 117. Étienne (English below starting from page 5) est notre nouveau représentant à l’École Doctorale. Pour les doctorants qui arrivent au LMNO, n’hésitez pas à poser directement vos questions à Étienne. Tuyen, tu as fini ton Skype pour venir manger avec nous ce midi ? Moustafa, très gentil, il serre la main à tout le monde quand il arrive au bureau et Nasser, je n’ai pas pu encaisser ton chèque de 1000\euro{}, tu m'en feras un autre s’il te plait ? Profe Pablo, tu as fini la preuve de notre théorème \mbox{(le monstre) ?} César, mon chouchou, tu manques au labo ! Guillaume, un jour je serais moi aussi végétarien. Emilie, tu vas venir à ma soutenance ? Julien, foot ce vendredi ? On se verra au bureau samedi ? Sarra, désolé de te faire croire que le verre de tequila était de l’eau. Anne-Sophie, Laetitia et Tuanlong, vous êtes toujours très sympatiques. Daniele, merci pour les deux dictionnaires, ça me permettra d’apprendre l’Allemand et l’Italien en même temps ! Raoul, j’ai gagné le pari ? Angelot, si tu penses toujours que les libanais sont fous, on se verra le 14 juin… Léonard, j’espère que tu ne me diras pas non la prochaine fois que je te propose de m’épouser. Ma chère Vlerë, rue Saint Pierre se rappellera toujours nos pas lourds quand on était bourré en train de \mbox{chanter :} padam padam. Attention, Vlerë a des photos et des vidéos très compromettantes de moi, elle va les rendre publiques quand je deviendrai un jour célèbre !\\

Mon père et mon oncle m’ont toujours aidé financièrement pour accomplir mon projet professionnel. Ma sœur Chrystelle est venue du Liban spécialement pour ma soutenance et m’a préparé avec l’aide de Samira un délicieux pot de thèse ! Ma petite sœur Claire, je te fais un gros bisou, ça pique non ?\\

« Bonjour joujou » est le message qui s’affiche sur mon téléphone chaque matin. Ma mère me l’envoyait inlassablement depuis des années. Ce petit message a rythmé ma vie. Avec ma mère, j’ai appris à grandir, à rêver et à ne pas abandonner mes rêves. Cette thèse de doctorat lui est dédiée.

\newgeometry{left=2cm,bottom=2cm,right=2cm,top=2cm}

\noindent \textbf{Interval Structures, Hecke Algebras, and Krammer's Representations for the Complex Braid Groups $B(e,e,n)$}\\

\noindent \textbf{Abstract:} We define geodesic normal forms for the general series of complex reflection groups $G(de,e,n)$. This requires the elaboration of a combinatorial technique in order to determine minimal word representatives and to compute the length of the elements of $G(de,e,n)$ over some generating set. Using these geodesic normal forms, we construct intervals in $G(e,e,n)$ that give rise to Garside groups. Some of these groups correspond to the complex braid group $B(e,e,n)$. For the other Garside groups that appear, we study some of their properties and compute their second integral homology groups. Inspired by the geodesic normal forms, we also define new presentations and new bases for the Hecke algebras associated with the complex reflection groups $G(e,e,n)$ and $G(d,1,n)$ which lead to a new proof of the BMR (Broué-Malle-Rouquier) freeness conjecture for these two cases. Next, we define a BMW (Birman-Murakami-Wenzl) and Brauer algebras for type $(e,e,n)$. This enables us to construct explicit Krammer's representations for some cases of the complex braid groups $B(e,e,n)$. We conjecture that these representations are faithful. Finally, based on our heuristic computations, we propose a conjecture about the structure of the BMW algebra.\\

\noindent \textbf{Keywords:} Reflection Groups, Braid Groups, Hecke Algebras, Geodesic Normal Forms, Garside Structures, Interval Garside Structures, Homology, BMR Freeness Conjecture, Brauer Algebras, BMW Algebras, Krammer's Representations.

\begin{center}***\end{center}

\noindent \textbf{Structures d'Intervalles, Algèbres de Hecke et Représentations de Krammer des Groupes de Tresses Complexes $B(e,e,n)$}\\

\noindent \textbf{R\'esum\'e :} Nous définissons des formes normales géodésiques pour les séries générales des groupes de réflexions complexes $G(de,e,n)$. Ceci nécessite l'élaboration d'une technique combinatoire afin de déterminer des décompositions réduites et de calculer la longueur des éléments de $G(de,e,n)$ sur un ensemble générateur donné. En utilisant ces formes normales géodésiques, nous construisons des intervalles dans $G(e,e,n)$ qui permettent d'obtenir des groupes de Garside. Certains de ces groupes correspondent au groupe de tresses complexe $B(e,e,n)$. Pour les autres groupes de Garside, nous étudions certaines de leurs propriétés et nous calculons leurs groupes d'homologie sur $\mathbb{Z}$ d'ordre $2$. Inspirés par les formes normales géodésiques, nous définissons aussi de nouvelles présentations et de nouvelles bases pour les algèbres de Hecke associées aux groupes de réflexions complexes $G(e,e,n)$ et $G(d,1,n)$ ce qui permet d'obtenir une nouvelle preuve de la conjecture de liberté de BMR (Broué-Malle-Rouquier) pour ces deux cas. Ensuite, nous définissons des algèbres de BMW (Birman-Murakami-Wenzl) et de Brauer pour le type $(e,e,n)$. Ceci nous permet de construire des représentations de Krammer explicites pour des cas particuliers des groupes de tresses complexes $B(e,e,n)$. Nous conjecturons que ces représentations sont fidèles. Enfin, en se basant sur nos calculs heuristiques, nous proposons une conjecture sur la structure de l'algèbre de BMW.\\

\noindent \textbf{Mots cl\'es :} Groupes de Réflexions, Groupes de Tresses, Algèbres de Hecke, Formes Normales Géodésiques, Structures de Garside, Structures d'Intervalles, Homologie, Conjecture de Liberté de BMR, Algèbres de Brauer, Algèbres de BMW, Représentations de Krammer.

\restoregeometry

\newpage~
\thispagestyle{empty}

\tableofcontents

\chapter{Introduction}\label{ChapterIntroduction}
 
\minitoc

\bigskip

The aim of this chapter is to give an outline of the thesis and present its main results. We include the necessary preliminaries about the complex reflection groups, the complex braid groups, and the Hecke algebras.

\section{Complex reflection groups}

In this section, we provide the definition of a finite complex reflection group and recall the classification of Shephard and Todd of finite irreducible complex reflection groups. We also recall the definition of Coxeter groups and the classification of finite irreducible Coxeter groups.\\

Let $n$ be a positive integer and let $V$ be a $\mathbb{C}$-vector space of dimension $n$.

\begin{definition}\label{DefinitionReflection}

An element $s$ of $GL(V)$ is called a reflection if \\$Ker(s-1)$ is a hyperplane and $s^d=1$ for some $d \geq 2$. 

\end{definition}

Let $W$ be a finite subgroup of $GL(V)$. 

\begin{definition}\label{DefinitionReflectionGroup}

$W$ is a complex reflection group if $W$ is generated by the set $\mathcal{R}$ of reflections of $W$.

\end{definition}

We say that $W$ is irreducible if $V$ is an irreducible linear representation of $W$. Every complex reflection group can be written as a direct product of irreducible ones (see Proposition 1.27 of \cite{UnitaryReflectionGroupsLehrerTaylor}). Therefore, the study of complex reflection groups reduces to the irreducible case that was classified by Shephard and Todd in 1954 (see \cite{ShephardTodd}). The classification is as follows.

\begin{proposition}\label{PropositionClassificationShephardTodd}

Let $W$ be an irreducible complex reflection group. Then, up to conjugacy, $W$ belongs to one of the following cases:

\begin{itemize}

\item The infinite series $G(de,e,n)$ depending on three positive integer parameters\\ $d$, $e$, and $n$ (see Definition \ref{DefinitionG(de,e,n)} below).
\item The $34$ exceptional groups $G_4$, $\cdots$, $G_{37}$.

\end{itemize}

\end{proposition}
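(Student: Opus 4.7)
The strategy is a case analysis separating imprimitive from primitive complex reflection groups. Recall that an irreducible subgroup $W \subset GL(V)$ is imprimitive if $V$ admits a non-trivial decomposition $V = V_1 \oplus \cdots \oplus V_k$ with $k \geq 2$ that is permuted by $W$, and primitive otherwise. The first step is to verify that the irreducible imprimitive case produces exactly the infinite series $G(de,e,n)$, while primitive groups form a finite list in each dimension.

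For the imprimitive case, irreducibility forces $\dim V_i = 1$ and transitivity of the permutation action on the lines $V_i$. I would then analyze the diagonal subgroup $A \leq W$, which is finite abelian and acts on each line by a root of unity. Since $W$ is generated by reflections and surjects onto $S_n$ via its permutation of the lines, the diagonal matrices in $A$ are highly constrained: their entries lie in a common cyclic group $\mu_{de}$, and closure under conjugation by transpositions (which are themselves products of two reflections) reduces the freedom to a single condition on the product of the entries being a $d$-th root of unity. This pins down $W$ up to conjugacy as $G(de,e,n)$ for some $d,e,n$.

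For the primitive case, the argument proceeds by induction on $n = \dim V$. In dimension $1$ every finite subgroup of $\mathbb{C}^*$ is cyclic and already lies in the infinite series as $G(m,1,1)$. In dimension $2$, one descends via the projection $GL_2(\mathbb{C}) \to PGL_2(\mathbb{C})$ to the classical classification of finite subgroups of $PGL_2(\mathbb{C})$; the primitive ones are the polyhedral groups $A_4, S_4, A_5$, and pulling back while imposing reflection generation produces the finite list $G_4, \ldots, G_{22}$. In dimension $n \geq 3$, I would invoke the Shephard-Todd-Chevalley theorem to write the invariant ring $\mathbb{C}[V]^W$ as a polynomial algebra on $n$ homogeneous generators of degrees $d_1 \leq \cdots \leq d_n$ with $d_1 \cdots d_n = |W|$, then use parabolic subgroups (pointwise stabilizers of subspaces, which are themselves reflection groups of smaller rank by Steinberg's theorem) to run the induction, ruling out all but finitely many possibilities for the degree sequence; these correspond to the remaining exceptional groups $G_{23}, \ldots, G_{37}$.

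The principal obstacle, as in every classification theorem of this kind, is the tight control of the primitive exceptional groups. In dimension $2$ this becomes a bookkeeping exercise once the polyhedral classification is in hand, but requires careful tracking of the central scalar factors to separate, e.g., $G_4$ from $G_5$. In dimension $\geq 3$ the invariant-theoretic bounds must be supplemented by explicit constructions (or case-by-case elimination) to establish both existence and uniqueness for each admissible degree sequence; this is essentially the content of Shephard and Todd's original tabulation and of Cohen's later streamlined treatment, and any complete proof must either reproduce this enumeration or cite it in full.
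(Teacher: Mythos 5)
The paper does not prove this proposition at all: it is stated as a quotation of the 1954 classification, with the reduction to the irreducible case referred to \cite{UnitaryReflectionGroupsLehrerTaylor} and the classification itself referred to \cite{ShephardTodd}. Your proposal therefore takes a genuinely different route, namely an outline of the actual classification argument as it appears in the literature the paper cites: the imprimitive/primitive dichotomy, the identification of the irreducible imprimitive groups with the series $G(de,e,n)$ via the diagonal normal subgroup and the induced transitive permutation action on the lines, the rank-$2$ primitive case via the finite subgroups of $PGL_2(\mathbb{C})$, and the higher-rank primitive case by induction using parabolic subgroups and numerical constraints, with the exceptional list $G_{23},\dots,G_{37}$ obtained by case-by-case elimination. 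This is structurally sound and is essentially the Shephard--Todd/Cohen strategy, and you are right, and candid, that any complete proof must either reproduce or cite their enumeration; since you end by citing it, your argument ultimately rests on the same external reference as the paper, but it has the merit of explaining \emph{why} the dichotomy into one infinite series plus finitely many exceptions holds, which the paper's bare citation does not. Two caveats if you were to flesh this out: the claim that an irreducible imprimitive $W$ surjects onto all of $S_n$ needs the small argument that the image is a transitive subgroup generated by transpositions (the non-diagonal reflections map to transpositions), and in rank $\geq 3$ the primitive classification is not really pinned down by the degree sequence alone --- the standard treatments work with root/line systems and explicit constructions, so the invariant-theoretic bounds should be presented as narrowing the possibilities rather than determining them.
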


We provide the definition of the infinite series $G(de,e,n)$ and some of their properties. For the definition of the $34$ exceptional groups, see \cite{ShephardTodd}.

\begin{definition}\label{DefinitionG(de,e,n)}

$G(de,e,n)$ is the group of $n \times n$ matrices consisting of monomial matrices (each row and column has a unique nonzero entry), where
\begin{itemize}
\item all nonzero entries are $de$-th roots of unity and
\item the product of the nonzero entries is a $d$-th root of unity.
\end{itemize} 

\end{definition}

\begin{remark}

The group $G(1,1,n)$ is irreducible on $\mathbb{C}^{n-1}$ and the group $G(2,2,2)$ is not irreducible so it is excluded from the classification of Shephard and Todd.

\end{remark}

Let $s_i$ be the transposition matrix $(i, i+1)$ for $1 \leq i \leq n-1$, $t_e = \begin{pmatrix}
0 & \zeta_e^{-1} & 0\\
\zeta_e & 0 & 0\\
0 & 0 & I_{n-2}
\end{pmatrix}$, and $u_d = \begin{pmatrix}
\zeta_d & 0\\
0 & I_{n-1}
\end{pmatrix}$, where $I_k$ denotes the identity $k \times k$ matrix and $\zeta_l$ the $l$-th root of unity that is equal to $exp(2i\pi\l/l)$. The following result can be found in Section 3 of Chapter 2 in \cite{UnitaryReflectionGroupsLehrerTaylor}.

\begin{proposition}
The set of generators of the complex reflection group $G(de,e,n)$ are as follows.
\begin{itemize}
\item The group $G(e,e,n)$ is generated by the reflections $t_e$, $s_1$, $s_2$, $\cdots$, $s_{n-1}$.
\item The group $G(d,1,n)$ is generated by the reflections $u_d$, $s_1$, $s_2$, $\cdots$, $s_{n-1}$.
\item For $d \neq 1$ and $e \neq 1$, the group $G(de,e,n)$ is generated by the reflections $u_d$, $t_{de}$, $s_1$, $s_2$, $\cdots$, $s_{n-1}$.
\end{itemize}
\end{proposition}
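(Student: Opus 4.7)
The plan is to verify these generating sets via the standard strategy for monomial matrix groups: split the group as an extension of the symmetric group by a diagonal subgroup, and generate each piece separately.

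First I would check that every listed generator indeed lies in the corresponding group, which is immediate from Definition \ref{DefinitionG(de,e,n)}: the $s_i$ are permutation matrices (product of nonzero entries $= 1$, a $d$-th root for any $d$), while $u_d$ has product $\zeta_d$ and $t_e$, $t_{de}$ have product $\zeta_e \cdot \zeta_e^{-1} = 1$, respectively $\zeta_{de} \cdot \zeta_{de}^{-1} = 1$, so these sit in the right groups. Also, one checks each is indeed a reflection in the sense of Definition \ref{DefinitionReflection}.

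Next I would exploit the short exact sequence
\[
1 \longrightarrow D_{de,e,n} \longrightarrow G(de,e,n) \overset{\pi}{\longrightarrow} \mathfrak{S}_n \longrightarrow 1,
\]
where $\pi$ forgets the nonzero entries and $D_{de,e,n}$ is the subgroup of diagonal matrices in $G(de,e,n)$. Since $\pi(s_i) = (i,i+1)$, the classical fact that the adjacent transpositions generate $\mathfrak{S}_n$ reduces everything to showing that the proposed extra generators, together with the $s_i$, produce all of $D_{de,e,n}$. Now $D_{de,e,n}$ is exactly the set of diagonal matrices $\operatorname{diag}(\alpha_1,\ldots,\alpha_n)$ with each $\alpha_j \in \mu_{de}$ and $\prod_j \alpha_j \in \mu_d$, so its structure is a well-understood abelian group.

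The core computation is the identity
\[
s_1 \cdot t_{e} \;=\; \operatorname{diag}(\zeta_e,\zeta_e^{-1},1,\ldots,1),
\]
and more generally $s_1 \cdot t_{de} = \operatorname{diag}(\zeta_{de},\zeta_{de}^{-1},1,\ldots,1)$. Conjugating by arbitrary permutations realized by words in the $s_i$, I can place such a $\zeta_e^{\pm 1}$ pair (resp.\ $\zeta_{de}^{\pm 1}$ pair) in any two positions. For the first case, these elements visibly generate the kernel of the product map $\mu_e^n \to \mu_e$, which is precisely $D_{e,e,n}$. For the second case, $u_d$ and its $S_n$-conjugates generate the full diagonal subgroup $\mu_d^n$, which together with the $s_i$ yields $G(d,1,n)$. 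For the third, the $\zeta_{de}^{\pm 1}$ pairs generate the subgroup of diagonals with entries in $\mu_{de}$ and product $1$, and throwing in $u_d$ (whose determinant is $\zeta_d$) enlarges this to all diagonals whose product lies in $\mu_d$, namely $D_{de,e,n}$.

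The main (minor) obstacle is the bookkeeping in the third item: one must be careful that $\mu_d \subset \mu_{de}$ and that $u_d$ lies in $G(de,e,n)$, which requires the hypothesis that $\zeta_d = \zeta_{de}^e$ is indeed a $de$-th root of unity (trivially true) and that its determinant $\zeta_d$ is in $\mu_d$. Once that is clear, generating $D_{de,e,n}$ amounts to checking that the subgroup of $\mu_{de}^n$ generated by $u_d$ and all $\operatorname{diag}(\ldots,\zeta_{de},\ldots,\zeta_{de}^{-1},\ldots)$ coincides with the kernel of the map $\mu_{de}^n \to \mu_{de}/\mu_d$, which is a direct computation in the finitely generated abelian group $\mu_{de}^n$.
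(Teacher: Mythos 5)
Your proof is correct. Note that the paper does not actually prove this proposition: it simply quotes the result from Section 3 of Chapter 2 of \cite{UnitaryReflectionGroupsLehrerTaylor}, so there is no in-paper argument to compare against. Your route — the exact sequence $1 \to D \to G(de,e,n) \to \mathfrak{S}_n \to 1$ with $D$ the diagonal subgroup and the quotient map forgetting the nonzero entries, the identity $s_1 t_e = \operatorname{diag}(\zeta_e,\zeta_e^{-1},1,\dots,1)$ (resp. $s_1 t_{de}$), conjugation by words in the $s_i$ to place the $\zeta^{\pm 1}$ pair in arbitrary positions, and the elementary abelian-group computation showing these pairs generate the kernel of the product map on $\mu_e^n$ (resp. $\mu_{de}^n$) while adjoining $u_d$ enlarges it to the full preimage of $\mu_d$ — is exactly the standard wreath-product argument, and it is essentially the argument given in the cited reference, so nothing genuinely different is happening, just a self-contained write-up of what the paper outsources. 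The only step you leave implicit is the routine closing observation that a subgroup of $G(de,e,n)$ that contains $D$ and surjects onto $\mathfrak{S}_n$ must be all of $G(de,e,n)$; one line (pull back an arbitrary element along the projection and correct by a diagonal matrix) finishes it, and your verification that $u_d$ and $t_{de}$ do lie in $G(de,e,n)$ and are reflections in the sense of the paper's definition is the right bookkeeping to include.
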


Now, let $W$ be a finite real reflection group. By a theorem of Coxeter, it is known that every finite real reflection group is isomorphic to a Coxeter group. The definition of Coxeter groups by a presentation with generators and relations is as follows. 

\begin{definition}\label{DefinitionCoxeterGroups}

Assume that $W$ is a group and $S$ be a subset of $W$. For $s$ and $t$ in $S$, let $m_{st}$ be the order of $st$ if this order is finite, and be $\infty$ otherwise. We say that $(W,S)$ is a Coxeter system, and that $W$ is a Coxeter group, if $W$ admits the presentation with generating set $S$ and relations: \begin{itemize}

\item quadratic relations: $s^2=1$ for all $s \in S$ and
\item braid relations: $\underset{m_{st}}{\underbrace{sts\cdots}}=\underset{m_{st}}{\underbrace{tst\cdots}}$  for $s,t \in S$, $s \neq t$, and $m_{st} \neq \infty$.

\end{itemize}

\end{definition}

The presentation of a Coxeter group can be described by a diagram where the nodes are the generators that belong to $S$ and the edges describe the relations between these generators. We follow the standard conventions for Coxeter diagrams. The classification of finite irreducible Coxeter groups consists of:

\begin{tabular}{ll}

-- Type $A_n$ (the symmetric group $S_{n+1}$): &  

\begin{tikzpicture}
\draw[fill=black] 
(0,0)                         
      circle [radius=.1] node [label=above:$s_1$] (1) {} 
(1,0) 
      circle [radius=.1] node [label=above:$s_2$] (2) {} 
(3,0)
      circle [radius=.1] node [label=above:$s_n$] (3) {}; 

\draw[-] (1) to (2);
\draw[dashed,-] (2) to (3);

\end{tikzpicture}\\

-- Type $B_n$: & 

\begin{tikzpicture}

\draw[fill=black] 
(0,0)                         
      circle [radius=.1] node [label=above:$s_1$] (1) {}
(1,0) 
      circle [radius=.1] node [label=above:$s_2$] (2) {} 
(2,0)
      circle [radius=.1] node [label=above:$s_3$] (3) {}
(4,0)
	  circle [radius=.1] node [label=above:$s_n$] (4) {}; 

\draw[thick,double,-] (1) to (2);
\draw[-] (2) to (3);
\draw[dashed,-] (3) to (4);

\end{tikzpicture}\\

-- Type $D_n$: &

\begin{tabular}{ll}
\begin{tikzpicture}

\draw[fill=black] 
(-1,0.5)                         
      circle [radius=.1] node [label=left:$s_1$] (1) {}
(-1,-0.5) 
      circle [radius=.1] node [label=left:$s_2$] (2) {} 
(0,0)
      circle [radius=.1] node [label=below:$s_3$] (3) {}
(1,0)
	  circle [radius=.1] node [label=below:$s_4$] (4) {}
(3,0)
	  circle [radius=.1] node [label=below:$s_n$] (5) {}; 

\draw[-] (1) to (3);
\draw[-] (2) to (3);
\draw[-] (3) to (4);
\draw[dashed,-] (4) to (5);

\end{tikzpicture} & \\
 & 

\end{tabular}\\

-- Type $I_2{(e)}$ (the dihedral group): & \begin{tikzpicture}

\draw[fill=black] 
(0,0)                         
      circle [radius=.1] node [label=above:$s_1$] (1) {}
    
(1,0) 
      circle [radius=.1] node [label=above:$s_2$] (2) {};

\draw[-,label=$e$] (1) to (2);

\node at (0.5,0.15) {$e$};

\end{tikzpicture}\\

 & \\

-- $H_3$, $F_4$, $H_4$, $E_6$, $E_7$, and $E_8$. & \\

\end{tabular}

\begin{remark}

Using the notations of the groups that appear in the classification of Shephard and Todd (see Proposition \ref{PropositionClassificationShephardTodd}), we have: type $A_{n-1}$ is $G(1,1,n)$, type $B_n$ is $G(2,1,n)$, type $D_n$ is $G(2,2,n)$, and type $I_2(e)$ is $G(e,e,2)$. For the exceptional groups, we have $H_3 = G_{23}$, $F_4 = G_{28}$, $H_4 = G_{30}$, $E_6 = G_{35}$, $E_7 = G_{36}$, and $E_8 = G_{37}$.

\end{remark}

\section{Complex braid groups}

Let  $W$ be a Coxeter group. We define the Artin-Tits group $B(W)$ associated to $W$ as follows.

\begin{definition}\label{DefinitionArtinTits}

The Artin-Tits group $B(W)$ associated to a Coxeter group $W$ is defined by a presentation with generating set $\widetilde{S}$ in bijection with the generating set $S$ of the Coxeter group and the relations are only the braid relations: $\underset{m_{st}}{\underbrace{\tilde{s}\tilde{t}\tilde{s}\cdots}}=\underset{m_{st}}{\underbrace{\tilde{t}\tilde{s}\tilde{t}\cdots}}$ for $\tilde{s}, \tilde{t} \in \widetilde{S}$ and $\tilde{s} \neq \tilde{t}$, where $m_{st} \in \mathbb{Z}_{\geq 2}$ is the order of $st$ in $W$.

\end{definition}

Consider $W = S_n$, the symmetric group with $n \geq 2$.
The Artin-Tits group associated to $S_n$ is the `classical' braid group denoted by $B_n$. The following diagram encodes all the information about the generators and relations of the presentation of $B_n$.

\begin{center}
\begin{tikzpicture}

\node[draw, shape=circle, label=above:$\tilde{s}_1$] (1) at (0,0) {};
\node[draw, shape=circle, label=above:$\tilde{s}_2$] (2) at (1,0) {};
\node[draw, shape=circle,label=above:$\tilde{s}_{n-2}$] (n-2) at (4,0) {};
\node[draw,shape=circle,label=above:$\tilde{s}_{n-1}$] (n-1) at (5,0) {};

\draw[thick,-] (1) to (2);
\draw[thick,dashed,-] (2) to (n-2);
\draw[thick,-] (n-2) to (n-1);

\end{tikzpicture}
\end{center}

Brou\'e, Malle, and Rouquier \cite{BMR} managed to associate a complex braid group to each complex reflection group. This generalizes the definition of the Artin-Tits groups associated to real reflection groups. We will provide the construction of these complex braid groups. All the results can be found in \cite{BMR}.\\

Let $W < GL(V)$ be a finite complex reflection group. Let $\mathcal{R}$ be the set of reflections of $W$. There exists a corresponding hyperplane arrangement and hyperplane complement:  $\mathcal{A} = \{Ker(s-1)\ |\ s \in \mathcal{R} \}$ and $X = V\setminus\bigcup\mathcal{A}$. The complex reflection group $W$ acts naturally on $X$. Let $X/W$ be its space of orbits and $p: X \rightarrow X/W$ the canonical surjection. By Steinberg’s theorem (see \cite{SteinbergFreeAction}), this action is free. Therefore, it defines a Galois covering $X \rightarrow X/W$, which gives rise to the following exact sequence. Let $x \in X$, we have
 
$$1 \longrightarrow \pi_1(X,x) \longrightarrow \pi_1(X/W, p(x)) \longrightarrow W \longrightarrow 1.$$

\noindent This allows us to give the following definition.

\begin{definition}\label{DefinitionBraidGroup}

We define $P := \pi_1(X,x)$ to be the pure complex braid group and\\ $B := \pi_1(X/W, p(x))$ the complex braid group attached to $W$.

\end{definition}

Let $s \in \mathcal{R}$ and $H_s$ its corresponding hyperplane. We define a loop $\sigma_s \in B$ induced by a path $\gamma$ in $X$. Choose a point $x_0$ `close to $H_s$ and far from the other reflecting hyperplanes'. Define $\gamma$ to be the path in $X$ that is equal to $s.({\widetilde{\gamma}}^{-1}) \circ \gamma_0 \circ \widetilde{\gamma}$ where $\widetilde{\gamma}$ is any path in $X$ from $x$ to $x_0$, $s.({\widetilde{\gamma}}^{-1})$ is the image of $\widetilde{\gamma}^{-1}$ under the action of $s$, and $\gamma_0$ is a path in $X$ from $x_0$ to $s.x_0$ around the hyperplane $H_s$. The path $\gamma$ is illustrated in Figure \ref{FigureBraidedReflections} below.
 
\begin{figure}[H]
\centering
\begin{tikzpicture}
[x={(0.866cm,-0.5cm)}, y={(0.866cm,0.5cm)}, z={(0cm,1cm)}, scale=0.9,
%Option for nice arrows
>=stealth, %
inner sep=0pt, outer sep=2pt,%
axis/.style={thick,->},
wave/.style={thick,color=#1,smooth},
polaroid/.style={fill=black!60!white, opacity=0.3},
]

\coordinate (O) at (0, 0, 0);
\draw[thick,dashed] (O) -- +(0,  2.7, 0) ;
\draw[thick,dashed] (O) -- +(0,  -2.7, 0);
\draw(0.3,1.9,0) node [right]{$s\cdot x$};
\draw(0.4,-2.2,0) node [left] {$x$};
 \draw[thick] (-2,0,0) -- (O);
\draw[thick] (O) -- +(4.35, 0,   0) ;

\draw [thick] (4.6,0,0)-- +(2.5, 0,   0) node [right] {$H_s$};
\draw (0,2,0)  node{\textbullet} ;
\draw (0,-2,0)  node{\textbullet} ;
\draw[thick,dashed] (5,0,0) -- +(0,  1.9, 0) node[right]{${H_{s}}^{\perp}$};
\draw (5, 0.6, 0) node{\textbullet}; 
\draw[thick,dashed] (5,0,0) -- +(0,  -1.9, 0) ;
\draw (5, -0.6, 0) node{\textbullet} ;
\draw (5.6,-1.1,0.1) node [left] {$x_0$};
\draw (5.8, 1.2,-0.1) node [left] {$s\cdot x_0$};
\draw (5,0,0) node{\textbullet};
\draw (5.5,-0.2,0) node[left]{0};

\draw[fermion, blue] (5,-0.6,0) arc [start angle=190,end angle=23, x radius=0.55cm, y radius=1cm];
\draw[thick, dashed] (5,-0.6,0) arc [start angle=-162, end angle=21, x radius=0.55cm, y radius=1cm];

\path[fermion,draw,red] (0,-2,0) .. controls (4,1,0) and (0.65,-3.7,0) .. (5,-0.6,0);
\path [fermionbar, draw,red](0,2,0).. controls (4,-1,0) and (0.65,3.7,0) .. (5,0.6,0);

\node at (2.5, -1.8,0) {$\widetilde{\gamma}$};
\node at (2.5, 2,0) {$s.({\widetilde{\gamma}}^{-1})$};
\node at (3.8, 0.5,0) {$\gamma_0$};

\end{tikzpicture}
\caption{The path $\gamma$ in $X$.} \label{FigureBraidedReflections}
\end{figure}

\noindent We call $\sigma_s$ a braided reflection associated to $s$. The importance of this construction will appear in Proposition \ref{PropositionGeneratorsBraidedReflections} below. We have the following property for braided reflections (see \cite{BMR} for a proof).

\begin{proposition}\label{PropositionBraidedReflectionsConjugate}

Let $s_1$ and $s_2$ be two reflections that are conjugate in $W$ and let $\sigma_1$ and $\sigma_2$ be two braided reflections associated to $s_1$ and $s_2$, respectively. We have $\sigma_1$ and $\sigma_2$ are conjugate in $B$.

\end{proposition}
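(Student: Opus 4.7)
The plan is to exploit the fact that any $w \in W$ realizing $s_2 = w s_1 w^{-1}$ acts as a homeomorphism of $X$ permuting the arrangement $\mathcal{A}$, and therefore transports a braided reflection for $s_1$ into a braided reflection for $s_2$, modulo a conjugation by any path in $X$ from $x$ to $w \cdot x$. I would carry this out in two steps: a reduction lemma, and an explicit construction.

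\textbf{Reduction.} First I would establish that any two braided reflections associated with a single reflection $s$ are conjugate in $B$. The construction depends only on the choice of basepoint $x_0$ near $H_s$ and of a path $\widetilde{\gamma}$ from $x$ to $x_0$; inside a small tubular neighbourhood of $H_s$ avoiding the other hyperplanes, the half-loop $\gamma_0$ is determined up to homotopy in $X$, so two different choices of $(x_0, \widetilde{\gamma})$ and $(x_0', \widetilde{\gamma}')$ produce braided reflections that differ by conjugation by the element of $B$ represented by $p(\widetilde{\gamma}' \cdot \widetilde{\gamma}^{-1})$. Once this is known, it suffices to exhibit one braided reflection $\sigma_1'$ for $s_1$ and one $\sigma_2'$ for $s_2$ that are conjugate in $B$.

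\textbf{Construction.} Fix data $(x_{0,1}, \widetilde{\gamma}_1, \gamma_{0,1})$ defining $\sigma_1'$, fix any path $\delta$ in $X$ from $x$ to $w \cdot x$, and set $\alpha := [p(\delta)] \in B$. Because $w$ permutes the reflecting hyperplanes, the point $x_{0,2} := w \cdot x_{0,1}$ lies close to $H_{s_2} = w \cdot H_{s_1}$ and far from the other hyperplanes; the path $\gamma_{0,2} := w \cdot \gamma_{0,1}$ is a half-loop from $x_{0,2}$ to $s_2 \cdot x_{0,2}$ around $H_{s_2}$ (using $s_2 w = w s_1$); and $\widetilde{\gamma}_2 := \delta \cdot (w \cdot \widetilde{\gamma}_1)$ is a valid connecting path in $X$ from $x$ to $x_{0,2}$. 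A direct computation then shows that the resulting braided reflection $\sigma_2'$ has for lift to $X$ the concatenation $\delta \cdot (w \cdot \gamma) \cdot (s_2 \cdot \delta^{-1})$, where $\gamma$ is the lift of $\sigma_1'$. Projecting to $X/W$ and using that $p$ is constant on $W$-orbits gives $p(w \cdot \gamma) = p(\gamma)$ and $p(s_2 \cdot \delta^{-1}) = p(\delta^{-1})$, whence $\sigma_2' = \alpha\, \sigma_1'\, \alpha^{-1}$ in $B$.

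\textbf{Main obstacle.} The delicate point is the reduction step: one must make precise the notion of a basepoint being ``close to $H_s$'' and verify, using the local structure of $X$ near $H_s$ (topologically a punctured-disk bundle over an open set of $H_s$), that the homotopy class of the half-loop $\gamma_0$ is rigid enough for the braided reflection to be well-defined up to the conjugation coming from the choice of connecting path. Once this local analysis is settled, the remainder of the argument is a formal manipulation of paths using the $W$-equivariance of the quotient map $p$.
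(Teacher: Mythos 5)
The paper itself gives no argument for this proposition: it simply refers the reader to Brou\'e--Malle--Rouquier, so there is no internal proof to compare with. Your proposal is essentially the standard (BMR-style) argument and its two steps are sound: transporting the defining data $(x_{0,1},\widetilde{\gamma}_1,\gamma_{0,1})$ by $w$ and inserting a connecting path $\delta$ from $x$ to $w\cdot x$ does produce a braided reflection for $s_2$ whose lift is $\delta\cdot(w\cdot\gamma)\cdot(s_2\cdot\delta^{-1})$, and $W$-invariance of $p$ then gives conjugacy by $[p(\delta)]\in B$; combined with the independence-of-choices lemma this proves the statement.

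One point in the reduction step needs repair, though it is an imprecision rather than a wrong idea. The element you name as the conjugator, $p(\widetilde{\gamma}'\cdot\widetilde{\gamma}^{-1})$, is not a loop at $p(x)$ (indeed the two paths are not even composable when $x_0\neq x_0'$), so as written it is not an element of $B$. The correct bookkeeping is: choose a path $\eta$ from $x_0$ to $x_0'$ inside the tubular neighbourhood of $H_s$ minus the other hyperplanes --- this region is path-connected because one is removing complex hypersurfaces --- note that $\gamma_0'$ is homotopic in $X$ to $(s\cdot\eta)\circ\gamma_0\circ\eta^{-1}$, and then the two braided reflections are conjugate by the class of the loop $\widetilde{\gamma}^{-1}\ast\eta^{-1}\ast\widetilde{\gamma}'$, i.e.\ by an element of the pure braid group $P$. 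This is exactly the local analysis you single out as the main obstacle, so once it is written out your proof is complete; one should also record (as you implicitly do) that $w\cdot\gamma_{0,1}$ is again a half-loop of the required type around $H_{s_2}$, which holds because $w$ acts linearly, maps $H_{s_1}$ to $H_{s_2}$, and conjugation preserves the relevant eigenvalue of the reflection.
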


A reflection $s$ is called distinguished if its only nontrivial eigenvalue is $exp(2i\pi/o(s))$, where $o(s)$ is the order of $s$. One can associate a braided reflection $\sigma_s$ to each distinguished reflection $s$. In this case, we call $\sigma_s$ a distinguished braided reflection associated to $s$. We have the following result (see \cite{BMR} for a proof). 
 
\begin{proposition}\label{PropositionGeneratorsBraidedReflections}

The complex braid group $B$ is generated by the distinguished braided reflections associated to all the distinguished reflections in $W$.

\end{proposition}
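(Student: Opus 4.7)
The plan is to exploit the short exact sequence
$$1 \longrightarrow P \longrightarrow B \longrightarrow W \longrightarrow 1$$
coming from the Galois covering $X \to X/W$. Writing $G$ for the subgroup of $B$ generated by the distinguished braided reflections, the strategy is to show (i) that $G$ surjects onto $W$ via the projection $B \to W$, and (ii) that $G$ contains the pure braid group $P$; combining the two yields $G = B$.

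For step (i), note that the image of a braided reflection $\sigma_s$ in $W$ is the reflection $s$ itself, since the path $\gamma = s.(\widetilde{\gamma}^{-1}) \circ \gamma_0 \circ \widetilde{\gamma}$ projects in $X/W$ to a loop whose monodromy acts as $s$. Hence the image of $G$ in $W$ contains all distinguished reflections. Now every reflection $s$ in $W$ has order $d = o(s)$ with nontrivial eigenvalue $\exp(2i\pi k/d)$ for some $k$ coprime to $d$; if $m$ is the inverse of $k$ modulo $d$, then $s^m$ is distinguished and $s = (s^m)^k$ lies in the subgroup generated by the distinguished reflections. Since $W$ is generated by $\mathcal{R}$ by Definition \ref{DefinitionReflectionGroup}, the distinguished reflections generate $W$, and consequently $G$ maps onto $W$.

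For step (ii), the key topological input is that the pure complex braid group $P = \pi_1(X, x)$ of the hyperplane complement is generated by meridians: for each $H \in \mathcal{A}$, a small loop going once around $H$ in $X$ (and trivially around all other hyperplanes) lies in $P$, and the collection of such meridians over $H \in \mathcal{A}$ generates $P$. The link with distinguished braided reflections is the following: if $s_H$ is the distinguished reflection fixing $H$, of order $e_H$, then the braided reflection $\sigma_{s_H}$ traverses the loop $\gamma_0$ (see Figure \ref{FigureBraidedReflections}) which goes from $x_0$ to $s_H \cdot x_0$ around $H$; its $e_H$-th power $\sigma_{s_H}^{e_H}$ therefore lifts to a closed loop in $X$ that wraps exactly once around $H$, i.e.\ a meridian of $H$. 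Hence every meridian generator of $P$ lies in $G$, so $P \subseteq G$, and combined with step (i) the five-lemma style argument $G \cdot P = B$ with $P \subseteq G$ gives $G = B$.

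The main obstacle is the topological fact invoked in step (ii): that the fundamental group of the complement of a finite complex hyperplane arrangement is generated by meridians around its hyperplanes. In this thesis we may take this as a standard result on hyperplane complements (it can be established via a Zariski--Van Kampen argument after slicing by a generic line, reducing to the case of finitely many points removed from $\mathbb{C}$); the remainder of the argument is essentially formal manipulation with the exact sequence and the explicit description of $\sigma_{s_H}^{e_H}$ as a meridian, which is precisely the content of Proposition \ref{PropositionBraidedReflectionsConjugate}-type lifting computations already sketched in Figure \ref{FigureBraidedReflections}.
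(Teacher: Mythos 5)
Your overall skeleton (show the subgroup $G$ generated by the distinguished braided reflections surjects onto $W$, and that it contains $P$, then conclude from the exact sequence) is reasonable, and step (i) is correct: the order argument showing every reflection is a power of a distinguished one is fine. The genuine gap is in step (ii), at the sentence ``hence every meridian generator of $P$ lies in $G$''. What you have actually placed in $G$ is, for each hyperplane $H$, \emph{one particular} meridian of $H$, namely $\sigma_{s_H}^{e_H}$ built from the particular connecting path $\widetilde{\gamma}$ chosen when defining $\sigma_{s_H}$. The meridians that the Zariski--Van Kampen slice produces as generators of $P=\pi_1(X,x)$ come with their own system of connecting paths; for a fixed $H$ they are only conjugate to your $\sigma_{s_H}^{e_H}$, and the conjugating elements lie in $P$ --- precisely the subgroup you are trying to prove is contained in $G$. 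So the inclusion $P\subseteq G$ does not follow as written; the argument is circular at this point. (Note also that for an \emph{arbitrary} choice of one meridian per hyperplane there is no general reason these generate $\pi_1$ of the complement; the slice argument yields a specific compatible family, which need not contain your elements.)

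The gap can be closed, and doing so brings you to the standard argument of \cite{BMR} (which is what the paper cites here; it contains no proof of its own). First, the statement is about the family of \emph{all} distinguished braided reflections, i.e.\ all admissible choices of $x_0$ and $\widetilde{\gamma}$ in the construction preceding Figure \ref{FigureBraidedReflections}. With that reading, your step (ii) is repaired by observing that \emph{every} meridian of $H$, with \emph{any} connecting path $\alpha$, equals $\sigma^{e_H}$ for the braided reflection $\sigma$ associated to $s_H$ defined using $\widetilde{\gamma}=\alpha$; since $P$ is generated by meridians, $P\subseteq G$ and your exact-sequence argument closes. The proof in \cite{BMR} bypasses $P$ altogether: $B=\pi_1(X/W,p(x))$, and $X/W$ is the complement of the discriminant hypersurface in $V/W\cong\mathbb{C}^n$ (Chevalley--Shephard--Todd), so $B$ is \emph{normally} generated by meridians of the discriminant, and these meridians are exactly the distinguished braided reflections; since this family is stable under conjugation in $B$ (the conjugation-equivariance underlying Proposition \ref{PropositionBraidedReflectionsConjugate}), the subgroup it generates is normal, hence equal to $B$. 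Either repair is fine, but as submitted the passage from ``$\sigma_{s_H}^{e_H}$ is a meridian of $H$'' to ``the meridian generators of $P$ lie in $G$'' is a real gap.
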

 
\begin{remark}\label{RemarkBrieskorn}

By a theorem of Brieskorn \cite{BrieskornArtinTitsFundamentalGroup}, the complex braid group associated to a finite Coxeter group $W$ is isomorphic to the Artin-Tits group $B(W)$ defined by a presentation with generators and relations in Definition \ref{DefinitionArtinTits}.

\end{remark} 
 
An important property of the complex braid groups is that they can be defined by presentations with generators and relations. This generalizes the case of complex braid groups associated to finite Coxeter groups (see Remark \ref{RemarkBrieskorn}). Presentations for the complex braid groups associated to $G(de,e,n)$ and some of the exceptional irreducible reflection groups can be found in \cite{BMR}. Presentations for the complex braid groups associated to the rest of the exceptional irreducible reflection groups are given in \cite{BessisMichelPresentationsExcepBraidGroups}, \cite{BessisKPi1}, and \cite{MalleMichelRepresentationsHecke}.
 
\section{Hecke algebras}
 
Extending earlier results in \cite{BroueMalleZyklotomischeHeckealgebren}, Brou\'e, Malle, and Rouquier \cite{BMR} managed to generalize in a natural way the definition of the Hecke (or Iwahori-Hecke) algebra for real reflection groups to arbitrary complex reflection groups. Actually, they defined these Hecke algebras by using their definition of the complex braid groups (see Definition \ref{DefinitionBraidGroup}). The aim of this section is to provide the definition of the Hecke algebra as well as some of its properties.\\

Let $W$ be a complex reflection group and $B$ its complex braid group as defined in the previous section. Let $R=\mathbb{Z}[a_{s,i},a_{s,0}^{-1}]$ where $s$ runs among a representative system of conjugacy classes of distinguished reflections in $W$ and $0 \leq i \leq o(s)-1$, where $o(s)$ is the order of $s$ in $W$. One can choose one distinguished braided reflection $\sigma_s$ for each distinguished reflection $s$. The definition of a distinguished braided reflection was given in the previous section.

\begin{definition}\label{DefinitionHeckeAlgebra}

The Hecke algebra $H(W)$ associated to $W$ is the quotient of the group algebra $RB$ by the ideal generated by the relations $${\sigma_s}^{o(s)} = \sum\limits_{i=0}^{o(s)-1} a_{s,i}{\sigma_s}^{i}$$ where $\sigma_s$ is the distinguished braided reflection associated to $s$, where $s$ belongs to a representative system of conjugacy classes of distinguished reflections in $W$.

\end{definition}

\begin{remark}

By Proposition \ref{PropositionBraidedReflectionsConjugate} and the fact that the relations defining the Hecke algebra are polynomial relations on the braided reflections, the previous definition of the Hecke algebra does not depend on the choice of the representative system of conjugacy classes of distinguished reflections of $W$, nor on the choice of one distinguished braided reflection for each distinguished reflection. It also coincides with the usual definition of the Hecke algebra (see \cite{BMR} and \cite{MarinBMRFreenessConjecture}).

\end{remark}
 
\begin{remark}

When $W$ is a finite Coxeter group with generating set $S$, the Hecke algebra (also known as the Iwahori-Hecke algebra) associated to $W$ is defined over $R=\mathbb{Z}[a_{1},a_{0}^{-1}]$ by a presentation with a generating set $\Sigma$ in bijection with $S$ and the relations are $\underset{m_{st}}{\underbrace{\sigma_s \sigma_t \sigma_s \cdots}}=\underset{m_{st}}{\underbrace{\sigma_t \sigma_s \sigma_t \cdots}}$ along with the polynomial relations $\sigma_s^2 = a_1 \sigma_s + a_0$ for all $s \in S$.

\end{remark}
 
An important conjecture (the BMR freeness conjecture) about $H(W)$ was stated in \cite{BMR}:

\begin{conjecture}\label{ConjectureOfLibertyBMR}

The Hecke algebra $H(W)$ is a free $R$-module of rank $|W|$.

\end{conjecture}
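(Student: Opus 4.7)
The conjecture is open in full generality; what I would aim for is a proof in the infinite series cases $W = G(e,e,n)$ and $W = G(d,1,n)$, which is precisely what the abstract announces. The plan is to exhibit an explicit $R$-spanning family $\mathcal{B} \subset H(W)$ of cardinality $|W|$ indexed by the elements of the group, and then to deduce $R$-linear independence by a specialization argument. This reduces Conjecture \ref{ConjectureOfLibertyBMR} to (i) a combinatorial rewriting statement in $H(W)$ and (ii) the classical fact that $RW$ is $R$-free of rank $|W|$.

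First I would fix a generating set $S$ of $W$ consisting of distinguished reflections (namely the $s_i$, $t_e$, $u_d$ of Proposition 1.1.5, appropriately adapted to each case), and develop a theory of geodesic normal forms for $W$: to every $w \in W$ assign a distinguished reduced expression $\underline{w}$ over $S$. This requires the combinatorial analysis of length in $G(de,e,n)$ announced in the abstract, and boils down to a careful bookkeeping of the monomial-matrix data (the underlying permutation and the scalar entries that must multiply to a prescribed root of unity). Lifting each $\underline{w}$ to a product of the corresponding distinguished braided reflections $\sigma_s$, I obtain the candidate family $\mathcal{B} = \{\,\underline{\sigma}_w \mid w \in W\,\}$ in $H(W)$.

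Next, I would prove that $\mathcal{B}$ spans $H(W)$ as an $R$-module. The only tools available are the braid relations from the presentation of $B(W)$ and the polynomial relations ${\sigma_s}^{o(s)} = \sum_{i=0}^{o(s)-1} a_{s,i}{\sigma_s}^{i}$ from Definition \ref{DefinitionHeckeAlgebra}. Concretely, given any word in the $\sigma_s$, I would apply a rewriting procedure that either shortens the word modulo lower-order terms (using the quadratic/polynomial relations whenever a repeated letter appears) or transforms it by braid moves into one of the chosen normal forms. The existence of such a confluent, terminating procedure is exactly where the geodesic normal forms of the group must be carefully designed: one needs a normal form whose combinatorial structure is compatible with both families of relations.

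Finally, the lower bound follows from specialization. Setting $a_{s,0} = (-1)^{o(s)}$ and $a_{s,i} = 0$ for $0 < i < o(s)$ collapses the polynomial relation to ${\sigma_s}^{o(s)} = (-1)^{o(s)+1}$, so the images of the $\sigma_s$ satisfy the order relations of $W$; this realizes a surjection $H(W) \otimes_R \mathbb{Z} \twoheadrightarrow \mathbb{Z}W$, which has rank $|W|$. Any $R$-linear dependence among the elements of $\mathcal{B}$ would descend to a $\mathbb{Z}$-linear dependence among the $|W|$ images in $\mathbb{Z}W$, contradicting freeness of the group ring. Combined with the spanning statement, this yields freeness of rank $|W|$. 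I expect the main obstacle to be the spanning step: designing the normal forms and the attendant rewriting rules so that the nontrivial polynomial relations (of order $e$ at $t_e$, of order $d$ at $u_d$) can always be funneled to the prescribed position in the word without introducing uncontrolled higher-length terms; this is precisely what the new presentations and bases announced in the abstract are meant to achieve.
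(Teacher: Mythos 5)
Your overall plan — geodesic normal forms for $G(e,e,n)$ and $G(d,1,n)$, a spanning family $\mathcal{B}$ of cardinality $|W|$ indexed by group elements, and a rewriting argument using the braid and polynomial relations — is indeed the route the paper takes in Chapter \ref{ChapterHeckeAlgebras}: the sets $\Lambda = \Lambda_2\cdots\Lambda_n$ (resp.\ $\Lambda_1\cdots\Lambda_n$) are built from the reduced expressions of Chapter \ref{ChapterNormalFormsG(de,e,n)}, and the spanning statement is proved by induction on $n$, checking stability under left multiplication by each generator (Lemmas \ref{Lemma2}--\ref{Lemma10} and their analogues). Note also that the paper works over the polynomial ring $R_0$ rather than $R$ and transfers back via Proposition 2.3~$(ii)$ of \cite{MarinG20G21}; and keep in mind that the statement you are addressing is only a conjecture in general — the paper's contribution is a new proof for these two infinite families, the general case being Theorem \ref{PropositionBMRFreenessTheorem} by the case-by-case work of many authors.

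The genuine gap is in your linear-independence step. Specializing the parameters at a single point (your $a_{s,0}=(-1)^{o(s)}$, $a_{s,i}=0$, or the more natural $a_{s,0}=1$ giving $\mathbb{Z}W$) does \emph{not} let you conclude: a nontrivial $R$-linear relation $\sum_w r_w\,\underline{\sigma}_w = 0$ may have every coefficient $r_w$ lying in the kernel of that specialization (e.g.\ all $r_w$ in the ideal generated by the $a_{s,i}$, $i>0$, with no common factor to divide out), so its image in $\mathbb{Z}W$ is the trivial relation and no contradiction arises. What is actually needed is a bound at the \emph{generic} point: $\dim_{\operatorname{Frac}(R)} H(W)\otimes_R \operatorname{Frac}(R) \geq |W|$, which comes from the Tits-deformation/formal-series argument identifying the Hecke algebra with the group algebra after a suitable (non-algebraic) base change — this is precisely what is packaged in Proposition 2.3~$(i)$ of \cite{MarinG20G21}, which the paper invokes: a spanning set of cardinality $|W|$ then forces the kernel of $R_0^{|W|}\twoheadrightarrow H(W)$ to be both torsion and torsion-free, hence zero. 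Without importing that input (or reproving it), your specialization argument does not close the independence step, although the spanning half of your proposal is sound and matches the paper.
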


Even without being proven, this conjecture has been used by some authors as an assumption. For example, Malle used it to prove that the characters of $H(W)$ take their values in a specific field (see \cite{MalleCharactersHecke}). Note that the validity of this conjecture implies that $H(W) \otimes_R F$ is isomorphic to the group algebra $FW$, where $F$ is an algebraic closure field of the field of fractions of $R$ (see \cite{MarinBMRFreenessConjecture}).\\

Conjecture \ref{ConjectureOfLibertyBMR} is known to hold for real reflection groups (see Lemma 4.4.3 of \cite{GeckPfeifferBook}). Thanks to Ariki and Ariki-Koike (see \cite{ArikiHecke} and \cite{ArikiKoikeHecke}), the BMR freeness conjecture holds for the infinite series $G(de, e, n)$. Marin proved the conjecture for $G_4$, $G_{25}$, $G_{26}$, and $G_{32}$ in \cite{MarinCubicHeckeAlgebra5Strands} and \cite{MarinBMRFreenessConjecture}. Marin and Pfeiffer proved it for $G_{12}$, $G_{22}$, $G_{24}$, $G_{27}$, $G_{29}$, $G_{31}$, $G_{33}$, and $G_{34}$ in \cite{MarinPfeifferBMRFreeness}. In her PhD thesis and in the article that followed (see \cite{ChavliThesis} and \cite{ChavliArticlePhD}), Chavli proved the validity of this conjecture for $G_{5}$, $G_{6}$, $\cdots$, $G_{16}$. Recently, Marin proved the conjecture for $G_{20}$ and $G_{21}$ (see \cite{MarinG20G21}) and finally Tsushioka for $G_{17}$, $G_{18}$, and $G_{19}$ (see \cite{TsuchiokaBMRfreenessConjecture}). Therefore, after almost a quarter of a century, a case-by-case proof of this conjecture has been established. Hence we have:

\begin{theorem}\label{PropositionBMRFreenessTheorem}

The Hecke algebra $H(W)$ is a free $R$-module of rank $|W|$.

\end{theorem}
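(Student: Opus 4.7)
The plan is to proceed by a case-by-case verification aligned with the Shephard--Todd classification recalled in Proposition \ref{PropositionClassificationShephardTodd}. First I would reduce to the irreducible case: since every finite complex reflection group decomposes as a direct product of irreducible ones, and the associated complex braid group and Hecke algebra factor compatibly with this decomposition (the hyperplane complement is a product, so $\pi_1$ factors, and the defining quadratic relations are indexed by conjugacy classes of distinguished reflections, each lying in a single factor), freeness of rank $|W|$ for $H(W)$ would follow from freeness of each irreducible factor by tensor product over the appropriate coefficient rings.

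Next I would split the irreducible case into the infinite series $G(de,e,n)$ and the $34$ exceptional groups $G_4,\dots,G_{37}$. For the infinite series, the plan is to invoke the construction of Ariki and Ariki--Koike (\cite{ArikiHecke}, \cite{ArikiKoikeHecke}): one exhibits an explicit spanning set of size $|G(de,e,n)|$ coming from a cellular/combinatorial basis built out of the generators $u_d$, $t_{de}$, $s_1,\dots,s_{n-1}$, and verifies linear independence either by a faithful representation of dimension large enough to separate the basis or by producing a section from the group algebra. The finite Coxeter cases (types $A$, $B$, $D$, $I_2(e)$, and the exceptional $H$, $F$, $E$ series) are covered either as special cases of $G(de,e,n)$ or by the classical Iwahori--Hecke argument of \cite{GeckPfeifferBook}, which uses the existence of reduced expressions and Matsumoto's theorem to build a basis indexed by the elements of $W$.

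For the exceptional groups, the plan is to assemble the case-by-case results cited in the excerpt: Marin's treatment of $G_4$, $G_{25}$, $G_{26}$, $G_{32}$ \cite{MarinCubicHeckeAlgebra5Strands,MarinBMRFreenessConjecture}; Marin--Pfeiffer for $G_{12}$, $G_{22}$, $G_{24}$, $G_{27}$, $G_{29}$, $G_{31}$, $G_{33}$, $G_{34}$ \cite{MarinPfeifferBMRFreeness}; Chavli for $G_5,\dots,G_{16}$ \cite{ChavliThesis,ChavliArticlePhD}; Marin for $G_{20}$, $G_{21}$ \cite{MarinG20G21}; and Tsuchioka for $G_{17}$, $G_{18}$, $G_{19}$ \cite{TsuchiokaBMRfreenessConjecture}. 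The common strategy in each of these references is to exhibit an explicit spanning family $\mathcal{B}$ of size $|W|$ in $H(W)$, and then certify $|\mathcal{B}| = \mathrm{rk}_R H(W)$ by producing a faithful representation or by a specialization argument showing that after reduction modulo a maximal ideal the family maps to a basis of $\mathbb{C} W$.

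The main obstacle in this global argument is the exceptional groups of large order, where neither a uniform Garside-theoretic approach nor a uniform Ariki--Koike-type combinatorial model is available. The difficulty is twofold: constructing a candidate spanning set of exactly $|W|$ elements (for which the rewriting rules from the defining quadratic and braid relations in $B$ must be shown to terminate), and proving that no linear relation among these elements holds. Historically this required explicit computer-aided computations with non-commutative Gröbner bases (as in \cite{MarinCubicHeckeAlgebra5Strands}) and the construction of ad hoc representations of the right dimension, and it is precisely this step that resisted a conceptual proof and required the quarter-century effort summarized above.
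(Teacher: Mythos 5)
Your proposal matches the paper's treatment: the theorem is recorded there as the outcome of the case-by-case verification following the Shephard--Todd classification, citing Ariki and Ariki--Koike for the infinite series $G(de,e,n)$, the classical Iwahori--Hecke argument for real reflection groups, and Marin, Marin--Pfeiffer, Chavli, and Tsuchioka for the exceptional groups, which is exactly the assembly you describe. The extra scaffolding you add (reduction to the irreducible case and remarks on how the cited references certify their spanning sets) is consistent with the literature and does not change the route.
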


The reason for recalling the BMR freeness conjecture is that in Chapter \ref{ChapterHeckeAlgebras}, a new proof of this conjecture is provided for the general series of complex reflection groups of type $G(e,e,n)$ and $G(d,1,n)$.

\section{Motivations and main results}

It is widely believed that the complex braid groups share similar properties with Artin-Tits groups. One would like to extend what is known for Artin-Tits groups to the complex braid groups. Denote by $B(de,e,n)$ the complex braid group associated to the complex reflection group $G(de,e,n)$.\\

In this PhD thesis, we are interested in constructing interval Garside structures for $B(e,e,n)$ as well as explicit Krammer's representations that would probably be helpful to prove that the groups $B(e,e,n)$ are linear. The next subsection gives an idea about the interval structures obtained for $B(e,e,n)$. In the second subsection, we talk about the construction of a Hecke algebra associated to each complex reflection group of type $G(e,e,n)$ and $G(d,1,n)$ and its basis obtained from reduced words in the corresponding complex reflection group. The last subsection is about the construction of Krammer's representations for $B(e,e,n)$.

\subsection{Garside monoids for $B(e,e,n)$}

Brieskorn-Saito \cite{GarsideBrieskornSaito} and Deligne \cite{GarsideDeligne} obtained nice combinatorial results for finite-type Artin-Tits groups that generalize some results of Garside in \cite{GarsideThesis} and \cite{GarsideArt}. They described an Artin-Tits group as the group of fractions of a monoid in which divisibility behaves nicely, and there exists a fundamental element whose set of divisors encodes the entire structure of the group. In modern terminology, these objects are called Garside monoids and groups. Many problems like the Word and Conjugacy problems can be solved in Garside structures with some additional properties about the corresponding Garside group. For a detailed study about Garside monoids and groups, see \cite{GarsideBookPatrick}. Unfortunately, the presentations of Broué, Malle, and Rouquier for $B(de,e,n)$ (except for $B(d,1,n)$) do not give rise to Garside structures as it is shown in \cite{CorranPhD} and \cite{CorranLeeLee}. Therefore, it is interesting to search for (possibly various) Garside structures for these groups. For instance, it is shown by Bessis and Corran \cite{BessisCorranNonCrossingPartitions} in 2006, and by Corran and Picantin \cite{CorranPicantin} in 2009 that $B(e,e,n)$ admits Garside structures. It is also shown in \cite{CorranLeeLee} that $B(de,e,n)$ admits quasi-Garside structures (the set of divisors of the fundamental element is infinite). We are interested in constructing Garside structures for $B(e,e,n)$ that derive from intervals in the associated complex reflection group $G(e,e,n)$.\\

We use the presentation of Corran and Picantin for $G(e,e,n)$ obtained in \cite{CorranPicantin}. The generators and relations of this presentation can be described by the following diagram. Details about this presentation can be found in the next chapter. In \cite{CorranPicantin}, it is shown that if we remove the quadratic relations of this presentation, we get a presentation of the complex braid group $B(e,e,n)$ that we call the presentation of Corran and Picantin of $B(e,e,n)$.

\begin{figure}[H]
\begin{center}
\begin{tikzpicture}[yscale=0.8,xscale=1,rotate=30]

\draw[thick,dashed] (0,0) ellipse (2cm and 1cm);

\node[draw, shape=circle, fill=white, label=above:$\mathbf{t}_0$] (t0) at (0,-1) {\begin{tiny} 2 \end{tiny}};
\node[draw, shape=circle, fill=white, label=above:$\mathbf{t}_1$] (t1) at (1,-0.8) {\begin{tiny} 2 \end{tiny}};
\node[draw, shape=circle, fill=white, label=right:$\mathbf{t}_2$] (t2) at (2,0) {\begin{tiny} 2 \end{tiny}};
\node[draw, shape=circle, fill=white, label=above:$\mathbf{t}_i$] (ti) at (0,1) {\begin{tiny} 2 \end{tiny}};
\node[draw, shape=circle, fill=white, label=above:$\mathbf{t}_{e-1}$] (te-1) at (-1,-0.8) {\begin{tiny} 2 \end{tiny}};

\draw[thick,-] (0,-2) arc (-180:-90:3);

\node[draw, shape=circle, fill=white, label=below left:$\mathbf{s}_3$] (s3) at (0,-2) {\begin{tiny} 2 \end{tiny}};

\draw[thick,-] (t0) to (s3);
\draw[thick,-,bend left] (t1) to (s3);
\draw[thick,-,bend left] (t2) to (s3);
\draw[thick,-,bend left] (s3) to (te-1);

\node[draw, shape=circle, fill=white, label=below:$\mathbf{s}_4$] (s4) at (0.15,-3) {\begin{tiny} 2 \end{tiny}};
\node[draw, shape=circle, fill=white, label=below:$\mathbf{s}_{n-1}$] (sn-1) at (2.2,-4.9) {\begin{tiny} 2 \end{tiny}};
\node[draw, shape=circle, fill=white, label=right:$\mathbf{s}_{n}$] (sn) at (3,-5) {\begin{tiny} 2 \end{tiny}};

\node[fill=white] () at (1,-4.285) {$\cdots$};
\end{tikzpicture}
\end{center}
\caption{Diagram for the presentation of Corran-Picantin of $G(e,e,n)$.}
\end{figure}

The first step is to define geodesic normal forms (words of minimal length) for all elements of $G(e,e,n)$ over the generating set of the presentation of Corran and Picantin. This is the main result of Section 2.1 of Chapter \ref{ChapterNormalFormsG(de,e,n)} where the geodesic normal forms are defined by an algorithm using the matrix form of the elements of $G(e,e,n)$. Moreover, these normal forms are generalized to the case of $G(de,e,n)$ in Section 2.2 of the same chapter.\\

We will provide an idea about the interval structures for $B(e,e,n)$ obtained in \mbox{Chapter \ref{ChapterIntervalGarside}} that follows essentially \cite{GeorgesNeaimeIntervals}. First, we equip $G(e,e,n)$ with a left and a right division. The description of a normal form for all elements of $G(e,e,n)$ allows us to determine the balanced elements (the set of left divisors coincide with the set of right divisors) of maximal length and to characterize their divisors (see Theorem \ref{PropAllBalancedofMaxLength}). We get $e-1$ balanced elements of maximal length which we denote by \mbox{$\lambda$, $\lambda^2$, $\cdots$, $\lambda^{e-1}$.} Suppose $\lambda^k$ is a balanced element for $1 \leq k \leq e-1$ and $[1,\lambda^k]$ is the interval of the divisors of $\lambda^k$ in the complex reflection group. We manage to prove that the interval $[1,\lambda^k]$ is a lattice for both left and right divisions (see Corollary \ref{CorBothPosetsLattices}). This is done by proving a property in Proposition \ref{PropMatsumoto} for each interval $[1,\lambda^k]$ that is similar to Matsumoto's property for real reflection groups, see \cite{MatsumotoTheorem}. By a theorem of Michel, see Section 10 of \cite{CorseJeanMichel}, we obtain a Garside monoid $M([1,\lambda^k])$ attached to each interval $[1,\lambda^k]$. Moreover, we prove that $M([1,\lambda^k])$ is isomorphic to a monoid that we denote by $B^{\oplus k}(e,e,n)$ and that is defined by a presentation similar to the presentation of Corran and Picantin, see Definition \ref{DefofB+keen}. By $B^{(k)}(e,e,n)$, we denote its group of fractions. One of the important results obtained is the following (see Theorem \ref{TheoremBkIsomBiff}).

\begin{center}
$B^{(k)}(e,e,n)$ is isomorphic to $B(e,e,n)$ if and only if \mbox{$k\wedge e=1$.}
\end{center}

When $k \wedge e \neq 1$, each group $B^{(k)}(e,e,n)$ is described as an amalgamated product of $k \wedge e$ copies of the complex braid group $B(e',e',n)$ with $e'=e/e \wedge k$, over a common subgroup which is the Artin-Tits group $B(2,1,n-1)$. Furthermore, we compute the second integral homology group of $B^{(k)}(e,e,n)$ using the Dehornoy-Lafont complexes \cite{DehLafHomologyofGaussianGroups} and the method of \cite{CalMarHomologyComputations} in order to deduce that $B^{(k)}(e,e,n)$ is not isomorphic to $B(e,e,n)$ when $k \wedge e \neq 1$. This is done in Section 3.4 of Chapter \ref{ChapterIntervalGarside}.\\

The Garside monoids $B^{\oplus k}(e,e,n)$ have been implemented using the development version of the CHEVIE package for GAP3 (see \cite{CHEVIEJMichel} and \cite{GAP3CPMichelNeaime}). In Appendix A, we explain this implementation and provide an algorithm that computes the integral homology groups of $B^{(k)}(e,e,n)$ by using their Garside structures.

\subsection{Hecke algebras for $G(e,e,n)$ and $G(d,1,n)$}

Ariki and Koike \cite{ArikiKoikeHecke} proved Conjecture \ref{ConjectureOfLibertyBMR} for the case of $G(d,1,n)$. Ariki defined in \cite{ArikiHecke} a Hecke algebra for $G(de,e,n)$ by a presentation with generators and relations. He also proved that it is a free module of rank $|G(de,e,n)|$. The Hecke algebra defined by Ariki is isomorphic to the Hecke algebra defined by Brou\'e, Malle, and Rouquier in \cite{BMR} for $G(de,e,n)$ (the details why this is true can be found in Appendix A.2 of \cite{RostamArikiBMR}). Hence one gets a proof of Conjecture \ref{ConjectureOfLibertyBMR} for the case of $G(de,e,n)$.\\

In Chapter \ref{ChapterHeckeAlgebras}, we define the Hecke algebra $H(e,e,n)$ associated to $G(e,e,n)$ as the quotient of the group algebra $R_0(B(e,e,n))$ by some polynomial relations, where $R_0$ is a polynomial ring. It is also described by a presentation with generators and relations by using the presentation of Corran and Picantin of $B(e,e,n)$. Similarly, we define the Hecke algebra $H(d,1,n)$ associated to $G(d,1,n)$ over a polynomial ring $R_0$ and describe it by a presentation with generators and relations by using the presentation of the complex braid group $B(d,1,n)$.\\

%Next, we construct bases for these algebras that are different from those defined by Ariki and Ariki-Koike in \cite{ArikiHecke} and \cite{ArikiKoikeHecke}. This shows why this new presentation for the Hecke algebra is probably more natural than the one defined by Ariki where the braid relations are related to those of the presentation of Brou\'e, Malle, and Rouquier of $B(de,e,n)$.\\

Next, we use the geodesic normal forms obtained in Chapter \ref{ChapterNormalFormsG(de,e,n)} for $G(e,e,n)$ and $G(d,1,n)$ in order to provide a nice description for a basis of the corresponding Hecke algebra that is probably simpler than the one obtained by Ariki for the case of $G(e,e,n)$ and by Ariki-Koike for the case of $G(d,1,n)$. Note that a basis for the Hecke algebra associated with $G(d,1,n)$ is also given in \cite{BremkeMalle}. Getting a basis for the Hecke algebra from geodesic normal forms is not very surprising since a spanning set for the Hecke algebra in the case of real reflection groups is made from reduced word representatives of the elements of the Coxeter group (see Lemma 4.4.3 of \cite{GeckPfeifferBook}).\\

By Proposition 2.3 $(ii)$ of \cite{MarinG20G21}, the construction of these bases provide a new proof of Theorem \ref{PropositionBMRFreenessTheorem} for the general series of complex reflection groups of type $G(e,e,n)$ and $G(d,1,n)$. We use Proposition 2.3 $(i)$ of \cite{MarinG20G21} to reduce our proof to find a spanning set of the Hecke algebra over $R_0$ of cardinal equal to the size of the corresponding complex reflection group. We get Theorems \ref{TheoremNewBasis} and \ref{TheoremNewBasisH(d,1,n)} that are the main results of Chapter \ref{ChapterHeckeAlgebras}.

\subsection{Krammer's representations for $B(e,e,n)$}

Both Bigelow \cite{BigelowBraidGroupsLinear} and Krammer \cite{KrammerGroupB4, KrammerBraidGroupsLinear} proved that the classical braid group $B_n$ is linear, that is there exists a faithful linear representation of finite dimension of the classical braid group. This result has been extended to all Artin-Tits groups associated to finite Coxeter groups by Cohen and Wales \cite{CohenWalesLinearity} and Digne \cite{DigneLinearity} by generalizing Krammer's representation as well as Krammer’s faithfulness proof. Paris proved in \cite{ParisArtinMonoidInjectGroup} that Artin-Tits monoids inject in their groups (see Definition \ref{DefinitionArtinTits}). This is done by constructing a faithful linear (infinite dimentional) representation for Artin-Tits monoids. The construction of the representation and the proof of the injectivity are based on a generalization of the methods used by Cohen and Wales, Digne, and Krammer. Moreover, a simple proof of the faithfulness of these representations was given by H\'ee in \cite{HeePreuveSimpleFidelite}. Note that for the case of the classical braid group $B_n$, the representations of $B_n$ occur in earlier work of Lawrence \cite{LawrenceRepresentation}.  \\

Consider a complex braid group $B(de,e,n)$. For $d > 1$, $e \geq 1$, and $n \geq 2$, it is known that the group $B(de,e,n)$ injects in the Artin-Tits group $B(de,1,n)$, see \cite{BMR}. Since $B(de,1,n)$ is linear, we have $B(de,e,n)$ is linear for $d > 1$, $e \geq 1$, and $n \geq 2$. Recall that $B(1,1,n)$ is the classical braid group, $B(2,2,n)$ is an Artin-Tits group, and $B(e,e,2)$ is the Artin-Tits group associated to the dihedral group $I_2(e)$. All of them are then linear. The only remaining cases in the infinite series are when $d=1$, $e > 2$, and $n > 2$. This arises the following question: \begin{center} Is $B(e,e,n)$ linear for all $e > 2$ and $n > 2$?\end{center}

A positive answer for this question is conjectured to be true by Marin in \cite{MarinLinearity} where a generalization of the Krammer's representation has been constructed for the case of $2$-reflection groups. The representation is defined analytically over the field of (formal) Laurent series by the monodromy of some differential forms. It has been generalized by Chen in \cite{ChenFlatConnectionsBrauerAlgebras} to arbitrary reflection groups.\\

Zinno \cite{ZinnoBMW} observed that the Krammer's representation of the classical braid group $B_n$ factors through the BMW (Birman-Murakami-Wenzl) algebra introduced in \cite{BirmanWenzlBMW, MurakamiBMW}. In \cite{CohenGijsbersWalesBMW}, Cohen, Gijsbers, and Wales defined a BMW algebra for Artin-Tits groups of type ADE and showed that the faithful representation constructed by Cohen and Wales in \cite{CohenWalesLinearity} factors through their BMW algebra. In \cite{ChenBMW2017}, Chen defined a BMW algebra for the dihedral groups, based on which he defined a BMW algebra for any Coxeter group extending his previous work in \cite{ChenOldBMW}. He also found a representation of the Artin-Tits groups associated to the dihedral groups. He conjectured that this representation is isomorphic to the generalized Krammer's representation defined by Marin in \cite{MarinLinearity} for the case of the dihedral groups.\\

Inspired by all the previous works, we define a BMW algebra for type $(e,e,n)$ that we denote by BMW$(e,e,n)$, see Definitions \ref{DefinitionBMWB(e,e,n)eOdd} and \ref{DefinitionBMWB(e,e,n)eEven} of Chapter 5. These definitions are inspired from the monoid of Corran and Picantin of $B(e,e,n)$ and from the definition of the BMW algebras for the dihedral groups given by Chen in \cite{ChenBMW2017} and the definition of the BMW algebras of type ADE given by Cohen, Gijsbers, and Wales in \cite{CohenGijsbersWalesBMW}. Moreover,  we describe BMW$(e,e,n)$ as a deformation of a certain algebra that we call the Brauer algebra of type $(e,e,n)$ and we denote it by Br$(e,e,n)$, see Definitions \ref{DefinitionBr(e,e,n)eodd} and \ref{DefinitionBr(e,e,n)eeven} of Chapter 5. We prove in Proposition \ref{PropositionChenIsomBr(e,e,3)} that Br$(e,e,3)$ is isomorphic to the Brauer-Chen algebra defined by Chen in \cite{ChenFlatConnectionsBrauerAlgebras} when $e$ is odd.\\

We are able to construct explicit linear (finite dimensional and absolutely irreducible) representations that are good candidates to be called the Krammer's representations for the complex braid groups $B(3,3,3)$ and $B(4,4,3)$. They are irreducible representations of the BMW algebras BMW$(3,3,3)$ and BMW$(4,4,3)$, respectively. We use the package GBNP (\cite{GBNP}) of GAP4 and the platform MATRICS (\cite{MATRICS}) for our heuristic computations. In Chapter 5, we explain how to construct these representations. We conjecture that they are faithful, see Conjecture \ref{ConjectureRho3and4faithful}.\\

Our method uses the computation of a Gröbner basis from the list of polynomials that describe the relations of BMW$(e,e,n)$. These computations are very heavy for $e \geq 5$ when $n = 3$. However, we were able to compute the dimension of BMW$(5,5,3)$ and BMW$(6,6,3)$ over a finite field for many specializations of the parameters of the BMW algebra. This enables us to propose Conjecture \ref{ConjectureStructureBMW} about the structure and dimension of BMW$(e,e,3)$. In Appendix B, we provide the algorithms that enable us to construct the Krammer's representations and to propose Conjecture \ref{ConjectureStructureBMW}.

\chapter{Geodesic normal forms for $G(de,e,n)$}\label{ChapterNormalFormsG(de,e,n)}
 
\minitoc
 
\bigskip 
 
The aim of this chapter is to define geodesic normal forms for the elements of the general series of complex reflection groups $G(de,e,n)$. This requires the elaboration of a combinatorial technique to determine a reduced expression decomposition of an element over the generating set of the presentation of Corran-Picantin \cite{CorranPicantin} in the case of  $G(e,e,n)$ and Corran-Lee-Lee \cite{CorranLeeLee} in the case of $G(de,e,n)$ with $d > 1$. We start by studying the case of $G(e,e,n)$ and then the general case of $G(de,e,n)$. The reason for studying the case of $G(e,e,n)$ separately is that we use it to prove the general case and we will essentially use the results of the case of $G(e,e,n)$ in the next chapters. We therefore wanted to separate it from the other cases of the general series of complex reflection groups.

\section{Geodesic normal forms for $G(e,e,n)$}

Recall that $G(e,e,n)$ is the group of $n \times n$ monomial matrices where all nonzero entries are $e$-th roots of unity and such that their product is equal to $1$. We start by recalling the presentation of Corran-Picantin for $G(e,e,n)$. Then, we define an algorithm that produces a word representative for each element of $G(e,e,n)$ over the generating set of the presentation of Corran-Picantin. Finally, we prove that these word representatives are geodesic. Hence we get geodesic normal forms for the groups $G(e,e,n)$. 

\subsection{Presentation for $G(e,e,n)$}

Let $e \geq 1$ and $n > 1$. We recall the presentation of the complex reflection group $G(e,e,n)$ given in \cite{CorranPicantin}.

\begin{definition}\label{DefinitionPresentationCorranPicantin}

The complex reflection group $G(e,e,n)$ can be defined by a presentation with set of generators: $\textbf{X} = \{\textbf{t}_i \ |\ i \in \mathbb{Z}/e\mathbb{Z}\} \cup \{\textbf{s}_3, \textbf{s}_4, \cdots, \textbf{s}_n \}$ and relations as follows.

\begin{enumerate}

\item $\textbf{t}_i \textbf{t}_{i-1} = \textbf{t}_j \textbf{t}_{j-1}$ for $i, j \in \mathbb{Z}/e\mathbb{Z}$,
\item $\textbf{t}_i \textbf{s}_3 \textbf{t}_i = \textbf{s}_3 \textbf{t}_i \textbf{s}_3$ for $i \in \mathbb{Z}/e\mathbb{Z}$,
\item $\textbf{s}_j \textbf{t}_i = \textbf{t}_i \textbf{s}_j$ for $i \in \mathbb{Z}/e\mathbb{Z}$ and $4 \leq j \leq n$,
\item $\textbf{s}_i \textbf{s}_{i+1} \textbf{s}_i = \textbf{s}_{i+1} \textbf{s}_i \textbf{s}_{i+1}$ for $3 \leq i \leq n-1$, 
\item $\textbf{s}_i \textbf{s}_j = \textbf{s}_j \textbf{s}_i$ for $|i-j| > 1$, and
\item $\textbf{t}_i^2=1$ for $i \in \mathbb{Z}/e\mathbb{Z}$ and $\textbf{s}_j^2=1$ for $3 \leq j \leq n$.

\end{enumerate}

\end{definition}

The matrices in $G(e,e,n)$ that correspond to the set of generators $\mathbf{X}$ of this presentation are given by  $\mathbf{t}_i \longmapsto t_i:= \begin{pmatrix}

0 & \zeta_{e}^{-i} & 0\\
\zeta_{e}^{i} & 0 & 0\\
0 & 0 & I_{n-2}\\

\end{pmatrix}$ \mbox{for $0 \leq i \leq e-1$}, and $\mathbf{s}_j \longmapsto s_j:= \begin{pmatrix}

I_{j-2} & 0 & 0 & 0\\
0 & 0 & 1 & 0\\
0 & 1 & 0 & 0\\
0 & 0 & 0 & I_{n-j}\\

\end{pmatrix}$ for $3 \leq j \leq n$. To avoid confusion, we use normal letters for matrices and bold letters for words over $\mathbf{X}$. Denote by $X$ the set $\{t_0,t_1, \cdots, t_{e-1},s_3, \cdots, s_n\}$.\\

This presentation can be described by the following diagram. The dashed circle describes Relation $1$ of Definition \ref{DefinitionPresentationCorranPicantin}. The other edges used to describe all the other relations follow the standard conventions for Coxeter groups.

\begin{figure}[H]
\begin{center}
\begin{tikzpicture}[yscale=0.8,xscale=1,rotate=30]

\draw[thick,dashed] (0,0) ellipse (2cm and 1cm);

\node[draw, shape=circle, fill=white, label=above:$\mathbf{t}_0$] (t0) at (0,-1) {\begin{tiny} 2 \end{tiny}};
\node[draw, shape=circle, fill=white, label=above:$\mathbf{t}_1$] (t1) at (1,-0.8) {\begin{tiny} 2 \end{tiny}};
\node[draw, shape=circle, fill=white, label=right:$\mathbf{t}_2$] (t2) at (2,0) {\begin{tiny} 2 \end{tiny}};
\node[draw, shape=circle, fill=white, label=above:$\mathbf{t}_i$] (ti) at (0,1) {\begin{tiny} 2 \end{tiny}};
\node[draw, shape=circle, fill=white, label=above:$\mathbf{t}_{e-1}$] (te-1) at (-1,-0.8) {\begin{tiny} 2 \end{tiny}};

\draw[thick,-] (0,-2) arc (-180:-90:3);

\node[draw, shape=circle, fill=white, label=below left:$\mathbf{s}_3$] (s3) at (0,-2) {\begin{tiny} 2 \end{tiny}};

\draw[thick,-] (t0) to (s3);
\draw[thick,-,bend left] (t1) to (s3);
\draw[thick,-,bend left] (t2) to (s3);
\draw[thick,-,bend left] (s3) to (te-1);

\node[draw, shape=circle, fill=white, label=below:$\mathbf{s}_4$] (s4) at (0.15,-3) {\begin{tiny} 2 \end{tiny}};
\node[draw, shape=circle, fill=white, label=below:$\mathbf{s}_{n-1}$] (sn-1) at (2.2,-4.9) {\begin{tiny} 2 \end{tiny}};
\node[draw, shape=circle, fill=white, label=right:$\mathbf{s}_{n}$] (sn) at (3,-5) {\begin{tiny} 2 \end{tiny}};

\node[fill=white] () at (1,-4.285) {$\cdots$};
\end{tikzpicture}
\end{center}
\caption{Diagram for the presentation of Corran-Picantin of $G(e,e,n)$.}\label{FigureDiagramCPG(e,e,n)}
\end{figure}

\begin{remark}\

\begin{enumerate}

\item \mbox{For $e=1$ and $n \geq 2$, we get the classical presentation of the symmetric group $S_n$.}
\item For $e=2$ and $n \geq 2$, we get the classical presentation of the Coxeter group of type $D_n$.
\item For $e \geq 2$ and $n=2$, we get the dual presentation of the dihedral group $I_2(e)$, see \cite{PicantinPresentationsDualMonoids}. 

\end{enumerate}

\end{remark}

\begin{remark}\label{RemarkPresB(e,e,n)}

In their paper \cite{CorranPicantin}, Corran and Picantin showed that if we remove the quadratic relations (Relations 6 of Definition \ref{DefinitionPresentationCorranPicantin}) from the presentation of $G(e,e,n)$, we get a presentation of the complex braid group $B(e,e,n)$. They also proved that this presentation provides a Garside structure for $B(e,e,n)$. The notion of Garside structures will be developed in the next chapter. 

\end{remark}

We set the following convention.

\begin{convention}\label{ConventionDecreaseIncreaseIndex}

A decreasing-index expression of the form $\mathbf{s}_i \mathbf{s}_{i-1} \cdots \mathbf{s}_{i'}$ is the empty word when $i < i'$ and an increasing-index expression of the form $\mathbf{s}_i \mathbf{s}_{i+1} \cdots \mathbf{s}_{i'}$ is the empty word when $i > i'$. Similarly, in $G(e,e,n)$, a decreasing-index product of the form $s_i s_{i-1} \cdots s_{i'}$ is equal to $I_n$ when $i < i'$ and an increasing-index product of the form $s_i s_{i+1} \cdots s_{i'}$ is equal to $I_n$ when $i > i'$, where $I_n$ is the identity $n \times n$ matrix.

\end{convention}

\subsection{Minimal word representatives}

Recall that an element $\boldsymbol{w} \in \mathbf{X}^{*}$ is called a word over $\mathbf{X}$. We denote by $\boldsymbol{\ell}(\boldsymbol{w})$ the length over $\mathbf{X}$ of the word $\boldsymbol{w}$.

\begin{definition}\label{def.Slength.reducedword}
Let $w$ be an element of $G(e,e,n)$.
We define $\ell(w)$ to be the minimal word length $\boldsymbol{\ell}(\boldsymbol{w})$ of a word $\boldsymbol{w}$ over $\mathbf{X}$ that represents $w$. A reduced expression of $w$ is any word representative of $w$ of word length $\ell(w)$.
\end{definition}

Our aim is to represent each element of $G(e,e,n)$ by a reduced word over $\mathbf{X}$, where $\mathbf{X}$ denotes the set of the generators of the presentation of Corran and Picantin of $G(e,e,n)$. This requires the elaboration of a combinatorial technique to determine a reduced expression decomposition over $\mathbf{X}$ for an element of $G(e,e,n)$.\\

We introduce Algorithm \ref{Algo1} below (see next page) that produces a word $R\!E(w)$ over $\mathbf{X}$ for a given matrix $w$ in $G(e,e,n)$. Note that we use Convention \ref{ConventionDecreaseIncreaseIndex} in the elaboration of the algorithm. Later on, we prove that $R\!E(w)$ is a reduced expression over $\mathbf{X}$ of $w$, see Proposition \ref{PropREwRedExp}.\\

Let $w_n := w \in G(e,e,n)$. For $i$ from $n$ to $2$, the $i$-th step of Algorithm\! \ref{Algo1} transforms the block diagonal matrix $\left(
\begin{array}{c|c}
w_i & 0 \\
\hline
0 & I_{n-i}
\end{array}
\right)$ into a block diagonal matrix $\left(
\begin{array}{c|c}
w_{i-1} & 0 \\
\hline
0 & I_{n-i+1}
\end{array}
\right) \in G(e,e,n)$ with $w_1 = 1$. Actually, for $2 \leq i \leq n$, there exists a unique $c$ with $1 \leq c \leq n$ such that $w_i[i,c] \neq 0$. At each step $i$ of Algorithm\! \ref{Algo1}, if $w_i[i,c] =1$, we shift it into the diagonal position $[i,i]$ by right multiplication by transpositions of the symmetric group $S_n$. If $w_i[i,c] \neq 1$, we shift it into the first column by right multiplication by transpositions, transform it into $1$ by right multiplication by an element of $\{t_0, t_1, \cdots, t_{e-1} \}$, and then shift the $1$ obtained into the diagonal position $[i,i]$.\\ 

\begin{algorithm}[]\label{Algo1}

\SetKwInOut{Input}{Input}\SetKwInOut{Output}{Output}

\noindent\rule{12cm}{0.5pt}

\Input{$w$, a matrix in $G(e,e,n)$, with $e \geq 1$ and $n \geq 2$.}
\Output{$R\!E(w)$, a word over $\mathbf{X}$.}

\noindent\rule{12cm}{0.5pt}

\textbf{Local variables}: $w'$, $R\!E(w)$,  $i$, $U$, $c$, $k$.

\noindent\rule{12cm}{0.5pt}

\textbf{Initialisation}:
$U:=[1,\zeta_e,\zeta_{e}^2,...,\zeta_{e}^{e-1}]$, $s_2 := t_0$, $\mathbf{s}_2 := \mathbf{t}_0$,\\
$R\!E(w) := \varepsilon$: the empty word, $w' := w$.

\noindent\rule{12cm}{0.5pt}

\For{$i$ \textbf{from} $n$ \textbf{down to} $2$} {
	$c:=1$; $k:=0$; \\
	\While{$w'[i,c] = 0$}{$c :=c+1$\; 
	}
	 \textit{\#Then $w'[i,c]$ is the root of unity on the row $i$}\;
	\While{$U[k+1]\neq w'[i,c] $}{$k :=k+1$\;
	}
	\textit{\#Then $w'[i,c] = \zeta_{e}^k$.}\\

		\If{$k \neq 0$}{
		$w' := w's_{c}s_{c-1} \cdots s_{3}s_{2}t_{k}$; \textit{\#Then $w'[i,2] =1$}\;
		$R\!E(w) := \mathbf{t}_k\mathbf{s}_2\mathbf{s}_3 \cdots \mathbf{s}_c R\!E(w)$\;
		$c:=2$\;
	}	
	$w' := w's_{c+1} \cdots s_{i-1} s_{i}$; \textit{\#Then $w'[i,i] = 1$}\;
	$R\!E(w) := \mathbf{s}_i \mathbf{s}_{i-1} \cdots \mathbf{s}_{c+1} R\!E(w)$\;
}
\textbf{Return} $R\!E(w)$;

\noindent\rule{12cm}{0.5pt}

\caption{A word over $\mathbf{X}$ corresponding to an element $w \in G(e,e,n)$.}
\end{algorithm}

We provide two examples in order to better understand Algorithm \ref{Algo1}. The first one is for an element $w$ of $G(3,3,4)$ and the second example is for an element $w$ of $G(2,2,4)$, that is the Coxeter group of type $D_4$. At each step, we indicate the values of $i$, $k$, and $c$ such that $w_i[i,c] = \zeta_e^k$.

\begin{example}\label{examp algo NF}

We apply Algorithm\! \ref{Algo1} to $w := \begin{pmatrix}

0 & 0 & 0 & 1\\
0 & \zeta_3^2 & 0 & 0\\
0 & 0 & \zeta_3 & 0\\
1 & 0 & 0 & 0\\

\end{pmatrix}$ $\in G(3,3,4)$.\\
Step $1$ $(i=4, k=0, c=1)$: $w':=ws_2s_3s_4=\begin{pmatrix}

0 & 0 & 1 & 0\\
\zeta_3^2 & 0 & 0 & 0\\
0 & \boxed{\zeta_3} & 0 & 0\\
0 & 0 & 0 & \mathbf{1}\\

\end{pmatrix}$.\\
Step $2$ $(i=3, k=1, c=2)$: $w' := w's_2 = \begin{pmatrix}

0 & 0 & 1 & 0\\
0 & \zeta_3^2 & 0 & 0\\
\zeta_3 & 0 & 0 & 0\\
0 & 0 & 0 & 1\\

\end{pmatrix}$,\\
then $w' := w't_1 = \begin{pmatrix}

0 & 0 & 1 & 0\\
1 & 0 & 0 & 0\\
0 & 1 & 0 & 0\\
0 & 0 & 0 & 1\\

\end{pmatrix}$, then $w':=w's_3 = \begin{pmatrix}

0 & 1 & 0 & 0\\
\boxed{1} & 0 & 0 & 0\\
0 & 0 & \mathbf{1} & 0\\
0 & 0 & 0 & 1\\

\end{pmatrix}$.\\
Step $3$ $(i=2, k=0, c=1)$: $w' := w's_2 = I_{4}$.\\
Hence $R\!E(w) = \mathbf{s}_2 \mathbf{s}_3 \mathbf{t}_1 \mathbf{s}_2 \mathbf{s}_4 \mathbf{s}_3 \mathbf{s}_2$. Recall that $\mathbf{s_2} = \mathbf{t}_0$. Thus, $R\!E(w) = \mathbf{t}_0 \mathbf{s}_3 \mathbf{t}_1 \mathbf{t}_0 \mathbf{s}_4 \mathbf{s}_3 \mathbf{t}_0$.\\

\end{example}

\begin{example}

We apply Algorithm\! \ref{Algo1} to $w := \begin{pmatrix}

0 & 1 & 0 & 0\\
0 & 0 & 0 & -1\\
0 & 0 & 1 & 0\\
-1 & 0 & 0 & 0\\

\end{pmatrix}$ $\in G(2,2,4)$.\\
Step $1$ $(i=4, k=1, c=1)$: $w':=wt_1=\begin{pmatrix}

-1 & 0 & 0 & 0\\
0 & 0 & 0 & -1\\
0 & 0 & 1 & 0\\
0 & 1 & 0 & 0\\

\end{pmatrix}$,\\ then $w':=w' s_3s_4=\begin{pmatrix}

-1 & 0 & 0 & 0\\
0 & 0 & -1 & 0\\
0 & 1 & 0 & 0\\
0 & 0 & 0 & 1\\

\end{pmatrix}$,\\
Step $2$ $(i=3, k=0, c=2)$: $w' := w's_3 = \begin{pmatrix}

-1 & 0 & 0 & 0\\
0 & -1 & 0 & 0\\
0 & 0 & 1 & 0\\
0 & 0 & 0 & 1\\

\end{pmatrix}$.\\
Step $3$ $(i=2, k=1, c=2)$: $w' := w's_2 = \begin{pmatrix}

0 & -1 & 0 & 0\\
-1 & 0 & 0 & 0\\
0 & 0 & 1 & 0\\
0 & 0 & 0 & 1\\

\end{pmatrix}$,\\ then $w' := w' t_1 = I_4$.\\
Hence $R\!E(w) = \mathbf{t}_1 \mathbf{s}_2 \mathbf{s}_3 \mathbf{s}_4 \mathbf{s}_3 \mathbf{t}_1 = \mathbf{t}_1 \mathbf{t}_0 \mathbf{s}_3 \mathbf{s}_4 \mathbf{s}_3 \mathbf{t}_1$ \emph{(since $\mathbf{s}_2 = \mathbf{t}_0$)}.\\

\end{example}

\begin{remark}

Let $w$ be an element of $G(e,e,2)$, that is the dihedral group $I_2(e)$. Denote by $\varepsilon$ the empty word. By Algorithm \ref{Algo1} and by assuming Convention \ref{ConventionDecreaseIncreaseIndex}, $R\!E(w)$ belongs to $\{\varepsilon, \mathbf{t}_0, \mathbf{t}_1, \cdots, \mathbf{t}_{e-1}, \mathbf{t}_1\mathbf{t}_0, \mathbf{t}_2\mathbf{t}_0, \cdots, \mathbf{t}_{e-1}\mathbf{t}_0 \}$.

\end{remark}

The next lemma follows directly from Algorithm \ref{Algo1}. 

\begin{lemma}\label{LemmaBlocks}

For $2 \leq i \leq n$, suppose $w_i[i,c] \neq 0$. The block $w_{i-1}$ is obtained by
\begin{itemize}

\item removing the row $i$ and the column $c$ from $w_i$, then by

\item multiplying the first column of the new matrix by $w_i[i,c]$.\\

\end{itemize}

\end{lemma}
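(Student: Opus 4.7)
The strategy is a direct verification: trace the intermediate matrices obtained during step $i$ of Algorithm \ref{Algo1} and compare the resulting top-left block $w_{i-1}$ with the claimed description. The key facts to use are that right multiplication by $s_j$ transposes columns $j-1$ and $j$, while right multiplication by $t_k$ swaps columns $1$ and $2$ and scales the new columns $1$ and $2$ by $\zeta_e^k$ and $\zeta_e^{-k}$ respectively. Set $A := w_i$ and let $k$ be the exponent with $A[i,c] = \zeta_e^k$; I would split into the two cases $k=0$ and $k\neq 0$ handled by the algorithm and check that the uniform conclusion of the lemma holds in each.

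If $k=0$, the algorithm performs only $A \mapsto A\, s_{c+1} s_{c+2} \cdots s_i$, which cyclically shifts columns $c, c+1, \ldots, i$ one step to the left, sending column $c$ of $A$ to column $i$. The unique nonzero entry of row $i$ ends up at position $(i,i)$, so extracting the top-left $(i-1)\times(i-1)$ block is the same as deleting row $i$ and column $c$ of $A$; the prescribed scaling of the first column by $A[i,c]=1$ is then trivial, matching the statement.

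If $k \neq 0$, the full sequence $s_c s_{c-1} \cdots s_2 \, t_k \, s_3 s_4 \cdots s_i$ is applied, and I would track the columns one factor at a time. The block $s_c \cdots s_2$ (empty when $c=1$, by Convention \ref{ConventionDecreaseIncreaseIndex}) moves column $c$ of $A$ to column $1$ and shifts columns $1,\ldots,c-1$ one step to the right; then $t_k$ replaces column $1$ by $\zeta_e^k$ times the preceding column $2$ and column $2$ by $\zeta_e^{-k}$ times the preceding column $1$, so that the entry at $(i,2)$ becomes $\zeta_e^{k}\zeta_e^{-k}=1$. Finally, $s_3 \cdots s_i$ moves column $2$ to column $i$ and shifts columns $3,\ldots,i$ one step to the left. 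Collecting these effects, the final matrix has column $i$ equal to $\zeta_e^{-k} \cdot A[\cdot,c]$ (whose unique nonzero entry is $1$, at row $i$), while the remaining columns are exactly the columns of $A$ with column $c$ removed, the first of these being additionally scaled by $\zeta_e^k$. Deleting row $i$ and column $i$ yields $w_{i-1}$ as described.

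The only delicate point is the boundary case $c=1$, where $s_c \cdots s_2$ is empty: one checks that the new first column of $w_{i-1}$ then arises from column $2$ of $A$ scaled by $\zeta_e^k$, which still agrees with ``delete column $c=1$ of $A$ and scale the new first column by $\zeta_e^k = A[i,1]$''. Since all operations are elementary column manipulations and $A$ is monomial, the verification is routine column bookkeeping and I do not anticipate any substantial obstacle.
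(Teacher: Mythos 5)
Your verification is correct: the column-by-column bookkeeping for the cases $k=0$, $k\neq 0$, and the boundary case $c=1$ matches exactly what Algorithm \ref{Algo1} does, and the paper itself gives no written proof, stating only that the lemma follows directly from the algorithm. So your argument is just the explicit form of the intended (omitted) verification, and it is sound.
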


\begin{example}

Let $w$ be as in Example \ref{examp algo NF}, where $n = 4$. The block $w_{3}$ is obtained by removing the row number $4$ and first column from $w_4 = w$ to obtain $\begin{pmatrix}

0 & 0 & 1\\
\zeta_3^2 & 0 & 0\\
0 & \zeta_3 & 0\\

\end{pmatrix}$, then by multiplying the first column of this matrix by $1$. The same can be said for the other block $w_2$.

\end{example}

\begin{definition}\label{DefREiw}

Let $2 \leq i \leq n$. Denote by $w_i[i,c]$ the unique nonzero entry on the row $i$ with $1 \leq c \leq i$.

\begin{itemize}

\item If $w_{i}[i,c] =1$, we define $R\!E_{i}(w)$ to be the word \\ 
$\mathbf{s}_i \mathbf{s}_{i-1} \cdots \mathbf{s}_{c+1}$ (decreasing-index expression).
\item If $w_{i}[i,c] = \zeta_{e}^{k}$ with $k \neq 0$, we define $R\!E_{i}(w)$ to be the word \\
			\begin{tabular}{ll}
			$\mathbf{s}_i \cdots \mathbf{s}_3 \mathbf{t}_k$ & if $c=1$,\\
			$\mathbf{s}_i \cdots \mathbf{s}_3 \mathbf{t}_k \mathbf{t}_0$  & if $c=2$,\\
			$\mathbf{s}_i \cdots \mathbf{s}_3 \mathbf{t}_k \mathbf{t}_0 \mathbf{s}_3 \cdots \mathbf{s}_c$ & if $c \geq 3$.
			\end{tabular}

\end{itemize}

\end{definition}

Remark that for $3 \leq i \leq n$, the word $R\!E_{i}(w)$ is either the empty word (when $w_{i}[i,i] =1$, see Convention \ref{ConventionDecreaseIncreaseIndex}) or a word that contains $\mathbf{s}_{i}$ necessarily but does not contain any of $\mathbf{s}_{i+1},\mathbf{s}_{i+2}, \cdots , \mathbf{s}_n$. Remark also that for $i = 2$, by using Convention \ref{ConventionDecreaseIncreaseIndex}, we have $R\!E_2(w) \in \{\varepsilon, \mathbf{t}_0, \mathbf{t}_1, \cdots, \mathbf{t}_{e-1}, \mathbf{t}_1\mathbf{t}_0, \cdots, \mathbf{t}_{e-1}\mathbf{t}_0\}$.

\begin{lemma}

We have $R\!E(w) = R\!E_2(w) R\!E_3(w) \cdots R\!E_n(w)$.

\end{lemma}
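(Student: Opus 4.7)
The plan is a direct structural verification: I would show that the word prepended to $R\!E(w)$ during the $i$-th iteration of the main loop of Algorithm \ref{Algo1} coincides exactly with $R\!E_i(w)$ as defined in Definition \ref{DefREiw}, and then observe that since the loop runs from $i=n$ down to $i=2$ and each step \emph{prepends} to $R\!E(w)$, the final content must read $R\!E_2(w) R\!E_3(w) \cdots R\!E_n(w)$.

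First I would set up the bookkeeping by proving, by a downward induction on $i$, that at the start of the $i$-th iteration the working matrix $w'$ has block form $\left(\begin{smallmatrix} w_i & 0 \\ 0 & I_{n-i} \end{smallmatrix}\right)$, where $w_i$ is the block introduced in Lemma \ref{LemmaBlocks}. The base case $i=n$ is immediate since $w' = w = w_n$, and the inductive step is exactly the content of Lemma \ref{LemmaBlocks}: the row/column removal and rescaling of its first column matches the right-multiplications carried out by Algorithm \ref{Algo1}. Consequently the entry $w'[i,c]$ read inside the inner loops of the algorithm equals $w_i[i,c]$, so the two sources of indexing agree.

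Next, a case analysis on the integer $k$ with $w_i[i,c] = \zeta_e^{k}$ matches the prepended factor to Definition \ref{DefREiw}. If $k = 0$, the \textbf{if}-block is skipped and only the final assignment executes, prepending $\mathbf{s}_i \mathbf{s}_{i-1} \cdots \mathbf{s}_{c+1}$, which is precisely the first bullet of Definition \ref{DefREiw} (with Convention \ref{ConventionDecreaseIncreaseIndex} taking care of the degenerate case $c = i$). If $k \neq 0$, the \textbf{if}-block first prepends $\mathbf{t}_k \mathbf{s}_2 \mathbf{s}_3 \cdots \mathbf{s}_c$ and then resets $c := 2$, so the final line appends on the left the factor $\mathbf{s}_i \cdots \mathbf{s}_3$; concatenation gives $\mathbf{s}_i \cdots \mathbf{s}_3 \mathbf{t}_k \mathbf{s}_2 \mathbf{s}_3 \cdots \mathbf{s}_c$. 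Replacing $\mathbf{s}_2$ by $\mathbf{t}_0$ (the initialization of Algorithm \ref{Algo1}) and invoking Convention \ref{ConventionDecreaseIncreaseIndex} in the boundary cases $c = 1$ and $c = 2$ to eliminate the empty right-tail recovers the three subcases of the second bullet of Definition \ref{DefREiw}.

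The last step is purely syntactic. Since each iteration prepends its factor to the left of everything written in later iterations (which correspond to smaller values of the index), and since iterations run from $i=n$ down to $i=2$, the factors stack up from right to left in the order $R\!E_n(w), R\!E_{n-1}(w), \ldots, R\!E_2(w)$, so the final output is $R\!E(w) = R\!E_2(w) R\!E_3(w) \cdots R\!E_n(w)$, as claimed. There is no genuine obstacle in this proof: the only subtlety is ensuring that the conventions in Convention \ref{ConventionDecreaseIncreaseIndex} absorb the degenerate empty factors at the endpoints $c = 1, 2$, which is the reason the three subcases in Definition \ref{DefREiw} can be handled by a single branch of the algorithm.
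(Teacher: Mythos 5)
Your proof is correct and follows essentially the same route as the paper, which simply observes that the output of Algorithm \ref{Algo1} is the concatenation of the words $R\!E_2(w),\dots,R\!E_n(w)$ produced at steps $i=n$ down to $2$. You merely spell out the details the paper leaves implicit (the block-form invariant via Lemma \ref{LemmaBlocks}, the case match with Definition \ref{DefREiw}, and the prepending order), so the argument is the same, just more explicit.
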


\begin{proof}

The output $R\!E(w)$ of Algorithm \ref{Algo1} is a concatenation of the words\\ $R\!E_2(w), R\!E_3(w), \cdots$, and $R\!E_n(w)$ obtained at each step $i$ from $n$ to $2$ of \mbox{Algorithm \ref{Algo1}}.

\end{proof}

\begin{example}

If $w$ is defined as in Example \ref{examp algo NF}, we have\\
$R\!E(w) = \underset{R\!E_{2}(w)}{\underbrace{\mathbf{t}_0}} \hspace{0.2cm} \underset{R\!E_{3}(w)}{\underbrace{\mathbf{s}_3\mathbf{t}_1\mathbf{t}_0}} \hspace{0.2cm} \underset{R\!E_{4}(w)}{\underbrace{ \mathbf{s}_4\mathbf{s}_3\mathbf{t}_0}}$.

\end{example}

\begin{proposition}

The word $R\!E(w)$ given by Algorithm \ref{Algo1} is a word representative over $\mathbf{X}$ of $w \in G(e,e,n)$.

\end{proposition}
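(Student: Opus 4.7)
The plan is to verify two things directly from Algorithm \ref{Algo1}: that the successive right-multiplications reduce the input matrix to $I_n$, and that the word $R\!E(w)$ is assembled so that, as a product of matrices in $X$, it equals $w$.

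First, for each $i$ from $n$ down to $2$, let $M_i$ denote the product (in $G(e,e,n)$) of the generators right-multiplied at iteration $i$, taken in the order they are applied, so that $M_i = s_c s_{c-1}\cdots s_3 s_2 t_k \cdot s_3 s_4\cdots s_i$ when $k\neq 0$ and $M_i = s_{c+1}\cdots s_i$ when $k=0$ (with Convention \ref{ConventionDecreaseIncreaseIndex}). I would show by induction on $n-i$ that when iteration $i$ begins, the working matrix has the block form $w_i \oplus I_{n-i}$ with $w_i \in G(e,e,i)$, and that after iteration $i$ it has the form $w_{i-1}\oplus I_{n-i+1}$ where $w_{i-1}$ is exactly the $(i-1)\times(i-1)$ matrix described by Lemma \ref{LemmaBlocks}. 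The case $k=0$ is an immediate sequence of column transpositions shifting the $1$ in row $i$ from column $c$ to column $i$; the case $k\neq 0$ requires first shifting $\zeta_e^k$ to column $2$ via $s_c\cdots s_3$, then to column $1$ via $s_2 = t_0$, then applying $t_k$ (whose explicit matrix turns $(i,1)$ into $0$ and $(i,2)$ into $1$, while rescaling the first column by $\zeta_e^k$), and finally shifting this $1$ to position $(i,i)$ via $s_3 s_4 \cdots s_i$. Since all multiplied generators act trivially on columns $i+1,\dots,n$, the trailing identity block is preserved. Iterating down to $i=2$ yields $w_1\oplus I_{n-1} = I_n$, hence
\[
w \cdot M_n M_{n-1} \cdots M_2 = I_n, \qquad \text{so} \qquad w = M_2^{-1} M_3^{-1} \cdots M_n^{-1}.
\]

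Second, I would observe that the word prepended to $R\!E(w)$ at iteration $i$ is the literal reversal of the sequence of generators whose product is $M_i$. Because every element of $\mathbf{X}$ has order two (the quadratic relations of Definition \ref{DefinitionPresentationCorranPicantin}), reversing any word representing $M_i$ produces a word representing $M_i^{-1}$. Concretely, in the case $k\neq 0$ the prepended block is $\mathbf{s}_i\mathbf{s}_{i-1}\cdots \mathbf{s}_3 \mathbf{t}_k \mathbf{s}_2 \mathbf{s}_3\cdots \mathbf{s}_c$ (obtained from the two sub-prepends in reverse order of application), which as a matrix product equals $s_i\cdots s_3\, t_k\, s_2 \cdots s_c = M_i^{-1}$; the case $k=0$ is identical and shorter.

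Finally, since at each iteration the new letters are prepended on the left of what was built before, and the iterations run from $i=n$ down to $i=2$, the full word $R\!E(w)$ corresponds, as a product of matrices, to $M_2^{-1} M_3^{-1} \cdots M_n^{-1}$, which is $w$ by the first step. The main obstacle is the explicit matrix bookkeeping required to justify Lemma \ref{LemmaBlocks}'s description of $w_{i-1}$, especially the interaction of the two column transpositions surrounding $t_k$ with the rescaling induced by $t_k$; once that calculation is in hand (or quoted from Lemma \ref{LemmaBlocks}), everything else is a combinatorial unpacking of how the word was built.
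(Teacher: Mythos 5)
Your proposal is correct and follows essentially the same route as the paper: the algorithm right-multiplies $w$ by generators until $I_n$ is reached, so $w$ equals the product of those generators in reverse order (using that every element of $X$ is an involution), and the prepending in the algorithm builds exactly that reversed word. The paper states this in a few lines, quoting Lemma \ref{LemmaBlocks} for the matrix bookkeeping, while you additionally spell out the induction on the block structure; this extra detail is harmless and matches the lemma.
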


\begin{proof}

Algorithm \ref{Algo1} transforms the matrix $w$ into $I_n$ by multiplying it on the right by elements of $X$. We get $wx_1 \cdots x_r = I_n$, where $x_1$, $\cdots$, $x_r$ are elements of $X$. Hence $w = x_r^{-1} \cdots x_1^{-1} = x_r \cdots x_1$ since $x_i^2 = 1$ for all $x_i \in X$. The output $R\!E(w)$ of Algorithm \ref{Algo1} is $R\!E(w) = \mathbf{x}_r \cdots \mathbf{x}_1$. Hence it is a word representative over $\mathbf{X}$ of $w \in G(e,e,n)$.

\end{proof}

The following proposition will prepare us to prove that the output of \mbox{Algorithm \ref{Algo1}} is a reduced expression over $\mathbf{X}$ of a given element $w \in G(e,e,n)$.

\begin{proposition}\label{prop.lengthcomp}

Let $w$ be an element of $G(e,e,n)$. For all $x \in X$, we have\\
$$|\boldsymbol{\ell}(R\!E(xw)) - \boldsymbol{\ell}(R\!E(w))| = 1.$$

\end{proposition}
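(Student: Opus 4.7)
The plan is to prove the identity by induction on $n$, exploiting the factorization $R\!E(w) = R\!E^{(n-1)}(w_{n-1}) \cdot R\!E_n(w)$, where $R\!E^{(n-1)}$ denotes the output of Algorithm \ref{Algo1} applied within $G(e,e,n-1)$ to the block $w_{n-1}$. The base case $n=2$ is the dihedral group $I_2(e) = G(e,e,2)$: its elements fall into three families $I_2, t_j, t_j t_0$ (with $j \neq 0$) having normal-form lengths $0, 1, 2$, and using the dihedral relation $t_i t_{i-1} = t_j t_{j-1}$ to reduce triple products $t_k t_j t_0$, one verifies directly that $|\boldsymbol{\ell}(R\!E(t_k w)) - \boldsymbol{\ell}(R\!E(w))| = 1$ in each resulting subcase.

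For the inductive step, I split the generators of $G(e,e,n)$ into those that fix row $n$ of $w$ and the single generator $s_n$ that does not. If $x \in \{t_0, \ldots, t_{e-1}, s_3, \ldots, s_{n-1}\}$, then row $n$ of $xw$ equals row $n$ of $w$, so Algorithm \ref{Algo1} extracts the same pair $(c_n, \lambda_n)$ at the top step and consequently $R\!E_n(xw) = R\!E_n(w)$. Writing $P_n$ for the projection that drops the last row, one has the intertwining $P_n x = x^{(n-1)} P_n$, where $x^{(n-1)}$ is the analogous generator in $G(e,e,n-1)$; combining this with the description of the block reduction $w_{n-1} = P_n w \, Q_{c_n} D_{\lambda_n}$ (the column-removal $Q_{c_n}$ and column-$1$ scaling $D_{\lambda_n}$ act on the right and commute with left-multiplication by $x^{(n-1)}$ by associativity), one obtains $(xw)_{n-1} = x^{(n-1)} w_{n-1}$. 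The induction hypothesis applied to $(x^{(n-1)}, w_{n-1})$ then finishes this case.

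For $x = s_n$, write $(a, \lambda)$ and $(a', \lambda')$ for the positions and values of the unique nonzero entries in rows $n$ and $n-1$ of $w$, with $a \neq a'$. The key intermediate lemma is that, although $w_{n-1}$ and $(s_n w)_{n-1}$ differ in general, the two-step-reduced blocks always coincide: $w_{n-2} = (s_n w)_{n-2}$. This is checked case-by-case according to whether each of $a, a'$ lies in $\{1, 2\}$ or in $\{3, \ldots, n\}$; in every subcase, both reduction orders remove exactly the same pair of original columns $\{a, a'\}$ of $w$, and the two successive column-$1$ scalings compose to the same total factor. Granting this lemma, the difference $\boldsymbol{\ell}(R\!E(s_n w)) - \boldsymbol{\ell}(R\!E(w))$ collapses to $[\ell_n(s_n w) + \ell_{n-1}(s_n w)] - [\ell_n(w) + \ell_{n-1}(w)]$, where $\ell_i(w)$ equals $i - c_i$ if $\lambda_i = 1$ and $i + c_i - 2$ otherwise. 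Since the parameters $c_{n-1}, \lambda_{n-1}$ for both $w$ and $s_n w$ are explicit functions of $(a, a', \lambda, \lambda')$, a finite enumeration of subcases shows this difference is always $\pm 1$.

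The main obstacle is the verification of the lemma $w_{n-2} = (s_n w)_{n-2}$ when $\{a, a'\}$ meets $\{1, 2\}$: the column-$1$ scaling from the first reduction migrates to a different original column after the column removal, and when the second removed column is again column $1$ of the intermediate block, the two successive scalings compound in a way that a priori depends on the order in which the two rows are processed. A careful bookkeeping of which original column plays the role of column $1$ after each reduction, and of how the scalars compose in each combinatorial subcase, is required to confirm that this apparent order-dependence vanishes. Once the lemma is secured, the concluding length arithmetic is a short verification.
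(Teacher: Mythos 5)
Your proposal is correct, and it takes a genuinely different route from the paper. The paper proves the statement by a direct case analysis for each generator: for $x=s_i$ it observes that only the blocks at positions $i-1,i$ change and compares $\boldsymbol{\ell}(R\!E_{i-1}(w)R\!E_i(w))$ with $\boldsymbol{\ell}(R\!E_{i-1}(xw)R\!E_i(xw))$ explicitly, subcase by subcase, according to whether $c_{i-1}<c_i$ and whether the relevant entries equal $1$; for $x=t_i$ it compares the $2\times 2$ blocks $w_2$ and $(t_iw)_2$ the same way. You instead induct on $n$ via the factorization $R\!E(w)=R\!E^{(n-1)}(w_{n-1})R\!E_n(w)$ and the intertwining $(xw)_{n-1}=x^{(n-1)}w_{n-1}$ for generators fixing row $n$, which correctly disposes of all of $t_0,\dots,t_{e-1},s_3,\dots,s_{n-1}$ at once and reduces everything to the top generator $s_n$ plus the dihedral base case; your key lemma $w_{n-2}=(s_nw)_{n-2}$ is true (the two orders remove the same pair of columns and the two column-$1$ scalings compose to the same factor $a_{n-1}a_n$ on the first surviving column), and your length formula $\ell_i=i-c_i$ resp.\ $i+c_i-2$ matches Definition \ref{DefREiw}, so the concluding enumeration does come out to $\pm1$ (e.g.\ in the case $c_{n-1}<c_n$ with both entries $1$ it reproduces the paper's $2i-c-c'$ vs.\ $2i-c-c'-1$). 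What each approach buys: your induction is structurally cleaner and makes explicit a fact the paper only asserts in passing (that the blocks below index $i-1$ are unchanged, i.e.\ exactly your lemma), whereas the paper's brute-force computation produces the labelled outputs $(a)$--$(f)$ that are then reused to prove Proposition \ref{PropLengthdecreas}, which the later chapters depend on. Be aware, though, that the two places you defer -- the scalar bookkeeping for $w_{n-2}=(s_nw)_{n-2}$ and the ``finite enumeration'' of the two-row length differences -- are precisely where the paper's actual work lies, so a complete write-up of your argument would still contain essentially the same subcase computations, just organized once at the top level instead of for every $s_i$ and $t_i$.
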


\begin{proof}

For $1 \leq i \leq n$, there exists a unique $c_i$ such that $w[i,c_i] \neq 0$. We denote $w[i,c_i]$ by $a_i$.\\

\underline{Case 1: Suppose $x = s_i$ for $3 \leq i \leq n$.}\\

Set $w' := s_i w$. Since the left multiplication by the matrix $x$ exchanges the rows $i-1$ and $i$ of $w$ and the other rows remain the same, by Definition \ref{DefREiw} and Lemma \ref{LemmaBlocks}, we have:\\
$R\!E_{i+1}(xw)R\!E_{i+2}(xw) \cdots R\!E_{n}(xw)=R\!E_{i+1}(w) R\!E_{i+2}(w) \cdots R\!E_{n}(w)$ and\\
$R\!E_{2}(xw)R\!E_{3}(xw) \cdots R\!E_{i-2}(xw)=R\!E_{2}(w) R\!E_{3}(w) \cdots R\!E_{i-2}(w)$.\\
Then, in order to prove our property, we should compare $\boldsymbol{\ell}_1:=\boldsymbol{\ell}(R\!E_{i-1}(w)R\!E_i(w))$ and $\boldsymbol{\ell}_2:=\boldsymbol{\ell}(R\!E_{i-1}(xw)R\!E_i(xw))$.\\

Suppose  $c_{i-1} < c_{i}$, by Lemma \ref{LemmaBlocks}, the rows $i-1$ and $i$ of the blocks $w_i$ and $w'_{i}$ are of the form:

\begin{tabular}{ccc}

\begin{tikzpicture}[scale=0.5]

\node at (0,0) {};
\node at (0,0.8) {$w_i$ :};

\end{tikzpicture}

 & & \begin{tikzpicture}[scale=0.5]

\node at (0,1) {$i$};
\node at (0,2) {$i-1$};
\node at (2,3) {$..$};
\node at (2.6,3) {$c$};
\node at (4,3) {$..$};
\node at (5,3) {$c'$};
\node at (5.7,3) {$..$};
\node at (7,3) {$i$};

\node at (3,2) {$b_{i-1}$};
\node at (5,1) {$a_i$};

\draw [-] (1,0.5) to node[auto] {} (1,2.5);
\draw [-] (1,0.5) to node[auto] {} (7.3,0.5);
\draw [-] (7.3,0.5) to node[auto] {} (7.3,2.5);
\draw [-] (1,2.5) to node[auto] {} (7.3,2.5);

\end{tikzpicture}\\

\begin{tikzpicture}[scale=0.5]

\node at (0,0) {};
\node at (0,0.8) {$w'_i$ :};

\end{tikzpicture}

 & & \begin{tikzpicture}[scale=0.5]

\node at (0,1) {$i$};
\node at (0,2) {$i-1$};
\node at (2,3) {$..$};
\node at (2.6,3) {$c$};
\node at (4,3) {$..$};
\node at (5,3) {$c'$};
\node at (5.7,3) {$..$};
\node at (7,3) {$i$};

\node at (3,1) {$b_{i-1}$};
\node at (5,2) {$a_i$};

\draw [-] (1,0.5) to node[auto] {} (1,2.5);
\draw [-] (1,0.5) to node[auto] {} (7.3,0.5);
\draw [-] (7.3,0.5) to node[auto] {} (7.3,2.5);
\draw [-] (1,2.5) to node[auto] {} (7.3,2.5);

\end{tikzpicture}

\end{tabular}

with $c < c'$ and where we write $b_{i-1}$ instead of $a_{i-1}$ since $a_{i-1}$ may change when applying Algorithm \ref{Algo1} if $c_{i-1} =1$, that is $a_{i-1}$ on the first column of $w$.\\

We will discuss different cases depending on the values of $a_i$ and $b_{i-1}$.

\begin{itemize}

\item \underline{Suppose $a_i = 1$.}

\begin{itemize}

\item \underline{If $b_{i-1} =1$,}\\
we have $R\!E_{i}(w) = \boldsymbol{s}_i \cdots \boldsymbol{s}_{c'+2} \boldsymbol{s}_{c'+1}$ and $R\!E_{i-1}(w) = \boldsymbol{s}_{i-1} \cdots \boldsymbol{s}_{c+2} \boldsymbol{s}_{c+1}$.
Furthermore, we have $R\!E_{i}(xw) = \boldsymbol{s}_i \cdots \boldsymbol{s}_{c+2} \boldsymbol{s}_{c+1}$\\
and $R\!E_{i-1}(xw) = \boldsymbol{s}_{i-1} \cdots \boldsymbol{s}_{c'+1} \boldsymbol{s}_{c'}$.\\
It follows that $\boldsymbol{\ell}_1 = ((i-1)-(c+1)+1) + (i-(c'+1)+1) = 2i-c-c'-1$ and $\boldsymbol{\ell}_2 = ((i-1)-c'+1) + (i-(c+1)+1) = 2i-c-c'$ hence $\boldsymbol{\ell}_2 = \boldsymbol{\ell}_1 +1$.\\
\item \underline{If $b_{i-1} = \zeta_e^{k}$ with $1 \leq k \leq e-1$,}\\
we have $R\!E_{i}(w) = \boldsymbol{s}_i \cdots \boldsymbol{s}_{c'+2} \boldsymbol{s}_{c'+1}$ and $R\!E_{i-1}(w) = \boldsymbol{s}_{i-1} \cdots \boldsymbol{s}_3 \boldsymbol{t}_k \boldsymbol{t}_0 \boldsymbol{s}_{3} \cdots \boldsymbol{s}_{c}$. Furthermore, we have $R\!E_{i}(xw) = \boldsymbol{s}_i \cdots \boldsymbol{s}_3 \boldsymbol{t}_k \boldsymbol{t}_0 \boldsymbol{s}_{3} \cdots \boldsymbol{s}_{c}$ and $R\!E_{i-1}(xw) = \boldsymbol{s}_{i-1} \cdots \boldsymbol{s}_{c'}$.\\
It follows that $\boldsymbol{\ell}_1 = (((i-1)-3+1) + 2 + (c-3+1)) + (i-(c'+1)+1) = 2i+c-c'-3$ and $\boldsymbol{\ell}_2 = ((i-1)-c'+1) + ((i-3+1) + 2 + (c-3+1)) = 2i+c-c'-2$ hence $\boldsymbol{\ell}_2 = \boldsymbol{\ell}_1 +1$.

\end{itemize}

It follows that

\begin{center}
if $a_i =1$, then $\boldsymbol{\ell}(R\!E(s_iw))= \boldsymbol{\ell}(R\!E(w)) +1. \hspace{1cm} (a)$
\end{center}

\item \underline{Suppose now that $a_i = \zeta_e^{k}$ with $1 \leq k \leq e-1$.}

\begin{itemize}

\item \underline{If $b_{i-1} = 1$,}\\
we have $R\!E_{i}(w) = \boldsymbol{s}_{i} \cdots \boldsymbol{s}_3 \boldsymbol{t}_k \boldsymbol{t}_0 \boldsymbol{s}_{3} \cdots \boldsymbol{s}_{c'}$ and $R\!E_{i-1}(w) =\boldsymbol{s}_{i-1} \cdots \boldsymbol{s}_{c+1}$.\\
Also, we have $R\!E_{i}(xw) = \boldsymbol{s}_i \cdots \boldsymbol{s}_{c+1}$ and\\
$R\!E_{i-1}(xw) = \boldsymbol{s}_{i-1} \cdots \boldsymbol{s}_3 \boldsymbol{t}_k \boldsymbol{t}_0 \boldsymbol{s}_{3} \cdots \boldsymbol{s}_{c'-1}$.\\
It follows that $\boldsymbol{\ell}_1 = ((i-1)-(c+1)-1) + ((i-3+1)+2+(c'-3+1)) = 2i-c+c'-5$ and $\boldsymbol{\ell}_2 = (((i-1)-3+1)+2+((c'-1)-3+1)) + (i-(c+1)-1) = 2i-c+c'-6$ hence $\boldsymbol{\ell}_2 = \boldsymbol{\ell}_1 -1$.\\

\item \underline{If $b_{i-1} = \zeta_e^{k'}$ with $1 \leq k' \leq e-1$,}\\
we have $R\!E_{i}(w) = \boldsymbol{s}_{i} \cdots \boldsymbol{s}_3 \boldsymbol{t}_k \boldsymbol{t}_0 \boldsymbol{s}_{3} \cdots  \boldsymbol{s}_{c'}$ and\\
$R\!E_{i-1}(w) = \boldsymbol{s}_{i-1} \cdots \boldsymbol{s}_3 \boldsymbol{t}_{k'} \boldsymbol{t}_0 \boldsymbol{s}_{3} \cdots \boldsymbol{s}_{c}$. \\
Also, we have $R\!E_{i}(xw) = \boldsymbol{s}_{i} \cdots \boldsymbol{s}_3 \boldsymbol{t}_{k'} \boldsymbol{t}_0 \boldsymbol{s}_{3} \cdots \boldsymbol{s}_{c}$ and\\
$R\!E_{i-1}(xw) = \boldsymbol{s}_{i-1} \cdots \boldsymbol{s}_3 \boldsymbol{t}_k \boldsymbol{t}_0 \boldsymbol{s}_{3} \cdots \boldsymbol{s}_{c'-1}$.\\
It follows that $\boldsymbol{\ell}_1 = ((i-1)-3+1) +2+(c-3+1) + (i-3+1) +2+ (c'-3+1) = 2i+c+c'-5$ and $\boldsymbol{\ell}_2 = ((i-1)-3+1)+2+((c'-1)-3+1)+(i-3+1)+2+(c-3+1) = 2i+c+c'-6$ hence $\boldsymbol{\ell}_2 = \boldsymbol{\ell}_1 -1$.

\end{itemize}

It follows that

\begin{center}
if $a_i \neq 1$, then $\boldsymbol{\ell}(R\!E(s_iw)) = \boldsymbol{\ell}(R\!E(w)) -1. \hspace{1cm} (b)$
\end{center}

\end{itemize}

Suppose, on the other hand, $c_{i-1} > c_i$. Recall that $w' = s_iw$. If $w'[i-1,c'_{i-1}]$ and $w'[i,c'_{i}]$ denote the nonzero entries of $w'$ on the rows $i-1$ and $i$, respectively, we have $w'[i-1,c'_{i-1}] = a_i$ and $w'[i,c'_{i}] = a_{i-1}$. For $w'$, we have $c'_{i-1} < c'_{i}$, in which case the preceding analysis would give:

\begin{center}

if $a_{i-1} = 1$, then $\boldsymbol{\ell}(R\!E(s_i(s_iw))) = \boldsymbol{\ell}(R\!E(s_iw)) + 1$,\\
if $a_{i-1} \neq 1$, then $\boldsymbol{\ell}(R\!E(s_i(s_iw))) = \boldsymbol{\ell}(R\!E(s_iw)) - 1$.

\end{center}

Hence, since $s_i^2 = 1$, we get the following:

\begin{center}

if $a_{i-1} = 1$, then $\boldsymbol{\ell}(R\!E(s_iw)) = \boldsymbol{\ell}(R\!E(w)) - 1. \hspace{1cm} (a')$,\\
if $a_{i-1} \neq 1$, then $\boldsymbol{\ell}(R\!E(s_iw)) = \boldsymbol{\ell}(R\!E(w)) + 1. \hspace{1cm} (b')$.

\end{center}

\underline{Case 2: Suppose $x = t_i$ for $0 \leq i \leq e-1$.}\\

Set $w' := t_iw$. By the left multiplication by $t_i$, we have that the last $n-2$ rows of $w$ and $w'$ are the same. Hence, by Definition \ref{DefREiw} and Lemma \ref{LemmaBlocks}, we have:\\
$R\!E_3(xw)R\!E_4(xw) \cdots R\!E_n(xw) = R\!E_3(w)R\!E_4(w) \cdots R\!E_n(w)$. In order to prove our property in this case, we should compare $\boldsymbol{\ell}_1 := \boldsymbol{\ell}(R\!E_2(w))$ and $\boldsymbol{\ell}_2 := \boldsymbol{\ell}(R\!E_2(xw))$.

\begin{itemize}

\item \underline{Consider the case where $c_1 < c_2$.}\\

Since $c_1 < c_2$, by Lemma \ref{LemmaBlocks}, the blocks $w_2$ and $w'_2$ are of the form:

$w_2 = \begin{pmatrix}

b_1 & 0\\
0 & a_2\\

\end{pmatrix}$ and $w'_2 = \begin{pmatrix}

0 & \zeta_e^{-i}a_2\\
\zeta_e^{i}b_{1} & 0\\ 

\end{pmatrix}$ with $b_{1}$ instead of $a_{1}$ since $a_{1}$ may change when applying Algorithm \ref{Algo1} if $c_{1} =1$.

\begin{itemize}
\item \underline{Suppose $a_2 = 1$,}\\
we have $b_1 = 1$ necessarily hence $\boldsymbol{\ell}_1 = 0$. Since $R\!E_2(xw) = \boldsymbol{t}_i$, we have $\boldsymbol{\ell}_2 = 1$. It follows that when $c_1 < c_2$,

\begin{center}
if $a_2 =1$, then $\boldsymbol{\ell}(R\!E(t_iw)) = \boldsymbol{\ell}(R\!E(w)) +1. \hspace{1cm} (c)$
\end{center}

\item \underline{Suppose $a_2 = \zeta_e^{k}$ with $1 \leq k \leq e-1$,} then $b_1 = \zeta_e^{-k}$.\\
We get $R\!E_2(w) = \boldsymbol{t}_k\boldsymbol{t}_0$. Thus, $\boldsymbol{\ell}_1 = 2$. We also get $R\!E_2(xw) = \mathbf{t}_{i-k}$. Thus, $\boldsymbol{\ell}_2 = 1$. It follows that when $c_1 < c_2$,

\begin{center}
if $a_2 \neq 1$, then $\boldsymbol{\ell}(R\!E(t_iw)) = \boldsymbol{\ell}(R\!E(w))  -1. \hspace{1cm} (d)$
\end{center}

\end{itemize}

\item \underline{Now, consider the case where $c_1 > c_2$.}\\

Since $c_1 > c_2$, by Lemma \ref{LemmaBlocks}, the blocks $w_2$ and $w'_2$ are of the form:

$w_2 = \begin{pmatrix}

0 & a_1\\
b_2 & 0\\

\end{pmatrix}$ and $w'_2 = \begin{pmatrix}

\zeta_e^{-i}b_2 & 0\\
0 & \zeta_e^{i}a_{1}\\ 

\end{pmatrix}$ with $b_{2}$ instead of $a_{2}$ since $a_{2}$ may change when applying Algorithm \ref{Algo1} if $c_{2} =1$.

\begin{itemize}

\item \underline{Suppose $a_1 \neq \zeta_e^{-i}$,} then $b_2 \neq \zeta_e^i$.\\
We have $\boldsymbol{\ell}_1 =1$ necessarily, and since $\zeta_e^{i}a_1 \neq 1$, we have $\boldsymbol{\ell}_2 = 2$. Hence when $c_1 > c_2$,

\begin{center}
if $a_1 \neq \zeta_e^{-i}$, then $\boldsymbol{\ell}(R\!E(t_iw)) = \boldsymbol{\ell}(R\!E(w)) +1. \hspace{1cm} (e)$
\end{center}

\item \underline{Suppose $a_1 = \zeta_e^{-i}$,}\\
we have $\boldsymbol{\ell}_1 = 1$ and $\boldsymbol{\ell}_2 = 0$. Hence when $c_1 > c_2$,

\begin{center}
if $a_1 = \zeta_e^{-i}$, then $\boldsymbol{\ell}(R\!E(t_iw)) = \boldsymbol{\ell}(R\!E(w)) -1. \hspace{1cm} (f)$
\end{center}

\end{itemize}

\end{itemize}

This finishes our proof.
\end{proof}

\begin{proposition}\label{PropREwRedExp}

Let $w$ be an element of $G(e,e,n)$. The word $R\!E(w)$ is a reduced expression over $\mathbf{X}$ of $w$.

\end{proposition}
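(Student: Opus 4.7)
The plan is to deduce this from Proposition \ref{prop.lengthcomp} by a short triangle-inequality argument.

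First, observe that the previous proposition already shows that $R\!E(w)$ is some word over $\mathbf{X}$ representing $w$, so by the definition of $\ell$ we automatically have $\ell(w) \leq \boldsymbol{\ell}(R\!E(w))$. The entire work therefore consists in showing the reverse inequality $\boldsymbol{\ell}(R\!E(w)) \leq \ell(w)$.

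For the reverse inequality, I would start by noting the base case $R\!E(I_n) = \varepsilon$: indeed, when $w = I_n$, at each step $i$ of Algorithm \ref{Algo1} one has $w_i[i,i] = 1$, so $c = i$ and $k = 0$, which by Convention \ref{ConventionDecreaseIncreaseIndex} forces $R\!E_i(I_n)$ to be empty. Hence $\boldsymbol{\ell}(R\!E(I_n)) = 0$. Now pick any reduced expression $\mathbf{x}_1 \mathbf{x}_2 \cdots \mathbf{x}_m$ of $w$ over $\mathbf{X}$ with $m = \ell(w)$, and consider the sequence of elements of $G(e,e,n)$ defined by $v_0 := I_n$ and $v_j := x_{m-j+1} v_{j-1}$ for $1 \leq j \leq m$, so that $v_m = x_1 x_2 \cdots x_m = w$. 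For each $j$, Proposition \ref{prop.lengthcomp} applied to $x := x_{m-j+1}$ and the element $v_{j-1}$ yields
$$\boldsymbol{\ell}(R\!E(v_j)) \leq \boldsymbol{\ell}(R\!E(v_{j-1})) + 1.$$
Summing these $m$ inequalities telescopically gives $\boldsymbol{\ell}(R\!E(w)) = \boldsymbol{\ell}(R\!E(v_m)) \leq \boldsymbol{\ell}(R\!E(v_0)) + m = m = \ell(w)$.

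Combining the two inequalities yields $\boldsymbol{\ell}(R\!E(w)) = \ell(w)$, so $R\!E(w)$ is indeed a reduced expression of $w$ over $\mathbf{X}$. There is essentially no obstacle here: all of the real work was absorbed into Proposition \ref{prop.lengthcomp}, where the detailed case analysis on the possible shapes of $R\!E_{i-1}$ and $R\!E_i$ under multiplication by a generator $x \in X$ established the delicate one-step length control. Once that subadditivity estimate is in place, the reducedness of $R\!E(w)$ follows immediately by iterating it along any reduced expression of $w$.
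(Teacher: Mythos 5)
Your proposal is correct and follows essentially the same route as the paper: both prove $\boldsymbol{\ell}(R\!E(w))\le \ell(w)$ by iterating the one-step estimate of Proposition \ref{prop.lengthcomp} along a reduced expression $\mathbf{x}_1\cdots\mathbf{x}_r$ of $w$, the paper peeling generators off from the left while you build $w$ up from the identity, which is the same telescoping argument read in the opposite direction. Your explicit check that $R\!E(I_n)=\varepsilon$ is a small detail the paper leaves implicit, but there is no substantive difference.
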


\begin{proof}

We must prove that $\ell(w) = \boldsymbol{\ell}(R\!E(w))$.\\
Let $\mathbf{x}_1\mathbf{x}_2 \cdots \mathbf{x}_r$ be a reduced expression over $\mathbf{X}$ of $w$. Hence $\ell(w) = \boldsymbol{\ell}(\mathbf{x}_1\mathbf{x}_2 \cdots \mathbf{x}_r) = r$. Since $R\!E(w)$ is a word representative over $\mathbf{X}$ of $w$, we have $\boldsymbol{\ell}(R\!E(w)) \geq \boldsymbol{\ell}(\mathbf{x}_1\mathbf{x}_2 \cdots \mathbf{x}_r) = r$. We prove that $\boldsymbol{\ell}(R\!E(w)) \leq r$. Observe that we can write $w$ as $x_1x_2 \cdots x_r$ where $x_1,x_2, \cdots, x_r$ are the matrices of $G(e,e,n)$ corresponding to $\mathbf{x}_1,\mathbf{x}_2, \cdots, \mathbf{x}_r$.\\
By Proposition \ref{prop.lengthcomp}, we have: $\boldsymbol{\ell}(R\!E(w)) = \boldsymbol{\ell}(R\!E(x_1x_2 \cdots x_r)) \leq \boldsymbol{\ell}(R\!E(x_2x_3 \cdots x_r)) +1 \leq \boldsymbol{\ell}(R\!E(x_3 \cdots x_r)) +2 \leq \cdots \leq r$. Hence $\boldsymbol{\ell}(R\!E(w)) = r = \ell(w)$ and we are done.

\end{proof}

The following proposition is useful in the next chapter. Its proof is based on the proof of Proposition \ref{prop.lengthcomp}.

\begin{proposition}\label{PropLengthdecreas}

Let $w$ be an element of $G(e,e,n)$. Denote by $a_i$ the unique nonzero entry $w[i,c_i]$ on the row $i$ of $w$ where $1 \leq i, c_i \leq n$.

\begin{enumerate}

\item For $3 \leq i \leq n$, we have:
	\begin{enumerate}
		\item if $c_{i-1} < c_i$, then
				\begin{center}$\ell(s_{i}w) = \ell(w)-1$ if and only if $a_{i} \neq 1.$\end{center}
		\item if $c_{i-1} > c_i$, then
				\begin{center}$\ell(s_{i}w) = \ell(w)-1$ if and only if $a_{i-1} =1.$\end{center}
	\end{enumerate}
\item If $c_1 < c_2$, then $\forall\ 0 \leq k \leq e-1$, we have
				\begin{center}$\ell(t_k w) = \ell(w)-1$ if and only if $a_2 \neq 1.$\end{center}
\item If $c_1 > c_2$, then $\forall\ 0 \leq k \leq e-1$, we have
				\begin{center}$\ell(t_k w) = \ell(w)-1$ if and only if $a_1 = \zeta_{e}^{-k}.$\end{center}
				
\end{enumerate}

\end{proposition}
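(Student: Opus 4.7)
The plan is to observe that this proposition is essentially a refinement of Proposition \ref{prop.lengthcomp}: that proposition establishes $|\boldsymbol{\ell}(R\!E(xw)) - \boldsymbol{\ell}(R\!E(w))| = 1$ through a case analysis which already tracks precisely when the difference is $+1$ versus $-1$. Combined with Proposition \ref{PropREwRedExp}, which says $\ell(w) = \boldsymbol{\ell}(R\!E(w))$, this case analysis immediately yields the sharper \emph{iff} statements stated here.

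The first step is to invoke Proposition \ref{PropREwRedExp} to replace $\boldsymbol{\ell}(R\!E(\cdot))$ by $\ell(\cdot)$ throughout, so that the conclusions $(a)$, $(b)$, $(a')$, $(b')$, $(c)$, $(d)$, $(e)$, $(f)$ of the proof of Proposition \ref{prop.lengthcomp} become statements about $\ell(s_i w) - \ell(w)$ and $\ell(t_k w) - \ell(w)$ directly. I would then organise the argument into three blocks mirroring the three items of the statement.

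For item 1, split on whether $c_{i-1} < c_i$ or $c_{i-1} > c_i$. In the first subcase, the subcases $(a)$ and $(b)$ of the proof of Proposition \ref{prop.lengthcomp} give $\ell(s_i w) = \ell(w) + 1$ when $a_i = 1$ and $\ell(s_i w) = \ell(w) - 1$ when $a_i \neq 1$, which is precisely 1(a). In the second subcase, the subcases $(a')$ and $(b')$ give $\ell(s_iw) = \ell(w) - 1$ when $a_{i-1} = 1$ and $\ell(s_iw) = \ell(w) + 1$ when $a_{i-1} \neq 1$, which is precisely 1(b). For item 2, subcases $(c)$ and $(d)$ give the claim under $c_1 < c_2$, distinguishing $a_2 = 1$ from $a_2 \neq 1$. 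For item 3, subcases $(e)$ and $(f)$ give the claim under $c_1 > c_2$, distinguishing $a_1 = \zeta_e^{-k}$ from $a_1 \neq \zeta_e^{-k}$.

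There is no real obstacle here beyond being careful with notation: the work was already done inside the proof of Proposition \ref{prop.lengthcomp}, which computed exact word lengths rather than mere absolute differences. The only minor subtlety is the shift of notation (the variable called $i$ in Case 2 of that proof plays the role of the index $k$ in $t_k$ here), and the fact that one must use $\ell(w) = \boldsymbol{\ell}(R\!E(w))$ in order to promote statements about the specific word $R\!E(w)$ into statements about the intrinsic length $\ell(w)$ of $w$. The proof is thus short, and essentially amounts to a citation of the internal case analysis of Proposition \ref{prop.lengthcomp} together with Proposition \ref{PropREwRedExp}.
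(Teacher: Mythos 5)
Your proposal is correct and matches the paper's own proof, which likewise deduces 1(a), 1(b), 2, and 3 directly from the tagged conclusions $(a)$, $(b)$, $(a')$, $(b')$, $(c)$, $(d)$, $(e)$, $(f)$ in the proof of Proposition \ref{prop.lengthcomp}. Your explicit remark that Proposition \ref{PropREwRedExp} is needed to pass from $\boldsymbol{\ell}(R\!E(\cdot))$ to $\ell(\cdot)$ is a minor but accurate point that the paper leaves implicit.
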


\begin{proof}

The claim $1(a)$ is deduced from $(a)$ and $(b)$, $1(b)$ is deduced from $(a')$ and $(b')$, $2$ is deduced from $(c)$ and $(d)$, and $3$ is deduced from $(e)$ and $(f)$ where $(a)$, $(b)$, $(a')$, $(b')$, $(c)$, $(d)$, $(e)$, and $(f)$ are given in the proof of Proposition \ref{prop.lengthcomp}.

\end{proof}

\begin{remark}

Proposition \ref{PropLengthdecreas} will be useful to implement in Appendix A the interval Garside monoids that we will construct in the next chapter.

\end{remark}

\section{The general case of $G(de,e,n)$}

In this section, we generalize the geodesic normal forms to all the general series of complex reflection groups $G(de,e,n)$ where $d > 1$, $e > 1$, and $n \geq 2$. Geodesic normal forms for $G(d,1,n)$ are studied in the last subsection. We start by providing the presentation of Corran-Lee-Lee \cite{CorranLeeLee} of $G(de,e,n)$.

\subsection{Presentation for $G(de,e,n)$}

Recall that the complex reflection group $G(de,e,n)$ is the group of monomial matrices whose nonzero entries are $de$-th roots of unity and their product is a $d$-th root of unity.
There exists a presentation of the complex reflection group $G(de,e,n)$ given in \cite{CorranLeeLee} for $d > 1$, $e \geq 1$, and $n \geq 2$.

\begin{definition}\label{DefCorranLeePresG(de,e,n)}

The complex reflection group $G(de,e,n)$ is defined by a presentation with set of generators: $\mathbf{X} = \{ \mathbf{z}\} \cup \{ \mathbf{t}_i \ | \ i \in \mathbb{Z}/de\mathbb{Z}\} \cup \{\mathbf{s}_3, \mathbf{s}_4, \cdots, \mathbf{s}_n \}$ and relations as follows.

\begin{enumerate}

\item $\mathbf{z} \mathbf{t}_i = \mathbf{t}_{i-e} \mathbf{z}$ for $i \in \mathbb{Z}/de\mathbb{Z}$,
\item $\mathbf{z}  \mathbf{s}_j=\mathbf{s}_j \mathbf{z}$ for $3 \leq j \leq n$,
\item $\mathbf{t}_i \mathbf{t}_{i-1} = \mathbf{t}_j \mathbf{t}_{j-1}$ for $i, j \in \mathbb{Z}/de\mathbb{Z}$,
\item $\mathbf{t}_i \mathbf{s}_3 \mathbf{t}_i = \mathbf{s}_3 \mathbf{t}_i \mathbf{s}_3$ for $i \in \mathbb{Z}/de\mathbb{Z}$,
\item $\mathbf{s}_j \mathbf{t}_i = \mathbf{t}_i \mathbf{s}_j$ for $i \in \mathbb{Z}/de\mathbb{Z}$ and $4 \leq j \leq n$,
\item $\mathbf{s}_i \mathbf{s}_{i+1} \mathbf{s}_i = \mathbf{s}_{i+1} \mathbf{s}_i \mathbf{s}_{i+1}$ for $3 \leq i \leq n-1$,
\item $\mathbf{s}_i \mathbf{s}_j = \mathbf{s}_j \mathbf{s}_i$ for $|i-j| > 1$, and
\item $\mathbf{z}^d=1$, $\mathbf{t}_i^2=1$ for $i \in \mathbb{Z}/de\mathbb{Z}$, and $\mathbf{s}_j^2=1$ for $3 \leq j \leq n$.

\end{enumerate}

\end{definition}

The generators of this presentation correspond to the following $n \times n$ matrices.
The generator $\mathbf{t}_i$ is represented by the matrix $t_i =
\begin{pmatrix}

0 & \zeta_{de}^{-i} & 0\\
\zeta_{de}^{i} & 0 & 0\\
0 & 0 & I_{n-2}\\

\end{pmatrix}$
for $i \in \mathbb{Z}/de\mathbb{Z}$, $\mathbf{z}$ by the diagonal matrix  $z = Diag(\zeta_d,1,\cdots,1)$ where 
$\zeta_d = exp(2i \pi / d)$, 
and $\mathbf{s}_j$ by the transposition matrix $s_j = (j-1,j)$ for $3 \leq j \leq n$. To avoid confusion, we use normal letters for matrices and bold letters for words over $\mathbf{X}$. Denote by $X$ the set $\{z,t_0,t_1, \cdots, t_{de-1},s_3, \cdots, s_n\}$.\\

This presentation can be described by the following diagram. The dashed circle describes Relation $3$ of Definition \ref{DefCorranLeePresG(de,e,n)}. The curved arrow below $\mathbf{z}$ describes Relation $1$. The other edges used to describe all the other relations follow the standard conventions for Coxeter groups.

\begin{figure}[H]
\begin{center}
\begin{tikzpicture}[yscale=0.8,xscale=1,rotate=30]

\draw[thick,dashed] (0,0) ellipse (2cm and 1cm);

\node[draw, shape=circle, fill=white, label=above:\begin{small}$\mathbf{z}$\end{small}] (z) at (0,0) {\begin{tiny}$d$\end{tiny}};
\node[draw, shape=circle, fill=white, label=above:\begin{small}$\mathbf{t}_0$\end{small}] (t0) at (0,-1) {\begin{tiny}$2$\end{tiny}};
\node[draw, shape=circle, fill=white, label=above:\begin{small}$\mathbf{t}_1$\end{small}] (t1) at (1,-0.8) {\begin{tiny}$2$\end{tiny}};
\node[draw, shape=circle, fill=white, label=right:\begin{small}$\mathbf{t}_2$\end{small}] (t2) at (2,0) {\begin{tiny}$2$\end{tiny}};
\node[draw, shape=circle, fill=white, label=above:$\mathbf{t}_i$] (ti) at (0,1) {\begin{tiny}$2$\end{tiny}};
\node[draw, shape=circle, fill=white, label=above:\begin{small}$\mathbf{t}_{de-1}$\end{small}] (te-1) at (-1,-0.8) {\begin{tiny}$2$\end{tiny}};

\draw[->,bend left=45] (-0.3,0.26) to (0.3,0.26);
\draw[thick,-] (0,-2) arc (-180:-90:3);

\node[draw, shape=circle, fill=white, label=below left:$\mathbf{s}_3$] (s3) at (0,-2) {\begin{tiny}$2$\end{tiny}};

\draw[thick,-] (t0) to (s3);
\draw[thick,-,bend left] (t1) to (s3);
\draw[thick,-,bend left] (t2) to (s3);
\draw[thick,-,bend left] (s3) to (te-1);

\node[draw, shape=circle, fill=white, label=below:$\mathbf{s}_4$] (s4) at (0.15,-3) {\begin{tiny}$2$\end{tiny}};
\node[draw, shape=circle, fill=white, label=below:$\mathbf{s}_{n-1}$] (sn-1) at (2.2,-4.9) {\begin{tiny}$2$\end{tiny}};
\node[draw, shape=circle, fill=white, label=right:$\mathbf{s}_{n}$] (sn) at (3,-5) {\begin{tiny}$2$\end{tiny}};

\node[fill=white] () at (1,-4.285) {$\cdots$};

\end{tikzpicture}
\end{center}
\caption{\mbox{Diagram for the presentation of Corran-Lee-Lee of $G(de,e,n)$.}}\label{FigPresG(de,e,n)CorranLeeLee}
\end{figure}

\begin{proposition}\label{PropPresB(d,1,n)}

Let $e=1$. The presentation provided in Definition \ref{DefCorranLeePresG(de,e,n)} is equivalent to the classical presentation of the complex reflection group $G(d,1,n)$ that can be described by the following diagram.

\begin{figure}[H]

\begin{center}
\begin{tikzpicture}

\node[draw, shape=circle, label=above:$\mathbf{z}$] (1) at (0,0) {$d$};
\node[draw, shape=circle,label=above:$\mathbf{s}_2$] (2) at (2,0) {$2$};
\node[draw, shape=circle,label=above:$\mathbf{s}_3$] (3) at (4,0) {$2$};
\node[draw, shape=circle,label=above:$\mathbf{s}_{n-1}$] (n-1) at (6,0) {$2$};
\node[draw,shape=circle,label=above:$\mathbf{s}_n$] (n) at (8,0) {$2$};

\draw[thick,-,double] (1) to (2);
\draw[thick,-] (2) to (3);
\draw[dashed,-,thick] (3) to (n-1);
\draw[thick,-] (n-1) to (n);

\end{tikzpicture}
\end{center}

\caption{Diagram for the presentation of $G(d,1,n)$.}\label{PresofBMRofGd1n}
\end{figure}

\end{proposition}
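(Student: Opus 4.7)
The plan is to show the two presentations are Tietze-equivalent. Since both groups are claimed to be $G(d,1,n)$, it suffices to exhibit a sequence of Tietze moves carrying one presentation to the other.

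Starting from the Corran--Lee--Lee presentation with $e=1$, I would introduce $\mathbf{s}_2 := \mathbf{t}_0$ as a new generator. Relation 1 then reads $\mathbf{t}_i = \mathbf{z}^{-i}\mathbf{s}_2\mathbf{z}^{i}$ for all $i \in \mathbb{Z}/d\mathbb{Z}$, which lets me eliminate every generator $\mathbf{t}_i$ with $i \neq 0$ by substituting $\mathbf{z}^{-i}\mathbf{s}_2\mathbf{z}^{i}$.

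Next I would verify that after this elimination each remaining relation reduces to a relation of the classical presentation in Figure \ref{PresofBMRofGd1n}. Relation 3 becomes $\mathbf{z}^{-i}\mathbf{s}_2\mathbf{z}\mathbf{s}_2\mathbf{z}^{i-1} = \mathbf{z}^{-j}\mathbf{s}_2\mathbf{z}\mathbf{s}_2\mathbf{z}^{j-1}$; the case $j=0$, $i=1$ gives $\mathbf{s}_2\mathbf{z}\mathbf{s}_2\mathbf{z} = \mathbf{z}\mathbf{s}_2\mathbf{z}\mathbf{s}_2$, which is exactly the length-$4$ braid relation encoded by the double edge between $\mathbf{z}$ and $\mathbf{s}_2$, and conversely this single instance implies all the other cases by Relation 1. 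Relation 4, after conjugation by $\mathbf{z}^{i}$ and use of Relation 2 (which says $\mathbf{z}$ commutes with $\mathbf{s}_3$), reduces to the braid-$3$ relation $\mathbf{s}_2\mathbf{s}_3\mathbf{s}_2 = \mathbf{s}_3\mathbf{s}_2\mathbf{s}_3$. Relation 5 similarly reduces to $\mathbf{s}_j\mathbf{s}_2 = \mathbf{s}_2\mathbf{s}_j$ for $j \geq 4$. Relation 8 gives $\mathbf{s}_2^2 = 1$. The relations $\mathbf{z}^d = 1$, $\mathbf{s}_j^2 = 1$ for $j \geq 3$, and Relations 2, 6, 7 carry over unchanged. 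Reading off, these are precisely the relations of the classical diagram.

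Conversely, starting from the classical presentation I would set $\mathbf{t}_i := \mathbf{z}^{-i}\mathbf{s}_2\mathbf{z}^{i}$ for $i \in \mathbb{Z}/d\mathbb{Z}$ and check each relation of Definition \ref{DefCorranLeePresG(de,e,n)}: Relation 1 is immediate from the definition; Relation 3 is an easy consequence of the length-$4$ braid relation; Relation 4 follows from the length-$3$ braid relation between $\mathbf{s}_2$ and $\mathbf{s}_3$ together with $\mathbf{z}\mathbf{s}_3 = \mathbf{s}_3\mathbf{z}$; Relations 5 and 8 are analogous. The main obstacle is bookkeeping: one has to see that Relation 3, which is stated for all pairs $(i,j)$ and looks restrictive, collapses in the presence of Relation 1 to the single length-$4$ braid relation between $\mathbf{s}_2$ and $\mathbf{z}$, and one must track carefully that the Tietze elimination introduces no spurious relations. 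No step is deep, but the count of relations to verify makes this a careful rather than an inspired argument.
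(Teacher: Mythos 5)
Your proof is correct and follows essentially the same route as the paper: with $e=1$, Relation 1 gives $\mathbf{t}_k=\mathbf{z}^{-k}\mathbf{t}_0\mathbf{z}^{k}$, the generators $\mathbf{t}_1,\dots,\mathbf{t}_{d-1}$ are eliminated by Tietze transformations, and the remaining relations reduce to those of the classical presentation. The paper states this more tersely, while you carry out the relation-by-relation verification (and the converse substitution) explicitly, but the underlying argument is the same.
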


\begin{proof}

Let $e=1$. Relation $1$ of Definition \ref{DefCorranLeePresG(de,e,n)} becomes $\mathbf{z} \mathbf{t}_1=\mathbf{t}_0 \mathbf{z}$, that is $\mathbf{t}_1 = \mathbf{z}^{-1} \mathbf{t}_0 \mathbf{z}$. Also by Relation $3$ of  Definition \ref{DefCorranLeePresG(de,e,n)}, we have $\mathbf{t}_k=\mathbf{z}^{-k}\mathbf{t}_0\mathbf{z}^k$ for $1 \leq k \leq d-1$. If we remove $\mathbf{t}_1, \cdots, \mathbf{t}_{d-1}$ from the set of generators and replace every occurrence of $\mathbf{t}_k$ in the defining relations with $\mathbf{z}^{-k} \mathbf{t}_0 \mathbf{z}^{k}$ for $1 \leq k \leq d-1$, we recover the classical presentation of the complex reflection group $G(d,1,n)$.

\end{proof}

\begin{remark}

For $d = 2$, the presentation described by the diagram of Figure \ref{PresofBMRofGd1n} is the classical presentation of the Coxeter group of type $B_n$. 

\end{remark}

\begin{remark}\label{RemarkPresB(de,e,n)}

In \cite{CorranLeeLee}, it is shown that if we remove Relations 8 of Definition \ref{DefCorranLeePresG(de,e,n)} from the presentation of $G(de,e,n)$, we get a presentation of the complex braid group $B(de,e,n)$. It is also shown that this presentation provides a quasi-Garside structure for $B(de,e,n)$. The notion of Garside (and quasi-Garside) structures will be developed in the next chapter. 

\end{remark}

We set Convention \ref{ConventionDecreaseIncreaseIndex} in the case of the presentation of $G(de,e,n)$ given in Definition \ref{DefCorranLeePresG(de,e,n)} and in the case of the presentation of $G(d,1,n)$ described by the diagram of Figure \ref{PresofBMRofGd1n}. We also set the convention that $\mathbf{z}^0$ is the empty word. These conventions will be helpful to provide all the possible cases of the words that appear in Definitions \ref{DefREiwG(de,e,n)} and \ref{DefinitionREG(d,1,n)} below. Also they ensure that Algorithms \ref{Algo2} and \ref{Algo3} that we will introduce in the next subsections work properly.

\subsection{Minimal word representatives}

Consider the complex reflection group $G(de,e,n)$ with $d > 1$, $e > 1$, and $n \geq 2$. Our aim is to represent each element of $G(de,e,n)$ by a reduced word over $\mathbf{X}$, where $\mathbf{X}$ is the set of the generators of the presentation of Corran-Lee-Lee of $G(de,e,n)$, see Definition \ref{DefCorranLeePresG(de,e,n)}. Recall that $X$ denotes the set of the matrices in $G(de,e,n)$ that  correspond to the elements of $\mathbf{X}$.\\

We introduce Algorithm \ref{Algo2} below (see next page) that produces a word $R\!E(w)$ over $\mathbf{X}$ for a given matrix $w$ in $G(de,e,n)$. For $d=1$, this algorithm is the same as Algorithm \ref{Algo1} that corresponds to the case of $G(e,e,n)$. We will also have that the output $R\!E(w)$ of Algorithm \ref{Algo2} is a reduced word representative of $w \in G(de,e,n)$ over $\mathbf{X}$.\\

Let $w_n := w \in G(de,e,n)$. For $i$ from $n$ to $2$, the $i$-th step of Algorithm\! \ref{Algo2} transforms the block diagonal matrix $\left(
\begin{array}{c|c}
w_i & 0 \\
\hline
0 & I_{n-i}
\end{array}
\right)$ into a block diagonal matrix $\left(
\begin{array}{c|c}
w_{i-1} & 0 \\
\hline
0 & I_{n-i+1}
\end{array}
\right) \in G(de,e,n)$ in the same way as Algorithm \ref{Algo1}. We finally get $w_1 = \zeta_d^k$ for some $0 \leq k \leq d-1$, where $\zeta_d^k$ is equal to the product of the nonzero entries of $w$. By multiplying $\left(
\begin{array}{c|c}
w_{1} & 0 \\
\hline
0 & I_{n-1}
\end{array}
\right)$ on the right by $z^{-k}$, we get the identity matrix $I_n$.

\begin{algorithm}[]\label{Algo2}

\SetKwInOut{Input}{Input}\SetKwInOut{Output}{Output}

\noindent\rule{12cm}{0.5pt}

\Input{$w$, a matrix in $G(de,e,n)$ with $d > 1$, $e > 1$, and $n \geq 2$.}
\Output{$R\!E(w)$, a word over $\mathbf{X}$.}

\noindent\rule{12cm}{0.5pt}

\textbf{Local variables}: $w'$, $R\!E(w)$,  $i$, $U$, $V$, $c$, $k$.

\noindent\rule{12cm}{0.5pt}

\textbf{Initialisation}:
$U:=[1,\zeta_{de},\zeta_{de}^2,...,\zeta_{de}^{e-1}]$, $V:=[1,\zeta_d,\zeta_d^2, \cdots, \zeta_d^{d-1}]$, $s_2 := t_0$, $\mathbf{s}_2 := \mathbf{t}_0$, $R\!E(w) := \varepsilon$: the empty word, $w' := w$.

\noindent\rule{12cm}{0.5pt}

\For{$i$ \textbf{from} $n$ \textbf{down to} $2$} {
	$c:=1$; $k:=0$; \\
	\While{$w'[i,c] = 0$}{$c :=c+1$\; 
	}
	 \textit{\#Then $w'[i,c]$ is the root of unity on the row $i$}\;
	\While{$U[k+1]\neq w'[i,c] $}{$k :=k+1$\;
	}
	\textit{\#Then $w'[i,c] = \zeta_{de}^k$.}\\

		\If{$k \neq 0$}{
		$w' := w' s_{c} s_{c-1} \cdots s_{3} s_{2} t_{k}$; \textit{\#Then $w'[i,2] =1$}\;
		$R\!E(w) := \mathbf{t}_k \mathbf{s}_2 \mathbf{s}_3 \cdots \mathbf{s}_c R\!E(w)$\;
		$c:=2$\;
	}	
	$w' := w' s_{c+1} \cdots s_{i-1}s_{i}$; \textit{\#Then $w'[i,i] = 1$}\;
	$R\!E(w) := \mathbf{s}_i \mathbf{s}_{i-1} \cdots \mathbf{s}_{c+1} R\!E(w)$\;
}
$k:=0$\;
\While {$V[k+1] \neq w'[1,1]$}{$k:=k+1$\;}
\textit{\#Then $w'[1,1] = \zeta_d^k$}\;
$w':=w' z^{-k}$; \textit{\#Then $w'=I_n$}\;
\If{$k \neq 0$}{$R\!E(w) = \mathbf{z}^{k} R\!E(w)$\;}
\textbf{Return} $R\!E(w)$;

\noindent\rule{12cm}{0.5pt}

\caption{A word over $\mathbf{X}$ corresponding to a matrix $w \in G(de,e,n)$.}
\end{algorithm}

\begin{example}\label{ExampleAlgo1}

We apply Algorithm\! \ref{Algo2} to $w := \begin{pmatrix}

\zeta_9 & 0 & 0 & 0\\
0 & 0 & 1 & 0\\
0 & 0 & 0 & \zeta_9\\
0 & \zeta_9 & 0 & 0\\

\end{pmatrix}$ $\in G(9,3,4)$.\\
Step $1$ $(i=4, k=0, c=1)$: $w':= w s_2=\begin{pmatrix}

0 & \zeta_9 & 0 & 0\\
0 & 0 & 1 & 0\\
0 & 0 & 0 & \zeta_9\\
\zeta_9 & 0 & 0 & 0\\

\end{pmatrix}$, then $w':= w' t_1 = \begin{pmatrix}

\zeta_9^2 & 0 & 0 & 0\\
0 & 0 & 1 & 0\\
0 & 0 & 0 & \zeta_9\\
0 & 1 & 0 & 0\\

\end{pmatrix}$, then $w':= w' s_3 s_4 = \begin{pmatrix}

\zeta_9^2 & 0 & 0 & 0\\
0 & 1 & 0 & 0\\
0 & 0 & \boxed{\zeta_9} & 0\\
0 & 0 & 0 & \mathbf{1}\\

\end{pmatrix}$.\\
Step $2$ $(i=3, k=1, c=3)$: $w' := w' s_3 s_2 = \begin{pmatrix}

0 & \zeta_9^2 & 0 & 0\\
0 & 0 & 1 & 0\\
\zeta_9 & 0 & 0 & 0\\
0 & 0 & 0 & 1\\

\end{pmatrix}$, then $w' := w' t_1 = \begin{pmatrix}

\zeta_9^{3} & 0 & 0 & 0\\
0 & 0 & 1 & 0\\
0 & 1 & 0 & 0\\
0 & 0 & 0 & 1\\

\end{pmatrix}$, then $w':= w' s_3 = \begin{pmatrix}

\zeta_9^3 & 0 & 0 & 0\\
0 & \boxed{1} & 0 & 0\\
0 & 0 & \mathbf{1} & 0\\
0 & 0 & 0 & 1\\

\end{pmatrix}$.\\
Step $3$ $(i=2, k=0, c=2)$: $w' = \begin{pmatrix}

\boxed{\zeta_9^3} & 0 & 0 & 0\\
0 & \mathbf{1} & 0 & 0\\
0 & 0 & 1 & 0\\
0 & 0 & 0 & 1\\

\end{pmatrix} = \begin{pmatrix}

\boxed{\zeta_3} & 0 & 0 & 0\\
0 & \mathbf{1} & 0 & 0\\
0 & 0 & 1 & 0\\
0 & 0 & 0 & 1\\

\end{pmatrix}$.\\
Step $4$ $(k=1)$: $w' := w' z^{-1} = I_4$.\\
Hence $R\!E(w) = \mathbf{z} \mathbf{s}_3 \mathbf{t}_1 \mathbf{s}_2 \mathbf{s}_3 \mathbf{s}_4 \mathbf{s}_3 \mathbf{t}_1 \mathbf{s}_2 = \mathbf{z} \mathbf{s}_3 \mathbf{t}_1 \mathbf{t}_0 \mathbf{s}_3 \mathbf{s}_4 \mathbf{s}_3 \mathbf{t}_1 \mathbf{t}_0$ \emph{(since $\mathbf{s}_2 = \mathbf{t}_0$)}.

\end{example}

The next lemma follows directly from Algorithm \ref{Algo2}.

\begin{lemma}\label{LemmaBlocksG(de,e,n)}

For $2 \leq i \leq n$, suppose $w_i[i,c] \neq 0$. The block $w_{i-1}$ is obtained by
\begin{itemize}

\item removing the row $i$ and the column $c$ from $w_i$, then by

\item multiplying the first column of the new matrix by $w_i[i,c]$.

\end{itemize}

Moreover, if we denote by $a_i$ the unique nonzero entry on the row $i$ of $w$, we have $w_1 = \displaystyle\prod_{i=1}^{n} a_i = \zeta_d^k$ for $0 \leq k \leq d-1$.

\end{lemma}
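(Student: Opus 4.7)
The plan is to prove both parts by following Algorithm \ref{Algo2} at its $i$-th step and carefully tracking how right multiplication by each generator in $X$ acts on the columns of the current matrix. Throughout, I will use two elementary facts: right multiplication by $s_j$ (for $j \geq 3$) or by $s_2 := t_0$ permutes columns $j-1$ and $j$ without scaling, while right multiplication by $t_k$ (for $k \neq 0$) swaps columns $1$ and $2$ and multiplies the new column $1$ by $\zeta_{de}^{k}$ and the new column $2$ by $\zeta_{de}^{-k}$.

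For the first claim I split into two cases according to the value of $a_i := w_i[i,c]$. If $a_i = 1$, the algorithm executes only $w' := w_i \, s_{c+1} s_{c+2} \cdots s_i$, which is a product of column transpositions moving the column that was in position $c$ to position $i$, while columns $c+1, c+2, \ldots, i$ each shift one place to the left and columns $1, \ldots, c-1$ remain fixed. Since $w_i$ is monomial and $w_i[i,c]=1$ is the unique nonzero entry of column $c$ as well as of row $i$, erasing row $i$ and column $i$ of the new matrix yields exactly $w_i$ with row $i$ and column $c$ removed; multiplying the first column by $a_i = 1$ is trivial. If instead $a_i = \zeta_{de}^k$ with $k \neq 0$, the algorithm first applies $s_c s_{c-1} \cdots s_3 s_2$, which moves column $c$ of $w_i$ into position $1$ (and pushes original columns $1, \ldots, c-1$ into positions $2, \ldots, c$); then applies $t_k$, whose effect puts $\zeta_{de}^{k}\cdot(\text{original column }1)$ into position $1$ and $\zeta_{de}^{-k}\cdot(\text{original column }c)$ into position $2$ (so position $2$ now carries a $1$ at row $i$); then applies $s_3 s_4 \cdots s_i$, which moves this column from position $2$ to position $i$ while the intermediate columns slide one position left. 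Reading off rows $1, \ldots, i-1$ and columns $1, \ldots, i-1$ then gives precisely the matrix obtained from $w_i$ by deleting row $i$ and column $c$, with its first column multiplied by $a_i$, as claimed.

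For the moreover part I proceed by a descending induction on $i$ showing that the product of the nonzero entries of $w_i$ equals $\prod_{j=1}^{n} a_j$. At each passage from $w_i$ to $w_{i-1}$, the nonzero entries of $w_i$ are exactly $a_1, \ldots, a_i$; deleting row $i$ and column $c$ removes $a_i$ from the list, leaving $a_1, \ldots, a_{i-1}$ (up to reordering), and then multiplying the first column by $a_i$ multiplies whichever entry $a_j$ currently lies in column $1$ by $a_i$. The product is therefore invariant, so $w_1 = \prod_{j=1}^{n} a_j$. Since $w \in G(de,e,n)$, the defining condition on $G(de,e,n)$ says exactly that this product is a $d$-th root of unity, i.e.\ $w_1 = \zeta_d^k$ for some $0 \leq k \leq d-1$.

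The computations are routine once the column bookkeeping is set up; the only slightly delicate point is keeping track of the asymmetric twist introduced by $t_k$ (as opposed to the untwisted swap by $s_2 = t_0$), which is why the first column of $w_{i-1}$ gets rescaled exactly by the factor $a_i$ and not its inverse. Identifying this factor correctly is the main thing to verify carefully; everything else reduces to composing column transpositions.
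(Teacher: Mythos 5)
Your proof is correct and supplies exactly the column-bookkeeping that the paper leaves implicit: the paper offers no argument here beyond stating that the lemma ``follows directly from Algorithm \ref{Algo2},'' and your case analysis of the right multiplications by $s_j$ and $t_k$ is precisely that verification. One small imprecision: the nonzero entries of $w_i$ need not literally be $a_1,\dots,a_i$, since first-column entries may already have been rescaled at earlier steps (the paper itself acknowledges this elsewhere by writing $b_{i-1}$ instead of $a_{i-1}$), but your product-invariance step — one nonzero entry is deleted and another is multiplied by that same entry — does not use that identification, so the conclusion $w_1=\prod_{i=1}^{n} a_i=\zeta_d^k$ is unaffected.
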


\begin{definition}\label{DefREiwG(de,e,n)}

Let $1 \leq i \leq n$. Let $w_i[i,c] \neq 0$ for $1 \leq c \leq i$.

\begin{itemize}

\item If $w_1 = \zeta_d^k$ with $0 \leq k \leq d-1$, we define $R\!E_{1}(w)$ to be the word $\mathbf{z}^{k}$.	
\item If $w_{i}[i,c] =1$, we define $R\!E_{i}(w)$ to be the word\\
$\mathbf{s}_i \mathbf{s}_{i-1} \cdots \mathbf{s}_{c+1}$ (decreasing-index expression).
\item If $w_{i}[i,c] = \zeta_{de}^{k}$ with $k \neq 0$, we define $R\!E_{i}(w)$ to be the word\\
\begin{tabular}{ll}
			$\mathbf{s}_i \cdots \mathbf{s}_3 \mathbf{t}_k$ & if $c=1$,\\
			$\mathbf{s}_i \cdots \mathbf{s}_3 \mathbf{t}_k \mathbf{t}_0$  & if $c=2$,\\
			$\mathbf{s}_i \cdots \mathbf{s}_3 \mathbf{t}_k \mathbf{t}_0 \mathbf{s}_3 \cdots \mathbf{s}_c$ & if $c \geq 3$.\\
			
\end{tabular}

\end{itemize}
\end{definition}

The output $R\!E(w)$ of Algorithm \ref{Algo2} is a concatenation of the words $R\!E_1(w)$, $R\!E_2(w)$, $\cdots$, and $R\!E_n(w)$ obtained at each step $i$ from $n$ to $1$ of Algorithm \ref{Algo2}. Then we have $R\!E(w) = R\!E_1(w) R\!E_2(w)  \cdots R\!E_n(w)$.

\begin{example}

If $w$ is defined as in Example \ref{ExampleAlgo1}, we have\\
$R\!E(w) = \underset{R\!E_{1}(w)}{\underbrace{\mathbf{z}}} \hspace{0.2cm} \underset{R\!E_{3}(w)}{\underbrace{\mathbf{s}_3 \mathbf{t}_1 \mathbf{t}_0 \mathbf{s}_3}} \hspace{0.2cm} \underset{R\!E_{4}(w)}{\underbrace{ \mathbf{s}_4 \mathbf{s}_3 \mathbf{t}_1 \mathbf{t}_0}}$, where $R\!E_2(w)$ is the empty word.

\end{example}

\begin{proposition}\label{PropWordRepG(de,e,n)}

Let $w \in G(de,e,n)$. The word $R\!E(w)$ given by Algorithm \ref{Algo2} is a word representative over $\mathbf{X}$ of $w \in G(de,e,n)$.

\end{proposition}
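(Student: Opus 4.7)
My plan is to mirror the strategy used in the analogous statement for $G(e,e,n)$ (the Proposition before Proposition \ref{PropREwRedExp}), with only the mild additional bookkeeping needed because the new generator $z$ satisfies $z^d=1$ rather than $z^2=1$.

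First I would trace the effect of Algorithm \ref{Algo2} on $w$ as a sequence of right multiplications. By construction, the two \texttt{for}/\texttt{while} blocks successively right-multiply $w$ by elements of the form $s_c$, $s_{c+1}, \dots, s_i$ and $t_k$, and at the very end by $z^{-k}$ for some $0 \le k \le d-1$. Call the sequence of generators used $x_1, x_2, \dots, x_r, z^{-k}$, where each $x_j \in \{s_3,\dots,s_n,t_0,\dots,t_{de-1}\} \cup \{s_2 := t_0\}$. The algorithm only terminates when $w' = I_n$, so after all these multiplications we have
\[
w \cdot x_1 x_2 \cdots x_r \cdot z^{-k} = I_n.
\]

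Second, I would read the output word $R\!E(w)$ directly from the algorithm: each time $w'$ is updated on the right by an $s_j$ or $t_l$, the corresponding bold letter is prepended to $R\!E(w)$; the final step prepends $\mathbf{z}^k$. Hence, as words over $\mathbf{X}$,
\[
R\!E(w) = \mathbf{z}^{k}\, \mathbf{x}_r\, \mathbf{x}_{r-1}\cdots \mathbf{x}_1.
\]
Rearranging the equation above gives $w = z^{k}\, x_r^{-1} \cdots x_1^{-1}$. Since each $x_j$ is one of the $s_\bullet$ or $t_\bullet$ generators, the quadratic relations $\mathbf{s}_j^2 = 1$ and $\mathbf{t}_l^2 = 1$ in Definition \ref{DefCorranLeePresG(de,e,n)} yield $x_j^{-1} = x_j$ in $G(de,e,n)$. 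Therefore $w = z^{k}\, x_r\, x_{r-1}\cdots x_1$, which is exactly the element of $G(de,e,n)$ represented by $R\!E(w)$.

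There is essentially no hard step; the main thing to verify carefully is that Algorithm \ref{Algo2} actually terminates with $w'=I_n$, i.e.\ that after the \texttt{for} loop the remaining block $w_1$ is indeed a scalar $\zeta_d^k$ for some $0\le k \le d-1$ (so that the final $z^{-k}$ step is well-defined). This is precisely the content of Lemma \ref{LemmaBlocksG(de,e,n)}: the reduction on the rows $i = n, n-1, \dots, 2$ leaves a $1\times 1$ block equal to the product of the nonzero entries of $w$, which by the definition of $G(de,e,n)$ is a $d$-th root of unity. Once termination is justified, the argument above is a one-line consequence, exactly parallel to the proof for $G(e,e,n)$.
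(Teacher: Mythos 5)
Your proposal is correct and follows essentially the same route as the paper's proof: read the algorithm as $w\,x_1\cdots x_r\,z^{-k}=I_n$, invert using the involutivity of the $s_j$ and $t_l$ generators, and match the result with the prepended output word $\mathbf{z}^k\mathbf{x}_r\cdots\mathbf{x}_1$. Your extra remark that the terminal block $w_1$ is a power of $\zeta_d$ (via Lemma \ref{LemmaBlocksG(de,e,n)} and the defining condition on $G(de,e,n)$) is exactly the justification the paper invokes implicitly, so there is no substantive difference.
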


\begin{proof}

Let $w \in G(de,e,n)$ such that the product of all the nonzero entries of $w$ is equal to $\zeta_d^k$ for some $0 \leq k \leq d-1$.  Algorithm \ref{Algo2} transforms the matrix $w$ into $I_n$ by multiplying it on the right by elements of $X$. We get $w x_1 \cdots x_{r-1}x_r = I_n$, where $x_1, \cdots, x_{r-1}$ are elements of $X \setminus \{z\}$ and $x_r = z^{-k}$. Hence $w = x_r^{-1} x_{r-1}^{-1} \cdots x_1^{-1} = z^{k} x_{r-1} \cdots x_1$ since $x_i^2 = 1$ for $1 \leq i \leq r-1$. The output $R\!E(w)$ of Algorithm \ref{Algo2} is $R\!E(w) = \mathbf{z}^k \mathbf{x}_{r-1} \cdots \mathbf{x}_1$. Hence it is a word representative over $\mathbf{X}$ of $w \in G(de,e,n)$.  

\end{proof}

The following proposition is similar to Proposition \ref{prop.lengthcomp}. It enables us to prove that the output of Algorithm \ref{Algo2} is a reduced expression over $\mathbf{X}$ of a given element $w \in G(de,e,n)$.

\begin{proposition}\label{prop.lengthcompG(de,e,n)}

Let $w$ be an element of $G(de,e,n)$. For all $x \in X$, we have $$\boldsymbol{\ell}(R\!E(xw)) \leq \boldsymbol{\ell}(R\!E(w)) + 1.$$

\end{proposition}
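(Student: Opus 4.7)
The plan is to split into cases according to $x \in X = \{z\} \cup \{t_k \mid 0 \leq k \leq de-1\} \cup \{s_j \mid 3 \leq j \leq n\}$ and to exploit the block decomposition $R\!E(w) = R\!E_1(w) R\!E_2(w) \cdots R\!E_n(w)$ guaranteed by Definition \ref{DefREiwG(de,e,n)}.

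For $x = s_j$ with $3 \leq j \leq n$ and for $x = t_k$ with $0 \leq k \leq de-1$, I would repeat the case analysis of Proposition \ref{prop.lengthcomp} almost word for word. Two observations make the translation transparent. First, left multiplication by $s_j$ or $t_k$ preserves the product of the nonzero entries of $w$ (two rows are swapped for $s_j$, while for $t_k$ the extra factors $\zeta_{de}^{k}$ and $\zeta_{de}^{-k}$ appearing on rows $1$ and $2$ cancel), hence $R\!E_1(xw) = R\!E_1(w)$. Second, the pieces $R\!E_i$ for $i \geq 2$ in Definition \ref{DefREiwG(de,e,n)} have exactly the same shape as in Definition \ref{DefREiw}, with $\zeta_e$ replaced by $\zeta_{de}$. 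Thus the six subcases $(a)$--$(f)$ from Proposition \ref{prop.lengthcomp} go through verbatim and yield $\bigl|\boldsymbol{\ell}(R\!E(xw)) - \boldsymbol{\ell}(R\!E(w))\bigr| = 1$, which is enough for the desired bound.

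The genuinely new case is $x = z$. Left multiplication by $z$ replaces the nonzero entry $a_1$ of row $1$ of $w$ by $\zeta_d a_1$ and leaves every other row untouched. I would then prove by downward induction on $i$ that, for each $i \geq 2$, the column $c$ and exponent $k$ chosen by Algorithm \ref{Algo2} when it processes row $i$ depend only on the values and positions of $a_i, a_{i+1}, \ldots, a_n$ in the original $w$. Indeed, the state of row $i$ at that moment is obtained from the original row $i$ by a column permutation together with $\zeta_{de}^{\pm}$-rescalings of columns $1$ and $2$, and both the permutation and the rescalings are dictated solely by the operations carried out in the previous steps, i.e.\ by $a_{i+1}, \ldots, a_n$. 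Consequently $R\!E_i(zw) = R\!E_i(w)$ for every $i \geq 2$. On the other hand, if $\zeta_d^{k_0}$ denotes the product of the nonzero entries of $w$, then the analogous product for $zw$ is $\zeta_d^{k_0+1}$, so $R\!E_1(w) = \mathbf{z}^{k_0}$ and $R\!E_1(zw) = \mathbf{z}^{(k_0+1) \bmod d}$. Taking the length difference yields
$$\boldsymbol{\ell}(R\!E(zw)) - \boldsymbol{\ell}(R\!E(w)) \,=\, \bigl((k_0+1) \bmod d\bigr) - k_0 \,=\, \begin{cases} +1 & \text{if } k_0 < d-1, \\ -(d-1) & \text{if } k_0 = d-1, \end{cases}$$
both values being $\leq 1$. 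This also explains why the statement is a one-sided inequality rather than the equality of Proposition \ref{prop.lengthcomp}.

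The main obstacle will be the independence claim for $x = z$: rigorously tracking through Algorithm \ref{Algo2} that the choices at step $i$ are functions only of $a_i, a_{i+1}, \ldots, a_n$ and of their original columns, and never of $a_1$. This reduces to a bookkeeping lemma describing how right multiplications by $s_c$ (a pure column transposition) and $t_{k'}$ (a swap of columns $1$ and $2$ together with rescaling by $\zeta_{de}^{\pm k'}$) propagate through the matrix, and it should be organized together with the downward induction above.
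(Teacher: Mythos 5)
Your proposal is correct and follows essentially the same route as the paper: a case split over $x\in X$, with the $s_j$ and $t_k$ cases reducing to the analysis of Proposition \ref{prop.lengthcomp} (the $\mathbf{z}^k$ prefix $R\!E_1$ being common to $w$ and $xw$), and the new $z$ case isolated to $R\!E_1$, where the wrap-around at $k=d-1$ accounts for the one-sided inequality $(g)$. Two small remarks: the ``independence'' claim you single out as the main obstacle is immediate from Lemma \ref{LemmaBlocksG(de,e,n)} (left multiplication by $z$ only rescales row $1$, and the block recursion never feeds row $1$ into the rows $\geq 2$ data), and in the $t_k$ case the computations are not literally verbatim (e.g.\ $b_1=\zeta_d^k$ rather than $1$, and $R\!E_2(xw)=\mathbf{t}_{i+ke}$), but the lengths match so your conclusion stands, exactly as in the paper's Case 2.
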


\begin{proof}

For $1 \leq i \leq n$, there exists a unique $c_i$ such that $w[i,c_i] \neq 0$. We denote $w[i,c_i]$ by $a_i$. We have  $\displaystyle\prod_{i=1}^{n} a_i = \zeta_d^k$ for some $0 \leq k \leq d-1$. \\

\underline{Case 1: Suppose $x = s_i$ for $3 \leq i \leq n$.}\\

This case is done in the same way as in the proof of Proposition \ref{prop.lengthcomp}. We get $(a)$, $(b)$, $(a')$, and $(b')$, see Case 1 in the proof of Proposition \ref{prop.lengthcomp}.\\

\underline{Case 2: Suppose $x = t_i$ for $0 \leq i \leq de-1$.}\\

Set $w' := t_iw$. By the left multiplication by $t_i$, we have that the last $n-2$ rows of $w$ and $w'$ are the same. Hence, by Definition \ref{DefREiwG(de,e,n)} and Lemma \ref{LemmaBlocksG(de,e,n)}, we have:\\
$R\!E_3(xw)R\!E_4(xw) \cdots R\!E_n(xw) = R\!E_3(w)R\!E_4(w) \cdots R\!E_n(w)$. In order to prove our property in this case, we should compare $\boldsymbol{\ell}_1 := \boldsymbol{\ell}(R\!E_1(w)R\!E_2(w))$ and\\ $\boldsymbol{\ell}_2 := \boldsymbol{\ell}(R\!E_1(xw)R\!E_2(xw))$.

\begin{itemize}

\item \underline{Consider the case where $c_1 < c_2$.}\\

Since $c_1 < c_2$, by Lemma \ref{LemmaBlocksG(de,e,n)}, the blocks $w_2$ and $w'_2$ are of the form:

$w_2 = \begin{pmatrix}

b_1 & 0\\
0 & a_2\\

\end{pmatrix}$ and $w'_2 = \begin{pmatrix}

0 & \zeta_{de}^{-i}a_2\\
\zeta_{de}^{i}b_{1} & 0\\ 

\end{pmatrix}$ with $b_{1}$ instead of $a_{1}$ since $a_{1}$ may change when applying Algorithm \ref{Algo2} if $c_{1} =1$.

\begin{itemize}
\item \underline{Suppose $a_2 = 1$,}\\
we have $b_1 = \zeta_d^k$ hence $\boldsymbol{\ell}_1 = k$. We also have $R\!E_2(xw) = \boldsymbol{t}_{i+ke}$ and $R\!E_1(xw) = \mathbf{z}^k$. Hence we get $\boldsymbol{\ell}_2 = k+1$. It follows that when $c_1 < c_2$,

\begin{center}
if $a_2 =1$, then $\boldsymbol{\ell}(R\!E(t_iw)) = \boldsymbol{\ell}(R\!E(w)) +1. \hspace{1cm} (c)$
\end{center}

\item \underline{Suppose $a_2 = \zeta_{de}^{k'}$ with $1 \leq k' \leq de-1$,}\\
we have $b_1 = \zeta_{de}^{ke-k'}$. We get $R\!E_2(w) = \boldsymbol{t}_{k'}\boldsymbol{t}_0$ and $R\!E_1(w) = \mathbf{z}^k$. Thus, $\boldsymbol{\ell}_1 = k+2$. We also get $R\!E_2(xw) = \boldsymbol{t}_{ke+i-k'}$ and $R\!E_1(xw) = \mathbf{z}^k$. Thus, $\boldsymbol{\ell}_2 = k+1$. It follows that when $c_1 < c_2$,

\begin{center}
if $a_2 \neq 1$, then $\boldsymbol{\ell}(R\!E(t_iw)) = \boldsymbol{\ell}(R\!E(w))  -1. \hspace{1cm} (d)$
\end{center}

\end{itemize}

\item \underline{Now, consider the case where $c_1 > c_2$.}\\

Since $c_1 > c_2$, by Lemma \ref{LemmaBlocksG(de,e,n)}, the blocks $w_2$ and $w'_2$ are of the form:

$w_2 = \begin{pmatrix}

0 & a_1\\
b_2 & 0\\

\end{pmatrix}$ and $w'_2 = \begin{pmatrix}

\zeta_{de}^{-i}b_2 & 0\\
0 & \zeta_{de}^{i}a_{1}\\ 

\end{pmatrix}$ with $b_{2}$ instead of $a_{2}$ since $a_{2}$ may change when applying Algorithm \ref{Algo2} if $c_{2} =1$.

\begin{itemize}

\item \underline{Suppose $a_1 \neq \zeta_{de}^{-i}$,}\\
we have $\boldsymbol{\ell}_1 = k+1$, and since $\zeta_{de}^{i}a_1 \neq 1$, we have $\boldsymbol{\ell}_2 = k+2$. Hence when $c_1 > c_2$,

\begin{center}
if $a_1 \neq \zeta_{de}^{-i}$, then $\boldsymbol{\ell}(R\!E(t_iw)) = \boldsymbol{\ell}(R\!E(w)) +1. \hspace{1cm} (e)$
\end{center}

\item \underline{Suppose $a_1 = \zeta_{de}^{-i}$,} we have $b_2 = \zeta_{de}^{i+ek}$.\\
We get $\boldsymbol{\ell}_1 = k+1$ and $\boldsymbol{\ell}_2 = k$. Hence when $c_1 > c_2$,

\begin{center}
if $a_1 = \zeta_{de}^{-i}$, then $\boldsymbol{\ell}(R\!E(t_iw)) = \boldsymbol{\ell}(R\!E(w)) -1. \hspace{1cm} (f)$
\end{center}

\end{itemize}

\end{itemize}

\underline{Case 3: Suppose $x = z$.}\\

Set $w' := zw$. By the left multiplication by $z$, we have that the last $n-1$ rows of $w$ and $w'$ are the same. Hence, by Definition \ref{DefREiwG(de,e,n)} and Lemma \ref{LemmaBlocksG(de,e,n)}, we have:\\
$R\!E_2(xw)R\!E_3(xw) \cdots R\!E_n(xw) = R\!E_2(w)R\!E_3(w) \cdots R\!E_n(w)$. In order to prove our property in this case, we should compare $\boldsymbol{\ell}_1 := \boldsymbol{\ell}(R\!E_1(w))$ and $\boldsymbol{\ell}_2 := \boldsymbol{\ell}(R\!E_1(xw))$.\\

We get $w_1$ is equal to $b_1$ and $w'_1 = \zeta_d b_1$ with $b_{1}$ instead of $a_{1}$ since $a_{1}$ may change when applying Algorithm \ref{Algo2} if $c_{1} =1$. We have $b_1 = \displaystyle\prod_{i=1}^{n} a_i = \zeta_d^k$ for some $0 \leq k \leq d-1$. Hence if $k \neq d-1$, we get $\boldsymbol{\ell}_1 = k$ and $\boldsymbol{\ell}_2 = k+1$ and if $k = d-1$, we get $\boldsymbol{\ell}_1 = d-1$ and $\boldsymbol{\ell}_2 = 0$. It follows that

\begin{center}
$\boldsymbol{\ell}(R\!E(zw)) \leq \boldsymbol{\ell}(R\!E(w)) + 1. \hspace{1cm} (g)$
\end{center}

\end{proof}

By using Proposition \ref{prop.lengthcompG(de,e,n)}, we apply the argument of the proof of Proposition \ref{PropREwRedExp} and deduce that $R\!E(w)$ is a reduced expression over $\mathbf{X}$ of $w \in G(de,e,n)$. Hence Algorithm \ref{Algo2} produces geodesic normal forms for $G(de,e,n)$. Proposition \ref{PropLengthdecreas} is also valid for the case of $G(de,e,n)$, where we replace $e$ by $de$. It summarizes the proof of Proposition \ref{prop.lengthcompG(de,e,n)}.

\subsection{The case of $G(d,1,n)$}

We provide similar constructions for the case of $G(d,1,n)$ for $d > 1$ and $n \geq 2$. We recall the diagram of the presentation of $G(d,1,n)$. 

\begin{figure}[H]

\begin{center}
\begin{tikzpicture}

\node[draw, shape=circle, label=above:$\mathbf{z}$] (1) at (0,0) {$d$};
\node[draw, shape=circle,label=above:$\mathbf{s}_2$] (2) at (2,0) {$2$};
\node[draw, shape=circle,label=above:$\mathbf{s}_3$] (3) at (4,0) {$2$};
\node[draw, shape=circle,label=above:$\mathbf{s}_{n-1}$] (n-1) at (6,0) {$2$};
\node[draw,shape=circle,label=above:$\mathbf{s}_n$] (n) at (8,0) {$2$};

\draw[thick,-,double] (1) to (2);
\draw[thick,-] (2) to (3);
\draw[dashed,-,thick] (3) to (n-1);
\draw[thick,-] (n-1) to (n);

\end{tikzpicture}
\end{center}

\end{figure}

The set of the generators is denoted by $\mathbf{X} = \{\mathbf{z}, \mathbf{s}_2, \cdots, \mathbf{s}_n\}$. The generator $\mathbf{z}$ corresponds to the matrix $z := Diag(\zeta_d,1,\cdots,1)$ in $G(d,1,n)$ with $\zeta_d = exp(2i\pi / d)$ and $\mathbf{s}_j$ corresponds to the transposition matrix $s_j := (j-1,j)$ for $2 \leq j \leq n$. Denote by $X = \{z,s_2,s_3, \cdots, s_n\}$ the set of these matrices.\\

We define Algorithm \ref{Algo3} below (see next page) in the same way as Algorithms \ref{Algo1} and \ref{Algo2}. It produces a word $R\!E(w)$ for each matrix $w$ of $G(d,1,n)$. We explain the steps of the algorithm. Let $w_n := w \in G(d,1,n)$. For $i$ from $n$ to $1$, the $i$-th step of the algorithm transforms the block diagonal matrix $\left(
\begin{array}{c|c}
w_i & 0 \\
\hline
0 & I_{n-i}
\end{array}
\right)$ into a block diagonal matrix $\left(
\begin{array}{c|c}
w_{i-1} & 0 \\
\hline
0 & I_{n-i+1}
\end{array}
\right) \in G(d,1,n)$. Let $w_i[i,c] \neq 0$ be the nonzero coefficient on the row $i$ of $w_i$. If $w_i[i,c] =1$, we shift it into the diagonal position $[i,i]$ by right multiplication by transpositions. If $w_i[i,c] = \zeta_d^k$ with $k \geq 1$, we shift it into position $[i,1]$ by right multiplication by transpositions, followed by a right multiplication by $z^{-k}$, then we shift the $1$ obtained in position $[i,1]$ into the diagonal position $[i,i]$ by right multiplication by transpositions. Let us illustrate these operations by the following example.\\

\begin{algorithm}[]\label{Algo3}

\SetKwInOut{Input}{Input}\SetKwInOut{Output}{Output}

\noindent\rule{12cm}{0.5pt}

\Input{$w$, a matrix in $G(d,1,n)$, with $d > 1$ and $n \geq 2$.}
\Output{$R\!E(w)$, a word over $\mathbf{X}$.}

\noindent\rule{12cm}{0.5pt}

\textbf{Local variables}: $w'$, $R\!E(w)$,  $i$, $U$, $c$, $k$.

\noindent\rule{12cm}{0.5pt}

\textbf{Initialisation}:
$U:=[1,\zeta_d,\zeta_{d}^2, \cdots ,\zeta_{d}^{d-1}]$, $R\!E(w) := \varepsilon$: the empty word, $w' := w$.

\noindent\rule{12cm}{0.5pt}

\For{$i$ \textbf{from} $n$ \textbf{down to} $1$} {
	$c:=1$; $k:=0$; \\
	\While{$w'[i,c] = 0$}{$c :=c+1$\; 
	}
	 \textit{\#Then $w'[i,c]$ is the root of unity on the row $i$}\;
	\While{$U[k+1]\neq w'[i,c] $}{$k :=k+1$\;
	}
	\textit{\#Then $w'[i,c] = \zeta_{d}^k$.}\\

		\If{$k \neq 0$}{
		$w' := w's_{c}s_{c-1} \cdots s_{3}s_{2}z^{-k}$; \textit{\#Then $w'[i,2] =1$}\;
		$R\!E(w) := \mathbf{z}^{k}\mathbf{s}_2\mathbf{s}_3 \cdots \mathbf{s}_c R\!E(w)$\;
		$c:=1$\;
	}	
	$w' := w's_{c+1} \cdots s_{i-1} s_{i}$; \textit{\#Then $w'[i,i] = 1$}\;
	$R\!E(w) := \mathbf{s}_i \mathbf{s}_{i-1} \cdots \mathbf{s}_{c+1} R\!E(w)$\;
}
\textbf{Return} $R\!E(w)$;

\noindent\rule{12cm}{0.5pt}

\caption{A word over $\mathbf{X}$ corresponding to an element $w \in G(d,1,n)$.}
\end{algorithm}

\begin{example}\label{ExampAlgoG(d,1,n)}

Let $w := \begin{pmatrix}

0 & \zeta_3 & 0\\
0 & 0 & \zeta_3^2\\
\zeta_3^2 & 0 & 0

\end{pmatrix}$ $\in G(3,1,3)$.\\
Step $1$ $(i=3, k=2, c=1)$: $w':= w z^{-2}=\begin{pmatrix}

0 & \zeta_3 & 0\\
0 & 0 & \zeta_3^2\\
1 & 0 & 0

\end{pmatrix}$, then\\ $w':= w' s_2 s_3 = \begin{pmatrix}

\zeta_3 & 0 & 0\\
0 & \boxed{\zeta_3^2} & 0\\
0 & 0 & \mathbf{1}

\end{pmatrix}$.\\
Step $2$ $(i= 2, k = 2, c=1)$: $w' := w' s_2 = \begin{pmatrix}

0 & \zeta_3 & 0\\
\zeta_3^2 & 0 & 0\\
0 & 0 & 1

\end{pmatrix}$, then $w':= w' z^{-2} = \begin{pmatrix}

0 & \zeta_3 & 0\\
1 & 0 & 0\\
0 & 0 & 1

\end{pmatrix}$, then $w' := w' s_2 = \begin{pmatrix}

\boxed{\zeta_3} & 0 & 0\\
0 & \mathbf{1} & 0\\
0 & 0 & 1

\end{pmatrix}$.\\
Step $3$ $(i=1, k=1, c=1)$: $w' := w' z^{-1} = I_3$.\\
Hence $R\!E(w) = \mathbf{z} \mathbf{s}_2 \mathbf{z}^2 \mathbf{s}_2 \mathbf{s}_3 \mathbf{s}_2 \mathbf{z}^2$.

\end{example}

The next lemma follows directly from Algorithm \ref{Algo3}. Remark the difference between the next lemma and lemmas \ref{LemmaBlocks} and \ref{LemmaBlocksG(de,e,n)}. 

\begin{lemma}\label{LemmaBlocksG(d,1,n)}

For $2 \leq i \leq n$, the block $w_i$ is obtained by removing the row $i$ and the column $c$ from $w_i$. \mbox{Moreover, $w_1$ is equal to the nonzero entry on the first row of $w$.}

\end{lemma}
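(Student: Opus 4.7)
The plan is to verify this claim by direct inspection of Algorithm~\ref{Algo3}, splitting on whether $w_i[i,c]=1$ or $w_i[i,c]=\zeta_d^k$ with $k\neq 0$, and then deducing the statement about $w_1$ by iterating.

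First I would describe the column operations performed at step $i$. In the case $k=0$, the algorithm multiplies $w'$ on the right only by the transpositions $s_{c+1},s_{c+2},\ldots,s_i$. Cumulatively this shifts the original column $c$ into position $i$ while sliding columns $c+1,\ldots,i$ into positions $c,\ldots,i-1$ in order, leaving rows unchanged. Row $i$ now has its $1$ in position $[i,i]$, so the top-left $(i-1)\times(i-1)$ block is exactly the submatrix obtained by deleting row $i$ and column $c$ from $w_i$.

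In the case $k\neq 0$, the same conclusion holds but one extra ingredient must be handled: the multiplication by $z^{-k}$ in the middle. After the swaps $s_c,s_{c-1},\ldots,s_2$, the original column $c$ is temporarily located in position $1$ and carries a unique nonzero entry $\zeta_d^k$ at row $i$ (this uniqueness is crucial and uses that $w_i$ is monomial). Right multiplication by $z^{-k}=\mathrm{Diag}(\zeta_d^{-k},1,\ldots,1)$ scales exactly this column, converting the entry to $1$ without touching any other row. The subsequent swaps $s_2,s_3,\ldots,s_i$ then carry this rescaled column to position $i$, so the scaled column lands outside the top-left $(i-1)\times(i-1)$ block; the remaining columns in positions $1,\ldots,i-1$ are the unchanged original columns $1,\ldots,c-1,c+1,\ldots,i$ of $w_i$, proving again that $w_{i-1}$ is $w_i$ with row $i$ and column $c$ deleted. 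The only subtle point (the main thing to check carefully) is precisely this: because of monomiality, the scaling $z^{-k}$ never modifies an entry that survives in $w_{i-1}$, so \emph{no first-column correction} is needed, in contrast with Lemmas~\ref{LemmaBlocks} and~\ref{LemmaBlocksG(de,e,n)}.

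For the second claim, I would iterate the first. Denote by $a_j$ the unique nonzero entry in row $j$ of $w=w_n$. Since each reduction $w_i\leadsto w_{i-1}$ only removes row $i$ and one column and preserves the remaining row entries (the argument above shows that no surviving entry is ever rescaled), by induction on $i$ the nonzero entry in row $1$ of $w_i$ equals $a_1$ for every $i\geq 1$. Taking $i=1$ gives $w_1=a_1$, the unique nonzero entry on the first row of $w$.
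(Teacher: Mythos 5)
Your verification is correct and follows the same route as the paper, which simply asserts that the lemma follows directly from Algorithm~\ref{Algo3}: you track the column operations at each step, note that monomiality ensures the $z^{-k}$ scaling only affects the removed column $c$ (so no first-column correction arises, unlike Lemmas~\ref{LemmaBlocks} and~\ref{LemmaBlocksG(de,e,n)}), and iterate to identify $w_1$ with the row-one entry of $w$. Your write-up just makes explicit the direct inspection the paper leaves to the reader.
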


\begin{definition}\label{DefinitionREG(d,1,n)}

Let $1 \leq i \leq n$. Let $w_i[i,c] \neq 0$ for $1 \leq c \leq i$.

\begin{itemize}

\item If $w_1 = \zeta_d^k$ for some $0 \leq k \leq d-1$ (this is equal to the nonzero entry on the first row of $w$), we define $R\!E_{1}(w)$ to be the word $\mathbf{z}^{k}$.	
\item If $w_{i}[i,c] =1$, we define $R\!E_{i}(w)$ to be the word\\
$\mathbf{s}_i \mathbf{s}_{i-1} \cdots \mathbf{s}_{c+1}$ (decreasing-index expression).
\item If $w_{i}[i,c] = \zeta_{d}^{k}$ with $k \neq 0$, we define $R\!E_{i}(w)$ to be the word\\
\begin{tabular}{ll}
			$\mathbf{s}_i \cdots \mathbf{s}_3 \mathbf{z}^k$ & if $c=1$,\\
			$\mathbf{s}_i \cdots \mathbf{s}_3 \mathbf{s}_2 \mathbf{z}^k \mathbf{s}_2 \mathbf{s}_3 \cdots \mathbf{s}_c$ & if $c \geq 2$.\\
			
\end{tabular}

\end{itemize}

\end{definition}

As for Algorithm \ref{Algo2}, the output of Algorithm \ref{Algo3} is equal to $R\!E_1(w) R\!E_2(w) \cdots R\!E_n(w)$. In Example \ref{ExampAlgoG(d,1,n)}, we have $R\!E(w) = \underset{R\!E_1(w)}{\underbrace{\mathbf{z}}} \hspace{0.2cm} \underset{R\!E_2(w)}{\underbrace{\mathbf{s}_2 \mathbf{z}^2 \mathbf{s}_2}} \hspace{0.2cm} \underset{R\!E_3(w)}{\underbrace{\mathbf{s}_3 \mathbf{s}_2 \mathbf{z}^2}}$.\\

The proof of the next proposition is the same as the proof of Proposition \ref{PropWordRepG(de,e,n)}.

\begin{proposition}\label{PropWordRepG(d,1,n)}

Let $w \in G(d,1,n)$. The word $R\!E(w)$ given by Algorithm \ref{Algo3} is a word representative over $\mathbf{X}$ of $w \in G(d,1,n)$.

\end{proposition}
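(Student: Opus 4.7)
The plan is to mimic the proof of Proposition \ref{PropWordRepG(de,e,n)} while accounting for the fact that in the generating set $X=\{z,s_2,\dots,s_n\}$ the element $z$ has order $d>1$ rather than order $2$. The central observation is that Algorithm \ref{Algo3} is purely a column-reduction procedure: at each step $i$ (running from $n$ down to $1$) it right-multiplies the current matrix $w'$ by a carefully chosen product of elements of $X$ so as to replace the row $i$ of $w'$ by the $i$-th row of $I_n$, while leaving the last $n-i$ rows unchanged (by Lemma \ref{LemmaBlocksG(d,1,n)}, these operations reduce the active block from $w_i$ to $w_{i-1}$). After the final iteration, $w'=I_n$.

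First I would make the loop invariant explicit: after the iteration indexed by $i$, one has
\[
w' \;=\; w \cdot m_n m_{n-1} \cdots m_i \;=\; \left(\begin{array}{c|c} w_{i-1} & 0 \\ \hline 0 & I_{n-i+1}\end{array}\right),
\]
where $m_i$ is the product of matrices appearing in the body of the $i$-th iteration, namely
\[
m_i \;=\; \bigl(s_{c}s_{c-1}\cdots s_{2} z^{-k}\bigr)\bigl(s_{c'+1}\cdots s_{i-1}s_{i}\bigr),
\]
with $c'=1$ if $k=0$ and $c'=2$ otherwise (in accordance with Convention \ref{ConventionDecreaseIncreaseIndex}, with the left factor being empty when $k=0$). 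Applying this repeatedly and using the final iteration ($i=1$), one obtains $w\cdot m_n\cdots m_1=I_n$, hence
\[
w \;=\; m_1^{-1} m_2^{-1}\cdots m_n^{-1}.
\]

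Next I would compute these inverses factor-wise. Since each $s_j$ satisfies $s_j^{-1}=s_j$, and $(z^{-k})^{-1}=z^{k}$, one has
\[
m_i^{-1} \;=\; \bigl(s_{i}s_{i-1}\cdots s_{c'+1}\bigr)\bigl(z^{k}s_{2}\cdots s_{c-1}s_{c}\bigr),
\]
which, read as a word over $\mathbf{X}$, is precisely the fragment $R\!E_i(w)$ prescribed by Definition \ref{DefinitionREG(d,1,n)}; and similarly the iteration $i=1$ contributes the prefix $\mathbf{z}^{k}=R\!E_1(w)$. Because at each iteration the algorithm prepends exactly this fragment to the accumulator $R\!E(w)$, the final output is
\[
R\!E(w) \;=\; R\!E_1(w)\,R\!E_2(w)\cdots R\!E_n(w),
\]
whose image under the evaluation $\mathbf{X}^*\to G(d,1,n)$ is $m_1^{-1}m_2^{-1}\cdots m_n^{-1}=w$.

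The only genuine verification needed is the loop invariant above, i.e.\ that the transformation performed within one iteration really does clear row $i$ while preserving the block structure; this is essentially the content of Lemma \ref{LemmaBlocksG(d,1,n)} together with an inspection of how $z^{-k}$ acts on the first column. Everything else—the termination of the two inner \textbf{while} loops, the correct identification of the exponent $k$ via the table $U$, and the handling of the empty-word cases—is routine given Convention \ref{ConventionDecreaseIncreaseIndex}. The main (minor) subtlety to handle with care is the asymmetry between the cases $k=0$ and $k\neq 0$: when $k=0$ no root-of-unity reduction is performed, so $c'=c$ and the left factor of $m_i$ is empty, whereas when $k\neq 0$ the column $c$ is first moved to column $2$ (not column $1$, as in Algorithm \ref{Algo1}, but with the $z^{-k}$ reduction the entry actually ends up in column $1$ before being shifted)—one must check that Definition \ref{DefinitionREG(d,1,n)} captures this distinction faithfully, which is where I would invest the most attention.
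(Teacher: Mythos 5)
Your argument is essentially the paper's: the paper proves this by noting (verbatim, as in Proposition \ref{PropWordRepG(de,e,n)}) that Algorithm \ref{Algo3} turns $w$ into $I_n$ by right multiplication by elements of $X$, so inverting and using $s_j^{-1}=s_j$ and $(z^{-k})^{-1}=z^{k}$ shows that the word $R\!E(w)$, built by prepending the inverses at each step, evaluates to $w$; your loop invariant and factor-wise computation of the $m_i^{-1}$ is just an explicit rendering of that. The one blemish is the clause ``$c'=1$ if $k=0$ and $c'=2$ otherwise,'' which has the cases wrong for Algorithm \ref{Algo3} (when $k\neq 0$ the entry is brought to column $1$ and $c$ is reset to $1$, while for $k=0$ the shift starts from the original column $c$), but your closing paragraph essentially corrects this and it does not affect the soundness of the approach.
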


The following proposition enables us to prove that the output of Algorithm \ref{Algo3} is a reduced expression over $\mathbf{X}$ of a given element $w \in G(d,1,n)$. It is similar to Propositions \ref{prop.lengthcomp} and \ref{prop.lengthcompG(de,e,n)}.

\begin{proposition}\label{prop.lengthcompG(d,1,n)}

Let $w$ be an element of $G(d,1,n)$. For all $x \in X$, we have $$\boldsymbol{\ell}(R\!E(xw)) \leq \boldsymbol{\ell}(R\!E(w)) + 1.$$

\end{proposition}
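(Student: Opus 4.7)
The plan is to mirror exactly the case-analysis structure used in the proofs of Propositions \ref{prop.lengthcomp} and \ref{prop.lengthcompG(de,e,n)}, splitting on which generator $x \in X = \{z, s_2, s_3, \ldots, s_n\}$ is multiplied on the left. For each $1 \leq i \leq n$, denote by $a_i = w[i,c_i]$ the unique nonzero entry on row $i$ of $w$. The strategy for each case is: identify which subwords $R\!E_j(xw)$ coincide with $R\!E_j(w)$ (so they cancel from the length comparison), then evaluate the lengths of the remaining pieces explicitly from Definition \ref{DefinitionREG(d,1,n)}.

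First, for $x = s_i$ with $3 \leq i \leq n$, left multiplication swaps rows $i-1$ and $i$ and leaves all other rows fixed. By Lemma \ref{LemmaBlocksG(d,1,n)}, this means $R\!E_j(xw) = R\!E_j(w)$ for every $j \notin \{i-1,i\}$ (including $j = 1$, which is crucial), so the problem reduces to comparing $\boldsymbol{\ell}(R\!E_{i-1}(w) R\!E_i(w))$ with $\boldsymbol{\ell}(R\!E_{i-1}(xw) R\!E_i(xw))$. A routine case analysis on whether $c_{i-1} < c_i$ or $c_{i-1} > c_i$, combined with whether $a_i$ (resp.\ $a_{i-1}$) equals $1$ or is a nontrivial power of $\zeta_d$, yields that the length changes by exactly $\pm 1$; the computation is essentially identical to Case 1 of Proposition \ref{prop.lengthcomp}, with the only bookkeeping change being that the rewriting of a nontrivial entry $\zeta_d^k$ now uses the word $\mathbf{s}_i \cdots \mathbf{s}_3 \mathbf{s}_2 \mathbf{z}^k \mathbf{s}_2 \mathbf{s}_3 \cdots \mathbf{s}_c$ of length $(i - c + 2)$ instead of the $\mathbf{t}_k\mathbf{t}_0$ variant.

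Second, for $x = s_2$, left multiplication swaps rows $1$ and $2$, so $R\!E_j(xw) = R\!E_j(w)$ for $j \geq 3$ and we compare $\boldsymbol{\ell}_1 = \boldsymbol{\ell}(R\!E_1(w) R\!E_2(w))$ with $\boldsymbol{\ell}_2 = \boldsymbol{\ell}(R\!E_1(xw) R\!E_2(xw))$. Splitting on $c_1 < c_2$ versus $c_1 > c_2$ and on whether $a_1$ or $a_2$ is trivial, one writes out the relevant words from Definition \ref{DefinitionREG(d,1,n)}, notices that the exponent $k$ appearing in $R\!E_1$ is preserved or transferred between the two rows under the swap (since $s_2$ is a plain transposition with no root-of-unity twist), and concludes that the length again changes by $\pm 1$.

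Finally, for $x = z$, only row $1$ is affected: if $a_1 = \zeta_d^k$ then the new entry is $\zeta_d^{k+1 \bmod d}$, while the remaining rows and their positions $c_i$ are unchanged, so only $R\!E_1$ can differ. If $0 \leq k \leq d-2$ then $\boldsymbol{\ell}(R\!E_1(zw)) = k+1 = \boldsymbol{\ell}(R\!E_1(w)) + 1$; if $k = d-1$ then $\boldsymbol{\ell}(R\!E_1(zw)) = 0 = \boldsymbol{\ell}(R\!E_1(w)) - (d-1)$. In both sub-cases the inequality $\boldsymbol{\ell}(R\!E(zw)) \leq \boldsymbol{\ell}(R\!E(w)) + 1$ holds, which is precisely why the statement is an inequality rather than an equality. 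The only mild obstacle I anticipate is keeping the bookkeeping clean in the $s_2$ case, because $s_2$ now plays the hybrid role that $\mathbf{t}_0$ played in $G(e,e,n)$ while also interacting with the single generator $\mathbf{z}$ of $R\!E_1$; but since $s_2$ carries no phase, the arithmetic of the exponents $k$ is straightforward and the subcases close by direct length count.
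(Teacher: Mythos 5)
Your plan follows essentially the same route as the paper, whose proof simply reduces the cases $x=s_i$ ($2\le i\le n$) to Case 1 of Proposition \ref{prop.lengthcomp} and the case $x=z$ to Case 3 of Proposition \ref{prop.lengthcompG(de,e,n)}; your separate handling of $s_2$ and of $z$ (where the drop from $k=d-1$ to $0$ is exactly why the statement is only an inequality) reproduces the paper's conclusions $(a)$, $(b)$, $(a')$, $(b')$, and $(g)$. The only blemish is the parenthetical length count: the word $\mathbf{s}_i\cdots\mathbf{s}_3\mathbf{s}_2\mathbf{z}^k\mathbf{s}_2\mathbf{s}_3\cdots\mathbf{s}_c$ has length $i+c+k-2$, not $i-c+2$, but this slip does not affect the structure or validity of your argument.
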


\begin{proof}

We have two cases. The first one is for $x = s_i$ ($2 \leq i \leq n$). This is done in the same way as Case 1 in the proof of Proposition \ref{prop.lengthcomp}. The second case is for $x = z$. This is done in the same way as Case 3 in the proof of Proposition \ref{prop.lengthcompG(de,e,n)}. We get $(a)$, $(b)$, $(a')$, $(b')$, and $(g)$ as in the proofs of Propositions \ref{prop.lengthcomp} and \ref{prop.lengthcompG(de,e,n)}.

\end{proof}

Now, we apply the argument of the proof of Proposition \ref{PropREwRedExp} and deduce that $R\!E(w)$ is a reduced expression over $\mathbf{X}$ of $w \in G(d,1,n)$. Hence Algorithm \ref{Algo3} produces geodesic normal forms for $G(d,1,n)$. We also get Proposition \ref{PropLengthdecreas} (1). When $d=2$, that is the case of the Coxeter group $G(2,1,n)$ of type $B_n$, as a direct consequence of the proof of Proposition \ref{prop.lengthcompG(d,1,n)}, we have the following.

\begin{proposition}\label{PropIsLeftDescendingG(2,1,n)}
Let $w$ be an element of $G(2,1,n)$. Denote by $a_i$ the unique nonzero entry $w[i,c_i]$ on the row $i$ of $w$ where $1 \leq i, c_i \leq n$.
\begin{enumerate}
\item We have $\ell(z w) = \ell(w) -1$ $\Longleftrightarrow$ $a_1 = -1$.
\item For $2 \leq i \leq n$, we have:
	\begin{enumerate}
		\item if $c_{i-1} < c_i$, then
				$\ell(s_{i} w) = \ell(w)-1$ $\Longleftrightarrow$ $a_{i} \neq 1.$
		\item if $c_{i-1} > c_i$, then
				$\ell(s_{i} w) = \ell(w)-1$ $\Longleftrightarrow$ $a_{i-1} =1.$
	\end{enumerate}
\end{enumerate}
\end{proposition}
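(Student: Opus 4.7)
The plan is to extract the statement directly from the case-by-case analysis that already appears inside the proof of Proposition \ref{prop.lengthcompG(d,1,n)}. That proof splits on whether $x$ equals some $s_i$ (handled via Case 1 of the proof of Proposition \ref{prop.lengthcomp}) or equals $z$ (handled via Case 3 of the proof of Proposition \ref{prop.lengthcompG(de,e,n)}). In each subcase the calculation does more than bound $\boldsymbol{\ell}(R\!E(xw))$ by $\boldsymbol{\ell}(R\!E(w)) + 1$: it produces the exact two-sided equality $\boldsymbol{\ell}(R\!E(xw)) = \boldsymbol{\ell}(R\!E(w)) \pm 1$ with an explicit arithmetic condition on $a_1$, $a_{i-1}$ or $a_i$ determining the sign. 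Since $R\!E(\cdot)$ is reduced (Proposition \ref{PropWordRepG(d,1,n)} together with Proposition \ref{prop.lengthcompG(d,1,n)}, via the argument of Proposition \ref{PropREwRedExp}), we have $\boldsymbol{\ell}(R\!E(\cdot)) = \ell(\cdot)$, so these sign conditions translate directly into left descents for $\ell$.

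I would then carry out the translation in two steps. First, for part $(2)$, I would read off items $(a), (b), (a'), (b')$ from the proof of Proposition \ref{prop.lengthcomp}; they apply verbatim because Algorithm \ref{Algo3} acts on rows $i-1$ and $i$ under left multiplication by $s_i$ in exactly the same way as Algorithm \ref{Algo1}. Items $(a)$ and $(b)$ together give the equivalence $(2)(a)$: when $c_{i-1} < c_i$, the length drops by one precisely when $a_i \neq 1$. Items $(a')$ and $(b')$ together give $(2)(b)$: when $c_{i-1} > c_i$, the length drops by one precisely when $a_{i-1} = 1$.

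Second, for part $(1)$, I would specialize item $(g)$ from the proof of Proposition \ref{prop.lengthcompG(de,e,n)} to $d = 2$. By Lemma \ref{LemmaBlocksG(d,1,n)} the residue $w_1$ is nothing but the nonzero entry $a_1$ on the first row of $w$, so writing $a_1 = \zeta_2^k$ forces $k \in \{0, 1\}$. The $k \neq d-1$ branch of $(g)$ yields $\ell(zw) = \ell(w) + 1$ when $a_1 = 1$, and the $k = d-1$ branch yields $\ell(zw) = \ell(w) - 1$ when $a_1 = -1$, which is precisely claim $(1)$.

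No serious obstacle arises; the argument is pure specialization and bookkeeping. The only point that deserves a sanity check is that Lemma \ref{LemmaBlocksG(d,1,n)} builds $w_1$ slightly differently from Lemma \ref{LemmaBlocksG(de,e,n)} (the latter multiplies the first column by $a_i$ at each step, the former does not), but in both cases $w_1$ ends up as the $d$-th root of unity associated to $a_1$ when $d = 2$, so the sign split of item $(g)$ transfers without modification.
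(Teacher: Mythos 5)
Your proposal is correct and follows essentially the same route as the paper: the paper presents this proposition as a direct consequence of the case analysis in the proof of Proposition \ref{prop.lengthcompG(d,1,n)}, reading part $(2)$ off items $(a)$, $(b)$, $(a')$, $(b')$ from Case 1 of Proposition \ref{prop.lengthcomp} and part $(1)$ off item $(g)$ from Case 3 of Proposition \ref{prop.lengthcompG(de,e,n)} specialized to $d=2$, exactly as you do. Your sanity check on Lemma \ref{LemmaBlocksG(d,1,n)} (that $w_1=a_1$ for $G(d,1,n)$, so the sign split in $(g)$ transfers) is the right point to flag and is handled correctly.
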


\section{Elements of maximal length}

Using the geodesic normal forms of the elements of $G(de,e,n)$ defined by Algorithms \ref{Algo1}, \ref{Algo2}, and \ref{Algo3}, we will characterize in this section the elements of $G(de,e,n)$ that are of maximal length. We distinguish three cases: the first one is for the group $G(e,e,n)$ defined by the presentation of Corran-Picantin (see Definition \ref{DefinitionPresentationCorranPicantin}), the second case is for $G(de,e,n)$ with $d > 1, e > 1$, and $n \geq 2$ defined by the presentation of Corran-Lee-Lee (see Definition \ref{DefCorranLeePresG(de,e,n)}), and the third case is for $G(d,1,n)$ with $d > 1$ defined by its classical presentation (see Proposition \ref{PropPresB(d,1,n)}).

\begin{proposition}\label{PropMaxLength}

Let $e > 1$ and $n \geq 2$. The maximal length of an element of $G(e,e,n)$ is $n(n-1)$. It is realized for diagonal matrices $w$ such that $w[i,i]$ is an $e$-th root of unity different from $1$ for $2 \leq i \leq n$. A minimal word representative of such an element is of the form $$(\mathbf{t}_{k_2}\mathbf{t}_0)(\mathbf{s}_3\mathbf{t}_{k_3}\mathbf{t}_0\mathbf{s}_3)\cdots (\mathbf{s}_n \cdots \mathbf{s}_3\mathbf{t}_{k_n}\mathbf{t}_0\mathbf{s}_3 \cdots \mathbf{s}_n),$$ with $1 \leq k_2, \cdots, k_n \leq e-1$. The number of elements of this form is $(e-1)^{(n-1)}$.

\end{proposition}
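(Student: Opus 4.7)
The plan is to read off everything from the geodesic normal form $R\!E(w)=R\!E_2(w)R\!E_3(w)\cdots R\!E_n(w)$ established in Proposition~\ref{PropREwRedExp}, since reducedness gives $\ell(w)=\sum_{i=2}^{n}\boldsymbol{\ell}(R\!E_i(w))$. The first step is to bound each factor by inspection of Definition~\ref{DefREiw}. Writing $w_i[i,c]$ for the unique nonzero entry of the $i$-th row of $w_i$, the four cases of Definition~\ref{DefREiw} give lengths $i-c$ (when $w_i[i,c]=1$), $i-1$ (when $w_i[i,c]=\zeta_e^k\neq 1$ and $c=1$), $i$ (when $c=2$), and $i+c-2$ (when $c\geq 3$). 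The global maximum is $2(i-1)$, attained uniquely in the last case with $c=i$. For $i=2$ the same discussion (using Convention~\ref{ConventionDecreaseIncreaseIndex}) yields a maximum of $2$, attained iff $c_2=2$ and $w_2[2,2]\neq 1$. Summing gives the upper bound
\[
\ell(w)\;\leq\;\sum_{i=2}^{n}2(i-1)\;=\;n(n-1).
\]

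The next step is to describe the matrices saturating this bound. Equality forces $c_i=i$ and $w_i[i,i]=\zeta_e^{k_i}$ with $k_i\neq 0$ for every $2\leq i\leq n$. The main point is to translate these conditions, which a priori concern the iterated blocks $w_i$ produced by Algorithm~\ref{Algo1}, back into a condition on $w$ itself. For this I would apply Lemma~\ref{LemmaBlocks} inductively, from $i=n$ down to $i=2$: the only nonzero entry in row $i$ of $w_i$ sits at column $i$, which by monomiality forces $w_i$ to have zeros elsewhere in row $i$ and column $i$, and the passage to $w_{i-1}$ only rescales the first column (which, as we go, remains supported on position $[1,1]$). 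An immediate induction then shows that $w_i$ is diagonal for every $i$, with $w_i[j,j]=w[j,j]$ for all $2\leq j\leq i$. In particular $w$ itself is diagonal and $w[i,i]=\zeta_e^{k_i}\neq 1$ for $2\leq i\leq n$. Conversely, any such diagonal $w$ (with $w[1,1]$ determined by the requirement $\prod_i w[i,i]=1$ that defines $G(e,e,n)$) clearly satisfies all the equality conditions.

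To obtain the explicit minimal word, I plug $c=i$ and $w_i[i,i]=\zeta_e^{k_i}$ into the third line of Definition~\ref{DefREiw}, which yields
\[
R\!E_i(w)=\mathbf{s}_i\mathbf{s}_{i-1}\cdots\mathbf{s}_3\,\mathbf{t}_{k_i}\,\mathbf{t}_0\,\mathbf{s}_3\cdots\mathbf{s}_{i-1}\mathbf{s}_i
\]
for $i\geq 3$, and $R\!E_2(w)=\mathbf{t}_{k_2}\mathbf{t}_0$. Concatenating gives precisely the announced form. For the count, the $(n-1)$-tuple $(k_2,\ldots,k_n)$ ranges freely over $\{1,\ldots,e-1\}^{n-1}$ while $w[1,1]$ is uniquely forced by the product condition, so there are exactly $(e-1)^{n-1}$ such elements.

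The only mildly delicate step is the inductive translation of the conditions $(c_i=i,\ w_i[i,i]\neq 1)$ into the diagonal structure of $w$: one must be careful that the first-column rescaling in Lemma~\ref{LemmaBlocks} cannot create or destroy the zero pattern needed at the next stage. Everything else is a direct length count from Definition~\ref{DefREiw}.
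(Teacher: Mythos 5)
Your proof is correct and follows essentially the same route as the paper: the paper also reads the maximum off the algorithm (each factor $R\!E_i(w)$ is longest exactly when $w_i[i,i]=\zeta_e^{k_i}\neq 1$, giving $\sum_{i=2}^n 2(i-1)=n(n-1)$), identifies the extremal matrices as the stated diagonal ones via Lemma \ref{LemmaBlocks}, and reads the word and the count $(e-1)^{n-1}$ from Definition \ref{DefREiw}. You merely make explicit the per-factor length bounds and the block-to-$w$ translation that the paper's proof leaves implicit.
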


\begin{proof}

By Algorithm \ref{Algo1}, an element $w$ in $G(e,e,n)$ is of maximal length when $w_i[i,i] = \zeta_e^{k}$ for $2 \leq i \leq n$ and $\zeta_e^{k} \neq 1$. By Lemma \ref{LemmaBlocks}, this condition is satisfied when $w$ is a diagonal matrix such that $w[i,i]$ is an $e$-th root of unity different from $1$ for $2 \leq i \leq n$. A minimal word representative given by Algorithm \ref{Algo1} for such an element is of the form $(\mathbf{t}_{k_2}\mathbf{t}_0)(\mathbf{s}_3\mathbf{t}_{k_3}\mathbf{t}_0\mathbf{s}_3)\cdots (\mathbf{s}_n \cdots \mathbf{s}_3\mathbf{t}_{k_n}\mathbf{t}_0\mathbf{s}_3 \cdots \mathbf{s}_n)$ with $1 \leq k_2, \cdots, k_n \leq (e-1)$ which is of length $n(n-1)$. The number of elements of this form is $(e-1)^{(n-1)}$.

\end{proof}

\noindent Denote by $\lambda$ the element $\begin{pmatrix}

(\zeta_e^{-1})^{(n-1)} & & & \\
 & \zeta_e & & \\
 & & \ddots & \\
 & & & \zeta_e\\

\end{pmatrix} \in G(e,e,n)$.

\begin{example}\label{LemmaLengthlambda}

We have $R\!E(\lambda) = (\mathbf{t}_{1}\mathbf{t}_0)(\mathbf{s}_3\mathbf{t}_{1}\mathbf{t}_0\mathbf{s}_3)\cdots (\mathbf{s}_n \cdots \mathbf{s}_3\mathbf{t}_{1}\mathbf{t}_0\mathbf{s}_3 \cdots \mathbf{s}_n)$. Hence ${\ell}(\lambda) = n(n-1)$ which is the maximal length of an element of $G(e,e,n)$.\\

\end{example}

\begin{remark}

Consider the group $G(1,1,n)$, that is the symmetric group $S_n$. There exists a unique element of maximal length in $S_n$ that is of the form $$\mathbf{t}_0 (\mathbf{s}_3\mathbf{t}_0) \cdots (\mathbf{s}_{n-1}\cdots \mathbf{s}_3\mathbf{t}_0) (\mathbf{s}_{n}\cdots \mathbf{s}_3\mathbf{t}_0).$$ This corresponds to the maximal number of steps of Algorithm \ref{Algo1}. The length of such an element is $n(n-1)/2$ which is already known for the symmetric group $S_n$, see Example 1.5.4 of \cite{GeckPfeifferBook}.

\end{remark}

\begin{remark}\label{RemUniqueMaximalG(2,2,n)}

Consider the group $G(2,2,n)$, that is the Coxeter group of type $D_n$. We have $e = 2$. Then, by Proposition \ref{PropMaxLength}, the number of elements of maximal length is equal to $(e-1)^{(n-1)} = 1$. Hence there exists a unique element of maximal length in $G(2,2,n)$. It is of the form $$(\mathbf{t}_{1}\mathbf{t}_0)(\mathbf{s}_3\mathbf{t}_{1}\mathbf{t}_0\mathbf{s}_3) \cdots (\mathbf{s}_n \cdots \mathbf{s}_3\mathbf{t}_{1}\mathbf{t}_0\mathbf{s}_3 \cdots \mathbf{s}_n).$$ The length of this element is $n(n-1)$ which is already known for Coxeter groups of type $D_n$, see Example 1.5.5 of \cite{GeckPfeifferBook}.\\

\end{remark}

The next two propositions are a direct consequence of Algorithms \ref{Algo2} and \ref{Algo3}.

\begin{proposition}\label{PropMaxLengthG(de,e,n)}

Let $d > 1, e > 1$, and $n \geq 2$. The maximal length of an element of $G(de,e,n)$ is $n(n-1) + d-1$. It is realized for diagonal matrices $w$ such that for all $2 \leq i \leq n$, we have $w[i,i] = \zeta_{de}^{k_i}$ with $1 \leq k_i \leq de-1$ and $w[1,1] = \zeta_{de}^x$ with $x + (k_2 \cdots k_n) = e(d-1)$. A minimal word representative of such an element is of the form $$\mathbf{z}^{d-1} (\mathbf{t}_{k_2}\mathbf{t}_0) (\mathbf{s}_3\mathbf{t}_{k_3}\mathbf{t}_0\mathbf{s}_3)\cdots (\mathbf{s}_n \cdots \mathbf{s}_3\mathbf{t}_{k_n}\mathbf{t}_0\mathbf{s}_3 \cdots \mathbf{s}_n),$$ with $1 \leq k_2, \cdots, k_n \leq de-1$. The number of elements that are of maximal length is then $(de-1)^{(n-1)}$.

\end{proposition}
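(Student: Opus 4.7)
The plan is to mimic the proof of Proposition \ref{PropMaxLength} using Algorithm \ref{Algo2} instead of Algorithm \ref{Algo1}, with the extra ingredient that the step $i=1$ now contributes a power of $\mathbf{z}$ of length at most $d-1$.

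First, I would decompose $R\!E(w) = R\!E_1(w) R\!E_2(w) \cdots R\!E_n(w)$ and bound each factor using Definition \ref{DefREiwG(de,e,n)}. For $i \geq 2$, writing $w_i[i,c] \neq 0$, a direct case analysis gives $\boldsymbol{\ell}(R\!E_i(w)) = i-c$ when $w_i[i,c]=1$ and $\boldsymbol{\ell}(R\!E_i(w)) \in \{i-1, i, i+c-2\}$ (according to $c=1$, $c=2$, or $c \geq 3$) when $w_i[i,c] \neq 1$. In every case $\boldsymbol{\ell}(R\!E_i(w)) \leq 2i-2$, with equality precisely when $c=i$ \emph{and} $w_i[i,i] \neq 1$. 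For $i=1$, $R\!E_1(w) = \mathbf{z}^k$ for $0 \leq k \leq d-1$, so $\boldsymbol{\ell}(R\!E_1(w)) \leq d-1$ with equality iff $w_1 = \zeta_d^{d-1}$. Summing the upper bounds yields
\[
\ell(w) = \boldsymbol{\ell}(R\!E(w)) \leq (d-1) + \sum_{i=2}^{n} (2i-2) = n(n-1) + d-1.
\]

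Next I would characterize the elements attaining this bound. By the equality conditions just identified, maximality requires $c_i = i$ and $w_i[i,i] \neq 1$ for every $2 \leq i \leq n$, together with $w_1 = \zeta_d^{d-1}$. Using Lemma \ref{LemmaBlocksG(de,e,n)} inductively from $i=n$ down to $i=2$: if $c_n = n$ then removing row $n$ and column $n$ leaves every other row's nonzero entry in the same column, so $c_i = i$ for all $i$ forces $w$ to be a diagonal matrix. Writing $w[i,i] = \zeta_{de}^{k_i}$ for $i \geq 2$ and $w[1,1] = \zeta_{de}^{x}$, the block $w_j[j,j] = \zeta_{de}^{k_j}$ is unchanged by the first-column multiplications (since $j \neq 1$), so the condition $w_i[i,i] \neq 1$ is exactly $1 \leq k_i \leq de-1$. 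Finally, iterating Lemma \ref{LemmaBlocksG(de,e,n)} gives $w_1 = \zeta_{de}^{x + k_2 + \cdots + k_n}$, so the requirement $w_1 = \zeta_d^{d-1} = \zeta_{de}^{e(d-1)}$ translates to $x + (k_2 + \cdots + k_n) \equiv e(d-1) \pmod{de}$, which is the stated condition (and is automatically compatible with $w \in G(de,e,n)$ since $e \mid e(d-1)$).

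For the minimal word representative, reading off $R\!E_i(w)$ from Definition \ref{DefREiwG(de,e,n)} in the case $c=i$ with nontrivial root of unity gives $R\!E_i(w) = \mathbf{s}_i \cdots \mathbf{s}_3 \mathbf{t}_{k_i} \mathbf{t}_0 \mathbf{s}_3 \cdots \mathbf{s}_i$, and the factor $R\!E_1(w) = \mathbf{z}^{d-1}$ comes in front; concatenation yields the displayed expression. For the count, once the exponents $k_2, \ldots, k_n$ are chosen freely in $\{1, \ldots, de-1\}$ — giving $(de-1)^{n-1}$ possibilities — the exponent $x \in \{0, 1, \ldots, de-1\}$ is uniquely determined modulo $de$ by the congruence above, so there are exactly $(de-1)^{n-1}$ elements of maximal length.

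The main (modest) obstacle is the downward induction showing that maximality at every step forces $w$ to be diagonal; the bookkeeping with first-column multiplications in Lemma \ref{LemmaBlocksG(de,e,n)} must be done carefully to confirm that $w_i[i,i]$ equals $w[i,i]$ for $i \geq 2$, so that the condition "nontrivial root of unity at each step" is indeed an intrinsic condition on $w$ and not on the intermediate blocks.
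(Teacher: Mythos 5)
Your proof is correct and follows essentially the argument the paper intends: the paper states this proposition as a ``direct consequence'' of Algorithm \ref{Algo2}, in parallel with the proof of Proposition \ref{PropMaxLength}, which is exactly the factor-by-factor maximization of $R\!E_1(w)R\!E_2(w)\cdots R\!E_n(w)$ that you carry out. Your write-up merely makes explicit the details the paper leaves implicit (the equality conditions $c_i=i$, $w_i[i,i]\neq 1$, $w_1=\zeta_d^{d-1}$, the induction forcing $w$ to be diagonal via Lemma \ref{LemmaBlocksG(de,e,n)}, and the count through the congruence determining $x$), and these details are all in order.
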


\begin{proposition}\label{PropMaxLengthG(d,1,n)}

Let $d > 1$ and $n \geq 2$. There exists a unique element of maximal length of $G(d,1,n)$. Its minimal word representative is of the form $$\mathbf{z}^{d-1} (\mathbf{s}_2 \mathbf{z}^{d-1}\mathbf{s}_2) (\mathbf{s}_3\mathbf{s}_2\mathbf{z}^{d-1}\mathbf{s}_2\mathbf{s}_3) \cdots (\mathbf{s}_n \cdots \mathbf{s}_2 \mathbf{z}^{d-1} \mathbf{s}_2 \cdots \mathbf{s}_n).$$ Its length is then equal to $n(n+d-2)$.

\end{proposition}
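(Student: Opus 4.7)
The plan is to exploit the fact, established earlier, that for any $w\in G(d,1,n)$ the concatenation $R\!E(w)=R\!E_1(w)R\!E_2(w)\cdots R\!E_n(w)$ is a reduced expression over $\mathbf{X}$. So $\ell(w)=\sum_{i=1}^{n}\boldsymbol{\ell}(R\!E_i(w))$, and to maximize $\ell(w)$ it is enough to maximize each piece $\boldsymbol{\ell}(R\!E_i(w))$ independently, subject only to the constraint (furnished by Lemma \ref{LemmaBlocksG(d,1,n)}) that the data $(c_i,w_i[i,c_i])_{i=1,\ldots,n}$ can actually be realized by a single matrix $w\in G(d,1,n)$.

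Reading Definition \ref{DefinitionREG(d,1,n)} case by case I would show:
\begin{itemize}
\item $\boldsymbol{\ell}(R\!E_1(w))\le d-1$, with equality iff $w_1=\zeta_d^{d-1}$;
\item for $2\le i\le n$, $\boldsymbol{\ell}(R\!E_i(w))\le 2i+d-3$, with equality iff $c_i=i$ and $w_i[i,i]=\zeta_d^{d-1}$.
\end{itemize}
The first point is immediate. For the second, I would compare the three cases of the definition: if $w_i[i,c]=1$ one gets length $i-c\le i-1$; if $w_i[i,c]=\zeta_d^k$ with $k\neq 0$ and $c=1$ one gets length $i-2+k\le i+d-3$; and for $c\ge 2$ one gets length $(i-1)+k+(c-1)=i+c+k-2$, maximized at $c=i$, $k=d-1$ giving $2i+d-3$. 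A direct comparison then shows that the last case strictly dominates the first two as soon as $d>1$.

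Summing these upper bounds yields
\[
\ell(w)\le (d-1)+\sum_{i=2}^{n}(2i+d-3)=(d-1)+(n-1)(n+d-1)=n(n+d-2),
\]
which is the claimed length. For uniqueness, the equality constraints force $c_i=i$ for every $i\ge 2$ and $w_i[i,i]=\zeta_d^{d-1}$, and by Lemma \ref{LemmaBlocksG(d,1,n)} the condition $c_i=i$ for all $i\ge 2$ together with $w_1=\zeta_d^{d-1}$ means (by descending induction on $i$, peeling off the last row/column at each step) that $w$ is the diagonal matrix with all diagonal entries equal to $\zeta_d^{d-1}$. Plugging this into the definition of $R\!E_i$ gives $R\!E_1(w)=\mathbf{z}^{d-1}$ and, for $2\le i\le n$, $R\!E_i(w)=\mathbf{s}_i\cdots\mathbf{s}_3\mathbf{s}_2\mathbf{z}^{d-1}\mathbf{s}_2\mathbf{s}_3\cdots\mathbf{s}_i$, whose concatenation is exactly the word displayed in the statement.

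The only mildly delicate point is keeping the bookkeeping straight when comparing the three sub-cases in the estimate of $\boldsymbol{\ell}(R\!E_i(w))$; nothing deep is going on, but one has to verify that the $c=i$, $k=d-1$ configuration really is globally optimal (including beating the $w_i[i,c]=1$ branch, where no power of $\mathbf{z}$ appears at all). Once that is checked, the additive structure of $R\!E(w)$ does the rest, and both the extremal value $n(n+d-2)$ and the explicit diagonal matrix achieving it drop out with no further work.
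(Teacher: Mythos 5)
Your argument is correct and is essentially the paper's own reasoning: the paper states this proposition as a direct consequence of Algorithm \ref{Algo3}, and your write-up is exactly the expected expansion of that, mirroring the proof given for Proposition \ref{PropMaxLength} in the $G(e,e,n)$ case (maximize each block contribution $R\!E_i(w)$ of the geodesic normal form, sum the bounds, and note the unique realizing matrix is the diagonal one with all entries $\zeta_d^{d-1}$). The only slight wrinkle is the length you assign to the $c=1$ branch (which follows the literal text of Definition \ref{DefinitionREG(d,1,n)} rather than the algorithm's output), but since that branch is strictly dominated either way, it does not affect the bound, the extremal value $n(n+d-2)$, or uniqueness.
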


\begin{remark}

When $d=2$, the group $G(2,1,n)$ is the Coxeter group of type $B_n$. By Proposition \ref{PropMaxLengthG(d,1,n)}, the longest element is of the form $$\mathbf{z} (\mathbf{s}_2 \mathbf{z}\mathbf{s}_2) (\mathbf{s}_3\mathbf{s}_2\mathbf{z}\mathbf{s}_2\mathbf{s}_3) \cdots (\mathbf{s}_n \cdots \mathbf{s}_2 \mathbf{z} \mathbf{s}_2 \cdots \mathbf{s}_n).$$ Its length is equal to $n^2$ which is already known for Coxeter groups of type $B_n$, see Example 1.4.6 of \cite{GeckPfeifferBook}.\\

\end{remark}

\chapter{Interval Garside structures for $B(e,e,n)$}\label{ChapterIntervalGarside}
 
\minitoc

\bigskip

In this chapter, we are interested in Garside structures that derive from intervals. We construct intervals in the complex reflection group $G(e,e,n)$ which give rise to Garside groups. Some of these groups correspond to the complex braid group $B(e,e,n)$. For the other Garside groups that appear, we give some of their properties in order to understand these new structures.
 
\section{Generalities about Garside structures}

In his PhD thesis, defended in 1965 \cite{GarsideThesis}, and in the article that followed \cite{GarsideArt}, Garside solved the Conjugacy Problem for the classical braid group $B_n$ by introducing a submonoid $B_{n}^{+}$ of $B_n$ and an element $\Delta_n$ of $B_{n}^{+}$ that he called fundamental, and then showing that there exists a normal form for every element in $B_n$. In the beginning of the 1970's, it was realized by Brieskorn and Saito \cite{GarsideBrieskornSaito} and Deligne \cite{GarsideDeligne} that Garside's results extend to all finite-type Artin-Tits groups. At the end of the 1990's, after listing the abstract properties of $B_n^{+}$ and the fundamental element $\Delta_n$, Dehornoy and Paris \cite{GarsideDehornoyParis} defined the notion of Gaussian groups and Garside groups which leads, in ``a natural, but slowly emerging program'' as stated in \cite{GarsideBookPatrick}, to Garside theory. For a detailed study about Garside structures, we refer the reader to \cite{GarsideBookPatrick}. The aim of this section is to give the necessary preliminaries about Garside structures.

\subsection{Garside monoids and groups}

Let $M$ be a monoid. Under some assumptions about $M$, more precisely the assumptions 1 and 2 of Definition \ref{DefGarsideMonoid}, one can define a partial order relation on $M$ as follows.

\begin{definition}

Let $f,g \in M$. We say that $f$ left-divides $g$ or simply $f$ divides $g$ when there is no confusion, written $f \preceq g$, if $fg'=g$ holds for some $g' \in M$. Similarly, we say that $f$ right-divides $g$, written $f \preceq_r g$, if $g'f=g$ holds for some $g' \in M$. 

\end{definition}

We are ready to define Garside monoids and groups.

\begin{definition}\label{DefGarsideMonoid}

A Garside monoid is a pair $(M, \Delta)$, where $M$ is a monoid and

\begin{enumerate}

\item $M$ is cancellative, that is $fg=fh \Longrightarrow g=h$ and $gf=hf \Longrightarrow g=h$ for $f,g,h \in M$,

\item there exists $\lambda : M \longrightarrow \mathbb{N}$ s.t. $\lambda(fg) \geq \lambda(f) + \lambda(g)$ and $g \neq 1 \Longrightarrow \lambda(g) \neq 0$,

\item any two elements of $M$ have a gcd and an lcm for $\preceq$ and $\preceq_r$, and

\item $\Delta$ is a \emph{Garside element} of $M$, this meaning that the set of its left divisors coincides with the set of its right divisors, generate $M$, and is finite. 

\end{enumerate} 

The divisors of $\Delta$ are called the \emph{simples} of $M$.

\end{definition}

A quasi-Garside monoid is a pair $(M,\Delta)$ that satisfies the conditions of Definition \ref{DefGarsideMonoid}, except the finiteness of the number of divisors of $\Delta$.\\

Assumptions 1 and 3 of Definition \ref{DefGarsideMonoid} ensure that Ore's conditions are satisfied. Hence there exists a group of fractions of the monoid $M$ in which it embeds. This allows us to give the following definition.

\begin{definition}

A Garside group is the group of fractions of a Garside monoid.

\end{definition}

Consider a pair $(M,\Delta)$ that satisfies the conditions of Definition \ref{DefGarsideMonoid}. The pair $(M,\Delta)$ and the group of fractions of $M$ provide a Garside structure for $M$.\\

Note that one of the important aspects of a Garside structure is the existence of a normal form for all elements of the Garside group. Furthermore, many problems like the Word and Conjugacy problems can be solved in Garside groups which makes their study interesting. 

\subsection{Interval Garside structures}

Let $G$ be a finite group generated by a finite set $S$. There is a way to construct Garside structures from intervals in $G$.\\

We start by defining a partial order relation on $G$.

\begin{definition}\label{DefPartalOrder11}

Let $f, g \in G$. We say that $g$ is a divisor of $f$ or $f$ is a multiple of $g$, and write $g \preceq f$, if $f = g h$ with $h \in G$ and $\ell(f) = \ell(g) + \ell(h)$, where $\ell(f)$ is the length over $S$ of $f \in G$.

\end{definition}

\begin{definition}\label{DefGeneralIntervalMonoid}

For $w \in G$, define a monoid $M([1,w])$ by the monoid presentation with

\begin{itemize}

\item generating set $\underline{P}$ in bijection with the interval
\begin{center} $[1,w] := \{ f \in G \ | \ 1 \preceq f \preceq w \}$ and \end{center}

\item relations: $\underline{f}\ \underline{g} = \underline{h}$ if $f, g , h \in [1,w]$, $fg=h$, and $f \preceq h$, that is \mbox{$\ell(f) + \ell(g) = \ell(h)$}.

\end{itemize}

\end{definition}

\noindent Similarly, one can define the partial order relation on $G$
\begin{center}$g \preceq_r f$ if and only if $\ell(f{g}^{-1}) + \ell(g) = \ell(f)$, \end{center}
then define the interval $[1,w]_r$ and the monoid $M([1,w]_r)$. 

\begin{definition}

Let $w$ be in $G$. We say that $w$ is a balanced element of $G$ if $[1,w] = [1,w]_r$.

\end{definition}

We have the following theorem due to Michel (see Section 10 of \cite{CorseJeanMichel} for a proof).

\begin{theorem}\label{TheoremMichelGarside}

If $w \in G$ is balanced and both posets $([1,w],\preceq)$ and $([1,w]_r, \preceq_r)$ are lattices, then $(M([1,w]),\underline{w})$ is a Garside monoid with simples $\underline{[1,w]}$, where $\underline{w}$ and $\underline{[1,w]}$ are given in Definition \ref{DefGeneralIntervalMonoid}.

\end{theorem}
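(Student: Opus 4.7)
The strategy is to verify the four axioms of Definition \ref{DefGarsideMonoid} for the pair $(M([1,w]), \underline{w})$. A length homomorphism is the natural starting point: since every defining relation $\underline{f}\,\underline{g} = \underline{h}$ of $M([1,w])$ satisfies $\ell(f) + \ell(g) = \ell(h)$, the map $\lambda : M([1,w]) \to \mathbb{N}$ sending $\underline{f}$ to $\ell(f)$ (where $\ell$ is the length in $G$) extends unambiguously to a monoid homomorphism, with $\lambda(g) > 0$ for $g \neq 1$. This settles axiom 2 and shows that $M([1,w])$ has no nontrivial invertible elements.

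The central step is to identify the simples. I would prove that the natural map $[1,w] \to M([1,w])$, $f \mapsto \underline{f}$, is injective and that its image coincides with the set of left divisors, as well as with the set of right divisors, of $\underline{w}$ in $M([1,w])$. My plan is to introduce a greedy normal form: given a product $\underline{f_1}\cdots\underline{f_k}$ of simples, I would use the lattice structure on $([1,w], \preceq)$ to replace $(f_i, f_{i+1})$ by a new pair whose first entry is the largest element of $[1,w]$ that can be pushed to the left (essentially the lcm of the prefix with the first letter on the right, truncated inside $[1,w]$). Iterating produces a unique left-greedy expression for every element, which shows in particular that $\underline{w}$ has exactly $\underline{[1,w]}$ as its set of left divisors. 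The balanced hypothesis $[1,w] = [1,w]_r$ together with the lattice structure on $\preceq_r$ yields the symmetric statement on the right. Finiteness of the divisors and the fact that they generate $M([1,w])$ are then immediate from the presentation, giving axiom 4.

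For axiom 3, once the greedy normal form is in hand, lcms and gcds in $M([1,w])$ can be built iteratively from the lattice operations on $[1,w]$: if $x$ and $y$ have respective greedy first simples $f$ and $g$, the head of their left-lcm is the lcm of $f$ and $g$ in $([1,w],\preceq)$ (which exists by hypothesis and equals an element of $[1,w]$ provided it is $\preceq w$, which holds because both $f, g \preceq w$ and $[1,w]$ is closed under lcm). Peeling off this head and recursing on the tails, using $\lambda$ for Noetherianity, produces the lcm of $x$ and $y$; gcds are obtained dually, and the same argument with $\preceq_r$ handles the right versions.

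The main obstacle is cancellativity (axiom 1). The difficulty is that the same element of $M([1,w])$ admits many factorisations into simples, so one must rule out that multiplication by a common factor could mask a nontrivial identification. The plan is to derive cancellation from the combination of the length function and uniqueness of the left-greedy normal form: if $\underline{a} x = \underline{a} y$ with $a \in [1,w]$, the uniqueness of the first simple in the greedy normal form of both sides forces $x$ and $y$ to have equal heads, and an induction on $\lambda$ concludes $x = y$; arbitrary left factors are handled by decomposing them into simples. The balanced condition is essential here, as it is what makes the left- and right-greedy algorithms compatible and keeps the recursion inside $\underline{[1,w]}$. With the four axioms in place, $(M([1,w]), \underline{w})$ is a Garside monoid and its simples are exactly $\underline{[1,w]}$, as claimed.
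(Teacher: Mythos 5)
The paper does not actually prove this statement: it is quoted from Michel (Section 10 of \cite{CorseJeanMichel}), so your attempt can only be measured against the standard argument. Your skeleton (length homomorphism, injectivity of $[1,w]\to M([1,w])$ via the obvious morphism $M([1,w])\to G$, identification of the simples with $\underline{[1,w]}$, greedy normal forms, lcm/gcd by recursion) is indeed the right shape, but the heart of the proof is missing. The uniqueness of the left-greedy normal form \emph{as a function on the monoid} is exactly the hard content of the theorem: iterating a greedy rewriting on words produces a normal word for each word, but it does not show that two words representing the same element of $M([1,w])$ reach the same normal word. The standard proofs establish this by working with the ``germ'' $([1,w],\text{partial product})$ and constructing an action of the presented monoid on the set of normal sequences (verifying that each defining relation $\underline{f}\,\underline{g}=\underline{h}$ acts identically), or equivalently by proving convergence of the associated rewriting system; the balanced and lattice hypotheses enter precisely there. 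Since you then derive cancellativity from ``uniqueness of the first simple in the greedy normal form'', and the controlled behaviour of heads under left multiplication is itself one of the properties that must be proved alongside cancellativity, your argument as written is circular: the statement you lean on is essentially equivalent to the theorem.

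There is also a concretely false step in your treatment of lcms: the head of $\mathrm{lcm}(x,y)$ is \emph{not} in general the lattice lcm of the heads of $x$ and $y$. In the braid monoid $B_3^{+}$ (the interval monoid of the longest element of $S_3$), take $x=\sigma_1\sigma_1$ and $y=\sigma_1\sigma_2$: their heads are $\sigma_1$ and $\sigma_1\sigma_2$, whose lcm among simples is $\sigma_1\sigma_2$, while $\mathrm{lcm}(x,y)=\sigma_1\Delta$ has head $\Delta=\sigma_1\sigma_2\sigma_1$. The correct recursion is the grid/ladder argument (peel off $f\vee g$, write $f\vee g=f g_1=g f_1$, and recurse on the complements), which is what the paper itself carries out for its specific intervals in Proposition \ref{PropInductionExistLCM}; but even that argument presupposes facts you have not secured, namely that divisibility between simples in $M([1,w])$ agrees with $\preceq$ in $G$ and that the interval is closed under taking complements with respect to $w$ (this is where balancedness is used, cf.\ Lemma \ref{LemmaDivisorsForInduction}), both of which again rest on the normal-form machinery. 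So the proposal sketches the right route but leaves the decisive step --- well-definedness of the normal form, equivalently left-cancellativity of the enveloping monoid of the interval germ --- unproved.
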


The previous construction gives rise to an interval structure. The interval monoid is $M([1,w])$. When $M([1,w])$ is a Garside monoid, its group of fractions exists and is denoted by $G(M([1,w]))$. We call it the interval group. We will give a classical example of this structure. It shows that Artin-Tits groups admit interval structures. For details about this example, see Chapter 9, Section 1.3 in \cite{GarsideBookPatrick}.

\begin{example}\label{ExampleArtinTitsIntervals}

Let $W$ be a finite coxeter group and $B(W)$ the Artin-Tits group associated to $W$.

\begin{center} $W = <S\ |\ s^2=1, \ \underset{m_{st}}{\underbrace{sts\cdots}}=\underset{m_{st}}{\underbrace{tst\cdots}}$ for $s, t \in S, s \neq t, m_{st} = o(st)>,$\\
$B(W) = <\widetilde{S}\ |\ \underset{m_{st}}{\underbrace{\tilde{s}\tilde{t}\tilde{s}\cdots}}=\underset{m_{st}}{\underbrace{\tilde{t}\tilde{s}\tilde{t}\cdots}}\ for\ \tilde{s}, \tilde{t} \in \widetilde{S},\ \tilde{s} \neq \tilde{t}>$.
\end{center}
Take $G=W$ and $g=w_0$ the longest element over $S$ in $W$. We have \mbox{$[1,w_0] = W$}. Construct the interval monoid $M([1,w_0])$. We have $M([1,w_0])$ is the Artin-Tits monoid $B^{+}(W)$, where $B^{+}(W)$ is the monoid defined by the same presentation as $B(W)$. Hence $B^{+}(W)$ is generated by a copy $\underline{W}$ of $W$ with $\underline{f}\ \underline{g} = \underline{h}$ if $fg=h$ and $\ell(f) + \ell(g) = \ell(h)$; $f,g$, and $h \in W$. It is also known that $w_0$ is balanced and both posets $([1,w_0],\preceq)$ and $([1,w_0]_r, \preceq_r)$ are lattices. Hence by Theorem \ref{TheoremMichelGarside}, we have the following result.

\end{example}

\begin{proposition}

$(B^{+}(W),\underline{w_0})$ is a Garside monoid with simples $\underline{W}$, where $\underline{w_0}$ and $\underline{W}$ are given in Example \ref{ExampleArtinTitsIntervals}.

\end{proposition}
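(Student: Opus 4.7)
The plan is to derive the statement as an instance of Theorem \ref{TheoremMichelGarside} applied to $G = W$ and $w = w_0$. Example \ref{ExampleArtinTitsIntervals} already identifies the interval monoid $M([1, w_0])$ with the Artin-Tits monoid $B^+(W)$, and the element $\underline{w_0}$ as an element of either description; so it suffices to verify the two hypotheses of Michel's theorem, namely that $w_0$ is balanced and that both posets $([1, w_0], \preceq)$ and $([1, w_0]_r, \preceq_r)$ are lattices.

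To justify the identification $M([1, w_0]) = B^+(W)$ used in the example, I would invoke Matsumoto's theorem: any two reduced expressions for the same element of a Coxeter group differ by a sequence of braid relations, with no use of the quadratic relations $s^2 = 1$. Writing each generator $\underline{f}$ of $M([1, w_0])$ iteratively as a product of $\underline{s}$'s coming from a reduced expression $f = s_{i_1}\cdots s_{i_k}$ (using the defining relations $\underline{f} = \underline{s_{i_1}}\,\underline{s_{i_2}\cdots s_{i_k}}$ of the interval monoid repeatedly), Matsumoto's theorem implies that all remaining relations among the $\underline{s}$'s are consequences of the braid relations, matching exactly the presentation of $B^+(W)$ recalled in Definition \ref{DefinitionArtinTits}.

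For the balanced condition, $w_0$ being the longest element of $W$ means that any $f \in W$ admits a reduced expression which extends on the right (and symmetrically on the left) to a reduced expression of $w_0$; hence $f \preceq w_0$ and $f \preceq_r w_0$, so $[1, w_0] = W = [1, w_0]_r$ and $w_0$ is balanced. The main non-formal ingredient is the lattice property of the weak orders, which I would obtain by invoking Björner's classical theorem (``Orderings of Coxeter groups'', 1984) stating that the right weak order on a finite Coxeter group is a complete lattice with minimum $1$ and maximum $w_0$, together with its symmetric statement for the left weak order. Applying Theorem \ref{TheoremMichelGarside} then produces a Garside structure on $M([1, w_0]) = B^+(W)$ with Garside element $\underline{w_0}$ and simples $\underline{[1, w_0]} = \underline{W}$, which is the desired conclusion.
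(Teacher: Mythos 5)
Your proposal is correct and follows essentially the same route as the paper: the paper likewise obtains the proposition by applying Theorem \ref{TheoremMichelGarside} to $G=W$, $w=w_0$, simply quoting as classical (with a reference to the Garside book) the three facts you verify — the identification $M([1,w_0])\cong B^{+}(W)$, the balancedness of $w_0$, and the lattice property of both weak orders. Your explicit justifications via Matsumoto's theorem and Björner's lattice theorem are exactly the standard ingredients behind those cited facts, so there is no substantive difference.
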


Our aim is to construct interval structures for the complex braid group $B(e,e,n)$. Since the interval Garside structures depend on the definition of a length function in the corresponding complex reflection group, we will use the results and notations of Section 2.1 of Chapter \ref{ChapterNormalFormsG(de,e,n)} where geodesic normal forms are constructed for all the elements of $G(e,e,n)$.

\section{Balanced elements of maximal length}

Consider the group $G(e,e,n)$ defined by the presentation of Corran and Picantin with generating set $\mathbf{X}$, see Definition \ref{DefinitionPresentationCorranPicantin}. We always assume Convention \ref{ConventionDecreaseIncreaseIndex}. Denote by $\ell(w)$ the length over $\mathbf{X}$ of $w \in G(e,e,n)$. It is equal to the word length of the output $RE(w)$ of Algorithm \ref{Algo1}, see Proposition \ref{PropREwRedExp}.\\

The aim of this section is to prove that the only balanced elements of $G(e,e,n)$ that are of maximal length over $\mathbf{X}$ are $\lambda^k$ with $1 \leq k \leq e-1$, where $\lambda$ is the diagonal matrix such that $\lambda[i,i] = \zeta_e$ for $2 \leq i \leq n$ and $\lambda[1,1] = {(\zeta_e^{-1})}^{n-1}$, see Proposition \ref{PropMaxLength}. This is done by characterizing the intervals of the elements of maximal length.\\

We start by defining two partial order relations on $G(e,e,n)$.

\begin{definition}\label{DefPartalOrder1}

Let $w, w' \in G(e,e,n)$. We say that $w'$ is a divisor of $w$ or $w$ is a multiple of $w'$, and write $w' \preceq w$, if $w = w' w''$ with $w'' \in G(e,e,n)$ and $\ell(w) = \ell(w') + \ell(w'')$. This defines a partial order relation on $G(e,e,n)$.

\end{definition}

\noindent Similarly, we have another partial order relation on $G(e,e,n)$.

\begin{definition}\label{DefPartialOrder2}

Let $w$, $w' \in G(e,e,n)$. We say that $w'$ is a right divisor of $w$ or $w$ is a left multiple of $w'$, and write $w' \preceq_r w$, if there exists $w'' \in G(e,e,n)$ such that $w=w''w'$ and $\ell(w) = \ell(w'')+\ell(w')$.

\end{definition}

\begin{lemma}\label{LemmaCaractDivlambda}

Let $w, w' \in G(e,e,n)$ and let $\mathbf{x}_1\mathbf{x}_2 \cdots \mathbf{x}_r$ be a reduced expression over $\mathbf{X}$ of $w'$. We have $w' \preceq w$ if and only if  $\forall\ 1 \leq i \leq r, \ell(x_ix_{i-1}\cdots x_1 w)=\ell(x_{i-1}\cdots x_1 w)-1.$ 

\end{lemma}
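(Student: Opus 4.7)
The plan is to exploit Proposition \ref{prop.lengthcomp}, which guarantees that for every $x\in X$ and every $w\in G(e,e,n)$ the length changes by exactly $\pm 1$ upon left multiplication by $x$. The idea is then to track how $\ell$ evolves along the sequence $w,\; x_1w,\; x_2x_1w,\; \dots,\; x_r\cdots x_1w$, and to observe that requiring $w'\preceq w$ is equivalent to requiring that this sequence is strictly decreasing at every step.

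For the forward direction, I would assume $w'\preceq w$, so that $w = w' w''$ for some $w''\in G(e,e,n)$ with $\ell(w)=\ell(w')+\ell(w'')=r+\ell(w'')$. Using the relations $x_i^2=1$ (Relations 6 of Definition \ref{DefinitionPresentationCorranPicantin}) and peeling off generators one by one, a direct induction shows $x_r x_{r-1}\cdots x_1\, w = w''$, hence $\ell(x_r\cdots x_1 w) = \ell(w'')$. Now consider the sequence of integers
\[
\ell(w),\ \ell(x_1w),\ \ell(x_2x_1w),\ \dots,\ \ell(x_r\cdots x_1w).
\]
By Proposition \ref{prop.lengthcomp} consecutive terms differ by exactly $\pm 1$, while the overall drop from the first to the last term equals $r+\ell(w'')-\ell(w'')=r$. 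Since the sequence has $r$ steps of size $\pm 1$ and the total variation is $-r$, every single step must be a $-1$, which is precisely the claimed condition.

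For the converse, I would assume $\ell(x_i\cdots x_1 w)=\ell(x_{i-1}\cdots x_1 w)-1$ for all $1\le i\le r$. Summing these equalities telescopes to $\ell(x_r\cdots x_1 w)=\ell(w)-r$. Setting $w'':=x_r\cdots x_1 w$, the identity $x_i^2=1$ gives $w = x_1\cdots x_r\, w'' = w'w''$, and the length relation becomes $\ell(w'')=\ell(w)-r=\ell(w)-\ell(w')$. Combined with subadditivity $\ell(w)\le \ell(w')+\ell(w'')$ (which is automatic since concatenating word representatives yields a representative), this forces $\ell(w)=\ell(w')+\ell(w'')$, so $w'\preceq w$ in the sense of Definition \ref{DefPartalOrder1}.

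The whole argument is essentially bookkeeping; there is no genuine obstacle, since Proposition \ref{prop.lengthcomp} already does the heavy lifting. The only step that deserves care is the algebraic manipulation $x_r\cdots x_1\cdot x_1\cdots x_r\, w'' = w''$, which must be done carefully by induction on $r$, cancelling the innermost pair at each stage; and the observation that a sequence of $r$ steps of size $\pm 1$ with total variation $-r$ is necessarily monotone decreasing. Both are routine, so the main value of the lemma is really in providing a convenient criterion that will be applied repeatedly in what follows.
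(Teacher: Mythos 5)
Your proof is correct and follows essentially the same route as the paper: both arguments rest on the identity $w''=x_r\cdots x_1 w$, the fact from Proposition \ref{prop.lengthcomp} that each left multiplication by a generator changes the length by exactly $\pm 1$, and a telescoping count (the paper phrases the "only if" part as a contrapositive, you argue it directly via monotonicity, which is the same counting). The appeal to subadditivity in your converse is unnecessary, since $\ell(w'')=\ell(w)-\ell(w')$ already gives the required equality, but this is harmless.
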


\begin{proof}

On the one hand, we have $w'w''=w$ with $w''=x_rx_{r-1} \cdots x_1 w$ and the condition $\forall\ 1 \leq i \leq r, \ell(x_ix_{i-1}\cdots x_1w)=\ell(x_{i-1}\cdots x_1 w)-1$ implies that \mbox{$\ell(w'')=\ell(w)-r$}. So we get $\ell(w'')+\ell(w')=\ell(w)$. Hence $w' \preceq w$.

On the other hand, since $x^2=1$ for all $x \in X$, we have $\ell(xw)=\ell(w) \pm 1$ for all \mbox{$w \in G(e,e,n)$}. If there exists $i$ such that $\ell(x_i x_{i-1}\cdots x_1 w) = \ell(x_{i-1} \cdots x_1 w)+1$ with $1 \leq i \leq r$, then $\ell(w'') = \ell(x_r x_{r-1}\cdots x_1 w) > \ell(w) -r$. It follows that $\ell(w') + \ell(w'') > \ell(w)$. Hence $w' \npreceq w$.

\end{proof}

Consider the homomorphism ${}^-$: $\mathbf{X}^{*} \longrightarrow G(e,e,n): \mathbf{x} \longmapsto \overline{\mathbf{x}} := x \in X$. If $R\!E(w) = \boldsymbol{x}_1\boldsymbol{x}_2 \cdots \boldsymbol{x}_r$ with $w \in G(e,e,n)$ and $\boldsymbol{x}_1,\boldsymbol{x}_2, \cdots, \boldsymbol{x}_r \in \mathbf{X}$, then $\overline{R\!E(w)} =  x_1x_2 \cdots x_r  = w$ where $x_1, x_2, \cdots, x_r \in X$.\\

In the sequel, we fix $1 \leq k \leq e-1$ and let $w \in G(e,e,n)$.

\begin{definition}

Let $\lambda$ be the diagonal matrix of $G(e,e,n)$ such that $\lambda[i,i] = \zeta_e$ for $2 \leq i \leq n$ and $\lambda[1,1] = (\zeta_e^{-1})^{n-1}$. We define $D_k$ to be the set \begin{center} $\left\{w \in G(e,e,n)\ s.t.\ \overline{R\!E_i(w)} \preceq \overline{R\!E_i(\lambda^k)}\ for\ 2 \leq i \leq n \right\}$, \end{center}
where $R\!E_i(w)$ is given in Definition \ref{DefREiw}.

\end{definition}

\begin{proposition}\label{PropCaractD}

The set $D_k$ consists of the elements $w$ of $G(e,e,n)$ such that for all \mbox{$2 \leq i \leq n$}, $R\!E_i(w)$ can be any of the following words:

\begin{tabular}{ll}
$\mathbf{s}_i \cdots \mathbf{s}_{i'}$ & with $2 \leq i' \leq i$,\\
$\mathbf{s}_i \cdots \mathbf{s}_3 \mathbf{t}_{k'}$ & with $0 \leq k' \leq e-1$, and\\
$\mathbf{s}_i \cdots \mathbf{s}_3\mathbf{t}_{k}\mathbf{t}_0\mathbf{s}_3 \cdots \mathbf{s}_{i'}$ & with $2 \leq i' \leq i$.

\end{tabular}

\end{proposition}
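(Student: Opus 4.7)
The plan is to use Lemma \ref{LemmaCaractDivlambda} together with Proposition \ref{PropLengthdecreas}: an element $\overline{R\!E_i(w)}$ left-divides $\mu_i := \overline{R\!E_i(\lambda^k)}$ if and only if, along any reduced expression $\mathbf{x}_1 \cdots \mathbf{x}_r$ of $\overline{R\!E_i(w)}$, each of the $r$ successive left-multiplications by $x_j$ strictly decreases length. Since the possible shapes of $R\!E_i(w)$ are constrained by Definition \ref{DefREiw}, it suffices to run this length-descent check case by case. Before starting I would make $\mu_i$ explicit: from Example \ref{LemmaLengthlambda}, $R\!E_i(\lambda^k) = \mathbf{s}_i \cdots \mathbf{s}_3 \mathbf{t}_k \mathbf{t}_0 \mathbf{s}_3 \cdots \mathbf{s}_i$, so $\mu_i = A (t_k t_0) A^{-1}$ where $A = s_i \cdots s_3$ realises the cyclic permutation $(2, i, i-1, \ldots, 3)$; since $t_k t_0 = \mathrm{diag}(\zeta_e^{-k}, \zeta_e^k, 1, \ldots, 1)$, conjugation yields that $\mu_i$ is the diagonal matrix with $\zeta_e^{-k}$ in position $(1,1)$, $\zeta_e^k$ in position $(i,i)$, and $1$'s elsewhere.

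For the ``if'' direction I would verify each of the three listed forms is a left divisor by applying its generators one at a time on the left of $\mu_i$ while maintaining an inductive description of the intermediate matrix. Applying the initial block $s_i, s_{i-1}, \ldots, s_{i'+1}$ slides the $\zeta_e^k$ entry from row $i$ up column $i$ into row $i'$, and at each such step Proposition \ref{PropLengthdecreas}(1)(a) applies, with $c_{j-1} < c_j = i$ and incoming row entry $\zeta_e^k \neq 1$. For Form 1 ending with $\mathbf{s}_2 = \mathbf{t}_0$ and for Form 2 ending with $\mathbf{t}_{k'}$, one last appeal to Proposition \ref{PropLengthdecreas}(2) suffices, using $c_1 = 1 < c_2 = i$ and $a_2 = \zeta_e^k \neq 1$. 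For Form 3, the pivotal step is the application of $\mathbf{t}_k$: the coefficients $\zeta_e^{-k}$ in row $1$ and $\zeta_e^k$ in row $2$ collapse to $1$'s under $\mathbf{t}_k$, so the subsequent $\mathbf{t}_0 \mathbf{s}_3 \cdots \mathbf{s}_{i'}$ merely shuffles these $1$'s back into diagonal positions, each step meeting the hypothesis $a_1 = 1$ or $a_{j-1} = 1$ required by Proposition \ref{PropLengthdecreas}(3) or (1)(b), respectively.

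For the converse, the remaining shapes of $R\!E_i(w)$ allowed by Definition \ref{DefREiw} are precisely $\mathbf{s}_i \cdots \mathbf{s}_3 \mathbf{t}_{k'} \mathbf{t}_0$ and $\mathbf{s}_i \cdots \mathbf{s}_3 \mathbf{t}_{k'} \mathbf{t}_0 \mathbf{s}_3 \cdots \mathbf{s}_{i'}$ with $k' \in \{1, \ldots, e-1\} \setminus \{k\}$ (the empty word falls under the trivial inclusion $1 \preceq \mu_i$). Propagating the same tracking through the prefix $\mathbf{s}_i \cdots \mathbf{s}_3 \mathbf{t}_{k'}$, I would find a matrix whose first two rows carry $a_1 = \zeta_e^{k-k'}$ at column $i$ and $a_2 = \zeta_e^{k'-k}$ at column $1$, so $c_1 > c_2$; Proposition \ref{PropLengthdecreas}(3) then demands $a_1 = 1$ for the next letter $\mathbf{t}_0$ to decrease length, which forces $k \equiv k' \pmod{e}$, contradicting $k \neq k'$. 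Hence the length fails to decrease at that step, and by Lemma \ref{LemmaCaractDivlambda} no such word represents a left divisor of $\mu_i$.

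The main obstacle will be the bookkeeping: tracking, after each generator is applied, the column $c_j$ and entry $a_j$ of every affected row, and identifying the correct clause of Proposition \ref{PropLengthdecreas} to invoke. Once the effect of each generator on a single row is made explicit, every verification reduces to a routine check against those clauses.
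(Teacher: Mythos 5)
Your argument is correct, but it takes a different route from the paper's own proof of this proposition. The paper does not invoke Lemma \ref{LemmaCaractDivlambda} or Proposition \ref{PropLengthdecreas} here: for each of the four possible shapes of $R\!E_i(w)$ allowed by Definition \ref{DefREiw} it exhibits the explicit complement $w'$ with $\overline{R\!E_i(w)}\,w' = \overline{R\!E_i(\lambda^k)}$, reads off the word lengths, and observes that they add up to $\ell\bigl(\overline{R\!E_i(\lambda^k)}\bigr) = 2(i-1)$ precisely in the three listed cases. You instead compute $\mu_i = \overline{R\!E_i(\lambda^k)}$ as an explicit diagonal matrix (your conjugation computation is right) and decide divisibility by the letter-by-letter descent criterion; this is essentially the technique the paper reserves for the next statement (Proposition \ref{PropDisDivLambda}), applied block by block, and your case analysis --- descent via clauses 1(a), (2), (3), 1(b) for the three good shapes, and failure of clause (3) at the letter $\mathbf{t}_0$ when $k' \neq k$ --- checks out. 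The trade-off: the paper's table is shorter and hands you the complements explicitly, while your route avoids identifying reduced expressions for those complements at the price of the matrix bookkeeping. Two points to make explicit when writing it up: Lemma \ref{LemmaCaractDivlambda} is stated for a \emph{reduced} expression of the would-be divisor (in both directions), so you must note that each block $R\!E_i(w)$ is geodesic for the element it represents, being a factor of the geodesic word $R\!E(w)$ of Proposition \ref{PropREwRedExp}; and in Form 1 with $i' \geq 3$ the final letter $\mathbf{s}_{i'}$ is still handled by Proposition \ref{PropLengthdecreas}(1)(a), not by clause (2), so your ``initial block'' should run through $\mathbf{s}_{i'}$ rather than stop at $\mathbf{s}_{i'+1}$.
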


\begin{proof}

We have $R\!E_i(\lambda^k) = \mathbf{s}_i \cdots \mathbf{s}_3 \mathbf{t}_k \mathbf{t}_0 \mathbf{s}_3 \cdots \mathbf{s}_i$. Let $w \in G(e,e,n)$. Note that $R\!E_i(w)$ is necessarily one of the words given in the first column of the following table. For each $R\!E_i(w)$, there exists a unique $w' \in G(e,e,n)$ with $R\!E(w')$ given in the second column, such that $\overline{R\!E_i(w)}w' = \overline{R\!E_i(\lambda^k)}$.

For $R\!E_i(w) = \mathbf{s}_i \cdots \mathbf{s}_{i'}$ with $2 \leq i' \leq i$, we get $R\!E(w') = \mathbf{s}_{i'-1} \cdots \mathbf{s}_3 \mathbf{t}_k \mathbf{t}_0 \mathbf{s}_3 \cdots \mathbf{s}_i$.

For $R\!E_i(w) = \mathbf{s}_i \cdots \mathbf{s}_3 \mathbf{t}_{k'}$ with $0 \leq k' \leq e-1$, we get $R\!E(w') = \mathbf{t}_{k'-k}\mathbf{s}_3 \cdots \mathbf{s}_i$. In this case, $\overline{R\!E_i(w)}\ \overline{R\!E(w')} = s_i \cdots s_3 t_{k'}t_{k'-k} s_3 \cdots s_i = s_i \cdots s_3 t_{k}t_{0} s_3 \cdots s_i = \overline{R\!E_i(\lambda^k)}$.

For $R\!E_i(w) = \mathbf{s}_i \cdots \mathbf{s}_3\mathbf{t}_k\mathbf{t}_0\mathbf{s}_3 \cdots \mathbf{s}_{i'}$ with $2 \leq i' \leq i$, we get $R\!E(w') = \mathbf{s}_{i'+1} \cdots \mathbf{s}_i$.

Finally, for $R\!E_i(w) = \mathbf{s}_i \cdots \mathbf{s}_3 \mathbf{t}_{k'} \mathbf{t}_0 \mathbf{s}_3 \cdots \mathbf{s}_{i'}$ with $1 \leq k' \leq e-1$, $k' \neq k$, and $2 \leq i' \leq i$, we get $R\!E(w') = \mathbf{s}_{i'} \cdots \mathbf{s}_3 \mathbf{t}_{k-k'} \mathbf{t}_{0} \mathbf{s}_3 \cdots \mathbf{s}_i$.

In the last column, we compute $\ell\left(\overline{R\!E_i(w)}\right) + \ell(\overline{R\!E(w')})$. It is equal to $\ell\left(\overline{R\!E_i(\lambda^k)}\right) = 2(i-1)$ only for the first three cases. The result follows immediately.\\

\begin{tabular}{|l|l|l|}
\hline
$R\!E_i(w)$ & $R\!E(w')$ & \\
\hline
$\mathbf{s}_i \cdots \mathbf{s}_{i'}$ with $2 \leq i' \leq i$ & $\mathbf{s}_{i'-1} \cdots \mathbf{s}_3 \mathbf{t}_k \mathbf{t}_0 \mathbf{s}_3 \cdots \mathbf{s}_i$ & $2(i-1)$\\
\hline
$\mathbf{s}_i \cdots \mathbf{s}_3 \mathbf{t}_{k'}$ with $0 \leq k' \leq e-1$ & $\mathbf{t}_{k'-k}\mathbf{s}_3 \cdots \mathbf{s}_i$ & $2(i-1)$\\
\hline
$\mathbf{s}_i \cdots \mathbf{s}_3\mathbf{t}_k \mathbf{t}_0\mathbf{s}_3 \cdots \mathbf{s}_{i'}$ with $2 \leq i' \leq i$ & $\mathbf{s}_{i'+1} \cdots \mathbf{s}_i$ & $2(i-1)$\\
\hline
$\mathbf{s}_i \cdots \mathbf{s}_3 \mathbf{t}_{k'} \mathbf{t}_0 \mathbf{s}_3 \cdots \mathbf{s}_{i'}$ with $1 \leq k' \leq e-1$, & $\mathbf{s}_{i'} \cdots \mathbf{s}_3 \mathbf{t}_{k-k'} \mathbf{t}_{0} \mathbf{s}_3 \cdots \mathbf{s}_i$ & $2(i-1)+$\\
$k' \neq k$, and $2 \leq i' \leq i$ & & $2(i'-1)$ \\
\hline

\end{tabular}

\end{proof}

The next proposition characterizes the divisors of $\lambda^k$ in $G(e,e,n)$.

\begin{proposition}\label{PropDisDivLambda}

The set $D_k$ is equal to the interval $[1,\lambda^k]$, where \begin{center}$[1,\lambda^k] = \left\{ w \in G(e,e,n)\ s.t.\ 1 \preceq w \preceq \lambda^k  \right\}$.\end{center}

\end{proposition}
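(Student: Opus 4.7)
The plan is to prove the equality $D_k = [1,\lambda^k]$ by double inclusion, exploiting both the explicit form of $RE_i(w)$ given in Proposition \ref{PropCaractD} and the inductive divisibility criterion of Lemma \ref{LemmaCaractDivlambda}. Throughout, I would use the fact that every element $w$ of $G(e,e,n)$ has a unique normal form $RE(w) = RE_2(w)\, RE_3(w) \cdots RE_n(w)$ with $\ell(w)=\sum_{i=2}^n \ell(\overline{RE_i(w)})$, and similarly for $\lambda^k$ with $\ell(\overline{RE_i(\lambda^k)})=2(i-1)$.

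For the inclusion $D_k \subseteq [1,\lambda^k]$, I would show that for $w \in D_k$ there is an element $w'' \in G(e,e,n)$ with $w\,w'' = \lambda^k$ and $\ell(w)+\ell(w'')=n(n-1)$. The starting material is exactly the local complements $u_i$ produced in the proof of Proposition \ref{PropCaractD}: for each $i$, the form of $RE_i(w)$ determines an explicit $u_i$ with $\overline{RE_i(w)}\,u_i = \overline{RE_i(\lambda^k)}$ and $\ell(\overline{RE_i(w)})+\ell(u_i)=2(i-1)$. The candidate complement is then a suitable arrangement of the $u_i$'s. The cleanest implementation is by induction on $n$: writing $\lambda^k = \lambda^{k}_{(n-1)} \cdot \overline{RE_n(\lambda^k)}$ where $\lambda^k_{(n-1)} = \prod_{i=2}^{n-1}\overline{RE_i(\lambda^k)}$ lives in the parabolic generated by $\{t_0,\dots,t_{e-1},s_3,\dots,s_{n-1}\}$, and writing $w = w_{(n-1)} \cdot \overline{RE_n(w)}$, one reduces the problem at level $n$ to (i) showing that $u_n$ satisfies length-additivity with $\overline{RE_n(w)}$ to yield $\overline{RE_n(\lambda^k)}$ (this is the content of the table), and (ii) recursively showing $w_{(n-1)} \preceq \lambda^k_{(n-1)}$ in the smaller rank. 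The recursion is meaningful because the allowed shapes of $RE_i(w)$ for $i<n$ do not depend on $n$.

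For the inclusion $[1,\lambda^k] \subseteq D_k$, I would proceed by contradiction. Given $w \preceq \lambda^k$, we have $\ell(w)+\ell(w^{-1}\lambda^k)=\ell(\lambda^k)=n(n-1) = \sum_{i=2}^n 2(i-1)$. The possible shapes for each $RE_i(w)$ are exhaustively enumerated in the four rows of the table in the proof of Proposition \ref{PropCaractD}, and only the first three rows are length-additive with the corresponding complement $u_i$ (total length $2(i-1)$); the fourth row has total length $2(i-1)+2(i'-1) > 2(i-1)$. If some $RE_i(w)$ were of the fourth shape, then any word representative of $w^{-1}\lambda^k$ built from the $u_j$'s would have total length strictly greater than $n(n-1)-\ell(w)$, and one shows that no cancellation can repair this, using Proposition \ref{PropLengthdecreas} to track left descents of $\lambda^k$ through the letters of $RE(w)$ in reverse order (which is exactly Lemma \ref{LemmaCaractDivlambda}). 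Hence every $RE_i(w)$ is of one of the first three shapes, i.e.\ $w \in D_k$.

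The main obstacle is the forward direction: combining the local length-additive factorizations $\overline{RE_i(w)}\,u_i = \overline{RE_i(\lambda^k)}$ into a single global length-additive factorization $w\,w''=\lambda^k$. This requires exploiting the commutation relations of the Corran--Picantin presentation (Definition \ref{DefinitionPresentationCorranPicantin}), in particular that $s_j$ for $j\geq 4$ commutes with every $t_i$ and with $s_{j'}$ for $|j-j'|>1$, in order to move $u_i$ past the block $\overline{RE_{i+1}(w)} \cdots \overline{RE_n(w)}$ without increasing length. Handling the interaction between the $t_k\,t_0$ parts coming from different levels is the most delicate step; this is where one needs the precise form of $RE_i(\lambda^k)$ and the fact that all nontrivial $RE_i(\lambda^k)$ use the \emph{same} index $k$ in the $t_k\,t_0$ factor.
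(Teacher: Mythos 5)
Your plan has the right overall shape (double inclusion, using Proposition \ref{PropCaractD} and Lemma \ref{LemmaCaractDivlambda}), but the forward inclusion $D_k \subseteq [1,\lambda^k]$ is not actually proved, and the step you defer is exactly the one that fails as formulated. You propose to assemble the local complements $u_i$ from Proposition \ref{PropCaractD} into a single length-additive factorization $w\,w'' = \lambda^k$, either by ``moving $u_i$ past the blocks $\overline{R\!E_{i+1}(w)}\cdots\overline{R\!E_n(w)}$'' using commutation relations, or by induction on $n$. But $u_i$ contains letters $\mathbf{t}_\bullet$ and $\mathbf{s}_3,\dots$, which do \emph{not} commute with the later blocks (these also contain $\mathbf{t}_\bullet$ and low-index $\mathbf{s}$'s), so the commutation tactic cannot move $u_i$ to the right; and in the inductive formulation the complement you need is the conjugate $\overline{R\!E_n(w)}^{-1}\, v\, \overline{R\!E_n(w)}\, u_n$, whose length has no reason to equal $\ell(v)+\ell(u_n)$. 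You flag this as ``the most delicate step'' without resolving it, so the inclusion remains open in your write-up. The paper never constructs $w''$: by Lemma \ref{LemmaCaractDivlambda} it suffices to check that left-multiplying $\lambda^k$ successively by the letters of $R\!E(w)$ decreases the length at each step, and this is verified with the matrix criteria of Proposition \ref{PropLengthdecreas}, using the structural observation (absent from your plan) that the already-processed prefix $\alpha_i$ only permutes and modifies the first $i-1$ rows and columns of $\lambda^k$, so the untouched diagonal entry $\lambda^k[i,i]=\zeta_e^k$ is exactly what the criteria see when block $i$ is processed.

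For $[1,\lambda^k]\subseteq D_k$, your first argument is not valid as stated: $\ell(w^{-1}\lambda^k)$ is a minimum over all word representatives, so observing that the representative built from the $u_j$'s is too long proves nothing, and ``no cancellation can repair this'' is precisely what has to be shown. Your fallback --- tracking descents of $\lambda^k$ along $R\!E(w)$ via Lemma \ref{LemmaCaractDivlambda} --- is the paper's actual argument, but it needs the concrete computation you omit: take the \emph{first} block of the fourth shape $\mathbf{s}_i\cdots\mathbf{s}_3\mathbf{t}_{k'}\mathbf{t}_0\mathbf{s}_3\cdots\mathbf{s}_{i'}$ with $k'\neq k$; after the letters $s_i,\dots,s_3$ and then $t_{k'}$ are applied, the entry $\zeta_e^k$ has been moved to the first row and turned into $\zeta_e^{k-k'}\neq 1$, so the next letter $t_0$ strictly \emph{increases} the length by part 3 of Proposition \ref{PropLengthdecreas}, and Lemma \ref{LemmaCaractDivlambda} gives $w\npreceq\lambda^k$. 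In short, the architecture is right, but both inclusions ultimately rest on the letter-by-letter descent verification on the matrix of $\lambda^k$, which your proposal replaces by an unproven (and, as stated, unworkable) global recombination of the local complements.
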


\begin{proof}

Let $w \in G(e,e,n)$. We have $R\!E(w) = R\!E_2(w)R\!E_3(w) \cdots R\!E_n(w)$. Let $\textbf{w} \in \mathbf{X}^{*}$ be a word representative of $w$. Denote by $\overleftarrow{\mathbf{w}} \in \mathbf{X}^{*}$ the word obtained by reading $\textbf{w}$ from right to left. For $3 \leq i \leq n$, we denote by $\alpha_i$ the element that corresponds to \begin{small}$\overline{\overleftarrow{R\!E_{i-1}(w)}} \cdots \overline{\overleftarrow{R\!E_2(w)}}$\end{small} in $G(e,e,n)$.

Suppose that $w \in D_k$. We apply Lemma  \ref{LemmaCaractDivlambda} to prove that $w \preceq \lambda^k$. Fix $2 \leq i \leq n$. By Proposition \ref{PropCaractD}, we have three different possibilities for $R\!E_i(w)$.

First, consider the cases $R\!E_i(w) = \mathbf{s}_i \cdots  \mathbf{s}_3  \mathbf{t}_k  \mathbf{t}_0  \mathbf{s}_3 \cdots  \mathbf{s}_{i'}$ or $\mathbf{s}_i \cdots \mathbf{s}_{i'}$ with $2 \leq i' \leq i$. Hence $\overleftarrow{R\!E_i(w)} = \mathbf{s}_{i'} \cdots \mathbf{s}_3 \mathbf{t}_0 \mathbf{t}_k \mathbf{s}_3 \cdots \mathbf{s}_i$ or $\mathbf{s}_{i'} \cdots \mathbf{s}_i$, respectively.
Note that the left multiplication of the matrix $\lambda^k$ by $\alpha_i$ produces permutations only in the block consisting of the first $i-1$ rows and the first $i-1$ columns of $\lambda^k$. Since  $\lambda[i,i] = \zeta_e^k$ ($\neq 1$), by $1(a)$ of Proposition \ref{PropLengthdecreas}, the left multiplication of $\alpha_i \lambda^k$ by $s_{i'} \cdots s_i$ decreases the length maximally $-$ that is, each generator causes a decrease of length $1$. Now, by $2$ of Proposition \ref{PropLengthdecreas}, the left multiplication of $s_3 \cdots s_i \alpha_i \lambda^k$ by $t_k$ decreases the length of $1$. Note that by these left multiplications, $\lambda^k[i,i] = \zeta_e^k$ is shifted to the first row then transformed to $ \zeta_e^k  \zeta_e^{-k}=1$. Hence, by $1(b)$ of Proposition \ref{PropLengthdecreas}, the left multiplication of $t_1s_3 \cdots s_i \alpha_i \lambda^k$ by $s_{i'} \cdots s_3 t_0$ decreases the length maximally. Thus, by Lemma \ref{LemmaCaractDivlambda}, we have $w \preceq \lambda^k$.

Suppose that $R\!E_i(w) = \mathbf{s}_i \cdots \mathbf{s}_3 \mathbf{t}_{k'}$ with $0 \leq k' \leq e-1$. We have $\overleftarrow{R\!E_i(w)} = \mathbf{t}_{k'} \mathbf{s}_3 \cdots \mathbf{s}_i$.
Since $\lambda^k[i,i] = \zeta_e^k$ ($\neq 1$), by $1(a)$ of Proposition \ref{PropLengthdecreas}, the left multiplication of $\alpha_i \lambda^k$ by $s_3 \cdots s_i$ decreases the length maximally. By $2$ of Proposition \ref{PropLengthdecreas}, the left multiplication of $s_3 \cdots s_i \alpha_i \lambda^k$ by $t_{k'}$ also decreases the length of $1$. Hence, by applying Lemma \ref{LemmaCaractDivlambda}, we have $w \preceq \lambda^k$.

Conversely, suppose that $w \notin D_k$, we prove that $w \npreceq \lambda^k$.
If $R\!E(w) = \mathbf{x}_1 \cdots \mathbf{x}_r$, by Lemma \ref{LemmaCaractDivlambda}, we show that there exists $1 \leq i \leq r$ such that $\ell(x_i x_{i-1} \cdots x_1 \lambda^k) = \ell(x_{i-1} \cdots x_1 \lambda^k) +1$. Since $w \notin D$, by Proposition \ref{PropCaractD}, we may consider the first $R\!E_i(w)$ that appears in $R\!E(w) = R\!E_2(w) \cdots R\!E_n(w)$ such that $R\!E_i(w) =\\ \mathbf{s}_i \cdots \mathbf{s}_3 \mathbf{t}_{k'} \mathbf{t}_0 \mathbf{s}_3 \cdots \mathbf{s}_{i'}$ with $1 \leq k' \leq e-1$, $k' \neq k$, and $2 \leq i' \leq i$. Thus, we have $\overleftarrow{{R\!E}_i(w)} = \mathbf{s}_{i'} \cdots \mathbf{s}_3 \mathbf{t}_0\mathbf{t}_{k'} \mathbf{s}_3 \cdots \mathbf{s}_i$.
Since $\lambda^k[i,i] = \zeta_e^k$ ($\neq 1$), by $1(a)$ of Proposition \ref{PropLengthdecreas}, the left multiplication of $\alpha_i \lambda^k$ by $s_3 \cdots s_i$ decreases the length maximally. By $2$ of Proposition \ref{PropLengthdecreas}, the left multiplication of $s_3 \cdots s_i \alpha_i \lambda^k$ by $t_{k'}$ also decreases the length of $1$. Note that by these left multiplications, $\lambda^k[i,i] = \zeta_e^k$ is shifted to the first row then transformed to $\zeta_e^k \zeta_e^{-k'} = \zeta_e^{k-k'}$. Since $k \neq k'$, we have $\zeta_e^{k-k'} \neq 1$. By $3$ of Proposition \ref{PropLengthdecreas}, it follows that the left multiplication of $t_{k'} s_3 \cdots s_i \alpha_i \lambda^k$ by $t_0$ increases the length. Hence $w \npreceq \lambda^k$.

\end{proof}

We want to recognize if an element $w \in G(e,e,n)$ is in the set $D_k$ directly from its matrix form. For this purpose, we introduce nice combinatorial tools defined as follows. Fix $1 \leq k \leq e-1$. Let $w \in G(e,e,n)$.

\begin{definition}\label{DefinitionBullets}

An index $[i,c]$ is said to be a \emph{bullet} if $w[j,d] = 0$ for all $[j,d] \in \left\{ [j,d]\ s.t.\ j \leq i\ and\ d\leq c  \right\} \setminus \left\{[i,c] \right\} $. When $[i,c]$ is a bullet, $w[i,c]$ is represented by an encircled element.

\end{definition}

\begin{definition}\label{DefinitionZwZ'w}
We define two sets of matrix indices $Z(w)$ and $Z'(w)$ as follows.
\begin{itemize}

\item $Z(w) := \left\{ [j,d]\ s.t.\ j \leq i\ and\ d \leq c\ for\ some\ bullet\ [i,c] \right\}$.
\item $Z'(w)$ is the set of matrix indices not in $Z(w)$.

\end{itemize}

\end{definition}

We draw a path in the matrix $w$ that separates it into two parts such that the upper left-hand side is $Z(w)$ and the other side is $Z'(w)$. Let us illustrate this by the following example.

\begin{example}

%Instead of \encircle{} below, we can use \boxed{} !

Let $w =
\left(
\begin{BMAT}{ccccc}{ccccc}
0 & 0 & 0  & \encircle{$\zeta_3^2$} & 0 \\
0 & 0 & 0 & 0 & \zeta_3 \\
0 & 0 & \encircle{$\zeta_3$} & 0 & 0 \\
\encircle{$1$} & 0 & 0 & 0 & 0 \\
0 & \zeta_3^2 & 0 & 0 & 0
\addpath{(0,1,0)rurruurur}
\end{BMAT}
\right) \in G(3,3,5).
$ When $[i,c]$ is a bullet, $w[i,c]$ is an encircled element and the drawn path separates $Z(w)$ from $Z'(w)$.

\end{example}

\begin{remark}\label{RemBulletsinZ}

Let $[i,c]$ be one of the bullets of $w \in G(e,e,n)$. We have
\begin{center} $[i-1,c] \in Z(w)$ and $[i,c-1] \in Z(w)$. \end{center}
An index $[i,c]$ such that $w[i,c] \neq 0$ and $[i,c]$ is not a bullet does not satisfy this condition.

\end{remark}

\begin{remark}\label{RemBulletsFirstRowColumn}

The indices corresponding to nonzero entries on the first row and the first column of $w$ are always bullets. In particular, when $w[1,1] \neq 0$, we have $[1,1]$ is a bullet and it is the only bullet of $w$ (as this nonzero entry at $[1,1]$ is above, or to the left of, every entry of $w$).  

\end{remark}

The following proposition gives a nice description of the divisors of $\lambda^k$ in $G(e,e,n)$.

\begin{proposition}\label{PropZ'=1orZetae}

Let $w \in G(e,e,n)$. We have that $w \in D_k$ (i.e. $w \preceq \lambda^k$) if and only if, for all $[j,d] \in Z'(w)$, $w[j,d]$ is either $0$, $1$, or $\zeta_e^k$.

\end{proposition}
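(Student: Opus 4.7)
The plan is to deduce the matrix criterion directly from the word-level characterization already obtained in Proposition \ref{PropCaractD}. By that result, $w\in D_k$ if and only if, for every $2\leq i\leq n$, the word $R\!E_i(w)$ has one of the three listed forms. Unwinding Definition \ref{DefREiw}, these three forms correspond respectively to the following three conditions on the unique nonzero entry $w_i[i,c]$ of row $i$ of the block $w_i$: either $w_i[i,c]=1$ (with $c$ arbitrary), or $c=1$ (with the value arbitrary), or $w_i[i,c]=\zeta_e^k$ (with $c$ arbitrary). Hence the task reduces to translating this block-level condition into the entry-level condition on $Z'(w)$ stated in the proposition.

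For each row $i$ let $c_i$ denote the unique column such that $a_i:=w[i,c_i]\neq 0$. Iterating Lemma \ref{LemmaBlocks}, one sees that the block $w_i$ has row set $\{1,\dots,i\}$ and occupies the original columns $\{c_1,\dots,c_i\}$ sorted in their natural order; thus the block column $c$ of the nonzero entry of row $i$ in $w_i$ equals the rank of $c_i$ in $\{c_1,\dots,c_i\}$. Consequently $c=1$ if and only if $c_i=\min(c_1,\dots,c_i)$, which is precisely the bullet condition of Definition \ref{DefinitionBullets} applied to $[i,c_i]$.

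The key remaining point is a value-level comparison: when $[i,c_i]$ is \emph{not} a bullet, I claim $w_i[i,c]=a_i$, with no correction factor. Indeed, Lemma \ref{LemmaBlocks} shows that at each reduction step from $w_j$ to $w_{j-1}$, the only column whose entries are rescaled is the current first column of the new block. If $[i,c_i]$ is not a bullet, some $l<i$ satisfies $c_l<c_i$; since column $c_l$ remains present in every block $w_j$ for $j\geq i$ and lies strictly to the left of column $c_i$, the column $c_i$ is always in position $\geq 2$ throughout the reduction and is therefore never touched by these rescalings. Combining this with the preceding paragraph, the criterion from Proposition \ref{PropCaractD} becomes: for every $2\leq i\leq n$, if $[i,c_i]$ is not a bullet of $w$, then $a_i\in\{1,\zeta_e^k\}$, whereas bullet rows impose no constraint on $a_i$.

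To finish, I translate this back to a statement about $Z'(w)$. Bullets lie in $Z(w)$ tautologically; conversely, if $[j,c_j]$ is a nonzero entry that is not a bullet, then every bullet $[i,c_i]$ with $i\geq j$ satisfies $c_i=\min(c_1,\dots,c_i)\leq\min(c_1,\dots,c_j)<c_j$, so $[j,c_j]\notin Z(w)$. Hence the nonzero entries of $w$ in $Z'(w)$ are exactly the non-bullet ones, row~$1$ is automatically a bullet by Remark \ref{RemBulletsFirstRowColumn}, and the zero entries of $Z'(w)$ trivially lie in $\{0,1,\zeta_e^k\}$. The criterion above is therefore equivalent to the assertion that $w[j,d]\in\{0,1,\zeta_e^k\}$ for every $[j,d]\in Z'(w)$, as desired. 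The main subtlety is the value-level claim of the third paragraph: one must carefully verify that non-bullet columns escape all the successive left-column rescalings performed by the reduction algorithm.
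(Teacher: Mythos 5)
Your proof is correct and takes essentially the same route as the paper: both deduce the matrix criterion from Proposition \ref{PropCaractD} via Definition \ref{DefREiw} and Lemma \ref{LemmaBlocks}, identifying bullets with block-column-one positions and the nonzero entries of $Z'(w)$ with the non-bullet ones. Your third paragraph simply makes explicit the value-preservation claim ($w_i[i,c]=w[i,c_i]$ for non-bullet entries, since that column is never the leftmost one during the reduction) that the paper asserts directly by citing Lemma \ref{LemmaBlocks}.
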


\begin{proof}

Let $w \in D_k$ and let $w[i,c] \neq 0$ for $[i,c] \in Z'(w)$. Since $w \in D_k$, by Proposition \ref{PropCaractD}, we have $R\!E_i(w) = \mathbf{s}_i \cdots \mathbf{s}_{i'}$ or $\mathbf{s}_i \cdots \mathbf{s}_3\mathbf{t}_k\mathbf{t}_0\mathbf{s}_3 \cdots \mathbf{s}_{i'}$ for $2 \leq i' \leq i$ (the case $R\!E_i(w) = \mathbf{s}_i \cdots \mathbf{s}_3\mathbf{t}_{k'}$ for $0 \leq k' \leq e-1$ appears only when $w[i,c] \neq 0$ and $[i,c]$ is a bullet). By Lemma \ref{LemmaBlocks}, we have $w[i,c] = w_i[i,d]$ for some $1 < d \leq i$. It follows that for $R\!E_i(w) = \mathbf{s}_i \cdots \mathbf{s}_{i'}$, we have $w[i,c] = 1$ and for $R\!E_i(w) = \mathbf{s}_i \cdots \mathbf{s}_3\mathbf{t}_k\mathbf{t}_0\mathbf{s}_3 \cdots \mathbf{s}_{i'}$, we have $w[i,c] = \zeta_e^k$.

Conversely, suppose that $w[j,d]$ is $0$, $1$, or $\zeta_e^k$ whenever $[j,d] \in Z'(w)$. Firstly, consider a nonzero entry $w[i,c]$ of $w$ for which $[i,c] \in Z'(w)$. From Remark \ref{RemBulletsFirstRowColumn}, we have $i \geq 2$. Once again $R\!E_i(w) = \mathbf{s}_i \cdots \mathbf{s}_3\mathbf{t}_k\mathbf{t}_0\mathbf{s}_3 \cdots \mathbf{s}_{i'}$ or $\mathbf{s}_i \cdots \mathbf{s}_{i'}$ for $2 \leq i' \leq i$. On the other hand, if $w[i,c]$ is a nonzero entry of $w$ for which $[i,c] \notin Z'(w)\ -$ that is, $[i,c]$ is a bullet of $w$, so by Lemma \ref{LemmaBlocks}, we have $w_i[i,1] = \zeta_e^{k'}$ for some $0 \leq k' \leq e-1$, for which case $R\!E_{i}(w) = \mathbf{s}_i \cdots \mathbf{s}_3 \mathbf{t}_{k'}$. Hence, by Proposition \ref{PropCaractD}, we have $w \in D_k$.

\end{proof}

\begin{example}

Let $w =
\left(
\begin{BMAT}{ccccc}{ccccc}
0 & 0 & 0  & \encircle{$1$} & 0 \\
0 & 0 & 0 & 0 & \boxed{\zeta_3} \\
0 & 0 & \encircle{$\zeta_3$} & 0 & 0 \\
\encircle{$\zeta_3$} & 0 & 0 & 0 & 0 \\
0 & \boxed{1} & 0 & 0 & 0
\addpath{(0,1,0)rurruurur}
\end{BMAT}
\right) \in G(3,3,5)$.\\ For all $[i,c] \in Z'(w)$, we have $w[i,c]$ is equal to $1$ or $\zeta_3$ (these are the boxed entries of $w$). It follows immediately that $w \preceq \lambda$ ($w \in [1,\lambda]$).

\end{example}

\begin{example}

Let $w =
\left(
\begin{BMAT}{ccccc}{ccccc}
0 & 0 & 0  & \encircle{$1$} & 0 \\
0 & 0 & 0 & 0 & \boxed{1} \\
0 & 0 & \encircle{$\zeta_3$} & 0 & 0 \\
\encircle{$1$} & 0 & 0 & 0 & 0 \\
0 & \boxed{\zeta_3^2} & 0 & 0 & 0
\addpath{(0,1,0)rurruurur}
\end{BMAT}
\right) \in G(3,3,5).
$\\ For all $[i,c] \in Z'(w)$, we have $w[i,c]$ is equal to $1$ or $\zeta_3^2$ (these are the boxed entries of $w$). It follows immediately that $w \in [1,\lambda^2]$.

\end{example}

\begin{example}

Let $w =
\left(
\begin{BMAT}{cccc}{cccc}
\encircle{$\zeta_3^2$} & 0 & 0  & 0 \\
0 & 0 & \zeta_3 & 0 \\
0 & \zeta_3 & 0 & 0 \\
0 & 0 & 0 & \boxed{\zeta_3^2} 
\addpath{(0,3,0)rurrr}
\end{BMAT}
\right) \in G(3,3,4).$\\ There exists $[i,c] \in Z'(w)$ such that $w[i,c] = \zeta_3^2$ (the boxed element in $w$). It follows immediately that $w \notin [1,\lambda]$. Moreover, there exists $[i',c'] \in Z'(w)$ such that $w[i',c'] = \zeta_3$. Hence $w \notin [1,\lambda^2]$. 

\end{example}

\begin{remark}\label{RemDivisorsLambdaG(2,2,n)}

Let $w$ be an element of the Coxeter group $G(2,2,n)$. The nonzero elements $w[i,c]$ with $[i,c] \in Z'(w)$ are always equal to $1$ or $-1$. Hence by Proposition \ref{PropZ'=1orZetae}, all the elements of $G(2,2,n)$ are left divisors of the unique element of maximal length $\lambda$ in $G(2,2,n)$, see Remark \ref{RemUniqueMaximalG(2,2,n)}.\\ For example,
Let $w =
\left(
\begin{BMAT}{ccccc}{ccccc}
0 & 0 & 0  & \encircle{$1$} & 0 \\
0 & 0 & 0 & 0 & \boxed{1} \\
0 & 0 & \encircle{$-1$} & 0 & 0 \\
\encircle{$1$} & 0 & 0 & 0 & 0 \\
0 & \boxed{-1} & 0 & 0 & 0
\addpath{(0,1,0)rurruurur}
\end{BMAT}
\right)$ be an element of $G(2,2,4)$. Since all the nonzero elements $w[i,c]$ with $[i,c] \in Z'(w)$ are equal to $1$ or $-1$, it follows immediately that $w \preceq \lambda$.

\end{remark}

\medskip

Our description of the interval $[1, \lambda^k]$ allows us to prove easily that $\lambda^k$ is balanced. Let us recall the definition of a balanced element.

\begin{definition}\label{DefBalancedElement}

A balanced element in $G(e,e,n)$ is an element $w$ such that $w' \preceq w$ holds precisely when $w \preceq_r w'$. 

\end{definition}

\noindent The next lemma is obvious.

\begin{lemma}\label{LemmaDivisorsForInduction}

Let $g$ be a balanced element and let $w, w' \in [1,g]$. If $w' \preceq w$, then $(w')^{-1}w \preceq g$.

\end{lemma}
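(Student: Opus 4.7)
The plan is to chain together two divisibility relations, using the balanced hypothesis to convert right-divisibility into left-divisibility at the crucial step. Write $u = (w')^{-1} w$; the hypothesis $w' \preceq w$ means $w = w' u$ with $\ell(w) = \ell(w') + \ell(u)$, and we must show $u \preceq g$.

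First I would combine the two given divisibilities. Since $w \preceq g$, write $g = w h$ with $\ell(g) = \ell(w) + \ell(h)$, so that $g = w' u h$ with
$$\ell(g) = \ell(w') + \ell(u) + \ell(h).$$
Setting $v := (w')^{-1} g = u h$, one reads off $\ell(v) = \ell(g) - \ell(w') = \ell(u) + \ell(h)$, so $v = u h$ is a product in which lengths add. Hence $u \preceq v$ directly from Definition \ref{DefPartalOrder1}.

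Next, the equality $g = w' v$ with $\ell(g) = \ell(w') + \ell(v)$ exhibits $v$ as a right divisor of $g$, i.e.\ $v \preceq_r g$ in the sense of Definition \ref{DefPartialOrder2}. This is the step where the balanced hypothesis enters: since $g$ is balanced (Definition \ref{DefBalancedElement}), its set of right divisors coincides with its set of left divisors, so $v \preceq g$.

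Finally, I would invoke transitivity of $\preceq$: from $u \preceq v$ and $v \preceq g$ one deduces $u \preceq g$, which is exactly the desired conclusion $(w')^{-1} w \preceq g$. Transitivity itself is a one-line check (combining the two product decompositions and bounding $\ell(u^{-1}g) = \ell((u^{-1}v)(v^{-1}g))$ between the obvious upper and lower bounds given by submultiplicativity of $\ell$ and by $g = u \cdot u^{-1}g$). There is no real obstacle here; the only substantive input beyond bookkeeping with length is the balanced property, which is used precisely once to pass from $v \preceq_r g$ to $v \preceq g$.
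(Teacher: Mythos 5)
Your argument is correct: the sandwich of length inequalities gives $\ell((w')^{-1}g)=\ell(u)+\ell(h)$, so $u\preceq (w')^{-1}g$, and since $(w')^{-1}g\preceq_r g$ the balanced hypothesis converts this to a left divisor of $g$, after which transitivity of $\preceq$ (which holds by the same subadditivity sandwich) finishes it. The paper offers no proof at all -- it simply declares the lemma obvious -- and your write-up is exactly the routine verification it has in mind, with the balanced property entering in the one place it must.
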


In order to prove that $\lambda^k$ is balanced, we first check the following.

\begin{lemma}\label{LemmaforPropBalanced}

If $w \in D_k$, we have $w^{-1}\lambda^k \in D_k$ and $\lambda^k w^{-1} \in D_k$.

\end{lemma}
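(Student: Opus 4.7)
The plan is to verify both memberships via the matrix-entry characterization of $D_k$ from Proposition \ref{PropZ'=1orZetae}: a matrix $v \in G(e,e,n)$ lies in $D_k$ iff every nonzero entry of $v$ at a non-bullet position belongs to $\{1, \zeta_e^k\}$ (note that for nonzero entries, being indexed in $Z'$ is equivalent to not being a bullet, since nonzero entries inside the upper-left rectangle of some bullet would contradict the bullet's definition unless they are the bullet itself).

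Write $a_i := w[i, c_i]$ for the unique nonzero entry on row $i$ of $w$, so that $w^{-1}[c_i, i] = a_i^{-1}$. Since $\lambda^k$ is diagonal with all diagonal entries nonzero, the matrices $w^{-1}\lambda^k$, $\lambda^k w^{-1}$ and $w^{-1}$ share the same zero-pattern, hence the same set of bullets. Their nonzero entries at position $(c_i, i)$ are computed as $(w^{-1}\lambda^k)[c_i, i] = a_i^{-1}\,\lambda^k[i, i]$ and $(\lambda^k w^{-1})[c_i, i] = \lambda^k[c_i, c_i]\,a_i^{-1}$. The crucial step is then a bullet-symmetry lemma: $(i, c_i)$ is a bullet of $w$ if and only if $(c_i, i)$ is a bullet of $w^{-1}$. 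This follows directly from the definition of a bullet, since $w^{-1}[p,q] \neq 0 \iff w[q,p] \neq 0$, so the vanishing of $w^{-1}$ in the upper-left $c_i \times i$ rectangle (excluding $(c_i, i)$) is equivalent to the vanishing of $w$ in the upper-left $i \times c_i$ rectangle (excluding $(i, c_i)$). Consequently, non-bullet nonzero positions of $w^{-1}$ are exactly the transposes of non-bullet nonzero positions of $w$.

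To conclude, fix a non-bullet nonzero position $(c_i, i)$ of $w^{-1}$ (hence of both $w^{-1}\lambda^k$ and $\lambda^k w^{-1}$). By the bullet-symmetry, $(i, c_i)$ is a non-bullet nonzero position of $w$, and by Remark \ref{RemBulletsFirstRowColumn} we have $i \geq 2$ and $c_i \geq 2$. Therefore $\lambda^k[i,i] = \lambda^k[c_i, c_i] = \zeta_e^k$, and the hypothesis $w \in D_k$ combined with Proposition \ref{PropZ'=1orZetae} forces $a_i \in \{1, \zeta_e^k\}$. It follows that both entries $a_i^{-1}\zeta_e^k$ and $\zeta_e^k\, a_i^{-1}$ lie in $\{1, \zeta_e^k\}$, giving $w^{-1}\lambda^k, \lambda^k w^{-1} \in D_k$ via Proposition \ref{PropZ'=1orZetae}. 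The main subtlety is the bullet-symmetry lemma, which requires careful bookkeeping of indices, but once established the entry-level verification is essentially immediate.
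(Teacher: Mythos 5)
Your proof is correct and follows essentially the same route as the paper's: identify $w^{-1}$ with the conjugate transpose of $w$, observe that bullets (hence $Z'$) transpose, transfer the $\{0,1,\zeta_e^k\}$ condition from $w$ to get $\{0,1,\zeta_e^{-k}\}$ for $w^{-1}$, and conclude by multiplying by the diagonal matrix $\lambda^k$. Your only addition is making explicit, via Remark \ref{RemBulletsFirstRowColumn}, that non-bullet nonzero positions avoid the first row and column so the relevant diagonal entries of $\lambda^k$ are $\zeta_e^k$ — a detail the paper leaves implicit.
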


\begin{proof}

Let $w \in D_k$. We show that $w^{-1}\lambda^k = \overline{t_w}\lambda^k \in D_k$ and $\lambda^k w^{-1} = \lambda^k \overline{t_w} \in D_k$, where $\overline{t_w}$ is the complex conjugate of the transpose $t_w$ of the matrix $w$. We use the matrix form of an element of $D_k$. If $[i,c]$ is a bullet of $w$, then $[c,i]$ is a bullet of $\overline{t_w} = w^{-1}$ and $w^{-1}[c,i] = \overline{w[i,c]}$. Then, if $[i,c] \in Z'(w^{-1})$, we have $[c,i] \in Z'(w)$. Since $w \in D_k$, we have $w[c,i] \in \left\{ 0,1,\zeta_e^k \right\}$ whenever $[c,i] \in Z'(w)$. Then $w^{-1}[i,c] \in \left\{ 0,1,\zeta_e^{-k} \right\}$ whenever $[i,c] \in Z'(w^{-1})$. Multiplying $w^{-1}$ by $\lambda^k$, we get that $(w^{-1}\lambda^k)[i,c]$ and $(\lambda^k w^{-1})[i,c]$ are equal to $0$, $1$, or $\zeta_e^k$ whenever $[i,c] \in Z'(w^{-1}\lambda^k)$ and $Z'(\lambda^kw^{-1})$. Hence $w^{-1}\lambda^k$ and $\lambda^kw^{-1}$ belong to $D_k$.

\end{proof}

\begin{example}

We illustrate the idea of the proof of Lemma \ref{LemmaforPropBalanced} for $k=1$.\\
Consider $w \in D_1$ as follows and show that $\overline{t_w}\lambda \in D_1$:
$w =
\left(
\begin{BMAT}{ccccc}{ccccc}
0 & 0 & 0  & 0 & \encircle{$1$} \\
0 & \encircle{$1$} & 0 & 0 & 0 \\
0 & 0 & 0 & \encircle{$\zeta_3$} & 0 \\
\encircle{$\zeta_3^2$} & 0 & 0 & 0 & 0 \\
0 & 0 & \boxed{1} & 0 & 0
\addpath{(0,1,0)ruururrru}
\end{BMAT}
\right)$ $\overset{t_w}{\longrightarrow}$ $\left(
\begin{BMAT}{ccccc}{ccccc}
0 & 0 & 0  & \encircle{$\zeta_3^2$} & 0 \\
0 & \encircle{$1$} & 0 & 0 & 0 \\
0 & 0 & 0 & 0 & \boxed{1} \\
0 & 0 &  \boxed{\zeta_3} & 0 & 0 \\
\encircle{$1$} & 0 & 0 & 0 & 0
\addpath{(0,0,0)ruuururrur}
\end{BMAT}
\right)$ $\overset{\overline{t_w}}{\longrightarrow}$ $\left(
\begin{BMAT}{ccccc}{ccccc}
0 & 0 & 0  & \encircle{$\zeta_3$} & 0 \\
0 & \encircle{$1$} & 0 & 0 & 0 \\
0 & 0 & 0 & 0 & \boxed{1} \\
0 & 0 & \boxed{\zeta_3^2} & 0 & 0 \\
\encircle{$1$} & 0 & 0 & 0 & 0
\addpath{(0,0,0)ruuururrur}
\end{BMAT}
\right)$ $\overset{\overline{t_w}\lambda}{\longrightarrow}$ $\left(
\begin{BMAT}{ccccc}{ccccc}
0 & 0 & 0  & \encircle{$1$} & 0 \\
0 & \encircle{$\zeta_3$} & 0 & 0 & 0 \\
0 & 0 & 0 & 0 & \boxed{\zeta_3} \\
0 & 0 & \boxed{1} & 0 & 0 \\
\encircle{$\zeta_3$} & 0 & 0 & 0 & 0
\addpath{(0,0,0)ruuururrur}
\end{BMAT}
\right)$.

\end{example}

\medskip

\begin{proposition}\label{Propbalanced}

The element $\lambda^k$ is balanced.

\end{proposition}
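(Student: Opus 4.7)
The plan is to derive both inclusions of the equality $[1,\lambda^k] = [1,\lambda^k]_r$ directly from Lemma \ref{LemmaforPropBalanced}. After the combinatorial characterization of $D_k = [1,\lambda^k]$ established in Propositions \ref{PropCaractD}, \ref{PropDisDivLambda}, and \ref{PropZ'=1orZetae}, almost all of the work has already been carried out: what remains is a short translation between left and right divisibility, which is exactly the content of the two conclusions packaged together in Lemma \ref{LemmaforPropBalanced}.

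For the forward direction, I would start with an element $w' \preceq \lambda^k$, so $w' \in D_k$ by Proposition \ref{PropDisDivLambda}. Lemma \ref{LemmaforPropBalanced} then supplies $\lambda^k (w')^{-1} \in D_k$, which reads $\lambda^k (w')^{-1} \preceq \lambda^k$, i.e.\ $\ell(\lambda^k) = \ell(\lambda^k (w')^{-1}) + \ell(w')$. Rewriting the factorization $\lambda^k = (\lambda^k (w')^{-1}) \cdot w'$ in light of Definition \ref{DefPartialOrder2} immediately yields $w' \preceq_r \lambda^k$.

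For the reverse direction, assume $w' \preceq_r \lambda^k$ and factor $\lambda^k = v w'$ with $\ell(\lambda^k) = \ell(v) + \ell(w')$. Then $v \preceq \lambda^k$, hence $v \in D_k$, and the other conclusion of Lemma \ref{LemmaforPropBalanced} (applied to $v$) gives $v^{-1}\lambda^k \in D_k$. Since $v^{-1}\lambda^k = w'$, this is exactly $w' \preceq \lambda^k$. Combining both directions proves that $\lambda^k$ is balanced in the sense of Definition \ref{DefBalancedElement}.

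There is no genuine obstacle at this stage: the real difficulty was already handled by the matrix/bullet analysis leading to Proposition \ref{PropZ'=1orZetae} and then transported through Lemma \ref{LemmaforPropBalanced}, whose symmetric conclusion about $w^{-1}\lambda^k$ and $\lambda^k w^{-1}$ is precisely tailored to match the two inclusions required here. The only point to be mildly careful about is the direction of the inverses when turning factorizations with additive lengths into left or right divisibility assertions, but this is purely a bookkeeping step using Definitions \ref{DefPartalOrder1} and \ref{DefPartialOrder2}.
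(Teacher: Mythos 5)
Your proposal is correct and follows essentially the same route as the paper: in both directions you pass from left divisibility to membership in $D_k$ via Proposition \ref{PropDisDivLambda}, invoke Lemma \ref{LemmaforPropBalanced} to get $\lambda^k w^{-1}$ (resp.\ $w'^{-1}\lambda^k$) in $D_k$, and then read off the additive length identity to obtain the other divisibility. This matches the paper's proof step for step, so nothing further is needed.
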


\begin{proof}

Suppose that $w \preceq \lambda^k$. By Proposition \ref{PropDisDivLambda}, we have $w \in D_k$, so $\lambda^k w^{-1}$ is in $D_k$ by Lemma \ref{LemmaforPropBalanced}. Hence $\lambda^k = (\lambda^k w^{-1})w$ satisfies $\ell(\lambda^k w^{-1}) + \ell(w) = \ell(\lambda^k)$, namely $w \preceq_r \lambda^k$. 

Conversely, suppose that $w \preceq_r \lambda^k$. We have $\lambda^k = w' w$ with $w' \in G(e,e,n)$ and $\ell(w') + \ell(w) = \ell(\lambda^k)$. It follows that $w' \in D_k$, then ${w'}^{-1} \lambda^k \in D_k$ by Lemma \ref{LemmaforPropBalanced}. Since  $w = {w'}^{-1}\lambda^k$, we have $w \in D_k$, namely $w \preceq \lambda^k$.

\end{proof}

In the following theorem, we show that the elements $\lambda^k$ are the only balanced elements of maximal length of $G(e,e,n)$ for $1 \leq k \leq e-1$.

%More generally, let $w \in G(e,e,n)$ be an element of maximal length, namely by Proposition \ref{PropMaxLength}, a diagonal matrix such that for $2 \leq i \leq n$, $w[i,i]$ is an $e$-th root of unity different \mbox{from $1$}. As previously, we can prove that a divisor $w'$ of $w$ satisfies that for all $2 \leq i \leq n$, if $w'[i,c] \neq 0$ is not a bullet of $w'$, then $w'[i,c] = 1$ or $w[i,i]$.\\
%Suppose that $w$ is of maximal length such that $w[i,i] \neq w[j,j]$ with $2 \leq i \neq j \leq n$. We have $(i,j) \preceq w$ where $(i,j)$ is the transposition matrix. Hence $w' := (i,j)^{-1}w = (i,j)w \preceq_r w$. We have $w'[j,i] = w[i,i]$. Thus, $w'[j,i]$ that is not a bullet of $w'$ is different from $1$ and $w[j,j]$, since it is equal to $w[i,i]$. Hence $w' \npreceq w$. We thus get the following.

\begin{theorem}\label{PropAllBalancedofMaxLength}

The balanced elements of $G(e,e,n)$  that are of maximal length are precisely $\lambda^k$ with $1 \leq k \leq e-1$. The set $D_k$ of the divisors of each $\lambda^k$ is characterized in Propositions \ref{PropCaractD} and \ref{PropZ'=1orZetae}.

\end{theorem}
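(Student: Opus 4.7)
The plan is to combine the results already proved with a converse statement showing that no other max-length element is balanced. By Proposition~\ref{PropMaxLength}, every maximal-length element $w$ of $G(e,e,n)$ is a diagonal matrix with $w[i,i]=\zeta_e^{k_i}$, $k_i\in\{1,\ldots,e-1\}$ for $2\leq i\leq n$. The elements $\lambda^k$ (which correspond to $k_2=\cdots=k_n=k$) are balanced by Proposition~\ref{Propbalanced}, and the characterization $D_k=[1,\lambda^k]$ is already given by Propositions~\ref{PropCaractD} and~\ref{PropZ'=1orZetae}. So only the converse remains: if $k_2,\ldots,k_n$ are not all equal to a common value, then $w$ is not balanced.

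Given such $w$, let $j\geq 3$ be the smallest index with $k_j\neq k_{j-1}$, and set $k:=k_2=\cdots=k_{j-1}$. The plan is to exhibit an element $u\in G(e,e,n)$ that is a left divisor of $w$ but fails to be a right divisor. I take
\[
\mathbf{u} \;=\; R\!E_2(w)\,R\!E_3(w)\cdots R\!E_{j-1}(w)\cdot \mathbf{s}_j\mathbf{s}_{j-1}\cdots\mathbf{s}_3\,\mathbf{t}_{k_j},
\]
a prefix of the reduced expression $R\!E(w)$ produced by Algorithm~\ref{Algo1} (as computed in Proposition~\ref{PropMaxLength}). Since $\mathbf{u}$ is a prefix of a reduced word for $w$, the corresponding element $u$ satisfies $u\preceq w$ by Lemma~\ref{LemmaCaractDivlambda}.

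To show $u\not\preceq_r w$, I compute $wu^{-1}$ as a monomial matrix and determine its length via Algorithm~\ref{Algo1}. The inverse $u^{-1}$ is the reverse of $\mathbf{u}$, every generator being an involution. Multiplying $w$ on the right by $u^{-1}$ step by step: the initial segment of $u^{-1}$ rotates the entry $\zeta_e^{k_j}$ from diagonal position $[j,j]$ up into the $2\times 2$ upper-left block, while the trailing factors $R\!E_{j-1}(w)^{-1}\cdots R\!E_2(w)^{-1}$ would cancel an entry $\zeta_e^k$ at the analogous position. Since $k_j\neq k$, the cancellation at the final $\mathbf{t}_k$ step fails by a factor of $\zeta_e^{k_j-k}\neq 1$, and Algorithm~\ref{Algo1} applied to the resulting matrix detects this residue as an extra $\mathbf{t}_{k_j-k}\mathbf{t}_0$ of length $2$ in $R\!E(wu^{-1})$. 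A direct length count then gives $\ell(wu^{-1})=\ell(w)-\ell(u)+2$, strictly larger than the value $\ell(w)-\ell(u)$ required for right divisibility, so $u\not\preceq_r w$ and $w$ is not balanced.

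The main obstacle is the matrix bookkeeping in the general case $j>3$: one must track how each $\mathbf{s}_i$ and $\mathbf{t}_m$ in $u^{-1}$ permutes and rescales the columns of $w$ through the product, and then apply Proposition~\ref{PropLengthdecreas} repeatedly. For $j=3$ this is a short direct calculation with $u=t_k t_0 s_3 t_{k_3}$, where one writes $wu^{-1}$ explicitly, extracts its nontrivial $3\times 3$ block, and reads off the length using Definition~\ref{DefREiw}. The general case reduces to essentially the same computation, because restricted to rows and columns $1,\ldots,j-1$ the matrix $w$ agrees with a scalar multiple of $\lambda^k\in G(e,e,j-1)$, and the action of the relevant portion of $u^{-1}$ is confined to this leading block; the extra factor $\mathbf{t}_{k_j-k}\mathbf{t}_0$ of length $2$ in $R\!E(wu^{-1})$ survives independently of $n$ and $j$.
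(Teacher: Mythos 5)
Your strategy is sound and genuinely different from the paper's. The paper does not do any length bookkeeping at all: it first observes that the bullet/$Z'$ criterion of Proposition \ref{PropZ'=1orZetae} extends verbatim to an arbitrary maximal-length diagonal element $w$ (a left divisor must have all non-bullet entries in row $i$ equal to $0$, $1$, or $w[i,i]$), and then, choosing $i\neq j$ with $w[i,i]\neq w[j,j]$, uses the single transposition $s_{ij}$ as witness: $s_{ij}$ left-divides $w$ by the criterion, hence $w'=s_{ij}w$ right-divides $w$, but $w'[j,i]=w[i,i]$ sits at a non-bullet position and is neither $0$, $1$ nor $w[j,j]$, so $w'\not\preceq w$. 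So the paper exhibits a right divisor that is not a left divisor, in two lines, at the price of the ``analogous'' extension of the divisor criterion; you instead exhibit a left divisor (a prefix of $R\!E(w)$ cut inside $R\!E_j(w)$) that is not a right divisor, at the price of running Algorithm \ref{Algo1} on $wu^{-1}$. Both yield non-balancedness, and I checked that your witness $u$ does work.

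One correction to your bookkeeping, though. Your claims that the excess is exactly $2$ ``independently of $n$ and $j$'' and that the relevant part of $u^{-1}$ is ``confined to the leading $(j-1)\times(j-1)$ block'' are false for $j>3$: the factor $\mathbf{s}_j$ moves column $j$, and the uncancelled residue $\zeta_e^{k_j-k}$ lands at position $[j,j-1]$ of $wu^{-1}$ (not in the upper-left $2\times2$ block). Running the algorithm, one finds $R\!E_j(wu^{-1})=\mathbf{s}_j\cdots\mathbf{s}_3\mathbf{t}_{k_j-k}\mathbf{t}_0\mathbf{s}_3\cdots\mathbf{s}_{j-1}$ of length $2j-3$ (instead of length $1$ were that entry equal to $1$), rows $3,\dots,j-1$ contribute $1$ each and row $2$ contributes $1$, giving $\ell(wu^{-1})=\bigl(\ell(w)-\ell(u)\bigr)+2(j-2)$; your ``$+2$'' is only the case $j=3$. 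This does not damage the proof, since all you need is $\ell(wu^{-1})>\ell(w)-\ell(u)$, but the general-case computation must be redone as above rather than ``reduced to the leading block''.
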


\begin{proof}

Let $w \in G(e,e,n)$ be an element of $G(e,e,n)$ of maximal length, namely by Proposition \ref{PropMaxLength}, a diagonal matrix such that for $2 \leq i \leq n$, $w[i,i]$ is an $e$-th root of unity different from $1$. Analogously to Proposition \ref{PropZ'=1orZetae}, a left divisor $w'$ of $w$ satisfies that for all $2 \leq i \leq n$, if $[i,c]$ is not a bullet of $w'$, then $w'[i,c]$ is either $0$, $1$, or $w[i,i]$.

By Proposition \ref{Propbalanced}, we already have that $\lambda^k$ is balanced for $1 \leq k \leq e-1$. Suppose that $w$ is of maximal length such that $w[i,i] \neq w[j,j]$ for $2 \leq i,j \leq n$ and $i \neq j$. Let $s_{ij}$ be the transposition matrix. We have $s_{ij}[1,1]$ is nonzero and so, by Remark \ref{RemBulletsFirstRowColumn}, $[1,1]$ is the unique bullet of $s_{ij}$. Hence if $[j,d]$ is not a bullet of $s_{ij}$, then $s_{ij}[j,d]$ is either $0$ or $1$. So, $s_{ij}$ left-divides $w$. Hence $w' := s_{ij}^{-1}w = s_{ij}w$ right-divides $w$. We also have $w'[1,1]$ is nonzero, and so $[1,1]$ is the unique bullet of $w'$. Thus, $[j,i]$ is not a bullet and $w'[j,i] = w[i,i]$ is neither $0$, $1$, nor $w[j,j]$ (which was assumed different from $w[i,i]$). So $w' \npreceq w$, and so $w$ is not balanced.

It follows that the balanced elements of $G(e,e,n)$ that are of maximal length are precisely $\lambda^k$ with $1 \leq k \leq e-1$.

\end{proof}

We are ready to study the interval structures associated with the intervals $[1,\lambda^k]$ with $1 \leq k \leq e-1$.

\section{Interval structures}

In this section, we construct the monoid $M([1,\lambda^k])$ associated to each of the intervals $[1,\lambda^k]$ constructed in the previous section with $1 \leq k \leq e-1$. By Proposition \ref{PropAllBalancedofMaxLength}, $\lambda^k$ is balanced. Hence, by Theorem \ref{TheoremMichelGarside}, in order to prove that $M([1,\lambda^k])$ is a Garside monoid, it remains to show that both posets $([1,\lambda^k],\preceq)$ and $([1,\lambda^k],\preceq_r)$ are lattices. This is stated in Corollary \ref{CorBothPosetsLattices}. The interval structures are given \mbox{in Theorem \ref{TheoremIntervalStructure}.}

\subsection{Least common multiples}

Let $1 \leq k \leq e-1$ and let $w \in [1,\lambda^k]$. For each $1 \leq i \leq n$ there exists a unique $c_i$ such that $w[i,c_i] \neq 0$. We denote $w[i,c_i]$ by $a_i$. We apply Lemma \ref{LemmaCaractDivlambda} to prove the following lemmas. The reader is invited to write the matrix form of $w$ to illustrate each step of the proof.

\begin{lemma}\label{tit0Divw}

Let $t_i \preceq w$ where $i \in \mathbb{Z}/e\mathbb{Z}$.

\begin{itemize}

\item If $c_1 < c_2$, then $t_kt_0 \preceq w$ and
\item if $c_2 < c_1$, then  $t_j \npreceq w$ for all $j$ with $j \neq i$.

\end{itemize}

Hence if $t_i \preceq w$ and $t_j \preceq w$ with $i,j \in \mathbb{Z}/e\mathbb{Z}$ and $i \neq j$, then $t_kt_0 \preceq w$.

\end{lemma}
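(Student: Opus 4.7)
The plan is to combine Lemma \ref{LemmaCaractDivlambda} (divisibility $\iff$ successive length drops) with the very explicit length-decrease conditions recorded in Proposition \ref{PropLengthdecreas}(2)--(3), using Proposition \ref{PropZ'=1orZetae} to pin down the value of an off-bullet entry.

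For the first bullet, assume $c_1<c_2$ and $t_i\preceq w$. First I would apply Proposition \ref{PropLengthdecreas}(2) to the hypothesis $\ell(t_i w)=\ell(w)-1$ to conclude $a_2\neq 1$. The key observation is that because $c_1<c_2$, the nonzero entry $a_1$ sits weakly to the upper-left of $[2,c_2]$, so $[2,c_2]$ fails the bullet condition of Definition \ref{DefinitionBullets}, i.e.\ $[2,c_2]\in Z'(w)$. Since $w\in[1,\lambda^k]$, Proposition \ref{PropZ'=1orZetae} then forces $a_2\in\{1,\zeta_e^k\}$, and combining with $a_2\neq 1$ gives $a_2=\zeta_e^k$. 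A second application of Proposition \ref{PropLengthdecreas}(2) (with index $k$) yields $\ell(t_k w)=\ell(w)-1$. Next I would compute $t_k w$ directly from the $2\times 2$ block form of $t_k$: row $1$ of $t_k w$ has $\zeta_e^{-k}a_2=1$ at column $c_2$, and row $2$ has $\zeta_e^k a_1$ at column $c_1$. So the columns flip, and in $t_k w$ we are in the case $c'_1=c_2>c_1=c'_2$ with top-row value $a'_1=1=\zeta_e^{-0}$. Proposition \ref{PropLengthdecreas}(3) with index $0$ then gives $\ell(t_0 t_k w)=\ell(t_k w)-1$, and Lemma \ref{LemmaCaractDivlambda} finally delivers $t_k t_0\preceq w$.

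For the second bullet, assume $c_2<c_1$ and $t_i\preceq w$. Proposition \ref{PropLengthdecreas}(3) immediately pins $a_1=\zeta_e^{-i}$. For any $j\in\mathbb{Z}/e\mathbb{Z}$ with $j\neq i$, we have $\zeta_e^{-j}\neq\zeta_e^{-i}=a_1$, so the same proposition forbids $\ell(t_j w)=\ell(w)-1$ and therefore $t_j\npreceq w$.

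The final assertion is then a one-line contrapositive: if $t_i\preceq w$ and $t_j\preceq w$ with $i\neq j$, the second bullet rules out $c_2<c_1$, so $c_1<c_2$ (the two columns are necessarily distinct), and the first bullet applies to give $t_k t_0\preceq w$. I do not foresee a real obstacle here; the only thing that needs care is the flip $c'_1=c_2$, $c'_2=c_1$ after left-multiplication by $t_k$, which routes the argument through the correct branch of Proposition \ref{PropLengthdecreas}.
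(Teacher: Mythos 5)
Your proof is correct and follows essentially the same route as the paper's: use Proposition \ref{PropZ'=1orZetae} (via the bullet/$Z'(w)$ observation) together with Proposition \ref{PropLengthdecreas}(2) to force $a_2=\zeta_e^k$, then apply Proposition \ref{PropLengthdecreas}(2) and (3) to the column-flipped matrix $t_kw$ and conclude with Lemma \ref{LemmaCaractDivlambda}, while the second bullet uses the uniqueness of $a_1=\zeta_e^{-i}$ from Proposition \ref{PropLengthdecreas}(3). The only cosmetic difference is the order in which you derive $a_2\neq 1$ and $a_2\in\{1,\zeta_e^k\}$, which is immaterial.
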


\begin{proof}

\underline{Suppose $c_1 < c_2$.}\\
Since $a_1 = w[1,c_1]$ is nonzero , and above and to the left of $a_2 = w[2,c_2]$ (as $c_1 < c_2$), then $[2,c_2]$ is not a bullet. It belongs to $Z'(w)$. Since $w \in [1,\lambda^k]$ and $[2,c_2] \in Z'(w)$, by Proposition \ref{PropZ'=1orZetae}, $a_2 = 1$ or $\zeta_e^k$. Since $t_i \preceq w$, we have $\ell(t_iw) = \ell(w) - 1$. Hence by $2$ of \mbox{Proposition \ref{PropLengthdecreas}}, we get $a_2 \neq 1$. Hence $a_2 = \zeta_e^k$.
Again by $2$ of Proposition \ref{PropLengthdecreas}, since $a_2 \neq 1$, we have $\ell(t_kw) = \ell(w) - 1$. Let $w' := t_kw$. We have $w'[1,c_2] = \zeta_e^{-k}a_2 = \zeta_e^{-k} \zeta_e^{k} = 1$. Hence by $3$ of Proposition $\ref{PropLengthdecreas}$, $\ell(t_0w') = \ell(w') - 1$. It follows that $t_kt_0 \preceq w$.\\
\underline{Suppose $c_2 < c_1$.}\\
Since $t_i \preceq w$, we have $\ell(t_iw) = \ell(w) - 1$. Hence by $3$ of Proposition \ref{PropLengthdecreas}, we have $a_1 = \zeta_e^{-i}$. If there exists $j \in \mathbb{Z}/e\mathbb{Z}$ with $j \neq i$ such that $t_j \preceq w$, then $\ell(t_jw) = \ell(w) - 1$. Again by $3$ of Proposition \ref{PropLengthdecreas}, we have $a_1 = \zeta_e^{-j}$. Thus, $i = j$ which contradicts the hypothesis.\\
The last statement of the lemma follows immediately.

\end{proof}

\begin{lemma}\label{tis3Divw}

If $t_i \preceq w$ with $i \in \mathbb{Z}/e\mathbb{Z}$ and $s_3 \preceq w$, then $s_3t_is_3 = t_is_3t_i \preceq w$.

\end{lemma}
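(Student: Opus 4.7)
The plan is to reduce the claim to three successive length-decrease verifications. By relation $2$ of Definition \ref{DefinitionPresentationCorranPicantin} we have $s_3 t_i s_3 = t_i s_3 t_i$ in $G(e,e,n)$, so it suffices to prove $t_i s_3 t_i \preceq w$. By Lemma \ref{LemmaCaractDivlambda} this amounts to checking
\[
\ell(t_i w)=\ell(w)-1, \qquad \ell(s_3 t_i w)=\ell(t_i w)-1, \qquad \ell(t_i s_3 t_i w)=\ell(s_3 t_i w)-1.
\]
The first equality is the hypothesis. For the other two, I would track only the first three rows of the successive matrices, since left multiplication by $t_i$ (resp.\ $s_3$) permutes and rescales rows $1,2$ (resp.\ rows $2,3$) and leaves the other rows untouched. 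With $a_j$ the nonzero entry on row $j$ of the current matrix and $c_j$ its column, applying $t_i$ replaces the pair $(c_1,c_2)$ by $(c_2,c_1)$ and multiplies the two displaced entries by $\zeta_e^{\mp i}$, while $s_3$ swaps $(c_2,c_3)$ to $(c_3,c_2)$ without changing values.

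I would then case-split on the ordering of the distinct integers $c_1,c_2,c_3$. By Proposition \ref{PropLengthdecreas}, the hypothesis $t_i \preceq w$ forces $a_2 \neq 1$ if $c_1<c_2$ and $a_1=\zeta_e^{-i}$ if $c_2<c_1$; similarly $s_3 \preceq w$ forces $a_3 \neq 1$ if $c_2<c_3$ and $a_2=1$ if $c_2>c_3$. The two orderings in which $c_2$ is largest of the three would force $a_2=1$ and $a_2 \neq 1$ at once, so they are excluded and only four orderings survive. In each survivor, whenever a forced condition ``$a_j \neq 1$'' appears for $j \in \{2,3\}$, the index $[j,c_j]$ automatically lies in $Z'(w)$ (because some earlier-row entry sits strictly above-and-left of it), and Proposition \ref{PropZ'=1orZetae} then upgrades this to the sharper equality $a_j=\zeta_e^k$. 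I would then write down the first three rows of $t_i w$ and of $s_3 t_i w$ explicitly and apply Proposition \ref{PropLengthdecreas} again: in each of the four cases, the value controlling the applicable clause after multiplication (either $a''_2=\zeta_e^k \neq 1$ in parts 1(a)/2, or $a''_1=\zeta_e^{-i}$ in part 3, or $a'_2=\zeta_e^{i}a_1=1$ in part 1(b)) is precisely what the preceding swap produces.

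The main obstacle is purely bookkeeping: across four orderings and three generator multiplications one must keep straight which clause of Proposition \ref{PropLengthdecreas} applies at each stage, and how the scalars $\zeta_e^{\pm i}$ introduced by $t_i$ multiply with the forced value $\zeta_e^k$ coming from $w \in [1,\lambda^k]$. Once the data are tabulated case by case, every verification collapses to a one-line check, and the three length-decrease equalities combine to give $t_i s_3 t_i \preceq w$ as required.
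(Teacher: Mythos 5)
Your plan is correct and is essentially the paper's own argument: the paper likewise reduces the claim via Lemma \ref{LemmaCaractDivlambda} to successive length-decrease checks, rules out the orderings where $c_2$ is maximal using clauses $1(b)$ and $2$ of Proposition \ref{PropLengthdecreas}, and settles the remaining cases with Propositions \ref{PropLengthdecreas} and \ref{PropZ'=1orZetae}. The only differences are cosmetic — you track the reduced word $\mathbf{t}_i\mathbf{s}_3\mathbf{t}_i$ (so the first check is the hypothesis $t_i\preceq w$) and organize the four surviving orderings flatly, whereas the paper tracks $\mathbf{s}_3\mathbf{t}_i\mathbf{s}_3$ with nested sub-cases — and your case-by-case verifications do go through as claimed.
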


\begin{proof}

Set $w' := s_3w$ and $w'':=t_iw'$.\\
\underline{Suppose $c_1 < c_2$.}\\
Since $w \in [1,\lambda^k]$ and $[2,c_2] \in Z'(w)$, by Proposition \ref{PropZ'=1orZetae}, we get $a_2 = 1$ or $\zeta_e^k$. Since $t_i \preceq w$, we have $\ell(t_iw) = \ell(w) - 1$. Thus, by $2$ of Proposition \ref{PropLengthdecreas}, we get $a_2 \neq 1$. Hence $a_2 = \zeta_e^k$.\\
Suppose that $c_3 < c_2$. Since $s_3 \preceq w$, we have $\ell(s_3w) = \ell(w) - 1$. Hence by $1(b)$ of Proposition \ref{PropLengthdecreas}, $a_2 = 1$ which is not the case. So instead it must be that $c_2 < c_3$. Assume $c_2 < c_3$. Since $c_1 < c_2 < c_3$, $a_1 = w[1,c_1]$ is a nonzero entry which is above and to the left of $a_3 = w[3,c_3]$. Then $[3,c_3]$ is in $Z'(w)$. Since $w \in [1, \lambda^k]$ and $[3,c_3] \in Z'(w)$, we have $a_3 = 1$ or $\zeta_e^k$. By $1(a)$ of Proposition \ref{PropLengthdecreas}, we have $a_3 \neq 1$. Hence $a_3$ is equal to $\zeta_e^k$. Now, we prove that $s_3t_is_3 \preceq w$ by applying Lemma \ref{LemmaCaractDivlambda}. Indeed, since $a_3 \neq 1$, by $2$ of Proposition \ref{PropLengthdecreas}, we have $\ell(t_i w') = \ell(w') -1$, and since $a_2 \neq 1$, by $1(a)$ of Proposition \ref{PropLengthdecreas}, we have $\ell(s_3 w'') = \ell(w'') -1$.\\
\underline{Suppose $c_2 < c_1$.}\\
Since $\ell(t_iw) = \ell(w) -1$, by $3$ of Proposition \ref{PropLengthdecreas}, we have $a_1 = \zeta_e^{-i}$.
\begin{itemize}

\item \underline{Assume $c_2 < c_3$.}\\
Since $\ell(s_3 w) = \ell(w) -1$, by $1(a)$ of Proposition \ref{PropLengthdecreas}, we have $a_3 \neq 1$. We have $\ell(t_iw') = \ell(w') - 1$ for both cases $c_1 < c_3$ and $c_3 < c_1$. Actually, if $c_1 < c_3$, since $a_3 \neq 1$, by $2$ of Proposition \ref{PropLengthdecreas}, we have $\ell(t_iw') = \ell(w') -1$, and if $c_3 < c_1$, since $a_1 = \zeta_e^{-i}$, by $3$ of Proposition \ref{PropLengthdecreas}, $\ell(t_i w') = \ell(w') -1$. Now, since $\zeta_e^{i}a_1 = \zeta_e^{i}\zeta_e^{-i} = 1$, by $1(b)$ of Proposition \ref{PropLengthdecreas}, we have $\ell(s_3 w'') = \ell(w'') -1$.

\item \underline{Assume $c_3 < c_2$.}\\
Since $a_1 = \zeta_e^{-i}$, by $3$ of Proposition \ref{PropLengthdecreas}, $\ell(t_iw') = \ell(w') -1$. Since $\zeta_e^{i}a_1 = 1$, by $1(b)$ of Proposition \ref{PropLengthdecreas}, we have $\ell(s_3 w'') = \ell(w'') -1$.

\end{itemize}

\end{proof}

\begin{lemma}\label{tisjDivw}

If $t_i \preceq w$ with $i \in \mathbb{Z}/e\mathbb{Z}$ and $s_j \preceq w$ with $4 \leq j \leq n$, then \mbox{$t_i s_j = s_j t_i \preceq w$}.

\end{lemma}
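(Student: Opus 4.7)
The plan is to apply Lemma \ref{LemmaCaractDivlambda} to the reduced expression $\mathbf{t}_i \mathbf{s}_j$ of the element $t_i s_j = s_j t_i$ (the two expressions coincide by Relation $3$ of Definition \ref{DefinitionPresentationCorranPicantin}, and the word has length $2$, which equals $\ell(t_i s_j)$). We therefore need to verify that
\[
\ell(t_i w) = \ell(w)-1 \quad \text{and} \quad \ell(s_j t_i w) = \ell(t_i w)-1.
\]
The first equality is immediate from the hypothesis $t_i \preceq w$, and the hypothesis $s_j \preceq w$ gives $\ell(s_j w) = \ell(w)-1$.

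The key observation, and the only real content of the argument, is that since $j \geq 4$, left multiplication by $t_i$ modifies only rows $1$ and $2$ of $w$, while left multiplication by $s_j$ swaps rows $j-1$ and $j$. These sets of rows are disjoint. Writing $w' := t_i w$, it follows that the positions $c_{j-1}, c_j$ of the nonzero entries on rows $j-1$ and $j$ of $w'$ are the same as in $w$, and the values $a_{j-1}, a_j$ are unchanged as well.

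Consequently, whichever case of part $(1)$ of Proposition \ref{PropLengthdecreas} produces the equality $\ell(s_j w) = \ell(w)-1$ (either the subcase $c_{j-1} < c_j$ with $a_j \neq 1$, or the subcase $c_{j-1} > c_j$ with $a_{j-1} = 1$) applies verbatim to $w'$ and yields $\ell(s_j w') = \ell(w')-1$. This is the second length equality we needed. By Lemma \ref{LemmaCaractDivlambda}, we conclude that $t_i s_j \preceq w$. There is no real obstacle here; the whole proof is a one-line bookkeeping check, made possible by the fact that the supports of $t_i$ and $s_j$ (as row operations on $w$) are disjoint for $j \geq 4$.
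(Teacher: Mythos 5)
Your proof is correct and follows essentially the same route as the paper: both reduce the claim to Lemma \ref{LemmaCaractDivlambda} together with Proposition \ref{PropLengthdecreas}, using the fact that the row operations of $t_i$ (rows $1,2$) and $s_j$ (rows $j-1,j$, with $j\geq 4$) have disjoint supports. The only difference is cosmetic: the paper sets $w':=s_jw$ and re-checks the $t_i$-descent through a four-case analysis on $c_1$ vs.\ $c_2$ and $c_{j-1}$ vs.\ $c_j$, whereas your ordering (multiplying by $t_i$ first) transfers the $s_j$-descent criterion of Proposition \ref{PropLengthdecreas}(1) verbatim and so avoids the case split altogether.
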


\begin{proof}

We distinguish four different cases: case 1: $c_1 < c_2$ and $c_{j-1} < c_j$, case 2: $c_1 < c_2$ and $c_{j} < c_{j-1}$, case 3: $c_2 < c_1$ and $c_{j-1} < c_j$, and case 4: $c_2 < c_1$ and $c_{j} < c_{j-1}$. The proof is similar to the proofs of Lemmas \ref{tit0Divw} and \ref{tis3Divw} so we prove that $s_jt_i \preceq w$ only for the first case. Suppose that $c_1 < c_2$ and $c_{j-1} < c_j$. Since $t_i \preceq w$, we have $\ell(t_iw) = \ell(w) -1$. Hence, by $2$ of Proposition \ref{PropLengthdecreas}, we have $a_2 \neq 1$. Also, since $s_j \preceq w$, we have $\ell(s_jw) = \ell(w) -1$. Hence, by $1(a)$ of Proposition \ref{PropLengthdecreas}, we have $a_j \neq 1$. Set $w' := s_jw$. Since $a_2 \neq 1$, then $\ell(t_iw') = \ell(w') -1$. Hence $s_j t_i \preceq w$.

\end{proof}

\noindent The proof of the following lemma is similar to the proofs of Lemmas \ref{tis3Divw} and \ref{tisjDivw}.

\begin{lemma}\label{sis(i+1)sisjdivw}

If $s_i \preceq w$ and $s_{i+1} \preceq w$ for $3 \leq i \leq n-1$, then \mbox{$s_is_{i+1}s_i = s_{i+1}s_is_{i+1} \preceq w$}, and if $s_i \preceq w$ and $s_j \preceq w$ for $3 \leq i,j \leq n$ and $|i-j| > 1$, then $s_is_j = s_js_i \preceq w$.

\end{lemma}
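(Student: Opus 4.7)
The plan is to apply Lemma \ref{LemmaCaractDivlambda}, following the same strategy used in the proofs of Lemmas \ref{tis3Divw} and \ref{tisjDivw}. For the braid relation $s_i s_{i+1} s_i \preceq w$, I would verify the three successive length decreases
\[
\ell(s_i w) = \ell(w) - 1, \quad \ell(s_{i+1} s_i w) = \ell(s_i w) - 1, \quad \ell(s_i s_{i+1} s_i w) = \ell(s_{i+1} s_i w) - 1,
\]
using clause (1) of Proposition \ref{PropLengthdecreas}. The first decrease is given. For the other two I would first record the information carried by the hypotheses: $s_i \preceq w$ gives $a_i \neq 1$ when $c_{i-1} < c_i$ and $a_{i-1} = 1$ when $c_{i-1} > c_i$, and analogously $s_{i+1} \preceq w$ constrains the entries in rows $i$ and $i+1$.

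Next I would split into the six possible orderings of $c_{i-1}, c_i, c_{i+1}$ and, in each case, combine these constraints with the condition $w \in [1,\lambda^k]$ -- which, by Proposition \ref{PropZ'=1orZetae}, forces every nonzero entry whose index lies in $Z'(w)$ to belong to $\{1, \zeta_e^k\}$ -- in order to pin down enough of the $a_\bullet$ values to invoke clause (a) or (b) of Proposition \ref{PropLengthdecreas}(1) at each of the two remaining steps. After each left multiplication by $s_i$ or $s_{i+1}$ the corresponding pair of rows of $w$ is swapped, so I would carefully track the positions and values of the nonzero entries through the swaps, just as was done for $t_i s_3 t_i$ in the proof of Lemma \ref{tis3Divw}.

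For the commutation assertion, since $|i-j| > 1$, the reflections $s_i$ and $s_j$ act on disjoint pairs of rows. Hence the data $(c_{j-1}, c_j, a_{j-1}, a_j)$ is unchanged when we left multiply $w$ by $s_i$, and Proposition \ref{PropLengthdecreas}(1) applied to $s_i w$ yields $\ell(s_j s_i w) = \ell(s_i w) - 1$ directly from $s_j \preceq w$. Lemma \ref{LemmaCaractDivlambda} then concludes.

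The main obstacle will be the bookkeeping in the six sub-cases of the braid-relation argument: after each row swap, one must re-identify which of the new indices lies in $Z'$, re-read the relative order of the three columns, and select the appropriate clause of Proposition \ref{PropLengthdecreas}(1) accordingly. Once this tracking is set up, each sub-case reduces to a short verification completely analogous to those in Lemmas \ref{tis3Divw} and \ref{tisjDivw}.
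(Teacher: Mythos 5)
Your proposal is correct and matches the paper's intended argument: the paper simply states that the proof is "similar to the proofs of Lemmas \ref{tis3Divw} and \ref{tisjDivw}", i.e.\ exactly the case analysis on the relative order of $c_{i-1},c_i,c_{i+1}$ via Lemma \ref{LemmaCaractDivlambda} and Proposition \ref{PropLengthdecreas} that you outline, with the commutation case handled by disjointness of the affected rows. One small remark: for this lemma the appeal to Proposition \ref{PropZ'=1orZetae} is not actually needed, since the only information required at each step is whether the relevant entries equal $1$ or not, and that is already supplied by the hypotheses $s_i\preceq w$ and $s_{i+1}\preceq w$ (indeed two of the six orderings are vacuous because they force $a_i=1$ and $a_i\neq 1$ simultaneously).
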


The following proposition is a direct consequence of all the preceding lemmas.

\begin{proposition}\label{PropLCMofGenIn1lambdak}

Let $x, y \in X = \{t_0,t_1, \cdots, t_{e-1},s_3, \cdots, s_n \}$. The least common multiple in $([1, \lambda^k], \preceq)$ of $x$ and $y$, denoted by $x \vee y$, exists and is given by the following identities:

\begin{itemize}

\item $t_i \vee t_j = t_kt_0 = t_{i}t_{i-k} = t_{j}t_{j-k}$ for $i \neq j \in \mathbb{Z}/e\mathbb{Z}$,
\item $t_i \vee s_3 = t_is_3t_i = s_3t_is_3$ for $i \in \mathbb{Z}/e\mathbb{Z}$,
\item $t_i \vee s_j = t_is_j = s_jt_i$ for $i \in \mathbb{Z}/e\mathbb{Z}$ and $4 \leq j \leq n$,
\item $s_i \vee s_{i+1} = s_{i}s_{i+1}s_{i} = s_{i+1}s_{i}s_{i+1}$ for $3 \leq i \leq n-1$, and
\item $s_i \vee s_j = s_is_j = s_js_i$ for $3 \leq i \neq j \leq n$ and $|i-j| > 1$.

\end{itemize}

\end{proposition}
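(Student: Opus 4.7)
The plan is to prove each identity in two steps: first, exhibit the candidate element $z$ as a common multiple of $x$ and $y$ that lies in $[1,\lambda^k]$; second, invoke the preceding lemmas to conclude that $z$ is actually the least such common multiple. The structure is completely parallel across the five cases, so the work really amounts to one checking and then five routine verifications.

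For each pair $(x,y)$, the candidate $z$ admits two reduced expressions starting respectively with $x$ and with $y$, and these alternative expressions come directly from the defining relations of the Corran--Picantin presentation: $t_kt_0 = t_it_{i-k} = t_jt_{j-k}$ follows from Relation 1, $t_is_3t_i = s_3t_is_3$ from Relation 2, $t_is_j = s_jt_i$ from Relation 3, and the remaining two from Relations 4 and 5. To confirm these words are indeed reduced (so that $x,y \preceq z$ holds with the length additivity required by Definition \ref{DefPartalOrder1}), I would compute the matrix $z$ and check via Algorithm \ref{Algo1} that $R\!E(z)$ has the stated word length. To verify $z \in [1,\lambda^k]$ I would apply the matrix criterion of Proposition \ref{PropZ'=1orZetae}: for instance, $t_kt_0 = \mathrm{diag}(\zeta_e^{-k},\zeta_e^k,1,\ldots,1)$ has $[1,1]$ as its unique bullet and its only nonzero entry in $Z'$ is $\zeta_e^k$, which is allowed; the permutation-type candidates $s_is_j$ and $s_is_{i+1}s_i$ have $[1,1]$ as the unique bullet and only $0$ and $1$ entries elsewhere, hence lie in $[1,\lambda^k]$ trivially; and $t_is_3t_i$, $t_is_j$ are monomial matrices whose $Z'$-entries are checked by direct inspection to be in $\{0,1,\zeta_e^k\}$.

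For the lower-bound direction, suppose $w \in [1,\lambda^k]$ satisfies $x \preceq w$ and $y \preceq w$. Depending on the pair type, Lemma \ref{tit0Divw} (two $t$-generators), Lemma \ref{tis3Divw} (one $t_i$ and $s_3$), Lemma \ref{tisjDivw} (one $t_i$ and $s_j$ with $j\geq 4$), or Lemma \ref{sis(i+1)sisjdivw} (two $s$-generators) yields $z \preceq w$. Combined with the previous step, this shows that $z$ is the least common multiple of $x$ and $y$ in $([1,\lambda^k],\preceq)$.

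The real obstacle has already been absorbed into the four preceding lemmas, whose proofs rest on a careful case analysis in the columns $c_i$ of the nonzero entries of $w$ together with the length-decrease criterion of Proposition \ref{PropLengthdecreas}. Once those are in hand, the present proposition is essentially bookkeeping; the only small subtlety is to make sure, when asserting the two reduced forms of each candidate, that one is working with reduced words (this is where Algorithm \ref{Algo1} and the explicit length formulas from Section 2.1 are used).
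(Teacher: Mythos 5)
Your proof follows essentially the same route as the paper, which simply records the proposition as a direct consequence of Lemmas \ref{tit0Divw}--\ref{sis(i+1)sisjdivw}: those lemmas give that every common multiple in $[1,\lambda^k]$ is divisible by the stated candidate, and the remaining check that each candidate is itself a common multiple lying in the interval (which the paper leaves implicit and you verify via the defining relations, Algorithm \ref{Algo1}, and Proposition \ref{PropZ'=1orZetae}) is routine. Your argument is correct; only a minor imprecision occurs when you say the diagonal matrix $t_kt_0$ has $\zeta_e^k$ as its only nonzero $Z'$-entry (the diagonal $1$'s are also in $Z'$, but they are of course allowed).
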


We have a similar result for $([1,\lambda^k],\preceq_r)$.

\begin{proposition}\label{PropLCMofGenRight}

Let $x, y \in X$. The least common multiple in $([1, \lambda^k], \preceq_r)$ of $x$ and $y$, denoted by $x \vee_r y$, exists and is equal to $x \vee y$.

\end{proposition}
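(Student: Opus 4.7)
The strategy is to use the inversion anti-automorphism $\iota : g \mapsto g^{-1}$ of $G(e,e,n)$ to reduce the statement to the already-established Proposition \ref{PropLCMofGenIn1lambdak}. In this way one transports the left-divisibility lemmas (Lemmas \ref{tit0Divw}--\ref{sis(i+1)sisjdivw}) into right-divisibility statements without duplicating the entire proof.

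First I would verify that $z := x \vee y$ is a common right multiple of $x$ and $y$ inside $[1,\lambda^k]$. In cases 2--5 of Proposition \ref{PropLCMofGenIn1lambdak} the two alternate expressions given for $z$ already display $x$ and $y$ as the terminal letters of reduced expressions, hence as right divisors. For case 1 ($t_i \vee t_j = t_k t_0$ with $i \neq j$), a short induction using relation 1 of Definition \ref{DefinitionPresentationCorranPicantin} together with $\mathbf{t}_a^2=1$ shows that the rotation $\rho := t_{c+1} t_c$ (independent of $c$) satisfies $\rho^m = t_{a+m} t_a$ for all $a$ and $m$. Setting $a=i$, $m=k$ gives $t_k t_0 = \rho^k = t_{k+i} t_i$, and analogously $t_k t_0 = t_{k+j} t_j$, exhibiting both $t_i$ and $t_j$ as right divisors of $t_k t_0$.

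For minimality, let $w \in [1,\lambda^k]$ satisfy $x \preceq_r w$ and $y \preceq_r w$. Since every element of $X$ is an involution, the anti-automorphism $\iota$ fixes each element of $X$ and satisfies $u \preceq v \Longleftrightarrow u^{-1} \preceq_r v^{-1}$. Applying $\iota$ yields $x \preceq w^{-1}$ and $y \preceq w^{-1}$; moreover $(\lambda^k)^{-1} = \lambda^{e-k}$, and the balanced property of $\lambda^{e-k}$ (Proposition \ref{Propbalanced}) gives $w^{-1} \in [1,\lambda^{e-k}]$. Proposition \ref{PropLCMofGenIn1lambdak}, applied inside the interval $[1,\lambda^{e-k}]$, then produces an element $z^\ast$ with $x, y \preceq z^\ast$ and $z^\ast \preceq w^{-1}$. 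The explicit formulas show that $z^\ast = z$ in cases 2--5 (where the lcm formula is $k$-independent and the resulting element is self-inverse), and that $z^\ast = t_{e-k} t_0 = \rho^{e-k} = (t_k t_0)^{-1} = z^{-1}$ in case 1. Applying $\iota$ once more transports $z^\ast \preceq w^{-1}$ to $z \preceq_r w$, which is the desired minimality.

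The main subtle point is case 1, where the formula for the left lcm in $[1,\lambda^{e-k}]$ genuinely differs from the one in $[1,\lambda^k]$; one must verify the identity $(t_k t_0)^{-1} = t_{e-k} t_0$, which follows from $\rho^e = 1$ together with the formula $\rho^m = t_{a+m} t_a$ derived above. In the remaining cases the argument is transparent because the lcm element is already self-inverse, so the inversion step changes nothing.
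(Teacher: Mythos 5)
Your proof is correct, and it follows the same underlying strategy as the paper---transporting the left-lcm statement of Proposition \ref{PropLCMofGenIn1lambdak} through a length-preserving anti-homomorphism---but with a different choice of map, which changes the supporting facts needed. The paper uses the map $\phi: t_i \mapsto t_{-i}$, $s_j \mapsto s_j$ (matrix transposition), verified to extend to an anti-automorphism; since $\phi$ fixes the diagonal element $\lambda^k$, the whole argument stays inside the single interval $[1,\lambda^k]$, and the only delicate verification is $\phi(x)\vee\phi(y) = \phi(x\vee y)$, whose nontrivial case is $t_{-i}\vee t_{-j} = t_kt_0 = \phi(t_kt_0)$. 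You instead use group inversion, which is canonically an anti-automorphism fixing $X$ pointwise and requires no check against the relations, but which sends $\lambda^k$ to $\lambda^{e-k}$; consequently you must pass to the complementary interval, invoke Proposition \ref{Propbalanced} for $\lambda^{e-k}$ to get $w^{-1}\in[1,\lambda^{e-k}]$, apply Proposition \ref{PropLCMofGenIn1lambdak} at parameter $e-k$, and reconcile the two case-1 formulas via $(t_{e-k}t_0)^{-1}=t_kt_0$ (your computation $\rho^m = t_{a+m}t_a$, $\rho^e=1$ is right, and it also correctly handles the self-inverse cases 2--5). A small bonus of your write-up is that you check explicitly that $x$ and $y$ right-divide $x\vee y$ (including $t_kt_0=t_{k+i}t_i$), a point the paper leaves implicit; conversely, the paper's choice of $\phi$ keeps the parameter $k$ fixed throughout, at the cost of verifying that $\phi$ respects the defining relations. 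Both routes are sound; yours trades that relation-check for the bookkeeping of the complementary interval.
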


\begin{proof}

Define a map on the generators of $G(e,e,n)$ by $\phi: t_i \longmapsto t_{-i}$ and $s_j \longmapsto s_j$ with $i \in \mathbb{Z}/e\mathbb{Z}$ and $3 \leq j \leq n$. On examination of the relations for $G(e,e,n)$, it is quickly verified that this map extends to an anti-homomorphism $G(e,e,n) \longrightarrow G(e,e,n)$. Clearly $\phi^2$ is the identity, so in particular, $\phi$ respects the length function on $G(e,e,n)$: $\ell(\phi(w)) = \ell(w)$. Suppose that $x,y \in X$ and $x \preceq_r w$ and $y \preceq_r w$, we will show that $x \vee y \preceq_r w$. Write $w = vx$ and $w=v'y$ with $v, v' \in G(e,e,n)$. Thus, $\phi(w) = \phi(x) \phi(v) = \phi(y) \phi(v')$, and since $\phi$ respects length, $\phi(x) \preceq \phi(w)$ and $\phi(y) \preceq \phi(w)$. Hence $\phi(x) \vee \phi(y) \preceq \phi(w)$.

For each pair of generators, it is straightforward to check that\\ $\phi(x) \vee \phi(y) = \phi(x \vee y)$: the only non-trivial case being when $x = t_i$ and $y = t_j$ for $i \neq j$, when we have: $$\phi(t_i) \vee \phi(t_j) = t_{-i} \vee t_{-j} = t_kt_0 = \phi(t_0t_{-k}) = \phi(t_kt_0).$$ Thus, $\phi(x \vee y) \preceq \phi(w)$, that is $\phi(w) = \phi(x \vee y)u$ for some $u \in G(e,e,n)$. Applying $\phi$ again gives $w = \phi(u)(x \vee y)$, and since $\phi$ respects length, $x \vee y \preceq_r w$.

\end{proof}

Note that Propositions \ref{PropLCMofGenIn1lambdak} and \ref{PropLCMofGenRight} are important to prove that both posets $([1,\lambda^k],\preceq)$ and $([1,\lambda^k],\preceq_r)$ are lattices. Actually, they will make possible an induction proof of Proposition \ref{PropInductionExistLCM} in the next subsection.

\subsection{The lattice property and interval structures}

We start by recalling some general properties about lattices that will be useful in our proof. Let $(S, \preceq)$ be a finite poset. 

\begin{definition}\label{DefMeetSemiLattice}

Say that $(S, \preceq)$ is a meet-semilattice if and only if $f \wedge g := gcd(f,g)$ exists for any $f, g \in S$.

\end{definition} 

Equivalently, $(S, \preceq)$ is a meet-semilattice if and only if $\bigwedge T$ exists for any finite nonempty subset $T$ of $S$.

\begin{definition}\label{DefJoinSemiLattice}

Say that $(S, \preceq)$ is a join-semilattice if and only if $f \vee g := lcm(f,g)$ exists for any $f, g \in S$.

\end{definition}

Equivalently, $(S, \preceq)$ is a join-semilattice if and only if $\bigvee T$ exists for any finite nonempty subset $T$ of $S$.

\begin{proposition}\label{PropCaractMeetSemiLattice}

Let $(S, \preceq)$ be a finite poset. $(S, \preceq)$ is a meet-semilattice if and only if for any $f, g \in S$, either $f \vee g$ exists, or $f$ and $g$ have no common multiples.

\end{proposition}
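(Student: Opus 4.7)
The plan is to prove the two implications symmetrically, exploiting the finiteness of $S$ to translate between joins and meets via iterated pairwise operations. Throughout, for $f,g \in S$, I will write $L(f,g) := \{h \in S \mid h \preceq f \text{ and } h \preceq g\}$ for the set of common divisors and $U(f,g) := \{h \in S \mid f \preceq h \text{ and } g \preceq h\}$ for the set of common multiples.

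For the forward direction, assume $(S,\preceq)$ is a meet-semilattice and pick $f,g \in S$ having at least one common multiple, so that $U(f,g)$ is a nonempty finite subset of $S$. By the meet-semilattice hypothesis, pairwise meets exist, so by a straightforward induction on $|U(f,g)|$ using $\bigwedge\{a_1,\dots,a_n\} = \bigl(\bigwedge\{a_1,\dots,a_{n-1}\}\bigr) \wedge a_n$, the element $m := \bigwedge U(f,g)$ is well-defined. I then check that $m$ is precisely $f \vee g$: every element of $U(f,g)$ lies above both $f$ and $g$, so $f$ and $g$ are lower bounds of $U(f,g)$ and hence $f \preceq m$ and $g \preceq m$, showing $m \in U(f,g)$; and by construction $m$ is $\preceq$-below every other common multiple, so it is the least common multiple.

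For the converse, assume the disjunctive condition and take $f,g \in S$. Since $S$ is finite (and in the intended application has a bottom element), I may restrict attention to the case $L(f,g) \neq \emptyset$. The key claim is that $L(f,g)$ is closed under pairwise joins inside $S$: for any $h_1, h_2 \in L(f,g)$, the element $f$ is a common multiple of $h_1$ and $h_2$, so by hypothesis $h_1 \vee h_2$ exists, and since $f$ (resp. $g$) is a common upper bound, the defining minimality of the join forces $h_1 \vee h_2 \preceq f$ and $h_1 \vee h_2 \preceq g$, placing $h_1 \vee h_2$ back in $L(f,g)$. An induction on $|L(f,g)|$ then yields an element $M := \bigvee L(f,g) \in L(f,g)$, which is necessarily the maximum of $L(f,g)$, i.e. $f \wedge g$.

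The main obstacle, and the only delicate step, is verifying the closure property of $L(f,g)$ under pairwise joins in the backward direction: one must be careful to justify both the existence of $h_1 \vee h_2$ (which comes from the hypothesis applied to the nonempty set of common multiples containing $f$) and the fact that this join lies below $f$ and below $g$ (which follows from the universal property of the least common multiple). Once this closure is in hand, the induction producing $\bigvee L(f,g)$ is routine, and the symmetric arguments close the equivalence.
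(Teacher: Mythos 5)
Your proof is correct and follows essentially the same route as the paper: the forward direction takes the meet of the finite, nonempty set of common multiples and identifies it with $f \vee g$, while the converse takes the join of the set of common divisors, built by iterated pairwise joins using the hypothesis. You merely make explicit two points the paper leaves implicit, namely the closure of $L(f,g)$ under pairwise joins and the nonemptiness of $L(f,g)$ (guaranteed in the intended application by the bottom element $1$).
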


\begin{proof}

Let $f, g \in S$ and suppose that $f$ and $g$ have at least one common multiple. Let $A := \{h \in S\ |\ f \preceq h$ and $g \preceq h \}$ be the set of the common multiples of $f$ and $g$. Since $S$ is finite, $A$ is also finite. Since $(S, \preceq)$ is a meet-semilattice, $\bigwedge A$ exists and $\bigwedge A = lcm(f,g) = f \vee g$.\\
Conversely, let $f, g \in S$ and let $B := \{h \in S\ |\ h \preceq f$ and $h \preceq g \}$ be the set of all common divisors of $f$ and $g$. Since all elements of $B$ have common multiples, $\bigvee B$ exists and we have $\bigvee B = gcd(f,g) = f \wedge g$.

\end{proof}

\begin{definition}\label{DefFromSemiLatToLat}

The poset $(S, \preceq)$ is a lattice if and only if it is both a meet- and join- semilattice.

\end{definition}

The following lemma is a consequence of Proposition \ref{PropCaractMeetSemiLattice}.

\begin{lemma}\label{LemmaSemiLatticeLattice}

If $(S, \preceq)$ is a meet-semilattice such that $\bigvee S$ exists, then $(S, \preceq)$ is a lattice.\\

\end{lemma}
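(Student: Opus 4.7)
The plan is to reduce the statement to Proposition \ref{PropCaractMeetSemiLattice}, which says that in a finite meet-semilattice, the join $f \vee g$ fails to exist only when $f$ and $g$ have no common upper bound at all. So to upgrade a meet-semilattice to a lattice, it suffices to check that any two elements do admit some common multiple.

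First, I would set $M := \bigvee S$, which exists by hypothesis. By definition of the join, every element of $S$ divides $M$, so $M$ is a maximum element of $(S, \preceq)$. In particular, for any $f, g \in S$, both $f \preceq M$ and $g \preceq M$, so $M$ is a common multiple of $f$ and $g$.

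Next, since $(S, \preceq)$ is a meet-semilattice and $f, g$ have at least one common multiple, Proposition \ref{PropCaractMeetSemiLattice} guarantees that $f \vee g$ exists in $S$. Since $f$ and $g$ were arbitrary, $(S, \preceq)$ is a join-semilattice in the sense of Definition \ref{DefJoinSemiLattice}. Combined with the meet-semilattice hypothesis, Definition \ref{DefFromSemiLatToLat} then yields that $(S, \preceq)$ is a lattice.

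There is essentially no obstacle here: the proof is a direct combination of the hypothesis on $\bigvee S$ with the already-established characterization in Proposition \ref{PropCaractMeetSemiLattice}. The only point worth emphasizing in the write-up is that the finiteness of $S$ (implicit from the beginning of the subsection) is what makes $\bigvee S$ a meaningful single element serving as a universal common multiple.
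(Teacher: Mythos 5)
Your proof is correct and follows exactly the route the paper intends: the paper states the lemma as an immediate consequence of Proposition \ref{PropCaractMeetSemiLattice}, and your argument (take $M=\bigvee S$ as a universal common multiple, then invoke the proposition to get all joins, hence a lattice) is precisely that reasoning spelled out, as the paper itself does later in the proof of Corollary \ref{CorBothPosetsLattices}.
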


We will prove that $([1,\lambda^k],\preceq)$ is a meet-semilattice by applying Proposition \ref{PropCaractMeetSemiLattice}. For $1 \leq m \leq \ell(\lambda^k)$ with $\ell(\lambda^k) = n(n-1)$, we introduce 

\begin{center} $([1,\lambda^k])_m := \{w \in [1,\lambda^k]\ s.t.\ \ell(w) \leq m \}.$ \end{center}

\begin{proposition}\label{PropInductionExistLCM}

Let $0 \leq k \leq e-1$. For $1 \leq m \leq n(n-1)$ and $u,v$ in $([1,\lambda^k])_m$, either $u \vee v$ exists in $([1,\lambda^k])_m$, or $u$ and $v$ do not have common multiples in $([1,\lambda^k])_m$.

\end{proposition}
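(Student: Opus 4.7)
The plan is to proceed by induction on $m$. The base cases $m=0$ and $m=1$ are immediate: for $m=0$ the only element is the identity, and for $m=1$ the pairs of generators are handled by Proposition \ref{PropLCMofGenIn1lambdak}. For the inductive step, I assume the statement holds for $m-1$ and fix $u,v \in ([1,\lambda^k])_m$ admitting a common multiple $w \in ([1,\lambda^k])_m$; the goal is to produce $u \vee v$ inside $([1,\lambda^k])_m$. If one of $u,v$ left-divides the other the claim is trivial, so I may assume reduced factorisations $u = x u'$ and $v = y v'$ with $x,y \in X$.

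The main step is then to peel off a common left-divisor of any potential common multiple. If $x=y$, then $u',v' \in ([1,\lambda^k])_{m-1}$, and since $\lambda^k$ is balanced (Proposition \ref{Propbalanced}), Lemma \ref{LemmaDivisorsForInduction} gives $x^{-1}w \in [1,\lambda^k]$; its length being at most $m-1$, it is a common multiple of $u'$ and $v'$ in $([1,\lambda^k])_{m-1}$. The induction hypothesis yields $u' \vee v'$, and I would verify that $x(u' \vee v') = u \vee v$ in $([1,\lambda^k])_m$. If instead $x \neq y$, then $x,y \preceq w$ gives $z := x \vee y \preceq w$ by Proposition \ref{PropLCMofGenIn1lambdak} (applied to $x,y$ as divisors of $w$, not yet the lcm in the interval, but the proposition shows $z \in [1,\lambda^k]$ and that it is actually the join in the interval). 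Writing $z = x \bar{y} = y \bar{x}$, Lemma \ref{LemmaDivisorsForInduction} again gives $x^{-1} w, y^{-1} w \in [1,\lambda^k]$, and both have length $\leq m-1$. Inside $([1,\lambda^k])_{m-1}$, the elements $u'$ and $\bar{y}$ share the common multiple $x^{-1}w$, and by induction admit a join $p := u' \vee \bar{y}$; symmetrically $q := v' \vee \bar{x}$ exists. I would then show $xp = yq$, which is the candidate for $u \vee v$.

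The delicate verification is that this candidate really is the least upper bound and lies in $([1,\lambda^k])_m$. Length-additivity of the factorisations $xp$ and $yq$ follows from $x \preceq z \preceq xp$ and $y \preceq z \preceq yq$ together with the induction hypothesis giving $\ell(p) = \ell(u' \vee \bar y) \leq \ell(x^{-1}w) = \ell(w) - 1 \leq m-1$; the equality $xp = yq$ is forced by the fact that both are the unique element of length $1 + \ell(p)$ that is divisible by $z$ and whose $x$-part is $p$ (and respectively $y$-part is $q$). To check minimality, I would take an arbitrary common multiple $w'$ of $u$ and $v$ in $([1,\lambda^k])_m$, observe that $z \preceq w'$ so that $x^{-1}w'$ dominates both $u'$ and $\bar y$, apply the induction hypothesis to conclude $p \preceq x^{-1}w'$, and multiply by $x$ to get $xp \preceq w'$.

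The main obstacle will be the verification that the assembled element $xp$ indeed has $\ell(xp) = 1 + \ell(p)$ and that $v \preceq xp$ (not merely $z \preceq xp$), since $v$ is not a priori $\preceq z$. Here the asymmetry between $v$ and $z$ is resolved by running the induction simultaneously for $u' \vee \bar y$ and $\bar x \vee v'$ and using the identification $xp = yq$ together with $v = y v' \preceq y q = xp$. This symmetric coupling, which uses both the left and right join propositions (Propositions \ref{PropLCMofGenIn1lambdak} and \ref{PropLCMofGenRight}), is what makes the induction close up.
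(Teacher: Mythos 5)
Your overall strategy is the same as the paper's (induction on $m$, peeling off first letters $u=xu'$, $v=yv'$, using Proposition \ref{PropLCMofGenIn1lambdak} for the atoms and Lemma \ref{LemmaDivisorsForInduction} to stay inside the interval), but the assembly step contains a genuine gap: you apply the induction hypothesis only twice and claim that $xp=yq$, where $p=u'\vee\bar y$ and $q=v'\vee\bar x$. Writing $u'\vee\bar y=\bar y\,u_2$ and $v'\vee\bar x=\bar x\,v_2$ length-additively, one gets $xp=(x\vee y)u_2$ and $yq=(x\vee y)v_2$, so your identity forces $u_2=v_2$, which has no reason to hold; in fact $\ell(p)$ and $\ell(q)$ need not even agree, so the ``unique element of length $1+\ell(p)$'' justification does not work, and worse, $xp$ need not be a common multiple of $v$ at all --- it is only (a candidate for) $u\vee(x\vee y)$. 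A concrete instance inside the interval (take $n\geq 5$, so that $s_3,s_4,s_5\in X$; all elements below lie in $[1,\lambda^k]$ by Proposition \ref{PropCaractD}): $u=s_3s_4$, $v=s_5s_4$. Then $x=s_3$, $y=s_5$, $x\vee y=s_3s_5$, $\bar y=s_5$, $\bar x=s_3$, $p=s_4\vee s_5=s_4s_5s_4$, $q=s_4\vee s_3=s_4s_3s_4$, and $xp=s_3s_4s_5s_4\neq s_5s_4s_3s_4=yq$; moreover $v\npreceq xp$, while the actual join is $s_3s_4s_5s_4s_3$, of length $5$.

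The missing ingredient is a third application of the induction hypothesis, which is exactly how the paper closes the square: the complements $u_2$ and $v_2$ above belong to $[1,\lambda^k]$ by Lemma \ref{LemmaDivisorsForInduction}, have length $<m$, and have a common multiple obtained by peeling $w$, so $u_2\vee v_2$ exists in $([1,\lambda^k])_{m-1}$; writing $u_2\vee v_2=u_2v_3=v_2u_3$, the element $uy_2v_3=vx_2u_3=(x\vee y)(u_2\vee v_2)$ is then a common multiple of both $u$ and $v$, and an argument along the lines of your minimality step (iterated through the three completions, as in the paper's Figure \ref{fig2}) shows it divides every common multiple $w'$, hence equals $u\vee v$ and has length $\leq m$. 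Your base cases, the $x=y$ case, and the use of the balanced property via Lemma \ref{LemmaDivisorsForInduction} are all fine; only the two-join shortcut needs to be replaced by this three-join completion.
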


\begin{proof}

Let $1 \leq m \leq n(n-1)$. We make a proof by induction on $m$. By Proposition \ref{PropLCMofGenIn1lambdak}, our claim holds for $m=1$. Suppose $m > 1$. Assume that the claim holds for all $1 \leq m' \leq m-1$. We want to prove it for $m' = m$. The proof is illustrated in Figure \ref{fig2} below. Let $u, v$ be in $([1,\lambda^k])_m$ such that $u$ and $v$ have at least one common multiple in $([1,\lambda^k])_m$ which we denote by $w$. Write $u= xu_1$ and $v = y v_1$ such that $x, y \in X$ and $\ell(u) = \ell(u_1) + 1$, $\ell(v) = \ell(v_1) + 1$. By Proposition \ref{PropLCMofGenIn1lambdak}, $x \vee y$ exists. Since $x \preceq w$ and $y \preceq w$, $x \vee y$ divides $w$. We can write $x \vee y = xy_1 = yx_1$ with $\ell(x \vee y) = \ell(x_1) + 1 = \ell(y_1) + 1$. By Lemma \ref{LemmaDivisorsForInduction}, we have $x_1, v_1 \in [1, \lambda^k]$. Also, we have $\ell(x_1) < m$, $\ell(v_1) < m$ and $x_1, v_1$ have a common multiple in $([1,\lambda^k])_{m-1}$. Thus, by the induction hypothesis, $x_1 \vee v_1$ exists in $([1,\lambda^k])_{m-1}$. Similarly, $y_1 \vee u_1$ exists in $([1,\lambda^k])_{m-1}$. Write $x_1 \vee v_1 = v_1x_2 = x_1v_2$ with $\ell(x_1 \vee v_1) = \ell(v_1) + \ell(x_2) = \ell(v_2) + \ell(x_1)$ and write $y_1 \vee u_1 = u_1y_2 = y_1u_2$ with $\ell(y_1 \vee u_1) = \ell(y_1) + \ell(u_2) = \ell(u_1) + \ell(y_2)$. By Lemma \ref{LemmaDivisorsForInduction}, we have $u_2, v_2 \in [1, \lambda^k]$. Also, we have $\ell(u_2) < m$, $\ell(v_2) < m$ and $u_2, v_2$ have a common multiple in $([1,\lambda^k])_{m-1}$. Thus, by the induction hypothesis, $u_2 \vee v_2$ exists in $([1,\lambda^k])_{m-1}$. Write $u_2 \vee v_2 = v_2 u_3 = u_2 v_3$ with $\ell(u_2 \vee v_2) = \ell(v_2) + \ell(u_3) = \ell(u_2) + \ell(v_3)$. Since $uy_2v_3 = vx_2u_3$ is a common multiple of $u$ and $v$ that divides every common multiple $w$ of $u$ and $v$, we deduce that $u \vee v = uy_2v_3 = vx_2u_3$ and we are done.

\end{proof}

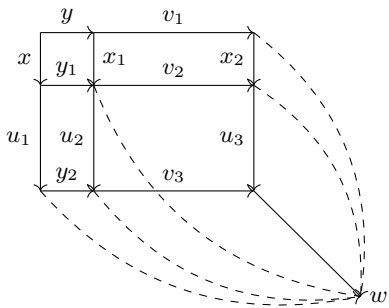
\begin{figure}[H]

\begin{small}
\begin{center}

\begin{tikzpicture}[scale=0.7]

\draw [->] (0,0) to node[auto] {$y_2$} (1,0);
\draw [->] (1,0) to node[auto] {$v_3$} (4,0);
\draw [->] (0,2) to node[left] {$u_1$} (0,0);
\draw [->] (1,2) to node[left] {$u_2$} (1,0);
\draw [->] (4,2) to node[left] {$u_3$} (4,0);
\draw [->] (0,2) to node[auto] {$y_1$} (1,2);
\draw [->] (1,2) to node[auto] {$v_2$} (4,2);
\draw [->] (0,3) to node[left] {$x$} (0,2);
\draw [->] (1,3) to node[auto] {$x_1$} (1,2);
\draw [->] (4,3) to node[left] {$x_2$} (4,2);
\draw [->] (0,3) to node[auto] {$y$} (1,3);
\draw [->] (1,3) to node[auto] {$v_1$} (4,3);
\draw [-] (4,0) -- (6,-2) node[right] {$w$};
\draw [->, dashed] (0,0) to[bend right] (6,-2);
\draw [-, dashed] (1,0) to[bend right] (6,-2);
\draw [-, dashed] (1,2) to[bend right] (6,-2);
\draw [-, dashed] (4,2) to[bend left] (6,-2);
\draw [->, dashed] (4,3) to[bend left] (6,-2);

\end{tikzpicture}

\end{center}
\end{small}

\caption{The proof of Proposition \ref{PropInductionExistLCM}.}
\label{fig2}
\end{figure}

\noindent Similarly, applying Proposition \ref{PropLCMofGenRight}, we obtain the same results for $([1,\lambda^k],\preceq_r)$. We thus proved the following.

\begin{cor}\label{CorBothPosetsLattices}

Both posets $([1,\lambda^k],\preceq)$ and $([1,\lambda^k],\preceq_r)$ are lattices. 

\end{cor}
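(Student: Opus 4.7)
The plan is to deduce the corollary in a short argument by combining the inductive step of Proposition \ref{PropInductionExistLCM} with the observation that the interval $[1,\lambda^k]$ has $\lambda^k$ itself as a maximum element for both orders.

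First, I would specialize Proposition \ref{PropInductionExistLCM} at $m = n(n-1) = \ell(\lambda^k)$, so that $([1,\lambda^k])_m$ coincides with the entire interval $[1,\lambda^k]$. The proposition then says: for any two elements $u, v \in [1,\lambda^k]$, either $u \vee v$ exists in $[1,\lambda^k]$, or $u$ and $v$ admit no common multiple in $[1,\lambda^k]$. But $\lambda^k$ is trivially a common multiple of any two of its divisors, so the second alternative is ruled out. Hence $u \vee v$ always exists in $[1,\lambda^k]$, which by Definition \ref{DefJoinSemiLattice} means that $([1,\lambda^k],\preceq)$ is a join-semilattice.

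Next, I would upgrade this to the lattice property via Proposition \ref{PropCaractMeetSemiLattice}. Given any $f, g \in [1,\lambda^k]$, the set $B$ of common divisors is finite and nonempty (since $1 \in B$). Any two elements of $B$ left-divide both $f$ and $g$, so they have common multiples in $[1,\lambda^k]$; by the join-semilattice property just established, their join exists there. The converse direction of Proposition \ref{PropCaractMeetSemiLattice} then yields $\bigvee B = f \wedge g$, so $([1,\lambda^k],\preceq)$ is also a meet-semilattice. By Definition \ref{DefFromSemiLatToLat}, it is therefore a lattice.

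Finally, the same argument applies verbatim to $\preceq_r$. The right-divisibility analogue of Proposition \ref{PropInductionExistLCM} is the one alluded to just before the corollary: it is obtained by running the same induction as in Figure \ref{fig2} with Proposition \ref{PropLCMofGenIn1lambdak} replaced by Proposition \ref{PropLCMofGenRight}. The role of the universal upper bound is again played by $\lambda^k$, whose balancedness (Proposition \ref{Propbalanced}) ensures that $w \preceq \lambda^k$ and $w \preceq_r \lambda^k$ are equivalent, so $\lambda^k$ remains a common right-multiple of every pair in $[1,\lambda^k]$. There is no real obstacle here beyond this bookkeeping: all the substantive work has already been done in Proposition \ref{PropInductionExistLCM} and its right-sided counterpart, and the present corollary is essentially the observation that a finite bounded poset in which every pair has a join is automatically a lattice.
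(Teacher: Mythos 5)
Your proof is correct and follows essentially the same route as the paper: both specialize Proposition \ref{PropInductionExistLCM} (and its right-division counterpart built on Proposition \ref{PropLCMofGenRight}) at $m=n(n-1)$, use that $\lambda^k$ is the top of the interval, and conclude via the general finite-poset facts (Proposition \ref{PropCaractMeetSemiLattice}, Definition \ref{DefFromSemiLatToLat}). The only difference is cosmetic ordering --- you rule out the ``no common multiple'' alternative first to get the join-semilattice and then deduce meets, while the paper states the meet-semilattice property first and invokes the existence of $\bigvee[1,\lambda^k]=\lambda^k$ --- so no further comment is needed.
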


\begin{proof}

Applying Proposition \ref{PropCaractMeetSemiLattice}, $([1,\lambda^k],\preceq)$ is a meet-semilattice. Also, by definition of the interval $[1,\lambda^k]$, we have $\bigvee [1,\lambda^k] = \lambda^k$. Thus, applying Proposition \ref{PropCaractMeetSemiLattice}, $([1,\lambda^k],\preceq)$ is a lattice. The same can be done for $([1,\lambda^k],\preceq_r)$.

\end{proof}

We are ready to define the interval monoid $M([1,\lambda^k])$.

\begin{definition}\label{DefIntervalMonoid}

Let $\underline{D_k}$ be a set in bijection with $D_k = [1,\lambda^k]$ with $$[1,\lambda^k] \longrightarrow \underline{D_k}: w \longmapsto \underline{w}.$$ We define the monoid $M([1,\lambda^k])$ by the following presentation of monoid with

\begin{itemize}

\item generating set: $\underline{D_k}$ (a copy of the interval $[1,\lambda^k]$) and
\item relations: $\underline{w} = \underline{w'} \hspace{0.1cm} \underline{w''}$ whenever $w,w'$ and $w'' \in [1,\lambda^k]$, $w=w'w''$ and $\ell(w) = \ell(w') + \ell(w'')$.

\end{itemize}

\end{definition}

We have that $\lambda^k$ is balanced. Also, by Corollary \ref{CorBothPosetsLattices}, both posets $([1,\lambda^k],\preceq)$ and $([1,\lambda^k],\preceq_r)$ are lattices. Hence, by Theorem \ref{TheoremMichelGarside}, we have:

\begin{theorem}\label{TheoremIntervalStructure}

$(M([1,\lambda^k]),\underline{\lambda^k})$ is an interval Garside monoid with simples $\underline{D_k}$, where $\underline{D_k}$ is given in Definition \ref{DefIntervalMonoid}. Its group of fractions exists and is denoted by $G(M([1,\lambda^k]))$.

\end{theorem}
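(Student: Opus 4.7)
The plan is essentially to invoke Michel's theorem (Theorem \ref{TheoremMichelGarside}) directly, so the work reduces to verifying its hypotheses for the element $\lambda^k \in G(e,e,n)$ and then extracting the group of fractions from the resulting Garside monoid.

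First I would recall that the two hypotheses of Theorem \ref{TheoremMichelGarside} applied to $w = \lambda^k$ are: (i) $\lambda^k$ is a balanced element of $G(e,e,n)$, and (ii) both $([1,\lambda^k],\preceq)$ and $([1,\lambda^k],\preceq_r)$ are lattices. Hypothesis (i) was established in Proposition \ref{Propbalanced}, where the matrix characterization of $D_k = [1,\lambda^k]$ via the sets $Z(w)$ and $Z'(w)$ (Proposition \ref{PropZ'=1orZetae}) combined with Lemma \ref{LemmaforPropBalanced} shows that $w \preceq \lambda^k$ if and only if $w \preceq_r \lambda^k$. Hypothesis (ii) is exactly the content of Corollary \ref{CorBothPosetsLattices}, which was in turn deduced from Proposition \ref{PropInductionExistLCM} (induction on length using the generator-level least common multiples computed in Propositions \ref{PropLCMofGenIn1lambdak} and \ref{PropLCMofGenRight}). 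Feeding these two facts into Theorem \ref{TheoremMichelGarside} immediately yields that $(M([1,\lambda^k]),\underline{\lambda^k})$ is a Garside monoid whose set of simples is precisely $\underline{D_k}$, as defined in Definition \ref{DefIntervalMonoid}.

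For the second assertion, I would note that being a Garside monoid is enough to guarantee the existence of a group of fractions. Indeed, Definition \ref{DefGarsideMonoid} requires cancellativity and the existence of common multiples (in fact lcms) for any two elements; these are exactly Ore's conditions for a monoid to embed in its group of (left, equivalently right) fractions. Applying Ore's theorem therefore produces the group $G(M([1,\lambda^k]))$ into which $M([1,\lambda^k])$ embeds, completing the statement.

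Since both hypotheses of Michel's theorem have already been verified in the preceding sections, there is no real obstacle left: the only thing to be careful about is the bookkeeping between the interval $D_k$, the set $\underline{D_k}$ of abstract generators, and the identification $\underline{\lambda^k}$ as the Garside element, all of which are fixed by Definition \ref{DefIntervalMonoid}. The substantive work had already been done in proving that $\lambda^k$ is balanced and that the divisibility posets are lattices; the theorem itself is a clean assembly of these ingredients.
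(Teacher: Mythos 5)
Your proposal matches the paper's proof: the paper likewise cites the balancedness of $\lambda^k$ (Proposition \ref{Propbalanced}) and the lattice property (Corollary \ref{CorBothPosetsLattices}), then applies Theorem \ref{TheoremMichelGarside}, with the group of fractions coming from Ore's conditions as noted after Definition \ref{DefGarsideMonoid}. Your assembly of these ingredients, including the bookkeeping with $\underline{D_k}$ and $\underline{\lambda^k}$ from Definition \ref{DefIntervalMonoid}, is correct and essentially identical to the paper's argument.
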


These interval structures have been implemented by Michel and the author using the package CHEVIE of GAP3 (see \cite{CHEVIEJMichel} and \cite{GAP3CPMichelNeaime}). We provide this implementation in Appendix A. The next section is devoted to the study of these interval structures.

\section{About the interval structures}

In this section, we provide a new presentation for the interval monoid $M([1,\lambda^k])$. Furthermore, we prove that $G(M([1,\lambda^k]))$ is isomorphic to the complex braid group $B(e,e,n)$ if and only if $k \wedge e = 1$ ($k$ and $e$ are relatively prime). When $k \wedge e \neq 1$, we describe these new structures and show some of their properties.

\subsection{Presentations}

Our first aim is to prove that the interval monoid $M([1,\lambda^k])$ is isomorphic to the monoid $B^{\oplus k}(e,e,n)$ defined as follows.

\begin{definition}\label{DefofB+keen}

For $1 \leq k \leq e-1$, we define the monoid $B^{\oplus k}(e,e,n)$ by a presentation of monoid with

\begin{itemize}

\item generating set: $\widetilde{X} = \{ \tilde{t}_{0}, \tilde{t}_{1}, \cdots, \tilde{t}_{e-1}, \tilde{s}_{3}, \cdots, \tilde{s}_{n}\}$ and
\item relations: $\left\{
\begin{array}{ll}

\tilde{s}_{i}\tilde{s}_{j}\tilde{s}_{i} = \tilde{s}_{j}\tilde{s}_{i}\tilde{s}_{j} & for\ |i-j|=1,\\
\tilde{s}_{i}\tilde{s}_{j} = \tilde{s}_{j}\tilde{s}_{i} & for\ |i-j| > 1,\\
\tilde{s}_{3}\tilde{t}_{i}\tilde{s}_{3} = \tilde{t}_{i}\tilde{s}_{3}\tilde{t}_{i} & for\ i \in \mathbb{Z}/e\mathbb{Z},\\
\tilde{s}_{j}\tilde{t}_{i} = \tilde{t}_{i}\tilde{s}_{j} & for\ i \in \mathbb{Z}/e\mathbb{Z}\ and\ 4 \leq j \leq n,\\
\tilde{t}_{i}\tilde{t}_{i-k} = \tilde{t}_{j}\tilde{t}_{j-k} & for\ i, j \in \mathbb{Z}/e\mathbb{Z}.

\end{array}
\right.$

\end{itemize}

\end{definition}

Note that the monoid $B^{\oplus 1}(e,e,n)$ is the monoid $B^{\oplus}(e,e,n)$ of Corran and Picantin, see \cite{CorranPicantin}. It is shown in \cite{CorranPicantin} that this monoid gives rise to a Garside structure for $B(e,e,n)$. The diagram corresponding to the presentation of $B^{\oplus}(e,e,n)$ is as follows. The dashed circle between $\tilde{t}_0, \tilde{t}_1, \cdots , \tilde{t}_{e-1}$ describes the relation $\tilde{t}_{i}\tilde{t}_{i-1} = \tilde{t}_{j}\tilde{t}_{j-1}$. The other edges of the diagram follow the standard conventions for Artin-Tits monoids.

\begin{figure}[H]
\begin{center}
\begin{tikzpicture}[yscale=0.8,xscale=1,rotate=30]

\draw[thick,dashed] (0,0) ellipse (2cm and 1cm);

\node[draw, shape=circle, fill=white, label=above:$\tilde{t}_0$] (t0) at (0,-1) {};
\node[draw, shape=circle, fill=white, label=above:$\tilde{t}_1$] (t1) at (1,-0.8) {};
\node[draw, shape=circle, fill=white, label=right:$\tilde{t}_2$] (t2) at (2,0) {};
\node[draw, shape=circle, fill=white, label=above:$\tilde{t}_i$] (ti) at (0,1) {};
\node[draw, shape=circle, fill=white, label=above:$\tilde{t}_{e-1}$] (te-1) at (-1,-0.8) {};

\draw[thick,-] (0,-2) arc (-180:-90:3);

\node[draw, shape=circle, fill=white, label=below left:$\tilde{s}_3$] (s3) at (0,-2) {};

\draw[thick,-] (t0) to (s3);
\draw[thick,-,bend left] (t1) to (s3);
\draw[thick,-,bend left] (t2) to (s3);
\draw[thick,-,bend left] (s3) to (te-1);

\node[draw, shape=circle, fill=white, label=below:$\tilde{s}_4$] (s4) at (0.15,-3) {};
\node[draw, shape=circle, fill=white, label=below:$\tilde{s}_{n-1}$] (sn-1) at (2.2,-4.9) {};
\node[draw, shape=circle, fill=white, label=right:$\tilde{s}_{n}$] (sn) at (3,-5) {};

\node[fill=white] () at (1,-4.285) {$\cdots$};

\end{tikzpicture}
\end{center}

\caption{Diagram for the presentation of Corran-Picantin of $B(e,e,n)$.}\label{PresofCPBeen}
\end{figure}
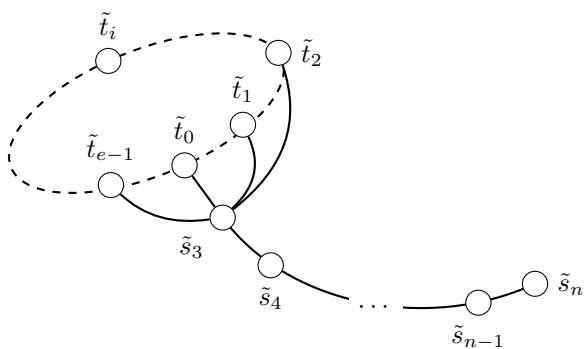

Fix $k$ such that $1 \leq k \leq e-1$. We define a diagram for the presentation of $B^{\oplus k}(e,e,n)$ in the same way as the diagram corresponding to the presentation of Corran and Picantin of $B(e,e,n)$ given in \mbox{Figure \ref{PresofCPBeen}}, with a dashed edge between $\tilde{t}_i$ and $\tilde{t}_{i-k}$ and between $\tilde{t}_j$ and $\tilde{t}_{j-k}$ for each relation of the form $\tilde{t}_{i}\tilde{t}_{i-k} = \tilde{t}_{j}\tilde{t}_{j-k}$, $i, j \in \mathbb{Z}/e\mathbb{Z}$. For example, the diagram corresponding to $B^{(2)}(8,8,2)$ is as follows.

\begin{small}
\begin{figure}[H]
\begin{center}
\begin{tikzpicture}

\node[label=$0$] (0) at (1.5,0) {$\bullet$};
\node[label=$1$] (1) at (1,1) {$\bullet$};
\node[label=$2$] (2) at (0,1.5) {$\bullet$};
\node[label=$3$] (3) at (-1,1) {$\bullet$};
\node[label=$4$] (4) at (-1.5,0) {$\bullet$};
\node[label=below:$5$] (5) at (-1,-1) {$\bullet$};
\node[label=below:$6$] (6) at (0,-1.5) {$\bullet$};
\node[label=below:$7$] (7) at (1,-1) {$\bullet$};

\draw[thick,dashed] (0) to (2);
\draw[thick,dashed] (2) to (4);
\draw[thick,dashed] (4) to (6);
\draw[thick,dashed] (6) to (0);

\draw[thick,dashed] (1) to (3);
\draw[thick,dashed] (3) to (5);
\draw[thick,dashed] (5) to (7);
\draw[thick,dashed] (7) to (1);

\end{tikzpicture}
\end{center}
\caption{Diagram for the presentation of $B^{(2)}(8,8,2)$.}\label{PresofB2882}
\end{figure}
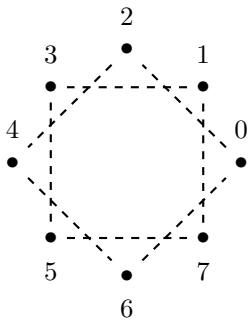
\end{small}

%\begin{definition}

%We define an homomorphism \footnote{La d\'efinition suivante est-elle suffisante pour d\'efinir ce morphisme de monoides ? Faut-il pr\'eciser que c'est un morphisme de monoides ? De part cette d\'efinition qui va suivre, ce morphisme est-il surjectif ?} $\pi_k : B^{\oplus k}(e,e,n) \longrightarrow G(e,e,n)$ by setting $\pi_k(\widetilde{t}_{i,k}) = t_i$ for $i \in \mathbb{Z}/e\mathbb{Z}$ and $\pi_k(\widetilde{s}_{j,k}) = s_j$ for $3 \leq j \leq n$.

%\end{definition}

The following result is similar to Matsumoto's property in the case of real reflection groups, see \cite{MatsumotoTheorem}.

\begin{proposition}\label{PropMatsumoto}

There exists a map $F: [1,\lambda^k] \longrightarrow B^{\oplus k}(e,e,n)$ defined by\\ $F(w) = \tilde{x}_1 \tilde{x}_2 \cdots \tilde{x}_r$ whenever $\mathbf{x}_1 \mathbf{x}_2 \cdots \mathbf{x}_r$ is a reduced expression over $\mathbf{X}$ of $w$, where $\tilde{x}_i \in \widetilde{X}$ for $1 \leq i \leq r$.

\end{proposition}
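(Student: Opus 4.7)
The content of the statement is well-definedness: two reduced expressions of the same element $w \in [1,\lambda^k]$ over $\mathbf{X}$ should give the same element of $B^{\oplus k}(e,e,n)$. I would prove this by induction on the length $r = \ell(w)$, the cases $r = 0$ (empty word) and $r = 1$ (a single generator admits only one reduced expression) being immediate.

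For the inductive step, suppose $r \geq 2$ and let $\boldsymbol{w}_1 = \mathbf{x}_1 \mathbf{x}_2 \cdots \mathbf{x}_r$ and $\boldsymbol{w}_2 = \mathbf{y}_1 \mathbf{y}_2 \cdots \mathbf{y}_r$ be two reduced expressions of $w$. If $\mathbf{x}_1 = \mathbf{y}_1$, then the suffixes are reduced expressions of $x_1^{-1} w \in [1,\lambda^k]$ (this divisor lies in the interval by Lemma~\ref{LemmaDivisorsForInduction}), so by induction $\tilde{x}_2 \cdots \tilde{x}_r = \tilde{y}_2 \cdots \tilde{y}_r$ in $B^{\oplus k}(e,e,n)$, and we conclude by multiplying on the left by $\tilde{x}_1$.

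The main work is when $x_1 \neq y_1$. Since $x_1 \preceq w$ and $y_1 \preceq w$, Corollary~\ref{CorBothPosetsLattices} gives a least common multiple $x_1 \vee y_1$ in $[1,\lambda^k]$, and it left-divides $w$: write $w = (x_1 \vee y_1) \cdot w''$ with $\ell(w) = \ell(x_1 \vee y_1) + \ell(w'')$. Pick any reduced expression $\boldsymbol{u}$ of $w''$. The crucial observation, coming from Proposition~\ref{PropLCMofGenIn1lambdak}, is that $x_1 \vee y_1$ admits exactly two canonical reduced expressions corresponding to a defining relation of $B^{\oplus k}(e,e,n)$: one starting with $\mathbf{x}_1$ and one starting with $\mathbf{y}_1$. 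Explicitly, depending on the pair $(x_1, y_1)$, these are the two sides of one of
\[
\tilde{t}_i \tilde{t}_{i-k} = \tilde{t}_j \tilde{t}_{j-k}, \quad \tilde{t}_i \tilde{s}_3 \tilde{t}_i = \tilde{s}_3 \tilde{t}_i \tilde{s}_3, \quad \tilde{t}_i \tilde{s}_j = \tilde{s}_j \tilde{t}_i, \quad \tilde{s}_i \tilde{s}_{i+1} \tilde{s}_i = \tilde{s}_{i+1} \tilde{s}_i \tilde{s}_{i+1}, \quad \tilde{s}_i \tilde{s}_j = \tilde{s}_j \tilde{s}_i,
\]
which are precisely the defining relations of $B^{\oplus k}(e,e,n)$. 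Call these two reduced expressions of $x_1 \vee y_1$ respectively $\boldsymbol{v}^{(x)}$ and $\boldsymbol{v}^{(y)}$.

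Now the four concatenated words $\boldsymbol{w}_1$, $\boldsymbol{v}^{(x)} \boldsymbol{u}$, $\boldsymbol{v}^{(y)} \boldsymbol{u}$, $\boldsymbol{w}_2$ are all reduced expressions of $w$. Since $\boldsymbol{w}_1$ and $\boldsymbol{v}^{(x)} \boldsymbol{u}$ both begin with $\mathbf{x}_1$, their suffixes are reduced expressions of $x_1^{-1} w$ of length $r-1$; by induction they yield the same element of $B^{\oplus k}(e,e,n)$, hence so do $\boldsymbol{w}_1$ and $\boldsymbol{v}^{(x)} \boldsymbol{u}$. Similarly $\boldsymbol{w}_2$ and $\boldsymbol{v}^{(y)} \boldsymbol{u}$ are equivalent. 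Finally, $\boldsymbol{v}^{(x)} \boldsymbol{u}$ and $\boldsymbol{v}^{(y)} \boldsymbol{u}$ differ only by the replacement $\boldsymbol{v}^{(x)} \leftrightarrow \boldsymbol{v}^{(y)}$, which is an application of one of the defining relations above; so they are equal in $B^{\oplus k}(e,e,n)$. Chaining these three equalities gives $F(\boldsymbol{w}_1) = F(\boldsymbol{w}_2)$.

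The key step, and the one on which everything rests, is the exact match between the list of lcms of pairs of atoms in $[1,\lambda^k]$ (Proposition~\ref{PropLCMofGenIn1lambdak}) and the defining relations of $B^{\oplus k}(e,e,n)$; in particular the shift by $k$ in $t_i \vee t_j = t_i t_{i-k}$ is precisely what forces the relation $\tilde{t}_i \tilde{t}_{i-k} = \tilde{t}_j \tilde{t}_{j-k}$ rather than the classical Corran--Picantin relation with shift $1$. Once this alignment is in hand, the argument above is the standard Matsumoto-style induction, made available by the lattice property proved in Corollary~\ref{CorBothPosetsLattices}.
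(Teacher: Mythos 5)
Your proposal is correct and follows essentially the same route as the paper: induction on $\ell(w)$, splitting off the first letters, and when they differ, using the lcm of the two atoms from Proposition~\ref{PropLCMofGenIn1lambdak} (which left-divides $w$ by the lattice structure), whose two canonical reduced expressions are exactly the two sides of a defining relation of $B^{\oplus k}(e,e,n)$, then chaining via the induction hypothesis on the suffixes. This matches the paper's Matsumoto-style argument, so no further comment is needed.
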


\begin{proof}

Let $\mathbf{w}_1$ and $\mathbf{w}_2$ be in $\mathbf{X}^{*}$. We write $\mathbf{w}_1 \overset{\mathcal{B}}{\leadsto} \mathbf{w}_2$ if $\mathbf{w}_2$ is obtained from $\mathbf{w}_1$ by applying only the relations of the presentation of $B^{\oplus k}(e,e,n)$ where we replace $\tilde{t}_i$ by $\mathbf{t}_i$ and $\tilde{s}_j$ by $\mathbf{s}_j$ for all $i \in \mathbb{Z}/e\mathbb{Z}$ and $3 \leq j \leq n$.\\
Let $w$ be in $[1,\lambda^k]$ and suppose that $\mathbf{w}_1$ and $\mathbf{w}_2$ are two reduced expressions over $\mathbf{X}$ of $w$. We prove that $\mathbf{w}_1 \overset{\mathcal{B}}{\leadsto} \mathbf{w}_2$ by induction on $\boldsymbol{\ell}(\mathbf{w}_1)$.\\
The result holds vacuously for $\boldsymbol{\ell}(\mathbf{w}_1) = 0$ and $\boldsymbol{\ell}(\mathbf{w}_1) = 1$. Suppose that $\boldsymbol{\ell}(\mathbf{w}_1) > 1$. Write 
\begin{center} $\mathbf{w}_1 = \mathbf{x}_1\mathbf{w'}_1$ and $\mathbf{w}_2 = \mathbf{x}_2\mathbf{w'}_2$, with $\mathbf{x}_1, \mathbf{x}_2 \in \mathbf{X}$.\end{center}
\underline{If $\mathbf{x}_1 = \mathbf{x}_2$}, we have $x_1 w'_{1} = x_2 w'_{2}$ in $G(e,e,n)$ from which we get $w'_{1} = w'_{2}$. Then, by the induction hypothesis, we have $\mathbf{w'}_1 \overset{\mathcal{B}}{\leadsto} \mathbf{w'}_2$. Hence $\mathbf{w}_1 \overset{\mathcal{B}}{\leadsto} \mathbf{w}_2$.\\
\underline{If $\mathbf{x}_1 \neq \mathbf{x}_2$}, since $x_1 \preceq w$ and $x_2 \preceq w$, we have $x_1 \vee x_2 \preceq w$ where $x_1 \vee x_2$ is the lcm of $x_1$ and $x_2$ in $([1,\lambda^k],\preceq)$ given in Proposition \ref{PropLCMofGenIn1lambdak}. Write $w = (x_1 \vee x_2) w'$. Also, write $\mathbf{x}_1 \vee \mathbf{x}_2 = \mathbf{x}_1 \mathbf{v}_1$ and $\mathbf{x}_1 \vee \mathbf{x}_2 = \mathbf{x}_2 \mathbf{v}_2$ where we can check that $\mathbf{x}_1 \mathbf{v}_1 \overset{\mathcal{B}}{\leadsto} \mathbf{x}_2 \mathbf{v}_2$ for all possible cases for $\mathbf{x}_1$ and $\mathbf{x}_2$. All the words $\mathbf{x}_1\mathbf{w'}_1$, $\mathbf{x}_2\mathbf{w'}_2$, $\mathbf{x}_1\mathbf{v}_1\mathbf{w'}$, and $\mathbf{x}_2\mathbf{v}_2\mathbf{w'}$ represent $w$. In particular, $\mathbf{x}_1\mathbf{w'}_1$ and $\mathbf{x}_1\mathbf{v}_1\mathbf{w'}$ represent $w$. Hence $w'_1 = v_1w'$ and by the induction hypothesis, we have $\mathbf{w'}_1 \overset{\mathcal{B}}{\leadsto} \mathbf{v}_1\mathbf{w'}$. Thus, we have
\begin{center} $\mathbf{x}_1\mathbf{w'}_1 \overset{\mathcal{B}}{\leadsto} \mathbf{x}_1\mathbf{v}_1\mathbf{w'}$. \end{center}
Similarly, since $\mathbf{x}_2\mathbf{w'}_2$ and $\mathbf{x}_2\mathbf{v}_2\mathbf{w'}$ represent $w$, we get
\begin{center} $\mathbf{x}_2\mathbf{v}_2\mathbf{w'} \overset{\mathcal{B}}{\leadsto} \mathbf{x}_2\mathbf{w'}_2$. \end{center}
Since $\mathbf{x}_1 \mathbf{v}_1 \overset{\mathcal{B}}{\leadsto} \mathbf{x}_2 \mathbf{v}_2$, we have
\begin{center} $\mathbf{x}_1\mathbf{v}_1\mathbf{w'} \overset{\mathcal{B}}{\leadsto} \mathbf{x}_2\mathbf{v}_2\mathbf{w'}$. \end{center} We obtain:
\begin{center}$\mathbf{w}_1 \overset{\mathcal{B}}{\leadsto} \mathbf{x}_1\mathbf{w'}_1 \overset{\mathcal{B}}{\leadsto}  \mathbf{x}_1\mathbf{v}_1\mathbf{w'} \overset{\mathcal{B}}{\leadsto} \mathbf{x}_2\mathbf{v}_2\mathbf{w'} \overset{\mathcal{B}}{\leadsto} \mathbf{x}_2\mathbf{w'}_2 \overset{\mathcal{B}}{\leadsto} \mathbf{w}_2$.\end{center}
Hence $\mathbf{w}_1 \overset{\mathcal{B}}{\leadsto} \mathbf{w}_2$ and we are done.

\end{proof}

\begin{remark}

In the case of the Coxeter group $G(2,2,n)$ of type $D_n$, we have $[1,\lambda] = G(2,2,n)$, where $\lambda$ is the unique balanced element of maximal length of $G(2,2,n)$, see Remarks \ref{RemUniqueMaximalG(2,2,n)} and \ref{RemDivisorsLambdaG(2,2,n)}. Hence the previous result is valid for $G(2,2,n)$. This is Matsumoto's property for the case of $G(2,2,n)$, see \cite{MatsumotoTheorem}.

\end{remark}

%\begin{definition}

%Let $M$ be a monoid defined by a presentation with generators $S$ and relations of the form $\omega_1 = \omega_2$. Let $M'$ be any other monoid.\\
%For all $f: S \longrightarrow M'$ defined by $s' = f(s)$ which implies a map $f^{*}: S^{*} \longrightarrow M'$, if $f^{*}(\omega_1) = f^{*}(\omega_2)$ in $M'$ then there exists a unique homomorphism $f': M \longrightarrow M'$.

%\end{definition}

By the following proposition, we provide an alternative presentation for the interval monoid $M([1,\lambda^k])$ given in Definition \ref{DefIntervalMonoid}.

\begin{proposition}

The monoid $B^{\oplus k}(e,e,n)$ is isomorphic to $M([1,\lambda^k])$.

\end{proposition}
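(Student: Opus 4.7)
The plan is to construct mutually inverse monoid homomorphisms
\[
\Phi : B^{\oplus k}(e,e,n) \longrightarrow M([1,\lambda^k]), \qquad \Psi : M([1,\lambda^k]) \longrightarrow B^{\oplus k}(e,e,n),
\]
using the two presentations: the ``small'' presentation of $B^{\oplus k}(e,e,n)$ from Definition \ref{DefofB+keen}, and the ``big'' interval presentation of $M([1,\lambda^k])$ from Definition \ref{DefIntervalMonoid}. The map $\Phi$ will be defined on generators by $\tilde{t}_i \mapsto \underline{t_i}$ and $\tilde{s}_j \mapsto \underline{s_j}$. To see this is well defined, I would verify that each defining relation of $B^{\oplus k}(e,e,n)$ becomes an identity in $M([1,\lambda^k])$. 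This is precisely where Proposition \ref{PropLCMofGenIn1lambdak} is used: for example, both sides of $\tilde{s}_i\tilde{s}_{i+1}\tilde{s}_i = \tilde{s}_{i+1}\tilde{s}_i\tilde{s}_{i+1}$ correspond to the lcm $s_i\vee s_{i+1}\in[1,\lambda^k]$, and both sides of $\tilde{t}_i\tilde{t}_{i-k} = \tilde{t}_j\tilde{t}_{j-k}$ correspond to the common lcm $t_i\vee t_j = t_kt_0\in[1,\lambda^k]$; in each case, the equality of the underlined products in $M([1,\lambda^k])$ follows from the defining interval relations applied to the common lcm.

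For the reverse map $\Psi$, I would exploit Proposition \ref{PropMatsumoto}, which already furnishes a set-theoretic map $F:[1,\lambda^k]\to B^{\oplus k}(e,e,n)$ assigning to $w$ the image of any reduced expression for $w$. Extend $F$ to a map on the generating set $\underline{D_k}$ of $M([1,\lambda^k])$ by $\underline{w}\mapsto F(w)$. To check it descends to the quotient, I must verify each interval relation $\underline{w}=\underline{w'}\,\underline{w''}$ (with $w=w'w''$ and $\ell(w)=\ell(w')+\ell(w'')$) becomes an equality in $B^{\oplus k}(e,e,n)$: this is immediate because concatenating reduced expressions of $w'$ and $w''$ produces a reduced expression of $w$, so $F(w)=F(w')F(w'')$ by the very definition of $F$.

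Finally, I would check that $\Phi$ and $\Psi$ are mutually inverse. On generators of $B^{\oplus k}(e,e,n)$, the composite $\Psi\circ\Phi$ sends $\tilde{x}$ to $F(x)$, and since $\mathbf{x}$ itself is a length-$1$ reduced expression of $x$, $F(x)=\tilde{x}$. In the other direction, for an interval generator $\underline{w}$ with a chosen reduced expression $\mathbf{x}_1\cdots\mathbf{x}_r$, the composite $\Phi\circ\Psi$ returns $\underline{x_1}\cdots\underline{x_r}$, which equals $\underline{w}$ in $M([1,\lambda^k])$ by iterated application of the defining relations of the interval monoid (splitting off one letter at a time, using that each prefix lies in $[1,\lambda^k]$ by Lemma \ref{LemmaDivisorsForInduction}).

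The only step that is not entirely routine is the verification that each defining relation of $B^{\oplus k}(e,e,n)$ holds in $M([1,\lambda^k])$: this reduces to the observation that the two sides of each braid-type relation are reduced expressions for the same element of $[1,\lambda^k]$ (namely the corresponding lcm from Proposition \ref{PropLCMofGenIn1lambdak}), so both sides collapse to the same underlined generator in $M([1,\lambda^k])$. The genuinely ``$k$-dependent'' case is the relation $\tilde{t}_i\tilde{t}_{i-k}=\tilde{t}_j\tilde{t}_{j-k}$, and it is precisely the fact that every pair $t_i,t_j$ with $i\neq j$ has common lcm $t_kt_0$ in $([1,\lambda^k],\preceq)$ (Proposition \ref{PropLCMofGenIn1lambdak}) that forces the parameter $k$ into the presentation of $B^{\oplus k}(e,e,n)$.
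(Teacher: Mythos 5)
Your proposal is correct and follows essentially the same route as the paper: one homomorphism $\underline{w}\mapsto F(w)$ built from the Matsumoto-type map of Proposition \ref{PropMatsumoto} (well-defined because reduced expressions concatenate along interval relations), the reverse map on generators checked against the defining relations of $B^{\oplus k}(e,e,n)$ via the lcm computations of Proposition \ref{PropLCMofGenIn1lambdak}, and mutual inverseness verified on generators. Your write-up merely makes explicit a couple of points the paper treats tersely (that both sides of each relation are reduced expressions of the corresponding lcm, hence lie in the interval), which is fine.
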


\begin{proof}

Consider the map $\rho: \underline{D_k} \longrightarrow B^{\oplus k}(e,e,n): \underline{w} \longmapsto F(w)$ where $F$ is defined in Proposition \ref{PropMatsumoto}.
Let $\underline{w} = \underline{w}' \underline{w}''$ be a defining relation of $M([1,\lambda^k])$. Since $\ell(w) = \ell(w') + \ell(w'')$, a reduced expression for $w'w''$ is obtained by concatenating reduced expressions for $w'$ and $w''$. It follows that $F(w'w'') = F(w') F(w'')$. We conclude that $\rho$ has a unique extension to a monoid homomorphism $M([1,\lambda^k]) \longrightarrow B^{\oplus k}(e,e,n)$, which we denote by the same symbol.\\
Conversely, consider the map $\rho' : \widetilde{X} \longrightarrow M([1,\lambda^k]): \widetilde{x} \longmapsto \underline{x}$. In order to prove that $\rho'$ extends to a unique monoid homomorphism $B^{\oplus k}(e,e,n) \longrightarrow M([1,\lambda^k])$, we have to check that $\underline{w_1} = \underline{w_2}$ in $M([1,\lambda^k])$ for any defining relation $\widetilde{w}_1 = \widetilde{w}_2$ of $B^{\oplus k}(e,e,n)$. Given a relation $\widetilde{w}_1 = \widetilde{w}_2 = \tilde{x}_1\tilde{x}_2 \cdots \tilde{x}_r$ of $B^{\oplus k}(e,e,n)$, we have $\boldsymbol{w_1} = \boldsymbol{w_2} = \mathbf{x}_1 \mathbf{x}_2 \cdots \mathbf{x}_r$ a reduced word over $\mathbf{X}$. On the other hand, applying repeatedly the defining relations in $M([1,\lambda^k])$ yields to $\underline{w} = \underline{x_1}\ \underline{x_2} \cdots \underline{x_r}$ if $\boldsymbol{w} = \mathbf{x}_1\mathbf{x}_2 \cdots \mathbf{x}_r$ is a reduced expression over $\mathbf{X}$. Thus, we can conclude that $\underline{w_1} = \underline{w_2}$, as desired.\\
Hence we have defined two homomorphisms $\rho: \underline{D_k} \longrightarrow B^{\oplus k}(e,e,n)$ and $\rho': \widetilde{X} \longrightarrow M([1,\lambda^k])$ such that $\rho \circ \rho' = id_{B^{\oplus k}(e,e,n)}$ and $\rho' \circ \rho = id_{M([1,\lambda^k])}$. It follows that $B^{\oplus k}(e,e,n)$ is isomorphic to $M([1,\lambda^k])$.

\end{proof}

Since $B^{\oplus k}(e,e,n)$ is isomorphic to $M([1,\lambda^k])$, we deduce that $B^{\oplus k}(e,e,n)$ is a Garside monoid and we denote by $B^{(k)}(e,e,n)$ its group of fractions.

\subsection{Identifying $B(e,e,n)$}

Now, we want to check which of the monoids $B^{\oplus k}(e,e,n)$ are isomorphic to $B^{\oplus}(e,e,n)$. Assume there exists an isomorphism $\phi : B^{\oplus k}(e,e,n) \longrightarrow  B^{\oplus}(e,e,n)$ for a given $k$ with $1 \leq k \leq e-1$. We start with the following lemma.

\begin{lemma}\label{LemmaIsomMonoids}

The isomorphism $\phi$ fixes $\tilde{s}_3, \tilde{s}_4, \cdots, \tilde{s}_n$ and permutes the $\tilde{t}_i$ where \mbox{$i \in \mathbb{Z}/e\mathbb{Z}$}.  

\end{lemma}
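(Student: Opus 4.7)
The plan is to exploit the fact that both $B^{\oplus k}(e,e,n)$ and $B^{\oplus}(e,e,n)$ are Garside monoids defined by homogeneous presentations on the very same generating alphabet $\widetilde{X}$. Consequently each monoid carries a canonical length function $\ell$ with $\ell(\tilde{x}) = 1$ for all $\tilde{x} \in \widetilde{X}$, and the atoms (indecomposable non-unit elements) of each monoid are exactly the images of $\widetilde{X}$. Since the isomorphism $\phi$ preserves the length function (read off from the common abelianization $\mathbb{Z}$ where every atom maps to $1$) together with the divisibility order, it must restrict to a bijection $\phi : \widetilde{X} \to \widetilde{X}$.

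Next I would consider, on each side, the symmetric integer matrix $M(a,b) := \ell(a \vee_r b)$ defined on pairs of distinct atoms, where $a \vee_r b$ is the right-lcm (which exists, because any two atoms divide the Garside element). A direct inspection of the defining relations shows that $M$ takes the same values in both monoids: $M(\tilde{s}_j,\tilde{s}_{j'}) = 3$ when $|j-j'|=1$ and $2$ when $|j-j'|>1$; $M(\tilde{s}_3,\tilde{t}_i) = 3$ and $M(\tilde{s}_j,\tilde{t}_i) = 2$ for $j \geq 4$; and $M(\tilde{t}_i,\tilde{t}_{i'}) = 2$ for $i \neq i'$, the common length-$2$ multiple being $\tilde{t}_k \tilde{t}_0$ in $B^{\oplus k}(e,e,n)$, respectively $\tilde{t}_1 \tilde{t}_0$ in $B^{\oplus}(e,e,n)$. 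Thus $\phi$ is an automorphism of this common weighted-graph invariant. The vertex $\tilde{s}_3$ is then singled out as the unique atom possessing $e + 1 \geq 3$ weight-$3$ neighbors, so $\phi(\tilde{s}_3) = \tilde{s}_3$. Its weight-$3$ neighborhood $\{\tilde{s}_4,\tilde{t}_0,\ldots,\tilde{t}_{e-1}\}$ is preserved, and for $n \geq 5$ the atom $\tilde{s}_4$ is distinguished within it as the unique element carrying a further weight-$3$ edge (to $\tilde{s}_5$); proceeding inductively along the resulting path $\tilde{s}_3 - \tilde{s}_4 - \cdots - \tilde{s}_n$ fixes every $\tilde{s}_j$, after which $\phi$ must permute the remaining atoms $\tilde{t}_0, \ldots, \tilde{t}_{e-1}$.

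The main obstacle is the low-rank boundary case $n = 4$, where $\tilde{s}_4$ and each $\tilde{t}_i$ share the same $M$-profile (all have $\tilde{s}_3$ as their unique weight-$3$ neighbor). To resolve it I would invoke a finer intrinsic invariant, namely the number of commutation neighbors of an atom $a$ (i.e.\ atoms $b \neq a$ with $ab = ba$, equivalently with $a \vee_r b = ab$), which is also preserved by $\phi$. Since $\tilde{s}_4$ commutes with each of the $e$ generators $\tilde{t}_i$, while every $\tilde{t}_i$ commutes with $\tilde{s}_4$ and with at most two other $\tilde{t}_j$'s (and only when $2k \equiv 0 \pmod{e}$), the two counts differ as soon as $e \geq 3$, forcing $\phi(\tilde{s}_4) = \tilde{s}_4$. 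The leftover case $e = 2$ is vacuous since the hypothesis $1 \leq k \leq e - 1$ then forces $k = 1$ and $\phi$ is the identity. In short, the combinatorial argument via the weighted invariant $M$ handles the generic situation, and a commutation-count refinement disposes of the remaining small-rank case.
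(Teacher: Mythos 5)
Your proposal is correct in substance but takes a genuinely different route from the paper's. After the common first step (length forces $\phi$ to send generators to generators; you extract this from the abelianization and the characterization of atoms, the paper from $\ell(\tilde{x})\leq\ell(\phi(\tilde{x}))\leq\ell(\tilde{x})$), the paper works with commutation alone: it singles out $\tilde{s}_n$ as the unique generator commuting with all others except one, then recovers $\tilde{s}_{n-1},\dots,\tilde{s}_3$ by walking up the chain, leaving the $\tilde{t}_i$ to be permuted. You instead single out $\tilde{s}_3$ first, using the lengths of pairwise lcms of atoms as a $\phi$-invariant weighted graph (note that these lcm values are not quite ``direct inspection of the relations''---they are exactly Propositions \ref{PropLCMofGenIn1lambdak} and \ref{PropLCMofGenRight}, which you may legitimately cite), and then walk down the chain $\tilde{s}_3-\tilde{s}_4-\cdots-\tilde{s}_n$, with a commutation-count tiebreaker at $n=4$. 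What your route buys: it works verbatim at $n=3$, where the paper's opening claim about $\tilde{s}_n=\tilde{s}_3$ (``commutes with all other generators except one'') is literally false, since $\tilde{s}_3$ commutes with none of the $\tilde{t}_i$.

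Two caveats. First, the parenthetical ``equivalently with $a\vee_r b=ab$'' is not a correct characterization of commutation: for instance $\tilde{t}_i\vee\tilde{t}_{i-k}=\tilde{t}_i\tilde{t}_{i-k}$ although $\tilde{t}_i$ and $\tilde{t}_{i-k}$ do not commute unless $2k\equiv 0 \pmod e$; this is harmless, since commutation of atoms is preserved by any monoid isomorphism directly, but the remark should be dropped. Second, your dismissal of $e=2$ is not justified: $k=1$ makes the two monoids equal, but $\phi$ is then an arbitrary automorphism of the Artin--Tits monoid of type $D_n$, not the identity. For $e=2$ with $n=3$ or $n\geq 5$ your main argument already covers the case, so nothing is lost; but for $e=2$, $n=4$ the triality diagram automorphism of $D_4$ sends $\tilde{s}_4$ to $\tilde{t}_0$, so no argument can close that corner---the statement itself fails there, and the paper's own proof (whose uniqueness claim for $\tilde{s}_n$ likewise breaks down at $e=2$) shares the same blind spot. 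Since the lemma only feeds into Proposition \ref{PropIsomwithMonoidCP}, whose conclusion is trivial when $e=2$, this corner is immaterial to the application, but you should flag it honestly rather than assert that $\phi$ is the identity.
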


\begin{proof}

Let $f$ be in $\widetilde{X}^{*}$. We have $\boldsymbol{\ell}(f) \leq \boldsymbol{\ell}(\phi(f))$. Thus, we have $\boldsymbol{\ell}(\tilde{x}) \leq \boldsymbol{\ell}(\phi(\tilde{x}))$ for $\tilde{x} \in \widetilde{X}$. Also, $\boldsymbol{\ell}(\phi(\tilde{x})) \leq \boldsymbol{\ell}(\phi^{-1}(\phi(\tilde{x}))) = \boldsymbol{\ell}(x)$. Hence $\boldsymbol{\ell}(\tilde{x}) = \boldsymbol{\ell}(\phi(\tilde{x})) = 1$. It follows that $\phi$ maps generators to generators, that is $\phi(\tilde{x}) \in \{\tilde{t}_0, \tilde{t}_1, \cdots, \tilde{t}_{e-1},\tilde{s}_3, \cdots, \tilde{s}_n\}$.

Furthermore, the only generator of  $B^{\oplus k}(e,e,n)$ that commutes with all other generators except for one of them is $\tilde{s}_{n}$. On the other hand, $\tilde{s}_n$ is the only generator of $B^{\oplus}(e,e,n)$ that satisfies the latter property. Hence $\phi(\tilde{s}_{n}) = \tilde{s}_n$. Next, $\tilde{s}_{n-1}$ is the only generator of $B^{\oplus k}(e,e,n)$ that does not commute with $\tilde{s}_{n}$. The only generator of $B^{\oplus}(e,e,n)$ that does not commute with $\tilde{s}_n$ is also $\tilde{s}_{n-1}$. Hence $\phi(\tilde{s}_{n-1}) = \tilde{s}_{n-1}$. Next, the only generator of  $B^{\oplus k}(e,e,n)$ different from $\tilde{s}_{n}$ and that does not commute with $\tilde{s}_{n-1}$ is $\tilde{s}_{n-2}$. And so on, we get $\phi(\tilde{s}_{j}) = \tilde{s}_j$ for $3 \leq j \leq n$. It remains that $\phi(\{ \tilde{t}_{i}\ |\ 0 \leq i \leq e-1 \}) = \{ \tilde{t}_i\ |\ 0 \leq i \leq e-1 \}$.

\end{proof}

\begin{proposition}\label{PropIsomwithMonoidCP}

The monoids $B^{\oplus k}(e,e,n)$ and $B^{\oplus}(e,e,n)$ are isomorphic if and only if $k \wedge e = 1$.

\end{proposition}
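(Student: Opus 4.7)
The plan is to prove the two implications separately.

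For the "if" direction, assuming $k \wedge e = 1$, let $k'$ denote the multiplicative inverse of $k$ modulo $e$. I would define monoid homomorphisms $\phi \colon B^{\oplus}(e,e,n) \to B^{\oplus k}(e,e,n)$ and $\psi \colon B^{\oplus k}(e,e,n) \to B^{\oplus}(e,e,n)$ by $\phi(\tilde{t}_i) = \tilde{t}_{ki}$, $\psi(\tilde{t}_i) = \tilde{t}_{k'i}$ (indices reduced modulo $e$), and $\phi(\tilde{s}_j) = \psi(\tilde{s}_j) = \tilde{s}_j$. The nontrivial check is that the relation $\tilde{t}_i \tilde{t}_{i-1} = \tilde{t}_j \tilde{t}_{j-1}$ of $B^{\oplus}(e,e,n)$ is sent by $\phi$ to $\tilde{t}_{ki} \tilde{t}_{ki-k} = \tilde{t}_{kj} \tilde{t}_{kj-k}$, which is indeed a defining relation of $B^{\oplus k}(e,e,n)$, and symmetrically that the relation $\tilde{t}_a \tilde{t}_{a-k} = \tilde{t}_b \tilde{t}_{b-k}$ is sent by $\psi$ to $\tilde{t}_{k'a} \tilde{t}_{k'a-1} = \tilde{t}_{k'b} \tilde{t}_{k'b-1}$ using $k'k \equiv 1 \pmod e$. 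The compositions $\psi \circ \phi$ and $\phi \circ \psi$ fix the generators and so both maps are mutually inverse isomorphisms.

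For the "only if" direction, assume that $\phi \colon B^{\oplus k}(e,e,n) \to B^{\oplus}(e,e,n)$ is an isomorphism. By Lemma \ref{LemmaIsomMonoids}, $\phi$ fixes each $\tilde{s}_j$ and induces a permutation $\sigma$ of $\mathbb{Z}/e\mathbb{Z}$ with $\phi(\tilde{t}_i) = \tilde{t}_{\sigma(i)}$. The defining relations $\tilde{t}_a \tilde{t}_{a-k} = \tilde{t}_b \tilde{t}_{b-k}$ of $B^{\oplus k}(e,e,n)$ force the equalities $\tilde{t}_{\sigma(a)} \tilde{t}_{\sigma(a-k)} = \tilde{t}_{\sigma(b)} \tilde{t}_{\sigma(b-k)}$ to hold in $B^{\oplus}(e,e,n)$ for every $a, b \in \mathbb{Z}/e\mathbb{Z}$. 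The key technical step is to establish the following length-two rigidity statement: in $B^{\oplus}(e,e,n)$, two length-two words $\tilde{t}_a \tilde{t}_b$ and $\tilde{t}_c \tilde{t}_d$ are equal if and only if either $(a,b)=(c,d)$ or $a - b \equiv c - d \equiv 1 \pmod e$. This would follow from the observation that every defining relation of $B^{\oplus}(e,e,n)$ preserves total word length, so the equivalence class of a length-two word stays inside the length-two words, combined with the fact that the only defining relation between length-two words using exclusively $\tilde{t}$-letters is $\tilde{t}_i \tilde{t}_{i-1} = \tilde{t}_j \tilde{t}_{j-1}$. Since $\sigma$ is a bijection and $k \not\equiv 0 \pmod e$, the pairs $(\sigma(a), \sigma(a-k))$ are pairwise distinct as $a$ varies, and the rigidity statement yields $\sigma(a) - \sigma(a-k) \equiv 1 \pmod e$ for every $a$.

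Iterating this congruence along the arithmetic progression $a, a-k, a-2k, \ldots$ gives $\sigma(a) - \sigma(a - jk) \equiv j \pmod e$ for every $j \geq 0$. Setting $d = k \wedge e$ and $m = e/d$ so that $mk \equiv 0 \pmod e$, substitution of $j = m$ yields $0 = \sigma(a) - \sigma(a) \equiv m \pmod e$, which forces $e \mid m = e/d$, hence $d = 1$, as required. The main obstacle is the length-two rigidity claim; I expect it to be a direct consequence of length-homogeneity of the presentation, but the Garside structure of $B^{\oplus}(e,e,n)$ supplied by Theorem \ref{TheoremIntervalStructure} provides a safety net to rule out any unexpected identifications of length-two words.
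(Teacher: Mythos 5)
Your proof is correct and follows essentially the same route as the paper: the ``if'' direction uses the same map $\tilde{t}_i \mapsto \tilde{t}_{ik}$ (you just make the inverse via $k'$ explicit), and the ``only if'' direction combines Lemma \ref{LemmaIsomMonoids} with the observation that the only length-two relations among the $\tilde{t}$-generators are the chain relations, exactly as in the paper, merely transporting the relations in the opposite direction and phrasing the final counting as a congruence rather than as the orbit $\{a_0+ik\}$ filling $\mathbb{Z}/e\mathbb{Z}$. Your explicit ``length-two rigidity'' statement is a sound way to make precise the step the paper states as ``the only relations of this type.''
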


\begin{proof}

Assume there exists an isomorphism $\phi$ between the monoids $B^{\oplus k}(e,e,n)$ and $B^{\oplus}(e,e,n)$. By Lemma \ref{LemmaIsomMonoids}, we have $\phi(\tilde{s}_{j}) = \tilde{s}_j$ for $3 \leq j \leq n$ and \mbox{$\phi(\{ \tilde{t}_{i}\ |\ 0 \leq i \leq e-1 \})$} $= \{ \widetilde{t}_i\ |\ 0 \leq i \leq e-1 \}$. Write $\tilde{t}_{a_i}$ for the generator of $B^{\oplus k}(e,e,n)$ for which $\phi(\tilde{t}_{a_i}) = \tilde{t}_i$. The existence of the isomorphism implies $\tilde{t}_{a_{i+1}}\tilde{t}_{a_i} = \tilde{t}_{a_1}\tilde{t}_{a_0}$ for each $i \in \mathbb{Z}/e\mathbb{Z}$. But the only relations of this type in $B^{\oplus k}(e,e,n)$ are $\tilde{t}_{a_{i}+k}\tilde{t}_{a_i} = \tilde{t}_{a_0 + k}\tilde{t}_{a_0}$. Thus, $a_1 = a_0+k$, $a_2 = a_0+2k$, $\cdots$, $a_{e-1} = a_0 + (e-1)k$. Since $\phi$ is bijective on the $\tilde{t}_i$-type generators, $\left\{ a_0 + ik\ |\ i \in \mathbb{Z}/e\mathbb{Z} \right\} = \mathbb{Z}/e\mathbb{Z}$, so $k \wedge e = 1$.

Conversely, let $1 \leq k \leq e-1$ such that $k \wedge e = 1$. We define a map $\phi : B^{\oplus}(e,e,n) \longrightarrow B^{\oplus k}(e,e,n)$ where $\phi(\tilde{s}_j) = \tilde{s}_{j}$ for $3 \leq j \leq n$ and $\phi(\tilde{t}_i) = \phi(\tilde{t}_{ik})$, that is $\phi(\tilde{t}_0) = \tilde{t}_{0}$, $\phi(\tilde{t}_1) = \tilde{t}_{k}$, $\phi(\tilde{t}_2) = \tilde{t}_{2k}$, $\cdots$, $\phi(\tilde{t}_{e-1}) = \widetilde{t}_{(e-1)k}$. The map $\phi$ is a well-defined monoid homomorphism, which is both surjective (as it sends a generator of $B^{\oplus}(e,e,n)$ to a generator of $B^{\oplus k}(e,e,n)$) and injective (as it is bijective on the relations). Hence $\phi$ defines an isomorphism of monoids.

\end{proof}

When $k \wedge e = 1$, since $B^{\oplus k}(e,e,n)$ is isomorphic to $B^{\oplus}(e,e,n)$, we have the following.

\begin{cor}\label{CorIsomGrFractBraidGroup}

$B^{(k)}(e,e,n)$ is isomorphic to the complex braid group $B(e,e,n)$ for $k \wedge e = 1$.

\end{cor}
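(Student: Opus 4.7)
The plan is to reduce this corollary directly to the monoid isomorphism already established in Proposition \ref{PropIsomwithMonoidCP}, by invoking the universal property of groups of fractions. Since $B^{\oplus k}(e,e,n)$ is a Garside monoid (Theorem \ref{TheoremIntervalStructure} combined with the isomorphism $B^{\oplus k}(e,e,n) \simeq M([1,\lambda^k])$), it is in particular cancellative with common right multiples, so Ore's conditions are satisfied. Hence $B^{\oplus k}(e,e,n)$ embeds into its group of fractions $B^{(k)}(e,e,n)$, and the pair satisfies the standard universal property: any monoid homomorphism from $B^{\oplus k}(e,e,n)$ into a group factors uniquely through $B^{(k)}(e,e,n)$. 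The same holds for $B^{\oplus}(e,e,n)$ with its group of fractions, which by Remark \ref{RemarkPresB(e,e,n)} and the results of Corran--Picantin is precisely the complex braid group $B(e,e,n)$.

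First I would assume $k \wedge e = 1$, and invoke Proposition \ref{PropIsomwithMonoidCP} to obtain the monoid isomorphism $\phi : B^{\oplus}(e,e,n) \longrightarrow B^{\oplus k}(e,e,n)$ with $\phi(\tilde{s}_j) = \tilde{s}_j$ for $3 \leq j \leq n$ and $\phi(\tilde{t}_i) = \tilde{t}_{ik}$ (indices mod $e$). Composing with the canonical inclusion $B^{\oplus k}(e,e,n) \hookrightarrow B^{(k)}(e,e,n)$ yields a monoid morphism into a group; by the universal property it extends uniquely to a group homomorphism $\widetilde{\phi} : B(e,e,n) \longrightarrow B^{(k)}(e,e,n)$. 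Applying the same construction to $\phi^{-1}$, one obtains a group homomorphism $\widetilde{\phi^{-1}} : B^{(k)}(e,e,n) \longrightarrow B(e,e,n)$.

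The final step is to check that $\widetilde{\phi}$ and $\widetilde{\phi^{-1}}$ are mutually inverse. This is a routine verification on generators: both composites agree with the identity on the embedded monoid $B^{\oplus}(e,e,n)$ (resp.\ $B^{\oplus k}(e,e,n)$), and since these monoids generate the corresponding groups of fractions as groups, the composites equal the identity globally. This yields the desired group isomorphism $B^{(k)}(e,e,n) \simeq B(e,e,n)$.

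No serious obstacle is expected here: all the work has been done in establishing Proposition \ref{PropIsomwithMonoidCP} and in verifying that $B^{\oplus k}(e,e,n)$ is Garside (hence Ore). The only point requiring mild care is the bookkeeping that the isomorphism $\phi$ genuinely matches the defining relations of $B^{\oplus k}(e,e,n)$ under the permutation $i \mapsto ik \pmod e$, which is exactly where the hypothesis $k \wedge e = 1$ is used, since it guarantees that $\{ik \mid i \in \mathbb{Z}/e\mathbb{Z}\} = \mathbb{Z}/e\mathbb{Z}$ so that $\phi$ is bijective on the $\tilde{t}$-generators.
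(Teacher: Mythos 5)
Your proposal is correct and follows essentially the same route as the paper: the isomorphism of Garside monoids from Proposition \ref{PropIsomwithMonoidCP} immediately induces an isomorphism of their groups of fractions, the latter being $B^{(k)}(e,e,n)$ and (by the Corran--Picantin result recalled in Remark \ref{RemarkPresB(e,e,n)}) the complex braid group $B(e,e,n)$. The paper treats this as an immediate consequence, while you spell out the universal-property bookkeeping, but the argument is the same.
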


The reason that the proof of Proposition \ref{PropIsomwithMonoidCP} fails in the case $k \wedge e \neq 1$ is that we have more than one connected component in $\Gamma_k$ that link $\tilde{t}_0$, $\tilde{t}_1$, $\cdots$, and $\tilde{t}_{e-1}$ together, as we can see in Figure \ref{PresofB2882}. Actually, it is easy to check that the number of connected components that link $\tilde{t}_0$, $\tilde{t}_1$, $\cdots$, and $\tilde{t}_{e-1}$ together is the number of cosets of the subgroup of $\mathbb{Z}/e\mathbb{Z}$ generated by the class of $k$, that is equal to $k \wedge e$, and each of these cosets have $e'=e/k \wedge e$ elements. This will be useful in the next subsection.

\subsection{New Garside structures}

When $k \wedge e \neq 1$, we describe $B^{(k)}(e,e,n)$ as an amalgamated product of $k \wedge e$ copies of the complex braid group $B(e',e',n)$ with $e'=e/e \wedge k$, over a common subgroup which is the Artin-Tits group $B(2,1,n-1)$. This allows us to compute the center of $B^{(k)}(e,e,n)$. Finally, using the Garside structure of $B^{(k)}(e,e,n)$, we compute its first and second integral homology groups using the Dehornoy-Lafont complex \cite{DehLafHomologyofGaussianGroups} and the method used in \cite{CalMarHomologyComputations}.\\

By an adaptation of the results of Crisp \cite{CrispInjectivemaps} as in Lemma 5.2 of \cite{CalMarHomologyComputations}, we have the following embedding. Let $B:=B(2,1,n-1)$ be the Artin-Tits group defined by the following diagram presentation.

\begin{figure}[H]
\begin{center}
\begin{tikzpicture}

\node[draw, shape=circle, label=above:$q_1$] (1) at (0,0) {};
\node[draw, shape=circle, label=above:$q_2$] (2) at (1.5,0) {};
\node[draw, shape=circle, label=above:$q_3$] (3) at (3,0) {};
\node[draw, shape=circle, label=above:$q_{n-2}$] (n-2) at (7,0) {};
\node[draw, shape=circle, label=above:$q_{n-1}$] (n-1) at (8.5,0) {};

\draw[double,thick,-] (1) to (2);
\draw[thick,-] (2) to (3);
\draw[thick,-] (n-2) to (n-1);
\draw[thick,dashed] (3) to (n-2);

\end{tikzpicture}
\end{center}
\end{figure}

\begin{proposition}

The group $B$ injects in $B^{(k)}(e,e,n)$.

\end{proposition}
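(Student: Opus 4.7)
The plan is to adapt Crisp's LCM\nobreakdash-homomorphism technique to construct an explicit injective homomorphism $\phi: B \to B^{(k)}(e,e,n)$. Set $\tau := \tilde{t}_0 \tilde{t}_{-k}$; by the defining relation $\tilde{t}_i \tilde{t}_{i-k} = \tilde{t}_j \tilde{t}_{j-k}$, one has $\tau = \tilde{t}_i \tilde{t}_{i-k}$ for every $i \in \mathbb{Z}/e\mathbb{Z}$, and this independence from $i$ will be used repeatedly. Define $\phi$ on the standard generators of $B$ by
$$\phi(q_1) := \tau, \qquad \phi(q_j) := \tilde{s}_{j+1} \text{ for } 2 \leq j \leq n-1.$$

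The first step is to verify that $\phi$ respects the defining relations of $B$. The commutations $\phi(q_1)\phi(q_j) = \phi(q_j)\phi(q_1)$ for $j \geq 3$ follow at once from $\tilde{s}_{j+1}\tilde{t}_i = \tilde{t}_i \tilde{s}_{j+1}$ (valid since $j+1 \geq 4$), and the type-$A$ relations among $\phi(q_2), \ldots, \phi(q_{n-1})$ are immediate translations of the corresponding relations among the $\tilde{s}_i$. The only nontrivial identity is the length-$4$ braid relation
$$\tau \tilde{s}_3 \tau \tilde{s}_3 \;=\; \tilde{s}_3 \tau \tilde{s}_3 \tau,$$
which I would prove by expanding $\tau = \tilde{t}_0 \tilde{t}_{-k}$, applying the mixed relation $\tilde{s}_3 \tilde{t}_i \tilde{s}_3 = \tilde{t}_i \tilde{s}_3 \tilde{t}_i$ one factor at a time, and using the reindexing $\tilde{t}_0 \tilde{t}_{-k} = \tilde{t}_i \tilde{t}_{i-k}$ at the middle of the computation to match the two sides.

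The second step is injectivity, which is the main obstacle. The strategy, following Crisp \cite{CrispInjectivemaps} and its Garside\nobreakdash-theoretic adaptation given in Lemma~5.2 of \cite{CalMarHomologyComputations}, is to show that $\phi$ is an \emph{LCM\nobreakdash-homomorphism} with respect to the Garside structures on the Artin-Tits monoid $B^+(2,1,n-1)$ and on the interval monoid $M([1,\lambda^k]) \cong B^{\oplus k}(e,e,n)$. Concretely, I would first check that each $\phi(q_j)$ actually lies in the image of the simples $\underline{D_k}$ of $B^{\oplus k}(e,e,n)$: for $j \geq 2$ this is trivial since $\tilde{s}_{j+1}$ is itself a simple, and for $j=1$ one uses Proposition \ref{PropLCMofGenIn1lambdak} together with Proposition \ref{PropCaractD} to identify $\tau$ as the simple associated with $t_0 \vee t_{-k} \in [1,\lambda^k]$. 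Then, for every pair $(q_i, q_j)$, one must verify that
$$\phi\bigl(q_i \vee_{B^+} q_j\bigr) \;=\; \phi(q_i) \vee \phi(q_j),$$
the right-hand LCM being computed in $M([1,\lambda^k])$. For the pairs $(q_i, q_j)$ with $i, j \geq 2$ this is automatic from the type-$A$ subdiagram, and the only real content lies in the pair $(q_1, q_2)$, where the length-$4$ relation established in the first step exhibits $\tau \tilde{s}_3 \tau \tilde{s}_3 = \tilde{s}_3 \tau \tilde{s}_3 \tau$ as a common multiple; the task is to show it is the least, by checking that no proper left divisor of this element is a common multiple of $\tau$ and $\tilde{s}_3$. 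This can be done directly within the interval $[1,\lambda^k]$ using the combinatorial characterisation of its divisors from Propositions \ref{PropCaractD} and \ref{PropZ'=1orZetae}, together with the length comparisons of Proposition \ref{PropLengthdecreas}.

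Once LCM\nobreakdash-compatibility is established, Crisp's criterion yields that the induced monoid homomorphism $B^+(2,1,n-1) \to B^{\oplus k}(e,e,n)$ is injective; since both monoids embed in their respective groups of fractions (Ore's condition in a Garside monoid), this injectivity transports to an injective homomorphism $B \hookrightarrow B^{(k)}(e,e,n)$, completing the proof. The delicate part, as indicated, is the explicit LCM computation for the pair $(\tau, \tilde{s}_3)$, since here the $t$-type and $s$-type generators genuinely interact through the nonstandard relation $\tilde{t}_i \tilde{t}_{i-k} = \tilde{t}_j \tilde{t}_{j-k}$ that produces $B^{(k)}(e,e,n)$ in the first place.
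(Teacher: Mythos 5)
Your proposal is correct and follows essentially the same route as the paper: the same assignment $q_1 \mapsto \tilde{t}_i\tilde{t}_{i-k}$, $q_j \mapsto \tilde{s}_{j+1}$, verification of the LCM-compatibility $\phi(\mathrm{lcm}(x,y)) = \mathrm{lcm}(\phi(x),\phi(y))$ on pairs of generators, and an appeal to Lemma 5.2 of \cite{CalMarHomologyComputations} (Crisp's technique) to conclude injectivity. The only difference is that you spell out the well-definedness and the lcm checks that the paper dismisses as ``easy to check.''
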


\begin{proof}

Define a monoid homomorphism $\phi : B^{+} \longrightarrow B^{\oplus k}(e,e,n) : q_{1} \longmapsto \tilde{t}_i\tilde{t}_{i-k}$, \mbox{$q_2 \longmapsto \tilde{s}_3$}, $\cdots$, $q_{n-1} \longmapsto \tilde{s}_n$. It is easy to check that for all $x, y \in \{q_1, q_2, \cdots, q_{n-1}\}$, we have $lcm(\phi(x), \phi(y))=\phi(lcm(x,y))$. Hence by applying Lemma 5.2 of \cite{CalMarHomologyComputations}, \mbox{$B(2,1,n-1)$} injects in $B^{(k)}(e,e,n)$.

\end{proof}

We construct $B^{(k)}(e,e,n)$ as follows:

\begin{proposition}\label{PropBkeenAmalgamatedProduct}

Let $B(1) := B(e',e',n)$ where $e'=e/e \wedge k$. Inductively, define $B(i+1)$ to be the amalgamated product $B(i+1) := B(i) *_{B} B(e',e',n)$ over $B = B(2,1,n-1)$. Then we have $B^{(k)}(e,e,n)$ is isomorphic to $B(e \wedge k)$.

\end{proposition}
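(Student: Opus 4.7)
The plan is to show that the defining presentation of $B^{(k)}(e,e,n)$ assembles naturally into the iterated amalgamated product on the right-hand side. Let $d := e \wedge k$ and $e' = e/d$. First I would partition $\mathbb{Z}/e\mathbb{Z}$ into the $d$ orbits $O_0, O_1, \ldots, O_{d-1}$ of translation by $k$ (equivalently, the cosets of $\langle k \rangle$ in $\mathbb{Z}/e\mathbb{Z}$), each of cardinality $e'$. Observe that the relations $\tilde{t}_i \tilde{t}_{i-k} = \tilde{t}_j \tilde{t}_{j-k}$ of $B^{(k)}(e,e,n)$ split into two kinds: within a single orbit $O_j$, they link $\tilde{t}_i$ and $\tilde{t}_{i-k}$ by the ``circular'' relations of Corran--Picantin type; across two different orbits, they only force the element $q := \tilde{t}_i \tilde{t}_{i-k}$ to be independent of $i$.

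Next, for each orbit $O_j$, let $A_j$ denote the subgroup of $B^{(k)}(e,e,n)$ generated by $\{\tilde{t}_i : i \in O_j\} \cup \{\tilde{s}_3, \ldots, \tilde{s}_n\}$. I would relabel the orbit $O_j = \{i_0, i_0 - k, i_0 - 2k, \ldots, i_0 - (e'-1)k\}$ as $\{0, 1, \ldots, e'-1\}$ and check that the relations among these generators extracted from the defining presentation are exactly those of the Corran--Picantin presentation of $B(e',e',n)$ given in Remark~\ref{RemarkPresB(e,e,n)}; this yields a surjection $B(e',e',n) \twoheadrightarrow A_j$, and injectivity will follow from the Crisp-style argument used just before the statement (the same monoid homomorphism $\phi$ restricts to an embedding on each orbit's subgroup because the lcms in the interval structure behave well). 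Similarly, the subgroup $C$ generated by $q$ and $\tilde{s}_3, \ldots, \tilde{s}_n$ is isomorphic to $B(2,1,n-1) = B$, which is exactly the content of the preceding proposition.

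Then I would prove the decomposition by induction on $d$. The base case $d=1$ is Corollary~\ref{CorIsomGrFractBraidGroup} (with $e' = e$). For the inductive step, I would verify that $B^{(k)}(e,e,n)$ satisfies the universal property of the amalgamated product $\bigl(\ast_{B}\bigr)_{j=0}^{d-1} A_j$: given a group $G$ with compatible homomorphisms $\psi_j : A_j \to G$ agreeing on $C$, the element $q$ is sent to a single well-defined element $\psi(q) \in G$, so the collection $\{\psi_j(\tilde{t}_i)\}$ together with the common images $\{\psi_j(\tilde{s}_\ell)\}$ satisfies all defining relations of $B^{(k)}(e,e,n)$; this produces the required unique extension $B^{(k)}(e,e,n) \to G$. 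Conversely, the inclusions $A_j \hookrightarrow B^{(k)}(e,e,n)$ clearly agree on $C$, giving the map in the other direction. Since iterated amalgamation over a common subgroup is associative, this yields $B^{(k)}(e,e,n) \cong B(d)$.

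The main obstacle will be step two, namely justifying that the presentation-level map $B(e',e',n) \to A_j$ is an isomorphism rather than merely a surjection. The naive comparison of presentations only gives a surjection; injectivity requires knowing that no extra relations are forced on the orbit-subgroup by the global relations of $B^{(k)}(e,e,n)$. I would handle this by mimicking the Crisp/parabolic embedding lemma (Lemma 5.2 of \cite{CalMarHomologyComputations}) used immediately before this proposition: construct an explicit monoid homomorphism from the Corran--Picantin Garside monoid of $B(e',e',n)$ into $B^{\oplus k}(e,e,n)$ that preserves lcms, which then lifts to an embedding of groups of fractions. Once these embeddings are in place, the amalgamated product description is essentially a bookkeeping exercise on the presentation.
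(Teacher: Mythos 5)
Your proposal is correct and is essentially the paper's argument: the paper proves the proposition by observing that the presentation of $B^{(k)}(e,e,n)$ in Definition \ref{DefofB+keen} is precisely the standard presentation of the iterated amalgamated product (the union of the Corran--Picantin presentations of the $e\wedge k$ copies of $B(e',e',n)$ indexed by the cosets of $\langle k\rangle$ in $\mathbb{Z}/e\mathbb{Z}$, with the $\tilde{s}_j$ and the common element $\tilde{t}_i\tilde{t}_{i-k}$ identified), citing the presentation of amalgamated products in Section 4.2 of \cite{CombinatorialGroupTheory} --- which is exactly your orbit decomposition together with the universal-property check. The one remark worth making is that the step you single out as the main obstacle (injectivity of $B(e',e',n)\to A_j$) is not actually needed: once the presentations are matched, the embedding of each factor into $B^{(k)}(e,e,n)$ is a consequence of the general theory of amalgamated products, requiring only that $B(2,1,n-1)$ embeds in the abstract factor $B(e',e',n)$ (the preceding proposition applied with $k=1$), so your Crisp-type lcm argument, though workable, can be dispensed with by phrasing the universal property in terms of the abstract copies of $B(e',e',n)$ rather than their images.
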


\begin{proof}

Due to the presentation of $B^{(k)}(e,e,n)$ given in Definition \ref{DefofB+keen} and to the presentation of the amalgamated products (see Section 4.2 of \cite{CombinatorialGroupTheory}), one can deduce that $B(e \wedge k)$ is isomorphic to $B^{(k)}(e,e,n)$.

\end{proof}

\begin{figure}[H]
\begin{center}
\begin{tikzpicture}

\node[draw, shape=rectangle, label=below:{$B(e',e',n)$}] (1) at (0,0) {};
\node[draw, shape=rectangle, label=below:{$B(e',e',n)$}] (2) at (2,0) {};
\node[draw, shape=rectangle, label=below:{$B(e',e',n)$}] (3) at (-1,1) {};
\node[draw, shape=rectangle, label=right:{$B(2)$}] (4) at (1,1) {};
\node[draw, shape=rectangle, label=right:{$B(3)$}] (5) at (0,2) {};
\node[draw, shape=rectangle, label=left:{$B(e',e',n)$}] (6) at (-3,3) {};
\node[draw, shape=rectangle] (7) at (-1,3) {};
\node[draw, shape=rectangle, label=right:{$B(e \wedge k) = B^{(k)}(e,e,n)$}] (8) at (-2,4) {};

\draw[thick,-] (1) to (4);
\draw[thick,-] (2) to (4);
\draw[thick,-] (5) to (3);
\draw[thick,-] (5) to (4);
\draw[thick,dashed,-] (7) to (5);
\draw[thick,dashed,-] (-1.5,1.5) to (-2.5,2.5);
\draw[thick,-] (8) to (6);
\draw[thick,-] (8) to (7);

\end{tikzpicture}
\end{center}
\caption{The construction of $B^{(k)}(e,e,n)$.}
\end{figure}
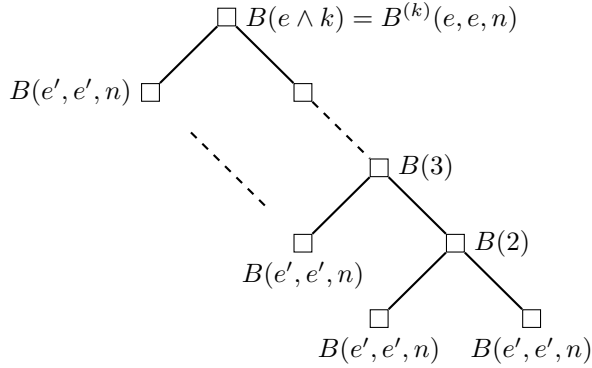

\begin{example}

Consider the case of $B^{(2)}(6,6,3)$. It is an amalgamated product of $k \wedge e = 2$ copies of $B(e',e',3)$ with $e' = e/(k \wedge e) = 3$ over the Artin-Tits group $B(2,1,2)$. Consider the following diagram of this amalgamation.\\
The presentation of $B(3,3,3) * B(3,3,3)$ over $B(2,1,2)$ is as follows:
\begin{itemize}

\item the generators are the union of the generators of the two copies of $B(3,3,3)$, 
\item the relations are the union of the relations of the two copies of $B(3,3,3)$ with the additional relations $\tilde{s}_3 = \tilde{s}'_3$ and $\tilde{t}_2\tilde{t}_0 = \tilde{t}_3\tilde{t}_1$

\end{itemize}
This is exactly the presentation of $B^{(2)}(6,6,3)$ given in Definition \ref{DefofB+keen}.

\begin{small}
\begin{center}

\begin{tabular}{lll}

\begin{tikzpicture}
\node[draw, shape=circle, label=above:$\tilde{t}_3\tilde{t}_1$] (1) at (0,0) {};
\node[draw, shape=circle, label=above:$\tilde{s}_3$] (2) at (1.5,0) {};
\draw[double,thick,-] (1) to (2);
\draw[thick,->] (0.75,-0.8) to (0.75,-1.4);
\end{tikzpicture} & \begin{tikzpicture}
\node[] () at (0,0) {};
\node[] () at (0,1.4) {$\overset{\sim}{\longrightarrow}$};
\end{tikzpicture}& \begin{tikzpicture}
\node[draw, shape=circle, label=above:$\tilde{t}_2\tilde{t}_0$] (1) at (0,0) {};
\node[draw, shape=circle, label=above:$\tilde{s}'_3$] (2) at (1.5,0) {};
\draw[double,thick,-] (1) to (2);
\draw[thick,->] (0.75,-0.8) to (0.75,-1.4);
\end{tikzpicture}\\

\begin{tikzpicture}[yscale=0.8,xscale=1]

\draw[thick,dashed] (0,0) ellipse (1cm and 0.5cm);

\node[draw, shape=circle, fill=white, label=above:$\tilde{t}_1$] (t1) at (0,-0.5) {};
\node[draw, shape=circle, fill=white, label=above:$\tilde{t}_3$] (t3) at (1,0) {};
\node[draw, shape=circle, fill=white, label=above:$\tilde{t}_5$] (t5) at (-1,0) {};

\node[draw, shape=circle, fill=white, label=below right:$\tilde{s}_3$] (s3) at (0,-1.5) {};

\draw[thick,-] (t1) to (s3);
\draw[thick,-, bend left] (t3) to (s3);
\draw[thick,-,bend right] (t5) to (s3);

\end{tikzpicture} & & \begin{tikzpicture}[yscale=0.8,xscale=1]

\draw[thick,dashed] (0,0) ellipse (1cm and 0.5cm);

\node[draw, shape=circle, fill=white, label=above:$\tilde{t}_0$] (t0) at (0,-0.5) {};
\node[draw, shape=circle, fill=white, label=above:$\tilde{t}_2$] (t2) at (1,0) {};
\node[draw, shape=circle, fill=white, label=above:$\tilde{t}_4$] (t4) at (-1,0) {};

\node[draw, shape=circle, fill=white, label=below right:$\tilde{s}'_3$] (s3) at (0,-1.5) {};

\draw[thick,-] (t0) to (s3);
\draw[thick,-, bend left] (t2) to (s3);
\draw[thick,-,bend right] (t4) to (s3);

\end{tikzpicture}\\

\end{tabular}

\begin{tikzpicture}

\node[] () at (0,0) {};
\draw[thick,->] (0,0) to (1,-0.5);
\draw[thick,->] (3.5,0) to (2.5,-0.5);

\end{tikzpicture}

$B^{(2)}(6,6,3)$

\end{center}
\end{small}

\end{example}

\begin{proposition}

The center of $B^{(k)}(e,e,n)$ is infinite cyclic isomorphic to $\mathbb{Z}$.

\end{proposition}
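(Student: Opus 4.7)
The plan is to split into two cases depending on whether $k \wedge e = 1$ or not, combining general Garside theory (which furnishes the infinite cyclic subgroup of the center) with either a direct appeal to a known result about complex braid groups (first case) or an argument based on the amalgamated product structure (second case).

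First I would establish that $Z(B^{(k)}(e,e,n))$ contains an infinite cyclic subgroup in both cases. Since $(M([1,\lambda^k]), \underline{\lambda^k})$ is a Garside monoid by Theorem \ref{TheoremIntervalStructure}, conjugation by $\underline{\lambda^k}$ restricts to an automorphism $\phi$ of the Garside monoid that permutes the finite set of simples $\underline{D_k}$. Hence $\phi$ has finite order $m$, so $\underline{\lambda^k}^m$ lies in $Z(B^{(k)}(e,e,n))$; since $\underline{\lambda^k}$ has infinite order in the Garside group, the subgroup $\langle \underline{\lambda^k}^m \rangle \cong \mathbb{Z}$ embeds in the center.

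When $k \wedge e = 1$, Corollary \ref{CorIsomGrFractBraidGroup} identifies $B^{(k)}(e,e,n)$ with the complex braid group $B(e,e,n)$, whose center is infinite cyclic by known results on irreducible complex braid groups, so we are done. For the case $k \wedge e \neq 1$, I would use the description of $B^{(k)}(e,e,n)$ as an iterated amalgamated product of copies of $B(e',e',n)$ (with $e' = e/(k \wedge e)$) over the subgroup $B = B(2,1,n-1)$ provided by Proposition \ref{PropBkeenAmalgamatedProduct}. A standard Bass-Serre argument shows that for a non-trivial amalgamated product $A *_C D$ (with $C$ a proper subgroup of both $A$ and $D$), one has $Z(A *_C D) \subseteq Z(A) \cap Z(D) \cap C$: a central element cannot act hyperbolically on the Bass-Serre tree, for otherwise every element of the group would preserve its axis, contradicting the non-triviality of the amalgamation; hence the central element fixes the tree pointwise, and commuting with all of $A$ and $D$ confines it to $Z(A) \cap Z(D) \cap C$. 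Iterating along the chain $B(1) \subset B(2) \subset \cdots \subset B(e \wedge k)$ yields $Z(B^{(k)}(e,e,n)) \subseteq Z(B(e',e',n)) \cap B(2,1,n-1)$, a subgroup of $Z(B(e',e',n)) \cong \mathbb{Z}$, hence cyclic. Combined with the infinite cyclic lower bound, this forces $Z(B^{(k)}(e,e,n)) \cong \mathbb{Z}$.

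The main obstacle I anticipate lies in the Bass-Serre step: verifying that the embedding $B(2,1,n-1) \hookrightarrow B(e',e',n)$ constructed earlier is a proper inclusion (so that the amalgamation is genuinely non-trivial at each stage) and carefully checking that the generator of the cyclic subgroup $\langle \underline{\lambda^k}^m \rangle$ obtained from the Garside structure is a non-trivial power of a generator of $Z(B(e',e',n)) \cap B(2,1,n-1)$, so that the upper and lower bounds actually coincide inside the ambient group.
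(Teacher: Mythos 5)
Your argument is correct, and it is close in spirit to the paper's but not identical. The paper's proof is a two-line appeal to the amalgamated decomposition of Proposition \ref{PropBkeenAmalgamatedProduct} together with Corollary 4.5 of \cite{CombinatorialGroupTheory}, which identifies the center of a non-trivial amalgam with the intersection of the centers of the factors, plus the fact from \cite{BMR} that the centers of $B(2,1,n-1)$ and $B(e',e',n)$ are infinite cyclic; the non-triviality of that intersection is left implicit. You use the same structural input (the amalgam over $B(2,1,n-1)$, via a Bass--Serre upper bound $Z \subseteq Z(B(e',e',n)) \cap B$ in place of the cited corollary), but you supply the missing non-triviality by a Garside-theoretic lower bound: conjugation by $\underline{\lambda^k}$ permutes the finitely many simples, so some power of $\underline{\lambda^k}$ is central and of infinite order. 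This sandwich makes the conclusion immediate, since a non-trivial subgroup of $\mathbb{Z}$ is infinite cyclic, and it buys you an argument that works uniformly without having to compute the intersection of the two centers; you also treat $k \wedge e = 1$ separately via Corollary \ref{CorIsomGrFractBraidGroup}, which the paper handles only implicitly.

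Two remarks on the obstacles you flag. First, properness of $B(2,1,n-1)$ in $B(e',e',n)$ is easy: its image in $G(e',e',n)$ is generated by $t_it_{i-k}$ (a diagonal matrix) and $s_3,\dots,s_n$, all of which stabilize the line $\mathbb{C}e_1$, whereas $t_0$ does not, so the image is proper and a fortiori the subgroup is proper; note also that your tree-based justification of $Z(A *_C D)\subseteq C$ is slightly too quick in the degenerate case $[A:C]=[D:C]=2$ (where the Bass--Serre tree is a line and a central hyperbolic element is not excluded by your axis argument), but the algebraic normal-form proof of that inclusion, or the cited MKS corollary, covers all cases, and the index here is not $2$ anyway. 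Second, your worry about matching $\langle \underline{\lambda^k}^m\rangle$ with a generator of $Z(B(e',e',n)) \cap B(2,1,n-1)$ is unnecessary: the upper and lower bounds need not coincide; it suffices that the center is simultaneously a subgroup of a group isomorphic to $\mathbb{Z}$ and contains an element of infinite order.
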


\begin{proof}

By Corollary 4.5 of \cite{CombinatorialGroupTheory} that computes the center of an amalgamated product, the center of $B^{(k)}(e,e,n)$ is the intersection of the centers of $B$ and $B(e',e',n)$. Since the center of $B$ and $B(e',e',n)$ is infinite cyclic \cite{BMR}, the center of $B^{(k)}(e,e,n)$ is infinite cyclic isomorphic to $\mathbb{Z}$.

\end{proof}

Since the center of $B(e,e,n)$ is also isomorphic to $\mathbb{Z}$ (see \cite{BMR}), in order to distinguish $B^{(k)}(e,e,n)$ from the braid groups $B(e,e,n)$, we compute its first and second integral homology groups. We recall the Dehornoy-Lafont complex and follow the method in \cite{CalMarHomologyComputations} where the second integral homology group of $B(e,e,n)$ is computed.\\

We order the elements of $\widetilde{X}$ by considering $\tilde{s}_{n} < \tilde{s}_{n-1} < \cdots < \tilde{s}_{3} < \tilde{t}_{0} < \tilde{t}_{1} < \cdots < \tilde{t}_{e-1}$. For $f \in B^{\oplus k}(e,e,n)$, denote by $d(f)$ the least element in $\widetilde{X}$ which divides $f$ on the right. An $r$-cell is an $r$-tuple $[x_1, \cdots, x_r]$ of elements in $\widetilde{X}$ such that $x_1 < x_2 < \cdots < x_r$ and $x_i = d(lcm(x_i,x_{i+1}, \cdots, x_{r}))$. The set $C_r$ of $r$-chains is the free $\mathbb{Z} B^{\oplus k}(e,e,n)$-module with basis $\widetilde{X}_r$, the set of all $r$-cells with the convention $\widetilde{X}_0 = \{[\emptyset]\}$. We provide the definition of the differential $\partial_r : C_r \longrightarrow C_{r-1}$.

\begin{definition}\label{DefintionDifferentialHomology}

Let $[\alpha,A]$ be an $(r+1)$-cell, with $\alpha \in \widetilde{X}$ and $A$ an $r$-cell. Denote $\alpha_{/A}$ the unique element of $B^{\oplus k}(e,e,n)$ such that $(\alpha_{/A})lcm(A) = lcm(\alpha,A)$. Define the differential $\partial_r : C_r \longrightarrow C_{r-1}$ recursively through two $\mathbb{Z}$-module homomorphisms $s_r: C_r \longrightarrow C_{r+1}$ and $u_r: C_r \longrightarrow C_r$ as follows.

\begin{center}

$\partial_{r+1}[\alpha,A] = \alpha_{/A}[A] - u_r(\alpha_{/A}[A])$,

\end{center}
with $u_{r+1} = s_r \circ \partial_{r+1}$ where $u_0(f[\emptyset]) = [\emptyset]$, for all $f \in B^{\oplus k}(e,e,n)$, and\\
$s_r([\emptyset])=0$, $s_r(x[A]) = 0$ if $\alpha:=d(x lcm(A))$ coincides with the first coefficient in $A$, and otherwise
$s_r(x[A]) = y[\alpha,A] + s_{r}(y u_{r}(\alpha_{/A}[A]))$ with $x = y \alpha_{/A}$.\\

\end{definition}

We provide the result of the computation of $\partial_1$, $\partial_2$, and $\partial_3$ for all $1$, $2$, and $3$-cells, respectively.
For all $x \in \widetilde{X}$, we have
\begin{center}

$\partial_1[x] = (x-1)[\emptyset]$,

\end{center}
for all $1 \leq i \leq e-1$,
\begin{center}

$\partial_2[\tilde{t}_0,\tilde{t}_i] = \tilde{t}_{i+k}[\tilde{t}_i] - \tilde{t}_k[\tilde{t}_0] - [\tilde{t}_k] + [\tilde{t}_{i+k}]$,

\end{center}
for $x,y \in \widetilde{X}$ with $xyx=yxy$,
\begin{center}

$\partial_2[x,y] = (yx+1-x)[y] + (y-xy-1)[x]$, and

\end{center}
for $x,y \in \widetilde{X}$ with $xy=yx$,
\begin{center}

$\partial_2[x,y] = (x-1)[y] - (y-1)[x]$.

\end{center}
For $j \neq -k$ mod $e$, we have:
\begin{center}

$\partial_3[\tilde{s}_3,\tilde{t}_0,\tilde{t}_j] = (\tilde{s}_3\tilde{t}_k\tilde{t}_0\tilde{s}_3 - \tilde{t}_k\tilde{t}_0\tilde{s}_3 + \tilde{t}_{j+2k}\tilde{s}_3)[\tilde{t}_0,\tilde{t}_j] - \tilde{t}_{j+2k}\tilde{s}_3\tilde{t}_{j+k}[\tilde{s}_3,\tilde{t}_j] + (\tilde{t}_{j+2k} - \tilde{s}_3\tilde{t}_{j+2k})[\tilde{s}_3,\tilde{t}_{j+k}] + (\tilde{s}_3 - \tilde{t}_{j+2k}\tilde{s}_3 - 1)[\tilde{t}_0, \tilde{t}_{j+k}] + (\tilde{s}_3 \tilde{t}_{2k} - \tilde{t}_{2k})[\tilde{s}_3,\tilde{t}_k] + (\tilde{t}_{2k}\tilde{s}_3 + 1 - \tilde{s}_3)[\tilde{t}_0,\tilde{t}_k] + [\tilde{s}_3,\tilde{t}_{j+2k}] + \tilde{t}_{2k}\tilde{s}_3\tilde{t}_k[\tilde{s}_3,\tilde{t}_0] - [\tilde{s}_3, \tilde{t}_{2k}]$ and

\end{center}

\begin{center}

$\partial_3[\tilde{s}_3,\tilde{t}_0,\tilde{t}_{-k}] = (\tilde{s}_3\tilde{t}_k\tilde{t}_0\tilde{s}_3 - \tilde{t}_k\tilde{t}_0\tilde{s}_3 + \tilde{t}_k\tilde{s}_3)[\tilde{t}_0, \tilde{t}_{-k}] - \tilde{t}_k\tilde{s}_3\tilde{t}_0[\tilde{s}_3,\tilde{t}_{-k}] + (1-\tilde{t}_{2k}+\tilde{s}_3\tilde{t}_{2k})[\tilde{s}_3,\tilde{t}_k] + (1+\tilde{t}_{2k}\tilde{s}_3 - \tilde{s}_3)[\tilde{t}_0,\tilde{t}_k] + (\tilde{t}_k - \tilde{s}_3\tilde{t}_k + \tilde{t}_{2k}\tilde{s}_3\tilde{t}_k)[\tilde{s}_3,\tilde{t}_0] - [\tilde{s}_3,\tilde{t}_{2k}]$.

\end{center}
Also, for $1 \leq i \leq e-1$ and $4 \leq j \leq n$, we have:
\begin{center}

$\partial_3[\tilde{s}_j,\tilde{t}_0,\tilde{t}_i] = (\tilde{s}_j -1)[\tilde{t}_0,\tilde{t}_i] - \tilde{t}_{i+k}[\tilde{s}_j, \tilde{t}_i] + \tilde{t}_k[\tilde{s}_j,\tilde{t}_0] - [\tilde{s}_j,\tilde{t}_{i+k}] + [\tilde{s}_j,\tilde{t}_k]$,

\end{center}
for $x,y,z \in \widetilde{X}$ with $xyx=yxy$, $xz=zx$, and $yzy=zyz$,
\begin{center}

$\partial_3[x,y,z] = (z + xyz - yz -1)[x,y] - [x,z] + (xz-z-x+1-yxz)y[x,z] + (x-1-yx+zyx)[y,z]$,

\end{center}
for $x,y,z \in \widetilde{X}$ with $xyx=yxy$, $xz=zx$, and $yz=zy$,
\begin{center}

$\partial_3[x,y,z] = (1-x+yx)[y,z] + (y-1-xy)[x,z] + (z-1)[x,y]$,

\end{center}
for $x,y,z \in \widetilde{X}$ with $xy=yx$, $xz=zx$, and $yzy=zyz$,
\begin{center}

$\partial_3[x,y,z] = (1+yz-z)[x,y] + (y-1-zy)[x,z] + (x-1)[y,z]$, and

\end{center}
for $x,y,z \in \widetilde{X}$ with $xy=yx$, $xz=zx$, and $yz=zy$,
\begin{center}

$\partial_3[x,y,z] = (1-y)[x,z] + (z-1)[x,y] +(x-1)[y,z]$.

\end{center}

Let $d_r = \partial_r \otimes_{\mathbb{Z} B^{\oplus k}(e,e,n)} \mathbb{Z} : C_r  \otimes_{\mathbb{Z} B^{\oplus k}(e,e,n)} \mathbb{Z} \longrightarrow C_{r-1}  \otimes_{\mathbb{Z} B^{\oplus k}(e,e,n)} \mathbb{Z}$ be the differential with trivial coefficients. For example, for $d_2$, we have:
for all $1 \leq i \leq e-1$,
\begin{center}

$d_2[\tilde{t}_0,\tilde{t}_i] = [\tilde{t}_i] - [\tilde{t}_0] - [\tilde{t}_k] + [\tilde{t}_{i+k}]$,

\end{center}
for $x,y \in \widetilde{X}$ with $xyx=yxy$,
\begin{center}

$d_2[x,y] = [y] - [x]$, and

\end{center}
for $x,y \in \widetilde{X}$ with $xy=yx$,
\begin{center}

$d_2[x,y] = 0$.

\end{center}
The same can be done for $d_3$.

\begin{definition}

Define the integral homology group of order $r$ to be $$H_r(B^{(k)}(e,e,n),\mathbb{Z}) = ker(d_r) / Im(d_{r+1}).$$

\end{definition}

We are ready to compute the first and second integral homology groups of $B^{(k)}(e,e,n)$. Using the presentation of $B^{(k)}(e,e,n)$ given in Definition \ref{DefofB+keen}, one can check that the abelianization of $B^{(k)}(e,e,n)$ is isomorphic to $\mathbb{Z}$. Since $H_1(B^{(k)}(e,e,n),\mathbb{Z})$ is isomorphic to the abelianization of $B^{(k)}(e,e,n)$, we deduce that $H_1(B^{(k)}(e,e,n),\mathbb{Z})$ is isomorphic to $\mathbb{Z}$. Since $H_1(B(e,e,n),\mathbb{Z})$ is also isomorphic to $\mathbb{Z}$ (see \cite{BMR}), the first integral homology group does not give any additional information whether these groups are isomorphic to some $B(e,e,n)$ or not.\\

Recall that by \cite{CalMarHomologyComputations}, we have

\begin{itemize}

\item $H_2(B(e,e,3),\mathbb{Z}) \simeq \mathbb{Z}/e\mathbb{Z}$ where $e \geq 2$,
\item $H_2(B(e,e,4),\mathbb{Z}) \simeq \mathbb{Z}/e\mathbb{Z} \times \mathbb{Z}/2\mathbb{Z}$ when $e$ is odd and $H_2(B(e,e,4),\mathbb{Z}) \simeq \mathbb{Z}/e\mathbb{Z} \times (\mathbb{Z}/2\mathbb{Z})^2$ when $e$ is even, and
\item $H_2(B(e,e,n),\mathbb{Z}) \simeq \mathbb{Z}/e\mathbb{Z} \times \mathbb{Z}/2\mathbb{Z}$ when $n \geq 5$ and $e \geq 2$.

\end{itemize}

In order to compute $H_2(B^{(k)}(e,e,n),\mathbb{Z})$, we follow exactly the proof of \mbox{Theorem 6.4} in \cite{CalMarHomologyComputations} and use the same notations. We only point out the part of our proof that is different from \cite{CalMarHomologyComputations}. Define $v_i = [\tilde{t}_0,\tilde{t}_i] + [\tilde{s}_3,\tilde{t}_0] + [\tilde{s}_3,\tilde{t}_k] - [\tilde{s}_3,\tilde{t}_i] - [\tilde{s}_3, \tilde{t}_{i+k}]$ where $1 \leq i \leq e-1$. As in \cite{CalMarHomologyComputations}, we also have $H_2(B^{(k)}(e,e,n),\mathbb{Z}) = (K_1/d_3(C_1)) \oplus (K_2/d_3(C_2))$. We have $d_3[\tilde{s}_3,\tilde{t}_0, \tilde{t}_j] = v_j - v_{j+k} + v_k$ if $j \neq -k$ and $d_3[\tilde{s}_3,\tilde{t}_0, \tilde{t}_{-k}] =v_{-k} + v_k$. Denote $u_i = [\tilde{s}_3, \tilde{t}_0,\tilde{t}_i]$ for $1 \leq i \leq e-1$. We define a basis of $C_2$ as follows. For each coset of the subgroup of $\mathbb{Z}/e\mathbb{Z}$ generated by the class of $k$, say $\{\tilde{t}_x,\tilde{t}_{x+k}, \cdots, \tilde{t}_{x-k}\}$ such that $1 \leq x \leq e-1$, we define $w_{x+ik} = u_{x+ik} + u_{x+(i+1)k} + \cdots + u_{x-k}$ for $0 \leq i \leq e-1$, and when $x=0$, we define $w_{ik} = u_{ik} + u_{(i+1)k} + \cdots + u_{-k}$ for $1 \leq i \leq e-1$. Written on the $\mathbb{Z}$-basis $(w_{k}, w_{2k}, \cdots , w_{-k}, w_{1}, w_{1+k}, \cdots, w_{1-k}, \cdots, w_{x}, w_{x+k}, \cdots, w_{x-k}, \cdots)$ and $(v_k, v_{2k}, \cdots, v_{-k}, v_1, v_{1+k}, \cdots, v_{1-k}, \cdots, v_x, v_{x+k}, \cdots, v_{x-k}, \cdots )$, $d_3$ is in triangular form with $(e \wedge k) -1$ diagonal coefficients that are zero, all other diagonal coefficients are equal to $1$ except one of them that is equal to $e' = e / (e \wedge k)$. In this case, we have $H_2(B^{(k)}(e,e,3),\mathbb{Z}) = \mathbb{Z}^{(e \wedge k)-1} \times \mathbb{Z}/e'\mathbb{Z}$. The rest of the proof is essentially similar to the proof of Theorem 6.4 in  \cite{CalMarHomologyComputations}.

When $n=4$, we get $[\tilde{s}_4,\tilde{t}_i] + [\tilde{s}_4,\tilde{t}_{i+k}] \equiv [\tilde{s}_4,\tilde{t}_{k}] + [\tilde{s}_4,\tilde{t}_{0}]$ for every $i$ and since $2[\tilde{s}_4,\tilde{t}_i] \equiv 0$ for every $i$, we get $K_2/d_3(C_2) \simeq (\mathbb{Z}/2\mathbb{Z})^c$, where $c$ is the number of $[\tilde{s}_4,\tilde{t}_j]$ that appear in the solution of the congruence relations $[\tilde{s}_4,\tilde{t}_i] + [\tilde{s}_4,\tilde{t}_{i+k}] \equiv [\tilde{s}_4,\tilde{t}_{k}] + [\tilde{s}_4,\tilde{t}_{0}]$ and $2[\tilde{s}_4,\tilde{t}_i] \equiv 0$ in $\mathbb{Z}/e\mathbb{Z}$. We summarize our proof by providing the second integral homology group of $B^{(k)}(e,e,n)$ in the following proposition.

\begin{proposition}\label{PropH2ofBkeen}

Let $e \geq 2$, $n \geq 3$, $1 \leq k \leq e-1$, and $e' = e/ e \wedge k$.
\begin{itemize}
\item If $n=3$, we have $H_2(B^{(k)}(e,e,3),\mathbb{Z}) \simeq \mathbb{Z}^{(e \wedge k)-1} \times \mathbb{Z}/e'\mathbb{Z}$.
\item If $n=4$, we have $H_2(B^{(k)}(e,e,4),\mathbb{Z}) \simeq \mathbb{Z}^{(e \wedge k)-1} \times \mathbb{Z}/e'\mathbb{Z} \times (\mathbb{Z}/2\mathbb{Z})^{c}$, where $c$ is defined in the previous paragraph.
\item If $n \geq 5$, we have $H_2(B^{(k)}(e,e,n),\mathbb{Z}) \simeq \mathbb{Z}^{(e \wedge k)-1} \times \mathbb{Z}/e'\mathbb{Z} \times \mathbb{Z}/2\mathbb{Z}$.

\end{itemize}

\end{proposition}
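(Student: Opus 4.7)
The plan is to mimic the proof of Theorem 6.4 in \cite{CalMarHomologyComputations} (where $H_2(B(e,e,n),\mathbb{Z})$ is computed), while carefully tracking the modifications forced by the relation $\tilde{t}_i\tilde{t}_{i-k}=\tilde{t}_j\tilde{t}_{j-k}$ instead of $\tilde{t}_i\tilde{t}_{i-1}=\tilde{t}_j\tilde{t}_{j-1}$. Concretely, I would use the Dehornoy--Lafont complex attached to the Garside monoid $B^{\oplus k}(e,e,n)$ from Theorem \ref{TheoremIntervalStructure}, compute $\ker(d_2)$ from the explicit formulas for $d_2$ already recorded, and then take the quotient by $\mathrm{Im}(d_3)$ using the formulas for $d_3$ given just before the proposition.

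First I would decompose $H_2(B^{(k)}(e,e,n),\mathbb{Z})=(K_1/d_3(C_1))\oplus(K_2/d_3(C_2))$ as in \cite{CalMarHomologyComputations}, where $K_1$ is the kernel-contribution generated by the elements $v_i=[\tilde{t}_0,\tilde{t}_i]+[\tilde{s}_3,\tilde{t}_0]+[\tilde{s}_3,\tilde{t}_k]-[\tilde{s}_3,\tilde{t}_i]-[\tilde{s}_3,\tilde{t}_{i+k}]$ for $1\le i\le e-1$, and $K_2$ is spanned by the $2$-cells $[\tilde{s}_j,\tilde{t}_i]$ with $j\ge 4$. The formulas $d_3[\tilde{s}_3,\tilde{t}_0,\tilde{t}_j]=v_j-v_{j+k}+v_k$ for $j\ne -k$ and $d_3[\tilde{s}_3,\tilde{t}_0,\tilde{t}_{-k}]=v_{-k}+v_k$ show that the relations among the $v_i$ partition according to the cosets of $\langle k\rangle\subset\mathbb{Z}/e\mathbb{Z}$; each such coset has $e'=e/(e\wedge k)$ elements.

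Next I would pick a coset-adapted basis as sketched: for each coset $\{x,x+k,\dots,x-k\}$ of $\langle k\rangle$ with $1\le x\le e-1$ set $w_{x+ik}=u_{x+ik}+u_{x+(i+1)k}+\dots+u_{x-k}$ (where $u_i=[\tilde{s}_3,\tilde{t}_0,\tilde{t}_i]$), and for the coset through $0$ use the analogous truncated sum. Expressing $d_3$ in the bases $(w_\bullet)$ and $(v_\bullet)$ arranged coset by coset, the matrix becomes block-triangular, with one block per coset; each block is triangular with diagonal entries equal to $1$ except that one coset produces a diagonal entry equal to $e'$, and the other $(e\wedge k)-1$ cosets yield one zero diagonal entry apiece. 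This is exactly the place where the analysis of \cite{CalMarHomologyComputations} (where $k=1$ and hence a single coset of size $e$ appears) is replaced by a multi-coset analysis. It yields $K_1/d_3(C_1)\simeq\mathbb{Z}^{(e\wedge k)-1}\times\mathbb{Z}/e'\mathbb{Z}$, which is the main new contribution.

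Finally, $K_2/d_3(C_2)$ is handled as in \cite{CalMarHomologyComputations}: the formula $d_3[\tilde{s}_j,\tilde{t}_0,\tilde{t}_i]$ and the relations $2[\tilde{s}_j,\tilde{t}_i]\equiv 0$ coming from the commutations give no contribution when $n=3$, give $(\mathbb{Z}/2\mathbb{Z})^c$ for $n=4$ (with $c$ the number of independent classes surviving after $[\tilde{s}_4,\tilde{t}_i]+[\tilde{s}_4,\tilde{t}_{i+k}]\equiv[\tilde{s}_4,\tilde{t}_k]+[\tilde{s}_4,\tilde{t}_0]$ mod $2$), and give $\mathbb{Z}/2\mathbb{Z}$ for $n\ge 5$ because the $\tilde{s}_j$ with $j\ge 5$ kill all but one $\mathbb{Z}/2\mathbb{Z}$ factor. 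Assembling the two summands gives exactly the three cases of the proposition. The main obstacle is a bookkeeping one: selecting the correct coset-adapted bases so that $d_3$ is triangular and reading off where the single $e'$ (rather than $1$) appears on the diagonal; once this linear-algebraic step is done, the rest reduces to invoking (almost verbatim) the arguments of \cite{CalMarHomologyComputations}.
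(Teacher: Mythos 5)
Your proposal is correct and follows essentially the same route as the paper's own argument: the same decomposition $H_2=(K_1/d_3(C_1))\oplus(K_2/d_3(C_2))$, the same elements $v_i$, $u_i$ and coset-adapted sums $w_{x+ik}$ making $d_3$ triangular with $(e\wedge k)-1$ zero diagonal entries and a single entry $e'$, and the same treatment of $K_2$ for $n=3,4,\ge 5$ as in the adaptation of Theorem 6.4 of \cite{CalMarHomologyComputations}. No substantive difference from the paper's proof.
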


Comparing this result with $H_2(B(e,e,n),\mathbb{Z})$, one can check that if $k \wedge e \neq 1$, $B^{(k)}(e,e,n)$ is not isomorphic to a complex braid group of type $B(e,e,n)$. Thus, we conclude by the following theorem.

\medskip

\begin{theorem}\label{TheoremBkIsomBiff}

$B^{(k)}(e,e,n)$ is isomorphic to $B(e,e,n)$ if and only if \mbox{$k\wedge e=1$.}

\end{theorem}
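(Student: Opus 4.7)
The plan is to split the biconditional into its two directions, each of which can be handled using results already established earlier in the chapter. The forward direction ($k\wedge e=1 \Rightarrow B^{(k)}(e,e,n)\cong B(e,e,n)$) is essentially immediate: by Proposition \ref{PropIsomwithMonoidCP}, the monoids $B^{\oplus k}(e,e,n)$ and $B^{\oplus}(e,e,n)$ are isomorphic whenever $k\wedge e=1$, and passing to groups of fractions gives $B^{(k)}(e,e,n)\cong B(e,e,n)$, which is exactly Corollary \ref{CorIsomGrFractBraidGroup}. So this half of the proof reduces to a single citation.

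For the converse direction, I would argue by contrapositive: assume $d:=k\wedge e \neq 1$ and produce a group-theoretic invariant that distinguishes $B^{(k)}(e,e,n)$ from $B(e,e,n)$. The natural choice is the second integral homology group, since Proposition \ref{PropH2ofBkeen} gives an explicit formula
\[
H_2(B^{(k)}(e,e,n),\mathbb{Z}) \;\simeq\; \mathbb{Z}^{d-1} \times \mathbb{Z}/e'\mathbb{Z} \times T_n,
\]
where $e'=e/d$ and $T_n$ is a finite $2$-torsion factor depending on whether $n=3$, $n=4$, or $n\geq 5$. The crucial observation is that because $d\geq 2$, the group $H_2(B^{(k)}(e,e,n),\mathbb{Z})$ contains a free abelian summand $\mathbb{Z}^{d-1}$ of positive rank. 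On the other hand, the values of $H_2(B(e,e,n),\mathbb{Z})$ recalled just before Proposition \ref{PropH2ofBkeen} (namely $\mathbb{Z}/e\mathbb{Z}$, $\mathbb{Z}/e\mathbb{Z}\times\mathbb{Z}/2\mathbb{Z}$, $\mathbb{Z}/e\mathbb{Z}\times(\mathbb{Z}/2\mathbb{Z})^2$, or $\mathbb{Z}/e\mathbb{Z}\times\mathbb{Z}/2\mathbb{Z}$, according to $n$) are all finite. Since integral homology is a group isomorphism invariant, a free part on one side and purely torsion on the other forbids any isomorphism $B^{(k)}(e,e,n)\cong B(e,e,n)$.

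The bulk of the work, therefore, is not in the theorem itself but in the preceding Proposition \ref{PropH2ofBkeen}, whose Dehornoy--Lafont complex computation has already been carried out. The one step where I would be careful is recording that the proof of Proposition \ref{PropH2ofBkeen} really does exhibit the free rank $d-1$ as a genuine $\mathbb{Z}$-summand and not merely a quotient: the triangular-form argument for $d_3$ on the basis $(w_{x+ik})$ shows that $(d\wedge k)-1$ diagonal entries are exactly zero, so the cokernel acquires a $\mathbb{Z}^{d-1}$ direct factor, which is what we need. Once this is in hand, the theorem follows in two short paragraphs corresponding to the two implications.
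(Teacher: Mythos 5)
Your proposal is correct and follows essentially the same route as the paper: the forward implication is exactly Corollary \ref{CorIsomGrFractBraidGroup} (via Proposition \ref{PropIsomwithMonoidCP}), and the converse is obtained by comparing $H_2(B^{(k)}(e,e,n),\mathbb{Z})$ from Proposition \ref{PropH2ofBkeen} with the finite groups $H_2(B(e,e,n),\mathbb{Z})$, the free summand $\mathbb{Z}^{(e\wedge k)-1}$ being precisely the distinguishing invariant the paper relies on when $k\wedge e\neq 1$.
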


\medskip

In Appendix A, we provide a general code to compute the homology groups of order $r \geq 2$ of Garside structures by using the Dehornoy-Lafont complexes. In particular, we apply the code for the Garside monoids $B^{\oplus k}(e,e,n)$ that have been already implemented (see \cite{CHEVIEJMichel} and \cite{GAP3CPMichelNeaime}). This enables us to compute the second integral homology groups for $B^{(k)}(e,e,n)$ and to check the results of Proposition \ref{PropH2ofBkeen}.

\newpage~
\thispagestyle{empty}

\chapter{Hecke algebras for $G(e,e,n)$ and $G(d,1,n)$}\label{ChapterHeckeAlgebras}
 
\minitoc

\bigskip

In this chapter, we define the Hecke algebra associated to each complex reflection group of type $G(e,e,n)$ and $G(d,1,n)$ as a quotient of the group algebra $R_0B$ by some polynomial relations, where $R_0$ is a polynomial ring and $B$ is the corresponding complex braid group. We define a basis for this Hecke algebra and get a new proof of Theorem \ref{PropositionBMRFreenessTheorem} for the general series of complex reflection groups of type $G(e,e,n)$ and $G(d,1,n)$.

\section{Presentations for the Hecke algebras}

We start by recalling the presentation of the complex braid group $B(e,e,n)$ for $e \geq 1$ and $n \geq 2$, see Definition \ref{DefofB+keen} for $k = 1$. Note that we remove the tilde character from each generator symbol of the presentation in order to simplify the notations.

\begin{definition}\label{DefPresB(de,e,n)CorranLeeLee}

Fix $e \geq 1$, and $n \geq 2$. The group $B(e,e,n)$ is defined by a presentation with set of generators: $\{t_0,t_1, \cdots, t_{e-1}, s_3, s_4, \cdots, s_n \}$ and relations as follows.

\begin{enumerate}

\item $t_i t_{i-1} = t_j t_{j-1}$ for $i, j \in \mathbb{Z}/e\mathbb{Z}$,
\item $t_i s_3 t_i = s_3 t_i s_3$ for $i \in \mathbb{Z}/e\mathbb{Z}$,
\item $s_j t_i = t_i s_j$ for $i \in \mathbb{Z}/e\mathbb{Z}$ and $4 \leq j \leq n$,
\item $s_i s_{i+1} s_i = s_{i+1} s_i s_{i+1}$ for $3 \leq i \leq n-1$,
\item $s_i s_j = s_j s_i$ for $|i-j| > 1$.

\end{enumerate}

\end{definition}

Recall that this presentation can be described by the following diagram. The dashed circle describes Relation $1$ of Definition \ref{DefPresB(de,e,n)CorranLeeLee}. The other edges used to describe all the other relations follow the standard conventions for Artin-Tits diagrams.

\begin{figure}[H]
\begin{center}
\begin{tikzpicture}[yscale=0.8,xscale=1,rotate=30]

\draw[thick,dashed] (0,0) ellipse (2cm and 1cm);

\node[draw, shape=circle, fill=white, label=above:\begin{small}$t_0$\end{small}] (t0) at (0,-1) {};
\node[draw, shape=circle, fill=white, label=above:\begin{small}$t_1$\end{small}] (t1) at (1,-0.8) {};
\node[draw, shape=circle, fill=white, label=right:\begin{small}$t_2$\end{small}] (t2) at (2,0) {};
\node[draw, shape=circle, fill=white, label=above:$t_i$] (ti) at (0,1) {};
\node[draw, shape=circle, fill=white, label=above:\begin{small}$t_{e-1}$\end{small}] (te-1) at (-1,-0.8) {};

\draw[thick,-] (0,-2) arc (-180:-90:3);

\node[draw, shape=circle, fill=white, label=below left:$s_3$] (s3) at (0,-2) {};

\draw[thick,-] (t0) to (s3);
\draw[thick,-,bend left] (t1) to (s3);
\draw[thick,-,bend left] (t2) to (s3);
\draw[thick,-,bend left] (s3) to (te-1);

\node[draw, shape=circle, fill=white, label=below:$s_4$] (s4) at (0.15,-3) {};
\node[draw, shape=circle, fill=white, label=below:$s_{n-1}$] (sn-1) at (2.2,-4.9) {};
\node[draw, shape=circle, fill=white, label=right:$s_{n}$] (sn) at (3,-5) {};

\node[fill=white] () at (1,-4.285) {$\cdots$};

\end{tikzpicture}
\end{center}
\caption{\mbox{Diagram for the presentation of Corran-Picantin of $B(e,e,n)$.}}
\end{figure}

Now, we recall the presentation of the braid group $B(d,1,n)$ for $d > 1$ and $n \geq 2$.

\begin{proposition}\label{PropPresB(d,1,n)}

Fix $d > 1$ and $n \geq 2$. The complex braid group $B(d,1,n)$ is defined by a presentation with generators $z, s_2, s_3$, $\cdots$, and $s_n$ and relations as follows.
\begin{enumerate}

\item $zs_2zs_2 = s_2zs_2z$,
\item $zs_j = s_jz$ for $2 \leq j \leq n$, and
\item $s_is_{i+1}s_i = s_{i+1}s_is_{i+1}$ for $2 \leq i \leq n-1$ and $s_is_j = s_js_i$ for $|i-j| > 1$.
\end{enumerate}

This presentation can be described by the following diagram.
\begin{figure}[H]

\begin{center}
\begin{tikzpicture}

\node[draw, shape=circle, label=above:$z$] (1) at (0,0) {};
\node[draw, shape=circle,label=above:$s_2$] (2) at (2,0) {};
\node[draw, shape=circle,label=above:$s_3$] (3) at (4,0) {};
\node[draw, shape=circle,label=above:$s_{n-1}$] (n-1) at (6,0) {};
\node[draw,shape=circle,label=above:$s_n$] (n) at (8,0) {};

\draw[thick,-,double] (1) to (2);
\draw[thick,-] (2) to (3);
\draw[dashed,-,thick] (3) to (n-1);
\draw[thick,-] (n-1) to (n);

\end{tikzpicture}
\end{center}

\caption{Diagram for the presentation of $B(d,1,n)$.}\label{DiagPresB(d,1,n)}
\end{figure}

\end{proposition}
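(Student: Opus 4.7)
The plan is to mirror, at the level of the braid group, the derivation of the classical presentation of $G(d,1,n)$ from the Corran--Lee--Lee presentation carried out in Chapter \ref{ChapterNormalFormsG(de,e,n)}. By Remark \ref{RemarkPresB(de,e,n)}, removing the quadratic relations (Relations 8) from Definition \ref{DefCorranLeePresG(de,e,n)} yields a presentation of the complex braid group $B(de,e,n)$; specialising to $e=1$ therefore provides a presentation of $B(d,1,n)$ on generators $z, t_0, t_1, \dots, t_{d-1}, s_3, \dots, s_n$ and Relations 1--7 of Definition \ref{DefCorranLeePresG(de,e,n)}.

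From this starting point I would apply Tietze transformations. Setting $s_2 := t_0$ and using Relation 1 (which with $e=1$ reads $zt_i = t_{i-1}z$) iteratively yields $t_k = z^{-k} s_2 z^k$ for $1 \leq k \leq d-1$, and these $t_k$ can then be eliminated from the generating set. After substitution, Relation 3 at $(i,j) = (2,1)$, namely $t_2 t_1 = t_1 t_0$, collapses to the $4$-braid relation $z s_2 z s_2 = s_2 z s_2 z$, and every other instance of Relation 3 is a consequence of this one. Relation 4, $t_i s_3 t_i = s_3 t_i s_3$, combined with Relation 2 ($z s_3 = s_3 z$), allows the conjugations by $z^{\pm i}$ to cancel and produces $s_2 s_3 s_2 = s_3 s_2 s_3$; Relation 5, $s_j t_i = t_i s_j$ for $j \geq 4$, reduces similarly to $s_j s_2 = s_2 s_j$. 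Relations 2, 6, and 7 involve only $z$ and the $s_j$ with $j \geq 3$ and pass through unchanged. Gathering the surviving relations produces precisely the list in the statement, it being understood that Relation 2 must be read for $3 \leq j \leq n$ (the pair $(z, s_2)$ being tied by the $4$-braid relation, as indicated by the double edge in Figure \ref{DiagPresB(d,1,n)}).

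The main obstacle I expect is the careful handling of the wrap-around instance of Relation 1 at $i = 0$, namely $zt_0 = t_{d-1}z$. After substituting $t_{d-1} = z^{-(d-1)} s_2 z^{d-1}$, this instance produces a residual equation relating powers of $z$ and $s_2$, and one must verify that it is already a consequence of the relations retained, so that no extra constraint beyond those in the statement is imposed. The analogous check for $G(d,1,n)$ in Chapter \ref{ChapterNormalFormsG(de,e,n)} is trivialised by the quadratic relation $z^d = 1$; in the present braid-group setting it has to be treated by hand, and is the most delicate step of the Tietze argument before the two presentations can be declared equivalent.
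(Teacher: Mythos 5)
The paper does not actually prove this proposition: it is stated as a recollection of the classical fact, due to Brou\'e--Malle--Rouquier \cite{BMR}, that $B(d,1,n)$ is the Artin--Tits group of type $B_n$ (independently of $d$), so your derivation is necessarily a different route. Unfortunately it has a genuine gap, and it sits exactly at the step you flag as delicate. After the Tietze elimination $t_k = z^{-k}s_2z^k$, the wrap-around instance $i=0$ of Relation 1 of Definition \ref{DefCorranLeePresG(de,e,n)} becomes $z^d s_2 = s_2 z^d$, and this is \emph{not} a consequence of the three families of relations in the statement. Indeed, there is a homomorphism from the group presented in the statement to the braid group on $n+1$ strands sending $z \mapsto \sigma_1^2$ and $s_j \mapsto \sigma_j$ (for Relation 1 both images equal the full twist of $\langle\sigma_1,\sigma_2\rangle$; the other relations are immediate), and no nonzero power of $\sigma_1$ commutes with $\sigma_2$ (pass to the quotient of $\langle\sigma_1,\sigma_2\rangle\cong B_3$ by its center, i.e.\ $PSL_2(\mathbb{Z})$, where the images visibly fail to commute). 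So imposing $[z^d,s_2]=1$ yields a proper quotient of the Artin--Tits group of type $B_n$: the presentation you take as a starting point (generators $z,t_0,\dots,t_{d-1},s_3,\dots,s_n$, Relations 1--7 with indices read modulo $d$, no quadratic relations) does not present $B(d,1,n)$, and the reduction cannot terminate in the stated presentation.

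The source of the trouble is the reading of Remark \ref{RemarkPresB(de,e,n)}: at the braid-group level the Corran--Lee--Lee presentation does not keep the cyclic index set $\mathbb{Z}/de\mathbb{Z}$ --- the generators $t_i$ run over $\mathbb{Z}$, so the wrap-around relation never occurs, which is precisely why the resulting structure is only quasi-Garside with infinitely many simples, as the paper itself notes. (The cyclic indexing only reappears after adding Relations 8; that is why the analogous reduction for the reflection group $G(d,1,n)$ in Chapter \ref{ChapterNormalFormsG(de,e,n)} goes through, since there $z^d=1$ makes the wrap-around vacuous, exactly as you observed.) Starting instead from the correct, infinitely generated, braid presentation, your elimination does reproduce the stated relations, but then the substantive content is the Corran--Lee--Lee theorem itself rather than the Tietze computation, and one may as well do what the paper does and quote \cite{BMR}. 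Your parenthetical remark that Relation 2 of the statement must be read for $3\le j\le n$ is correct.
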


\begin{remark}\label{RemPresG(de,e,n)}

As mentioned in Chapter \ref{ChapterNormalFormsG(de,e,n)}, if we add the relations $t_0^2=1$, $\cdots$, $t_{e-1}^2=1$, $s_3^2=1$, $\cdots$, $s_n^2=1$ to the presentation of $B(e,e,n)$, we get a presentation of the complex reflection groups $G(e,e,n)$. Also, if we add the relations $z^d = 1$, $s_2^2 = 1$, $\cdots$, $s_n^2 = 1$ to the presentation of $B(d,1,n)$, we get a presentation of the complex reflection group $G(d,1,n)$.

\end{remark}

We are ready to give the definitions of the Hecke algebras.

\begin{definition}\label{DefPresH(de,e,n)}

Let $e \geq 1$ and $n \geq 2$. We exclude the case (n=2, e even), see Remark \ref{RemarkTwoConjugHecke}. Let $R_0 = \mathbb{Z}[a]$. We define a unitary associative Hecke algebra $H(e,e,n)$ as the quotient of the group algebra $R_0(B(e,e,n))$ by the following relations.

\begin{enumerate}

\item $t_i^2 - at_i - 1 = 0$ for $0 \leq i \leq e-1$,
\item $s_j^2 - as_j - 1 = 0$ for $3 \leq j \leq n$.

\end{enumerate}
Then, a presentation of $H(e,e,n)$ is obtained by adding these relations to the braid relations of the presentation of $B(e,e,n)$ given in Definition \ref{DefPresB(de,e,n)CorranLeeLee}.

\end{definition}

\begin{definition}\label{DefPresH(d,1,n)}

Let $d > 1$ and $n \geq 2$. Let $R_0 = \mathbb{Z}[a,b_1,b_2, \cdots, b_{d-1}]$. We define a unitary associative Hecke algebra $H(d,1,n)$ as the quotient of the group algebra $R_0(B(d,1,n))$ by the following relations.

\begin{enumerate}

\item $z^d -b_1 z^{d-1} - b_2 z^{d-2} - \cdots - b_{d-1} z -1 = 0$,
\item $s_j^2 - as_j - 1 = 0$ for $2 \leq j \leq n$.

\end{enumerate}
Then, a presentation of $H(d,1,n)$ is obtained by adding these relations to the braid relations of the presentation of $B(d,1,n)$ given in Definition \ref{PropPresB(d,1,n)}.

\end{definition}

In the previous definitions, we use the polynomial ring $R_0$, instead of the usual Laurent polynomial ring $R$ that was introduced originally in Definition \ref{DefinitionHeckeAlgebra}. Actually, by Proposition 2.3 $(ii)$ of \cite{MarinG20G21}, Conjecture \ref{ConjectureOfLibertyBMR} (for $W = G(e,e,n)$) is equivalent to the fact that $H(e,e,n)$ of Definition \ref{DefPresH(de,e,n)} is a free $R_0$-module of rank $|G(e,e,n)|$. The same can be said for $G(d,1,n)$. Then, by proving that $H(e,e,n)$ and $H(d,1,n)$ are free $R_0$-modules of rank $|G(e,e,n)|$ and $|G(d,1,n)|$, respectively, we get a proof of Theorem \ref{PropositionBMRFreenessTheorem} for the case of $G(e,e,n)$ and $G(d,1,n)$.

\section{Basis for the case of $G(e,e,n)$}

The Hecke algebra $H(e,e,n)$ is described in Definition \ref{DefPresH(de,e,n)} by a presentation with generating set $\{ t_0, t_1, \cdots, t_{e-1}, s_3, \cdots, s_n \}$. We will replace $t_0$ by $s_2$ in some cases to simplify notations. We use the geodesic normal form of $G(e,e,n)$ defined in Section 2.1 of Chapter 2 in order to construct a basis for $H(e,e,n)$ that is different from the one defined by Ariki in \cite{ArikiHecke}. We introduce the following subsets of $H(e,e,n)$.

\begin{center}
\begin{tabular}{lllll}
$\Lambda_2 =$ & $\{$ & $1$,\\
& & $t_k$ & for $0 \leq k \leq e-1$, & \\
 & & $t_kt_0$ & for $1 \leq k \leq e-1$ & $\}$,
\end{tabular}
\end{center}
and for $3 \leq i \leq n$,
\begin{center}
\begin{tabular}{lllll}
$\Lambda_i =$ & $\{$ & $1$, & & \\
 & & $s_i \cdots s_{i'}$ & for $3 \leq i' \leq i$, & \\ 
 & & $s_i \cdots s_{3}t_k$ & for $0 \leq k \leq e-1$, & \\
 & & $s_i \cdots s_{3}t_ks_2 \cdots s_{i'}$ & for $1 \leq k \leq e-1$ and $2 \leq i' \leq i$ & $\}.$\\
% & & & and $2 \leq i' \leq i$ & $\}.$
\end{tabular}
\end{center}

\noindent Define $\Lambda = \Lambda_2 \cdots \Lambda_n$ to be the set of the products $a_2 \cdots a_n$, where $a_2 \in \Lambda_2, \cdots, a_n \in \Lambda_n$. Recall that $R_0 = \mathbb{Z}[a]$. The aim of this section is to prove the following theorem.

\begin{theorem}\label{TheoremNewBasis}

The set $\Lambda$ provides an $R_0$-basis of the Hecke algebra $H(e,e,n)$.

\end{theorem}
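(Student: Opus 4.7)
The plan is to show that $\Lambda$ is a spanning set of $H(e,e,n)$ over $R_0$ of cardinality exactly $|G(e,e,n)|$, and then to invoke Proposition 2.3$(i)$ of \cite{MarinG20G21} to conclude that it is automatically free of that rank with $\Lambda$ as an $R_0$-basis. The cardinality bound is the easy half: a direct count gives $|\Lambda_2| = 2e$ and $|\Lambda_i| = 1 + (i-2) + e + (e-1)(i-1) = ei$ for $3 \leq i \leq n$, so that $|\Lambda| \leq \prod_{i=2}^n |\Lambda_i| = e^{n-1}\, n! = |G(e,e,n)|$. Moreover, the shape of the elements of $\Lambda_i$ matches exactly the shape of the blocks $R\!E_i(w)$ produced by Algorithm \ref{Algo1}, so by Proposition \ref{PropREwRedExp} the natural map $\Lambda_2 \times \cdots \times \Lambda_n \longrightarrow G(e,e,n)$ obtained by taking products in the group is a bijection. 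This already shows $|\Lambda| = |G(e,e,n)|$, and in fact identifies $\Lambda$ set-theoretically (in the group) with the set of geodesic normal forms.

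The main step is therefore to prove that the $R_0$-submodule $R_0\Lambda \subseteq H(e,e,n)$ is stable under left multiplication by every generator $x \in \{t_0, t_1, \ldots, t_{e-1}, s_3, \ldots, s_n\}$; since $1 \in \Lambda$, this will yield $H(e,e,n) = R_0\Lambda$. I would handle this by reverse induction on the index $i$ of the block being acted upon, treating a general element $\mu = \mu_2 \mu_3 \cdots \mu_n$ with $\mu_i \in \Lambda_i$ and analyzing $x \mu_2 \cdots \mu_n$ by mimicking, inside $H(e,e,n)$, the combinatorial step-by-step simplification performed by Algorithm \ref{Algo1}. Each time a repeated generator $x^2$ is created during this rewriting, the quadratic relation $x^2 = ax + 1$ of Definition \ref{DefPresH(de,e,n)} replaces it by a linear combination of two strictly shorter products, so the recursion terminates. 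The braid relations (items $1$--$5$ of Definition \ref{DefPresB(de,e,n)CorranLeeLee}) are then used to shuffle intermediate factors back into normal form, exactly as they were used implicitly in Algorithm \ref{Algo1}.

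The hard part will be the case analysis for left multiplication by $t_k$ and by $s_3$, because these generators interact nontrivially with the whole block $\Lambda_i$ and the rewriting splits into subcases governed by the relative positions $c_{i-1}, c_i$ of the nonzero matrix entries, in parallel with Proposition \ref{PropLengthdecreas}. Concretely, I would show that $t_k \cdot \Lambda_i$ and $s_j \cdot \Lambda_i$ are contained in the $R_0$-span of $\Lambda_{i-1}\Lambda_i \cup \Lambda_i$, with the correction terms supplied by the quadratic relations whenever a length-decreasing situation from Proposition \ref{PropLengthdecreas} arises. A technical but routine bookkeeping argument, using the relation $t_it_{i-1} = t_jt_{j-1}$ to keep the subscripts of $t$-generators inside the allowed range, then passes from the local closure statements to global closure of $R_0\Lambda$ under all generators.

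Once spanning is established, the equality $|\Lambda| = |G(e,e,n)|$ and Proposition 2.3$(i)$ of \cite{MarinG20G21} imply that $\Lambda$ is an $R_0$-basis of $H(e,e,n)$, completing the proof of Theorem \ref{TheoremNewBasis} and, via Proposition 2.3$(ii)$ of the same reference, furnishing the promised new proof of the BMR freeness conjecture for $G(e,e,n)$.
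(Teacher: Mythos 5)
Your overall strategy is the same as the paper's: count $|\Lambda|=e^{n-1}n!=|G(e,e,n)|$, prove that $\Lambda$ spans $H(e,e,n)$ over $R_0$ by showing stability under left multiplication by the generators, and conclude via Proposition~2.3$(i)$ of \cite{MarinG20G21}. The gap lies in the central step, which you compress into two assertions that do not hold as stated and whose repair is essentially the entire content of the paper's proof. Your local closure claim that $t_k\cdot\Lambda_i$ and $s_j\cdot\Lambda_i$ lie in the $R_0$-span of $\Lambda_{i-1}\Lambda_i\cup\Lambda_i$ is false in general: already $t_l\cdot(s_i\cdots s_3t_k)=s_i\cdots s_4\,t_ls_3t_k$ produces, for $i\geq 4$, a $\Lambda_2$-type prefix $t_m$ or $t_mt_0$ that is not an element of $\Lambda_{i-1}$, and in the genuinely difficult configurations (the analogues of Lemmas \ref{Lemma7}, \ref{Lemma9}, \ref{Lemma10}) the correction terms only land in $\mathrm{Span}(S_{n-1}^{*}\Lambda_n)$, i.e.\ arbitrary words over the smaller generating set times $\Lambda_n$. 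Converting such terms back into $\mathrm{Span}(\Lambda)$ is exactly where the induction hypothesis that $\Lambda_2\cdots\Lambda_{n-1}$ spans $H(e,e,n-1)$ must be invoked; this induction on $n$ (with separate base cases $n=2$ and $n=3$, Propositions \ref{PropInductionHypothesis} and \ref{PropCasen=3}) also explains why, after the reduction, the only generator requiring new work is $s_n$ rather than $t_k$ and $s_3$ as you suggest. This passage from local to global closure is not ``routine bookkeeping''; it is the engine of the proof and needs the correct formulation to run at all.

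Second, your termination mechanism (``whenever a square $x^2$ appears, the quadratic relation yields strictly shorter products'') does not reach the interactions that carry the real difficulty. A product $t_jt_i$ with $i\neq j$ contains no square and admits no directly applicable braid relation; the paper has to prove the identity $t_jt_i=t_{j-1}t_{i-1}+a(t_i-t_{j-1})$ and iterate it down to some $t_kt_0$ (Lemma \ref{LemmaTLTKInVect}), and then control $t_kt_0$ through powers of $t_1t_0$ (Lemma \ref{LemmaTkT0}); likewise Case~5 of Proposition \ref{PropCasen=3} proceeds by a descent on powers of $(t_1t_0)$ rather than by any immediate length decrease, and Lemma \ref{LmScholie1} is needed to absorb the terms $s_n\cdots s_4s_3^2s_4\cdots s_i$. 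Without these auxiliary computations (or equivalents), the spanning claim — the only nontrivial part of Theorem \ref{TheoremNewBasis} — remains unproved, so your argument as written has a genuine gap even though its framing, including the final appeal to Proposition~2.3 of \cite{MarinG20G21}, coincides with the paper's.
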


In order to prove this theorem, it is shown in Proposition 2.3 $(i)$ of \cite{MarinG20G21} that it is enough to find a spanning set of $H(e,e,n)$ over $R_0$ of $|G(e,e,n)|$ elements. This is a general fact about Hecke algebras associated to complex reflection groups. We have $|\Lambda_2| = 2e$, $|\Lambda_3| = 3e$, $\cdots$, and $|\Lambda_n| = ne$ by the definition of $\Lambda_2$, $\cdots$, and $\Lambda_n$. Thus, $|\Lambda|$ is equal to $e^{n-1}n!$ that is the order of $G(e,e,n)$. If we manage to prove that $\Lambda$ is a spanning set of $H(e,e,n)$ over $R_0$, then we get Theorem \ref{TheoremNewBasis}. Denote by Span$(S)$ the sub-$R_0$-module of $H(e,e,n)$ generated by $S$.\\

We prove Theorem \ref{TheoremNewBasis} by induction on $n \geq 2$. Propositions \ref{PropInductionHypothesis} and \ref{PropCasen=3} correspond to cases $n=2$ and $n=3$, respectively. Suppose that $\Lambda_2 \cdots \Lambda_{n-1}$ is an $R_0$-basis of $H(e,e,n-1)$. As mentioned above, in order to prove that $\Lambda = \Lambda_2 \cdots \Lambda_{n}$ is an $R_0$-basis of $H(e,e,n)$, it is enough to show that it is an $R_0$-generating set of $H(e,e,n)$, that is $\Lambda$ stable under left multiplication by $t_0, \cdots , t_{e-1}, s_3, \cdots$, and $s_n$. Since $\Lambda_2 \cdots \Lambda_{n-1}$ is an $R_0$-basis of $H(e,e,n-1)$, the set $\Lambda_2 \cdots \Lambda_n$ is stable under left multiplication by $t_0, \cdots , t_{e-1},s_3, \cdots$, and $s_{n-1}$. We prove that it is stable under left multiplication by $s_n$, that is $s_n(a_2 \cdots a_n) = a_2 \cdots a_{n-2}s_n(a_{n-1}a_n)$ belongs to Span$(\Lambda)$ for $a_2 \in \Lambda_2$, $\cdots$, and $a_n \in \Lambda_n$. We will check now all the different possibilities for $a_{n-1} \in \Lambda_{n-1}$ and $a_n \in \Lambda_n$.\\

Let us consider the case $n > 3$. If $a_{n-1} =1$ and $a_n \in \Lambda_n$, it is obvious that $s_n(a_{n-1}a_n)$ belongs to Span$(\Lambda_{n-1}\Lambda_n)$. Also, if $a_{n-1} \in \Lambda_{n-1}$ and $a_n = 1$, it is obvious that $s_n(a_{n-1}a_n)$ belongs to Span$(\Lambda_{n-1}\Lambda_n)$. If $a_{n-1} = s_{n-1} \cdots s_{i}$ for $2 \leq i \leq n-1$, we distinguish $3$ different cases for $a_n$ that belongs to $\Lambda_n$. This is done in Lemmas \ref{Lemma2}, \ref{Lemma3}, and \ref{Lemma4} below. If $a_{n-1} = s_{n-1} \cdots s_{3}t_k$ for $0 \leq k \leq e-1$, we also distinguish $3$ different cases for $a_n \in \Lambda_n$. This is done in Lemmas \ref{Lemma5}, \ref{Lemma6}, and \ref{Lemma7}. Finally, if $a_{n-1} = s_{n-1} \cdots s_{3}t_ks_2 \cdots s_{i}$ for $1 \leq k \leq e-1$ and $2 \leq i \leq n-1$, we also check the $3$ different possibilities for $a_n \in \Lambda_n$. This is done in Lemmas \ref{Lemma8}, \ref{Lemma9}, and \ref{Lemma10}.\\

In order to prove all this, we first need to establish the following two preliminary lemmas.

\begin{lemma}\label{LemmaTLTKInVect}

For $i, j \in \mathbb{Z}/e\mathbb{Z}$, we have $t_j t_i \in$ Span$(\Lambda_2)$.

\end{lemma}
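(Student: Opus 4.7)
The strategy is to establish a clean ``shift identity''
\[
t_j t_i \;=\; t_{j+1} t_{i+1} + a(t_j - t_{i+1}) \qquad (i,j \in \mathbb{Z}/e\mathbb{Z}),
\]
and then iterate it until the right index reaches $0$, at which point the leading factor lies in $\Lambda_2$. This uses only the defining relations of $H(e,e,n)$, so no recursion through the inductive hypothesis on $n$ is needed.

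To prove the shift identity, I would first record the auxiliary identity
\[
t_l(t_1 t_0) \;=\; t_l(t_l t_{l-1}) \;=\; t_l^2 t_{l-1} \;=\; (at_l + 1) t_{l-1} \;=\; a(t_1 t_0) + t_{l-1}
\]
for every $l \in \mathbb{Z}/e\mathbb{Z}$, where I apply the circular relation $t_1 t_0 = t_l t_{l-1}$ twice and the quadratic relation $t_l^2 = a t_l + 1$ once. From the circular relation I also obtain $t_{i+1} = (t_1 t_0)\, t_i^{-1}$, with $t_i^{-1} = t_i - a$ from the quadratic relation. Combining these:
\[
t_{j+1} t_{i+1} \;=\; t_{j+1}(t_1 t_0)\, t_i^{-1} \;=\; \bigl(a(t_1 t_0) + t_j\bigr)(t_i - a) \;=\; a\, t_{i+1} + t_j t_i - a t_j,
\]
where I reuse $(t_1 t_0) t_i^{-1} = t_{i+1}$. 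Rearranging yields the shift identity.

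The case $i = 0$ is immediate: $t_j t_0 \in \Lambda_2$ when $j \neq 0$, and $t_0^2 = a t_0 + 1 \in \mathrm{Span}(\Lambda_2)$ when $j = 0$. For $1 \leq i \leq e-1$, iterating the shift identity $m := e - i$ times gives
\[
t_j t_i \;=\; t_{j+m}\, t_{i+m} + a \sum_{p=0}^{m-1}\bigl(t_{j+p} - t_{i+p+1}\bigr),
\]
indices read modulo $e$. Since $i + m \equiv 0 \pmod{e}$, the leading term is $t_{(j-i) \bmod e}\, t_0$, which lies in $\Lambda_2$ when $j \not\equiv i \pmod{e}$, and equals $a t_0 + 1 \in \mathrm{Span}(\Lambda_2)$ otherwise via the quadratic relation. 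Each remaining term $t_{j+p}$ or $t_{i+p+1}$ is a single generator, already in $\Lambda_2$. Therefore $t_j t_i \in \mathrm{Span}(\Lambda_2)$.

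The main obstacle is arriving at the shift identity itself, since a naive induction on the right index $i$ produces length-three intermediate monomials (for instance $t_1 t_0 t_{i-1}$) and is not guaranteed to terminate. The cleanliness of the present approach comes from the uniformly available circular relation $t_1 t_0 = t_l t_{l-1}$, which lets the central factor $t_1 t_0$ be introduced and eliminated symmetrically, turning a potentially tangled case analysis into a single telescoping sum.
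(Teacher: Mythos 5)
Your proof is correct and takes essentially the same route as the paper's: the paper derives the identical recursion $t_jt_i = t_{j-1}t_{i-1} + a(t_i - t_{j-1})$ (your shift identity read downwards) by multiplying the relation $t_it_{i-1}=t_jt_{j-1}$ on the left by $t_j$ and on the right by $t_{i-1}$ and applying the quadratic relations, and then iterates it until the second index reaches $0$, landing on a term $t_kt_0$ plus single generators. Your derivation of the same recursion via $t_i^{-1}=t_i-a$ and the auxiliary identity $t_l(t_1t_0)=a(t_1t_0)+t_{l-1}$ is only a mild repackaging of that manipulation, and the telescoping conclusion is identical.
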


\begin{proof}

If $i=j$, then we have $t_i^2 = at_i + 1 \in$ Span$(\Lambda_2)$. Suppose $i \neq j$. We have\\
$t_i t_{i-1} = t_j t_{j-1}$. If we multiply by $t_j$ on the left and by $t_{i-1}$ on the right, we get\\
$t_j t_i t_{i-1}^2 = t_j^{2} t_{j-1}t_{i-1}$. Using the quadratic relations, we have\\
$t_jt_i(a t_{i-1} +1) = (a t_j + 1)t_{j-1}t_{i-1}$, that is\\
$a t_j t_i t_{i-1} + t_jt_i = a t_j t_{j-1} t_{i-1} + t_{j-1}t_{i-1}$.\\
Replacing $t_i t_{i-1}$ by $t_jt_{j-1}$ and  $t_jt_{j-1}$ by $t_i t_{i-1}$, we get\\
$at_j^2 t_{j-1} + t_jt_i = at_it_{i-1}^2 + t_{j-1}t_{i-1}$. Using the quadratic relations, we have\\
$a(at_j +1) t_{j-1} + t_jt_i = at_i(at_{i-1}+1) + t_{j-1}t_{i-1}$, that is\\
$a^2 t_1t_0 + at_{j-1} + t_jt_i = a^2t_1t_0 + a t_i + t_{j-1}t_{i-1}$. Simplifying this relation, we get
\begin{center} $t_jt_i = t_{j-1}t_{i-1} + a(t_i - t_{j-1})$.\end{center}
Now, we apply the same operations to compute $t_{j-1}t_{i-1}$ and so on until we arrive to a term of the form $t_kt_0$ for some $k \in \mathbb{Z}/e\mathbb{Z}$. Thus, if $i \neq j$, then $t_jt_i$ belongs to Span$(\Lambda_2 \setminus \{1\})$.

\end{proof}

\begin{lemma}\label{LemmaTkT0}

For $1 \leq k \leq e-1$, we have $t_kt_0 \in R_0(t_1t_0) + R_0(t_1t_0)^2 + \cdots + R_0(t_1t_0)^k + R_0t_1 + R_0t_2 + \cdots + R_0t_{k-1}$.

\end{lemma}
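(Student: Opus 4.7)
The plan is to argue by induction on $k \geq 1$, setting $c := t_1 t_0$. The base case $k = 1$ is immediate, since $t_1 t_0 = c \in R_0 c$.

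For the inductive step, two very cheap identities in $H(e,e,n)$ do all the work. The quadratic relation gives $t_j^{-1} = t_j - a$, and the defining relation $t_i t_{i-1} = t_j t_{j-1}$ for all $i,j$ says that $t_{j+1} t_j = t_1 t_0 = c$ for every $j \in \mathbb{Z}/e\mathbb{Z}$. The first consequence, obtained by writing $t_k = c\, t_{k-1}^{-1}$ and expanding the inverse, is
$$
t_k = c(t_{k-1} - a) = c\, t_{k-1} - ac,
$$
so that $t_k t_0 = c\, t_{k-1} t_0 - a c\, t_0$. The second consequence, which is the key telescoping identity, comes from combining $c = t_{j+1} t_j$ with $t_j^2 = a t_j + 1$:
$$
c\, t_j \;=\; t_{j+1} t_j \cdot t_j \;=\; t_{j+1}(a t_j + 1) \;=\; ac + t_{j+1}, \qquad j \in \mathbb{Z}/e\mathbb{Z}.
$$
In particular $c\, t_0 = ac + t_1$, which I would use to rewrite the right-hand side above as
$$
t_k t_0 \;=\; c\, t_{k-1} t_0 \;-\; a^2 c \;-\; a\, t_1.
$$

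To finish, I would invoke the induction hypothesis to expand $t_{k-1} t_0$ as an $R_0$-linear combination of $c, c^2, \ldots, c^{k-1}, t_1, \ldots, t_{k-2}$ and then left-multiply by $c$. Each power $c^i$ becomes $c^{i+1}$, which still lies in the desired span, while each term $c\, t_j$ with $1 \leq j \leq k-2$ becomes $ac + t_{j+1}$ by the telescoping identity, contributing to $R_0 c$ and to $R_0 t_{j+1}$ with $2 \leq j+1 \leq k-1$. Reincorporating the remaining summands $-a^2 c - a t_1$, the whole expression lies in $\sum_{i=1}^{k} R_0\, c^i + \sum_{j=1}^{k-1} R_0\, t_j$, which is exactly the statement. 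There is no real obstacle here; the only subtle point is noticing the identity $c\, t_j = ac + t_{j+1}$, which is what allows the induction to close in a single clean pass without ever invoking Lemma \ref{LemmaTLTKInVect} (whose reduction introduces indices like $t_{e-1}$ that would lie outside the target span).
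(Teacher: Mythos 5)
Your proof is correct and follows essentially the same route as the paper: induction on $k$, the recursion $t_kt_0 = (t_1t_0)(t_{k-1}t_0) - a(t_1t_0)t_0$, and the identity $(t_1t_0)t_j = a(t_1t_0) + t_{j+1}$ to absorb the terms that appear after left-multiplying the induction hypothesis by $t_1t_0$. The only cosmetic difference is that you derive the recursion from $t_k = (t_1t_0)t_{k-1}^{-1}$ with $t_{k-1}^{-1} = t_{k-1} - a$, whereas the paper multiplies $t_{k+1}t_k = t_kt_{k-1}$ by $t_{k+1}$ on the left and $t_0$ on the right and then applies the quadratic relation.
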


\begin{proof}

We prove the property by induction on $k$. The property is clearly satisfied for $k = 1$. Let $k \geq 2$. Suppose $t_{k-1}t_0 \in R_0(t_1t_0) + R_0(t_1t_0)^2 + \cdots + R_0(t_1t_0)^{k-1} + R_0t_1 + R_0t_2 + \cdots + R_0t_{k-2}$. We have that $t_{k+1}t_k = t_kt_{k-1}$. Multiplying by $t_{k+1}$ on the left and by $t_0$ on the right, we get $t_{k+1}^2t_kt_0 =t_{k+1} t_kt_{k-1}t_0$. Using the quadratic relations and replacing $t_{k+1} t_k$ with $t_1t_0$, we get $(at_{k+1} +1)t_kt_0 =t_1 t_0t_{k-1}t_0$. After simplifying this relation, we have $t_kt_0 = t_1t_0(t_{k-1}t_0) - a(t_1t_0)t_0$. Using the induction hypothesis, we replace $t_{k-1}t_0$ by its value and we get $t_kt_0 \in t_1t_0(R_0(t_1t_0) + R_0(t_1t_0)^2 + \cdots + R_0(t_1t_0)^{k-1} + R_0t_1 + R_0t_2 + \cdots + R_0t_{k-2}) + R_0(t_1t_0)t_0$. This is equal to $R_0(t_1t_0)^2 + R_0(t_1t_0)^3 + \cdots + R_0(t_1t_0)^{k} + R_0(t_1t_0)t_1 + R_0(t_1t_0)t_2 + \cdots + R_0(t_1t_0)t_{k-2} + R_0(t_1t_0)t_0$. Now $(t_1t_0)t_m$ is equal to $t_{m+1}t_mt_m \in R_0t_1t_0 + R_0t_{m+1}$ for $1 \leq m \leq k-2$ and $(t_1t_0)t_0 \in R_0(t_1t_0) + R_0t_1$. It follows that $t_kt_0 \in R_0(t_1t_0) + R_0(t_1t_0)^2 + \cdots + R_0(t_1t_0)^k + R_0t_1 + R_0t_2 + \cdots + R_0t_{k-1}$.

\end{proof}

\begin{proposition}\label{PropInductionHypothesis}

Let $x = t_l$ with $l \in \mathbb{Z}/e\mathbb{Z}$. For all $a_2 \in \Lambda_2$, we have $xa_2$ belongs to Span$(\Lambda_2)$.

\end{proposition}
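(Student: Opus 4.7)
The plan is to verify the claim by examining each of the three possible forms that an element $a_2 \in \Lambda_2$ can take, and in each case reduce the verification to either Lemma \ref{LemmaTLTKInVect} or the quadratic relation $t_0^2 = at_0 + 1$. Since $|\Lambda_2| = 2e$ and the element $x = t_l$ is fixed, this amounts to a short finite check.

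First, when $a_2 = 1$, the product $xa_2 = t_l$ already lies in $\Lambda_2$, so there is nothing to do. Next, when $a_2 = t_k$ for some $0 \leq k \leq e-1$, we have $xa_2 = t_l t_k$, which by Lemma \ref{LemmaTLTKInVect} already lies in $\mathrm{Span}(\Lambda_2)$. Finally, when $a_2 = t_k t_0$ for some $1 \leq k \leq e-1$, I would write $xa_2 = (t_l t_k) t_0$ and first apply Lemma \ref{LemmaTLTKInVect} to express $t_l t_k$ as an $R_0$-linear combination of elements of $\Lambda_2$. The task is then reduced to checking that right multiplication by $t_0$ preserves $\mathrm{Span}(\Lambda_2)$.

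This last verification is performed on each of the three types of elements of $\Lambda_2$: the product $1 \cdot t_0 = t_0$ belongs to $\Lambda_2$; the product $t_j \cdot t_0$ either equals $t_j t_0 \in \Lambda_2$ when $j \neq 0$, or becomes $t_0^2 = a t_0 + 1 \in \mathrm{Span}(\Lambda_2)$ when $j = 0$, using the quadratic relation; and the product $(t_j t_0) \cdot t_0 = t_j (a t_0 + 1) = a\, t_j t_0 + t_j$ again lies in $\mathrm{Span}(\Lambda_2)$ by the same quadratic relation, for every $1 \leq j \leq e-1$.

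Since the statement is essentially an immediate corollary of Lemma \ref{LemmaTLTKInVect} combined with the quadratic relation on $t_0$, I do not anticipate any substantive obstacle. The only minor point of care is to split cases $j = 0$ and $j \neq 0$ when right-multiplying a generator $t_j$ by $t_0$, so that the quadratic relation is invoked exactly when needed. This proposition serves as the base case $n = 2$ for the induction used to establish Theorem \ref{TheoremNewBasis}.
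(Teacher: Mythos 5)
Your proof is correct and follows the same route as the paper, which simply records this proposition as an immediate consequence of Lemma \ref{LemmaTLTKInVect}; your case analysis (including the right multiplication by $t_0$ handled via the quadratic relation $t_0^2 = at_0+1$) just makes explicit the details that the paper leaves unstated.
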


\begin{proof}
This is an immediate consequence of Lemma \ref{LemmaTLTKInVect}.
\end{proof}

\begin{remark}\label{RemarkTwoConjugHecke}

We excluded the case $n = 2$ and $e$ even in Definition \ref{DefPresH(de,e,n)}. In this case, there are two conjugacy classes of reflections. One can define $H(e,e,2)$ for $e$ even in the same way as Definition \ref{DefPresH(de,e,n)} with $R_0 = \mathbb{Z}[a_1,a_2]$ and two types of polynomial relations $t_i^2 - a_1 t_i - 1 = 0$, $t_i$'s corresponding to the first conjugacy class and $t_j^2 - a_2 t_j - 1 = 0$, $t_j$'s corresponding to the second conjugacy class. Similarly to Proposition \ref{PropInductionHypothesis}, one shows that $\Lambda_2$ is stable under left multiplication by $t_i$ for $i \in \mathbb{Z}/e\mathbb{Z}$. Then, by Proposition (2.3) (i) of \cite{MarinG20G21}, $\Lambda_2$ is an $R_0$-basis of $H(e,e,2)$ for $e$ even.

\end{remark}

\begin{proposition}\label{PropCasen=3}

For all $a_2 \in \Lambda_2$ and $a_3 \in \Lambda_3$, the element $s_3(a_2a_3)$ belongs to Span$(\Lambda_2\Lambda_3)$.

\end{proposition}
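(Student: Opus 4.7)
The proof is a finite case analysis on the three types of $a_2 \in \Lambda_2$ (namely $1$, $t_l$, and $t_l t_0$) and the five types of $a_3 \in \Lambda_3$ (namely $1$, $s_3$, $s_3 t_k$, $s_3 t_k t_0$, and $s_3 t_k t_0 s_3$). The tools needed are the quadratic relations $s_3^2 = a s_3 + 1$ and $t_i^2 = a t_i + 1$, the braid relation $s_3 t_i s_3 = t_i s_3 t_i$ for every $i \in \mathbb{Z}/e\mathbb{Z}$, and Lemma~\ref{LemmaTLTKInVect}, which rewrites any product $t_j t_i$ as an element of Span$(\Lambda_2)$.

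When $a_2 = 1$, the quadratic relation in $s_3$ immediately converts $s_3 \cdot a_3$ into $a \cdot a_3$ plus the tail of $a_3$, and each possible tail ($1$, $t_k$, $t_k t_0$, or $t_k t_0 s_3$) is by inspection already a factor in $\Lambda_2 \Lambda_3$. When $a_2 = t_l$, the cases $a_3 = 1$ and $a_3 = s_3$ are immediate: the first gives $s_3 t_l \in \Lambda_3$, and the second gives $t_l s_3 t_l \in \Lambda_2 \cdot \Lambda_3$ via one braid move. When $a_2 = t_l$ and $a_3$ begins with $s_3$ followed by one or two $t$'s (possibly with a trailing $s_3$), the leading factor $s_3 t_l s_3$ is replaced by $t_l s_3 t_l$, shifting the problem to simplifying $t_l \cdot (\text{word with a single remaining } s_3)$; applying Lemma~\ref{LemmaTLTKInVect} to collapse the resulting product $t_l t_k$ into Span$(\Lambda_2)$ expresses the total as a sum of terms $b \cdot s_3 b'$ with $b \in \Lambda_2$ and $b' \in \{1, t_m, t_m t_0\}$, which are visibly in $\Lambda_2 \Lambda_3$. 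The trailing $s_3$ in the subcase $a_3 = s_3 t_k t_0 s_3$ is absorbed by one additional braid move after the preceding reduction.

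The main obstacle is the block $a_2 = t_l t_0$ combined with $a_3$ of the form $s_3 t_k$, $s_3 t_k t_0$, or $s_3 t_k t_0 s_3$, because the word $s_3 t_l t_0 a_3$ contains the subword $t_0 s_3$ that admits no direct braid or quadratic simplification. I plan to resolve this via the auxiliary identity
\begin{equation*}
t_0 s_3 \;=\; s_3 t_0 s_3 t_0 \;-\; a\, s_3 t_0 s_3,
\end{equation*}
derived by multiplying the braid relation $s_3 t_0 s_3 = t_0 s_3 t_0$ on the right by $t_0$ and using $t_0^2 = a t_0 + 1$ (together with $t_0 s_3 t_0 = s_3 t_0 s_3$ to reabsorb one term). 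Substituting this identity into $s_3 t_l (t_0 s_3) \cdots$ trades the problematic $t_0 s_3$ factor for $s_3$-initial words to which the $a_2 = t_l$ reductions above apply. After a further round of braid moves and applications of Lemma~\ref{LemmaTLTKInVect}, each surviving summand has the form $b \cdot s_3 b' \cdot s_3^{\varepsilon}$ with $b, b' \in \Lambda_2$ and $\varepsilon \in \{0, 1\}$, hence lies in Span$(\Lambda_2 \Lambda_3)$. Carrying out this bookkeeping for each of the three remaining combinations completes the verification.
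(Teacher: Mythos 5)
Your case organization matches the paper's, and the easy blocks are fine: $a_2=1$ or $a_3\in\{1,s_3\}$, and the whole block $a_2=t_l$, are handled exactly as in the paper's Cases 1--4, using the braid relation $s_3t_ls_3=t_ls_3t_l$ and Lemma~\ref{LemmaTLTKInVect}. The gap is in the block you yourself identify as the main obstacle, $a_2=t_lt_0$ with $a_3\in\{s_3t_k,\,s_3t_kt_0,\,s_3t_kt_0s_3\}$: the identity $t_0s_3=s_3t_0s_3t_0-a\,s_3t_0s_3$ does not break the deadlock. Substituting it into $s_3t_lt_0s_3t_k$ and braiding the resulting prefix $s_3t_ls_3$ into $t_ls_3t_l$ gives
\[
s_3t_lt_0s_3t_k \;=\; t_l\,s_3t_lt_0s_3\,t_0t_k \;-\; a\,t_l\,\bigl(s_3t_lt_0s_3t_k\bigr),
\]
so the original word reappears on the right-hand side, and the other summand, after collapsing $t_0t_k$ via Lemma~\ref{LemmaTLTKInVect}, is again a combination of words $s_3(t_lt_0)(s_3t_m)$ and $s_3(t_lt_0)(s_3t_mt_0)$ --- precisely the cases you are trying to prove. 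Worse, since $1+at_l=t_l^2$, the displayed relation is equivalent to $t_l\cdot s_3t_lt_0s_3t_k=s_3t_lt_0s_3t_0t_k$, which is nothing but the braid relation $t_0s_3t_0=s_3t_0s_3$ rearranged inside the word: no reduction has occurred. Consequently your claim that ``a further round of braid moves and applications of Lemma~\ref{LemmaTLTKInVect}'' brings every summand to the form $b\,s_3\,b'\,s_3^{\varepsilon}$ with $b,b'\in\Lambda_2$ is unsubstantiated; the surviving summands $s_3t_lt_0s_3t_m$ are not of that form, and the procedure as described is circular with no terminating measure.

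What is missing is a genuine device for dismantling the factor $t_kt_0$. The paper supplies it with Lemma~\ref{LemmaTkT0}, which writes $t_kt_0$ as an $R_0$-combination of powers $(t_1t_0)^x$ and single generators $t_x$; the crucial point in its Case 5 is then that $t_1t_0$ may be rewritten as $t_{j+1}t_j$ for \emph{any} index $j$ (relation 1 of the presentation), so one can always match indices to create a subword $t_js_3t_j$, braid it to $s_3t_js_3$, absorb a square, and strictly decrease the power of $(t_1t_0)$ --- that power is the terminating measure your argument lacks. Unless you import Lemma~\ref{LemmaTkT0} (or set up an equivalent induction, e.g.\ on $k$ via $t_kt_0=t_1t_0(t_{k-1}t_0)-a(t_1t_0)t_0$) together with this index-shifting trick, the cases $a_2=t_kt_0$ with $a_3\neq 1,s_3$ remain unproved, and with them the proposition.
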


\begin{proof}

The case where $a_1 \in \Lambda_1$ and $a_2 = 1$ is obvious. The case where $a_1 = 1$ and $a_2 \in \Lambda_2$ is also obvious.

\emph{Case 1}. Suppose $a_2 = t_k$ for $0 \leq k \leq e-1$ and $a_3 = s_3$.\\
We have $s_3t_ks_3 = t_k\underline{s_3t_k} \in$ Span$(\Lambda_2\Lambda_3)$.

\emph{Case 2}. Suppose $a_2 = t_k$ for $0 \leq k \leq e-1$ and $a_3 = s_3t_l$ for $0 \leq l \leq e-1$.\\
We have $\underline{s_3t_ks_3}t_l = t_ks_3t_kt_l$. After replacing $t_kt_l$ by its decomposition over $\Lambda_2$ (see Lemma \ref{LemmaTLTKInVect}), we directly have $s_3t_ks_3t_l \in$ Span$(\Lambda_2\Lambda_3)$.

\emph{Case 3}. Suppose $a_2 = t_k$ for $0 \leq k \leq e-1$ and $a_3 = s_3t_lt_0$ or $a_3 = s_3t_lt_0s_3$ for $1 \leq l \leq e-1$. We have $\underline{s_3t_ks_3}t_lt_0s_3 = t_ks_3t_kt_lt_0s_3$. By replacing $t_kt_l$ by its value (see Lemma \ref{LemmaTLTKInVect}), we obviously have $s_3t_ks_3t_lt_0$ and $s_3t_ks_3t_lt_0s_3$ belong to Span$(\Lambda_2\Lambda_3)$.

\emph{Case 4}. Suppose $a_2 = t_kt_0$ for $1 \leq k \leq e-1$ and $a_3 = s_3$. We have $s_3(a_2a_3) = s_3t_kt_0s_3 \in$ Span$(\Lambda_2\Lambda_3)$.

\emph{Case 5}. Suppose $a_2 = t_kt_0$ with $1 \leq k \leq e-1$ and $a_3 = s_3t_l$ with $0 \leq l \leq e-1$.\\
We have $s_3(a_2a_3) = s_3t_kt_0s_3t_l$. Recall that by Lemma \ref{LemmaTkT0}, we have $t_kt_0 \in R_0(t_1t_0) + R_0(t_1t_0)^2 + \cdots + R_0(t_1t_0)^k + R_0t_1 + R_0t_2 + \cdots + R_0t_{k-1}$. Replacing $t_kt_0$ by its value, we have to deal with the following two terms:\\
$s_3 t_x s_3t_l$ with $1 \leq x \leq k-1$ and
$s_3 (t_1t_0)^x s_3t_l$ with $1 \leq x \leq k$.\\
The first term is done in Case 2. For the second term, we decrease the power of $(t_1t_0)$ and use $t_1t_0 = t_{l+1}t_l$ to get
$s_3 (t_1t_0)^{x-1}t_{l+1}\underline{t_l s_3t_l}$. We apply a braid relation and then get
$s_3 (t_1t_0)^{x-1}t_{l+1}s_3t_ls_3$. Again, we decrease the power of $(t_1t_0)$ and use $t_1t_0 = t_{l+2}t_{l+1}$. We get $s_3 (t_1t_0)^{x-2}t_{l+2}t_{l+1}^2s_3t_ls_3 \in R_0 s_3(t_1t_0)^{x-2}t_{l+2}t_{l+1}s_3t_ls_3 + R_0s_3 (t_1t_0)^{x-2}t_{l+2}s_3t_ls_3$. We continue by decreasing the power of $(t_1t_0)$ and we get in the next step that $s_3(a_2 a_3)$ belongs to
$R_0 s_3(t_1t_0)^{x-3}t_{l+1}s_3t_{l}s_3^3$
$+ R_0 s_3(t_1t_0)^{x-3}t_{l+2}s_3t_{l}s_3^2$\\
$+ R_0 s_3 (t_1t_0)^{x-3}t_{l+1}s_3t_{l}s_3^2$
$+ R_0 s_3 (t_1t_0)^{x-3}t_{l+3}s_3t_ls_3$.
Inductively, we arrive to terms of the form
$s_3 t_1t_0t_{x'}s_3t_l(s_3)^{x''}$ ($0 \leq x' \leq e-1$ and $x'' \in \mathbb{N}$). Replace $t_1t_0$ by $t_{x'+1}t_{x'}$, we get $s_3 t_{x'+1}(t_{x'})^2s_3t_l(s_3)^{x''}$ which belongs to
$R_0 s_3 t_{x'+1}t_{x'}s_3t_l(s_3)^{x''}$
$+ R_0 s_3 t_{x'+1}s_3t_l(s_3)^{x''}$. Replacing $t_{x'+1}t_{x'}$ by $t_{l+1}t_l$ and applying a braid relation in the first term, we get
$R_0 s_3 t_{l+1}s_3t_l(s_3)^{x''+1}$
$+ R_0 s_3 t_{x'+1}s_3t_l(s_3)^{x''}$.
Since $s_3^2=as_3+1$, it remains to deal with these $2$ terms:
\begin{center}$s_3 t_xs_3t_l$ and $s_3 t_xs_3t_ls_3$, for some $0 \leq x \leq e-1$.\end{center} It is easily checked that they belong to Span$(\Lambda_2 \Lambda_3)$.

\emph{Case 6}. Suppose $a_2 = t_kt_0$ and $a_3 = s_3t_lt_0$ or $a_3 = s_3t_lt_0s_3$ ($1 \leq k,l \leq e-1$).\\
By Case 5, we get two terms of the form $s_3 t_xs_3t_l$ and $s_3 t_xs_3t_ls_3$. Multiplying them on the right by $t_0$ then by $s_3$, we get that $s_3(a_2a_3)$ obviously belongs to Span$(\Lambda_2 \Lambda_3)$.

\end{proof}

In the sequel, we will indicate by (1) the operation that shifts the underlined letters to the left, by (2) the operation that applies braid relations, and by (3) the one that applies quadratic relations. The following lemma is useful in the proofs of Lemmas \ref{Lemma7}, \ref{Lemma9},  and \ref{Lemma10}. Denote by $S_{n-1}^{*}$ the set of the words over $\{t_0, \cdots, t_{e-1}, s_3, \cdots, s_{n-1} \}$.\\

\begin{lemma}\label{LmScholie1}

Let $3 \leq i \leq n$. We have $s_n \cdots s_4s_3^2 s_4 \cdots s_i$ belongs to Span$(S_{n-1}^{*}\Lambda_n)$. 

\end{lemma}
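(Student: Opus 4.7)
\emph{Proof plan.} I will prove the lemma by iterated use of the quadratic relation $s_k^2 = as_k + 1$, reducing at each stage to a smaller expression of the same shape while extracting a linear term that already lies in $\mathrm{Span}(S_{n-1}^{*}\Lambda_n)$. The base case $i=3$ is immediate, since $s_n \cdots s_4 s_3^2 = a(s_n \cdots s_4 s_3) + (s_n \cdots s_4)$ and both summands lie in $\Lambda_n$.

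For $i \ge 4$, applying $s_3^2 = as_3 + 1$ splits the left-hand side as
\begin{equation*}
s_n \cdots s_4 s_3^2 s_4 \cdots s_i = a\bigl(s_n \cdots s_4 s_3 s_4 \cdots s_i\bigr) + \bigl(s_n \cdots s_5\, s_4^2\, s_5 \cdots s_i\bigr).
\end{equation*}
The second summand has exactly the same shape as the original, with the index $3$ replaced by $4$, so if each ``linear'' summand can be placed in $\mathrm{Span}(S_{n-1}^{*}\Lambda_n)$, then iterating on $j = 3, 4, \ldots, i-1$ reduces the problem at each step to a similar expression with index $j+1$. The iteration terminates with the residual $s_n \cdots s_{i+1}\, s_i^2 = a(s_n \cdots s_i) + (s_n \cdots s_{i+1})$, which is a linear combination of elements of $\Lambda_n$ (handling separately the boundary case $i=n$, where $s_n \cdots s_{i+1}$ is empty).

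The core algebraic identity, to be established by a small sub-induction, is
\begin{equation*}
s_n s_{n-1} \cdots s_{j+1} s_j s_{j+1} \cdots s_i = (s_j s_{j+1} \cdots s_{i-1}) \cdot (s_n s_{n-1} \cdots s_{j+1} s_j), \qquad 3 \le j \le i-1.
\end{equation*}
Setting $Y_\ell := s_n \cdots s_{\ell+1} s_\ell s_{\ell+1} \cdots s_i$, the braid move $s_{j+1} s_j s_{j+1} = s_j s_{j+1} s_j$ applied to the central triple, combined with the commutations $s_j s_k = s_k s_j$ valid for $|k - j| \ge 2$, yields $Y_j = s_j \, Y_{j+1}\, s_j$ (the freed $s_j$ on the left slides past $s_{j+2}, \ldots, s_n$, and the one on the right slides past $s_{j+2}, \ldots, s_i$). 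Unfolding this recursion down to $Y_i = s_n \cdots s_i$ and recombining the decreasing blocks $s_n \cdots s_i$ and $s_{i-1} \cdots s_j$ into $s_n \cdots s_j$ produces the desired right-hand side. Since $i - 1 \le n - 1$, the left factor $s_j \cdots s_{i-1}$ lies in $S_{n-1}^{*}$, while $s_n \cdots s_{j+1} s_j \in \Lambda_n$ by definition of $\Lambda_n$, yielding the required containment.

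The main obstacle is bookkeeping: verifying the identity above and handling the boundary cases ($j = i-1$ and $i = n$) cleanly while respecting the empty-word Convention~\ref{ConventionDecreaseIncreaseIndex}. None of the individual manipulations is technically deep, but the termination of the iteration and the correct identification of each extracted factor as an element of $S_{n-1}^{*}$ times an element of $\Lambda_n$ must be checked systematically.
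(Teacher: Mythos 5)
Your proposal is correct and follows essentially the same route as the paper: iterate the quadratic relations $s_k^2 = as_k+1$ to peel off linear terms, and show that each term $s_n\cdots s_{k+1}s_ks_{k+1}\cdots s_i$ equals $(s_k\cdots s_{i-1})(s_n\cdots s_k)$ via braid and commutation relations. Your recursion $Y_j = s_jY_{j+1}s_j$ is just a cleaner packaging of the paper's letter-shifting computation, and your treatment of the terminal residual ($s_n\cdots s_{i+1}s_i^2$ instead of the paper's $s_n\cdots s_{i+1}s_is_{i-1}^2s_i$) is an equivalent bookkeeping choice.
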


\begin{proof}

If $i=3$, we have $s_n \cdots s_4s_3^2 \in R_0 s_n \cdots s_4s_3 + R_0 s_n \cdots s_4$ (the last term is equal to $1$ if $n=3$). If $i = 4$, we have $s_n \cdots s_4 s_3^2s_4 \in R_0 s_n \cdots s_4s_3s_4 + R_0 s_n \cdots s_4^2 \in R_0 s_3 s_n \cdots s_3 + R_0 s_n \cdots s_4 + R_0 s_n \cdots s_5$. The last term is equal to $1$ if $n = 4$. Let $i \geq 5$.
We have $s_n \cdots s_4s_3^2 s_4 \cdots s_i$ belongs to $R_0 s_n \cdots s_4 s_3 s_4 \cdots s_i + R_0 s_n \cdots s_5 s_4^2 s_5 \cdots s_i$. We apply the quadratic relation $s_4^2 = as_4+1$ to the second term and get\\
$R_0 s_n \cdots s_5 s_4 s_5 \cdots s_i + R_0 s_n \cdots s_6 s_5^2 s_6 \cdots s_i$. And so on, we apply quadratic relations. We get terms of the form $s_n \cdots s_{k+1}s_ks_{k+1} \cdots s_i$ with $k+1 \leq i$ and a term of the form $s_n \cdots s_{i+1}s_is_{i-1}^{2}s_i$.\\
We have $s_n \cdots s_{i+1}s_is_{i-1}^{2}s_i$ belongs to\\
$R_0 s_n \cdots s_{i+1}s_is_{i-1}s_i + R_0 s_n \cdots s_{i+1}s_i + R_0 s_n \cdots s_{i+1} \subseteq \\ R_0 s_{i-1} s_n \cdots s_{i-1} + R_0 s_n \cdots s_{i+1}s_i + R_0 s_n \cdots s_{i+1}$ (the last term is equal to $1$ if $i = n$). Hence it belongs to Span$(S_{n-1}^{*}\Lambda_n)$.\\
The other terms are of the form $s_n \cdots s_{k+1}s_ks_{k+1} \cdots s_i$ ($k+1 \leq i$). We have
$\begin{array}{ll}
s_n \cdots \underline{s_{k+1}s_ks_{k+1}} \cdots s_i & \stackrel{(2)}{=}\\
s_n \cdots s_{k+2}\underline{s_k} s_{k+1}s_k \underline{s_{k+2}} \cdots s_{i} & \stackrel{(1)}{=}\\
s_k s_n \cdots \underline{s_{k+2} s_{k+1}s_{k+2}}s_k s_{k+3} \cdots s_{i} & \stackrel{(2)}{=}\\
s_k s_n \cdots \underline{s_{k+1}} s_{k+2}s_{k+1}s_k s_{k+3} \cdots s_{i} & \stackrel{(1)}{=}
\end{array}$\\
$\begin{array}{l}
s_k s_{k+1} s_n \cdots s_{k+2}s_{k+1}s_k \underline{s_{k+3}} \cdots \underline{s_{i-1}} s_{i}.
\end{array}$\\
We apply the same operations to $\underline{s_{k+3}}$, $\cdots$, $\underline{s_{i-1}}$ and get\\
$\begin{array}{ll}
s_k s_{k+1} \cdots s_{i-2} s_n \cdots s_{k} \underline{s_{i}} & \stackrel{(1)}{=}\\
s_k s_{k+1} \cdots s_{i-2} s_n \cdots \underline{s_i s_{i-1}s_i} s_{i-2} \cdots s_k & \stackrel{(2)}{=}\\
s_k s_{k+1} \cdots s_{i-2} s_n \cdots s_{i+1}\underline{s_{i-1}} s_{i}s_{i-1} s_{i-2} \cdots s_k & \stackrel{(1)}{=}\\
s_ks_{k+1} \cdots s_{i-1} s_ns_{n-1} \cdots s_{k}, &
\end{array}$\\
which belongs to Span$(S_{n-1}^{*}\Lambda_n)$.

\end{proof}

\begin{lemma}\label{Lemma2}

If $a_{n-1} = s_{n-1}s_{n-2} \cdots s_i$ with $3 \leq i \leq n-1$ and $a_n = s_n s_{n-1} \cdots s_{i'}$ with $3 \leq i' \leq n$, then $s_n (a_{n-1}a_n)$ belongs to Span$(\Lambda_{n-1}\Lambda_n)$.

\end{lemma}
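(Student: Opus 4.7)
The plan is to use braid relations to pull the letters of $a_n = s_n s_{n-1} \cdots s_{i'}$ one at a time through the left factor $s_n a_{n-1} = s_n s_{n-1} \cdots s_i$, and then invoke the quadratic relation $s_i^2 = a s_i + 1$ at most once if the pulling gets stuck. Writing $\sigma_{n,j} := s_n s_{n-1} \cdots s_j$, the goal is to evaluate $s_n(a_{n-1}a_n) = \sigma_{n,i} \cdot \sigma_{n,i'}$ modulo Span$(\Lambda_{n-1}\Lambda_n)$.

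The first step is to establish the commutation identity
\[
\sigma_{n,i} \cdot s_k \;=\; s_{k-1} \cdot \sigma_{n,i} \qquad (i+1 \leq k \leq n).
\]
This follows by commuting the trailing $s_k$ past $s_i, s_{i+1}, \ldots, s_{k-2}$ (all having index at most $k-2$, so they commute with $s_k$), applying the braid relation $s_k s_{k-1} s_k = s_{k-1} s_k s_{k-1}$, and then commuting the newly produced leftmost $s_{k-1}$ past $s_n, s_{n-1}, \ldots, s_{k+1}$.

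Iterating this identity, I would pull letters of $\sigma_{n,i'}$ off its left end one at a time, each $s_k$ becoming $s_{k-1}$ on the far left. If $i' > i$ (which forces $i' \geq 4$ since $i \geq 3$), every letter of $\sigma_{n,i'}$ satisfies $k \geq i' \geq i+1$, so all of them can be pulled, yielding
\[
s_n(a_{n-1}a_n) \;=\; \sigma_{n-1,\,i'-1} \cdot \sigma_{n,i} \;\in\; \Lambda_{n-1}\Lambda_n,
\]
since $3 \leq i'-1 \leq n-1$ and $3 \leq i \leq n-1$. If instead $i' \leq i$, pulling $s_n, s_{n-1}, \ldots, s_{i+1}$ leaves a tail $s_i s_{i-1} \cdots s_{i'}$ on the right, giving
\[
s_n(a_{n-1}a_n) \;=\; \sigma_{n-1,i} \cdot \sigma_{n,i} \cdot s_i s_{i-1} \cdots s_{i'}.
\]
The quadratic relation yields $\sigma_{n,i} \cdot s_i = s_n \cdots s_{i+1} \cdot s_i^2 = a\,\sigma_{n,i} + \sigma_{n,i+1}$. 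On the first summand, extending the decreasing run gives $\sigma_{n,i} \cdot s_{i-1}\cdots s_{i'} = \sigma_{n,i'}$; on the second, $\sigma_{n,i+1}$ involves only $s_{i+1},\ldots,s_n$ and hence commutes past each of $s_{i-1},\ldots,s_{i'}$ (indices differing by at least $2$), and then $\sigma_{n-1,i} \cdot s_{i-1}\cdots s_{i'} = \sigma_{n-1,i'}$. Reassembling,
\[
s_n(a_{n-1}a_n) \;=\; a\,\sigma_{n-1,i}\,\sigma_{n,i'} + \sigma_{n-1,i'}\,\sigma_{n,i+1} \;\in\; \text{Span}(\Lambda_{n-1}\Lambda_n).
\]

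The main obstacle is bookkeeping: the commutation identity must be justified cleanly, and one must verify that the resulting factors genuinely lie in $\Lambda_{n-1}$ and $\Lambda_n$. The hypotheses $3 \leq i \leq n-1$ and $3 \leq i' \leq n$ are used precisely to rule out the potentially problematic edge cases (namely $i'-1 = 2$ in the first branch, which is excluded because $i' > i \geq 3$ forces $i' \geq 4$, and $i+1 = n+1$ in the second branch, excluded by $i \leq n-1$).
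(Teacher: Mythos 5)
Your argument is correct and follows essentially the same route as the paper's proof: both push the letters of $a_n$ through $s_n a_{n-1}$ by commutation and braid relations (your identity $\sigma_{n,i}\,s_k = s_{k-1}\,\sigma_{n,i}$ is exactly the paper's iterated step), applying a single quadratic relation when $i' \leq i$. You arrive at the same final expressions as the paper, namely $s_{n-1}\cdots s_{i'-1}\,s_n\cdots s_i$ when $i<i'$ and $a\,s_{n-1}\cdots s_i\,s_n\cdots s_{i'} + s_{n-1}\cdots s_{i'}\,s_n\cdots s_{i+1}$ when $i\geq i'$.
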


\begin{proof}

\emph{Suppose $i < i'$.} We have $s_n(a_{n-1}a_n)$ is equal to\\
$\begin{array}{ll}
s_ns_{n-1}s_{n-2} \cdots s_i\underline{s_n}s_{n-1} \cdots s_{i'} & \stackrel{(1)}{=}\\
\underline{s_ns_{n-1}s_n}s_{n-2} \cdots s_{i}s_{n-1} \cdots s_{i'} & \stackrel{(2)}{=}\\
s_{n-1}s_ns_{n-1}s_{n-2} \cdots s_{i}\underline{s_{n-1}} \cdots s_{i'} & \stackrel{(1)}{=}\\
s_{n-1}s_n\underline{s_{n-1}s_{n-2}s_{n-1}} \cdots s_{i}s_{n-2} \cdots s_{i'} & \stackrel{(2)}{=}\\
s_{n-1}s_n\underline{s_{n-2}}s_{n-1}s_{n-2} \cdots s_{i}s_{n-2} \cdots s_{i'} & \stackrel{(1)}{=} 
\end{array}$\\
$\begin{array}{l}
s_{n-1}s_{n-2}s_ns_{n-1}s_{n-2} \cdots s_{i}\underline{s_{n-2}} \cdots \underline{s_{i'}}.
\end{array}$\\
We apply the same operations to the underlined letters $\underline{s_{n-2}} \cdots \underline{s_{i'}}$ in order to get
$s_{n-1}s_{n-2}\cdots s_{i'-1}s_ns_{n-1} \cdots s_i$ which belongs to Span$(\Lambda_{n-1}\Lambda_n)$.\\

\emph{Suppose $i \geq i'$.} We have $s_n(a_{n-1}a_n)$ is equal to $s_ns_{n-1} \cdots s_i\underline{s_n}s_{n-1} \cdots s_{i'}$. We apply the operations (1) and (2) and get $s_{n-1}s_{n}s_{n-1} \cdots s_{i}\underline{s_{n-1}} \cdots s_{i'}$. Then we apply the same operations to $\underline{s_{n-1}}$ and get $s_{n-1}s_{n-2}s_ns_{n-1} \cdots s_i \underline{s_{n-2} \cdots s_{i'}}$.
Since $i \geq i'$, we write $s_{i+2}s_{i+1}s_i$ in $\underline{s_{n-2} \cdots s_{i'}}$ and get $s_{n-1}s_{n-2}s_ns_{n-1} \cdots s_i \underline{s_{n-2}} \cdots \underline{s_{i+2}}s_{i+1}s_i \cdots s_{i'}$. Similarly, we apply the same operations to $\underline{s_{n-2}}$, $\cdots$, $\underline{s_{i+2}}$ and get\\
$\begin{array}{ll}
s_{n-1} \cdots s_{i+1}s_n \cdots \underline{s_{i+1}s_is_{i+1}}s_{i}\cdots s_{i'} & \stackrel{(2)}{=}\\
s_{n-1} \cdots s_{i+1}s_n \cdots s_{i+2}\underline{s_{i}}s_{i+1}s_{i}^2s_{i-1} \cdots s_{i'} & \stackrel{(1)}{=}\\
s_{n-1} \cdots s_{i}s_n \cdots s_{i+2}s_{i+1}\underline{s_{i}^2}s_{i-1} \cdots s_{i'} & \stackrel{(3)}{=}
\end{array}$\\
$\begin{array}{l} a s_{n-1} \cdots s_{i}s_n \cdots s_{i+1}s_is_{i-1}\cdots s_{i'} +  s_{n-1} \cdots s_{i}s_n \cdots s_{i+2}s_{i+1}\underline{s_{i-1}}\cdots \underline{s_{i'}}.\end{array}$\\
The first term belongs to Span$(\Lambda_{n-1}\Lambda_n)$. For the second term, the underlined letters commute with $s_n \cdots s_{i+2}s_{i+1}$ hence they are shifted to the left. We thus get $s_n(a_{n-1}a_n)$ is equal to
$a s_{n-1} \cdots s_{i}s_n \cdots s_{i'} +  s_{n-1} \cdots s_{i'} s_n \cdots s_{i+1}$ which belongs to Span$(\Lambda_{n-1}\Lambda_n)$.

\end{proof}

\begin{lemma}\label{Lemma3}

If $a_{n-1} = s_{n-1} \cdots s_i$ with $3 \leq i \leq n-1$ and $a_n = s_n \cdots s_{3}t_{k}$ with $0 \leq k \leq e-1$, then $s_n (a_{n-1}a_n)$ belongs to Span$(\Lambda_{n-1}\Lambda_n)$.

\end{lemma}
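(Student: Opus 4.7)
The plan is to reduce this statement directly to Lemma~\ref{Lemma2} by isolating the trailing $t_k$ and exploiting that $t_k$ commutes with $s_j$ for every $j \geq 4$. Writing $a_n = (s_n s_{n-1} \cdots s_3)\, t_k$, I would first replay on the prefix $s_n \cdot s_{n-1} \cdots s_i \cdot s_n s_{n-1} \cdots s_3$ the chain of braid, commutation and quadratic moves carried out in the ``$i \geq i'$'' half of the proof of Lemma~\ref{Lemma2} (specialised to $i' = 3$). The output of that computation is the explicit identity
$$s_n s_{n-1} \cdots s_i \cdot s_n s_{n-1} \cdots s_3 \;=\; a\,(s_{n-1} \cdots s_i)(s_n s_{n-1} \cdots s_3) + (s_{n-1} \cdots s_3)(s_n s_{n-1} \cdots s_{i+1})$$
inside $H(e,e,n)$, and multiplying both sides on the right by $t_k$ reduces Lemma~\ref{Lemma3} to placing the two resulting summands into $\Lambda_{n-1}\Lambda_n$.

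The first summand is already in the right shape: $s_{n-1} \cdots s_i \in \Lambda_{n-1}$ (the slot $s_{n-1}\cdots s_{i'}$ with $i' = i \geq 3$), and $s_n \cdots s_3 t_k \in \Lambda_n$ (the slot indexed by $k$). For the second summand, $(s_{n-1}\cdots s_3)(s_n s_{n-1}\cdots s_{i+1})\, t_k$, the hypothesis $i \geq 3$ gives $i+1 \geq 4$, so the commutation relations $s_j t_k = t_k s_j$ ($4 \leq j \leq n$) let me migrate $t_k$ leftwards through the entire block $s_n s_{n-1} \cdots s_{i+1}$. After this migration the term becomes $(s_{n-1} \cdots s_3 t_k)(s_n s_{n-1} \cdots s_{i+1})$, whose left factor lies in $\Lambda_{n-1}$ (the slot $s_{n-1} \cdots s_3 t_k$) and whose right factor lies in $\Lambda_n$ (the slot $s_n \cdots s_{i'}$ with $i' = i+1 \in [4,n]$, which is well defined since $i \leq n-1$).

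The main obstacle, such as it is, is purely bookkeeping: one must verify while replaying Lemma~\ref{Lemma2} that none of the braid, commutation, or quadratic relations invoked there involve the trailing $t_k$, only the generators $s_j$ with $j \geq 3$; otherwise the reduction above would not apply cleanly. Inspection of that proof shows this is indeed the case, since all manipulations stay inside the sub-alphabet $\{s_3, s_4, \ldots, s_n\}$. The boundary case $i = n-1$ (where $s_n \cdots s_{i+1} = s_n$) and the case $i = 3$ (where $s_{n-1} \cdots s_i = s_{n-1} \cdots s_3$) both fit into the same scheme without modification, so the conclusion follows.
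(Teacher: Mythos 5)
Your proposal is correct and follows essentially the same route as the paper: the paper's proof likewise invokes the case $i \geq i'$ of Lemma \ref{Lemma2} with $i' = 3$, multiplies the resulting two-term identity on the right by $t_k$, and shifts $t_k$ leftwards through $s_n \cdots s_{i+1}$ via the commutation relations to land both summands in $\Lambda_{n-1}\Lambda_n$. Your extra check that the Lemma \ref{Lemma2} manipulations stay within the alphabet $\{s_3,\dots,s_n\}$ is exactly the point that makes the reduction legitimate.
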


\begin{proof}

This corresponds to the case $i'=3$ in the proof of Lemma \ref{Lemma2} with a right multiplication by $t_k$ for $0 \leq k \leq e-1$. Since $i \geq 3$, by the case \mbox{$i \geq i'$} of Lemma \ref{Lemma2}, we have $s_n(a_{n-1}a_n) = a s_{n-1} \cdots s_{i}s_n \cdots s_{3}t_k +  s_{n-1} \cdots s_{3} s_n \cdots s_{i+1} t_k$. In the second term, $t_k$ commutes with $s_n \cdots s_{i+1}$ hence it is shifted to the left. We get $s_n(a_{n-1}a_n) = a s_{n-1} \cdots s_{i}s_n \cdots s_{3}t_k +  s_{n-1} \cdots s_{3}t_k s_n \cdots s_{i+1}$ which belongs to Span$(\Lambda_{n-1}\Lambda_n)$.

\end{proof}

\begin{lemma}\label{Lemma4}

If $a_{n-1} = s_{n-1} \cdots s_i$ with $3 \leq i \leq n-1$ and $a_n = s_n \cdots s_{3}t_{k}s_2s_3 \cdots s_{i'}$ with $1 \leq k \leq e-1$ and $2 \leq i' \leq n$, then $s_n (a_{n-1}a_n)$ belongs to Span$(\Lambda_{n-1}\Lambda_n)$.

\end{lemma}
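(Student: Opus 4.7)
The plan is to reduce to Lemma \ref{Lemma3} by factoring $a_n = (s_n \cdots s_3 t_k) \cdot (s_2 s_3 \cdots s_{i'})$ and then absorbing the trailing block $s_2 s_3 \cdots s_{i'}$ into either the $\Lambda_{n-1}$-factor or the $\Lambda_n$-factor of each resulting summand. Applying Lemma \ref{Lemma3} to $s_n \cdot a_{n-1} \cdot (s_n \cdots s_3 t_k)$ yields
\[
s_n(a_{n-1}a_n) = a\, s_{n-1}\cdots s_i (s_n \cdots s_3 t_k) s_2 s_3 \cdots s_{i'} \;+\; s_{n-1}\cdots s_3 t_k \, s_n \cdots s_{i+1}\, s_2 s_3 \cdots s_{i'}.
\]
The first summand is already in $R_0\cdot \Lambda_{n-1}\Lambda_n$ because $s_{n-1}\cdots s_i \in \Lambda_{n-1}$ and $s_n \cdots s_3 t_k s_2 s_3 \cdots s_{i'} \in \Lambda_n$; so all the work concerns the second summand $T$.

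In $T$, I would first use that $s_2$ commutes with $s_4,\ldots,s_n$ (recall $i+1\geq 4$) to slide $s_2$ leftward across $s_n\cdots s_{i+1}$, producing $T = (s_{n-1}\cdots s_3 t_k s_2)\cdot s_n\cdots s_{i+1}\cdot s_3 s_4 \cdots s_{i'}$. Then, since $s_j$ commutes with $s_p$ for $p\geq j+2$, each of $s_3,s_4,\ldots,s_{i-1}$ commutes past $s_n\cdots s_{i+1}$ and can be absorbed into the left factor. The remaining situation depends on $i'$: if $i' \leq i-1$ the whole block $s_3\cdots s_{i'}$ commutes across and we are done; if $i' \geq i$ there is a leftover tail $s_i s_{i+1}\cdots s_{i'}$ that must be handled.

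The crux of the argument is the identity
\[
s_n s_{n-1}\cdots s_{i+1}\cdot s_i s_{i+1}\cdots s_{i'} \;=\; s_i s_{i+1}\cdots s_{i'-1}\cdot s_n s_{n-1}\cdots s_i \qquad (i+1\leq i'\leq n).
\]
I would prove this by induction on $m := i'-i\geq 1$. The base case $m=1$ uses a single braid relation $s_{i+1}s_i s_{i+1}=s_i s_{i+1}s_i$ and a commutation to push $s_i$ to the left, exactly as in the sub-case $i'=i+1$ which is implicit in the sub-calculations. For the inductive step, one multiplies the induction hypothesis on the right by $s_{i+m}$ and applies the auxiliary identity $s_n\cdots s_i\cdot s_{i+m} = s_{i+m-1}\cdot s_n\cdots s_i$, itself obtained by commuting $s_{i+m}$ past $s_{i+m-2},\ldots,s_i$, applying the braid relation $s_{i+m}s_{i+m-1}s_{i+m}=s_{i+m-1}s_{i+m}s_{i+m-1}$, and then commuting the leftmost $s_{i+m-1}$ past $s_n,\ldots,s_{i+m+1}$.

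Using this identity, $T$ rewrites as $s_{n-1}\cdots s_3 t_k s_2 s_3 \cdots s_{i'-1} \cdot s_n \cdots s_i$, with the left factor in $\Lambda_{n-1}$ (since $2\leq i'-1\leq n-1$) and the right factor in $\Lambda_n$. For $i'=i$, an analogous but simpler computation (using only commutations of $s_i$ with $s_{i+2},\ldots,s_n$) gives $T = s_{n-1}\cdots s_3 t_k s_2 \cdots s_{i-1}\cdot s_n \cdots s_i \in \Lambda_{n-1}\Lambda_n$, and the case $i' = 2$ is immediate. The main obstacle is the combinatorial identity above, since the $i'\geq i+1$ case requires intertwining braid and commutation relations; everything else is bookkeeping, of the same flavour as Lemmas \ref{Lemma2} and \ref{Lemma3}.
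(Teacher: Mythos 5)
Your proposal is correct and follows essentially the same route as the paper: the same application of Lemma \ref{Lemma3} followed by right multiplication by $s_2\cdots s_{i'}$, the same commutation argument when $i'<i$, and the same braid-and-commutation shuffle leading to $s_{n-1}\cdots s_3 t_k s_2\cdots s_{i'-1}\,s_n\cdots s_i$ when $i'\geq i$. The only difference is presentational: you package the shuffle as an explicit identity $s_n\cdots s_{i+1}\,s_i\cdots s_{i'}=s_i\cdots s_{i'-1}\,s_n\cdots s_i$ proved by induction, where the paper carries out the same moves inline.
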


\begin{proof}

According to Lemma \ref{Lemma3}, we have\\ $s_n(a_{n-1}a_n) = a s_{n-1} \cdots s_{i}s_n \cdots s_{3}t_ks_2 \cdots s_{i'} +  s_{n-1} \cdots s_{3}t_k s_n \cdots s_{i+1} s_2 \cdots s_{i'}$.
The first term is an element of Span$(\Lambda_{n-1}\Lambda_n)$. We check that the second term also belongs to Span$(\Lambda_{n-1}\Lambda_n)$. Actually,

\emph{If $i' < i$}, the second term is equal to
$s_{n-1} \cdots s_{3}t_k s_n \cdots s_{i+1} \underline{s_2} \cdots \underline{s_{i'}}$.\\ The underlined letters commute with $s_n \cdots s_{i+1}$ and are shifted to the left. We get $s_{n-1} \cdots s_{3}t_ks_2 \cdots s_{i'}s_n \cdots s_{i+1} \in$ Span$(\Lambda_{n-1}\Lambda_n)$.

\emph{If $i' \geq i$}, we write $s_{i-1}s_is_{i+1}$ in $s_2 \cdots s_{i'}$ and get $s_{n-1} \cdots s_{3}t_k s_n \cdots s_{i+1} (s_2 \cdots s_{i'}) = $\\
$\begin{array}{ll}
s_{n-1} \cdots s_{3}t_k s_n \cdots s_{i+1} (\underline{s_2} \cdots \underline{s_{i-1}}s_is_{i+1} \cdots s_{i'}) & \stackrel{(1)}{=}\\
s_{n-1} \cdots s_{3}t_k s_2 \cdots s_{i-1} s_n \cdots \underline{s_{i+1} (s_is_{i+1}} \cdots s_{i'}) & \stackrel{(2)}{=}\\
s_{n-1} \cdots s_{3}t_k s_2 \cdots s_{i-1} s_n \cdots \underline{s_{i}} s_{i+1} s_{i} (s_{i+2} \cdots s_{i'}) & \stackrel{(1)}{=}\\
s_{n-1} \cdots s_{3}t_k s_2 \cdots s_{i-1}s_i s_n \cdots s_{i+1} s_{i} (\underline{s_{i+2}} \cdots s_{i'}) & \stackrel{(1)}{=}\\
s_{n-1} \cdots s_{3}t_k s_2 \cdots s_{i-1}s_i s_n \cdots \underline{s_{i+2}s_{i+1} s_{i+2}} s_{i}(s_{i+3} \cdots s_{i'}) & \stackrel{(2)}{=}\\
s_{n-1} \cdots s_{3}t_k s_2 \cdots s_{i-1}s_i s_n \cdots \underline{s_{i+1}}s_{i+2} s_{i+1} s_{i}(s_{i+3} \cdots s_{i'}) & \stackrel{(1)}{=}
\end{array}$\\
$\begin{array}{l}
s_{n-1} \cdots s_{3}t_k s_2 \cdots s_i s_{i+1} s_n \cdots s_{i+2} s_{i+1} s_{i}(\underline{s_{i+3}} \cdots \underline{s_{i'}}).
\end{array}$\\
We apply the same operations to the underlined letters $\underline{s_{i+3}}$, $\cdots$, $\underline{s_{i'}}$. We finally get $s_{n-1} \cdots s_{3}t_ks_2 \cdots s_{i'-1}s_n \cdots s_i \in$ Span$(\Lambda_{n-1}\Lambda_n)$.

\end{proof}

\begin{lemma}\label{Lemma5}

If $a_{n-1} = s_{n-1} \cdots s_3t_k$ with $0 \leq k \leq e-1$ and $a_n = s_n \cdots s_{i}$ with $3 \leq i \leq n$, then $s_n (a_{n-1}a_n)$ belongs to Span$(\Lambda_{n-1}\Lambda_n)$.

\end{lemma}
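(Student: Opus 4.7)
My plan is to expand $s_n(a_{n-1}a_n) = s_n\, s_{n-1}\cdots s_3\, t_k\, s_n\, s_{n-1}\cdots s_i$ and to use the commutations $t_k s_j = s_j t_k$ for $j \geq 4$ in order to push the letter $t_k$ as far to the right as possible. Once $t_k$ has been moved past all the commuting generators, what remains is a pure braid word on the $s_j$'s to which Lemma~\ref{Lemma2} (already established) applies directly. The final step will be to read off the resulting expression as a product of an element of $\Lambda_{n-1}$ and an element of $\Lambda_n$.

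The argument will naturally split into the two subcases $i \geq 4$ and $i = 3$. If $i \geq 4$, then $t_k$ commutes past the entire tail $s_n s_{n-1}\cdots s_i$, and one obtains
$$s_n(a_{n-1}a_n) = s_n\cdot (s_{n-1}\cdots s_3)\cdot (s_n s_{n-1}\cdots s_i)\cdot t_k.$$
The braid word $s_n\cdot (s_{n-1}\cdots s_3)(s_n s_{n-1}\cdots s_i)$ is exactly of the shape treated by Lemma~\ref{Lemma2} with the roles $a_{n-1} = s_{n-1}\cdots s_3$ and $a_n = s_n\cdots s_i$; since $3 < i$, we are in the subcase $i < i'$ of that lemma, whose conclusion is $s_{n-1}\cdots s_{i-1}\cdot s_n s_{n-1}\cdots s_3$. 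Reattaching the trailing $t_k$ gives $(s_{n-1}\cdots s_{i-1})(s_n s_{n-1}\cdots s_3\, t_k) \in \Lambda_{n-1}\Lambda_n$, as required.

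If $i = 3$, then $t_k$ can only be pushed as far as the factor $t_k s_3$, leaving
$$s_n(a_{n-1}a_n) = s_n\cdot (s_{n-1}\cdots s_3)\cdot (s_n s_{n-1}\cdots s_4)\cdot t_k s_3.$$
Applying Lemma~\ref{Lemma2} to the braid prefix (this time with $i' = 4 > 3$) rewrites it as $s_{n-1}\cdots s_3\cdot s_n s_{n-1}\cdots s_3$, so the trailing factor becomes $s_3\, t_k s_3$. I will then invoke the braid relation $s_3 t_k s_3 = t_k s_3 t_k$ and commute the newly released $t_k$ back past $s_n s_{n-1}\cdots s_4$, collapsing the whole expression to
$$(s_{n-1}\cdots s_3\, t_k)\cdot (s_n s_{n-1}\cdots s_3\, t_k),$$
which is visibly a product of an element of $\Lambda_{n-1}$ and an element of $\Lambda_n$.

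The main obstacle will be the boundary case $i = 3$: because $s_3$ and $t_k$ do not commute, one cannot simply move $t_k$ fully to the right and then reduce to Lemma~\ref{Lemma2}, so a separate appeal to the braid relation $s_3 t_k s_3 = t_k s_3 t_k$ is needed to repackage the word into the required normal form. Apart from this step, the proof is entirely routine, relying only on the commutations already recorded and on the previously established Lemma~\ref{Lemma2}.
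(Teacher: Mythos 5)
Your proof is correct and arrives at exactly the paper's final normal forms — $(s_{n-1}\cdots s_{i-1})(s_n\cdots s_3t_k)$ for $i\geq 4$ and $(s_{n-1}\cdots s_3t_k)(s_n\cdots s_3t_k)$ for $i=3$ — via essentially the same manipulations: the braid-shuffling that the paper carries out directly is what you import from the $i<i'$ computation inside the proof of Lemma~\ref{Lemma2}, and the $i=3$ case is handled by the same relation $s_3t_ks_3=t_ks_3t_k$ followed by commuting $t_k$ leftward. The only (harmless) organizational difference is that you park $t_k$ on the right first and reuse the identity established in the proof of Lemma~\ref{Lemma2} (note it is that computation, not the Span statement of the lemma, that you invoke), whereas the paper redoes the shifting with $t_k$ in place; this reuse is exactly the style the paper itself adopts in proving Lemma~\ref{Lemma3}.
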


\begin{proof}

We have $s_n(a_{n-1}a_n)$ is equal to\\
$\begin{array}{ll}
s_ns_{n-1} \cdots s_3t_k \underline{s_n} \cdots s_i & \stackrel{(1)}{=}\\
\underline{s_{n}s_{n-1}s_n} \cdots s_3t_ks_{n-1} \cdots s_i  & \stackrel{(2)}{=}\\
s_{n-1}s_{n}s_{n-1} \cdots s_3t_k \underline{s_{n-1}} \cdots s_i & \stackrel{(1)}{=}\\
s_{n-1}s_{n}\underline{s_{n-1}s_{n-2}s_{n-1}} \cdots s_3t_k s_{n-2} \cdots s_i & \stackrel{(2)}{=}\\
s_{n-1}s_{n} \underline{s_{n-2}}s_{n-1}s_{n-2} \cdots s_3t_k s_{n-2} \cdots s_i & \stackrel{(1)}{=}\\
\end{array}$\\
$\begin{array}{l}
s_{n-1} s_{n-2} s_{n}s_{n-1} \cdots s_3t_k \underline{s_{n-2}} \cdots \underline{s_i}.
\end{array}$ \mbox{Now we apply the same operations for $\underline{s_{n-2}}$, $\cdots$, $\underline{s_i}$.}

\emph{If $i=3$}, we get $s_{n-1} \cdots s_3 s_{n}s_{n-1} \cdots \underline{s_3t_ks_3}$. Next, we apply a braid relation to get $s_{n-1} \cdots s_3 s_{n}s_{n-1} \cdots \underline{t_k}s_3t_k$, then we shift $\underline{t_k}$ to the left and we finally get $s_{n-1} \cdots s_3 t_k s_{n}s_{n-1} \cdots s_3t_k$ which belongs to Span$(\Lambda_{n-1}\Lambda_n)$.\\
\emph{If $i > 3$}, we directly get 
$ s_{n-1} \cdots s_is_{i-1} s_{n} \cdots s_3t_k$ which also belongs to Span$(\Lambda_{n-1}\Lambda_n)$. 

\end{proof}

\begin{lemma}\label{Lemma6}

If $a_{n-1} = s_{n-1} \cdots s_3t_k$ with $0 \leq k \leq e-1$ and $a_n = s_n \cdots s_{3}t_l$ with $0 \leq l \leq e-1$, then $s_n (a_{n-1}a_n)$ belongs to Span$(\Lambda_{n-1}\Lambda_n)$.

\end{lemma}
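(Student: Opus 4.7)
The plan is to reduce this lemma to the already established Lemma \ref{Lemma5} (in the special case $i=3$) together with Lemma \ref{LemmaTLTKInVect} which handles products of two $t_m$'s. The element to analyze differs from the $i=3$ instance of Lemma \ref{Lemma5} only by an extra factor $t_l$ sitting at the rightmost position, so the first step is to peel this factor off and recycle the computation already done there.

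More precisely, I would begin by writing
\[
s_n(a_{n-1}a_n) \;=\; \bigl(s_n\cdot s_{n-1}\cdots s_3\, t_k \cdot s_n s_{n-1}\cdots s_3\bigr)\, t_l,
\]
so that the parenthesized factor is exactly what was treated in Lemma \ref{Lemma5} for the value $i=3$. Applying the chain of braid moves used there (namely shifting $s_n$ to the left through the $t_k$ and the $s_j$'s with $j\leq n-2$, using the $s_{n-1}s_n s_{n-1}=s_n s_{n-1}s_n$ and $s_j s_{j-1} s_j = s_{j-1}s_j s_{j-1}$ relations, then resolving the central block $s_3 t_k s_3 = t_k s_3 t_k$ and shifting the freed $t_k$ to the left through the commuting $s_j$'s with $j\geq 4$) yields the equality
\[
s_n\cdot s_{n-1}\cdots s_3\, t_k\cdot s_n s_{n-1}\cdots s_3 \;=\; s_{n-1}\cdots s_3\, t_k \cdot s_n s_{n-1}\cdots s_3\, t_k.
\]
Reinserting the trailing $t_l$ then gives
\[
s_n(a_{n-1}a_n) \;=\; s_{n-1}\cdots s_3\, t_k \cdot s_n s_{n-1}\cdots s_3 \cdot (t_k t_l).
\]

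The second step is to expand the product $t_k t_l$ using Lemma \ref{LemmaTLTKInVect}, which expresses it as an $R_0$-linear combination of elements of $\Lambda_2$, that is, of $1$, the $t_m$ with $0\leq m\leq e-1$, and the $t_m t_0$ with $1\leq m\leq e-1$. Each of the resulting summands can be recognized as a product of an element of $\Lambda_{n-1}$ and an element of $\Lambda_n$: the left factor $s_{n-1}\cdots s_3 t_k$ is always in $\Lambda_{n-1}$, while on the right we encounter $s_n s_{n-1}\cdots s_3$, or $s_n s_{n-1}\cdots s_3 t_m$, or $s_n s_{n-1}\cdots s_3 t_m t_0 = s_n\cdots s_3 t_m s_2$, all three patterns explicitly present in $\Lambda_n$ (using the identification $t_0 = s_2$ and the form $s_n \cdots s_3 t_m s_2\cdots s_{i'}$ with $i'=2$).

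The main obstacle is really just bookkeeping: making sure the computation from Lemma \ref{Lemma5} transcribes verbatim even in the presence of the trailing $t_l$. Since every braid move performed in Lemma \ref{Lemma5} takes place strictly to the left of the final letter $s_3$ of the $s_n\cdots s_3$ block, and $t_l$ sits to the right of everything that is manipulated, associativity guarantees that the reduction proceeds unchanged; no new non-commuting interaction between $t_l$ and the moving letters can appear. Thus the only genuine content beyond Lemma \ref{Lemma5} is the invocation of Lemma \ref{LemmaTLTKInVect} to rewrite $t_k t_l$, after which inclusion in $\mathrm{Span}(\Lambda_{n-1}\Lambda_n)$ is immediate.
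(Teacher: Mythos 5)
Your proposal is correct and follows essentially the same route as the paper: the paper likewise invokes Lemma \ref{Lemma5} (the case $i=3$, with the trailing $t_l$ carried along unchanged) to obtain $s_{n-1}\cdots s_3 t_k\, s_n s_{n-1}\cdots s_3\, t_k t_l$, and then expands $t_k t_l$ via Lemma \ref{LemmaTLTKInVect} to land in $\mathrm{Span}(\Lambda_{n-1}\Lambda_n)$. Your extra remarks on why the Lemma \ref{Lemma5} computation is unaffected by the rightmost factor $t_l$, and on matching the resulting summands with the patterns in $\Lambda_n$ (using $t_0=s_2$), just spell out what the paper leaves implicit.
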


\begin{proof}

By Lemma \ref{Lemma5}, one can write
$s_n(a_{n-1}a_n) = s_{n-1} \cdots s_3 t_k s_{n}s_{n-1} \cdots s_3t_kt_l$.
By Lemma \ref{LemmaTLTKInVect} where we compute $t_kt_l$, we directly deduce that $s_n(a_{n-1}a_n)$ belongs to Span$(\Lambda_{n-1}\Lambda_n)$.

\end{proof}

\begin{lemma}\label{Lemma7}

If $a_{n-1} = s_{n-1} \cdots s_3t_k$ with $0 \leq k \leq e-1$ and $a_n = s_n \cdots s_{3}t_l s_2 \cdots s_i$ with $2 \leq i \leq n$ and $1 \leq l \leq e-1$, then $s_n (a_{n-1}a_n)$ belongs to Span$(S_{n-1}^{*} \Lambda_n)$.

\end{lemma}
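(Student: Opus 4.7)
The strategy is to first use the braid manipulations from the proof of Lemma \ref{Lemma5} to simplify the left portion $s_n \cdot (s_{n-1} \cdots s_3 t_k) \cdot (s_n s_{n-1} \cdots s_3)$ of $s_n(a_{n-1} a_n)$, leaving the tail $t_l s_2 \cdots s_i$ intact throughout. The final braid move of Lemma \ref{Lemma5} is precisely $s_3 t_k s_3 = t_k s_3 t_k$, applied to the adjacent block formed by the middle $t_k$ and the $s_3$ at the end of the $s_n \cdots s_3$ segment of $a_n$; since $t_l s_2 \cdots s_i$ sits to the right of that $s_3$ and plays no role in the manipulation, the move still goes through. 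Commuting the new $t_k$ leftward past $s_n s_{n-1} \cdots s_4$ (all of which commute with $t_k$) then yields
\[
s_n(a_{n-1} a_n) \;=\; (s_{n-1} \cdots s_3 t_k) \cdot s_n s_{n-1} \cdots s_3 \cdot (t_k t_l) \cdot s_2 s_3 \cdots s_i.
\]
The left factor $s_{n-1} \cdots s_3 t_k$ already lies in $S_{n-1}^*$, so it suffices to show that $s_n \cdots s_3 \cdot w \cdot s_2 s_3 \cdots s_i$ belongs to $\mathrm{Span}(S_{n-1}^* \Lambda_n)$ for every $w$ in $\{1\} \cup \{t_m : 0 \leq m \leq e-1\} \cup \{t_{m'} t_0 : 1 \leq m' \leq e-1\}$, since by Lemma \ref{LemmaTLTKInVect} the element $t_k t_l$ is an $R_0$-linear combination of such $w$'s.

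For $i = 2$, the quadratic relation $t_0^2 = a t_0 + 1$ collapses any adjacent pair of $t_0$'s and the resulting words all lie in $\Lambda_n$. For $i \geq 3$, the choice $w = t_m$ with $1 \leq m \leq e-1$ yields $s_n \cdots s_3 t_m s_2 \cdots s_i \in \Lambda_n$. Applying $t_0 s_2 = t_0^2 = a t_0 + 1$ to the sub-cases $w = t_0$ and $w = t_{m'} t_0$ reduces the remaining analysis to three shapes, namely $(\mathrm{I})\ s_n \cdots s_3 s_2 s_3 \cdots s_i$, $(\mathrm{II})\ s_n \cdots s_4 s_3^2 s_4 \cdots s_i$, and $(\mathrm{III})\ s_n \cdots s_3 t_{m'} s_3 s_4 \cdots s_i$ with $1 \leq m' \leq e-1$. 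Shape (II) is handled directly by Lemma \ref{LmScholie1}. For shape (I), the braid $s_3 s_2 s_3 = s_2 s_3 s_2$, the commutations $s_2 s_j = s_j s_2$ for $j \geq 4$, and the purely symmetric-group identity $s_n \cdots s_4 s_3 s_4 \cdots s_i = (s_3 s_4 \cdots s_{i-1}) \cdot (s_n \cdots s_4 s_3)$ combine to give $s_n \cdots s_3 s_2 s_3 \cdots s_i = (s_2 s_3 \cdots s_{i-1}) \cdot (s_n \cdots s_3 s_2)$, which sits in $S_{n-1}^* \cdot \Lambda_n$ since $s_n \cdots s_3 s_2 = s_n \cdots s_3 t_0 \in \Lambda_n$.

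Shape (III) is the most delicate step. The plan is to apply the braid $s_3 t_{m'} s_3 = t_{m'} s_3 t_{m'}$ at the middle and then push the two freshly-produced copies of $t_{m'}$ outward, using that $t_{m'}$ commutes with $s_j$ for $j \geq 4$. This traps a core $s_n \cdots s_4 s_3 s_4 \cdots s_i$ between the two $t_{m'}$'s, which by the same braid-and-commute rewrite used in shape (I) equals $(s_3 s_4 \cdots s_{i-1}) \cdot (s_n \cdots s_4 s_3)$. Reassembling gives $s_n \cdots s_3 t_{m'} s_3 \cdots s_i = (t_{m'} s_3 \cdots s_{i-1}) \cdot (s_n \cdots s_3 t_{m'})$, which lies in $S_{n-1}^* \cdot \Lambda_n$. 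The main obstacle is the careful bookkeeping for this last identity — specifically, tracking the two $t_{m'}$'s through the outward commutations and verifying the purely symmetric-group rearrangement of $s_n \cdots s_4 s_3 s_4 \cdots s_i$ (which itself is proved by induction on $n-i$ via $s_4 s_3 s_4 = s_3 s_4 s_3$ and iterated commutations). Once that is in place, the enumerated sub-cases recombine to establish the lemma.
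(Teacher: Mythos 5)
Your proposal is correct and follows essentially the same route as the paper: reduce via the braid moves of Lemma \ref{Lemma5}/\ref{Lemma6} to $s_{n-1}\cdots s_3t_k\, s_n\cdots s_3\, (t_kt_l)\, s_2\cdots s_i$, expand $t_kt_l$ by Lemma \ref{LemmaTLTKInVect}, absorb the prefix into $S_{n-1}^{*}$, treat the $s_3^2$ term with Lemma \ref{LmScholie1}, and handle the remaining terms by the same braid-and-commute identity $s_n\cdots s_3 t_x s_3\cdots s_i = (t_x s_3\cdots s_{i-1})(s_n\cdots s_3 t_x)$ that the paper derives. The only cosmetic slips (omitting the $m'=0$ instance of shape (III), which is your shape (I) since $s_2=t_0$, and not flagging that the terms $a\,s_n\cdots s_3 t_{m'}t_0 s_3\cdots s_i$ already lie in $\Lambda_n$) do not affect the argument.
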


\begin{proof}

By the previous lemma, we have $s_n(a_{n-1}a_n) = s_{n-1}\cdots s_3t_ks_n \cdots s_3 t_k t_l (s_2 \cdots s_i)$.
By Lemma \ref{LemmaTLTKInVect}, the case $i = 2$ is obvious. Suppose $i \geq 3$.
After replacing $t_kt_l$ by its value given in Lemma \ref{LemmaTLTKInVect}, we have two different terms in $s_n(a_{n-1}a_n)$ of the form $s_{n-1}\cdots s_3t_ks_n \cdots s_3 t_x (s_2 \cdots s_i)$ with $0 \leq x \leq e-1$ and of the form $s_{n-1}\cdots s_3t_ks_n \cdots s_3 t_x (s_3 \cdots s_i)$ with $0 \leq x \leq e-1$.

For terms of the form $s_{n-1}\cdots s_3t_ks_n \cdots s_3 t_x (s_3 \cdots s_i)$ with $0 \leq x \leq e-1$, we have\\
$\begin{array}{ll}
s_{n-1} \cdots s_3 t_k s_n \cdots \underline{s_3 t_x (s_3} \cdots s_i) & \stackrel{(2)}{=}\\
s_{n-1} \cdots s_3 t_k s_n \cdots \underline{t_x} s_3 t_x(s_4 \cdots s_i) & \stackrel{(1)}{=}\\
s_{n-1} \cdots s_3 t_kt_x s_n \cdots s_3 t_x(\underline{s_4} \cdots s_i) & \stackrel{(1)}{=}\\
s_{n-1} \cdots s_3 t_kt_x s_n \cdots \underline{s_4 s_3 s_4}t_x (s_5\cdots s_i) & \stackrel{(2)}{=}\\
s_{n-1} \cdots s_3 t_kt_x s_n \cdots \underline{s_3} s_4 s_3 t_x (s_5\cdots s_i) & \stackrel{(1)}{=}\\
\end{array}$\\
$\begin{array}{l}
s_{n-1} \cdots s_3 t_kt_x s_3 s_n \cdots s_3 t_x (\underline{s_5} \cdots \underline{s_{i}}).
\end{array}$\\
We apply the same operations for the underlined letters to get\\ $s_{n-1} \cdots s_3 t_kt_xs_3 \cdots s_{i-1} \underline{s_n \cdots s_3t_x}$ which belongs to Span$(S_{n-1}^{*} \Lambda_n)$.

Consider the terms of the form $s_{n-1}\cdots s_3t_ks_n \cdots s_3 t_x (s_2 \cdots s_i)$ with $0 \leq x \leq e-1$.\\ If $x \neq 0$, they belong to Span$(\Lambda_{n-1}\Lambda_n)$.\\ If $x =0$, we have $s_{n-1}\cdots s_3t_ks_n \cdots s_3 t_0 (s_2s_3 \cdots s_i) \in R_0 s_{n-1}\cdots s_3t_ks_n \cdots s_3 t_0 s_3 \cdots s_i + R_0 s_{n-1}\cdots s_3t_ks_n \cdots s_4s_3^2s_4 \cdots s_i$. The first term correspond to the previous case (with $x=0$) and then belongs to Span$(S_{n-1}^{*} \Lambda_n)$. By Lemma \ref{LmScholie1}, the second term also belongs to Span$(S_{n-1}^{*} \Lambda_n)$.

\end{proof}

\begin{lemma}\label{Lemma8}

If $a_{n-1} = s_{n-1} \cdots s_3t_ks_2 \cdots s_i$ with $2 \leq i \leq n-1$, $1 \leq k \leq e-1$ and $a_n = s_n \cdots s_{i'}$ with $3 \leq i' \leq n$, then $s_n (a_{n-1}a_n)$ belongs to Span$(\Lambda_{n-1}\Lambda_n)$.

\end{lemma}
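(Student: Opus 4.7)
The plan is to mimic the approach of Lemmas \ref{Lemma2}--\ref{Lemma7}, splitting into cases based on the values of $i$ and $i'$. Throughout, write $M := s_{n-2} \cdots s_3 t_k s_2 s_3 \cdots s_{\min(i,n-2)}$ so that $a_{n-1} = s_{n-1} M$ when $i \leq n-2$, and $a_{n-1} = s_{n-1} M s_{n-1}$ when $i = n-1$. The key combinatorial fact is that every letter of $M$ commutes with $s_n$ (since $M$ involves only $t_k$ and $s_j$ with $j \leq n-2$), which is what makes the standard reductions available.

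In the case $i \leq n-2$, I will apply the commute-braid-commute-quadratic manipulation used throughout this section: commute the $s_n$ of $a_n$ leftward past $M$, apply the braid relation $s_n s_{n-1} s_n = s_{n-1} s_n s_{n-1}$, commute the resulting middle $s_n$ rightward back past $M$, and finally apply the quadratic relation $s_{n-1}^2 = a s_{n-1} + 1$. This yields
\[
s_n(a_{n-1} a_n) = a \cdot a_{n-1} a_n + M \cdot a_n.
\]
The first summand lies in $R_0 \cdot \Lambda_{n-1}\Lambda_n$ directly, and the second summand $M \cdot a_n$ lies in $\mathrm{Span}(S_{n-1}^*) \cdot \Lambda_n$, which by the induction hypothesis on $H(e,e,n-1)$ coincides with $\mathrm{Span}(\Lambda_2 \cdots \Lambda_{n-1}) \cdot \Lambda_n \subseteq \mathrm{Span}(\Lambda)$; this is sufficient to advance the global induction.

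In the case $i = n-1$, $i' = n$ (so $a_n = s_n$), a direct inspection shows that $s_n \cdot a_{n-1} \cdot s_n = s_n s_{n-1} \cdots s_3 t_k s_2 s_3 \cdots s_{n-1} s_n$ is literally of the form $s_n \cdots s_3 t_k s_2 \cdots s_{i'}$ with $i' = n$, hence belongs to $\Lambda_n \subseteq 1 \cdot \Lambda_n \subseteq \Lambda_{n-1}\Lambda_n$ with no further work needed.

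In the remaining case $i = n-1$, $i' \leq n-1$, the sub-word $s_{n-1} \cdot s_n \cdot s_{n-1}$ appears at the junction of $a_{n-1}$ and $a_n$. I will apply the braid relation there and use the commutation of $s_n$ with $M$ to rearrange; after moving the final $s_n$ rightward past the $s_j$'s ($j \leq n-2$) that follow, the expression will take the form
\[
s_n(a_{n-1} a_n) \;=\; s_{n-1} \cdot (s_n a_{n-1} s_n) \cdot (s_{n-2} \cdots s_{i'}).
\]
Since $s_n a_{n-1} s_n \in \Lambda_n$ by the previous case and $s_{n-1} s_{n-2} \cdots s_{i'} \in \Lambda_{n-1}$, the desired conclusion will follow once I verify that $s_n a_{n-1} s_n$ commutes with each $s_j$ for $2 \leq j \leq n-2$, so that the tail $s_{n-2} \cdots s_{i'}$ can be moved to the left and absorbed into the $\Lambda_{n-1}$-factor. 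This commutation is expected to follow from identities of the form $s_2 s_3 \cdots s_{n-1} s_{n-2} = s_{n-1} \cdot s_2 s_3 \cdots s_{n-2} \cdot s_{n-1}$ (derivable from a single application of the braid relation $s_{n-2} s_{n-1} s_{n-2} = s_{n-1} s_{n-2} s_{n-1}$), and will be the main technical obstacle of the proof; handling it cleanly for all $j$ in the required range is where the bulk of the bookkeeping will be concentrated.
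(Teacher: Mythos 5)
The central step of your proposal, the identity $s_n(a_{n-1}a_n)=a\,a_{n-1}a_n+M\,a_n$ in the case $i\le n-2$, is false, and the manipulation meant to produce it does not go through. After commuting the $s_n$ of $a_n$ leftward past $M$ and applying the braid relation, you are at $s_{n-1}s_ns_{n-1}\,M\,s_{n-1}s_{n-2}\cdots s_{i'}$; the middle $s_n$ cannot be ``commuted rightward past $M$'' because it is blocked by the $s_{n-1}$ standing between them, and that $s_{n-1}$ in turn does not commute with $M$ (for $n\ge 5$ because $M$ begins with $s_{n-2}$, and for $n=4$ because $s_3=s_{n-1}$ does not commute with $t_k,t_0$). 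Hence no factor $s_{n-1}^2$ is ever created and the quadratic relation is never applicable. Concretely, take $n=4$, $i=2$, $i'=3$, so $a_3=s_3t_kt_0$ and $a_4=s_4s_3$: using only commutations and one braid relation, $s_4(a_3a_4)=s_3s_4s_3t_kt_0s_3$, a single element of $\Lambda_3\Lambda_4$ (the paper's subcase $i<i'$ with $i'=i+1$), while your formula predicts $a\,s_3t_kt_0s_4s_3+t_kt_0s_4s_3$. Specializing $a=0$, which turns $H(e,e,n)$ into the group algebra of $G(e,e,n)$, the two sides become the distinct group elements $s_3s_4s_3t_kt_0s_3$ and $t_kt_0s_4s_3$ (their underlying permutations are the two different $3$-cycles on $\{2,3,4\}$), so the identity cannot hold. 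In fact the paper's proof of this lemma uses no quadratic relation at all: $s_n(a_{n-1}a_n)$ rewrites by commutations and braid relations alone into a single monomial of $\Lambda_{n-1}\Lambda_n$, and the shape of that rewriting is governed by comparing $i$ with $i'$ (how far the descending run of $a_n$ reaches relative to the ascending run of $a_{n-1}$) --- a case distinction which your split ``$i\le n-2$ versus $i=n-1$'' does not capture.

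The remaining parts are only partly sound. The subcase $i=n-1$, $i'=n$ is correct as stated. In the subcase $i=n-1$, $i'\le n-1$, the element $D:=s_na_{n-1}s_n$ is the member of $\Lambda_n$ whose image in $G(e,e,n)$ is the diagonal matrix with $\zeta_e^{-k}$ in position $1$ and $\zeta_e^{k}$ in position $n$; it does commute, already at the level of braid relations, with $s_j$ for $3\le j\le n-2$, which is the only range occurring in the tail $s_{n-2}\cdots s_{i'}$, so that case could be completed --- but the commutation you actually assert, ``for $2\le j\le n-2$'', is false for $j=2$ (conjugating by $t_0$ moves the entry $\zeta_e^{-k}$ out of position $1$), and in any case you leave the needed statement unproved. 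Since the bulk of the lemma (all $i\le n-2$, arbitrary $i'$) rests on the incorrect identity, the proposal has a genuine gap; note also that where you fall back on Span$(S_{n-1}^{*}\Lambda_n)$ you would at best obtain a conclusion weaker than the lemma's assertion Span$(\Lambda_{n-1}\Lambda_n)$, although such a weakening would still suffice for the global induction, as in Lemmas \ref{Lemma7}, \ref{Lemma9}, and \ref{Lemma10}.
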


\begin{proof}

\emph{Suppose $i < i'$}. We have $s_n(a_{n-1}a_n)$ is equal to\\
$\begin{array}{ll}
s_n \cdots s_3t_ks_2 \cdots s_i \underline{s_n} \cdots s_{i'} & \stackrel{(1)}{=}\\
\underline{s_n s_{n-1}s_n} \cdots s_3t_ks_2 \cdots s_i s_{n-1} \cdots s_{i'} & \stackrel{(2)}{=}\\
s_{n-1} s_{n}s_{n-1} \cdots s_3t_ks_2 \cdots s_i \underline{s_{n-1}} \cdots s_{i'} & \stackrel{(1)}{=}\\
s_{n-1} s_{n}\underline{s_{n-1}s_{n-2}s_{n-1}} \cdots s_3t_ks_2 \cdots s_i s_{n-2} \cdots s_{i'} & \stackrel{(2)}{=}\\
s_{n-1} s_{n} \underline{s_{n-2}} s_{n-1}s_{n-2} \cdots s_3t_ks_2 \cdots s_i s_{n-2} \cdots s_{i'} & \stackrel{(1)}{=}\\
\end{array}$\\
$\begin{array}{l}
s_{n-1} s_{n-2} s_{n} s_{n-1}s_{n-2} \cdots s_3t_ks_2 \cdots s_i \underline{s_{n-2}} \cdots \underline{s_{i'+1}}s_{i'}.
\end{array}$\\
We apply the same operations to the underlined letters and\\
we get $s_{n-1} \cdots s_{i'}s_n\cdots s_3t_ks_2 \cdots s_i s_{i'}$.\\
\emph{If $i'=i+1$}, we directly have $s_n(a_{n-1}a_n) \in$ Span$(\Lambda_{n-1}\Lambda_n)$.\\
\emph{If $i' > i+1$}, then we write $s_{i'+1}s_{i'}s_{i'-1}$ in the underlined word of\\ $s_{n-1} \cdots s_{i'}\underline{s_n\cdots s_3}t_ks_2 \cdots s_i s_{i'}$ and get\\
$\begin{array}{ll}
s_{n-1} \cdots s_{i'}s_n \cdots s_{i'+1}s_{i'}s_{i'-1}\cdots s_3t_ks_2 \cdots s_i \underline{s_{i'}} & \stackrel{(1)}{=}\\
s_{n-1} \cdots s_{i'}s_n \cdots s_{i'+1}\underline{s_{i'}s_{i'-1}s_{i'}} \cdots s_3t_ks_2 \cdots s_i  & \stackrel{(2)}{=}\\
s_{n-1} \cdots s_{i'}s_n \cdots s_{i'+1}\underline{s_{i'-1}}s_{i'}s_{i'-1} \cdots s_3t_ks_2 \cdots s_i & \stackrel{(1)}{=}\\
\end{array}$\\
$\begin{array}{l}
s_{n-1} \cdots s_{i'-1}s_n \cdots s_3t_ks_2 \cdots s_i
\end{array}$ which belongs to Span$(\Lambda_{n-1}\Lambda_n)$.

\emph{Suppose $i \geq i'$}. We have $s_n(a_{n-1}a_n)$ is equal to\\
$s_n \cdots s_3t_ks_2 \cdots s_i \underline{s_n} \cdots s_{i'}$. We shift $\underline{s_n}$ to the left and apply a braid relation to get
$s_{n-1} s_{n} s_{n-1} \cdots s_3t_ks_2 \cdots s_i \underline{s_{n-1} \cdots s_{i'}}$. Write $s_{i+2}s_{i+1}$ in the underlined word to get
$s_{n-1} s_{n} s_{n-1} \cdots s_3t_ks_2 \cdots s_i \underline{s_{n-1}} \cdots \underline{s_{i+2}}s_{i+1} \cdots s_{i'}$. We apply the same operations to the underlined letters to get\\
$\begin{array}{ll}
s_{n-1}\cdots s_{i+1}s_n \cdots s_3t_ks_2 \cdots \underline{s_i s_{i+1} s_i} s_{i-1} \cdots s_{i'} & \stackrel{(2)}{=}\\
s_{n-1}\cdots s_{i+1}s_n \cdots s_3t_ks_2 \cdots \underline{s_{i+1}} s_{i} s_{i+1} s_{i-1} \cdots s_{i'} & \stackrel{(1)}{=}\\
\end{array}$\\
$\begin{array}{l}
s_{n-1}\cdots s_{i}s_n \cdots s_3t_ks_2 \cdots s_{i-1}s_{i} s_{i+1}s_{i-1} \cdots s_{i'}
\end{array}$ (The details of the computation is left to the reader). Then we have\\
$\begin{array}{ll}
s_{n-1}\cdots s_{i}s_n \cdots s_3t_ks_2 \cdots s_{i-1}s_{i}s_{i+1}\underline{s_{i-1}} \cdots s_{i'} & \stackrel{(1)}{=}\\
s_{n-1}\cdots s_{i}s_n \cdots s_3t_ks_2 \cdots \underline{s_{i-1}s_{i}s_{i-1}}s_{i+1}s_{i-2} \cdots s_{i'} & \stackrel{(2)}{=}\\
s_{n-1}\cdots s_{i}s_n \cdots s_3t_ks_2 \cdots \underline{s_{i}}s_{i-1}s_{i}s_{i+1}s_{i-2} \cdots s_{i'} & \stackrel{(1)}{=}\\
\end{array}$\\
$\begin{array}{l}
s_{n-1}\cdots s_{i-1}s_n \cdots s_3t_ks_2 \cdots s_{i-2}s_{i-1}s_{i}s_{i+1}\underline{s_{i-2}} \cdots \underline{s_{i'+1}}s_{i'}.
\end{array}$ We apply the same operations to the underlined letters and we finally get\\
$s_{n-1}\cdots s_{i'+1}s_n \cdots s_3t_k\underline{s_2s_3 \cdots s_is_{i+1}}s_{i'}$. We write $s_{i'-1}s_{i'}s_{i'+1}$ in the underlined word and get $s_{n-1}\cdots s_{i'+1}s_n \cdots s_3t_ks_2s_3 \cdots s_{i'-1}s_{i'}s_{i'+1}\cdots s_is_{i+1} \underline{s_{i'}}$. We shift $\underline{s_{i'}}$ to the left. We get $s_{n-1}\cdots s_{i'+1}s_n \cdots s_3t_ks_2s_3 \cdots s_{i'-1}\underline{s_{i'}s_{i'+1}s_{i'}}\cdots s_is_{i+1}$. We apply a braid relation and get $s_{n-1}\cdots s_{i'+1}s_n \cdots s_3t_ks_2s_3 \cdots s_{i'-1} \underline{s_{i'+1}}s_{i'}s_{i'+1}\cdots s_is_{i+1}$. Now we shift $\underline{s_{i'+1}}$ to the left and get\\ 
$\begin{array}{ll}
s_{n-1}\cdots s_{i'+1}s_n \cdots \underline{s_{i'+1}s_{i'}s_{i'+1}}s_{i'-1}\cdots s_3t_ks_2 \cdots s_{i+1} & \stackrel{(2)}{=}\\
s_{n-1}\cdots s_{i'+1}s_n \cdots \underline{s_{i'}}s_{i'+1}s_{i'}s_{i'-1}\cdots s_3t_ks_2 \cdots s_{i+1} & \stackrel{(1)}{=}\\
s_{n-1}\cdots s_{i'+1}s_{i'}s_n \cdots s_{i'+1}s_{i'}s_{i'-1}\cdots s_3t_ks_2 \cdots s_{i+1} & = \\
\end{array}$\\
$\begin{array}{l}
s_{n-1} \cdots s_{i'}s_n \cdots s_3t_ks_2 \cdots s_{i+1},
\end{array}$ which belongs to Span$(\Lambda_{n-1}\Lambda_n)$.

\end{proof}

\begin{lemma}\label{Lemma9}

If $a_{n-1} = s_{n-1} \cdots s_3t_ks_2 \cdots s_i$ with $2 \leq i \leq n-1$, \mbox{$1 \leq k \leq e-1$} and $a_n = s_n s_{n-1} \cdots s_{3}t_l$ with $0 \leq l \leq e-1$, then $s_n (a_{n-1}a_n)$ belongs to Span$(S_{n-1}^{*} \Lambda_n)$.

\end{lemma}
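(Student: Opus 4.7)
The plan is to reduce to Lemma~\ref{Lemma8} and then handle the extra right multiplication by $t_l$ separately. Writing $a_n = (s_n \cdots s_3) \cdot t_l$, we obtain
$s_n(a_{n-1} a_n) = s_n(a_{n-1} \cdot s_n \cdots s_3) \cdot t_l$.
Applying Lemma~\ref{Lemma8} with $i' = 3$ gives $s_n(a_{n-1} \cdot s_n \cdots s_3) = (s_{n-1} \cdots s_3) \cdot Q$, where $Q = s_n \cdots s_3 t_k s_2 \cdots s_J$ with $J = 3$ when $i = 2$ and $J = i+1$ when $i \geq 3$. Since $s_{n-1} \cdots s_3 \in S_{n-1}^{*}$, it will suffice to show $Q \cdot t_l \in \text{Span}(S_{n-1}^{*} \Lambda_n)$.

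To analyze $Q t_l$, I would push $t_l$ leftward through $s_J, s_{J-1}, \ldots, s_4$ (which all commute with $t_l$) until it meets $s_3$, obtaining
$Q t_l = s_n \cdots s_4 \cdot (s_3 t_k s_2 s_3 t_l) \cdot s_4 \cdots s_J$.
The middle factor $s_3 t_k s_2 s_3 t_l$ lies in $H(e,e,3)$, so by the induction hypothesis (Proposition~\ref{PropCasen=3}) one can write $s_3 t_k s_2 s_3 t_l = \sum_{u,v} c_{u,v}\, uv$ with $u \in \Lambda_2$, $v \in \Lambda_3$, $c_{u,v} \in R_0$. Since each $u$ is a product of $t_i$'s, which commute with $s_4, \ldots, s_n$, moving $u$ to the left yields
$Q t_l = \sum_{u,v} c_{u,v}\, u \cdot \bigl(s_n \cdots s_4 \cdot v \cdot s_4 \cdots s_J\bigr)$,
and each $u \in S_{n-1}^{*}$.

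It thus remains to show, for each $v \in \Lambda_3$, that $s_n \cdots s_4 \cdot v \cdot s_4 \cdots s_J$ lies in $\text{Span}(S_{n-1}^{*} \Lambda_n)$. The key identity is $s_n \cdots s_4 s_3 s_4 \cdots s_J = (s_3 s_4 \cdots s_{J-1})(s_n \cdots s_3)$ for $J \geq 4$ (and the expression collapses to an element of $\Lambda_n$ for $J = 3$), obtained by iterated braid relations $s_{m+1} s_m s_{m+1} = s_m s_{m+1} s_m$ combined with the commutations of $s_m$ with $s_{m+2}, \ldots, s_n$. For $v \in \{s_3,\, s_3 t_m,\, s_3 t_m s_2\}$ one applies this identity and absorbs the auxiliary $t_m$ and $s_2$ factors into the right-hand $\Lambda_n$ component (using that $t_m$ and $s_2$ commute with $s_4, \ldots, s_J$). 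The case $v = s_3 t_m s_2 s_3$ is immediate: the resulting product $s_n \cdots s_3 t_m s_2 s_3 s_4 \cdots s_J$ is already of the form $s_n \cdots s_3 t_m s_2 \cdots s_J$, which lies in $\Lambda_n$ (type~4 with $i' = J$). The case $v = 1$ yields $s_n \cdots s_5 s_4^2 s_5 \cdots s_J$, which is handled by iterated application of the quadratic relation $s_m^2 = a s_m + 1$ together with the key identity above, entirely analogous to Lemma~\ref{LmScholie1}; this recursion is the main source of bookkeeping but presents no essential difficulty.
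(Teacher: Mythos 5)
Your argument is correct, and its skeleton matches the paper's: both start from the computation in the proof of Lemma~\ref{Lemma8} with $i'=3$ (note the lemma's statement only asserts membership in Span$(\Lambda_{n-1}\Lambda_n)$; the exact identity $s_n(a_{n-1}\,s_n\cdots s_3)=(s_{n-1}\cdots s_3)\,s_n\cdots s_3t_ks_2\cdots s_{i+1}$ is what the braid-move computation inside that proof produces, and the paper leans on it in exactly the same way), then shift $t_l$ to the left and finish inside the subalgebra generated by the $t$'s and $s_3$. Where you genuinely diverge is in how that rank-three input is exploited. The paper re-enters the proof of Case 5 of Proposition~\ref{PropCasen=3}, keeps the two word shapes $s_3t_xs_3t_l$ and $s_3t_xs_3t_ls_3$, and reworks each by hand, invoking Lemma~\ref{LemmaTLTKInVect} a second time and sorting the resulting terms (with the $s_3^2$-type term sent to Lemma~\ref{LmScholie1}). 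You instead use the statement of Proposition~\ref{PropCasen=3} as a black box to decompose the whole middle block $s_3t_kt_0s_3t_l$ over $\Lambda_2\Lambda_3$, pull the $\Lambda_2$-factor (a product of $t$'s) through $s_n\cdots s_4$, and dispose of each of the five $\Lambda_3$-shapes with the single conjugation identity $s_n\cdots s_4s_3s_4\cdots s_J=(s_3\cdots s_{J-1})(s_n\cdots s_3)$ --- precisely the identity established inside the proof of Lemma~\ref{LmScholie1} --- plus that lemma's quadratic recursion for the case $v=1$. This buys a shorter and more uniform case analysis (no second pass through the $t_xt_l$ reductions), and it is sound because Span$(S_{n-1}^{*}\Lambda_n)$ is stable under left multiplication by words of $S_{n-1}^{*}$, and the prefixes $s_{n-1}\cdots s_3$, the $\Lambda_2$-factors, and $s_3\cdots s_{J-1}$ (with $J-1=i\le n-1$) all lie in $S_{n-1}^{*}$. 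The only points worth spelling out are the constraint $1\le m\le e-1$ for the $\Lambda_3$-types containing $s_2$ (guaranteed by the definition of $\Lambda_3$, so that $s_n\cdots s_3t_ms_2\cdots s_{i'}$ really is in $\Lambda_n$) and the degenerate case $J=3$ (i.e.\ $i=2$), which the paper's conventions handle and which the paper itself only remarks on at the end of its proof.
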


\begin{proof}

By the final result of the computations in Lemma \ref{Lemma8}, we have $s_n(a_{n-1}a_n)$ is equal to
$s_{n-1}\cdots s_3 s_n \cdots s_3t_ks_2s_3 \cdots s_{i+1}\underline{t_l}$. We shift $\underline{t_l}$ to the left and get\\
$s_{n-1}\cdots s_3 s_n \cdots s_3t_ks_2s_3t_l \cdots s_{i+1}$. By Case 5 of Proposition \ref{PropCasen=3}, we have to deal with the following two terms:
\begin{itemize}

\item $s_{n-1}\cdots s_3s_n \cdots s_3 t_x s_3t_ls_4 \cdots s_{i+1}$ and
\item $s_{n-1}\cdots s_3s_n \cdots s_3 t_x s_3 t_l s_3 s_4 \cdots s_{i+1}$ with $1 \leq x,l \leq e-1$.
\end{itemize}
The first term is of the form\\
$\begin{array}{ll}
s_{n-1}\cdots s_3s_n \cdots \underline{s_3 t_x s_3}t_ls_4 \cdots s_{i+1} & \stackrel{(2)}{=}\\
s_{n-1}\cdots s_3s_n \cdots s_4\underline{t_x} s_3 t_x t_ls_4 \cdots s_{i+1} & \stackrel{(1)}{=}\\
s_{n-1}\cdots s_3t_xs_n \cdots s_3 t_x t_l\underline{s_4} \cdots s_{i+1} & \stackrel{(1)}{=}\\
s_{n-1}\cdots s_3t_xs_n \cdots \underline{s_4s_3s_4} t_x t_ls_5 \cdots s_{i+1} & \stackrel{(2)}{=}\\
s_{n-1}\cdots s_3t_xs_n \cdots \underline{s_3}s_4s_3 t_x t_ls_5 \cdots s_{i+1} & \stackrel{(1)}{=}\\
\end{array}$\\
$\begin{array}{l}
s_{n-1}\cdots s_3t_xs_3s_n \cdots s_4s_3 t_x t_l\underline{s_5} \cdots \underline{s_{i+2}}s_{i+1}.
\end{array}$ We apply the same operations to the underlined letters, we get $s_{n-1}\cdots s_3t_xs_3\cdots s_{i-1} s_n \cdots s_4s_3 t_x t_l \underline{s_{i+1}}$. Finally, we shift $\underline{s_{i+1}}$ to the left and get $s_{n-1}\cdots s_3t_xs_3\cdots s_{i} s_n \cdots s_4s_3 t_x t_l$.
Since $2 \leq i \leq n-1$ and by the computation of $t_xt_l$ in Lemma \ref{LemmaTLTKInVect}, the lemma is satisfied for this case.\\
The second term is equal to\\
$\begin{array}{ll}
s_{n-1}\cdots s_3s_n \cdots \underline{s_3 t_x s_3} t_l s_3 s_4 \cdots s_{i+1} & \stackrel{(2)}{=}\\
s_{n-1}\cdots s_3s_n \cdots \underline{t_x} s_3 t_x t_l s_3 s_4 \cdots s_{i+1} & \stackrel{(1)}{=}\\
\end{array}$\\
$\begin{array}{l}
s_{n-1}\cdots s_3t_xs_n \cdots  s_3 t_x t_l s_3 s_4 \cdots s_{i+1}.
\end{array}$ We replace $t_xt_l$ by its value given in Lemma \ref{LemmaTLTKInVect}, we get terms of the three following forms:
\begin{itemize}

\item $s_{n-1}\cdots s_3t_xs_n \cdots  s_3 t_m t_0 s_3 s_4 \cdots s_{i+1}$ with $1 \leq m \leq e-1$,
\item $s_{n-1}\cdots s_3t_xs_n \cdots  s_3 t_m s_3 s_4 \cdots s_{i+1}$ with $0 \leq m \leq e-1$,
\item $s_{n-1}\cdots s_3t_xs_n \cdots  s_4 s_3^2 s_4 \cdots s_{i+1}$.

\end{itemize}
The first term belongs to Span$(S_{n-1}^{*} \Lambda_n)$. The third term also belongs to Span$(S_{n-1}^{*} \Lambda_n)$. This is done by using the computation in the proof of Lemma \ref{LmScholie1}. For the second term, we have\\
$\begin{array}{ll}
s_{n-1}\cdots s_3t_xs_n \cdots s_4\underline{s_3t_ms_3}s_4 \cdots s_{i+1} & \stackrel{(2)}{=}\\
s_{n-1}\cdots s_3t_xs_n \cdots \underline{s_4 t_m}s_3 t_m\underline{s_4} \cdots s_{i+1} & \stackrel{(1)}{=}\\
s_{n-1}\cdots s_3t_xt_ms_n \cdots \underline{s_4s_3s_4}t_m s_5 \cdots s_{i+1} & \stackrel{(2)}{=}\\
s_{n-1}\cdots s_3t_xt_ms_n \cdots \underline{s_3}s_4s_3t_m s_5 \cdots s_{i+1} & \stackrel{(1)}{=}\\
\end{array}$\\
$\begin{array}{l}
s_{n-1}\cdots s_3t_xt_ms_3 s_n \cdots s_4s_3t_m \underline{s_5} \cdots \underline{s_{i+2}} s_{i+1}.
\end{array}$ We apply the same operations to the underlined letters and get $s_{n-1}\cdots s_3t_xt_ms_3\cdots s_{i-1} s_n \cdots s_3t_m s_{i+1}$. Now we shift $\underline{s_{i+1}}$ to the left and finally get $s_{n-1}\cdots s_3t_xt_ms_3\cdots s_{i} \underline{s_n \cdots s_3t_m}$ with $2 \leq i \leq n-1$ which belongs to Span$(S_{n-1}^{*} \Lambda_n)$.\\
Note that for $i = 2$, we get terms that are equal to the itemized terms (given at the beginning of this proof) after replacing $s_4 \cdots s_{i+1}$ by $1$.

\end{proof}

\begin{lemma}\label{Lemma10}

If $a_{n-1} = s_{n-1} \cdots s_3t_ks_2 \cdots s_i$ with $2 \leq i \leq n-1$, \mbox{$1 \leq k \leq e-1$} and $a_n = s_n \cdots s_{3}t_ls_2 \cdots s_{i'}$ with $1 \leq l \leq e-1$ and $2 \leq i' \leq n$, then $s_n (a_{n-1}a_n)$ belongs to Span$(S_{n-1}^{*} \Lambda_n)$.

\end{lemma}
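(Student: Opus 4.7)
The plan is to bootstrap directly off Lemma \ref{Lemma9} by factoring the right-hand factor as $a_n = a_n^{\circ} \cdot (s_2 s_3 \cdots s_{i'})$ with $a_n^{\circ} = s_n s_{n-1} \cdots s_3 t_l$, so that $s_n(a_{n-1} a_n) = \bigl(s_n a_{n-1} a_n^{\circ}\bigr) \cdot (s_2 \cdots s_{i'})$. Since the manipulations carried out in Lemma \ref{Lemma9} never involve the generator $s_2$, and all the letters they touch have index $\geq 3$ (they commute with $s_2$ whenever $s_2$ would appear), the tail $s_2 \cdots s_{i'}$ may be dragged along passively. By Lemma \ref{Lemma9}, $s_n a_{n-1} a_n^{\circ}$ reduces to a linear combination of elements of the two forms
\[
  \alpha \;=\; s_{n-1}\cdots s_3 t_x s_3 \cdots s_i \cdot s_n \cdots s_4 s_3\, t_x t_l,
  \qquad
  \beta \;=\; s_{n-1}\cdots s_3 t_x t_m s_3 \cdots s_i \cdot s_n \cdots s_3 t_m,
\]
so it is enough to show that $\alpha\cdot s_2 \cdots s_{i'}$ and $\beta\cdot s_2 \cdots s_{i'}$ both lie in $\mathrm{Span}(S_{n-1}^{*}\Lambda_n)$.

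For $\beta\cdot s_2\cdots s_{i'}$, the right-hand factor is $s_n\cdots s_3 t_m s_2\cdots s_{i'}$. When $1\le m\le e-1$ this is already an element of $\Lambda_n$ (matching the pattern $s_n\cdots s_3 t_k s_2\cdots s_{i'}$), so the term sits in $S_{n-1}^{*}\Lambda_n$. When $m=0$, using $t_0=s_2$, the factor becomes $s_n\cdots s_3 s_2^{\,2} s_3\cdots s_{i'}$; applying the quadratic relation $s_2^2=as_2+1$ gives $a\,s_n\cdots s_3 s_2 s_3\cdots s_{i'}+s_n\cdots s_4 s_3^2 s_4\cdots s_{i'}$ (after reindexing), and the second summand is reduced by Lemma \ref{LmScholie1}, while the first is already in $\Lambda_n$.

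For $\alpha\cdot s_2\cdots s_{i'}$, I first invoke Lemma \ref{LemmaTLTKInVect} to rewrite $t_xt_l$ as an $R_0$-linear combination of elements of $\Lambda_2=\{1,\,t_p,\,t_pt_0\}$. This turns the right-hand factor into a sum of terms of the shape $s_n\cdots s_4 s_3\cdot \mu\cdot s_2\cdots s_{i'}$ with $\mu\in\Lambda_2$. The three cases $\mu=1$, $\mu=t_p$, and $\mu=t_p t_0$ are each treated in the spirit of the previous lemmas: when $\mu=1$ or $\mu=t_p$ with $p\neq 0$, we recover an element of $\Lambda_n$ after at most one sweep of commutations; when $\mu=t_pt_0$ or $\mu=t_0$ a repeated $s_2$ (equivalently $t_0^2$) appears and is eliminated via $s_2^2=as_2+1$, with the residual terms carrying an $s_3^2$ that is absorbed by Lemma \ref{LmScholie1}.

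The principal obstacle is the bookkeeping in the last paragraph: each application of $t_i t_j \in \mathrm{Span}(\Lambda_2)$ splits the expression into several sub-terms, and several of these sub-terms require a further round of quadratic reduction combined with the braid-type straightening of Lemma \ref{LmScholie1}. Once one checks that no subcase escapes the pattern $S_{n-1}^{*}\cdot\Lambda_n$, the conclusion follows, and (together with Lemmas \ref{Lemma2}–\ref{Lemma9}) completes the inductive step proving that $\Lambda$ is stable under left multiplication by $s_n$, hence is an $R_0$-spanning set of $H(e,e,n)$ of the right cardinality.
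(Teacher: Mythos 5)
Your overall strategy is the same as the paper's: rerun the computation of Lemma \ref{Lemma9} with the tail $s_2\cdots s_{i'}$ appended (the appended letters are indeed never touched by those rewritings, so they ride along), and then treat each terminal shape multiplied on the right by $s_2\cdots s_{i'}$. The genuine gap is in your inventory of terminal shapes. The proof of Lemma \ref{Lemma9} does \emph{not} reduce everything to your $\alpha$- and $\beta$-forms: it also leaves terms whose right-hand factor is $s_n\cdots s_3\,t_mt_0\,s_3\cdots s_{i+1}$ and terms whose right-hand factor is $s_n\cdots s_4 s_3^2 s_4\cdots s_{i+1}$, because inside Lemma \ref{Lemma9} those are already in Span$(S_{n-1}^{*}\Lambda_n)$ and are therefore left unreduced. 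After right multiplication by $s_2\cdots s_{i'}$ they are no longer visibly in the span: in $s_n\cdots s_3\,t_mt_0\,s_3\cdots s_{i+1}\,s_2\cdots s_{i'}$ the two occurrences of $t_0=s_2$ are separated by $s_3\cdots s_{i+1}$, so no quadratic relation applies immediately, and its reduction is precisely the bulk of the paper's argument (its Case 2, split into $i'\le i$ and $i'>i$, the latter needing a quadratic relation followed by several further braid shuffles); likewise $s_n\cdots s_4 s_3^2 s_4\cdots s_{i+1}\cdot s_2\cdots s_{i'}$ requires expanding via the computation of Lemma \ref{LmScholie1} and then performing a Lemma \ref{Lemma4}-type shuffle on each resulting term. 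Your $\alpha$ and $\beta$ analyses only cover the paper's Cases 1 and 3, so the two hardest cases are missing from your plan and are not recoverable from the manipulations you describe.

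Two smaller points. Your justification for dragging the tail along (``the manipulations never involve $s_2$'') is not literally true, since Lemma \ref{Lemma9} invokes Case 5 of Proposition \ref{PropCasen=3}, which rewrites $t_ks_2s_3t_l$; the argument still works because all rewriting happens strictly to the left of the appended suffix, and that is the justification you should give. Also, in your $\beta$, $m=0$ computation the summand $s_n\cdots s_3 s_2 s_3\cdots s_{i'}$ is \emph{not} an element of $\Lambda_n$ (the patterns $s_n\cdots s_3 t_k s_2\cdots s_{i'}$ in $\Lambda_n$ require $k\geq 1$ and an $s_2$ immediately after $t_k$); it still needs the braid shuffle $s_n\cdots s_3 t_0 s_3\cdots s_{i'}=t_0 s_3\cdots s_{i'-1}\,s_n\cdots s_3 t_0$ (as in the paper's Case 1) before it lands in Span$(S_{n-1}^{*}\Lambda_n)$.
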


\begin{proof}

According to the computation in the proof of Lemma \ref{Lemma9}, we get the following possible terms. They appear in the proof of Lemma \ref{Lemma9} in the following order.
\begin{enumerate}

\item $s_n \cdots s_3t_xt_l$ with $0 \leq x,l \leq e-1$,
\item $s_n \cdots s_3 t_mt_0 s_3 \cdots s_{i+1}$ with $1 \leq m \leq e-1$,
\item $s_n \cdots s_3 t_m$ with $0 \leq m \leq e-1$,
\item $s_n \cdots s_4s_3^2 s_4 \cdots s_{i+1}$

\end{enumerate}

We show that the product on the right by $s_2 \cdots s_{i'}$ of each of the previous terms belongs to Span$(S_{n-1}^{*}\Lambda_n)$.

\emph{Case 1}. Consider the first term $s_n \cdots s_3t_xt_l (s_2 \cdots s_{i'})$ with $0 \leq x,l \leq e-1$.
We replace $t_xt_l$ by its decomposition given in Lemma \ref{LemmaTLTKInVect}, we get these terms
\begin{itemize}

\item $s_n \cdots s_3 t_mt_0 s_3 \cdots s_{i'}$ with $1 \leq m \leq e-1$,
\item $s_n \cdots s_3 t_m s_3 \cdots s_{i'}$ with $0 \leq m \leq e-1$,
\item $s_n \cdots s_4 s_3^2 s_4 \cdots s_{i'}$.

\end{itemize}
The first term belongs to Span$(S_{n-1}^{*} \Lambda_n)$. The third one is done in Lemma \ref{LmScholie1}. For the second one, we have\\
$\begin{array}{ll}
s_n \cdots \underline{s_3t_ms_3} \cdots s_{i'} & \stackrel{(2)}{=}\\
s_n \cdots \underline{t_m}s_3t_m\underline{s_4} \cdots s_{i'} & \stackrel{(1)}{=}\\
t_ms_n \cdots \underline{s_4s_3s_4}t_ms_5 \cdots s_{i'} & \stackrel{(2)}{=}\\
t_ms_n \cdots \underline{s_3}s_4s_3 t_ms_5 \cdots s_{i'} & \stackrel{(1)}{=}\\
\end{array}$\\
$\begin{array}{l}
t_ms_3s_n \cdots s_4s_3 t_m\underline{s_5} \cdots \underline{s_{i'-1}}s_{i'}.
\end{array}$ We apply the same operations to $\underline{s_5}$, $\cdots$, $\underline{s_{i'-1}}$ and get
$t_ms_3 \cdots s_{i'-2}s_n \cdots s_3 t_m \underline{s_{i'}}$. We shift $\underline{s_{i'}}$ to the left and finally get\\
$t_ms_3 \cdots s_{i'-1} \underline{s_n \cdots s_3 t_m}$ which belongs to Span$(S_{n-1}^{*} \Lambda_n)$.

We now consider \emph{Case 3} because we use the computation we made in Case 1. In this case, the term is of the form $s_n \cdots s_3t_m(s_2 \cdots s_{i'})$ with $0 \leq m \leq e-1$. If $m \neq 0$, then it belongs to Span$(S_{n-1}^{*} \Lambda_n)$. If $m = 0$, we get two terms $s_n \cdots s_3 t_0 s_3 \cdots s_{i'}$ and $s_n \cdots s_4 s_3^2 s_4 \cdots s_{i'}$. The first term is done in Case 1. The second term is done in Lemma \ref{LmScholie1}.

Consider \emph{Case 4}. We replace $s_n \cdots s_4 s_3^2s_4 \cdots s_{i+1}$ by its decomposition given by the computation in the proof of Lemma \ref{LmScholie1}. We multiply each term of the decomposition by $s_2 \cdots s_{i'}$ on the right and we prove that it belongs to Span$(S_{n-1}^{*} \Lambda_n)$ in the same way as the proof of Lemma \ref{Lemma4}.

Finally, it remains to show that the term corresponding to \emph{Case 2} belongs to Span$(S_{n-1}^{*} \Lambda_n)$. It is of the form $s_n \cdots s_3 t_mt_0 s_3 \cdots s_{i+1} (s_2 \cdots s_{i'})$ with $1 \leq m \leq e-1$.

\emph{Suppose $i' \leq i$}. We have\\
$\begin{array}{ll}
s_n \cdots s_3t_ms_2s_3 \cdots s_{i+1}\underline{s_2} \cdots s_{i'} & \stackrel{(1)}{=}\\
s_n \cdots s_3t_m\underline{s_2s_3s_2}s_4 \cdots s_{i+1}s_3 \cdots s_{i'} & \stackrel{(2)}{=}\\
s_n \cdots \underline{s_3t_m s_3}s_2s_3 s_4 \cdots s_{i+1}s_3 \cdots s_{i'} & \stackrel{(2)}{=}\\
s_n \cdots \underline{t_m}s_3t_m s_2s_3 s_4 \cdots s_{i+1} s_3 \cdots s_{i'} & \stackrel{(1)}{=}\\
t_m s_n \cdots s_3t_m s_2s_3 s_4 \cdots s_{i+1}\underline{s_3} \cdots s_{i'} & \stackrel{(1)}{=}\\
\end{array}$\\
$\begin{array}{ll}
t_m s_n \cdots s_3t_m s_2\underline{s_3 s_4s_3}s_5 \cdots s_{i+1}s_4 \cdots s_{i'} & \stackrel{(2)}{=}\\
t_m s_n \cdots s_3t_m s_2 \underline{s_4}s_3 s_4s_5 \cdots s_{i+1}s_4 \cdots s_{i'} & \stackrel{(1)}{=}\\
t_m s_n \cdots \underline{s_4s_3s_4}t_m s_2s_3 s_4s_5 \cdots s_{i+1}s_4 \cdots s_{i'} & \stackrel{(2)}{=}\\
t_m s_n \cdots \underline{s_3}s_4s_3t_m s_2s_3 s_4s_5 \cdots s_{i+1}s_4 \cdots s_{i'} & \stackrel{(1)}{=}\\
\end{array}$\\
$\begin{array}{l}
t_m s_3 s_n \cdots s_4s_3t_m s_2s_3 s_4s_5 \cdots s_{i+1}\underline{s_4} \cdots \underline{s_{i'-1}} s_{i'}.
\end{array}$ We apply the same operations to $\underline{s_4}$, $\cdots$, $\underline{s_{i'-1}}$ to get
$t_ms_3 \cdots s_{i'-1}s_n \cdots s_3 t_m s_2 s_3 \cdots s_i s_{i+1} \underline{s_{i'}}$. We shift $\underline{s_{i'}}$ to the left and finally get
$t_ms_3 \cdots s_{i'}s_n \cdots s_3 t_m s_2 s_3 \cdots s_i s_{i+1}$ which satisfies the property of the lemma since $i' \leq i \leq n-1$.

\emph{Suppose $i' > i$}. As previously, we have\\
$\begin{array}{ll}
s_n \cdots s_3t_ms_2s_3 \cdots s_{i+1}\underline{s_2} \cdots s_{i'} & \stackrel{(1)}{=}\\
t_m s_n \cdots s_3t_ms_2s_3 \cdots s_{i+1}\underline{s_3} \cdots s_{i'} & \stackrel{(1)}{=}\\
t_ms_3s_n \cdots s_3t_ms_2s_3 \cdots s_{i+1}s_4 \cdots s_{i'}. & \\
\end{array}$\\ Now we write $s_is_{i+1}$ in $s_4 \cdots s_{i'}$ and get $t_ms_3s_n \cdots s_3t_ms_2s_3 \cdots s_{i+1}\underline{s_4} \cdots \underline{s_i}s_{i+1} \cdots s_{i'}$.\\
We apply the same operations to $\underline{s_4}$, $\cdots$, $\underline{s_i}$ to get\\
$t_ms_3 \cdots s_i s_n \cdots s_3t_ms_2s_3 \cdots s_{i}s_{i+1}^2s_{i+2} \cdots s_{i'}$. Applying a quadratic relation, we finally get
$a t_ms_3 \cdots s_i s_n \cdots s_3t_m s_2 s_3 \cdots s_{i'}
+ t_m s_3 \cdots s_i s_n \cdots s_3t_ms_2s_3 \cdots s_{i}s_{i+2} \cdots s_{i'}$.\\
The first term satisfies the property of the lemma. For the second term, we write $s_{i+2}s_{i+1}$ in $\underline{s_n \cdots s_3}$ of 
$t_m s_3 \cdots s_i \underline{s_n \cdots s_3}t_ms_2s_3 \cdots s_{i} s_{i+2} \cdots s_{i'}$ and get\\
$\begin{array}{ll}
t_m s_3 \cdots s_i s_n \cdots s_{i+2}s_{i+1} \cdots s_3t_ms_2s_3 \cdots s_{i} \underline{s_{i+2}} \cdots s_{i'} & \stackrel{(1)}{=}\\
t_m s_3 \cdots s_i s_n \cdots \underline{s_{i+2}s_{i+1}s_{i+2}} \cdots s_3t_ms_2s_3 \cdots s_{i} s_{i+3} \cdots s_{i'} & \stackrel{(2)}{=}\\
t_m s_3 \cdots s_i s_n \cdots \underline{s_{i+1}}s_{i+2}s_{i+1} \cdots s_3t_ms_2s_3 \cdots s_{i} s_{i+3} \cdots s_{i'} & \stackrel{(1)}{=}\\
\end{array}$\\
$\begin{array}{l}
t_m s_3 \cdots s_{i+1} s_n \cdots s_3t_ms_2s_3 \cdots s_{i} \underline{s_{i+3}} \cdots \underline{s_{i'}}.
\end{array}$ We apply the same operations to $\underline{s_{i+3}}$, $\cdots$, $\underline{s_{i'}}$ and finally get
$t_m s_3 \cdots s_{i'-1} s_n \cdots s_3t_ms_2s_3 \cdots s_{i}$. Since $i'-1 \leq n-1$, this term belongs to Span$(S_{n-1}^{*} \Lambda_n)$.

\end{proof}

As a consequence of Theorem \ref{TheoremNewBasis}, by Proposition 2.3 $(ii)$ of \cite{MarinG20G21}, we get another proof of the BMR freeness conjecture (see Theorem \ref{PropositionBMRFreenessTheorem}) for the complex reflection groups $G(e,e,n)$.

\begin{remark}

Our basis never coincides with the Ariki basis for the Hecke algebra associated with $G(e,e,n)$. For example, consider the element $t_1t_0.\ t_0$ which belongs to Ariki's basis. In our basis, it is equal to the linear combination $a t_1t_0 + t_1$, where $t_1t_0$ and $t_1$ are two distinct elements of our basis.

\end{remark}

\newpage

\section{The case of $G(d,1,n)$}

Let $d > 1$ and $n \geq 2$. Let $R_0 = \mathbb{Z}[a,b_1,b_2, \cdots, b_{d-1}]$. Recall that the Hecke algebra $H(d,1,n)$ is the unitary associative $R_0$-algebra generated by the elements $z, s_2, s_3, \cdots, s_n$ with the following relations:
\begin{enumerate}

\item $zs_2zs_2 = s_2zs_2z$,
\item $zs_j = s_jz$ for $2 \leq j \leq n$,
\item $s_is_{i+1}s_i = s_{i+1}s_is_{i+1}$ for $2 \leq i \leq n-1$ and $s_is_j = s_js_i$ for $|i-j| > 1$,
\item $z^d -b_1 z^{d-1} - b_2 z^{d-2} - \cdots - b_{d-1} z -1 = 0$ and $s_j^2 - as_j - 1 = 0$ for $2 \leq j \leq n$.

\end{enumerate}

We use the geodesic normal form defined in Section 2.2 of Chapter \ref{ChapterNormalFormsG(de,e,n)} for all the elements of $G(d,1,n)$ in order to construct a basis for $H(d,1,n)$ that is different from the one defined by Ariki and Koike in \cite{ArikiKoikeHecke}. We introduce the following subsets of $H(d,1,n)$.
\begin{center}
\begin{tabular}{lllll}
$\Lambda_1 =$ & $\{$ & $z^k$ & for $0 \leq k \leq d-1$ & $\}$,
\end{tabular}
\end{center}
and for $2 \leq i \leq n$,
\begin{center}
\begin{tabular}{lllll}
$\Lambda_i =$ & $\{$ & $1$, & & \\
 & & $s_i \cdots s_{i'}$ & for $2 \leq i' \leq i$, & \\ 
 & & $s_i \cdots s_{2}z^k$ & for $1 \leq k \leq d-1$, & \\
 & & $s_i \cdots s_{2}z^ks_2 \cdots s_{i'}$ & for $1 \leq k \leq d-1$ and $2 \leq i' \leq i$ & $\}.$\\
\end{tabular}
\end{center}

\noindent Define $\Lambda = \Lambda_1\Lambda_2 \cdots \Lambda_n$ to be the set of the products $a_1a_2 \cdots a_n$, where $a_1 \in \Lambda_1, \cdots,\\ a_n \in \Lambda_n$. We prove the following theorem.

\begin{theorem}\label{TheoremNewBasisH(d,1,n)}

The set $\Lambda$ provides an $R_0$-basis of the Hecke algebra $H(d,1,n)$.

\end{theorem}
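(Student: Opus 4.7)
The plan is to replicate the strategy used to prove Theorem \ref{TheoremNewBasis}. First I verify that $|\Lambda| = |G(d,1,n)|$: a direct count gives $|\Lambda_1| = d$ and, for $2 \le i \le n$,
$$|\Lambda_i| = 1 + (i-1) + (d-1) + (d-1)(i-1) = 1 + (i-1) + (d-1)i = di.$$
Hence $|\Lambda| = d \cdot (2d)(3d)\cdots(nd) = d^n n!$, which coincides with $|G(d,1,n)|$. By Proposition 2.3 $(i)$ of \cite{MarinG20G21}, it therefore suffices to show that $\Lambda$ is a spanning set of $H(d,1,n)$ over $R_0$, which reduces to proving that $\mathrm{Span}(\Lambda)$ is stable under left multiplication by each generator $z, s_2, \ldots, s_n$.

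I would proceed by induction on $n \ge 2$. For the base case $n=2$, stability under $z$ follows from the polynomial relation $z^d = b_1 z^{d-1} + \cdots + b_{d-1} z + 1$, which allows one to reduce any power $z^k$ with $k \ge d$ modulo $\{1, z, \ldots, z^{d-1}\}$. Stability under $s_2$ requires a short case analysis on the four shapes of elements of $\Lambda_2$, using the quadratic relation $s_2^2 = as_2 + 1$ and the type $B$ braid relation $zs_2zs_2 = s_2zs_2z$. The latter plays, in this setting, the role that Lemmas \ref{LemmaTLTKInVect} and \ref{LemmaTkT0} played in the $G(e,e,n)$ case: I would first establish a preliminary reduction lemma stating that any monomial $z^{a} s_2 z^{b}$ (with $a,b \in \mathbb{Z}_{\ge 0}$) lies in the $R_0$-span of $\{z^{a'} s_2 z^{b'}, z^{a''} : 0 \le a',a'',b' \le d-1\}$, by iterating the type $B$ braid relation together with the polynomial reduction for $z^d$.

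For the inductive step, I assume that $\Lambda_1 \cdots \Lambda_{n-1}$ is a basis of $H(d,1,n-1)$. Stability of $\mathrm{Span}(\Lambda)$ under left multiplication by $z, s_2, \ldots, s_{n-1}$ then follows at once from the induction hypothesis applied to the prefix $a_1 \cdots a_{n-1}$, since these generators do not interact with the letter $s_n$ possibly appearing in $a_n \in \Lambda_n$. The main task is therefore to show that $s_n \cdot a_{n-1} a_n \in \mathrm{Span}(\Lambda)$ for every $a_{n-1} \in \Lambda_{n-1}$ and $a_n \in \Lambda_n$. This calls for a grid of subcases parallel to Lemmas \ref{Lemma2}--\ref{Lemma10}, split according to the four possible shapes of $a_{n-1}$ and the four possible shapes of $a_n$; the rows and columns with $a_{n-1} = 1$ or $a_n = 1$ are immediate. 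In each nontrivial subcase one slides $s_n$ through the intervening $s$-letters by the standard braid and commutation manipulations (operations $(1), (2), (3)$ in the notation of the previous section), bringing any $z$-factors together for reduction by the preliminary lemma.

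The main obstacle will be the subcases in which both $a_{n-1}$ and $a_n$ contain a factor $z^k$ (respectively $z^l$): after sliding $s_n$, one arrives at an expression of the shape $\cdots z^k (s_n \cdots s_2) z^l \cdots$ in which the two powers of $z$ are separated by a long $s$-word, and the type $B$ braid relation must be applied repeatedly to bring them past each other and combine them as $z^{k+l}$, then reduce modulo the polynomial relation. Once this preliminary reduction is in place, the remaining computations are mechanical adaptations of those in Lemmas \ref{Lemma6}--\ref{Lemma10}, with the circular family $\{t_i\}_{i \in \mathbb{Z}/e\mathbb{Z}}$ collapsed to the single generator $z$ and with the order-$d$ polynomial relation replacing the quadratic relation for $t_0$. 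Once $\Lambda$ is shown to span $H(d,1,n)$, Proposition 2.3 $(ii)$ of \cite{MarinG20G21} yields a new proof of the BMR freeness conjecture (Theorem \ref{PropositionBMRFreenessTheorem}) for the series $G(d,1,n)$.
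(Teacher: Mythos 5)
Your overall architecture is the same as the paper's: count $|\Lambda_1|=d$, $|\Lambda_i|=id$, so $|\Lambda|=d^n n!=|G(d,1,n)|$, invoke Proposition 2.3 $(i)$ of \cite{MarinG20G21} to reduce to spanning, and run an induction on $n$ with a grid of cases for $s_n(a_{n-1}a_n)$ mirroring Lemmas \ref{Lemma2}--\ref{Lemma10} (the paper's Lemmas \ref{LmScholie11}--\ref{Lemma1010}). However, the rank-two engine that makes this work is missing from your plan. Your ``preliminary reduction lemma'' concerns monomials $z^a s_2 z^b$ with a \emph{single} $s_2$; such words are reduced by the polynomial relation on $z$ alone (no braid relation enters), and the lemma is both essentially trivial and too weak. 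Already in the base case $n=2$, the words one must actually control are $s_2 z^c s_2 z^k$ and $s_2 z^c s_2 z^k s_2$, with two or three occurrences of $s_2$ interleaved with $z$-powers. The paper handles these through genuinely nontrivial inductions on $k$ built on $zs_2zs_2=s_2zs_2z$, namely Lemmas \ref{Lemma(s2ts2)^k}, \ref{Lemma(s2ts2)^ks2}, and \ref{Lemmas2t^ks2In(s2ts2)^k}, assembled into Proposition \ref{PropInductionHypothesisG(d,1,n)}; these are the true analogues of Lemmas \ref{LemmaTLTKInVect} and \ref{LemmaTkT0}, and nothing in your proposal substitutes for them.

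The same omission makes your treatment of the hardest inductive subcases incorrect as stated. You propose, after sliding $s_n$, to bring the two powers $z^k$ and $z^l$ ``past each other and combine them as $z^{k+l}$'' by iterating the type-$B$ braid relation. But $z$ commutes only with $s_3,\dots,s_n$, not with $s_2$; the relation $zs_2zs_2=s_2zs_2z$ does not let $z$ cross $s_2$, and in general $s_2 z^k s_2 z^l$ cannot be rewritten with the two $z$-powers merged. Since in these subcases the chain separating the two $z$-powers reaches down to $s_2$, your proposed manipulation fails exactly where you identify the main obstacle. What the paper does instead (Lemmas \ref{Lemma66}, \ref{Lemma77}, \ref{Lemma99}, \ref{Lemma1010}) is slide the $s$-letters so that the two $z$-powers sit in the rank-two configuration $s_2 z^k s_2 z^l$ (possibly followed by $s_2$), commute the remaining $s_j$ ($j\geq 3$) out of the way, and then substitute the decomposition of this rank-two word over $\Lambda_1\Lambda_2$ furnished by the $n=2$ case. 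To repair your plan you would need to state and prove the rank-two reduction (the content of Proposition \ref{PropInductionHypothesisG(d,1,n)} and its three lemmas) and route the hard subcases through it, rather than through a commutation of $z$ past $s_2$ that the relations do not permit.
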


We have $|\Lambda_1| = d$, $|\Lambda_i| = id$ for $2 \leq i \leq n$. Then $|\Lambda|$ is equal to $d^n n!$ that is the order of $G(d,1,n)$. Hence by Proposition 2.3 $(i)$ of \cite{MarinG20G21}, it is sufficient to show that $\Lambda$ is an $R_0$-generating set of $H(d,1,n)$. This is proved by induction on $n$ in much the same way as Theorem \ref{TheoremNewBasis}. We provide some preliminary lemmas that are useful in the proof of the theorem.

\begin{lemma}\label{Lemma(s2ts2)^k}

For $1 \leq k \leq d-1$, the element $(s_2zs_2)^k$ belongs to $\sum\limits_{\substack{\lambda_1 \in \Lambda_1,\\ \lambda_2 \in \Lambda'_2}}R_0 \lambda_1 \lambda_2$, where $\Lambda'_2 = \{1, s_2, s_2z, s_2z^2, \cdots, s_2z^{k-1}, s_2zs_2, s_2z^2s_2, \cdots, s_2z^ks_2 \}$.

\end{lemma}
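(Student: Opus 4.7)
The plan is to argue by induction on $k \geq 1$. The base case $k=1$ is immediate since $s_2 z s_2 = 1 \cdot s_2 z s_2$ with $1 \in \Lambda_1$ and $s_2 z s_2 \in \Lambda'_2$. For the inductive step, I will assume that $(s_2 z s_2)^{k-1}$ lies in $\sum R_0 \lambda_1 \lambda_2$ with $\lambda_1 \in \Lambda_1$ and $\lambda_2$ ranging over the corresponding set $\Lambda'_{2,k-1}$, and write $(s_2 z s_2)^k = (s_2 z s_2)^{k-1} \cdot (s_2 z s_2)$. It then suffices to show that, for each element $\lambda \in \Lambda'_{2,k-1}$, the product $\lambda \cdot s_2 z s_2$ belongs to $\sum R_0 \lambda_1' \lambda_2'$ with $\lambda_1' \in \Lambda_1$ and $\lambda_2' \in \Lambda'_{2,k}$; the global $z^i$ prefix from the inductive hypothesis is then reabsorbed into $\Lambda_1$ after applying the defining polynomial relation $z^d = b_1 z^{d-1} + \cdots + b_{d-1} z + 1$ to reduce higher powers of $z$.

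The key tool is the observation that the braid relation $z s_2 z s_2 = s_2 z s_2 z$ can be rewritten as $z \cdot (s_2 z s_2) = (s_2 z s_2) \cdot z$, so $s_2 z s_2$ commutes with $z$. Iterating this, I will establish the identity
\[
s_2 z^l s_2 z s_2 \;=\; a\, z^l s_2 z s_2 \,+\, z \cdot s_2 z^l \qquad (l \geq 1),
\]
whose derivation is: $s_2 z^l s_2 z s_2 = s_2 z^{l-1}(z s_2 z s_2) = s_2 z^{l-1}(s_2 z s_2 z) = (s_2 z^{l-1} s_2 z s_2)\, z$, so by iteration $s_2 z^l s_2 z s_2 = s_2^2 z s_2 \cdot z^l = (a s_2 + 1)(z s_2) z^l$, and then commuting $s_2 z s_2$ with $z^l$ yields the claimed identity.

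With this identity in hand, the four types of elements in $\Lambda'_{2,k-1}$ are treated directly:
\begin{itemize}
\item $\lambda = 1$ gives $s_2 z s_2 \in \Lambda'_{2,k}$;
\item $\lambda = s_2$ gives $s_2^2 z s_2 = a (s_2 z s_2) + z \cdot s_2$;
\item $\lambda = s_2 z^l$ with $1 \leq l \leq k-2$ gives $s_2 z^l s_2 z s_2 = a z^l (s_2 z s_2) + z \cdot s_2 z^l$, whose factors $s_2 z s_2$ and $s_2 z^l$ (with $l \leq k-1$) lie in $\Lambda'_{2,k}$;
\item $\lambda = s_2 z^l s_2$ with $1 \leq l \leq k-1$ gives $s_2 z^l (a s_2 + 1) z s_2 = a\, s_2 z^l s_2 z s_2 + s_2 z^{l+1} s_2 = a^2 z^l (s_2 z s_2) + a z \cdot s_2 z^l + s_2 z^{l+1} s_2$, and again each $\lambda_2'$ factor ($s_2 z s_2$, $s_2 z^l$ with $l \leq k-1$, or $s_2 z^{l+1} s_2$ with $l+1 \leq k$) lies in $\Lambda'_{2,k}$.
\end{itemize}
In each case the exponent on $z$ that appears in $\lambda_1'$ is bounded by some $l \leq k-1 \leq d-2$, so after combining with the $z^i$ prefix it is at most $2d-3$ and can be reduced into $\Lambda_1$ by one application of the polynomial relation.

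The main obstacle, anticipated before beginning the computation, is precisely the treatment of Cases 3 and 4 above, where naively one would face iterated products $s_2 z^l s_2 z s_2$ of unbounded length in the generators. The identity $s_2 z^l s_2 z s_2 = a z^l s_2 z s_2 + z \cdot s_2 z^l$ is what allows the induction to close cleanly: it both lowers the number of $s_2$-blocks and keeps the exponent on the middle $z$ within the range prescribed by $\Lambda'_{2,k}$, so that no new basis element is forced outside the prescribed set.
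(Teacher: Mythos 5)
Your proof is correct and follows essentially the same route as the paper's: induction on $k$, right-multiplying each term of $(s_2zs_2)^{k-1}$ by $s_2zs_2$, and simplifying via the commutation $z(s_2zs_2)=(s_2zs_2)z$ coming from the braid relation together with $s_2^2=as_2+1$; your displayed identity $s_2z^ls_2zs_2 = a\,z^l s_2zs_2 + z s_2z^l$ is precisely the paper's inline manipulation in its Cases 2 and 3, packaged as a separate identity. The only cosmetic slip is the remark that one application of the degree-$d$ relation suffices to reabsorb the combined $z$-powers into $\Lambda_1$ (in general several applications are needed), which does not affect the validity of the argument.
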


\begin{proof}

The property is clear for $k=1$. Let $k=2$. We have
$(s_2zs_2)^2 = s_2zs_2^2zs_2$. We apply a quadratic relation and get
$a s_2zs_2zs_2 + s_2z^2s_2$. Applying a braid relation, one gets
$a zs_2zs_2^2 + s_2z^2s_2$. Using a quadratic relation, this is equal to
$a^2 zs_2zs_2 + a zs_2z + s_2z^2s_2$, where each term is of the form $\lambda_1\lambda_2$ with $\lambda_1 \in \Lambda_1$ and $\lambda_2 \in \{1,s_2,s_2z,s_2zs_2,s_2z^2s_2\}$.

Let $k \geq 3$. Suppose the property is satisfied for $(s_2zs_2)^3$, $\cdots$, and $(s_2zs_2)^{k-1}$. We have $(s_2zs_2)^k = (s_2zs_2)^{k-1}(s_2zs_2)$. By the induction hypothesis, the terms that appear in the decomposition of $(s_2zs_2)^{k-1}$ are of the following forms.
\begin{itemize}

\item $z^c$ with $0 \leq c \leq d-1$,
\item $z^cs_2z^{c'}$ with $0 \leq c \leq d-1$ and $0 \leq c' \leq k-2$,
\item $z^cs_2z^{c'}s_2$ with $0 \leq c \leq d-1$ and $1 \leq c' \leq k-1$.

\end{itemize}
Multiplying these terms by $s_2zs_2$ on the right, we get the following $3$ cases.

\emph{Case 1}: A term of the form $z^c s_2zs_2$ with $0 \leq c \leq d-1$. It is of the form $\lambda_1\lambda_2$ with $\lambda_1 \in \Lambda_1$ and $\lambda_2 \in \Lambda'_2$.

\emph{Case 2}: A term of the form $z^cs_2z^{c'}s_2zs_2$ with $0 \leq c \leq d-1$ and $0 \leq c' \leq k-2$. We shift $z^{c'}$ to the right by applying braid relations and get
$z^cs_2^2zs_2z^{c'}$. Applying a quadratic relation, this is equal to
$a z^cs_2zs_2\underline{z^{c'}} +  z^{c+1}s_2z^{c'}$. Now we shift $\underline{z^{c'}}$ to the left by applying braid relations and get
$a z^{c+c'}s_2zs_2 +  z^{c+1}s_2z^{c'}$. Each term is of the form $\lambda_1\lambda_2$ with $\lambda_1 \in \Lambda_1$ and $\lambda_2 \in \Lambda'_2$.

\emph{Case 3}: A term of the form $z^cs_2z^{c'}s_2^2zs_2$ with $0 \leq c \leq d-1$ and $1 \leq c' \leq k-1$. By applying a quadratic relation, we have $z^cs_2z^{c'}s_2^2zs_2 = a z^cs_2z^{c'}s_2zs_2 + z^cs_2z^{c'+1}s_2$. The first term is the same as in the previous case. Then both terms are of the form $\lambda_1\lambda_2$ with $\lambda_1 \in \Lambda_1$ and $\lambda_2 \in \Lambda'_2$.

\end{proof}

\begin{lemma}\label{Lemma(s2ts2)^ks2}

For $1 \leq k \leq d-1$, the element $(s_2zs_2)^ks_2$ belongs to $R_0(s_2zs_2)^k + R_0z(s_2zs_2)^{k-1} + \cdots + R_0z^{k-1}(s_2zs_2) + R_0s_2z^k$.

\end{lemma}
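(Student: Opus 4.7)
The plan is to proceed by induction on $k$, relying on two ingredients. The first is the observation that the braid relation $zs_2zs_2 = s_2zs_2z$ can be read as $(s_2zs_2)\,z = z\,(s_2zs_2)$, so the element $s_2zs_2$ commutes with $z$, and hence $(s_2zs_2)^j$ commutes with every power $z^m$. The second is the quadratic relation $s_2^2 = as_2 + 1$, which will be used to absorb stray $s_2$'s.

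For the base case $k=1$, I would simply compute
\[
(s_2zs_2)\,s_2 \;=\; s_2z(as_2+1) \;=\; a(s_2zs_2) + s_2z,
\]
which lies in $R_0(s_2zs_2) + R_0\,s_2z$ as required.

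For the inductive step, assume the lemma holds for $k-1$, and write
\[
(s_2zs_2)^k s_2 \;=\; (s_2zs_2)\cdot\bigl((s_2zs_2)^{k-1} s_2\bigr).
\]
By the inductive hypothesis, the right factor is an $R_0$-linear combination of the terms $(s_2zs_2)^{k-1}$, $z(s_2zs_2)^{k-2}$, \dots, $z^{k-2}(s_2zs_2)$, $s_2z^{k-1}$. Using the commutation of $s_2zs_2$ with each $z^j$, the product of $(s_2zs_2)$ with $z^j(s_2zs_2)^{k-1-j}$ becomes $z^j(s_2zs_2)^{k-j}$, which already lies in the claimed span. The only delicate case is the final summand, for which I would compute
\[
(s_2zs_2)\cdot s_2z^{k-1} \;=\; s_2z s_2^2 z^{k-1} \;=\; a\,(s_2zs_2)\,z^{k-1} + s_2 z^k \;=\; a\,z^{k-1}(s_2zs_2) + s_2 z^k,
\]
invoking the quadratic relation and then the commutation of $s_2zs_2$ with $z^{k-1}$. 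Both resulting terms again lie in the target span, which completes the induction.

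There is no real obstacle here: the argument is a clean two-line induction whose only subtlety is recognizing that the braid relation makes $s_2zs_2$ central with respect to $z$. Once that observation is in place, the quadratic relation does the rest, and the stated sum of $R_0$-submodules is visibly closed under the operations used.
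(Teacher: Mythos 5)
Your proof is correct and follows essentially the same route as the paper: an induction on $k$ splitting $(s_2zs_2)^k s_2 = (s_2zs_2)\bigl((s_2zs_2)^{k-1}s_2\bigr)$, using the braid relation (read as $z$ commuting with $s_2zs_2$) to push powers of $z$ past $s_2zs_2$, and the quadratic relation $s_2^2 = as_2+1$ for the final term $(s_2zs_2)s_2z^{k-1}$. Your write-up is merely a bit more explicit than the paper's about where the commutation and quadratic relation enter.
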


\begin{proof}

For $k = 1$, we have $(s_2zs_2)s_2 = s_2zs_2^2 = a s_2zs_2 + s_2z$. Then the property is satisfied for $k = 1$. Let $k \geq 2$. Suppose that the property is satisfied for $(s_2zs_2)^{k-1}$. We have $(s_2zs_2)^k s_2 = (s_2zs_2)(s_2zs_2)^{k-1} s_2$. By the induction hypothesis, it belongs to $R_0(s_2zs_2)(s_2zs_2)^{k-1} + R_0(s_2zs_2)z(s_2zs_2)^{k-2} + \cdots + R_0(s_2zs_2)z^{k-2}(s_2zs_2) + R_0(s_2zs_2)s_2z^{k-1}$. Then it belongs to $R_0(s_2zs_2)^{k} + R_0z(s_2zs_2)^{k-1} + \cdots + R_0z^{k-2}(s_2zs_2)^2\\ + R_0z^{k-1}(s_2zs_2) + R_0s_2z^k$. It follows that for all $1 \leq k \leq d-1$, the element $(s_2zs_2)^ks_2$ belongs to $R_0(s_2zs_2)^k + R_0z(s_2zs_2)^{k-1} + \cdots + R_0z^{k-1}(s_2zs_2) + R_0s_2z^k$.

\end{proof}

\begin{lemma}\label{Lemmas2t^ks2In(s2ts2)^k}

For $1 \leq k \leq d-1$, the element $s_2z^ks_2$ belongs to $\sum\limits_{\substack{\lambda_1 \in \Lambda_1,\\ \lambda_2 \in \Lambda''_2}}R_0 \lambda_1 \lambda_2$, where $\Lambda''_2 = \{1,s_2,s_2z,s_2z^2, \cdots, s_2z^{k-1}, s_2zs_2, (s_2zs_2)^2, \cdots, (s_2zs_2)^{k} \}$.

\end{lemma}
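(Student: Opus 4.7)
The strategy is to isolate $s_2z^ks_2$ from the product $(s_2zs_2)^k$ after first rewriting it as $[(s_2zs_2)^{k-1}s_2] \cdot zs_2$, and then invoke Lemma \ref{Lemma(s2ts2)^ks2} twice. The crucial observation, obtained directly from the braid relation $zs_2zs_2 = s_2zs_2z$ in the presentation of $H(d,1,n)$, is that $z$ commutes with the element $s_2zs_2$; equivalently, $(s_2zs_2)^j \cdot z = z \cdot (s_2zs_2)^j$ for every $j \geq 0$. This commutation is what makes the whole computation go through cleanly.

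First, I would expand
\[
(s_2zs_2)^k = \bigl[(s_2zs_2)^{k-1}s_2\bigr]\cdot zs_2,
\]
and apply Lemma \ref{Lemma(s2ts2)^ks2} (with exponent $k-1$) to the bracket, expressing $(s_2zs_2)^{k-1}s_2$ as an $R_0$-linear combination of $(s_2zs_2)^{k-1},\ z(s_2zs_2)^{k-2},\ \ldots,\ z^{k-2}(s_2zs_2),$ and $s_2z^{k-1}$. Multiplying each of these terms by $zs_2$ on the right and using the commutation $(s_2zs_2)^{k-1-i}\cdot z = z\cdot(s_2zs_2)^{k-1-i}$, every term except the last becomes of the form $z^{i+1}(s_2zs_2)^{k-1-i}s_2$ with $0 \leq i \leq k-2$; the last term is exactly $s_2z^{k-1}\cdot zs_2 = s_2z^ks_2$, with coefficient $1$.

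Second, I would apply Lemma \ref{Lemma(s2ts2)^ks2} again, this time to each factor $(s_2zs_2)^{k-1-i}s_2$ appearing above. Since $1 \leq k-1-i \leq k-1$, each such factor is an $R_0$-combination of elements $z^m(s_2zs_2)^{j'}$ with $1 \leq j' \leq k-1$ and $m < j'$, together with $s_2z^{k-1-i}$ with $1 \leq k-1-i \leq k-1$. All the elements $(s_2zs_2)^{j'}$ and $s_2z^{k-1-i}$ produced this way lie in $\Lambda''_2$. After multiplying by the leading $z^{i+1}$ and reducing all accumulated powers of $z$ modulo the polynomial relation $z^d = b_1z^{d-1}+\cdots+b_{d-1}z+1$, every monomial is put into the form $R_0\,\lambda_1\lambda_2$ with $\lambda_1 \in \Lambda_1$ and $\lambda_2 \in \Lambda''_2$.

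Finally, since $(s_2zs_2)^k$ itself lies in $\Lambda''_2$, rearranging the identity obtained in the two preceding steps gives
\[
s_2z^ks_2 \;=\; (s_2zs_2)^k \;-\; \bigl(\text{$R_0$-combination of terms already in }\!\!\sum_{\lambda_1,\lambda_2} R_0\lambda_1\lambda_2\bigr),
\]
which proves the claim. The only real thing to keep track of will be the bookkeeping of the powers of $z$ after the commutations and the successive applications of Lemma \ref{Lemma(s2ts2)^ks2}, but no genuine obstacle arises: the commutation $z\cdot(s_2zs_2)=(s_2zs_2)\cdot z$ shifts all $z$'s to the left past the $(s_2zs_2)$-blocks, and the polynomial relation on $z$ then reduces everything into $\Lambda_1\cdot\Lambda''_2$.
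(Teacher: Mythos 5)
Your argument is correct in substance but takes a genuinely different route from the paper. The paper proves the lemma by induction on $k$: it writes $s_2z^ks_2 = s_2z^{k-1}s_2^{-1}\,s_2zs_2$, replaces $s_2^{-1}$ by $s_2-a$, and then feeds the inductive decomposition of $s_2z^{k-1}s_2$ into a case-by-case analysis of products with $s_2zs_2$ on the right. You avoid induction on the statement altogether: you expand $(s_2zs_2)^k$ by two applications of Lemma \ref{Lemma(s2ts2)^ks2}, use the commutation $z\,(s_2zs_2)=(s_2zs_2)\,z$ (which is literally the defining relation $zs_2zs_2=s_2zs_2z$) to push all powers of $z$ to the left, and conclude that every term of the expansion except $s_2z^ks_2$ already lies in $\mathrm{Span}(\Lambda_1\Lambda''_2)$; since $(s_2zs_2)^k\in\Lambda''_2$, solving for $s_2z^ks_2$ gives the claim. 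This is arguably slicker and makes explicit the centrality of $z$ relative to $s_2zs_2$, which the paper only uses implicitly when it ``shifts $z^{c'}$ to the right''; it also bypasses the paper's appeal to the three-case analysis of products with $s_2zs_2$.

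One point must be shored up before your proof is complete. The final step requires $s_2z^ks_2$ to appear with an invertible coefficient in the expansion of $(s_2zs_2)^k$; since the only units of $R_0=\mathbb{Z}[a,b_1,\dots,b_{d-1}]$ are $\pm1$, you genuinely need the coefficient of $s_2z^{k-1}$ in the decomposition of $(s_2zs_2)^{k-1}s_2$ to be $\pm1$. Lemma \ref{Lemma(s2ts2)^ks2} as stated only asserts membership in an $R_0$-span, so your claim ``with coefficient $1$'' does not follow from the lemma as cited. It is nevertheless true, and a one-line strengthening of that lemma's induction supplies it: the base case is $(s_2zs_2)s_2=as_2zs_2+s_2z$, and in the inductive step the only contribution to $s_2z^k$ comes from $(s_2zs_2)\,s_2z^{k-1}=as_2zs_2z^{k-1}+s_2z^k$, so the coefficient remains $1$ throughout. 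With that remark added (and the trivial case $k=1$, where $s_2zs_2\in\Lambda''_2$ directly), your argument goes through.
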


\begin{proof}

The lemma is satisfied for $k=1$. For $k=2$, we have
$s_2z^2s_2 = s_2zs_2^{-1}s_2zs_2$. Using that $s_2^{-1} = s_2 - a$, we get
$s_2zs_2^2zs_2 - a \underline{s_2zs_2z}s_2$. Now we apply a braid relation then a quadratic relation and get
$(s_2zs_2)^2 - a zs_2zs_2^2 = (s_2zs_2)^2 - a^2 zs_2zs_2 - a zs_2z$ which satisfies the property we are proving.

Suppose the property is satisfied for $s_2z^{k-1}s_2$. We have
$s_2z^ks_2 = s_2z^{k-1}s_2^{-1}s_2zs_2 = s_2z^{k-1}s_2s_2zs_2 - a s_2z^{k-1}s_2zs_2$ by replacing $s_2^{-1}$ by $s_2 - a$.
For the second term, we shift $z^{k-1}$ to the right and get
$s_2^2zs_2z^{k-1}$. We apply a quadratic relation to get
$a s_2zs_2\underline{z^{k-1}} + zs_2z^{k-1}$ then we shift $\underline{z^{k-1}}$ to the left and finally get
$a z^{k-1}s_2zs_2 + zs_2z^{k-1}$, where each term is of the form $\lambda_1 \lambda_2$ with $\lambda_1 \in \Lambda_1$ and $\lambda_2 \in \Lambda''_2$.\\
For the first term $s_2z^{k-1}s_2s_2zs_2$, by the induction hypothesis, the terms that appear in the decomposition of $s_2z^{k-1}s_2$ are of the following forms.
\begin{itemize}

\item $z^c$ and $z^cs_2$ with $0 \leq c \leq d-1$,
\item $z^c(s_2zs_2)^{c'}$ with $0 \leq c \leq d-1$ and $1 \leq c' \leq k-1$,
\item $z^cs_2z^{c'}$ with $0 \leq c \leq d-1$ and $1 \leq c' \leq k-2$.

\end{itemize}
Multiplying these terms by $s_2zs_2$ on the right, we get the following $3$ cases.

\emph{Case 1}. We have $z^c s_2zs_2$ and $z^c\underline{s_2s_2}zs_2 = a z^cs_2zs_2 + z^{c+1}s_2$, where each term in both expressions is of the form $\lambda_1 \lambda_2$ with $\lambda_1 \in \Lambda_1$ and $\lambda_2 \in \Lambda''_2$.

\emph{Case 2}. The term $z^c(s_2zs_2)^{c'}s_2zs_2 = z^c(s_2zs_2)^{c'+1}$ is of the form $\lambda_1 \lambda_2$ with $\lambda_1 \in \Lambda_1$ and $\lambda_2 \in \Lambda''_2$ since $1 \leq c' \leq k-1$.

\emph{Case 3}. We have $z^cs_2z^{c'}s_2zs_2 =
z^cs_2^2zs_2z^{c'} = a z^cs_2zs_2z^{c'} + z^{c+1}s_2z^{c'}$. The first term is equal to $z^{c+c'}s_2zs_2$ and the second term is equal to $z^{c+1}s_2z^{c'}$ with $1 \leq c' \leq k-2$. Both are of the form $\lambda_1 \lambda_2$ with $\lambda_1 \in \Lambda_1$ and $\lambda_2 \in \Lambda''_2$.

\end{proof}

The following proposition ensures that the case $n=2$ of Theorem \ref{TheoremNewBasisH(d,1,n)} works properly.

\begin{proposition}\label{PropInductionHypothesisG(d,1,n)}

For all $a_1 \in \Lambda_1$ and $a_2 \in \Lambda_2$, the elements $za_1a_2$ and $s_2 a_1a_2$ belong to \emph{Span}$(\Lambda_1\Lambda_2)$.

\end{proposition}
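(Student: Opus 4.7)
The plan is to show that $\mathrm{Span}(\Lambda_1 \Lambda_2)$ is stable under left multiplication by each of the two generators $z$ and $s_2$ of $H(d,1,2)$, which is exactly the statement applied to the generating set $\Lambda_1 \Lambda_2$ itself. The verification splits according to which of the four shapes $1$, $s_2$, $s_2 z^l$, $s_2 z^l s_2$ (with $1 \le l \le d-1$) the factor $a_2$ takes, and according to the exponent $k$ when $a_1 = z^k$.

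For the action of $z$: writing $a_1 = z^k$, I have $z a_1 a_2 = z^{k+1} a_2$. When $k+1 \le d-1$ this already lies in $\Lambda_1 \Lambda_2$, and when $k+1 = d$ the polynomial relation $z^d = b_1 z^{d-1} + \cdots + b_{d-1} z + 1$ rewrites $z^d a_2$ as an $R_0$-combination of elements $z^j a_2$ with $0 \le j \le d-1$, each of which lies in $\Lambda_1 \Lambda_2$.

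For the action of $s_2$, the cases $a_2 \in \{1, s_2\}$ are immediate: $s_2 z^k \in \Lambda_2$ when $k \ge 1$, while $s_2^2 = a s_2 + 1$ handles $k=0$, and $s_2 z^k s_2 \in \Lambda_2$ for $k \ge 1$. The substantive cases are $a_2 = s_2 z^l$ and $a_2 = s_2 z^l s_2$ with $1 \le l \le d-1$, where I must reduce $s_2 z^k s_2 z^l$ and $s_2 z^k s_2 z^l s_2$ to $\mathrm{Span}(\Lambda_1 \Lambda_2)$. The case $k = 0$ is dispatched by the quadratic relation; for $1 \le k \le d-1$ I apply Lemma \ref{Lemmas2t^ks2In(s2ts2)^k} to the inner block $s_2 z^k s_2$, expanding it as an $R_0$-combination of terms $\lambda_1 \lambda_2''$ with $\lambda_2'' \in \{1,\, s_2,\, s_2 z^j\ (j<k),\, (s_2 z s_2)^j\ (j \le k)\}$, and then multiply on the right by $z^l$ (or by $z^l s_2$).

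The main obstacle will be taming the residual terms of the form $\lambda_1 (s_2 z s_2)^j z^l$ thus produced, since they are not manifestly in $\Lambda_1 \Lambda_2$. The decisive observation is an immediate consequence of the braid relation $z s_2 z s_2 = s_2 z s_2 z$: the element $z$ commutes with $s_2 z s_2$, so $z^l$ commutes with every power $(s_2 z s_2)^j$ and the residual terms become $\lambda_1 z^l (s_2 z s_2)^j$. I then invoke Lemma \ref{Lemma(s2ts2)^k} to expand $(s_2 z s_2)^j$ as an $R_0$-combination of $\mu_1 \mu_2$ with $\mu_2 \in \{1,\, s_2,\, s_2 z^c,\, s_2 z^c s_2\}$, absorb the factor $z^l$ into $\lambda_1 \mu_1$ modulo the $z^d$-relation, and (in the trailing-$s_2$ case) apply Lemma \ref{Lemma(s2ts2)^ks2} to rewrite $(s_2 z s_2)^j s_2$ as a combination of terms $z^c (s_2 z s_2)^{j-c}$ and $s_2 z^j$. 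After finitely many substitutions, every expression is exhibited as an $R_0$-linear combination of elements of $\Lambda_1 \Lambda_2$, which proves the proposition and supplies the base case of the induction for Theorem \ref{TheoremNewBasisH(d,1,n)}.
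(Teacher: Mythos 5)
Your proof is correct and follows essentially the same route as the paper: it reduces the nontrivial cases $s_2 z^k s_2 z^{l}$ and $s_2 z^k s_2 z^{l} s_2$ by first expanding the inner block $s_2 z^k s_2$ via Lemma \ref{Lemmas2t^ks2In(s2ts2)^k} and then invoking Lemmas \ref{Lemma(s2ts2)^k} and \ref{Lemma(s2ts2)^ks2}, exactly as the paper does. The only difference is that you make explicit the commutation $z(s_2zs_2)=(s_2zs_2)z$ coming from the braid relation, a step the paper's terse proof leaves implicit.
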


\begin{proof}

It is readily checked that $za_1a_2$ belongs to Span$(\Lambda_1\Lambda_2)$. Note that when the power of $z$ exceeds $d-1$, we use Relation 1 of Definition \ref{DefPresH(d,1,n)}.

It is easily checked that if $a_1 \in \Lambda_1$ and $a_2 = 1$, the element $s_2 a_1a_2$ belongs to Span$(\Lambda_1\Lambda_2)$. Also, when $a_1 = 1$ and $a_2 \in \Lambda_2$, we have that $s_2 a_1a_2$ belongs to Span$(\Lambda_1\Lambda_2)$.

Suppose $a_1 = z^k$ with $1 \leq k \leq d-1$ and $a_2 = s_2$. We have $s_2 a_1 a_2$ is equal to $s_2 z^k s_2$. Hence it belongs to Span$(\Lambda_1\Lambda_2)$.

Suppose $a_1 = z^k$ with $1 \leq k \leq d-1$ and $a_2 = s_2 z^{k'}$ with $1 \leq k' \leq d-1$. We have $s_2 a_1 a_2 = s_2 z^k s_2 z^{k'}$. We replace $s_2 z^k s_2$ by its decomposition given in Lemma \ref{Lemmas2t^ks2In(s2ts2)^k}, then we use the result of Lemma \ref{Lemma(s2ts2)^k} to directly deduce that $s_2 z^k s_2 z^{k'}$ belongs to Span$(\Lambda_1\Lambda_2)$.

Finally, suppose $a_1 = z^k$ with $1 \leq k \leq d-1$ and $a_2 = s_2 z^{k'}s_2$ with $1 \leq k' \leq d-1$.\\
We have $s_2 a_1 a_2$ is equal to $s_2 z^k s_2 z^{k'} s_2$. We replace $s_2 z^k s_2$ by its decomposition given in Lemma \ref{Lemmas2t^ks2In(s2ts2)^k}. Then by the results of Lemmas \ref{Lemma(s2ts2)^ks2} and \ref{Lemma(s2ts2)^k}, we deduce that $s_2 z^k s_2 z^{k'} s_2$ belongs to Span$(\Lambda_1\Lambda_2)$.

\end{proof}

In order to prove Theorem \ref{TheoremNewBasisH(d,1,n)}, we introduce Lemmas \ref{LmScholie11} to \ref{Lemma1010} below that are similar to Lemmas \ref{LmScholie1} to \ref{Lemma10}. Along with Proposition \ref{PropInductionHypothesisG(d,1,n)}, they provide an inductive proof of Theorem \ref{TheoremNewBasisH(d,1,n)} that is similar to the proof of Theorem \ref{TheoremNewBasis}. Therefore, by Proposition 2.3 $(ii)$ of \cite{MarinG20G21}, we get another proof of Theorem \ref{PropositionBMRFreenessTheorem} for the complex reflection groups $G(d,1,n)$. Let $n \geq 3$. Denote by $S_{n-1}^{*}$ the set of the words over $\{z,s_2,s_3, \cdots, s_{n-1}\}$. The rest of this section is devoted to the proof of Lemmas \ref{LmScholie11} to \ref{Lemma1010}.\\

\begin{lemma}\label{LmScholie11}

Let $2 \leq i \leq n$. We have $s_n \cdots s_3 s_2^2s_3 \cdots s_i$ belongs to \emph{Span}$(S_{n-1}^{*}\Lambda_n)$.

\end{lemma}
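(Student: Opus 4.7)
The statement is the exact analogue for $G(d,1,n)$ of Lemma~\ref{LmScholie1}, with the role of $s_3$ played by $s_2$. Crucially, the entire manipulation stays inside the subalgebra generated by $s_2,s_3,\dots,s_n$, where these generators satisfy the classical braid/commutation relations of type $A$ together with the quadratic relation $s_j^2 = as_j+1$. In particular, the generator $z$ and the length-$4$ braid relation $zs_2zs_2 = s_2zs_2z$ never enter the picture, so the proof of Lemma~\ref{LmScholie1} can be transported with a simple downward index shift.

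The plan is therefore to imitate the proof of Lemma~\ref{LmScholie1}. First, I would dispose of the small cases $i=2$ and $i=3$: applying $s_2^2 = as_2+1$ yields directly $s_n\cdots s_3 s_2^2 \in R_0\, s_n\cdots s_3 s_2 + R_0\, s_n\cdots s_3$ and $s_n\cdots s_3 s_2^2 s_3 \in R_0\, s_n\cdots s_3 s_2 s_3 + R_0\, s_n\cdots s_3^2 \subseteq R_0\, s_2 s_n\cdots s_2 + R_0\, s_n\cdots s_3 + R_0\, s_n\cdots s_4$, each of which visibly lies in $S_{n-1}^{*}\Lambda_n$ (using $s_2s_3s_2=s_3s_2s_3$ and commutations to shift the resulting $s_2$ past $s_n\cdots s_4$).

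For $i\geq 4$ I would iterate the quadratic relation: starting from $s_n\cdots s_3 s_2^2 s_3\cdots s_i$, apply $s_2^2 = as_2+1$ to obtain $R_0\, s_n\cdots s_3 s_2 s_3\cdots s_i + R_0\, s_n\cdots s_4 s_3^2 s_4\cdots s_i$. Apply the quadratic relation again to the new square, and continue. This produces intermediate summands of the form $s_n\cdots s_{k+1} s_k s_{k+1}\cdots s_i$ for $2\leq k\leq i-1$, together with one terminal summand of the form $s_n\cdots s_{i+1}s_i s_{i-1}^{2} s_i$.

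The terminal summand is handled by one further application of the quadratic relation to $s_{i-1}^2$, giving a sum of $s_n\cdots s_{i+1}s_i s_{i-1}s_i$ (which, via $s_is_{i-1}s_i = s_{i-1}s_is_{i-1}$ and shifting the resulting $s_{i-1}$ to the left, becomes $s_{i-1}\cdot s_n\cdots s_{i-1}$, lying in $S_{n-1}^*\Lambda_n$) and two other terms $s_n\cdots s_{i+1}s_i$ and $s_n\cdots s_{i+1}$ already in $\Lambda_n$. For each intermediate summand $s_n\cdots s_{k+1}s_k s_{k+1}\cdots s_i$, I would carry out the braid-and-commutation chain that already appears verbatim in the proof of Lemma~\ref{LmScholie1} (using that $s_k$ commutes with every $s_j$ for $|j-k|>1$ and satisfies the length-$3$ braid relation with $s_{k\pm 1}$), ending with the rewriting $s_n\cdots s_{k+1}s_k s_{k+1}\cdots s_i = s_k s_{k+1}\cdots s_{i-1}\, s_n s_{n-1}\cdots s_k$, which is manifestly a product of an element of $S_{n-1}^*$ and an element of $\Lambda_n$.

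The only conceptual point to verify, and the mildly delicate part, is that this chain of braid moves never needs to involve $z$: the moves only permute the $s_j$'s with $2\leq j\leq n$, and for these the relations coincide with those of the type $A_{n-1}$ Hecke algebra. Once this is observed, the computation is the one of Lemma~\ref{LmScholie1} with $s_3$ replaced by $s_2$, and the conclusion follows.
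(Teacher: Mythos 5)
Your proof is correct and follows essentially the same route as the paper, which simply transports the proof of Lemma \ref{LmScholie1} (same small cases, same intermediate summands $s_n\cdots s_{k+1}s_ks_{k+1}\cdots s_i$ and terminal summand $s_n\cdots s_{i+1}s_is_{i-1}^2s_i$, and the same rewriting $s_n\cdots s_{k+1}s_ks_{k+1}\cdots s_i = s_ks_{k+1}\cdots s_{i-1}s_ns_{n-1}\cdots s_k$). Your explicit remark that the whole computation stays in the subalgebra generated by $s_2,\dots,s_n$, so that $z$ and the length-$4$ braid relation never intervene, is exactly the (implicit) justification the paper relies on.
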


\begin{proof}

The proof is the same as the proof of Lemma \ref{LmScholie1}. If $i = 2$, we have that $s_n \cdots s_3 s_2^2$ belongs to $R_0s_n \cdots s_3s_2 + R_0 s_n \cdots s_3$. We continue as in the proof of Lemma \ref{LmScholie1}, we get two terms of the form (see line 7 of the proof of Lemma \ref{LmScholie1}):\\ $s_n \cdots s_{k+1}s_ks_{k+1} \cdots s_i$ with $k+1 \leq i$ and $s_n \cdots s_{i+1}s_is_{i-1}^2s_i$.

We also get that the first term $s_n \cdots s_{i+1}s_is_{i-1}^2s_i$ belongs to $R_0 s_{i-1}s_n \cdots s_{i-1} + R_0 s_n \cdots s_{i+1}s_i + R_0 s_n \cdots s_{i+1}$ (the last term is equal to $1$ if $i = n$). Hence the element $s_n \cdots s_{i+1}s_is_{i-1}^2s_i$ belongs to Span$(S_{n-1}^{*}\Lambda_n)$. 

As in the proof of Lemma \ref{LmScholie1}, the element $s_n \cdots s_{k+1}s_ks_{k+1} \cdots s_i$ is equal to $s_ks_{k+1} \cdots s_{i-1}s_n s_{n-1} \cdots s_k$. Hence it belongs to Span$(S_{n-1}^{*}\Lambda_n)$.

\end{proof}

\begin{lemma}\label{Lemma22}

If $a_{n-1} = s_{n-1}s_{n-2} \cdots s_i$ with $2 \leq i \leq n-1$ and $a_n = s_n \cdots s_{i'}$ with $2 \leq i' \leq n$, then $s_n(a_{n-1}a_n)$ belongs to \emph{Span}$(\Lambda_{n-1}\Lambda_n)$.

\end{lemma}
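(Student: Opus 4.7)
The plan is to imitate the proof of Lemma \ref{Lemma2} verbatim, shifting every generator index down by one. The key observation is that the subalgebra of $H(d,1,n)$ generated by $s_2,s_3,\ldots,s_n$ satisfies exactly the same type-$A$ braid relations and quadratic relations as the subalgebra of $H(e,e,n)$ generated by $s_3,s_4,\ldots,s_n$ that was used in Lemma \ref{Lemma2}. Since neither the hypotheses nor the conclusion of Lemma \ref{Lemma22} involves $z$, no special relation between $z$ and $s_2$ ever intervenes, so the entire computation stays inside this Coxeter-type subalgebra.

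I would split the argument into the two cases $i<i'$ and $i\geq i'$, parallel to Lemma \ref{Lemma2}. In the case $i<i'$, note that the hypotheses $i\geq 2$ and $i<i'$ force $i'\geq 3$. Starting from
\[
s_n(a_{n-1}a_n)=s_ns_{n-1}\cdots s_i\,\cdot\,s_n s_{n-1}\cdots s_{i'},
\]
I would shift the second $s_n$ leftward using the commutation relations $s_n s_k=s_k s_n$ for $k\leq n-2$, then apply the braid relation $s_n s_{n-1} s_n=s_{n-1}s_n s_{n-1}$. Iterating the same commute--braid--commute maneuver strips off $s_{n-1}s_{n-2}\cdots s_{i'-1}$ on the left and leaves $s_n s_{n-1}\cdots s_i$ on the right, yielding the single element $s_{n-1}s_{n-2}\cdots s_{i'-1}\cdot s_n s_{n-1}\cdots s_i\in\Lambda_{n-1}\Lambda_n$.

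In the case $i\geq i'$, performing the same sequence of shifts and braid moves eventually produces an $s_i^{\,2}$ factor in the middle of the expression. Applying the quadratic relation $s_i^{\,2}=a s_i+1$ splits it into two summands, and a few further commutations collapse them to
\[
a\cdot s_{n-1}\cdots s_i\,s_n\cdots s_{i'}\ +\ s_{n-1}\cdots s_{i'}\,s_n\cdots s_{i+1},
\]
both of which lie in $\Lambda_{n-1}\Lambda_n$.

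The only potential obstacle is bookkeeping at the boundary of the index range: one must check that every intermediate index stays in $\{2,\ldots,n\}$, so that no forbidden generator $s_1$ is invoked and no relation involving $z$ is needed. Since $i,i'\geq 2$ and the smallest braid relation we ever apply is $s_2 s_3 s_2=s_3 s_2 s_3$, which is valid in $H(d,1,n)$, this presents no difficulty. Thus the computation of Lemma \ref{Lemma2} transports word-for-word, and the lemma follows.
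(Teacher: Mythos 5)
Your proposal is correct and matches the paper's own argument, which likewise just transports the computation of Lemma \ref{Lemma2} (the paper's proof literally reads ``The proof is the same as for Lemma \ref{Lemma2}'' and records the same two final expressions you obtain in the cases $i<i'$ and $i\geq i'$). Your extra remark that no relation involving $z$ is needed and that all indices stay in $\{2,\ldots,n\}$ is exactly the boundary check that makes the verbatim transfer legitimate.
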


\begin{proof}

The proof is the same as for Lemma \ref{Lemma2}.

If $i < i'$, then we get $s_n(a_{n-1}a_n) = s_{n-1}s_{n-2} \cdots s_{i'-1}s_n \cdots s_i$ which belongs to Span$(\Lambda_{n-1}\Lambda_n)$.

If $i \geq i'$, then we get $s_n(a_{n-1}a_n) = a s_{n-1}\cdots s_i s_n \cdots s_{i'} + s_n \cdots s_{i'} s_n \cdots s_{i+1}$ which also belongs to Span$(\Lambda_{n-1}\Lambda_n)$.

\end{proof}

\begin{lemma}\label{Lemma33}

If $a_{n-1} = s_{n-1}\cdots s_{i}$ with $2 \leq i \leq n-1$ and $a_n = s_n \cdots s_2 z^k$ with $0 \leq k \leq d-1$, then $s_n(a_{n-1}a_n)$ belongs to \emph{Span}$(\Lambda_{n-1}\Lambda_n)$.

\end{lemma}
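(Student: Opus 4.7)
The plan is to mimic the proof of Lemma~\ref{Lemma3} verbatim, replacing the generator $t_k$ with the power $z^k$ and using the commutation relations $zs_j = s_jz$ for $3 \leq j \leq n$ (in the presentation of $B(d,1,n)$ given in Proposition~\ref{PropPresB(d,1,n)}). Note that Lemma~\ref{Lemma22} is proved without invoking any relation involving $z$, so its word-manipulation argument applies unchanged to the underlying braid word.

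First, apply the $i \geq i'$ case of Lemma~\ref{Lemma22} with $i' = 2$, which is admissible since $i \geq 2$ by hypothesis. Performing exactly the same sequence of braid and quadratic manipulations on $s_n s_{n-1}\cdots s_i \cdot s_n\cdots s_2$ and then multiplying the result on the right by $z^k$ yields
\[
s_n(a_{n-1}a_n) \;=\; a\, s_{n-1}\cdots s_i\, s_n \cdots s_2\, z^k \;+\; s_{n-1}\cdots s_2\, s_n \cdots s_{i+1}\, z^k.
\]

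The first term is already a product of $s_{n-1}\cdots s_i \in \Lambda_{n-1}$ and $s_n\cdots s_2 z^k \in \Lambda_n$, so it lies in $\mathrm{Span}(\Lambda_{n-1}\Lambda_n)$. For the second term, observe that since $i \geq 2$, every letter in $s_n \cdots s_{i+1}$ is an $s_j$ with $j \geq 3$, hence commutes with $z$. Thus $z^k$ can be shifted to the left past $s_n \cdots s_{i+1}$, giving
\[
s_{n-1}\cdots s_2\, s_n \cdots s_{i+1}\, z^k \;=\; \bigl(s_{n-1}\cdots s_2\, z^k\bigr)\bigl(s_n \cdots s_{i+1}\bigr),
\]
which, when $1 \leq k \leq d-1$, is the product of an element of $\Lambda_{n-1}$ and an element of $\Lambda_n$ (taking $i' = i+1 \leq n$). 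When $k = 0$ the factor $z^k$ disappears and we get $(s_{n-1}\cdots s_2)(s_n\cdots s_{i+1}) \in \Lambda_{n-1}\Lambda_n$ directly (with $i' = 2$ in the first factor). In either case, both terms lie in $\mathrm{Span}(\Lambda_{n-1}\Lambda_n)$. The only genuinely new ingredient beyond Lemma~\ref{Lemma3} is the commutation of $z^k$ with $s_3,\dots,s_n$, and the main (minor) subtlety is checking that we never need to move $z$ past $s_2$: this is precisely guaranteed by the fact that the factor $s_n \cdots s_{i+1}$ with $i \geq 2$ contains no $s_2$.
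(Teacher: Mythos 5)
Your proof is correct and follows essentially the same route as the paper: it invokes the $i \geq i'$ case of Lemma~\ref{Lemma22} with $i' = 2$, multiplies on the right by $z^k$, and then uses the commutation of $z$ with $s_j$ for $j \geq 3$ (the analogue of shifting $t_k$ in Lemma~\ref{Lemma3}) to move $z^k$ left past $s_n \cdots s_{i+1}$, landing both terms in $\mathrm{Span}(\Lambda_{n-1}\Lambda_n)$. The explicit check that no $s_2$ occurs in $s_n \cdots s_{i+1}$ is exactly the point the paper relies on implicitly, so nothing is missing.
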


\begin{proof}

This case correspond to the case $i'=2$ in the proof of Lemma \ref{Lemma22} with a right multiplication by $z^k$ for $0 \leq k \leq d-1$. Since $i \geq 2$, by the case $i \geq i'$ in the proof of Lemma \ref{Lemma22}, we get, using the same technique as in the proof of Lemma \ref{Lemma3}, that $s_n(a_{n-1}a_n)$ is equal to $a s_{n-1} \cdots s_i s_n \cdots s_2 z^k + s_{n-1}\cdots s_2 z^k s_n \cdots s_{i+1}$ which belongs to Span$(\Lambda_{n-1}\Lambda_n)$.

\end{proof}

\begin{lemma}\label{Lemma44}

If $a_{n-1} = s_{n-1} \cdots s_i$ with $2 \leq i \leq n-1$ and $a_n = s_n \cdots s_2 z^k s_2 \cdots s_{i'}$ with $1 \leq k \leq d-1$ and $2 \leq i' \leq n$, then $s_n(a_{n-1}a_n)$ belongs to \emph{Span}$(\Lambda_{n-1}\Lambda_n)$.

\end{lemma}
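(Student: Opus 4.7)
The plan is to mirror the proof of Lemma \ref{Lemma4}, substituting $z^k$ for $t_k$ and $s_2$ for $s_3$ throughout. First I would apply Lemma \ref{Lemma33} to the prefix $s_n(s_{n-1}\cdots s_i)(s_n \cdots s_2 z^k)$ and multiply on the right by $s_2\cdots s_{i'}$, obtaining
\[
s_n(a_{n-1}a_n) = a\,s_{n-1}\cdots s_i\,s_n\cdots s_2 z^k\,s_2\cdots s_{i'} + s_{n-1}\cdots s_2 z^k\,s_n\cdots s_{i+1}\,s_2\cdots s_{i'}.
\]
The first summand is already in $\Lambda_{n-1}\Lambda_n$, so the whole work lies in handling the second summand, call it $T$. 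A crucial observation is that $z^k$ will never need to be moved: it commutes with every $s_j$ for $j\geq 3$, so every rewriting below happens among generators insensitive to the presence of $z$.

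For $T$ the analysis splits on the relative size of $i$ and $i'$. If $i' < i$, then each letter of $s_2\cdots s_{i'}$ carries an index at most $i-1$ and hence commutes with each of $s_{i+1},\ldots,s_n$. Shifting $s_2\cdots s_{i'}$ past $s_n\cdots s_{i+1}$ gives
\[
T = s_{n-1}\cdots s_2 z^k\,s_2\cdots s_{i'}\,s_n\cdots s_{i+1},
\]
which belongs to $\Lambda_{n-1}\Lambda_n$, since $i'\leq i-1\leq n-2$.

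For $i'\geq i$ (including the boundary case $i=2$), I would factor $s_2\cdots s_{i'} = (s_2\cdots s_{i-1})\,s_i\,(s_{i+1}\cdots s_{i'})$ and commute the prefix $s_2\cdots s_{i-1}$ leftwards past $s_n\cdots s_{i+1}$ (this prefix is empty when $i=2$, in which case one proceeds directly). Then I would reproduce verbatim the sequence of braid moves $s_{j+1}s_j s_{j+1} = s_j s_{j+1}s_j$ interleaved with the distant commutations used in the latter part of the proof of Lemma \ref{Lemma4}; each step pushes one further $s_j$ onto the growing leftmost block while shortening the tail. The outcome is
\[
T = s_{n-1}\cdots s_2 z^k\,s_2\cdots s_{i'-1}\,s_n\cdots s_i,
\]
which lies in $\Lambda_{n-1}\Lambda_n$ as soon as $2\leq i'-1\leq n-1$; the only remaining case $i'=2$ (which then forces $i=2$) is immediate since the trailing single letter $s_2$ commutes with $s_n\cdots s_3$.

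The main obstacle is essentially the same as in Lemma \ref{Lemma4}: careful bookkeeping of indices through the braid-commutation manipulation that converts $s_n\cdots s_{i+1}\,s_2\cdots s_{i'}$ into $s_2\cdots s_{i'-1}\,s_n\cdots s_i$. The presence of $z^k$ adds no difficulty because it remains a spectator throughout the rewriting; this is precisely why the argument of Lemma \ref{Lemma4} transfers intact to the $G(d,1,n)$ setting in spite of the length-$4$ braid relation between $z$ and $s_2$ (which is never invoked here).
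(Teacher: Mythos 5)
Your proof is correct and is essentially the paper's own argument: the paper likewise invokes Lemma \ref{Lemma33} for the decomposition and then follows the proof of Lemma \ref{Lemma4} verbatim (with $z^k$ inert, since it commutes with $s_j$ for $j\geq 3$), arriving at exactly the same two outcomes $s_{n-1}\cdots s_2 z^k s_2\cdots s_{i'}\, s_n\cdots s_{i+1}$ for $i'<i$ and $s_{n-1}\cdots s_2 z^k s_2\cdots s_{i'-1}\, s_n\cdots s_i$ for $i'\geq i$. One cosmetic remark: in the boundary sub-case $i'=i=2$ the tail $s_n\cdots s_3 s_2$ is already the element $s_n\cdots s_2$ of $\Lambda_n$, so the term lies in $\Lambda_{n-1}\Lambda_n$ with no commutation at all — your conclusion there is right, but the justification that $s_2$ "commutes with $s_n\cdots s_3$" is inaccurate (it does not commute with $s_3$) and unnecessary.
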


\begin{proof}

The proof is exactly the same as the proof of Lemma \ref{Lemma4}. According to the computation in the proof of Lemma \ref{Lemma33}, we have\\
$s_n(a_{n-1}a_n) = a s_{n-1} \cdots s_i s_n \cdots s_2 z^k (s_2 \cdots s_{i'}) + s_{n-1} \cdots s_2 z^k s_n \cdots s_{i+1}(s_2 \cdots s_{i'})$.\\
The first term is an element of Span$(\Lambda_{n-1}\Lambda_n)$. For the second term, we follow exactly the proof (starting from line 3) of Lemma \ref{Lemma4} :\\
If $i' < i$, we get $s_{n-1} \cdots s_2 z^k s_2 \cdots s_{i'} s_n \cdots s_{i+1}$ which belongs to Span$(\Lambda_{n-1}\Lambda_n)$.\\
If $i' \geq i$, we get $s_{n-1} \cdots s_2 z^k s_2 \cdots s_{i'-1} s_n \cdots s_i$ which also belongs to Span$(\Lambda_{n-1}\Lambda_n)$.

\end{proof}

\begin{lemma}\label{Lemma55}

If $a_{n-1} = s_{n-1} \cdots s_2 z^k$ with $1 \leq k \leq d-1$ and $a_n = s_n \cdots s_i$ with $2 \leq i \leq n$, then $s_n(a_{n-1}a_n)$ belongs to \emph{Span}$(\Lambda_{n-1}\Lambda_n)$.

\end{lemma}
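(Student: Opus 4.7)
The plan is to mimic the proof of Lemma~\ref{Lemma5}, exploiting the parallelism between $t_k\in H(e,e,n)$ and $z^k\in H(d,1,n)$. The one genuine difference in the combinatorics is that $z^k$ commutes with every $s_j$ for $j\ge 3$ (and fails to commute only with $s_2$), whereas $t_k$ commutes with every $s_j$ for $j\ge 4$.

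I would first dispose of the easy sub-case $i\ge 3$. Since $z^k$ commutes with each of $s_n,s_{n-1},\dots,s_i$, one immediately rewrites
$$s_n(a_{n-1}a_n)=s_n\cdot(s_{n-1}\cdots s_2)\cdot(s_n\cdots s_i)\cdot z^k.$$
Lemma~\ref{Lemma22} applied with its index parameters set to $2$ and $i\ge 3$ (the case $i<i'$) reduces the prefix to $s_{n-1}s_{n-2}\cdots s_{i-1}\cdot s_n\cdots s_2$, so
$$s_n(a_{n-1}a_n)=(s_{n-1}\cdots s_{i-1})\cdot(s_n\cdots s_2 z^k),$$
which lies in $\Lambda_{n-1}\Lambda_n$ by the very definitions of these sets (note that $i-1\ge 2$, so $s_{n-1}\cdots s_{i-1}\in\Lambda_{n-1}$, and $1\le k\le d-1$ places $s_n\cdots s_2 z^k$ in $\Lambda_n$).

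The sub-case $i=2$ is the main obstacle. I would first shift $z^k$ to the right past $s_n s_{n-1}\cdots s_3$, stopping at the trailing $s_2$, to obtain
$$s_n\cdot(s_{n-1}\cdots s_2)\cdot(s_n s_{n-1}\cdots s_3)\cdot z^k s_2.$$
Applying Lemma~\ref{Lemma22} with parameters $2$ and $3$ to the prefix yields $s_{n-1}\cdots s_2\cdot s_n s_{n-1}\cdots s_2$, and using once more that $z^k$ commutes with $s_n,\dots,s_3$, the remaining block is rewritten as $s_n\cdots s_3\cdot(s_2 z^k s_2)$. Now invoke Lemma~\ref{Lemmas2t^ks2In(s2ts2)^k} to expand $s_2 z^k s_2$ as an $R_0$-combination of elements $z^c\mu$ with $\mu\in\Lambda''_2$, and for every $\mu=(s_2 z s_2)^{j}$ appearing there apply Lemma~\ref{Lemma(s2ts2)^k} to reduce further to $z^{c'}\mu'$ with $\mu'\in\Lambda'_2$. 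In each resulting summand the scalar power of $z$ is commuted past $s_n\cdots s_3$ (which it centralises) and absorbed on the left into the factor $s_{n-1}\cdots s_2$, yielding some $s_{n-1}\cdots s_2 z^{c''}\in\Lambda_{n-1}$ after reducing modulo $z^d=b_1 z^{d-1}+\cdots+b_{d-1}z+1$ if necessary. The tail $s_n\cdots s_3\cdot\mu'$ then takes one of the shapes $s_n\cdots s_{i'}$, $s_n\cdots s_2 z^{j'}$, or $s_n\cdots s_2 z^{j'}s_2$, each of which is an element of $\Lambda_n$ by inspection of its defining list.

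Consequently every summand of $s_n(a_{n-1}a_n)$ belongs to $\Lambda_{n-1}\Lambda_n$, as required. The only real difficulty is bookkeeping: orchestrating the sequence of commutations, braid moves, and applications of Lemmas~\ref{Lemma(s2ts2)^k}--\ref{Lemmas2t^ks2In(s2ts2)^k} in the case $i=2$, and matching each composition $s_n\cdots s_3\cdot\mu'$ against an explicit element of $\Lambda_n$.
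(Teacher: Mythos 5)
Your argument is correct and amounts to the same computation as the paper: the paper proves this lemma by rerunning the braid-shifting of Lemma \ref{Lemma5} with $s_2,z^k$ in place of $s_3,t_k$, landing on $s_{n-1}\cdots s_{i-1}\, s_n\cdots s_2 z^k$ for $i>2$ and on $s_{n-1}\cdots s_2\, s_n\cdots s_2 z^k s_2$ for $i=2$, which is exactly what you obtain by first commuting $z^k$ to the right and quoting the identity from the case $i<i'$ of Lemma \ref{Lemma22}. The only substantive difference is in your $i=2$ case: after reaching $(s_{n-1}\cdots s_2)(s_n\cdots s_2 z^k s_2)$ you launch an unnecessary expansion of $s_2z^ks_2$ via Lemmas \ref{Lemma(s2ts2)^k} and \ref{Lemmas2t^ks2In(s2ts2)^k}; this detour is valid (the stray powers of $z$ do commute past $s_n\cdots s_3$ and reduce modulo the degree-$d$ relation, and each tail matches an element of $\Lambda_n$), but it can be skipped entirely, since $s_n\cdots s_2 z^k s_2$ is already listed in $\Lambda_n$ (take $i'=2$ in $s_n\cdots s_2 z^k s_2\cdots s_{i'}$), which is precisely how the paper concludes.
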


\begin{proof}

We follow exactly the proof of Lemma \ref{Lemma5} and we get the following terms.

If $i = 2$, we get $s_{n-1} \cdots s_2 s_n \cdots s_2 z^k s_2$ (see line 8 of the proof of Lemma \ref{Lemma5}) which belongs to Span$(\Lambda_{n-1}\Lambda_n)$.

If $i > 2$, we get $s_{n-1} \cdots s_{i-1} s_n \cdots s_2 z^k$ (see the last line of the proof of Lemma \ref{Lemma5}) which also belongs to Span$(\Lambda_{n-1}\Lambda_n)$.

\end{proof}

\begin{lemma}\label{Lemma66}

If $a_{n-1} = s_{n-1} \cdots s_2z^k$ with $1 \leq k \leq d-1$ and $a_n = s_n \cdots s_2 z^l$ with $1 \leq l \leq d-1$, then $s_n(a_{n-1}a_n)$ belongs to \emph{Span}$(\Lambda_{n-1}\Lambda_n)$.

\end{lemma}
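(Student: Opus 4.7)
The plan is to mirror the argument of Lemma \ref{Lemma6} almost verbatim, with $z$ and $s_2$ playing the roles of $t_k$ and $s_3$. Specifically, I will first reshape $s_n(a_{n-1}a_n)$ using the $i=2$ case of Lemma \ref{Lemma55}, then reduce the resulting ``local'' factor in the subalgebra $H(d,1,2)$ by invoking Proposition \ref{PropInductionHypothesisG(d,1,n)}, and finally reassemble the pieces to land in $\Lambda_{n-1}\Lambda_n$.

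First I apply the braid/commutation moves of Lemma \ref{Lemma55} (case $i=2$) to $s_n(a_{n-1}a_n) = s_n \cdot (s_{n-1}\cdots s_2 z^k) \cdot (s_n\cdots s_2 z^l)$. All of those moves act on the initial segment $s_n(s_{n-1}\cdots s_2 z^k)(s_n\cdots s_2)$ and leave the rightmost $z^l$ untouched, since $z^l$ sits beyond every generator touched in the proof and commutes with every $s_j$ for $j \geq 3$. Thus we reach
\[
s_n(a_{n-1}a_n) \;=\; (s_{n-1}\cdots s_2)(s_n\cdots s_2 z^k s_2 z^l) \;=\; (s_{n-1}\cdots s_2)(s_n\cdots s_3)(s_2 z^k s_2 z^l),
\]
using $s_n\cdots s_2 = (s_n\cdots s_3)\,s_2$.

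The factor $s_2 z^k s_2 z^l$ lies entirely in the subalgebra of $H(d,1,n)$ generated by $z$ and $s_2$, which is $H(d,1,2)$. By Proposition \ref{PropInductionHypothesisG(d,1,n)}, the set $\Lambda_1\Lambda_2$ spans $H(d,1,2)$ over $R_0$, so we may write $s_2 z^k s_2 z^l = \sum_{j,\mu} c_{j,\mu}\, z^j\mu$ with $z^j \in \Lambda_1$ and $\mu \in \Lambda_2$. Sliding each $z^j$ leftward across $s_n\cdots s_3$ (which commute with $z$) yields
\[
s_n(a_{n-1}a_n) \;=\; \sum_{j,\mu} c_{j,\mu}\,(s_{n-1}\cdots s_2\, z^j)\,(s_n\cdots s_3\,\mu).
\]

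It remains to verify that each assembled factor sits in the claimed set. The left factor is $s_{n-1}\cdots s_2$ when $j=0$ and $s_{n-1}\cdots s_2 z^j$ when $1 \leq j \leq d-1$, both elements of $\Lambda_{n-1}$. For the right factor, running through the four possibilities $\mu \in \{1,\,s_2,\,s_2 z^m,\,s_2 z^m s_2\}$ ($1 \leq m \leq d-1$) gives respectively $s_n\cdots s_3$, $s_n\cdots s_2$, $s_n\cdots s_2 z^m$, and $s_n\cdots s_2 z^m s_2$, each of which lies in $\Lambda_n$. Hence $s_n(a_{n-1}a_n) \in \mathrm{Span}(\Lambda_{n-1}\Lambda_n)$, as desired. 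The only subtle point in the whole argument is the first step: one must confirm that appending $z^l$ to $a_n$ does not disturb the manipulations of Lemma \ref{Lemma55}. This, however, is routine, because those manipulations never touch the rightmost segment and $z^l$ commutes with every $s_j$ involved; no new relations are invoked.
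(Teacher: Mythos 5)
Your proof is correct and follows essentially the same route as the paper: reduce via the $i=2$ case of Lemma \ref{Lemma55} to $s_{n-1}\cdots s_2\, s_n\cdots s_3\,(s_2z^ks_2z^l)$, decompose $s_2z^ks_2z^l$ over $\Lambda_1\Lambda_2$ using Proposition \ref{PropInductionHypothesisG(d,1,n)}, slide the $z^c$ factor left past $s_n\cdots s_3$, and check each resulting product lies in $\Lambda_{n-1}\Lambda_n$. The only difference is cosmetic: you spell out the four $\Lambda_2$-shapes separately, while the paper groups them into three families.
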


\begin{proof}

By the case $i = 2$ in the proof of Lemma \ref{Lemma55}, we get $s_n(a_{n-1}a_n) = s_{n-1} \cdots s_2 s_n \cdots s_3s_2z^ks_2z^l$. If we replace $s_2z^ks_2z^l$ by its decomposition over $\Lambda_1\Lambda_2$ (this is the case $n=2$ of Theorem \ref{TheoremNewBasisH(d,1,n)}, see Proposition \ref{PropInductionHypothesisG(d,1,n)}), we get the three following terms:
\begin{itemize}
\item $s_{n-1} \cdots s_2 s_n \cdots s_3 z^c$ with $0 \leq c \leq d-1$,
\item $s_{n-1} \cdots s_2 s_n \cdots s_3 z^cs_2 z^{c'}$ with $0 \leq c \leq d-1$ and $0 \leq c' \leq d-1$,
\item $s_{n-1} \cdots s_2 s_n \cdots s_3 z^cs_2 z^{c'}s_2$ with $0 \leq c \leq d-1$ and $1 \leq c' \leq d-1$.
\end{itemize}
The first term is equal to $s_{n-1} \cdots s_2 z^c s_n \cdots s_3$ which belongs to Span$(\Lambda_{n-1}\Lambda_n)$.\\
The second term is equal to $s_{n-1} \cdots s_2 z^c s_n \cdots s_3s_2z^{c'}$ which belongs to Span$(\Lambda_{n-1}\Lambda_n)$.\\
Finally, the third term is equal to $s_{n-1} \cdots s_2 z^c s_n \cdots s_2 z^{c'}s_2$ which also belongs to Span$(\Lambda_{n-1}\Lambda_n)$.

\end{proof}

\begin{lemma}\label{Lemma77}

If $a_{n-1} = s_{n-1} \cdots s_2 z^k$ with $1 \leq k \leq d-1$ and $a_n = s_n \cdots s_2 z^l s_2 \cdots s_i$ with $2 \leq i \leq n$ and $1 \leq l \leq d-1$, then $s_n(a_{n-1}a_n)$ belongs to \emph{Span}$(S_{n-1}^{*}\Lambda_n)$.

\end{lemma}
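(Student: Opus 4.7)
The plan is to mirror closely the proof of Lemma~\ref{Lemma7}. By the computation in the proof of Lemma~\ref{Lemma66}, one has
$$s_n(a_{n-1}a_n) = s_{n-1}\cdots s_2 \cdot s_n\cdots s_3\, s_2 z^k s_2 z^l \cdot s_2\cdots s_i.$$
I then expand $s_2 z^k s_2 z^l$ as a linear combination of elements of $\Lambda_1\Lambda_2$ using Proposition~\ref{PropInductionHypothesisG(d,1,n)}, and commute each leading $z$-power through $s_n\cdots s_3$ via the relation $zs_j=s_jz$ for $j\geq 3$. The leftmost factor thereby produced, namely $s_{n-1}\cdots s_2 z^c$, always lies in $S_{n-1}^*$, so the problem reduces to showing that each of the four families
\begin{enumerate}
\item $s_n\cdots s_3 \cdot s_2\cdots s_i$,
\item $s_n\cdots s_3\, s_2 \cdot s_2\cdots s_i$,
\item $s_n\cdots s_3\, s_2 z^{c'} \cdot s_2\cdots s_i$ with $1\le c' \le d-1$,
\item $s_n\cdots s_3\, s_2 z^{c'} s_2 \cdot s_2\cdots s_i$ with $1\le c' \le d-1$,
\end{enumerate}
lies in $\mathrm{Span}(S_{n-1}^*\Lambda_n)$.

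Family~(1) is handled by the braid-and-shift manipulations already used in the proof of Lemma~\ref{Lemma44}. Family~(2), after the quadratic relation $s_2^2 = as_2+1$, reduces to family~(1) together with Lemma~\ref{LmScholie11}. For families~(3) and~(4), the idea is to decompose the trapped middle block $s_2 z^{c'}$ or $s_2 z^{c'} s_2$ using Lemmas~\ref{Lemma(s2ts2)^k}, \ref{Lemma(s2ts2)^ks2}, and~\ref{Lemmas2t^ks2In(s2ts2)^k}, which together express every such block as an $R_0$-linear combination of words of the form $z^c$, $z^c s_2$, $z^c s_2 z^{c''}$, and $z^c s_2 z^{c''} s_2$. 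After substitution and sliding the resulting $z$-powers leftward through $s_n\cdots s_3$, each resulting summand falls into one of the earlier families or into a shorter instance of the present lemma; the process therefore terminates.

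The hard part will be family~(4): because $z$ does not commute with $s_2$, the trapped $z^{c'}$ cannot simply be shifted past the tail $s_2\cdots s_i$, and one must repeatedly apply $s_2^2 = as_2+1$ together with the braid relation $zs_2 z s_2 = s_2 z s_2 z$ to push every $z$-power either all the way to the left (into the $S_{n-1}^*$ factor) or to absorb it into the normal-form tail belonging to $\Lambda_n$. The bookkeeping is essentially identical in spirit to the final steps of Lemma~\ref{Lemma7}, with the length-$4$ braid relation for $(z,s_2)$ replacing the length-$3$ braid relation for $(t_k,s_3)$ used there; each reduction either strictly decreases the length of the unresolved middle segment or produces a term already known to lie in $\mathrm{Span}(S_{n-1}^*\Lambda_n)$, which closes the induction.
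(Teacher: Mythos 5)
Your setup is exactly the paper's: starting from the expression produced in the proof of Lemma \ref{Lemma66}, expanding $s_2z^ks_2z^l$ over $\Lambda_1\Lambda_2$ via Proposition \ref{PropInductionHypothesisG(d,1,n)}, sliding the resulting $z$-powers into the $S_{n-1}^{*}$ prefix, and treating your families (1) and (2) by the shift manipulations and Lemma \ref{LmScholie11} is all sound and matches what the paper does (it merely merges your (2) and (3) into one case with $0\leq c'\leq d-1$).

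The gap is in your treatment of families (3) and (4). First, family (3) needs no decomposition at all: the word $s_n\cdots s_3s_2z^{c'}s_2\cdots s_i$ with $1\leq c'\leq d-1$ is, by the very definition of $\Lambda_n$, an element of $\Lambda_n$, and the prefix $s_{n-1}\cdots s_2z^{c}$ lies in $S_{n-1}^{*}$, so the term is already in $\mathrm{Span}(S_{n-1}^{*}\Lambda_n)$; moreover none of the lemmas you cite even applies to the block $s_2z^{c'}$. Second, for family (4) your termination claim is not justified: Lemma \ref{Lemmas2t^ks2In(s2ts2)^k} rewrites $s_2z^{c'}s_2$ in terms of $(s_2zs_2)^{j}$ with $j$ up to $c'$, while Lemma \ref{Lemma(s2ts2)^k} rewrites $(s_2zs_2)^{c'}$ back in terms of $s_2z^{j'}s_2$ with $j'$ up to $c'$, so the "unresolved middle segment" can reappear with the same exponent and your proposed induction on its length does not close; likewise, $z$ does not commute with $s_2$, so the plan of pushing every $z$-power "all the way to the left" is not available. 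The correct (and one-line) resolution, which is the paper's, is to use that the tail begins with $s_2$: the family-(4) word is $s_n\cdots s_3s_2z^{c'}s_2^2s_3\cdots s_i$; applying $s_2^2=as_2+1$, the $a$-term is $s_n\cdots s_2z^{c'}s_2\cdots s_i$, again literally an element of $\Lambda_n$, while in the remaining term $z^{c'}$ commutes with $s_3,\ldots,s_i$ and, after the family-(1) identity $s_n\cdots s_3s_2s_3\cdots s_i=s_2\cdots s_{i-1}s_n\cdots s_2$, one obtains $s_{n-1}\cdots s_2z^{c}s_2\cdots s_{i-1}\cdot s_n\cdots s_2z^{c'}$, which lies in $\mathrm{Span}(S_{n-1}^{*}\Lambda_n)$ since $s_n\cdots s_2z^{c'}\in\Lambda_n$. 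Replace your treatment of (3) and (4) by this direct argument and the proof is complete.
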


\begin{proof}

According to the proof of the previous lemma, we have to deal with the following three terms:
\begin{itemize}
\item $s_{n-1} \cdots s_2 z^c s_n \cdots s_3 (s_2 \cdots s_i)$ for $0 \leq c \leq d-1$,
\item $s_{n-1} \cdots s_2 z^c s_n \cdots s_3s_2z^{c'}(s_2 \cdots s_i)$ for $0 \leq c \leq d-1$ and $0 \leq c' \leq d-1$,
\item $s_{n-1} \cdots s_2 z^c s_n \cdots s_2 z^{c'}s_2(s_2 \cdots s_{i})$ for $0 \leq c \leq d-1$ and $1 \leq c' \leq d-1$.
\end{itemize}

For the first case, we have $s_n \cdots \underline{s_3s_2s_3}s_4 \cdots s_i = s_n \cdots \underline{s_2} s_3 s_2 \underline{s_4} \cdots s_i$. We shift $\underline{s_2}$ and $\underline{s_4}$ to the left in the previous expression and get $s_2s_n \cdots \underline{s_4s_3s_4}s_2 s_5 \cdots s_i = s_2s_n \cdots \underline{s_3}s_4s_3s_2s_5 \cdots s_i$. We shift $\underline{s_3}$ to the left in the previous expression and get $s_2s_3 s_n \cdots s_2 \underline{s_5} \cdots \underline{s_i}$. We apply the same operations to $\underline{s_5}$, $\cdots$, $\underline{s_i}$ and get\\ $s_2 s_3 \cdots s_{i-1}s_n \cdots s_2$. Hence the term of the first case can be written as follows $s_{n-1} \cdots s_2 z^c s_2 \cdots s_{i-1}s_n \cdots s_2$. Since $i-1 \leq n-1$, it belongs to Span$(S_{n-1}^{*}\Lambda_n)$.

For the second case, we have a term of the form $s_{n-1} \cdots s_2 z^c s_n \cdots s_3s_2z^{c'}(s_2 \cdots s_i)$ for $0 \leq c \leq d-1$ and $0 \leq c' \leq d-1$. If $c' \neq 0$, this term belongs to Span$(S_{n-1}^{*}\Lambda_n)$ and if $c' = 0$, by the computation in the proof of Lemma \ref{LmScholie11}, it also belongs to Span$(S_{n-1}^{*}\Lambda_n)$.

For the third case, we have $s_{n-1} \cdots s_2 z^c s_n \cdots s_2 z^{c'}s_2^2s_3 \cdots s_{i} =$\\
$a s_{n-1} \cdots s_2 z^c s_n \cdots s_2 z^{c'}s_2 \cdots s_{i} + s_{n-1} \cdots s_2 z^c s_n \cdots s_2 z^{c'}s_3 \cdots s_{i}$. The first term is an element of Span$(S_{n-1}^{*}\Lambda_n)$. For the second term, we have $s_{n-1} \cdots s_2 z^c s_n \cdots s_2 \underline{z^{c'}} s_3 \cdots s_{i}\\ = s_{n-1} \cdots s_2 z^c s_n \cdots s_2 s_3 \cdots s_{i} z^{c'}$, where $ s_n \cdots s_2 s_3 \cdots s_{i}$ is already computed in the first case. It is equal to $s_2 \cdots s_{i-1}s_n \cdots s_2$. Hence the term is of the form\\ $s_{n-1} \cdots s_2 z^c s_2 \cdots s_{i-1} s_n \cdots s_2 z^{c'}$ which belongs to Span$(S_{n-1}^{*}\Lambda_n)$.

\end{proof}

\begin{lemma}\label{Lemma88}

If $a_{n-1} = s_{n-1} \cdots s_2 z^k s_2 \cdots s_i$ with $2 \leq i \leq n-1$, $1 \leq k \leq d-1$ and $a_n = s_n \cdots s_{i'}$ with $2 \leq i' \leq n$, then $s_n(a_{n-1}a_n)$ belongs to \emph{Span}$(\Lambda_{n-1}\Lambda_n)$.

\end{lemma}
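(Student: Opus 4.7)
The plan is to mimic, almost verbatim, the proof of Lemma \ref{Lemma8}, with the role of $t_k$ now played by $z^k$. The key observation is that the combinatorics of sliding and braid--re-indexing that drove the argument there depended on $t_k$ commuting with $s_j$ for $j\geq 4$, whereas here $z$ commutes with $s_j$ for \emph{every} $j\geq 2$ (Relation 2 of Proposition \ref{PropPresB(d,1,n)}). This makes $z^k$ behave like a scalar with respect to every $s_j$ it meets during the sliding, so the same moves go through and in fact simplify. As in Lemma \ref{Lemma8}, the proof splits according to whether $i<i'$ or $i\geq i'$.

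In the case $i<i'$, I would start from
$s_n(a_{n-1}a_n)=s_n\,s_{n-1}\cdots s_2 z^k s_2\cdots s_i\,\underline{s_n}\,s_{n-1}\cdots s_{i'}$,
shift $\underline{s_n}$ to the left using a braid relation $s_ns_{n-1}s_n=s_{n-1}s_ns_{n-1}$ and, repeatedly, the commutations $s_j z^k=z^ks_j$ for $j\geq 2$. This reproduces the chain of substitutions (1), (2) from the proof of Lemma \ref{Lemma8} and yields $s_{n-1}\cdots s_{i'}\,s_n\cdots s_2 z^k s_2\cdots s_i\,s_{i'}$. If $i'=i+1$, the element lies in $\Lambda_{n-1}\Lambda_n$ at once; otherwise I insert $s_{i'+1}s_{i'}s_{i'-1}$ inside the block $s_n\cdots s_2$ and apply a braid move $s_{i'}s_{i'-1}s_{i'}=s_{i'-1}s_{i'}s_{i'-1}$ as in the original proof, collapsing the expression to $s_{n-1}\cdots s_{i'-1}\,s_n\cdots s_2 z^k s_2\cdots s_i$, which lies in $\Lambda_{n-1}\Lambda_n$.

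In the case $i\geq i'$, the plan is again to follow the exact sequence of manipulations of Lemma \ref{Lemma8}: move the leading $s_n$ across using braid relations, slide it through the $z^k$ (which commutes freely with every $s_j$ encountered, a clear simplification over the $t_k$ case), and bring the expression to the point where an $s_{i+1}^2$ (or an $s_i^2$, depending on the intermediate step) appears; apply the quadratic relation $s_{i+1}^2=a s_{i+1}+1$ to split the element into two pieces. The first piece lies in $\Lambda_{n-1}\Lambda_n$ directly, while the second is rearranged by the same braid--commutation moves as in Lemma \ref{Lemma8} until it becomes $s_{n-1}\cdots s_{i'}\,s_n\cdots s_2 z^k s_2\cdots s_{i+1}$, which also lies in $\Lambda_{n-1}\Lambda_n$.

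The only genuine novelty with respect to Lemma \ref{Lemma8} is the base value $i'=2$, which was excluded there (the range was $3\leq i'\leq n$) but must be covered here. I expect this to be the main obstacle, although a mild one: when $i'=2$ one is automatically in the regime $i\geq i'$, and since $z$ commutes with $s_2$ in the sense given by the braid relation $zs_2zs_2=s_2zs_2z$, the re-indexing is exactly the one used above. The computation terminates at $s_{n-1}\cdots s_2\,s_n\cdots s_2 z^k s_2\cdots s_{i+1}$, which is of the required form because $s_2$ appears as the left end of $s_{n-1}\cdots s_2$, and the full tail lies in $\Lambda_n$. Thus Lemma \ref{Lemma88} follows by assembling these cases, and together with Lemmas \ref{Lemma99} and \ref{Lemma1010} (the analogs of \ref{Lemma9} and \ref{Lemma10}) it completes the induction step for Theorem \ref{TheoremNewBasisH(d,1,n)}.
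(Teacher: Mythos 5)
Your overall plan is exactly the paper's: the published proof of Lemma \ref{Lemma88} simply says ``we follow exactly the proof of Lemma \ref{Lemma8}'' and reads off the endpoints $s_{n-1}\cdots s_{i'-1}s_n\cdots s_2z^ks_2\cdots s_i$ (resp.\ $s_{n-1}\cdots s_{i'}s_n\cdots s_2z^ks_2\cdots s_{i+1}$) of that computation with $z^k$ in place of $t_k$. However, your stated ``key observation'' is false: in $B(d,1,n)$ and $H(d,1,n)$ the generator $z$ commutes with $s_j$ only for $j\geq 3$; with $s_2$ it satisfies only the length-four braid relation $zs_2zs_2=s_2zs_2z$ (if $z$ really commuted with $s_2$, Lemmas \ref{Lemma(s2ts2)^k}--\ref{Lemmas2t^ks2In(s2ts2)^k} would be vacuous). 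So $z^k$ does \emph{not} behave like a scalar with respect to every $s_j$ it meets, and the phrase ``$z$ commutes with $s_2$ in the sense given by the braid relation'' is not an argument. Fortunately, in the sliding inherited from Lemma \ref{Lemma8} the only letters that must cross $z^k$ have index at least $3$, so the cases $i<i'$ and $i\geq i'\geq 3$ do go through; but your treatment of the genuinely new case $i'=2$, which you correctly flag as the novelty, rests entirely on this false commutation and on applying Lemma \ref{Lemma8}'s re-indexing ``verbatim'', which is impossible there: that computation inserts $s_{i'-1}s_{i'}s_{i'+1}$, and $s_{i'-1}=s_1$ does not exist when $i'=2$.

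The gap is repairable, and without ever moving $z$ past $s_2$. For $i'=2$ one reaches, as in Lemma \ref{Lemma8}, the word $s_{n-1}\cdots s_3\, s_n\cdots s_2 z^k (s_2s_3\cdots s_{i+1})\, s_2$; slide the trailing $s_2$ left through $s_{i+1},\dots,s_4$, apply $s_2s_3s_2=s_3s_2s_3$, slide the resulting $s_3$ past $z^k$ (these do commute), and absorb it into the prefix via $s_n\cdots s_3s_2s_3=s_2\,s_n\cdots s_2$; this yields $s_{n-1}\cdots s_2\; s_n\cdots s_2 z^k s_2\cdots s_{i+1}\in\Lambda_{n-1}\Lambda_n$, which is the endpoint you announced (one checks it directly for $n=3$, $i=i'=2$: $s_3s_2zs_2s_3s_2=s_2s_3s_2zs_2s_3$). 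With that corrected justification, and with the remark that every other crossing of $z^k$ involves only $s_j$, $j\geq 3$, your argument matches the paper's.
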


\begin{proof}

We follow exactly the proof of Lemma \ref{Lemma8}. 

\textit{Suppose $i < i'$}. If $i' = i+1$, we get $s_n(a_{n-1}a_n) = s_{n-1} \cdots s_{i+1}s_n \cdots s_2 z^k s_2 \cdots s_{i+1}$ which belongs to Span$(\Lambda_{n-1}\Lambda_n)$, see lines 9 and 10 of the proof of Lemma \ref{Lemma8}. If $i' > i+1$, we get $s_{n-1} \cdots s_{i'-1}s_n \cdots s_2 z^k s_2 \cdots s_i$ which belongs to Span$(\Lambda_{n-1}\Lambda_n)$, see line 16 of the proof of Lemma \ref{Lemma8}.

\textit{Suppose $i \geq i'$}. We get $s_n(a_{n-1}a_n) = s_{n-1} \cdots s_{i'}s_n \cdots s_2z^ks_2 \cdots s_{i+1}$, see the last line of the proof of Lemma \ref{Lemma8}. Hence $s_n(a_{n-1}a_n)$ belongs to Span$(\Lambda_{n-1}\Lambda_n)$.

\end{proof}

\begin{lemma}\label{Lemma99}

If $a_{n-1} = s_{n-1} \cdots s_2z^ks_2 \cdots s_i$ with $2 \leq i \leq n-1$, $1 \leq k \leq d-1$ and $a_n = s_n \cdots s_2 z^l$ with $0 \leq l \leq d-1$, then $s_n(a_{n-1}a_n)$ belongs to \emph{Span}$(S_{n-1}^{*}\Lambda_n)$.

\end{lemma}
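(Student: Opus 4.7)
The plan is to follow the strategy used in the analogous Lemma \ref{Lemma9} for the $G(e,e,n)$ case. First I apply Lemma \ref{Lemma88} with $i' = 2$ to the prefix $s_n \cdot a_{n-1} \cdot (s_n \cdots s_2)$: the case $i \geq i'$ of its proof yields
$$s_n \cdot (s_{n-1} \cdots s_2 z^k s_2 \cdots s_i) \cdot (s_n \cdots s_2) \;=\; s_{n-1} \cdots s_2 \cdot s_n \cdots s_2 z^k s_2 \cdots s_{i+1}.$$
Multiplying the trailing $z^l$ on the right and pushing it leftward past $s_{i+1}, s_i, \ldots, s_3$ (all of which commute with $z$), I obtain
$$s_n(a_{n-1} a_n) \;=\; s_{n-1} \cdots s_2 \cdot s_n \cdots s_3 \cdot (s_2 z^k s_2 z^l) \cdot s_3 \cdots s_{i+1},$$
which reduces the problem to handling the two-generator central factor $s_2 z^k s_2 z^l$.

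Next, I invoke Proposition \ref{PropInductionHypothesisG(d,1,n)} (equivalently, Lemmas \ref{Lemma(s2ts2)^k}, \ref{Lemma(s2ts2)^ks2} and \ref{Lemmas2t^ks2In(s2ts2)^k}): the element $s_2 z^k s_2 z^l$ expands as an $R_0$-linear combination of elements of $\Lambda_1 \Lambda_2$, namely of the forms $z^c$, $z^c s_2$, $z^c s_2 z^{c'}$, or $z^c s_2 z^{c'} s_2$. Plugging each of these back into the expression above produces four types of terms. In every type I exploit the commutation of $z^c$ (and of $z^{c'}$, when present) with $s_3, \ldots, s_n$ to push the $z$-powers leftward into the outer $S_{n-1}^{*}$ prefix, or rightward past $s_3, \ldots, s_{i+1}$ into an element of $\Lambda_n$, so that only an $s$-only middle factor remains.

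In Type 1 the $s$-only factor becomes $s_n \cdots s_4 s_3^2 s_4 \cdots s_{i+1}$, which belongs to \emph{Span}$(S_{n-1}^{*} \Lambda_n)$ by the same braid/quadratic manipulations used in Lemma \ref{LmScholie11} (shifted by one index). In Types 2 and 3 the $s$-only factor is $s_n \cdots s_3 s_2 s_3 \cdots s_{i+1}$, which I rewrite as $s_2 s_3 \cdots s_i \cdot s_n \cdots s_2$ via the sequence of braid moves carried out in Case 1 of the proof of Lemma \ref{Lemma77}, thus placing the term in $S_{n-1}^{*} \cdot \Lambda_n$ (with a trailing $z^{c'}$ attached to $s_n \cdots s_2$ in Type 3, keeping the result inside $\Lambda_n$). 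In Type 4 the simple identification $s_n \cdots s_3 \cdot s_2 z^{c'} s_2 = s_n \cdots s_2 z^{c'} s_2$ lets the central and right parts combine directly into $s_n \cdots s_2 z^{c'} s_2 s_3 \cdots s_{i+1}$, which is already a basis element of $\Lambda_n$ whenever $c' \geq 1$ (and $c' = 0$ collapses by a quadratic relation to Types 1 and 2).

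The main obstacle is checking that Types 2 and 3 really do reduce to $s_2 s_3 \cdots s_i \cdot s_n \cdots s_2$ through the Lemma \ref{Lemma77}-style braid computation, and that the Lemma \ref{LmScholie11} argument carries over verbatim to the shifted expression $s_n \cdots s_4 s_3^2 s_4 \cdots s_{i+1}$; both are routine re-indexings of already-established manipulations, after which the lemma follows by collecting the four types and concluding that $s_n(a_{n-1} a_n)$ lies in \emph{Span}$(S_{n-1}^{*} \Lambda_n)$.
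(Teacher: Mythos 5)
Your proposal is correct and follows essentially the same route as the paper: reduce via the Lemma \ref{Lemma88} computation (the $i'=2$, $i\geq i'$ case) to $s_{n-1}\cdots s_2\, s_n\cdots s_3\,(s_2z^ks_2)z^l\, s_3\cdots s_{i+1}$, expand $(s_2z^ks_2)z^l$ over $\Lambda_1\Lambda_2$ by Proposition \ref{PropInductionHypothesisG(d,1,n)}, and dispose of the resulting terms using Lemma \ref{LmScholie11} and the Case 1 rewriting from Lemma \ref{Lemma77}. The only cosmetic difference is that you split the terms into four types where the paper lists three, which changes nothing in substance.
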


\begin{proof}

According to the computations in the proof of the previous lemma, we have $s_n(a_{n-1}a_n) = s_{n-1} \cdots s_2 s_n \cdots s_2 z^k s_2 \cdots s_{i+1} z^l$. We shift $z^l$ to the left and get\\ $s_n(a_{n-1}a_n) = s_{n-1} \cdots s_2 s_n \cdots s_3(s_2 z^k s_2)z^l s_3 \cdots s_{i+1}$. If we replace $(s_2z^ks_2)z^l$ by its decomposition over $\Lambda_1\Lambda_2$ (this is the case $n=2$ of Theorem \ref{TheoremNewBasisH(d,1,n)}, see Proposition \ref{PropInductionHypothesisG(d,1,n)}), we get terms of the three following forms:
\begin{itemize}
\item $s_{n-1} \cdots s_2 s_n \cdots s_3 z^c s_3 \cdots s_{i+1}$ with $0 \leq c \leq d-1$,
\item $s_{n-1} \cdots s_2 s_n \cdots s_3 z^c s_2 z^{c'}s_3 \cdots s_{i+1}$ with $0 \leq c \leq d-1$ and $0 \leq c' \leq d-1$,
\item $s_{n-1} \cdots s_2 s_n \cdots s_3 z^cs_2z^{c'}s_2s_3 \cdots s_{i+1}$ with $0 \leq c \leq d-1$ and $1 \leq c' \leq d-1$.
\end{itemize}

The first term is equal to $s_{n-1} \cdots s_2 z^c s_n \cdots s_4s_3^2s_4 \cdots s_{i+1}$ with $2 \leq i \leq n-1$. Thus, by the proof of Lemma \ref{LmScholie11}, it belongs to Span$(S_{n-1}^{*}\Lambda_n)$.

The second term is equal to $s_{n-1} \cdots s_2 z^c s_n \cdots s_3s_2s_3 \cdots s_{i+1}z^{c'}$. We have\\ $s_n \cdots s_3s_2s_3 \cdots s_{i+1}$ is equal to $s_2 \cdots s_i s_n \cdots s_2$. This is done in the first case of the proof of Lemma \ref{Lemma77}. Hence we get $s_{n-1} \cdots s_2 z^c s_2 \cdots s_i s_n \cdots s_2 z^{c'}$ which belongs to Span$(S_{n-1}^{*}\Lambda_n)$.

The third term is equal to $s_{n-1} \cdots s_2 z^c s_n \cdots s_2z^{c'}s_2 \cdots s_{i+1}$. Hence it belongs to Span$(S_{n-1}^{*}\Lambda_n)$.

\end{proof}

\begin{lemma}\label{Lemma1010}

If $a_{n-1} = s_{n-1} \cdots s_2 z^k s_2 \cdots s_i$ with $1 \leq k \leq d-1$, $2 \leq i \leq n-1$ and $a_n = s_n \cdots s_2 z^ls_2 \cdots s_{i'}$ with $1 \leq l \leq d-1$, $2 \leq i' \leq n$, then $s_n(a_{n-1}a_n)$ belongs to \emph{Span}$(S_{n-1}^{*}\Lambda_n)$.

\end{lemma}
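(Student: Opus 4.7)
The plan is to mimic exactly the argument of Lemma \ref{Lemma10}, which treats the analogous situation for $H(e,e,n)$, substituting the generator $t_k$ by $z^k$ throughout. The starting point is the output of Lemma \ref{Lemma99}: applying the reduction performed there to $s_n(a_{n-1}a_n)$ with $a_n = s_n\cdots s_2 z^l$ (before multiplying on the right by $s_2\cdots s_{i'}$) reduces $s_n(a_{n-1}a_n)$ to a linear combination, over $R_0$, of three families of terms, each right-multiplied by $s_2\cdots s_{i'}$:
\begin{enumerate}
\item $s_{n-1}\cdots s_2 z^c s_n\cdots s_4 s_3^{2} s_4\cdots s_{i+1}\,(s_2\cdots s_{i'})$ with $0\le c\le d-1$;
\item $s_{n-1}\cdots s_2 z^c s_2\cdots s_i s_n\cdots s_2 z^{c'}(s_2\cdots s_{i'})$ with $0\le c,c'\le d-1$;
\item $s_{n-1}\cdots s_2 z^c s_n\cdots s_2 z^{c'} s_2\cdots s_{i+1}(s_2\cdots s_{i'})$ with $0\le c\le d-1$, $1\le c'\le d-1$.
\end{enumerate}
It then suffices to show that each of these three families lies in $\operatorname{Span}(S_{n-1}^{*}\Lambda_n)$.

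For family (1), the factor $s_n\cdots s_4 s_3^{2} s_4\cdots s_{i+1}(s_2\cdots s_{i'})$ is handled by feeding the decomposition of $s_n\cdots s_4 s_3^{2} s_4\cdots s_{i+1}$ produced in the proof of Lemma \ref{LmScholie11} into the exact same argument used for the corresponding case in Lemma \ref{Lemma44} (multiplication on the right by $s_2\cdots s_{i'}$ of an element of $S_{n-1}^{*}\Lambda_n$). The power $z^c$ on the far left stays put, so the whole expression belongs to $\operatorname{Span}(S_{n-1}^{*}\Lambda_n)$.

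For family (2), we first commute $z^{c'}$ past $s_2\cdots s_{i'}$ using $z s_j=s_j z$ for $j\ge 2$, which is valid once $z^{c'}$ has travelled to the right of $s_n$; more precisely, we push the entire block $s_n\cdots s_2 z^{c'}(s_2\cdots s_{i'})$ into a form $s_2\cdots s_{i'-1}\, s_n\cdots s_2\, z^{c'}\cdot(\text{trailing }s_{i'})$ by repeating the rearrangement carried out in the \emph{Case }$i'\le i$ and \emph{Case }$i'>i$ analyses of Lemma \ref{Lemma88}. Combined with the $s_2\cdots s_i$ already sitting on the left, the result collects into an element of $\operatorname{Span}(S_{n-1}^{*}\Lambda_n)$ in the same way as in the final step of Lemma \ref{Lemma88}.

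For family (3), the computation is parallel: we distinguish $i'\le i+1$ and $i'>i+1$ and perform the braid/commutation moves of Lemma \ref{Lemma88} (Case $i<i'$ and Case $i\ge i'$ respectively) applied to $s_n\cdots s_2 z^{c'} s_2\cdots s_{i+1}(s_2\cdots s_{i'})$. This produces an expression of the form $s_n\cdots s_2 z^{c'} s_2\cdots s_{i_0}$ for an appropriate $i_0$, multiplied on the left by $s_{n-1}\cdots s_2 z^c$ and by a word in the generators $s_2,\ldots,s_{n-1}$ (which commute freely with the leftmost $z^c$). The end product again lies in $\operatorname{Span}(S_{n-1}^{*}\Lambda_n)$.

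The only non-routine step will be family (3): the interaction between $z^{c'}$ and the subword $s_2\cdots s_{i+1}(s_2\cdots s_{i'})$ must be carefully controlled because $z$ does \emph{not} commute with $s_2$, and the extra quadratic relation $s_2^2 = as_2+1$ appearing whenever $i'=2$ or $i=i'$ has to be applied at the right moment; this is however exactly the phenomenon dealt with in Lemma \ref{Lemma88} and in the very last computation of Lemma \ref{Lemma99}, so no genuinely new manipulation is needed, only bookkeeping.
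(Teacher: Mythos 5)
Your overall skeleton is the same as the paper's: you reduce to the three families of terms produced in the proof of Lemma \ref{Lemma99}; you handle the first family by feeding the decomposition from Lemma \ref{LmScholie11} into the argument of Lemma \ref{Lemma44}, which is exactly what the paper does (it phrases this as ``similar to Case 4 of Lemma \ref{Lemma10}'', and that case is precisely the combination you describe); and you handle the third family by the shift-and-braid manipulations of Case 2 of Lemma \ref{Lemma10} (the paper splits into $i'\le i$ and $i'>i$ rather than $i'\le i+1$ and $i'>i+1$, but the moves are the same).

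The gap is in your treatment of the second family. You propose to ``commute $z^{c'}$ past $s_2\cdots s_{i'}$ using $zs_j=s_jz$ for $j\ge 2$'', but commutation is only available for $j\ge 3$: $z$ and $s_2$ satisfy the length-four braid relation $zs_2zs_2=s_2zs_2z$, not commutation (you acknowledge this yourself in your final paragraph), and the trailing word begins with $s_2$, so the move fails at the very first letter; the Lemma \ref{Lemma88}-type rearrangement of $s_n\cdots s_2 z^{c'}(s_2\cdots s_{i'})$ you then invoke is likewise unjustified. Moreover none of this is needed, and the one genuinely delicate subcase is the one your argument never addresses: when $c'\ne 0$ the suffix $s_n\cdots s_2z^{c'}s_2\cdots s_{i'}$ is by definition an element of $\Lambda_n$ and the prefix $s_{n-1}\cdots s_2z^cs_2\cdots s_i$ is a word in $S_{n-1}^{*}$, so the term already lies in Span$(S_{n-1}^{*}\Lambda_n)$ with no manipulation at all; when $c'=0$ the product creates $s_n\cdots s_3s_2^{2}s_3\cdots s_{i'}$ and one must invoke the quadratic relation via Lemma \ref{LmScholie11}, which is exactly how the paper concludes. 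With the second family corrected in this way, your proof coincides with the paper's.
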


\begin{proof}

According to the proof of the previous lemma, we have to prove that the following three terms belong to Span$(S_{n-1}^{*}\Lambda_n)$:
\begin{itemize}
\item $s_{n-1} \cdots s_2 z^cs_n \cdots s_4s_3^2s_4 \cdots s_{i+1}(s_2 \cdots s_{i'})$ for $0 \leq c \leq d-1$,
\item $s_{n-1} \cdots s_2 z^cs_2 \cdots s_i s_n \cdots s_2 z^{c'}(s_2 \cdots s_{i'})$ for $0 \leq c \leq d-1$ and $0 \leq c' \leq d-1$,
\item $s_{n-1} \cdots s_2 z^c s_n \cdots s_2 z^{c'} s_2 \cdots s_{i+1}(s_2 \cdots s_{i'})$ for $0 \leq c \leq d-1$ and $1 \leq c' \leq d-1$.
\end{itemize}

The first case is similar to Case 4 in the proof of Lemma \ref{Lemma10}.

For the second term, if $c' \neq 0$, it belongs to Span$(S_{n-1}^{*}\Lambda_n)$ and if $c' = 0$, by the computation in the proof of Lemma \ref{LmScholie11}, it also belongs to Span$(S_{n-1}^{*}\Lambda_n)$.

For the third term, we apply the same technique as in Case 2 of the proof of Lemma \ref{Lemma10}. We get that $s_n \cdots s_2 z^{c'}s_2 \cdots s_{i+1} (s_2 \cdots s_{i'})$ is equal to:\\
If $i' \leq i$, it is equal to $s_2s_3 \cdots s_{i'}s_n \cdots s_2 z^{c'}s_2 \cdots s_{i+1}$.\\
If $i' > i$, it is equal to $s_2s_3 \cdots s_{i'-1}s_n \cdots s_2 z^{c'}s_2 \cdots s_i$.\\
It follows that $s_{n-1} \cdots s_2 z^c s_n \cdots s_2 z^{c'} s_2 \cdots s_{i+1}(s_2 \cdots s_{i'})$ for $0 \leq c \leq d-1$ and $1 \leq c' \leq d-1$ belongs to Span$(S_{n-1}^{*}\Lambda_n)$.
 
\end{proof}

\begin{remark}

We remark that for every $d$ and $n$ at least equal to $2$, our basis never coincides with the Ariki-Koike basis as illustrated by the following example. Consider the element $s_2zs_2^2 = s_2zs_2.s_2$ which belongs to the Ariki-Koike basis. In our basis, it is equal to the linear combination $as_2zs_2 + s_2z$, where $s_2zs_2$ and $s_2z$ are two distinct elements of our basis.

\end{remark}

\chapter{Toward Krammer's representations and BMW algebras for $B(e,e,n)$}\label{ChapterKrammerRepresentations}
 
\minitoc

\bigskip

In this chapter, we construct irreducible representations of some complex braid groups $B(e,e,n)$ by defining a BMW algebra for $B(e,e,n)$ and by studying its properties. We call these representations the Krammer's representations for $B(e,e,n)$. We provide Conjecture \ref{ConjectureRho3and4faithful} about the faithfulness of these representations. We also provide Conjecture \ref{ConjectureStructureBMW} about the structure and the dimension of the BMW algebra.

\section{Motivations and preliminaries}

Both Bigelow \cite{BigelowBraidGroupsLinear} and Krammer \cite{KrammerGroupB4,KrammerBraidGroupsLinear} proved that the classical braid group is linear, that is there exists a faithful linear representation of finite dimension of the classical braid group $B_n$. We recall that $B_n$ is defined by a presentation with generators $\{s_1, s_2, \cdots, s_{n-1} \}$ and relations $s_is_{i+1}s_i = s_{i+1}s_is_{i+1}$ for $1 \leq i \leq n-2$ and $s_is_j = s_js_i$ for $|i-j| > 1$. It can be described by the following diagram:

\begin{center}
\begin{tikzpicture}

\node[draw, shape=circle, label=above:$s_1$] (1) at (0,0) {};
\node[draw, shape=circle,label=above:$s_2$] (2) at (1,0) {};
\node[draw,shape=circle,label=above:$s_{n-2}$] (n-1) at (4,0) {};
\node[draw,shape=circle,label=above:$s_{n-1}$] (n) at (5,0) {};

\draw[thick,-] (1) to (2);
\draw[dashed,-,thick] (2) to (n-1);
\draw[thick,-] (n-1) to (n);

\end{tikzpicture}
\end{center}

The representation of Krammer $\rho: B_{n} \longrightarrow GL(V)$ is defined on an $\mathbb{R}(q,t)$-vector space $V$ with basis $\{x_s \ |\ s \in \mathcal{R} \}$ indexed on the set of reflections $(i,j)$ of the symmetric group $S_n$ with $1 \leq i < j \leq n-1$. Its dimension is then $\#\mathcal{R} = \frac{n(n-1)}{2}$. We denote $x_{(i,j)}$ by $x_{i,j}$. The representation is defined as follows.\\

\begin{tabular}{ll}

 $s_{k}x_{k,k+1} = tq^{2}x_{k,k+1}$, &  \\
 $s_{k}x_{i,k} = (1-q)x_{i,k} + qx_{i,k+1}$, & $i<k$,\\
 $s_{k}x_{i,k+1} = x_{i,k} + tq^{(k-i+1)}(q-1)x_{k, k+1}$, & $i<k$,\\
 $s_{k}x_{k,j} = tq(q-1)x_{k,k+1} + qx_{k+1,j}$, & $k+1<j$,\\
 $s_{k}x_{k+1,j} = x_{k,j} + (1-q)x_{k+1,j}$, & $k+1 < j$,\\
 $s_{k}x_{i,j} = x_{i,j},$ & $i < j < k$ or $k+1 < i < j$, and\\
 $s_{k}x_{i,j} = x_{i,j} + tq^{(k-i)}(q-1)^{2}x_{k,k+1}$, & $i < k < k+1 < j$.\\

\end{tabular}

\bigskip

The faithfulness criterion used by Krammer can be stated for a Garside proup. It provides necessary conditions to prove that a linear representation of a Garside group is faithful. Let $M$ be a Garside monoid and $G(M)$ its group of fractions. Denote by $\Delta$ and $P$ the Garside element and the set of simples of $M$, respectively. Define $\alpha(x)$ to be the gcd of $x$ and $\Delta$ for $x \in M$. Let $\rho : G(M) \longrightarrow GL(V)$ be a linear representation of finite dimension of $G(M)$ and let $(C_x)_{x \in P}$ be a family of subsets of $V$ indexed by the set of simples $P$. If the $C_x$ are nonempty and (pairwise) disjoint and $x C_y \subset C_{\alpha(xy)}$ for all $x \in M$ and $y \in P$, then the representation $\rho$ is faithful.\\

Krammer's representation as well as the faithfulness proof have been generalized to all Artin-Tits groups of spherical type by a work of \cite{DigneLinearity}, \cite{CohenWalesLinearity}, and \cite{ParisArtinMonoidInjectGroup}. Note also that a simple faithfulness proof was given in \cite{HeePreuveSimpleFidelite}. Marin generalized in \cite{MarinLinearity} this representation to all the $2$-reflection groups. His representation is defined analytically over the field of (formal) Laurent series by the monodromy of some differential forms and has dimension the number of reflections in the complex reflection group. It was conjectured in \cite{MarinLinearity} that this representation is faithful. It has also been generalized by Chen in \cite{ChenFlatConnectionsBrauerAlgebras} to arbitrary reflection groups.\\

In type ADE of Coxeter groups, the generalized Krammer's representations can be constructed via BMW (Birman-Murakami-Wenzl) algebras. For more details, see Section 1.4.3 of Chapter 1. We recall the definition of the BMW algebra for the case ADE as it appears in \cite{CohenGijsbersWalesBMW}. 

\begin{definition}\label{DefinitionBMWADE}

Let $W$ be a Coxeter group of type $A_n$, $D_n$ for any $n$, or $E_n$ for $n = 6, 7, 8$. The BMW algebra associated to $W$ is the $\mathbb{Q}(l,x)$-algebra with identity, with the generating set: $\{ S_1, S_2, \cdots, S_n \} \cup \{ F_1, F_2, \cdots, F_n \}$, and the defining relations are the braid relations along with:

\begin{enumerate}

\item $mF_i = l(S_i^2 + m S_i -1)$ with $m = \frac{l-l^{-1}}{1-x}$ for all $i$,
\item $S_iF_i = F_iS_i = l^{-1}F_i$ for all $i$, and
\item $F_iS_jF_i = l F_i$ for all $i,j$ when $s_is_js_i = s_js_is_j$.

\end{enumerate}

\end{definition}

For the proof of the following proposition, see Propositions 2.1 and 2.3 of \cite{CohenGijsbersWalesBMW}.

\begin{proposition}\label{PropEi^2=xEi}

We have $S_i$ is invertible with $S_i^{-1} = S_i + m -mF_i$. We also have $F_i^{2} = x F_i$ and $S_jS_iF_j = F_iS_jS_i = F_iF_j$ when $s_is_js_i = s_js_is_j$.

\end{proposition}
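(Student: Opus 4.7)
The plan is to handle the three assertions separately. The first two are short algebraic verifications; the third is the real difficulty, and I would flag its last step as the main obstacle.

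For the invertibility of $S_i$, I would first rewrite relation~(1) of Definition~\ref{DefinitionBMWADE} as the quadratic identity $S_i^2 = l^{-1}mF_i - mS_i + 1$. Expanding the product $S_i(S_i + m - mF_i) = S_i^2 + mS_i - mS_iF_i$ and using $S_iF_i = l^{-1}F_i$ gives $S_i^2 + mS_i - ml^{-1}F_i$, which is exactly $1$ by the rewritten quadratic identity. The reverse product $(S_i + m - mF_i)S_i = 1$ is obtained symmetrically using $F_iS_i = l^{-1}F_i$.

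For $F_i^2 = xF_i$, I would right-multiply relation~(1) by $F_i$ to get $mF_i^2 = l(S_i^2 F_i + mS_iF_i - F_i)$, then use $S_iF_i = l^{-1}F_i$ twice to collapse this to $mF_i^2 = (l^{-1} + m - l)F_i$. The defining relation $m(1-x) = l - l^{-1}$ is equivalent to $l^{-1} + m - l = mx$, and dividing by $m$ yields the claim.

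The triple identity $S_jS_iF_j = F_iS_jS_i = F_iF_j$ (under the braid relation $s_is_js_i = s_js_is_j$) is the main obstacle; I would split the argument in two. For the middle equality, substitute $F_j = lm^{-1}(S_j^2 + mS_j - 1)$ into $S_jS_iF_j$ and use the braid relation $S_iS_jS_i = S_jS_iS_j$ twice to rewrite $S_jS_iS_j^2 = (S_iS_jS_i)S_j = S_i(S_iS_jS_i) = S_i^2 S_jS_i$; this gives $S_jS_iF_j = lm^{-1}(S_i^2 + mS_i - 1)S_jS_i$, which collapses to $F_iS_jS_i$ by the quadratic relation for $F_i$. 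The hard part is the final equality $Y := F_iS_jS_i = F_iF_j$, since the naive expansion of $F_iF_j$ via the quadratic relation for $F_j$ only produces a tautology, so relation~(3) $F_iS_jF_i = lF_i$ (and its symmetric $F_jS_iF_j = lF_j$) must enter essentially. Two easy identities are available: $F_jY = F_j$ (using $Y = S_jS_iF_j$, $F_jS_j = l^{-1}F_j$, and relation~(3) with $i,j$ swapped) and $YF_i = F_i$ (using $Y = F_iS_jS_i$, $S_iF_i = l^{-1}F_i$, and relation~(3)). To close the argument I would exploit the invertibility of $S_i$ from the first step: right-multiplying $Y$ by $S_i$ and using the quadratic relation for $S_i^2$ together with relation~(3) produces the key linear identity $F_iS_j = YS_i + mY - mF_i$; a parallel computation with $F_iF_j$ in place of $Y$, combined with $F_jY = F_j$, should identify the two elements after inverting $S_i$. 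The genuine subtlety in this last closure is substantial, which is presumably why the authors refer to Propositions~2.1 and~2.3 of \cite{CohenGijsbersWalesBMW} rather than reprove the statement here.
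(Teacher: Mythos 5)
Your verifications of the first two assertions are correct: the computation $S_i(S_i+m-mF_i)=S_i^2+mS_i-ml^{-1}F_i=1$ and the identity $mF_i^2=(l^{-1}+m-l)F_i=mxF_i$ both check out, and so does the middle equality $S_jS_iF_j=F_iS_jS_i$, obtained by expanding $F_j$ via relation (1) and pushing the braid relation through. Note that the paper gives no argument for this proposition at all — it simply refers to Propositions 2.1 and 2.3 of Cohen--Gijsbers--Wales — so the only question is whether your attempt is complete, and it is not: the final equality $F_iS_jS_i=F_iF_j$ is left open, exactly at the step you flagged.

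The concrete problem with your proposed closure is that the ``parallel computation with $F_iF_j$ in place of $Y$'' requires simplifying $F_iF_jS_i^{-1}=F_iF_jS_i+mF_iF_j-mF_iF_jF_i$, i.e.\ it needs $F_iF_jF_i=F_i$ (or $F_jF_iF_j=F_j$). That is not a defining relation, and at this stage it is only known as a consequence of the very identity you are proving (one derives $F_iF_jF_i=F_iS_jS_iF_i=l^{-1}F_iS_jF_i=F_i$ only after knowing $F_iF_j=F_iS_jS_i$). Moreover, your ``key linear identity'' $F_iS_j=YS_i+mY-mF_i$ is just $F_iS_jS_iS_i^{-1}=F_iS_j$ rewritten using $YF_i=F_i$; any element $Z$ with $ZF_i=F_i$ satisfies the analogous identity, so it cannot distinguish $Y$ from $F_iF_j$. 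A way to close the gap with the tools you already assembled is to compute $Y^2$ in two ways: on one hand $Y^2=F_iS_j(S_iF_i)S_jS_i=l^{-1}(F_iS_jF_i)S_jS_i=Y$ by relations (2) and (3); on the other hand $Y^2=(F_iS_jS_i)(S_jS_iF_j)=l^{-1}F_iS_j^2S_iF_j$ by the braid relation and $S_jF_j=l^{-1}F_j$, and expanding $S_j^2=\tfrac{m}{l}F_j-mS_j+1$ together with $F_jS_iF_j=lF_j$, $YF_j=xY$ and $F_iS_i=l^{-1}F_i$ gives $Y^2=l^{-1}\bigl((m+l^{-1})F_iF_j-mxY\bigr)$. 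Comparing the two expressions and using $l+mx=m+l^{-1}\neq 0$ (from $m(1-x)=l-l^{-1}$) yields $Y=F_iF_j$. Without a step of this kind (or an explicit appeal to the cited results), your argument for the third assertion remains incomplete.
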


If $l = 1$, then the BMW algebra of Definition \ref{DefinitionBMWADE} degenerates to the following Brauer algebra, see \cite{Brauer} and \cite{CohenFrenkWales}. In this case, $m = \frac{l-l^{-1}}{1-x} = 0$ and Relation $1$ of Definition \ref{DefinitionBMWADE} degenerates to $S_i^2 = 1$. Hence we have $S_i^{-1} = S_i$. By Proposition \ref{PropEi^2=xEi}, we still have the relation $F_i^{2} = x F_i$ in the Brauer algebra.

\begin{definition}\label{DefinitionBrauerBMW}

Let $W$ be a Coxeter group of type $A_n$, $D_n$ for all $n$, or $E_n$ for $n = 6, 7, 8$. The Brauer algebra associated to $W$ is the $\mathbb{Q}(x)$-algebra with identity, with the generating set: $\{ S_1, S_2, \cdots, S_n \} \cup \{ F_1, F_2, \cdots, F_n \}$, and the defining relations are the braid relations along with:

\begin{enumerate}

\item $S_i^2 = 1$ for all $i$,
\item $F_i^2 = x F_i$ for all $i$,
\item $S_iF_i = F_iS_i = F_i$ for all $i$, 
\item $F_iS_jF_i = F_i$ for all $i,j$ when $s_is_js_i = s_js_is_j$, and
\item $S_jS_iF_j = F_iS_jS_i = F_iF_j$ when $s_is_js_i = s_js_is_j$.

\end{enumerate}

\end{definition}

Chen defined in \cite{ChenFlatConnectionsBrauerAlgebras} a Brauer algebra for all complex reflection groups that generalizes earlier works in \cite{Brauer} and \cite{CohenFrenkWales}. Completing his results in \cite{ChenOldBMW}, Chen also defined in \cite{ChenBMW2017} a BMW algebra for the dihedral groups $I_2(e)$ for all $e \geq 2$ based on which he defined a BMW algebra for any Coxeter group that degenerates to the Brauer algebra introduced in \cite{ChenFlatConnectionsBrauerAlgebras}. It is also shown in \cite{ChenBMW2017} the existence of a representation for each Artin-Tits group associated to $I_2(e)$. The representation has dimension $e$ and is explicitly defined over $\mathbb{Q}(\alpha,\beta)$, where $\alpha$ and $\beta$ depend on the parameters of the BMW algebra. It is also conjectured in \cite{ChenBMW2017} that this representation is isomorphic to the monodromy representation constructed by Marin in \cite{MarinLinearity}.\\

Attempting to make a similar approach in order to explicitly construct faithful irreducible representations for the complex braid groups $B(e,e,n)$, we define a BMW algebra for $B(e,e,n)$ that we denote by BMW$(e,e,n)$, see Definitions \ref{DefinitionBMWB(e,e,n)eOdd} and \ref{DefinitionBMWB(e,e,n)eEven}. We show that it is a deformation of an algebra that we denote by Br$(e,e,n)$ and we call the Brauer algebra, see Definitions \ref{DefinitionBr(e,e,n)eodd} and \ref{DefinitionBr(e,e,n)eeven}. We also show in Proposition \ref{PropositionChenIsomBr(e,e,3)} that Br$(e,e,n)$ is isomorphic to the Brauer-Chen algebra defined by Chen in  \cite{ChenFlatConnectionsBrauerAlgebras} for $n=3$ and $e$ odd. By using the BMW algebra BMW$(e,e,n)$, we explicitly define what we call the Krammer's representations for $B(3,3,3)$ and $B(4,4,3)$ by using the package GBNP \cite{GBNP} of GAP4 without being able to go further since the computations become very heavy for $e > 5$. In the last section, we provide some conjectural properties about BMW$(e,e,n)$ and the Krammer's representations, see Conjectures \ref{ConjectureRho3and4faithful} and \ref{ConjectureStructureBMW}.

\section{BMW and Brauer algebras for type $(e,e,n)$}

We are inspired by the monoid $B^{\oplus}(e,e,n)$ of Corran and Picantin in order to construct a BMW algebra for the complex braid groups $B(e,e,n)$. The presentation of $B^{\oplus}(e,e,n)$ is given in Definition \ref{DefofB+keen} for $k=1$ and described by the diagram of Figure \ref{PresofCPBeen}. It consists of attaching the dual presentation of the dihedral group $I_2(e)$ with generating set $\{ \tilde{t}_0,\tilde{t}_1, \cdots, \tilde{t}_{e-1} \}$ to the classical presentation of the braid group of type $A_{n-1}$ with set of generators $\{ \tilde{t}_i, \tilde{s}_3, \tilde{s}_4, \cdots, \tilde{s}_n \}$ for all $0 \leq i \leq e-1$. Inspired by the definition of Chen of the BMW algebra for the dihedral groups in \cite{ChenBMW2017} and the definition of the BMW algebra for the type ADE of Coxeter groups in \cite{CohenGijsbersWalesBMW}, we define a BMW algebra for $B(e,e,n)$ as follows. We distinguish the cases when $e$ is odd and even and we suppose $n \geq 3$.

\begin{definition}\label{DefinitionBMWB(e,e,n)eOdd}

Suppose $e$ odd. We define the BMW algebra associated to $B(e,e,n)$ to be the $\mathbb{Q}(l,x)$-algebra with identity, with the generating set: $$\{T_i\ |\ i \in \mathbb{Z}/e\mathbb{Z} \} \cup \{S_3, S_4, \cdots, S_n \} \cup \{E_i\ |\ i \in \mathbb{Z}/e\mathbb{Z}\} \cup \{ F_3, F_4, \cdots, F_n\},$$ and the defining relations are the relations of BMW of type $A_{n-1}$ for\\ $\{ T_i, S_3, S_4, \cdots, S_n \} \cup \{E_i,F_3, \cdots , F_n\}$ with $0 \leq i \leq e-1$ as described in Definition \ref{DefinitionBMWADE} along with the dihedral BMW relations that can be described as follows:

\begin{enumerate}

\item $T_i = T_{i-1}T_{i-2}T_{i-1}^{-1}$ and $E_i = T_{i-1}E_{i-2}T_{i-1}^{-1}$ for all $i \in \mathbb{Z}/e\mathbb{Z}$, $i \neq 0,1$,
\item $\underset{e}{\underbrace{T_1T_0 \cdots T_1}}=\underset{e}{\underbrace{T_0T_1 \cdots T_0}}$,
\item $m E_i = l (T_i^2 + mT_i -1)$ for $i = 0, 1$, where $m = \frac{l-l^{-1}}{1-x}$,
\item $T_iE_i = E_iT_i = l^{-1} E_i$ for $i = 0, 1$, 
\item $E_1\underset{k}{\underbrace{T_0T_1 \cdots T_0}}E_1 = l E_1$, where $1 \leq k \leq e-2$, $k$ odd,
\item $E_0\underset{k}{\underbrace{T_1T_0 \cdots T_1}}E_0 = l E_0$, where $1 \leq k \leq e-2$, $k$ odd,
\item $\underset{e-1}{\underbrace{T_1T_0 \cdots T_0}}E_1 = E_0 \underset{e-1}{\underbrace{T_1T_0 \cdots T_0}}$,
\item $\underset{e-1}{\underbrace{T_0T_1 \cdots T_1}}E_0 = E_1\underset{e-1}{\underbrace{T_0T_1 \cdots T_1}}$.
\end{enumerate}

\end{definition}

\begin{definition}\label{DefinitionBMWB(e,e,n)eEven}

Suppose $e$ even. Let $m = v-v^{-1}$ and $x$ be such that $m = \frac{l-l^{-1}}{1-x}$. We define the BMW algebra associated to $B(e,e,n)$ to be the $\mathbb{Q}(l,v)$-algebra with identity, with the generating set: $$\{T_i\ |\ i \in \mathbb{Z}/e\mathbb{Z} \} \cup \{S_3, S_4, \cdots, S_n \} \cup \{E_i\ |\ i \in \mathbb{Z}/e\mathbb{Z}\} \cup \{ F_3, F_4, \cdots, F_n\},$$ and the defining relations are the relations of BMW of type $A_{n-1}$ for\\ $\{ T_i, S_3, S_4, \cdots, S_n \} \cup \{E_i,F_3, \cdots , F_n\}$ with $0 \leq i \leq e-1$ as described in Definition \ref{DefinitionBMWADE} along with the dihedral BMW relations that can be described as follows:

\begin{enumerate}

\item $T_i = T_{i-1}T_{i-2}T_{i-1}^{-1}$ and $E_i = T_{i-1}E_{i-2}T_{i-1}^{-1}$ for all $i \in \mathbb{Z}/e\mathbb{Z}$, $i \neq 0,1$,
\item $\underset{e}{\underbrace{T_1T_0 \cdots T_0}}=\underset{e}{\underbrace{T_0T_1 \cdots T_1}}$,
\item $m E_i = l (T_i^2 + mT_i -1)$ for $i = 0, 1$,
\item $T_iE_i = E_iT_i = l^{-1} E_i$ for $i = 0, 1$,
\item $E_1\underset{i}{\underbrace{T_0T_1 \cdots T_0}}E_1 = (v^{-1} + l) E_1$ for $i = 4k+ 1 < e/2$ and $i = 4k+3 < e/2$,
\item $E_0\underset{i}{\underbrace{T_1T_0 \cdots T_1}}E_0 = (v^{-1} + l) E_0$ for $i = 4k+ 1 < e/2$ and $i = 4k+3 < e/2$,
\item $\underset{e-1}{\underbrace{T_1T_0 \cdots T_1}}E_0 = E_0 \underset{e-1}{\underbrace{T_1T_0 \cdots T_1}} = v^{-1} E_0$,
\item $\underset{e-1}{\underbrace{T_0T_1 \cdots T_0}}E_1 = E_1\underset{e-1}{\underbrace{T_0T_1 \cdots T_0}} = v^{-1} E_1$,
\item $E_0AE_1 = E_1AE_0 = 0$, where $A$ is any square-free word over $\{T_0,T_1\}$ of length at most $e-1$.
\end{enumerate}

\end{definition}

\begin{remark}

Additional relations for the BMW$(e,e,n)$ algebras can be found in the same way as in Lemmas 2.1 and 5.1 of \cite{ChenBMW2017} for the dihedral groups. Also we can get additional relations similar to those of Propositions 2.5 and 2.8 of \cite{CohenGijsbersWalesBMW} that correspond to the BMW algebra of type $A_n$.

\end{remark}

\begin{remark}

There exists a natural quotient map BMW$(e,e,n) \longrightarrow$ H$(e,e,n)$ by sending $T_i$ to $t_i$, $S_j$ to $s_j$, $E_i$ to $0$, and $F_j$ to $0$, where $\{ t_0, t_1, \cdots , t_{e-1}, s_3, s_4, \cdots , s_n  \}$ is the set of generators of H$(e,e,n)$, see Definition \ref{DefPresH(d,1,n)}.
 
\end{remark}

In Definition \ref{DefinitionBMWB(e,e,n)eOdd}, when $l = 1$, BMW$(e,e,n)$ degenerates to the algebra Br$(e,e,n)$ that we provide in Definition \ref{DefinitionBr(e,e,n)eodd} below and that we call the Brauer algebra of type $(e,e,n)$. In this case, we have $m = 0$ and Relation $3$ of Definition \ref{DefinitionBMWB(e,e,n)eOdd} degenerates to $T_i^2 = 1$. Hence we have $T_i^{-1} = T_i$ for all $i$. We still have $E_i^{2} = x E_i$ and $F_j^{2} = x F_j$ for all $i$ and $j$, see Proposition \ref{PropEi^2=xEi}. In Proposition \ref{PropositionChenIsomBr(e,e,3)}, we prove that Br$(e,e,n)$ coincides with the Brauer algebra defined by Chen in \cite{ChenFlatConnectionsBrauerAlgebras} for $n = 3$ and $e$ odd. 

\begin{definition}\label{DefinitionBr(e,e,n)eodd}

Suppose $e$ odd. We define the Brauer algebra \emph{Br}$(e,e,n)$ to be the $\mathbb{Q}(x)$-algebra with identity, with the generating set: $$\{T_i\ |\ i \in \mathbb{Z}/e\mathbb{Z} \} \cup \{S_3, S_4, \cdots, S_n \} \cup \{E_i\ |\ i \in \mathbb{Z}/e\mathbb{Z}\} \cup \{ F_3, F_4, \cdots, F_n\},$$ and the defining relations are the relations of the Brauer algebra of type $A_{n-1}$ for\\ $\{ T_i, S_3, S_4, \cdots, S_n \} \cup \{E_i,F_3, \cdots , F_n\}$ with $0 \leq i \leq e-1$ as described in Definition \ref{DefinitionBrauerBMW} along with the normalized dihedral Brauer relations that can be described as follows:

\begin{enumerate}

\item $T_i = T_{i-1}T_{i-2}T_{i-1}$ and $\underset{e}{\underbrace{T_1T_0 \cdots T_1}}=\underset{e}{\underbrace{T_0T_1 \cdots T_0}}$ for $i \in \mathbb{Z}/e\mathbb{Z}$,
\item $E_i = T_{i-1}E_{i-2}T_{i-1}$ for all $i \in \mathbb{Z}/e\mathbb{Z}$,
\item $T_i^2 = 1$ for $i = 0, 1$, 
\item $E_i^2 = x E_i$ for $i = 0,1$,
\item $T_iE_i = E_iT_i = E_i$ for $i = 0, 1$, 
\item $E_1\underset{k}{\underbrace{T_0T_1 \cdots T_0}}E_1 = E_1$, where $1 \leq k \leq e-2$, $k$ odd,
\item $E_0\underset{k}{\underbrace{T_1T_0 \cdots T_1}}E_0 = E_0$, where $1 \leq k \leq e-2$, $k$ odd,
\item $\underset{e-1}{\underbrace{T_1T_0 \cdots T_0}}E_1 = E_0 \underset{e-1}{\underbrace{T_1T_0 \cdots T_0}}$,
\item $\underset{e-1}{\underbrace{T_0T_1 \cdots T_1}}E_0 = E_1\underset{e-1}{\underbrace{T_0T_1 \cdots T_1}}$.
\end{enumerate}

\end{definition}

In Definition \ref{DefinitionBMWB(e,e,n)eEven}, when $l = 1$ and $v = 1$, BMW$(e,e,n)$ degenerates to the algebra Br$(e,e,n)$ that we provide in Definition \ref{DefinitionBr(e,e,n)eeven} below and that we call the Brauer algebra of type $(e,e,n)$ for $e$ even.

\begin{definition}\label{DefinitionBr(e,e,n)eeven}

Suppose $e$ even. We define the Brauer algebra \emph{Br}$(e,e,n)$ to be the $\mathbb{Q}(x)$-algebra with identity, with the generating set: $$\{T_i\ |\ i \in \mathbb{Z}/e\mathbb{Z} \} \cup \{S_3, S_4, \cdots, S_n \} \cup \{E_i\ |\ i \in \mathbb{Z}/e\mathbb{Z}\} \cup \{ F_3, F_4, \cdots, F_n\},$$ and the defining relations are the relations of the Brauer algebra of type $A_{n-1}$ for\\ $\{ T_i, S_3, S_4, \cdots, S_n \} \cup \{E_i,F_3, \cdots , F_n\}$ with $0 \leq i \leq e-1$ as described in Definition \ref{DefinitionBrauerBMW} along with the normalized dihedral Brauer relations that can be described as follows:

\begin{enumerate}

\item $T_i = T_{i-1}T_{i-2}T_{i-1}$ and $\underset{e}{\underbrace{T_1T_0 \cdots T_0}}=\underset{e}{\underbrace{T_0T_1 \cdots T_1}}$ for $i \in \mathbb{Z}/e\mathbb{Z}$,
\item $E_i = T_{i-1}E_{i-2}T_{i-1}$ for all $i \in \mathbb{Z}/e\mathbb{Z}$,
\item $T_i^2 = 1$ for $i = 0, 1$,
\item $E_i^2 = x E_i$ for $i = 0,1$,
\item $T_iE_i = E_iT_i = E_i$ for $i = 0, 1$,
\item $E_1\underset{i}{\underbrace{T_0T_1 \cdots T_0}}E_1 = E_1$, where $i = 4k+ 1 < e/2$ or $i = 4k+3 < e/2$,
\item $E_0\underset{i}{\underbrace{T_1T_0 \cdots T_1}}E_0 = E_0$, where $i = 4k+ 1 < e/2$ or $i = 4k+3 < e/2$,
\item $\underset{e-1}{\underbrace{T_1T_0 \cdots T_1}}E_0 = E_0 \underset{e-1}{\underbrace{T_1T_0 \cdots T_1}} = E_0$,
\item $\underset{e-1}{\underbrace{T_0T_1 \cdots T_0}}E_1 = E_1\underset{e-1}{\underbrace{T_0T_1 \cdots T_0}} = E_1$,
\item $E_0AE_1 = E_1AE_0 = 0$, where $A$ is any square-free word over $\{T_0,T_1\}$ of length at most $e-1$.

\end{enumerate}

\end{definition}

\begin{remark}

In the definition of \emph{Br}$(e,e,n)$, one can replace Relations 6 and 7 of Definitions \ref{DefinitionBr(e,e,n)eodd} and \ref{DefinitionBr(e,e,n)eeven} by $E_1\underset{i}{\underbrace{T_0T_1 \cdots T_0}}E_1 = \mu E_1$ and $E_0\underset{i}{\underbrace{T_1T_0 \cdots T_1}}E_0 = \mu E_0$ for a given parameter $\mu$. In our definitions, we set $\mu = 1$ in the dihedral relations that we call the normalized dihedral Brauer relations.

\end{remark}

\begin{proposition}\label{PropGroupAlgebraInjectsInBrauer}

The group algebra $\mathbb{Q}(x)(G(e,e,n))$ injects in the Brauer algebra \emph{Br}$(e,e,n)$.

\end{proposition}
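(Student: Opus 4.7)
My plan is to exhibit $\phi$ as a split monomorphism by constructing an explicit left inverse $\pi$ at the level of $\mathbb{Q}(x)$-algebras. This is the Brauer analogue of the natural quotient map $\mathrm{BMW}(e,e,n) \twoheadrightarrow H(e,e,n)$ mentioned after Definition \ref{DefinitionBMWB(e,e,n)eEven}: since $\mathrm{Br}(e,e,n)$ satisfies $T_i^2 = 1$ and $S_j^2 = 1$ rather than a quadratic Hecke deformation, killing the $E_i$ and $F_j$ should land us in the group algebra of $G(e,e,n)$ itself.

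First I would construct $\phi : \mathbb{Q}(x)(G(e,e,n)) \to \mathrm{Br}(e,e,n)$ by sending $t_i \mapsto T_i$ (for $i \in \mathbb{Z}/e\mathbb{Z}$) and $s_j \mapsto S_j$ (for $3 \leq j \leq n$), and check well-definedness by verifying that every Corran-Picantin relation (Definition \ref{DefinitionPresentationCorranPicantin}) holds in $\mathrm{Br}(e,e,n)$: the involutivity $t_i^2 = s_j^2 = 1$ is part of the type-$A_{n-1}$ Brauer relations; the mixed braid relation $t_i s_3 t_i = s_3 t_i s_3$, the commutations $s_j t_i = t_i s_j$ for $j \geq 4$, and the type-$A$ braid relations among $s_3,\dots,s_n$ come from applying the Brauer-of-type-$A_{n-1}$ relations with each $i$ in turn; and the relation $t_i t_{i-1} = t_j t_{j-1}$ follows from $T_i = T_{i-1}T_{i-2}T_{i-1}$ together with the length-$e$ dihedral identity $\underbrace{T_1 T_0 \cdots}_{e} = \underbrace{T_0 T_1 \cdots}_{e}$, which together encode the defining relations of the dihedral subgroup $G(e,e,2) \subset G(e,e,n)$.

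Second I would define $\pi : \mathrm{Br}(e,e,n) \to \mathbb{Q}(x)(G(e,e,n))$ by $T_i \mapsto t_i$, $S_j \mapsto s_j$, $E_i \mapsto 0$, $F_j \mapsto 0$, and verify well-definedness case by case. The relations of $\mathrm{Br}(e,e,n)$ involving only $T_i$ and $S_j$ map to the defining relations of $G(e,e,n)$ already checked for $\phi$. Every remaining relation has a factor of some $E_i$ or $F_j$ on both sides, so after applying $\pi$ each monomial contains a factor $0$ and the relation collapses to $0 = 0$: this covers $E_i^2 = x E_i$, $T_i E_i = E_i T_i = E_i$, the recursion $E_i = T_{i-1} E_{i-2} T_{i-1}$, the tangle identities $E_1 (\cdots) E_1 = E_1$ and $E_0 (\cdots) E_0 = E_0$, the twisting identities $(\cdots) E_1 = E_0 (\cdots)$, the annihilation relations $E_0 A E_1 = E_1 A E_0 = 0$ in the $e$ even case, and the full list of type-$A$ Brauer relations involving $F_j$ (i.e.\ $F_j^2 = x F_j$, $S_j F_j = F_j S_j = F_j$, $F_j S_k F_j = F_j$, and $S_j S_i F_j = F_i S_j S_i = F_i F_j$). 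The case $e$ even is handled in exactly the same way.

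Finally $\pi \circ \phi = \mathrm{id}$ on the generating set $\{t_i, s_j\}$ of $G(e,e,n)$, hence on the whole group algebra by multiplicativity and $\mathbb{Q}(x)$-linearity, which forces $\phi$ to be injective. The only step demanding genuine attention is the first paragraph: one must confirm that the dihedral portion of $\mathrm{Br}(e,e,n)$ really does imply the relation $t_i t_{i-1} = t_j t_{j-1}$ after setting $t_i = T_i$, which reduces to an identity inside the dihedral subgroup generated by $T_0, T_1$; everything else is routine bookkeeping, made effortless by the fact that $\pi$ annihilates $E_i$ and $F_j$ and therefore trivializes every relation in which they participate.
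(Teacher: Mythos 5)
Your proposal is correct and follows essentially the same route as the paper: the paper's proof also defines the map $t_i \mapsto T_i$, $s_j \mapsto S_j$ together with the retraction sending $T_i \mapsto t_i$, $S_j \mapsto s_j$, $E_i \mapsto 0$, $F_j \mapsto 0$, checks both extend to algebra morphisms, and concludes injectivity from the composite being the identity on $\mathbb{Q}(x)(G(e,e,n))$. Your write-up merely spells out the relation-checking (in particular that the dihedral Brauer relations recover $t_i t_{i-1} = t_j t_{j-1}$) that the paper dismisses as "readily checked".
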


\begin{proof}

Define a map $r$ from the set of generators (of the presentation of Corran and Picantin) of the group algebra $\mathbb{Q}(x)(G(e,e,n))$ to the set of generators of Br$(e,e,n)$ by $r(t_i) = T_i$ and $r(s_j) = S_j$ for all $i \in \mathbb{Z}/e\mathbb{Z}$ and $3 \leq j \leq n$. Also define a map $r'$ from the set of generators of Br$(e,e,n)$ to the set of generators of $\mathbb{Q}(x)(G(e,e,n))$ by $r'(T_i) = t_i$, $r'(S_j) = s_j$, $r'(E_i) = 0$, and $r'(F_j) = 0$ for all $i \in \mathbb{Z}/e\mathbb{Z}$ and $3 \leq j \leq n$. On examination of the relations of $\mathbb{Q}(x)(G(e,e,n))$ and Br$(e,e,n)$, it is readily checked that $r$ extends to a morphism from $\mathbb{Q}(x)(G(e,e,n))$ to Br$(e,e,n)$ and $r'$ extends to a morphism from Br$(e,e,n)$ to $\mathbb{Q}(x)(G(e,e,n))$. We have $r' \circ r = id$, where $id$ is the identity morphism on $\mathbb{Q}(x)(G(e,e,n))$. Hence $r$ is injective.

\end{proof}

In the following two propositions, we provide additional relations that hold in Br$(e,e,n)$.

\begin{proposition}\label{PropAdditionalRelations1}
When $e$ is odd, we have 
\begin{equation}
\begin{aligned}
E_1E_0E_1 = E_1,\\
E_0E_1E_0 = E_0.
\end{aligned}
\end{equation}

\end{proposition}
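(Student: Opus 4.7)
The strategy is to exploit that $e$ odd makes the recursion $E_i = T_{i-1} E_{i-2} T_{i-1}$ of Relation~2 cycle through all residues modulo $e$, so that $E_0$ can be written as a conjugate of $E_1$ by a word in the $T_j$'s. Substituted into $E_1 E_0 E_1$, this reduces (after absorbing boundary $T_1$'s via $T_1 E_1 = E_1 T_1 = E_1$) to a configuration where the absorption identity of Relation~6 applies.

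Concretely, I would first iterate Relation~2 starting from $E_0 = T_{-1} E_{-2} T_{-1}$: since $\gcd(2,e) = 1$, after $(e-1)/2$ substitutions the inner index reaches $-(e-1) \equiv 1 \pmod{e}$, yielding
\begin{equation*}
E_0 \;=\; T_{e-1}\, T_{e-3}\, \cdots\, T_{4}\, T_{2} \,\cdot\, E_1 \,\cdot\, T_{2}\, T_{4}\, \cdots\, T_{e-3}\, T_{e-1}.
\end{equation*}
Each $T_j$ with $j \geq 2$ then expands via Relation~1 (and $T_0^2 = T_1^2 = 1$) into the alternating word $(T_1 T_0)^{j-1} T_1$, so the two flanking products become words $A$ and $A^{*}$ in $T_0, T_1$, and $E_1 E_0 E_1$ becomes $E_1\, A\, E_1\, A^{*}\, E_1$. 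A direct calculation, which I would verify for small cases such as $e = 3$ (giving $E_1 T_0 E_1 T_0 E_1$), $e = 5$ (giving $E_1\cdot T_0 T_1 T_0 \cdot E_1 \cdot T_0 T_1 T_0 \cdot E_1$) and $e = 7$ (giving $E_1 \cdot T_0 T_1 T_0 T_1 T_0 \cdot E_1 \cdot T_0 T_1 T_0 T_1 T_0 \cdot E_1$), shows that after Relation~5 is used to absorb the boundary $T_1$'s into the outer $E_1$'s, both slots collapse to the same alternating word $B = \underbrace{T_0 T_1 T_0 \cdots T_0}_{e-2}$ of odd length $e-2$.

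With this reduction in hand, Relation~6 with $k = e-2$ (odd and in the allowed range $1 \le k \le e-2$) gives $E_1 B E_1 = E_1$, and the full product collapses:
\begin{equation*}
E_1 E_0 E_1 \;=\; E_1\, B\, E_1\, B\, E_1 \;=\; (E_1 B E_1)\, B\, E_1 \;=\; E_1\, B\, E_1 \;=\; E_1.
\end{equation*}
The companion identity $E_0 E_1 E_0 = E_0$ follows by the symmetric argument, exchanging $T_0 \leftrightarrow T_1$ and $E_0 \leftrightarrow E_1$ throughout and invoking Relation~7 in place of Relation~6. The main technical obstacle is the combinatorial claim that both flanking words simplify to the \emph{same} alternating word of length exactly $e-2$; I expect this to be provable by induction on $(e-1)/2$, together with the cyclic braid relation $\underbrace{T_1 T_0 \cdots T_1}_{e} = \underbrace{T_0 T_1 \cdots T_0}_{e}$, which shortens any alternating word of length $\geq e$ once its extremal $T_1$'s are absorbed by the flanking $E_1$'s.
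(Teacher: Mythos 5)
Your route is viable and genuinely different from the paper's: you obtain the conjugating word relating $E_0$ and $E_1$ by iterating Relation 2 of Definition \ref{DefinitionBr(e,e,n)eodd} (using that $e$ odd makes the indices $0,-2,-4,\dots$ sweep out $\mathbb{Z}/e\mathbb{Z}$ and land on $1$), whereas the paper gets the same link in one stroke from Relation 8, namely $\underset{e-1}{\underbrace{T_1T_0 \cdots T_0}}\,E_1 = E_0\,\underset{e-1}{\underbrace{T_1T_0 \cdots T_0}}$, hence $E_1 = \underset{e-1}{\underbrace{T_0T_1 \cdots T_1}}\,E_0\,\underset{e-1}{\underbrace{T_1T_0 \cdots T_0}}$; it then shows $E_1E_0 = \underset{e-1}{\underbrace{T_0T_1 \cdots T_1}}\,E_0$ and collapses $E_1E_0E_1$ using exactly the same absorption identity you invoke (Relation 7, resp.\ 6, with $k=e-2$, odd because $e$ is odd). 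So both arguments hinge on the same key relation; by avoiding Relations 8--9 you pay an extra combinatorial price, and that is where the gap sits.

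The statement you yourself flag as "the main technical obstacle" — that $T_{e-1}T_{e-3}\cdots T_2$ and its reverse, after the boundary $T_1$'s are absorbed by the adjacent $E_1$'s, both reduce to the same word $B=\underset{e-2}{\underbrace{T_0T_1 \cdots T_0}}$ — is precisely what makes Relation 6 applicable, and you only check it for $e=3,5,7$ and defer the general case to an unproven induction; as written the proof is therefore incomplete at its crux. The claim is true and can be closed quickly: by Lemma \ref{LemmaTkEk} one has $T_k=(T_1T_0)^{k-1}T_1$, hence $T_aT_b=(T_1T_0)^{a-b}$ for $a\ge b$, so with $m=(e-1)/2$ the flanking product equals $(T_1T_0)^{m}$ when $m$ is even and $\underset{e}{\underbrace{T_1T_0\cdots T_1}}$ when $m$ is odd; in either case, absorbing the extremal $T_1$'s via Relation 5 leaves exactly $\underset{e-2}{\underbrace{T_0T_1\cdots T_0}}$ on both sides (and the length-$e$ braid relation you mention is not actually needed). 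With that lemma supplied, your final collapse $E_1BE_1BE_1=(E_1BE_1)BE_1=E_1BE_1=E_1$ and the symmetric argument for $E_0E_1E_0$ are correct; the paper's use of Relations 8--9 simply packages this reduction for free and yields a shorter proof.
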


\begin{proof}

Suppose that $e$ is odd. By Relation 8 of Definition \ref{DefinitionBr(e,e,n)eodd}, we have  $\underset{e-1}{\underbrace{T_1T_0 \cdots T_0}}E_1 = E_0 \underset{e-1}{\underbrace{T_1T_0 \cdots T_0}}$. Since $T_0^2 = 1$ and $T_1^2 = 1$, this implies that $E_1 = \underset{e-1}{\underbrace{T_0T_1 \cdots T_1}} E_0 \underset{e-1}{\underbrace{T_1T_0 \cdots T_0}}$. Hence $E_1E_0 = \underset{e-1}{\underbrace{T_0T_1 \cdots T_1}} E_0 \underset{e-1}{\underbrace{T_1T_0 \cdots T_0}}E_0 = \underset{e-1}{\underbrace{T_0T_1 \cdots T_1}} E_0 \underset{e-2}{\underbrace{T_1T_0 \cdots T_1}}E_0$ with $e-2$ odd. By Relation 7 of Definition \ref{DefinitionBr(e,e,n)eodd}, we get $E_1E_0 = \underset{e-1}{\underbrace{T_0T_1 \cdots T_1}} E_0$. It follows that $E_1E_0E_1 =  \underset{e-1}{\underbrace{T_0T_1 \cdots T_1}} E_0 \underset{e-1}{\underbrace{T_0T_1 \cdots T_1}} E_0 \underset{e-1}{\underbrace{T_1T_0 \cdots T_0}}\\ = \underset{e-1}{\underbrace{T_0T_1 \cdots T_1}} E_0 \underset{e-2}{\underbrace{T_1T_0 \cdots T_1}} E_0 \underset{e-1}{\underbrace{T_1T_0 \cdots T_0}} =\\ \underset{e-1}{\underbrace{T_0T_1 \cdots T_1}} E_0 \underset{e-1}{\underbrace{T_1T_0 \cdots T_0}} = E_1$. Similarly, one can prove $E_0E_1E_0 = E_0$.

\end{proof}

\begin{remark}

By Relation 10 of Definition \ref{DefinitionBMWB(e,e,n)eEven}, we have $E_1E_0E_1 = E_0E_1E_0 = 0$ when $e$ is even. That's why in the previous proposition, we only consider the case $e$ odd.

\end{remark}

\begin{proposition}\label{PropAdditionalRelations2}
For $e \geq 3$ and for $0 \leq k \leq e-1$, we have 
\begin{equation}
\begin{aligned}
T_kS_3E_k = F_3T_kS_3 = F_3E_k,\\
S_3T_kF_3 = E_kS_3T_k = E_kF_3,
\end{aligned}
\end{equation}

\begin{equation}
\begin{aligned}
E_kF_3E_k = E_k,\\
F_3E_kF_3 = F_3,
\end{aligned}
\end{equation}

\begin{equation}
\begin{aligned}
T_kF_3E_k = S_3E_k,\\
S_3E_kF_3 = T_kF_3,
\end{aligned}
\end{equation}

\begin{equation}
\begin{aligned}
E_kF_3T_k = E_kS_3,\\
F_3E_kS_3 = F_3T_k,
\end{aligned}
\end{equation}

\begin{equation}\label{LastEquation}
\begin{aligned}
T_kF_3T_k = S_3E_kS_3.
\end{aligned}
\end{equation}

\end{proposition}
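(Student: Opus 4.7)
The plan is to exploit the fact that, for each fixed $k \in \mathbb{Z}/e\mathbb{Z}$, the generators $\{T_k, S_3, \ldots, S_n\} \cup \{E_k, F_3, \ldots, F_n\}$ span a type-$A_{n-1}$ Brauer subalgebra of Br$(e,e,n)$, with $T_k$ playing the role of the node that braids with $S_3$ and commutes with $S_4, \ldots, S_n$. This subalgebra structure is built directly into Definitions \ref{DefinitionBr(e,e,n)eodd} and \ref{DefinitionBr(e,e,n)eeven}, where the relations of Definition \ref{DefinitionBrauerBMW} are imposed for every $k$. Hence the entire proposition will follow from purely type-$A$ Brauer manipulations on the tuple $(T_k, S_3, E_k, F_3)$.

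Before applying these relations freely, I will extend the auxiliary identities $T_k^2 = 1$ and $T_kE_k = E_kT_k = E_k$, which are given only for $k \in \{0,1\}$ by Relations~3 and~5, to every $k \in \mathbb{Z}/e\mathbb{Z}$. Both are straightforward inductions based on the conjugation formulas $T_k = T_{k-1}T_{k-2}T_{k-1}$ and $E_k = T_{k-1}E_{k-2}T_{k-1}$ of Relations~1 and~2; for instance, if $T_{k-1}^2 = T_{k-2}^2 = 1$, then $T_k^2 = T_{k-1}T_{k-2}T_{k-1}^2T_{k-2}T_{k-1} = T_{k-1}T_{k-2}^2T_{k-1} = 1$.

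With these preliminaries, the first block $T_kS_3E_k = F_3T_kS_3 = F_3E_k$ and its mirror $S_3T_kF_3 = E_kS_3T_k = E_kF_3$ are two applications of Relation~5 of Definition \ref{DefinitionBrauerBMW}, namely $S_jS_iF_j = F_iS_jS_i = F_iF_j$: one with $(S_i,S_j,F_i,F_j) = (S_3,T_k,F_3,E_k)$ and one with the pair swapped. The block $E_kF_3E_k = E_k$ and $F_3E_kF_3 = F_3$ follows by substituting $E_kF_3 = E_kS_3T_k$ into $E_kF_3E_k$ and using $T_kE_k = E_k$ to collapse to $E_kS_3E_k$, which equals $E_k$ by Relation~4 of Definition \ref{DefinitionBrauerBMW}. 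The two middle blocks reduce to the first via $T_k^2 = 1$ or $S_3^2 = 1$: for example, $T_kF_3E_k = T_k(T_kS_3E_k) = S_3E_k$, and similarly for the other three identities.

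The remaining identity $T_kF_3T_k = S_3E_kS_3$ is the only one not obtained by a one-step manipulation. The plan is to left-multiply $E_kS_3T_k = E_kF_3$ by $S_3$, combine with $S_3E_kF_3 = T_kF_3$ from the preceding block to get $T_kF_3 = S_3E_kS_3T_k$, and then right-multiply by $T_k$ and use $T_k^2 = 1$. There is no serious obstacle to this argument; the only point requiring attention is confirming the auxiliary identities $T_k^2 = 1$ and $T_kE_k = E_kT_k = E_k$ for all $k$, after which every statement of the proposition becomes a one- or two-line manipulation within the type-$A_{n-1}$ Brauer subalgebra attached to $k$.
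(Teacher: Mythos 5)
Your argument is correct and is essentially the paper's own proof: both reduce everything to the type-$A_{n-1}$ Brauer relations attached to the pair $(T_k,S_3)$ — the paper's starting point $F_3 = T_kS_3E_kS_3T_k$ is exactly your instance of Relation 5 of Definition \ref{DefinitionBrauerBMW}, and the subsequent manipulations with $T_k^2=1$, $T_kE_k=E_k$ and $E_kS_3E_k=E_k$ coincide with yours step for step, including the derivation of $T_kF_3T_k=S_3E_kS_3$. Your preliminary induction extending $T_k^2=1$ and $T_kE_k=E_kT_k=E_k$ to all $k$ is harmless but not needed, since Definitions \ref{DefinitionBr(e,e,n)eodd} and \ref{DefinitionBr(e,e,n)eeven} already impose the full type-$A_{n-1}$ Brauer relations for every $0\le k\le e-1$.
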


\begin{proof}

\emph{Equation (5.2):} We have $F_3 = T_kS_3E_kS_3^{-1}T_k^{-1} = T_kS_3E_kS_3T_k$. Thus, $F_3 E_k =  T_kS_3E_kS_3T_kE_k = T_kS_3E_kS_3E_k = T_kS_3E_k$.

\emph{Equation (5.3):} By Equation (5.2), we have $E_kF_3 = E_kS_3T_k$. Hence $E_kF_3E_k = E_kS_3T_kE_k = E_kS_3E_k = E_k$. Similarly, one can prove $F_3E_kF_3 = F_3$.

\emph{Equation (5.4):} We have $T_kF_3E_k = T_kT_kS_3E_k = T_k^2 S_3E_k = S_3E_k$. Similarly, one can prove $S_3E_kF_3 = T_kF_3$.

\emph{Equation (5.5):} We have $E_kF_3T_k = E_kS_3T_kT_k = E_kS_3$. Similarly, we prove $F_3E_kS_3 = F_3T_k$.

\emph{Equation (5.6):} We have $F_3 = T_kS_3E_kS_3T_k$ by Equation (5.2). Hence $T_kF_3T_k = T_k^2 S_3E_kS_3T_k^2 = S_3E_kS_3$.

\end{proof}

From now on until the end of the next section, we set the convention that an expression of the form $\underset{-1}{\underbrace{T_1T_0 \cdots T_1}}$ is equal to $T_0$. Then, an expression of the form $\underset{-1}{\underbrace{T_1T_0 \cdots T_1}}\ E_0\ \underset{-1}{\underbrace{T_1T_0 \cdots T_1}}$ is equal to $E_0$.

In Br$(e,e,3)$, for $0 \leq k \leq e-1$, one can express $T_k$ in terms of $T_0$ and $T_1$ and $E_k$ in terms of $T_0, T_1, E_0$, and $E_1$. We have the following.

\begin{lemma}\label{LemmaTkEk}

For $0 \leq k \leq e-1$, we have
\begin{itemize}
\item $T_k = \underset{2k-1}{\underbrace{T_1T_0 \cdots T_1}} = \left\{
    \begin{array}{ll}
     \underset{k-1}{\underbrace{T_1T_0 \cdots T_0}}T_1\underset{k-1}{\underbrace{T_0T_1 \cdots T_1}}  & if\ k\ is\ odd, \\
     \underset{k-1}{\underbrace{T_1T_0 \cdots T_1}}T_0\underset{k-1}{\underbrace{T_1T_0 \cdots T_1}}  & if\ k\ is\ even.
    \end{array}
\right.$
\item $E_k = \left\{
    \begin{array}{ll}
     \underset{k-1}{\underbrace{T_1T_0 \cdots T_0}}E_1\underset{k-1}{\underbrace{T_0T_1 \cdots T_1}}  & if\ k\ is\ odd, \\
     \underset{k-1}{\underbrace{T_1T_0 \cdots T_1}}E_0\underset{k-1}{\underbrace{T_1T_0 \cdots T_1}}  & if\ k\ is\ even.
    \end{array}
\right.$
\end{itemize}

\end{lemma}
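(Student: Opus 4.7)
The approach is induction on $k \geq 0$, invoking the defining relations $T_k = T_{k-1}T_{k-2}T_{k-1}$ and $E_k = T_{k-1}E_{k-2}T_{k-1}$ together with the involution relations $T_0^2 = T_1^2 = 1$ and the normalizations $T_iE_i = E_iT_i = E_i$ for $i \in \{0,1\}$. The base cases $k=0,1$ are immediate from the stated convention $\underset{-1}{\underbrace{T_1T_0\cdots T_1}} = T_0$.

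The first step is to introduce $\omega := T_1T_0$ and prove the closed form $T_k = \omega^{k-1}T_1$. The key identity is $T_1\omega T_1 = T_1(T_1T_0)T_1 = T_0T_1 = \omega^{-1}$ (using $T_1^2 = 1$), which generalizes to $T_1\omega^n T_1 = \omega^{-n}$ for all integers $n$. Assuming inductively $T_{k-1} = \omega^{k-2}T_1$ and $T_{k-2} = \omega^{k-3}T_1$, one computes $T_k = \omega^{k-2}T_1 \cdot \omega^{k-3}T_1 \cdot \omega^{k-2}T_1 = \omega^{k-2}\omega^{-(k-3)}\omega^{k-2}T_1 = \omega^{k-1}T_1$. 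Expanding $(T_1T_0)^{k-1}T_1$ as an alternating word yields $\underset{2k-1}{\underbrace{T_1T_0\cdots T_1}}$, and the two palindromic forms of the lemma follow by grouping letters symmetrically around the central generator ($T_1$ when $k$ is odd, $T_0$ when $k$ is even).

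For $E_k$, a parallel induction establishes the closed form $E_k = \omega^{\lfloor k/2 \rfloor}\, E_{k \bmod 2}\, \omega^{-\lfloor k/2 \rfloor}$. Writing $T_{k-1} = \omega^{k-2}T_1$ on the left and $T_{k-1} = T_1\omega^{-(k-2)}$ on the right (the second expression being the inverse of the first, which coincides with $T_{k-1}$ thanks to $T_1\omega^{k-2}T_1 = \omega^{-(k-2)}$), the inductive step reduces via the base identifications $T_1E_1T_1 = E_1$ (odd case) and $T_1E_0T_1 = E_2 = \omega E_0\omega^{-1}$ (even case). The innermost flanking generators are then absorbed using $T_iE_i = E_iT_i = E_i$, so that for instance $(T_1T_0)^{k/2}E_0(T_0T_1)^{k/2}$ collapses to $\underset{k-1}{\underbrace{T_1T_0\cdots T_1}}\, E_0\, \underset{k-1}{\underbrace{T_1T_0\cdots T_1}}$ in the even case, and analogously in the odd case.

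The main obstacle lies in the parity bookkeeping: the alternating products $(T_1T_0)^m$ and $(T_0T_1)^m$ have different endpoint behavior depending on the parity of $k$, and translating between the compact form $\omega^a E_i \omega^{-a}$ and the palindromic expression in the lemma requires correctly identifying which terminal generator gets absorbed by the $T_iE_i = E_i$ relation. Once the closed forms for $T_k$ and $E_k$ in terms of $\omega$ are in hand, however, both cases of the lemma reduce to a routine regrouping of alternating words of the correct length and parity.
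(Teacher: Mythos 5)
Your proof is correct and follows essentially the same route as the paper: a two-step induction on $k$ using the recursions $T_k=T_{k-1}T_{k-2}T_{k-1}$ and $E_k=T_{k-1}E_{k-2}T_{k-1}$, the involutions $T_0^2=T_1^2=1$, and the absorption rules $T_iE_i=E_iT_i=E_i$. The only difference is presentational: you encode the alternating-word cancellations through the element $\omega=T_1T_0$ and the closed forms $T_k=\omega^{k-1}T_1$, $E_k=\omega^{\lfloor k/2\rfloor}E_{k\bmod 2}\omega^{-\lfloor k/2\rfloor}$, whereas the paper cancels the palindromic alternating words directly and absorbs the terminal $T_0$ or $T_1$ at the end.
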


\begin{proof}
For $k = 0$, we have $\underset{-1}{\underbrace{T_1T_0 \cdots T_1}} = T_0$ by the convention. Let $1 \leq k \leq e-1$. For $k = 1$, we have $T_1 = \underset{2k-1}{\underbrace{T_1T_0 \cdots T_1}}$. From Relation 1 of Definitions \ref{DefinitionBr(e,e,n)eodd} and \ref{DefinitionBr(e,e,n)eeven}, we get $T_2 = T_1 T_0 T_1$. We also have $T_3 = T_2 T_1 T_2$. After replacing $T_2$ by $T_1T_0T_1$, we get $T_3 = T_1T_0T_1 T_1 T_1T_0T_1$. Using that $T_1^2 = 1$, we get $T_3 = T_1T_0T_1T_0T_1$. Let $k > 3$. Inductively, we get $T_{k+1} = T_kt_{k-1}T_k = \underset{2k-1}{\underbrace{T_1T_0 \cdots T_1}}\ \underset{2(k-1)-1}{\underbrace{T_1T_0 \cdots T_1}}\ \underset{2k-1}{\underbrace{T_1T_0 \cdots T_1}} = T_1T_0 \underset{2k-1}{\underbrace{T_1T_0 \cdots T_1}} = \underset{2k+1}{\underbrace{T_1T_0 \cdots T_1}}$. Hence for all $1 \leq k \leq e-1$, we have $T_k = \underset{2k-1}{\underbrace{T_1T_0 \cdots T_1}}$. It is clear that this expression is equal to $\underset{k-1}{\underbrace{T_1T_0 \cdots T_0}}T_1\underset{k-1}{\underbrace{T_0T_1 \cdots T_1}}$ if k is odd and $\underset{k-1}{\underbrace{T_1T_0 \cdots T_1}}T_0\underset{k-1}{\underbrace{T_1T_0 \cdots T_1}}$ if k is even.\\

Let us prove the expression for $E_k$ for $0 \leq k \leq e-1$. For $k = 0$, it corresponds to the convention we have set. The expression is obvious for $k = 1$. From Relation 2 of Definitions \ref{DefinitionBr(e,e,n)eodd} and \ref{DefinitionBr(e,e,n)eeven}, we have $E_2 = T_1E_0T_1$. We also have $E_3 = T_2 E_1 T_2$. After replacing $T_2$ by $T_1T_0T_1$, we get $E_3 = T_1T_0T_1 E_1 T_1T_0T_1$. Using the fact that $T_1E_1 = E_1T_1 = E_1$, we finally get $E_3 = T_1T_0 E_1 T_0T_1$. Let $k > 3$ and $k$ odd. Inductively, we get $E_{k+1} = T_kE_{k-1}T_k =$\\
$\begin{array}{l}
\underset{2k-1}{\underbrace{T_1T_0 \cdots T_1}}\ \underset{k-2}{\underbrace{T_1T_0 \cdots T_1}}\ E_0\ \underset{k-2}{\underbrace{T_1T_0 \cdots T_1}}\ \underset{2k-1}{\underbrace{T_1T_0 \cdots T_1}} =\\
\underset{k+1}{\underbrace{T_1T_0 \cdots T_0}}\ E_0\ \underset{k+1}{\underbrace{T_0T_1 \cdots T_1}} =\\
\underset{k}{\underbrace{T_1T_0 \cdots T_1}}\ E_0\ \underset{k}{\underbrace{T_1T_0 \cdots T_1}}.
\end{array}$\\
Similarly, if $k$ is even, we get $E_{k+1} = \underset{k}{\underbrace{T_1T_0 \cdots T_0}}\ E_1\ \underset{k}{\underbrace{T_0T_1 \cdots T_1}}$.

\end{proof}

In the following lemma, we extend the result of the previous lemma to all $k \in \mathbb{N}$.

\begin{lemma}\label{LemmaExtendTkEk}

Let $k \in \mathbb{N}$. In \emph{Br}$(e,e,3)$, we have
\begin{itemize}
\item $T_k = \underset{2k-1}{\underbrace{T_1T_0 \cdots T_1}}$ and
\item $E_k = \left\{
    \begin{array}{ll}
     \underset{k-1}{\underbrace{T_1T_0 \cdots T_0}}E_1\underset{k-1}{\underbrace{T_0T_1 \cdots T_1}}  & if\ k\ is\ odd, \\
     \underset{k-1}{\underbrace{T_1T_0 \cdots T_1}}E_0\underset{k-1}{\underbrace{T_1T_0 \cdots T_1}}  & if\ k\ is\ even.
    \end{array}
\right.$
\end{itemize}

\end{lemma}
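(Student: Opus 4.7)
My plan is a strong induction on $k$. The base cases $0 \leq k \leq e-1$ are exactly Lemma \ref{LemmaTkEk} (the value $k=0$ reading off the convention $\underset{-1}{\underbrace{T_1T_0\cdots T_1}}=T_0$). For the inductive step with $k \geq e$, I would invoke Relations 1 and 2 of Definitions \ref{DefinitionBr(e,e,n)eodd} and \ref{DefinitionBr(e,e,n)eeven}, which are stated uniformly for all $i \in \mathbb{Z}/e\mathbb{Z}$, in the form
\begin{equation*}
T_k = T_{k-1}T_{k-2}T_{k-1}, \qquad E_k = T_{k-1}E_{k-2}T_{k-1}.
\end{equation*}
Applying the induction hypothesis to $T_{k-1}$, $T_{k-2}$, and $E_{k-2}$ expresses $T_k$ as a product of three alternating blocks in $T_0, T_1$ of lengths $2k-3$, $2k-5$, $2k-3$, and $E_k$ as the analogous five-block product with a single $E_0$ or $E_1$ sandwiched in the centre. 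Since $k$ and $k-2$ have the same parity, the induction hypothesis for $E_{k-2}$ already delivers the central generator $E_?$ with the parity demanded by the statement for $E_k$.

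Everything then collapses by systematic cancellation using $T_0^2 = T_1^2 = 1$. Because each block alternates $T_1, T_0, T_1, \ldots$, the two generators meeting at every internal junction coincide, so a cancellation is always available; removing a pair re-exposes a new coinciding pair at the next junction, and the process iterates until a flanking block is exhausted. For $T_k$, the middle block of length $2k-5$ is entirely consumed by the first block, leaving its two residual leading letters $T_1T_0$ in front of the untouched third block and producing $\underset{2k-1}{\underbrace{T_1T_0\cdots T_1}}$. For $E_k$, the two middle blocks of length $k-3$ flanking $E_?$ are symmetrically absorbed into the outer blocks of length $2k-3$, shrinking them to length $k$ and giving $\underset{k}{\underbrace{T_1T_0\cdots}} \cdot E_? \cdot \underset{k}{\underbrace{\cdots T_0T_1}}$. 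A final application of Relation 5, namely $T_iE_i = E_iT_i = E_i$ for $i=0,1$, peels the extremal generator adjacent to $E_?$ on each side into the central $E_?$, leaving precisely the claimed formula with $k-1$ letters on either side.

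The only delicate point is the parity bookkeeping: at each stage one must verify which of $T_0, T_1$ sits at the current junction and, at the final step, check that the extremal generator absorbed into $E_?$ carries the matching subscript. Both verifications are entirely controlled by the residual block lengths modulo $2$ and follow mechanically from the alternating pattern of the words. The argument is uniform in the parity of $e$ because Relations 1, 2, and 5 appear in identical form in both Definitions \ref{DefinitionBr(e,e,n)eodd} and \ref{DefinitionBr(e,e,n)eeven}, and I do not anticipate any substantial obstacle beyond this combinatorial housekeeping.
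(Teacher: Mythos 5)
Your argument is correct, but it is a genuinely different route from the paper's. The paper proves Lemma \ref{LemmaExtendTkEk} by writing $k = k' + xe$ with $0 \leq k' \leq e-1$, noting $T_k = T_{k'}$, and then collapsing the excess part of the long word: for $T_k$ the extra block of length $2xe$ is killed using the length-$e$ braid relation, and for $E_k$ the extra blocks of length $xe$ are absorbed using Relations 8 and 9 (the ``wrap'' relations linking $E_0$ and $E_1$ across $\underset{e-1}{\underbrace{T_1T_0\cdots T_0}}$) together with $T_iE_i = E_iT_i = E_i$, with separate case analyses for $e$ odd/even and $x$ even/odd. You instead continue the recursion of Lemma \ref{LemmaTkEk} past $k = e-1$, using $T_k = T_{k-1}T_{k-2}T_{k-1}$ and $E_k = T_{k-1}E_{k-2}T_{k-1}$ at every index; this works because Relations 1 and 2 of Definitions \ref{DefinitionBr(e,e,n)eodd} and \ref{DefinitionBr(e,e,n)eeven} are indeed imposed for \emph{all} $i \in \mathbb{Z}/e\mathbb{Z}$, including the wrap-around instances $i \equiv 0,1$, and your cancellation and parity bookkeeping (ending with the absorption $T_1E_1 = E_1$, $T_0E_0 = E_0$) checks out in both parities of $k$. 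What your approach buys is brevity and uniformity: no braid relation, no Relations 6--9, no case split on the parity of $e$ or of $x$. What the paper's approach buys is robustness: it only uses Relations 1--2 at the ``generic'' indices (as in the BMW definition, where $i \neq 0,1$ is excluded) and recovers the mod-$e$ periodicity from the braid relation and Relations 8--9, so it would survive a more conservative reading of the presentation, whereas your proof leans on the wrap-around relations being defining. Two small points you should make explicit: for $k \geq e$ the symbols $T_k, E_k$ are to be read with indices reduced mod $e$ (the paper states $T_k = T_{k'+xe} = T_{k'}$ up front), and the base of your induction should include all of $0 \leq k \leq e-1$ so that the step $k \geq e \geq 3$ always has $k-3 \geq 0$ and both $k-1, k-2$ covered.
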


\begin{proof}

Let $k \in \mathbb{N}$. We have $k = k'+xe$ for $0 \leq k' \leq e-1$ and $x \in \mathbb{N}$. Let $k' \neq 0$.\\
In Br$(e,e,3)$, we have $T_k = T_{k'+xe} = T_{k'}$. We show that $T_{k'} = \underset{2k-1}{\underbrace{T_1T_0 \cdots T_1}}$. Actually, $\underset{2k-1}{\underbrace{T_1T_0 \cdots T_1}} = \underset{2(k'+xe)-1}{\underbrace{T_1T_0 \cdots T_1}} = \underset{2k'-1}{\underbrace{T_1T_0 \cdots T_1}}\ \underset{2xe}{\underbrace{T_0T_1 \cdots T_0}}$. It is clear that $\underset{2xe}{\underbrace{T_0T_1 \cdots T_0}}$ is equal to $1$. Hence $\underset{2k-1}{\underbrace{T_1T_0 \cdots T_1}} = \underset{2k'-1}{\underbrace{T_1T_0 \cdots T_1}} = T_{k'}$. It follows that for all $k \in \mathbb{N}$, we have $T_k = \underset{2k-1}{\underbrace{T_1T_0 \cdots T_1}}$. For $k' = 0$, it is easy to check that $T_k = \underset{2xe-1}{\underbrace{T_1T_0 \cdots T_1}} = T_0$.

Now, we prove the second item of the lemma. Assume that $e$ is odd. We have to distinguish two different cases: $k$ odd or $k$ even. We provide the proof for the case $k$ odd. The proof when $k$ is even is similar and left to the reader. Suppose $k$ is odd with $k = k' +xe$ for $0 \leq k' \leq e-1$ and $x \in \mathbb{N}$. We prove that $E_k = E_{k'} = \underset{k-1}{\underbrace{T_1T_0 \cdots T_0}}E_1\underset{k-1}{\underbrace{T_0T_1 \cdots T_1}}$.

First consider $x$ even. Then $k'$ is odd. We have
$$\begin{array}{l}
\underset{k-1}{\underbrace{T_1T_0 \cdots T_0}}E_1\underset{k-1}{\underbrace{T_0T_1 \cdots T_1}} = 
\underset{k'+xe-1}{\underbrace{T_1T_0 \cdots T_0}}E_1\underset{k'+xe-1}{\underbrace{T_0T_1 \cdots T_1}} = \\
\underset{k'-1}{\underbrace{T_1T_0 \cdots T_0}}\ \underset{xe}{\underbrace{T_1T_0 \cdots T_0}}\ E_1\ \underset{xe}{\underbrace{T_0T_1 \cdots T_1}}\ \underset{k'-1}{\underbrace{T_0T_1 \cdots T_1}}.
\end{array}$$
By Relation 8 of Definition \ref{DefinitionBr(e,e,n)eodd}, we have $\underset{e}{\underbrace{T_0T_1 \cdots T_0}}\ E_1 = T_0E_0\ \underset{e-1}{\underbrace{T_1T_0 \cdots T_0}}$. Replacing $T_0E_0$ by $E_0$, we get $\underset{e}{\underbrace{T_0T_1 \cdots T_0}}\ E_1 = E_0\ \underset{e-1}{\underbrace{T_1T_0 \cdots T_0}}$. Similarly, we have $\underset{e}{\underbrace{T_1T_0 \cdots T_1}}\ E_0 = T_1\ \underset{e-1}{\underbrace{T_0T_1 \cdots T_1}}\ E_0 = T_1E_1\ \underset{e-1}{\underbrace{T_0T_1 \cdots T_1}} = E_1\ \underset{e-1}{\underbrace{T_0T_1 \cdots T_1}}$. Thus, we get $\underset{2e}{\underbrace{T_1T_0 \cdots T_0}}\ E_1 = \underset{e}{\underbrace{T_1T_0 \cdots T_1}}\ \underset{e}{\underbrace{T_0T_1 \cdots T_0}}\ E_1 = \underset{e}{\underbrace{T_1T_0 \cdots T_1}}\ E_0\ \underset{e-1}{\underbrace{T_1T_0 \cdots T_0}} = E_1\ \underset{e-1}{\underbrace{T_0T_1 \cdots T_1}}\ \underset{e-1}{\underbrace{T_1T_0 \cdots T_0}} = E_1$. Since $x$ is even, it follows that $\underset{xe}{\underbrace{T_1T_0 \cdots T_0}}\ E_1 = E_1$. Similarly, one can check that $E_1\ \underset{xe}{\underbrace{T_0T_1 \cdots T_1}}= E_1$. It follows that
$E_k = \underset{k-1}{\underbrace{T_1T_0 \cdots T_0}}E_1\underset{k-1}{\underbrace{T_0T_1 \cdots T_1}} = \underset{k'-1}{\underbrace{T_1T_0 \cdots T_0}}\ E_1\ \underset{k'-1}{\underbrace{T_0T_1 \cdots T_1}} = E_{k'}$.

Consider $x$ odd. Then $k'$ is even. Using the same technique, we also get $E_k = \underset{k-1}{\underbrace{T_1T_0 \cdots T_0}}E_1\underset{k-1}{\underbrace{T_0T_1 \cdots T_1}} = E_{k'}$.\\

Assume that $e$ is even. As for the case $e$ odd, we have to distinguish two different cases: $k$ odd or $k$ even. We provide the proof for the case $k$ odd. The proof when $k$ is even is similar and left to the reader. Suppose $k$ is odd with $k = k' +xe$ for $0 \leq k' \leq e-1$ and $x \in \mathbb{N}$. We prove that $E_k = E_{k'} = \underset{k-1}{\underbrace{T_1T_0 \cdots T_0}}E_1\underset{k-1}{\underbrace{T_0T_1 \cdots T_1}}$. Since $e$ is even and $k$ is assumed to be odd, we have $k'$ is odd for all $x \in \mathbb{N}$. We have
$$\begin{array}{l}
\underset{k-1}{\underbrace{T_1T_0 \cdots T_0}}E_1\underset{k-1}{\underbrace{T_0T_1 \cdots T_1}} = 
\underset{k'+xe-1}{\underbrace{T_1T_0 \cdots T_0}}E_1\underset{k'+xe-1}{\underbrace{T_0T_1 \cdots T_1}} = \\
\underset{k'-1}{\underbrace{T_1T_0 \cdots T_0}}\ \underset{xe}{\underbrace{T_1T_0 \cdots T_0}}\ E_1\ \underset{xe}{\underbrace{T_0T_1 \cdots T_1}}\ \underset{k'-1}{\underbrace{T_0T_1 \cdots T_1}}.
\end{array}$$
We have $\underset{e}{\underbrace{T_1T_0 \cdots T_0}}\ E_1 = \underset{e}{\underbrace{T_0T_1 \cdots T_1}}\ E_1 = \underset{e-1}{\underbrace{T_0T_1 \cdots T_0}}\ E_1$ that is equal to $E_1$ by Relation 9 of Definition \ref{DefinitionBr(e,e,n)eeven}. Similarly, one gets $\underset{xe}{\underbrace{T_1T_0 \cdots T_0}}\ E_1 = E_1$ and $E_1\ \underset{xe}{\underbrace{T_0T_1 \cdots T_1}} = E_1$. It follows that $E_k = \underset{k-1}{\underbrace{T_1T_0 \cdots T_0}}E_1\underset{k-1}{\underbrace{T_0T_1 \cdots T_1}}  = \underset{k'-1}{\underbrace{T_1T_0 \cdots T_0}}\ E_1\ \underset{k'-1}{\underbrace{T_0T_1 \cdots T_1}} = E_{k'}$.

\end{proof}

In our study of the BMW and Brauer algebras, we restrict from now on to the case $n = 3$ so that the computations (by hand or by using a computer) are feasible.\\

\section{Isomorphism with the Brauer-Chen algebra}

Recall that Chen defined in \cite{ChenFlatConnectionsBrauerAlgebras} a Brauer algebra for all complex reflection groups and found a canonical presentation for this algebra in the case of Coxeter groups and complex reflection groups of type $G(d,1,n)$. For the symmetric group, it coincides with the classical Brauer algebra, see \cite{Brauer}. For simply laced ($m_{st} \in \{1,2,3\}$ in Definition \ref{DefinitionCoxeterGroups}) and finite Coxeter groups, it coincides with the definition of the simply laced Brauer algebras described by Cohen, Frenk, and Wales in \cite{CohenFrenkWales}. For the case of $G(d,1,n)$, the cyclotomic Brauer algebra introduced by Häring-Oldenburg in \cite{HaringOldenburg} appears as a component of Chen's algebra.\\

We start by recalling in Definition \ref{DefinitionChenBrauer(e,e,3)} the Brauer-Chen algebra $\mathcal{B}(e,e,3)$ associated to the complex reflection group $G(e,e,3)$ for all $e \geq 1$. Next, we prove that our definition of the Brauer algebra Br$(e,e,n)$, see Definition \ref{DefinitionBr(e,e,n)eodd}, coincides with the definition of the Brauer-Chen algebra when $n = 3$ and $e$ odd. Thus, it provides a canonical presentation for $\mathcal{B}(e,e,3)$ when $e$ is odd. At the end of this section, we ask the question whether Br$(e,e,n)$ is isomorphic to the Brauer-Chen algebra $\mathcal{B}(e,e,n)$ for all $e$ and $n$.\\

Denote by $s_{i,j;k}$ the reflection of $G(e,e,3)$ of hyperplane $H_{i,j;k}$: $z_i = \zeta_e^{-k} z_j$ for $1 \leq i < j \leq n$ and $0 \leq k \leq e-1$. We denote $s_{i,j;k}$ by $s$ and $s_{i',j';k'}$ by $s'$ when there is no confusion. The corresponding hyperplanes can also be denoted by $H_s$ and $H_{s'}$, respectively. Let $\mathcal{R}$ be the set of reflections of $G(e,e,3)$.

\begin{definition}\label{DefinitionNonCrossingEdge}

Let $s_{i,j;k}$ and $s_{i',j';k'}$ be two reflections of $G(e,e,3)$ and $L$ the codimension $2$ edge  $H_{i,j;k} \cap H_{i',j';k'}$. We say that $L$ is a crossing edge if\\ $\{ s \in \mathcal{R}\ |\ L \subset H_s \} = \{ s_{i,j;k},s_{i',j';k'} \}$. Otherwise, $L$ is called a noncrossing edge.

\end{definition}

\begin{proposition}\label{PropositionNoCrossingEdges}

Let $s_{i,j;k}$ and $s_{i',j';k'}$ be two different reflections, we have\\ \mbox{$H_{i,j;k} \cap H_{i',j';k'}$} is a noncrossing edge. Hence there is no crossing edges in $G(e,e,3)$.

\end{proposition}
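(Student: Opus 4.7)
The plan is a short case analysis on the intersection of the two index pairs. Since $G(e,e,3)$ acts on $\mathbb{C}^3$, both $\{i,j\}$ and $\{i',j'\}$ are $2$-subsets of $\{1,2,3\}$, so they must share at least one element. Hence only two cases arise, and I want to exhibit in each case a third reflection $s_{i'',j'';k''}$ with $L = H_{i,j;k} \cap H_{i',j';k'} \subset H_{i'',j'';k''}$.

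First I would treat the case $\{i,j\} = \{i',j'\}$, so $k \neq k'$. Combining the two equations $z_i = \zeta_e^{-k} z_j$ and $z_i = \zeta_e^{-k'} z_j$ with $\zeta_e^{-k} \neq \zeta_e^{-k'}$ forces $z_i = z_j = 0$. But then for every $k'' \in \{0, 1, \dots, e-1\}$ the hyperplane $H_{i,j;k''}$ trivially contains $L$. This gives $e$ reflections whose hyperplanes contain $L$, so in particular a third one; hence $L$ is noncrossing.

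Next I would treat the case where the two index pairs share exactly one element. Up to relabelling, take $i = i'$ and $j \neq j'$. Combining $z_i = \zeta_e^{-k} z_j$ with $z_i = \zeta_e^{-k'} z_{j'}$ gives the relation $z_{j'} = \zeta_e^{k'-k} z_j$. After reordering $j, j'$ into the standard order and adjusting the exponent of $\zeta_e$ accordingly, this is precisely the defining equation of a reflecting hyperplane associated to the pair $\{j, j'\}$. This hyperplane is distinct from $H_{i,j;k}$ and $H_{i',j';k'}$ since it is built from a different pair of coordinates, and by construction it contains $L$. Hence $L$ is noncrossing.

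There is no real obstacle here beyond careful bookkeeping with the convention $i < j$ on the indices of $s_{i,j;k}$ and the corresponding sign on the exponent of $\zeta_e$. The argument is pure linear algebra in $\mathbb{C}^3$, exploiting the fact that with only three coordinates available the two cases above are exhaustive.
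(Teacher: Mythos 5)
Your proof is correct and follows essentially the same route as the paper's: in every case you exhibit a third reflecting hyperplane containing the codimension-$2$ intersection, which is exactly what the paper does (organized there by the values of $i,i'$ rather than by how the index pairs overlap, with the shared-index case yielding containments such as $H_{1,2;k} \cap H_{1,3;k'} \subset H_{2,3;k'-k}$). The only caveat, shared with the paper's ``easy to check'' treatment of the equal-pair case, is that producing a third hyperplane $H_{i,j;k''}$ with $k'' \notin \{k,k'\}$ implicitly requires $e \geq 3$.
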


\begin{proof}

Let $s_{i,j;k}$ and $s_{i',j';k'}$ be two different reflections. If $i = i'$ and $j = j'$, it is easy to check that $H_{i,j;k} \cap H_{i',j';k'}$ is a noncrossing edge. Suppose $i \neq i'$.\\
\emph{Case 1}: $i = 1$ and $i' = 2$. Note that $i'$ cannot be equal to $3$ since $i' < j'$. We have two possibilities depending on the values of $j$ and $j'$:
\begin{itemize}
\item $s_{1,2;k}$ and $s_{2,3;k'}$,
\item $s_{1,3;k}$ and $s_{2,3;k'}$, for all $0 \leq k,k' \leq e-1$.
\end{itemize}
For the first case, we have $H_{1,2;k} \cap H_{1,3;k'} \subset H_{2,3;k'-k}$. Thus, $H_{1,2;k} \cap H_{1,3;k'}$ is a noncrossing edge. For the second case, we have $H_{1,3;k} \cap H_{2,3;k'} \subset H_{1,2;k-k'}$. Thus, $H_{1,3;k} \cap H_{2,3;k'}$ is a noncrossing edge.\\
\emph{Case 2}: $i = 2$ and $i' = 1$. This is done in \emph{Case 1}.\\
Since $i < j$, we note that $i \neq 3$ and then all the cases are done. Hence there is no crossing edges in $G(e,e,3)$. 

\end{proof}

Let $s,s'$ be two different reflections of $G(e,e,3)$. We define $$R(s,s') = \{ s'' \in \mathcal{R}\ |\ s'' s s'' = s' \}$$ to be the set of reflections that conjugate $s$ into $s'$. We are able to provide the definition of Chen of the Brauer algebra that we denote by $\mathcal{B}(e,e,3)$,\mbox{ see Definition 1.1 of \cite{ChenFlatConnectionsBrauerAlgebras}.}

\begin{definition}\label{DefinitionChenBrauer(e,e,3)}

The Brauer-Chen algebra $\mathcal{B}(e,e,3)$ is the unitary associative $\mathbb{Q}(x)$-algebra generated by $\{T_w\ |\ w \in G(e,e,3)\} \cup \{e_{i,j;k}\ |\ 1 \leq i < j \leq n, 0 \leq k \leq e-1\}$ with the relations:
\begin{enumerate}
\item $T_{w}T_{w'} = T_{w''}$ if $ww'=w''$ for $w$, $w'$, $w'' \in G(e,e,3)$,
\item $T_{s} e_{i,j;k} = e_{i,j;k} T_{s} = e_{i,j;k}$, where $s = s_{i,j;k}$,
\item ${(e_{i,j;k})}^{2} = x e_{i,j;k}$,
\item $T_{w}e_{i,j;k} = e_{i',j';k'} T_{w}$ if $w s w^{-1} = s'$, where $s = s_{i,j;k}$, $s' = s_{i',j';k'}$, $\{s , s'\} \subset \mathcal{R}$,
\item \mbox{$e_{i,j;k} e_{i',j';k'} = T_r e_{i',j';k'} = e_{i,j;k} T_r$,}\linebreak
for $r \in R(s,s')$, where $s = s_{i,j;k}$, $s' = s_{i',j';k'}$, and $\{s , s'\} \subset \mathcal{R}$,
\item $e_{i,j;k}e_{i',j';k'} = 0$ when $R(s,s') = \emptyset$ with $s = s_{i,j;k}$ and $s' = s_{i',j';k'}$.

\end{enumerate}

\noindent When there is no confusion,  one can replace $T_w$ by $w$ for $w$ in $G(e,e,3)$.

\end{definition}

From now on, we restrict to the case when $e$ is odd. Our aim is to prove that the Brauer algebra Br$(e,e,3)$ is isomorphic to the Brauer-Chen algebra $\mathcal{B}(e,e,3)$ for $e$ odd.\\

We start by explicitly writing all the cases of Relations $4$, $5$, and $6$ of Definition \ref{DefinitionChenBrauer(e,e,3)}. For this purpose, we distinguish 6 cases depending on the choice of $s$ and $s'$ in $\mathcal{R}$ and determine for each case all the elements $w \in G(e,e,3)$ such that $w s w^{-1} = s'$.\\

Every element $w \in G(e,e,3)$ will be written as a product $x_1x_2 \cdots x_r$, $x_i \in \{t_0,t_1, \cdots, t_{e-1},s_3\}$ with $R\!E(w) = \mathbf{x}_1\mathbf{x}_2 \cdots \mathbf{x}_r$, where $R\!E(w)$ is the reduced word representative for $w$ over the generating set $\mathbf{X}$ of the presentation of Corran and Picantin of $G(e,e,3)$ introduced and studied in Chapter 2.\\

Note that every time we have $w s w^{-1} = s'$ for $s,s' \in \mathcal{R}$ and $w \in G(e,e,3)$, we get $w^{-1} s' w = s$. Hence every time we have a relation $T_{w}e_{i,j;k} = e_{i',j';k'} T_{w}$, we also have $T_{w^{-1}} e_{i',j';k'} = e_{i,j;k}T_{w^{-1}}$. If $w:=r \in \mathcal{R}$, every time we have a relation $e_{i,j;k} e_{i',j';k'} = T_r e_{i',j';k'} = e_{i,j;k} T_r$, we also have $e_{i',j';k'} e_{i,j;k} = T_r e_{i,j;k} = e_{i',j';k'} T_r$.

\subsection*{Case 1: $s = s_{1,2;k}$ and $s' = s_{1,2;k'}$}

Let $w = \begin{pmatrix}
c_1 & c_2 & c_3\\
c_4 & c_5 & c_6\\
c_7 & c_8 & c_9
\end{pmatrix}$ be an element of $G(e,e,3)$. We determine the solutions of the equation $w s_{1,2;k} w^{-1} = s_{1,2;k'}$. It is easy to check that we get $c_9 \neq 0$. Thus, we necessarily have $c_3 = c_6 = c_7 = c_8 = 0$ since $w$ is monomial. Set $c_9 = \zeta_e^{l}$ with $0 \leq l \leq e-1$. We have two different cases.\\

The first case is when $c_2$ is nonzero and $c_4 = \zeta_e^{k+k'} c_2$ for which case $w = \begin{pmatrix}
0 & c_2 & 0\\
\zeta_e^{k+k'}c_2 & 0 & 0\\
0 & 0 & \zeta_e^l
\end{pmatrix}$ with the product of the nonzero entries being $1$, that is\\ $c_2^2 = \zeta_e^{-(k+k'+l)}$. We get two cases depending on the parity of $k+k'+l$.\\

\noindent \underline{If $k+k'+l$ is even}, then $c_2 = \zeta_e^{-(k+k'+l)/2}$ and $w$ is equal to\\ $w = \begin{pmatrix}
0 & \zeta_e^{-(k+k'+l)/2} & 0\\
\zeta_e^{(k+k'-l)/2} & 0 & 0\\
0 & 0 & \zeta_e^l
\end{pmatrix}$.\\

\begin{itemize}

\item If $l = 0$, then $w \in \mathcal{R}$ and $w = t_{\frac{k+k'}{2}}$. Relation 5 of Definition \ref{DefinitionChenBrauer(e,e,3)} is:

$$e_{1,2;k}e_{1,2;k'} = t_{\frac{k+k'}{2}} e_{1,2;k'} = e_{1,2;k} t_{\frac{k+k'}{2}}.$$

We refer to this case as \underline{Case (1.1)}.

\item If $l \neq 0$, then $w \notin \mathcal{R}$ and $w = t_xs_3t_lt_0s_3$ with $x = \frac{k+k'+l}{2}$. Relation 4 of Definition \ref{DefinitionChenBrauer(e,e,3)} is:

\begin{center} $t_xs_3t_lt_0s_3e_{1,2;k} = e_{1,2;k'} t_xs_3t_lt_0s_3$, with $x = \frac{k+k'+l}{2}$. \end{center}

We refer to this case as \underline{Case (1.2)}.

\end{itemize}

\noindent \underline{If $k+k'+l$ is odd}, then $k+k'+l+e$ is even and we have $c_2^2 = \zeta_e^{-(k+k'+l+e)}$. Hence $c_2 = \zeta_e^{-\frac{k+k'+l+e}{2}}$ and $w = \begin{pmatrix}
0 & \zeta_e^{-\frac{k+k'+l+e}{2}} & 0\\
\zeta_e^{\frac{k+k'-l-e}{2}} & 0 & 0\\
0 & 0 & \zeta_e^l
\end{pmatrix}$.

\begin{itemize}

\item If $l = 0$, then $w \in \mathcal{R}$ and $w = t_x$ with $x = \frac{k+k'+e}{2}$. Relation 5 of Definition \ref{DefinitionChenBrauer(e,e,3)} is:

\begin{center} $e_{1,2;k}e_{1,2;k'} = t_{x} e_{1,2;k'} = e_{1,2;k} t_{x}$, with $x = \frac{k+k'+e}{2}$. \end{center}

We refer to this case as \underline{Case (1.3)}.

\item If $l \neq 0$, then $w \notin \mathcal{R}$ and $w = t_xs_3t_lt_0s_3$ with $x = \frac{k+k'+l-e}{2}$. Relation 4 of Definition \ref{DefinitionChenBrauer(e,e,3)} is:

\begin{center} $t_xs_3t_lt_0s_3e_{1,2;k} = e_{1,2;k'} t_xs_3t_lt_0s_3$, with $x = \frac{k+k'+l-e}{2}$. \end{center}

We refer to this case as \underline{Case (1.4)}.

\end{itemize}

The second case is when $c_1$ is nonzero and $c_5 = \zeta_e^{k'-k}c_1$ for which case $w = \begin{pmatrix}
c_1 & 0 & 0\\
0 & \zeta_e^{k'-k}c_1 & 0\\
0 & 0 & \zeta_e^{l}
\end{pmatrix}$ with $c_1^2 = \zeta_e^{k-k'-l}$. Similarly, we get two cases depending on the parity of $k-k'-l$.\\

\noindent \underline{If $k-k'-l$ is even}, then $c_1 = \zeta_e^{\frac{k-k'-l}{2}}$ and $w = \begin{pmatrix}
\zeta_e^{\frac{k-k'-l}{2}} & 0 & 0\\
0 & \zeta_e^{\frac{k'-k-l}{2}} & 0\\
0 & 0 & \zeta_e^l
\end{pmatrix}$. We have $w = t_xt_0s_3t_lt_0s_3$ with $x = \frac{k'-k-l}{2}$. Relation 4 of Definition \ref{DefinitionChenBrauer(e,e,3)} is:

\begin{center} $t_xt_0s_3t_lt_0s_3e_{1,2;k} = e_{1,2;k'} t_xt_0s_3t_lt_0s_3$, with $x = \frac{k'-k-l}{2}$. \end{center}

We refer to this case as \underline{Case (1.5)}.   

\noindent \underline{If $k-k'-l$ is odd}, then $k-k'-l+e$ is even and $c_1^2 = \zeta_e^{k-k'-l+e}$. Hence $c_1 = \zeta_e^{\frac{k-k'-l+e}{2}}$ and $w = \begin{pmatrix}
\zeta_e^{\frac{k-k'-l+e}{2}} & 0 & 0\\
0 & \zeta_e^{\frac{k'-k-l+e}{2}} & 0\\
0 & 0 & \zeta_e^l
\end{pmatrix}$. We have $w = t_xt_0s_3t_lt_0s_3$ with $x = \frac{k'-k-l+e}{2}$. Relation 4 of Definition \ref{DefinitionChenBrauer(e,e,3)} is:

\begin{center} $t_xt_0s_3t_lt_0s_3e_{1,2;k} = e_{1,2;k'} t_xt_0s_3t_lt_0s_3$, with $x = \frac{k'-k-l+e}{2}$. \end{center}

We refer to this case as \underline{Case (1.6)}. 

\subsection*{Case 2: $s = s_{1,2;k}$ and $s' = s_{2,3;k'}$}

We have $s = t_k$ and $s' = t_0t_{k'}s_3t_{k'}t_0$ with $0 \leq k,k' \leq e-1$. 
Let $w = \begin{pmatrix}
c_1 & c_2 & c_3\\
c_4 & c_5 & c_6\\
c_7 & c_8 & c_9
\end{pmatrix}$ be an element of $G(e,e,3)$. We determine the solutions of the equation $w t_k w^{-1} = s_{2,3;k'}$. It is easy to check that we get $c_3 \neq 0$. Thus, we necessarily have $c_1 = c_2 = c_6 = c_9 = 0$ since $w$ is monomial. Set $c_3 = \zeta_e^{l}$ with $0 \leq l \leq e-1$. We have two different cases.\\

The first case is when $c_4$ is nonzero and $c_8 = \zeta_e^{k'-k}c_4$ with $c_4^2 = \zeta_e^{k-k'-l}$.\\

\noindent \underline{If $k-k'-l$ is even}, we have $c_4 = \zeta_e^{\frac{k-k'-l}{2}}$ and in this case we have\\ $w = \begin{pmatrix}
0 & 0 & \zeta_e^l\\
\zeta_e^{\frac{k-k'-l}{2}} & 0 & 0\\
0 & \zeta_e^{\frac{k'-k-l}{2}} & 0\\
\end{pmatrix}$ with $w = t_{-l}s_3t_{\frac{k'-k-l}{2}}t_0$. Relation 4 of Definition \ref{DefinitionChenBrauer(e,e,3)} is:

\begin{center} $t_{-l}s_3t_{\frac{k'-k-l}{2}}t_0 e_{1,2;k} = e_{2,3;k'} t_{-l}s_3t_{\frac{k'-k-l}{2}}t_0$. \end{center}

We refer to this case as \underline{Case (2.1)}.

\noindent \underline{If $k-k'-l$ is odd}, we have $c_4 = \zeta_e^{\frac{k-k'-l+e}{2}}$ and $w = \begin{pmatrix}
0 & 0 & \zeta_e^l\\
\zeta_e^{\frac{k-k'-l+e}{2}} & 0 & 0\\
0 & \zeta_e^{\frac{k'-k-l+e}{2}} & 0
\end{pmatrix}$ with $w = t_{-l}s_3t_{\frac{k'-k-l+e}{2}}t_0$. Relation 4 of Definition \ref{DefinitionChenBrauer(e,e,3)} is:

\begin{center} $t_{-l}s_3t_{\frac{k'-k-l+e}{2}}t_0 e_{1,2;k} = e_{2,3;k'} t_{-l}s_3t_{\frac{k'-k-l+e}{2}}t_0$. \end{center}

We refer to this case as \underline{Case (2.2)}.\\

\mbox{The second case is when $c_5$ is nonzero and $c_7 = \zeta_e^{k+k'} c_5$. Then we have $c_5^2 = \zeta_e^{-(k+k'+l)}$.}\\

\noindent \underline{If $k+k'+l$ is even}, we get $c_5 = \zeta_e^{\frac{-(k+k'+l)}{2}}$ and $c_7 = \zeta_e^{\frac{k+k'-l}{2}}$.\\ We have $w = \begin{pmatrix}
0 & 0 & \zeta_e^l\\
0 & \zeta_e^{\frac{-(k+k'+l)}{2}} & 0\\
\zeta_e^{\frac{(k+k'-l)}{2}} & 0 & 0
\end{pmatrix}$ with $w = t_{-l}s_3t_{\frac{k+k'-l}{2}}$.

\begin{itemize}

\item If $l \equiv -k-k'$ modulo $e$, then $w \in \mathcal{R}$ and $w = t_{k+k'}s_3t_{k+k'}$. Relation 5 of Definition \ref{DefinitionChenBrauer(e,e,3)} is:

\begin{center} $e_{1,2;k}e_{2,3;k'} = t_{k+k'}s_3t_{k+k'} e_{2,3;k'} = e_{1,2;k} t_{k+k'}s_3t_{k+k'}$. \end{center}

We refer to this case as \underline{Case (2.3)}.

\item If $l \not\equiv -k-k'$ modulo $e$, then $w \notin \mathcal{R}$. Relation 4 of Definition \ref{DefinitionChenBrauer(e,e,3)} is:

\begin{center} $t_{-l}s_3t_{\frac{k+k'-l}{2}} e_{1,2;k} = e_{2,3;k'} t_{-l}s_3t_{\frac{k+k'-l}{2}}$. \end{center}

We refer to this case as \underline{Case (2.4)}.

\end{itemize}

\noindent \underline{If $k+k'+l$ is odd}, we have $c_5 = \zeta_e^{\frac{-k-k'-l+e}{2}}$ and $c_7 = \zeta_e^{\frac{k+k'-l+e}{2}}$ for which case\\ $w = \begin{pmatrix}
0 & 0 & \zeta_e^l\\
0 & \zeta_e^{\frac{-k-k'-l+e}{2}} & 0\\
\zeta_e^{\frac{k+k'-l+e}{2}} & 0 & 0
\end{pmatrix}$ with $w = t_{-l}s_3t_{\frac{k+k'-l+e}{2}}$. Relation 4 of Definition \ref{DefinitionChenBrauer(e,e,3)} is:

\begin{center} $t_{-l}s_3t_{\frac{k+k'-l+e}{2}} e_{1,2;k} = e_{2,3;k'} t_{-l}s_3t_{\frac{k+k'-l+e}{2}}$. \end{center}

We refer to this case as \underline{Case (2.5)}.
Note that if $l \equiv -k-k'$ modulo $e$ in this case, we get $w \in \mathcal{R}$ and the corresponding relation in $\mathcal{B}(e,e,3)$ is the same as in Case (2.3).

\subsection*{Case 3: $s = s_{1,2;k}$ and $s' = s_{1,3;k'}$}

We have $s = t_k$ and $s' = s_{1,3;k'} = t_{k'}s_3t_{k'}$. Let $w = \begin{pmatrix}
c_1 & c_2 & c_3\\
c_4 & c_5 & c_6\\
c_7 & c_8 & c_9
\end{pmatrix}$ be an element of $G(e,e,3)$. We determine the solutions of the equation $w s_{1,2;k} w^{-1} = s_{1,3;k'}$. We get $c_6$ is nonzero. Set $c_6 = \zeta_e^l$ with $0 \leq l \leq e-1$. It follows that $c_4 = c_5  = c_3 = c_9 = 0$ since $w$ is monomial. We get two cases.\\

The first one is when $c_2$ is nonzero and $c_7 = \zeta_e^{k+k'} c_2$ for which case we get $w = \begin{pmatrix}
0 & c_2 & 0\\
0 & 0 & \zeta_e^l\\
\zeta_e^{k+k'}c_2 & 0 & 0
\end{pmatrix}$ with $c_2^2 = \zeta_e^{-(k+k'+l)}$.\\

\noindent \underline{If $k+k'+l$ is even}, then $c_2 = \zeta_e^{\frac{-(k+k'+l)}{2}}$. We get $w = \begin{pmatrix}
0 & \zeta_e^{\frac{-(k+k'+l)}{2}} & 0\\
0 & 0 & \zeta_e^l\\
\zeta_e^{\frac{k+k'-l}{2}} & 0 & 0
\end{pmatrix}$ with $w = t_lt_0s_3t_{\frac{k+k'-l}{2}}$. Relation 4 of Definition \ref{DefinitionChenBrauer(e,e,3)} is:

$$t_lt_0s_3t_{\frac{k+k'-l}{2}} e_{1,2;k} = e_{1,3;k'} t_lt_0s_3t_{\frac{k+k'-l}{2}}.$$

We refer to this case as \underline{Case (3.1)}.

\noindent \underline{If $k+k'+l$ is odd}, then $k+k'+l+e$ is even and $c_2 = \zeta_e^{\frac{-k-k'-l+e}{2}}$. We get $w =  \begin{pmatrix}
0 & \zeta_e^{\frac{-k-k'-l+e}{2}} & 0\\
0 & 0 & \zeta_e^l\\
\zeta_e^{\frac{k+k'-l+e}{2}} & 0 & 0
\end{pmatrix}$ with $w = t_lt_0s_3t_{\frac{k+k'-l+e}{2}}$. \mbox{Relation 4 of Definition \ref{DefinitionChenBrauer(e,e,3)} is:}

$$t_lt_0s_3t_{\frac{k+k'-l+e}{2}} e_{1,2;k} = e_{1,3;k'} t_lt_0s_3t_{\frac{k+k'-l+e}{2}}.$$

We refer to this case as \underline{Case (3.2)}.\\

The second case is when $c_1$ is generic and $c_8 = \zeta_e^{k'-k} c_1$ for which case $w = \begin{pmatrix}
c_1 & 0 & 0\\
0 & 0 & \zeta_e^l\\
0 & \zeta_e^{k'-k}c_1 & 0
\end{pmatrix}$ with $c_1^2 = \zeta_e^{k-k'-l}$.\\

\noindent \underline{If $k-k'-l$ is even}, then $c_1 = \zeta_e^{\frac{k-k'-l}{2}}$ and $c_8 = \zeta_e^{\frac{k'-k-l}{2}}$. We get\\ $w = \begin{pmatrix}
\zeta_e^{\frac{k-k'-l}{2}} & 0 & 0\\
0 & 0 & \zeta_e^l\\
0 &  \zeta_e^{\frac{k'-k-l}{2}} & 0
\end{pmatrix}$.

\begin{itemize}

\item If $l \equiv k-k'$ modulo $e$, then $w \in \mathcal{R}$ and $w = t_{k-k'}t_0s_3t_{k'-k}t_0$. Relation 5 of Definition \ref{DefinitionChenBrauer(e,e,3)} is: 

$$e_{1,2;k}e_{1,3;k'} =e_{1,2;k} t_{k-k'}t_0s_3t_{k'-k}t_0 = t_{k-k'}t_0 s_3 t_{k'-k}t_0 e_{1,3;k'}.$$

We refer to this case as Case (3.3).

\item  If $l \not\equiv k-k'$ modulo $e$, then $w \notin \mathcal{R}$. We have $w = t_lt_0s_3t_{\frac{k'-k-l}{2}}$. Relation 4 of Definition \ref{DefinitionChenBrauer(e,e,3)} is:

$$t_lt_0s_3t_{\frac{k'-k-l}{2}}t_0 e_{1,2;k} = e_{1,3;k'} t_lt_0s_3t_{\frac{k'-k-l}{2}}t_0.$$

We refer to this case as \underline{Case (3.4)}.

\end{itemize}

\noindent \underline{If $k-k'-l$ is odd}, then $k-k'-l+e$ is even and $c_1 = \zeta_e^{\frac{k-k'-l+e}{2}}$. We get $w = \begin{pmatrix}
\zeta_e^{\frac{k-k'-l+e}{2}} & 0 & 0\\
0 & 0 & \zeta_e^l\\
0 &  \zeta_e^{\frac{k'-k-l+e}{2}} & 0
\end{pmatrix}$ with $w = t_lt_0s_3t_{\frac{k'-k-l+e}{2}}t_0$. \mbox{Relation 4 of Definition \ref{DefinitionChenBrauer(e,e,3)} is:}

$$t_lt_0s_3t_{\frac{k'-k-l+e}{2}}t_0 e_{1,2;k} = e_{1,3;k'} t_lt_0s_3t_{\frac{k'-k-l+e}{2}}t_0.$$

We refer to this case as \underline{Case (3.5)}. Note that if $l \equiv k-k'$ modulo $e$ in this case, we get $w \in \mathcal{R}$ and the corresponding relation in $\mathcal{B}(e,e,3)$ is the same as in Case (3.3).

\subsection*{Case 4: $s = s_{2,3;k}$ and $s' = s_{2,3;k'}$}

We have $s = s_{2,3;k} = t_0t_{k}s_3t_kt_0$ and $s' = s_{2,3;k'} = t_0t_{k'}s_3t_{k'}t_0$ with $0 \leq k,k' \leq e-1$ and $k \neq k'$. Let $w = \begin{pmatrix}
c_1 & c_2 & c_3\\
c_4 & c_5 & c_6\\
c_7 & c_8 & c_9
\end{pmatrix} \in G(e,e,3)$. We determine the solutions of the equation $w s = s' w$. We get $c_1$ is nonzero. Set $c_1 = \zeta_e^l$ with $0 \leq l \leq e-1$. Then we have $c_2 = c_3 = c_4 = c_7 = 0$ since $w$ is monomial. We get two different cases.\\

The first case is when $c_6$ is nonzero and $c_8 = \zeta_e^{k+k'}c_6$ for which case $w = \begin{pmatrix}
\zeta_e^l & 0 & 0\\
0 & 0 & c_6\\
0 & \zeta_e^{k+k'}c_6 & 0
\end{pmatrix}$. We also have $c_6^2 = \zeta_e^{-k-k'-l}$. We get two cases depending on the parity of $k+k'+l$.\\

\noindent \underline{If $k+k'+l$ is even}, then $c_6 = \zeta_e^{\frac{-k-k'-l}{2}}$ and $w = \begin{pmatrix}
\zeta_e^l & 0 & 0\\
0 & 0 &  \zeta_e^{\frac{-k-k'-l}{2}}\\
0 & \zeta_e^{\frac{k+k'-l}{2}} & 0
\end{pmatrix}$.

\begin{itemize}

\item If $l = 0$, then $w \in \mathcal{R}$ and $w = t_{\frac{-k-k'}{2}}t_0s_3t_{\frac{k+k'}{2}}t_0$. \mbox{Relation 5 of Definition \ref{DefinitionChenBrauer(e,e,3)} is:}

\begin{center} $e_{2,3;k}e_{2,3;k'} = t_{\frac{-k-k'}{2}}t_0s_3t_{\frac{k+k'}{2}}t_0 e_{2,3;k'} = e_{2,3;k} t_{\frac{-k-k'}{2}}t_0s_3t_{\frac{k+k'}{2}}t_0$. \end{center}

We refer to this case as \underline{Case (4.1)}.

\item If $l \neq 0$, then $w = t_{\frac{-k-k'-l}{2}}t_0s_3t_{\frac{k+k'-l}{2}}t_0 \notin \mathcal{R}$. \mbox{Relation 4 of Definition \ref{DefinitionChenBrauer(e,e,3)} is:}

$$ t_{\frac{-k-k'-l}{2}}t_0s_3t_{\frac{k+k'-l}{2}}t_0 e_{2,3;k} = e_{2,3;k'} t_{\frac{-k-k'-l}{2}}t_0s_3t_{\frac{k+k'-l}{2}}t_0.$$

We refer to this case as \underline{Case (4.2)}.

\end{itemize}

\noindent \underline{If $k+k'+l$ is odd}, then $c_6 = \zeta_e^{\frac{-k-k'-l+e}{2}}$ and $w = \begin{pmatrix}
\zeta_e^l & 0 & 0\\
0 & 0 &  \zeta_e^{\frac{-k-k'-l+e}{2}}\\
0 & \zeta_e^{\frac{k+k'-l+e}{2}} & 0
\end{pmatrix}$.

\begin{itemize}

\item If $l = 0$, then $w \in \mathcal{R}$ and  $w = t_{\frac{-k-k'+e}{2}}t_0s_3t_{\frac{k+k'+e}{2}}t_0$. \mbox{Relation 5 of Definition \ref{DefinitionChenBrauer(e,e,3)} is:}

\begin{center} $e_{2,3;k}e_{2,3;k'} =t_{\frac{-k-k'+e}{2}}t_0s_3t_{\frac{k+k'+e}{2}}t_0 e_{2,3;k'} = e_{2,3;k} t_{\frac{-k-k'+e}{2}}t_0s_3t_{\frac{k+k'+e}{2}}t_0$. \end{center}

We refer to this case as \underline{Case (4.3)}.

\item If $l \neq 0$, we have $w = t_{\frac{-k-k'-l+e}{2}}t_0s_3t_{\frac{k+k'-l+e}{2}}t_0 \notin \mathcal{R}$. \mbox{Relation 4 of Definition \ref{DefinitionChenBrauer(e,e,3)} is:}

$$t_{\frac{-k-k'-l+e}{2}}t_0s_3t_{\frac{k+k'-l+e}{2}}t_0 e_{2,3;k} = e_{2,3;k'} t_{\frac{-k-k'-l+e}{2}}t_0s_3t_{\frac{k+k'-l+e}{2}}t_0.$$

We refer to this case as \underline{Case (4.4)}.

\end{itemize}

The  second case is when $c_5$ is nonzero and $c_9 = \zeta_e^{k'-k} c_5$ for which case $w = \begin{pmatrix}
\zeta_e^l & 0 & 0\\
0 & c_5 & 0\\
0 & 0 & \zeta_e^{k'-k}c_5
\end{pmatrix}$. We also have $c_5^2 = \zeta_e^{k-k'-l}$.

\noindent \underline{If $k-k'-l$ is even}, then $c_5 = \zeta_e^{\frac{k-k'-l}{2}}$ and $w = \begin{pmatrix}
\zeta_e^l & 0 & 0\\
0 & \zeta_e^{\frac{k-k'-l}{2}} & 0\\
0 & 0 & \zeta_e^{\frac{k'-k-l}{2}}
\end{pmatrix}$ with $w = t_{\frac{k-k'-l}{2}}t_0s_3t_{\frac{k'-k-l}{2}}t_0s_3$. \mbox{Relation 4 of Definition \ref{DefinitionChenBrauer(e,e,3)} is:}

$$t_{\frac{k-k'-l}{2}}t_0s_3t_{\frac{k'-k-l}{2}}t_0s_3 e_{2,3;k} = e_{2,3;k'} t_{\frac{k-k'-l}{2}}t_0s_3t_{\frac{k'-k-l}{2}}t_0s_3.$$

We refer to this case as \underline{Case (4.5)}.

\noindent \underline{If $k-k'-l$ is odd}, then $c_5 = \zeta_e^{\frac{k-k'-l+e}{2}}$ and $w = \begin{pmatrix}
\zeta_e^l & 0 & 0\\
0 & \zeta_e^{\frac{k-k'-l+e}{2}} & 0\\
0 & 0 & \zeta_e^{\frac{k'-k-l+e}{2}}
\end{pmatrix}$ with $w = t_{\frac{k-k'-l+e}{2}}t_0s_3t_{\frac{k'-k-l+e}{2}}t_0s_3$. \mbox{Relation 4 of Definition \ref{DefinitionChenBrauer(e,e,3)} is:}

$$t_{\frac{k-k'-l+e}{2}}t_0s_3t_{\frac{k'-k-l+e}{2}}t_0s_3 e_{2,3;k} = e_{2,3;k'} t_{\frac{k-k'-l+e}{2}}t_0s_3t_{\frac{k'-k-l+e}{2}}t_0s_3.$$

We refer to this case as \underline{Case (4.6)}.

\subsection*{Case 5: $s = s_{2,3;k}$ and $s' = s_{1,3;k'}$}

We have $s = s_{2,3;k} = t_0t_{k}s_3t_kt_0$ and $s' = s_{1,3;k'} = t_{k'}s_3t_{k'}$ with $0 \leq k,k' \leq e-1$. Let $w =  \begin{pmatrix}
c_1 & c_2 & c_3\\
c_4 & c_5 & c_6\\
c_7 & c_8 & c_9
\end{pmatrix} \in G(e,e,3)$. We determine the solutions of the equation $w s = s' w$. We get $c_4$ is generic nonzero. Set $c_4 = \zeta_e^l$ with $0 \leq l \leq e-1$. Then we have $c_1 = c_7 = c_5 = c_6 = 0$ since $w$ is monomial. We get two different cases.\\

The first case is when $c_3$ is nonzero and $c_8 = \zeta_e^{k+k'}c_3$ for which case $w =  \begin{pmatrix}
0 & 0 & c_3\\
\zeta_e^l & 0 & 0\\
0 & \zeta_e^{k+k'}c_3 & 0
\end{pmatrix}$ with $c_3^2 = \zeta_e^{-k-k'-l}$.\\

\noindent \underline{If $k+k'+l$ is even}, then $c_3 = \zeta_e^{\frac{-k-k'-l}{2}}$ and $w = \begin{pmatrix}
0 & 0 & \zeta_e^{\frac{-k-k'-l}{2}}\\
\zeta_e^l & 0 & 0\\
0 & \zeta_e^{\frac{k+k'-l}{2}} & 0
\end{pmatrix}$ with $w = t_{\frac{k+k'+l}{2}}s_3t_{\frac{k+k'-l}{2}}t_0$. \mbox{Relation 4 of Definition \ref{DefinitionChenBrauer(e,e,3)} is:}

$$ t_{\frac{k+k'+l}{2}}s_3t_{\frac{k+k'-l}{2}}t_0 e_{2,3;k} = e_{1,3;k'}  t_{\frac{k+k'+l}{2}}s_3t_{\frac{k+k'-l}{2}}t_0.$$

We refer to this case as \underline{Case (5.1)}.

\noindent \underline{If $k+k'+l$ is odd}, then $c_3 = \zeta_e^{\frac{-k-k'-l+e}{2}}$ and $w = \begin{pmatrix}
0 & 0 & \zeta_e^{\frac{-k-k'-l+e}{2}}\\
\zeta_e^l & 0 & 0\\
0 & \zeta_e^{\frac{k+k'-l+e}{2}} & 0
\end{pmatrix}$ with $w = t_{\frac{k+k'+l+e}{2}}s_3t_{\frac{k+k'-l+e}{2}}t_0$. \mbox{Relation 4 of Definition \ref{DefinitionChenBrauer(e,e,3)} is:}

$$t_{\frac{k+k'+l+e}{2}}s_3t_{\frac{k+k'-l+e}{2}}t_0 e_{2,3;k} = e_{1,3;k'} t_{\frac{k+k'+l+e}{2}}s_3t_{\frac{k+k'-l+e}{2}}t_0.$$

We refer to this case as \underline{Case (5.2)}.\\

The second case is when $c_2$ is nonzero and $c_9 = \zeta_e^{k'-k}c_2$ for which case $w =  \begin{pmatrix}
0 & c_2 & 0\\
\zeta_e^l & 0 & 0\\
0 & 0 & \zeta_e^{k'-k}c_2
\end{pmatrix}$ and $c_2^2 = \zeta_e^{k-k'-l}$.

\noindent \underline{If $k-k'-l$ is even}, then $c_2 = \zeta_e^{\frac{k-k'-l}{2}}$ and $w = \begin{pmatrix}
0 & \zeta_e^{\frac{k-k'-l}{2}} & 0\\
\zeta_e^l & 0 & 0\\
0 & 0 & \zeta_e^{\frac{k'-k-l}{2}}
\end{pmatrix}$.

\begin{itemize}

\item If $l \equiv k'-k$ modulo $e$, we get $w = t_{k'-k} \in \mathcal{R}$. \mbox{Relation 5 of Definition \ref{DefinitionChenBrauer(e,e,3)} is:}

\begin{center} $e_{2,3;k}e_{1,3;k'} = t_{k'-k}  e_{1,3;k'} = e_{2,3;k} t_{k'-k}$. \end{center}

We refer to this case as \underline{Case (5.3)}.

\item If $l \not\equiv k'-k$ modulo $e$, we get $w = t_{\frac{k'-k+l}{2}}s_3t_{\frac{k'-k-l}{2}}t_0s_3 \notin \mathcal{R}$. Relation 4 of Definition \ref{DefinitionChenBrauer(e,e,3)} is:

$$t_{\frac{k'-k+l}{2}}s_3t_{\frac{k'-k-l}{2}}t_0s_3 e_{2,3;k} = e_{1,3;k'} t_{\frac{k'-k+l}{2}}s_3t_{\frac{k'-k-l}{2}}t_0s_3.$$

We refer to this case as \underline{Case (5.4)}. Note that if $l \equiv k'-k$ modulo $e$ in this case, we get $w \in \mathcal{R}$ and the corresponding relation in $\mathcal{B}(e,e,3)$ is the same as in Case (5.3).

\end{itemize}

\noindent \underline{If $k-k'-l$ is odd}, then $c_2 = \zeta_e^{\frac{k-k'-l+e}{2}}$ and $w = \begin{pmatrix}
0 & \zeta_e^{\frac{k-k'-l+e}{2}} & 0\\
\zeta_e^l & 0 & 0\\
0 & 0 & \zeta_e^{\frac{k'-k-l+e}{2}}
\end{pmatrix}$ with $w = t_{\frac{k'-k+l+e}{2}}s_3t_{\frac{k'-k-l+e}{2}}t_0s_3 \notin \mathcal{R}$. Relation 4 of Definition \ref{DefinitionChenBrauer(e,e,3)} is:

$$t_{\frac{k'-k+l+e}{2}}s_3t_{\frac{k'-k-l+e}{2}}t_0s_3 e_{2,3;k} = e_{1,3;k'} t_{\frac{k'-k+l+e}{2}}s_3t_{\frac{k'-k-l+e}{2}}t_0s_3.$$

We refer to this case as \underline{Case (5.5)}.

\subsection*{Case 6: $s = s_{1,3;k}$ and $s' = s_{1,3;k'}$}

We have $s = s_{1,3;k} = t_ks_3t_k$ and $s' = s_{1,3;k'} = t_{k'}s_3t_{k'}$ with $0 \leq k,k' \leq e-1$ and $k \neq k'$. We determine the solutions of the equation $w s = s' w$ for $w = \begin{pmatrix}
c_1 & c_2 & c_3\\
c_4 & c_5 & c_6\\
c_7 & c_8 & c_9
\end{pmatrix} \in G(e,e,3)$. We get $c_5$ is nonzero. Set $c_5 = \zeta_e^l$ with $0 \leq l \leq e-1$. Then we have $c_2 = c_8 = c_4 = c_6 = 0$ since $w$ is monomial. We get two different cases.\\

The first case is when $c_3$ is nonzero and $c_7 = \zeta_e^{k+k'}c_3$ for which case\\ $w = \begin{pmatrix}
0 & 0 & c_3\\
0 & \zeta_e^l & 0\\
\zeta_e^{k+k'}c_3 & 0 & 0
\end{pmatrix}$ with $c_3^2 = \zeta_e^{-k-k'-l}$.\\

\noindent \underline{If $k+k'+l$ is even}, then $c_3 = \zeta_e^{\frac{-k-k'-l}{2}}$ and $w = \begin{pmatrix}
0 & 0 & \zeta_e^{\frac{-k-k'-l}{2}}\\
0 & \zeta_e^l & 0\\
\zeta_e^{\frac{k+k'-l}{2}} & 0 & 0
\end{pmatrix}$.

\begin{itemize}

\item If $l = 0$, then $w \in \mathcal{R}$ with $w = t_{\frac{k+k'}{2}}s_3t_{\frac{k+k'}{2}}$.  \mbox{Relation 5 of Definition \ref{DefinitionChenBrauer(e,e,3)} is:}

\begin{center} $e_{1,3;k}e_{1,3;k'} = t_{\frac{k+k'}{2}}s_3t_{\frac{k+k'}{2}} e_{1,3;k'} = e_{2,3;k} t_{\frac{k+k'}{2}}s_3t_{\frac{k+k'}{2}}$. \end{center}

We refer to this case as \underline{Case (6.1)}.

\item If $l \neq 0$, then $w \notin \mathcal{R}$ and $w = t_{\frac{k+k'+l}{2}}s_3t_{\frac{k+k'-l}{2}}$. Relation 4 of Definition \ref{DefinitionChenBrauer(e,e,3)} is:

$$t_{\frac{k+k'+l}{2}}s_3t_{\frac{k+k'-l}{2}} e_{1,3;k} = e_{1,3;k'} t_{\frac{k+k'+l}{2}}s_3t_{\frac{k+k'-l}{2}}.$$

We refer to this case as \underline{Case (6.2)}.

\end{itemize}

\noindent \underline{If $k+k'+l$ is odd}, then $c_3 = \zeta_e^{\frac{-k-k'-l+e}{2}}$ and $w = \begin{pmatrix}
0 & 0 & \zeta_e^{\frac{-k-k'-l+e}{2}}\\
0 & \zeta_e^l & 0\\
\zeta_e^{\frac{k+k'-l+e}{2}} & 0 & 0
\end{pmatrix}$ with $w = t_{\frac{k+k'+l+e}{2}}s_3t_{\frac{k+k'-l+e}{2}}$.

\begin{itemize}

\item If $l = 0$, then $w \in \mathcal{R}$ with $w = t_{\frac{k+k'+e}{2}}s_3t_{\frac{k+k'+e}{2}}$. \mbox{Relation 5 of Definition \ref{DefinitionChenBrauer(e,e,3)} is:}

\begin{center} $e_{1,3;k}e_{1,3;k'} =t_{\frac{k+k'+e}{2}}s_3t_{\frac{k+k'+e}{2}} e_{1,3;k'} = e_{2,3;k} t_{\frac{k+k'+e}{2}}s_3t_{\frac{k+k'+e}{2}}$. \end{center}

We refer to this case as \underline{Case (6.3)}.

\item If $l \neq 0$, then $w \notin \mathcal{R}$ with $w = t_{\frac{k+k'+l+e}{2}}s_3t_{\frac{k+k'-l+e}{2}}$. \mbox{Relation 4 of Definition \ref{DefinitionChenBrauer(e,e,3)} is:}

$$t_{\frac{k+k'+l+e}{2}}s_3t_{\frac{k+k'-l+e}{2}} e_{1,3;k} = e_{1,3;k'} t_{\frac{k+k'+l+e}{2}}s_3t_{\frac{k+k'-l+e}{2}}.$$

We refer to this case as \underline{Case (6.4)}.

\end{itemize}

The second case is when $c_1$ is nonzero and $c_9 = \zeta_e^{k'-k}c_1$ for which case $w = \begin{pmatrix}
c_1 & 0 & 0\\
0 & \zeta_e^l & 0\\
0 & 0 & \zeta_e^{k'-k}c_1
\end{pmatrix}$ with $c_1^2 = \zeta_e^{k-k'-l}$.

\noindent \underline{If $k-k'-l$ is even}, then $c_1 = \zeta_e^{\frac{k-k'-l}{2}}$ and $w = \begin{pmatrix}
\zeta_e^{\frac{k-k'-l}{2}} & 0 & 0\\
0 & \zeta_e^l & 0\\
0 & 0 & \zeta_e^{\frac{k'-k-l}{2}}
\end{pmatrix}$ with $w = t_lt_0s_3t_{\frac{k'-k-l}{2}}t_0s_3$. Relation 4 of Definition \ref{DefinitionChenBrauer(e,e,3)} is:

$$t_lt_0s_3t_{\frac{k'-k-l}{2}}t_0s_3 e_{1,3;k} = e_{1,3;k'} t_lt_0s_3t_{\frac{k'-k-l}{2}}t_0s_3.$$

We refer to this case as \underline{Case (6.5)}.

\noindent \underline{If $k-k'-l$ is odd}, then $c_1 = \zeta_e^{\frac{k-k'-l+e}{2}}$ and $w = \begin{pmatrix}
\zeta_e^{\frac{k-k'-l+e}{2}} & 0 & 0\\
0 & \zeta_e^l & 0\\
0 & 0 & \zeta_e^{\frac{k'-k-l+e}{2}}
\end{pmatrix}$ with $w = t_lt_0s_3t_{\frac{k'-k-l+e}{2}}t_0s_3$. Relation 4 of Definition \ref{DefinitionChenBrauer(e,e,3)} is:

$$t_lt_0s_3t_{\frac{k'-k-l+e}{2}}t_0s_3 e_{1,3;k} = e_{1,3;k'} t_lt_0s_3t_{\frac{k'-k-l+e}{2}}t_0s_3.$$

We refer to this case as \underline{Case (6.6)}.\\

\smallskip

\begin{remark}\label{RemarkEeven}

From the six cases above, one can check that Relation 6 of Definition \ref{DefinitionChenBrauer(e,e,3)} does not occur in the case of $\mathcal{B}(e,e,3)$ when $e$ is odd.

\end{remark}

The following lemmas are useful in the proof of Proposition \ref{PropositionIsomChenPsi} below.\\

\begin{lemma}\label{LemmaEkBrauerInduction}

The following relations hold in $\mathcal{B}(e,e,3)$.
\begin{enumerate}
\item Let $k \in \mathbb{N}$. We have $t_k = \underset{2k-1}{\underbrace{t_1t_0 \cdots t_1}}$.
\item For $2 \leq k \leq e-1$, we have $e_{1,2;k} = t_{k-1} e_{1,2;k-2} t_{k-1}$.
\item For $0 \leq k \leq e-1$, we have\\
$e_{1,2;k} = \left\{
    \begin{array}{ll}
     \underset{k-1}{\underbrace{t_1t_0 \cdots t_0}}\ e_{1,2;1}\ \underset{k-1}{\underbrace{t_0t_1 \cdots t_1}}  & if\ k\ is\ odd, \\
     \underset{k-1}{\underbrace{t_1t_0 \cdots t_1}}\ e_{1,2;0}\ \underset{k-1}{\underbrace{t_1t_0 \cdots t_1}}  & if\ k\ is\ even.
    \end{array}
\right.$
\end{enumerate}

\end{lemma}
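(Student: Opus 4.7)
The three parts are proved in order, each feeding the next, and the whole argument mirrors the structure of Lemmas \ref{LemmaTkEk} and \ref{LemmaExtendTkEk} but transferred from $\mathrm{Br}(e,e,3)$ to $\mathcal{B}(e,e,3)$ via the presentation given in Definition \ref{DefinitionChenBrauer(e,e,3)}.

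For part (1), I would start from the Corran--Picantin relations in $G(e,e,3)$: since $t_i t_{i-1} = t_j t_{j-1}$ and $t_i^2 = 1$, one gets $t_k = t_{k-1} t_{k-2} t_{k-1}$ as an equality in the group, and hence via Relation~1 of Definition \ref{DefinitionChenBrauer(e,e,3)} as an equality of $T_{t_k}$'s in $\mathcal{B}(e,e,3)$ (which we write as $t_k$ by the convention following the definition). A straightforward induction on $k$, identical to the one carried out for Lemma \ref{LemmaTkEk}, gives $t_k = \underset{2k-1}{\underbrace{t_1 t_0 \cdots t_1}}$ for $0 \le k \le e-1$. To extend this to all $k \in \mathbb{N}$, I would then run the same periodicity computation as in Lemma \ref{LemmaExtendTkEk}: write $k = k' + xe$, and use that $\underset{2e}{\underbrace{t_0 t_1 \cdots t_1}}$ reduces to the identity inside $G(e,e,3)$ (hence in $\mathcal{B}(e,e,3)$).

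For part (2), the key input is Relation~4 of Definition \ref{DefinitionChenBrauer(e,e,3)} applied to $w = t_{k-1}$ and $s = s_{1,2;k-2}$. The preliminary observation is that $t_{k-1}\, s_{1,2;k-2}\, t_{k-1}^{-1} = s_{1,2;k}$ in $G(e,e,3)$: this is exactly the conjugation relation $t_{k-1} t_{k-2} t_{k-1} = t_k$ used in part (1), rewritten using $t_{k-1}^2 = 1$. Relation~4 therefore yields $t_{k-1} e_{1,2;k-2} = e_{1,2;k}\, t_{k-1}$ in $\mathcal{B}(e,e,3)$. Multiplying both sides on the right by $t_{k-1}$ and using $t_{k-1}^2 = 1$ gives $t_{k-1}\, e_{1,2;k-2}\, t_{k-1} = e_{1,2;k}$.

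Part (3) is then a direct induction on $k$, completely parallel to the expression for $E_k$ in Lemma \ref{LemmaTkEk}: the base cases $k=0$ and $k=1$ are trivial (using the conventions set for empty $\underset{-1}{\underbrace{\cdots}}$ blocks), and the inductive step substitutes the formula of part (1) for $t_{k-1}$ into the identity of part (2), then simplifies using $t_0^2 = t_1^2 = 1$ to collapse consecutive repetitions. The only subtle point, and the main bookkeeping obstacle, is tracking the parity of $k$ so that the alternating pattern $\underset{k-1}{\underbrace{t_1 t_0 \cdots}}$ ends on the correct letter and surrounds $e_{1,2;0}$ or $e_{1,2;1}$ as stated; this is handled exactly as in the odd/even case split already performed for $E_k$ in Lemma \ref{LemmaTkEk}.
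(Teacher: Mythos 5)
Your proposal is correct and follows essentially the same route as the paper: part (1) by the same induction and periodicity argument as in Lemmas \ref{LemmaTkEk} and \ref{LemmaExtendTkEk}, part (2) by applying Relation 4 of Definition \ref{DefinitionChenBrauer(e,e,3)} to the conjugation $t_{k-1}t_{k-2}t_{k-1}=t_k$ and multiplying by $t_{k-1}$, and part (3) by the same parity-split induction used for $E_k$ in Lemma \ref{LemmaTkEk}.
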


\begin{proof}

The first item of the lemma is obvious. It is done in the same way as the proof of the first item of Lemma \ref{LemmaExtendTkEk}.

Since $t_k = t_{k-1}t_{k-2}t_{k-1}$ for $2 \leq k \leq e-1$, by Relation 4 of Definition \ref{DefinitionChenBrauer(e,e,3)}, we have $t_{k-1} e_{1,2;k-2} = e_{1,2;k} t_{k-1}$, that is $e_{1,2;k} = t_{k-1} e_{1,2;k-2} t_{k-1}$. Hence we get the second relation of the lemma.

Using the previous relation, one can prove the last statement of the lemma by induction in the same way as the proof of the second item of Lemma \ref{LemmaTkEk}. 

\end{proof}

\begin{lemma}\label{LemmaRelBrauerChenJet1}

Let $0 \leq k \leq e-1$. The following relations hold in $\mathcal{B}(e,e,3)$ for $e$ odd.
\begin{enumerate}

\item $e_{1,2;k}e_{2,3;0} = s_3t_ke_{2,3;0} = e_{1,2;k} s_3t_k$ and\\
$e_{2,3;0}e_{1,2;k} = t_ks_3e_{1,2;k} = e_{2,3;0} t_ks_3$.

\item $e_{1,2;k}s_3e_{1,2;k} = e_{1,2;k}$ and\\
$e_{2,3;0}t_ke_{2,3;0} = e_{2,3;0}$.

\end{enumerate}

\end{lemma}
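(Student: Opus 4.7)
The proof will be a systematic application of Relations 2, 4, and 5 of Definition \ref{DefinitionChenBrauer(e,e,3)}, combined with the reflection identifications already carried out in Cases 2, 3, and 5 of the preceding analysis. The central observation is that $s_{1,3;k}=t_ks_3t_k$ is the canonical element linking the three hyperplane indices $\{1,2\},\{2,3\},\{1,3\}$ at level $k$, and that the absorption identity $T_{s_{i,j;k}}e_{i,j;k}=e_{i,j;k}$ from Relation 2 lets us freely add or remove a factor of $t_k$ or $s_3$ adjacent to the appropriate idempotent.

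For Part (1), I would use Case (2.3) (or Case (2.5) in the subcase noted as reducing to (2.3)) to conclude that $s_{1,3;k}=t_ks_3t_k\in R(s_{1,2;k},s_{2,3;0})$. Applying Relation 5 of Definition \ref{DefinitionChenBrauer(e,e,3)} then yields
\begin{equation*}
e_{1,2;k}e_{2,3;0}=t_ks_3t_k\,e_{2,3;0}=e_{1,2;k}\,t_ks_3t_k .
\end{equation*}
Multiplying the first expression on the left by $t_k$ and using $t_ke_{1,2;k}=e_{1,2;k}$ (Relation 2) gives $e_{1,2;k}e_{2,3;0}=s_3t_ke_{2,3;0}$; similarly, using $e_{1,2;k}t_k=e_{1,2;k}$ on the right-hand expression collapses it to $e_{1,2;k}s_3t_k$. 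Since $R(s,s')=R(s',s)$ for reflections, the same reflection applies to $e_{2,3;0}e_{1,2;k}$, and the analogous manipulations (again with $t_k^2=1$ and Relation 2) produce the two desired forms $t_ks_3\,e_{1,2;k}$ and $e_{2,3;0}\,t_ks_3$.

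For Part (2), I would first use Relation 4 to rewrite $s_3e_{1,2;k}=e_{1,3;k}s_3$, which holds because $s_3\,s_{1,2;k}\,s_3=s_{1,3;k}$. This turns the left-hand side into $e_{1,2;k}e_{1,3;k}s_3$. Next, Case (3.3) with $k'=k$ identifies $s_3\in R(s_{1,2;k},s_{1,3;k})$, so by Relation 5,
\begin{equation*}
e_{1,2;k}e_{1,3;k}=e_{1,2;k}\,s_3 ,
\end{equation*}
and therefore $e_{1,2;k}s_3e_{1,2;k}=e_{1,2;k}s_3^{2}=e_{1,2;k}$. The second identity is symmetric: using $t_ke_{2,3;0}=e_{1,3;k}t_k$ (Relation 4, from $t_k\,s_{2,3;0}\,t_k=s_{1,3;k}$), then Case (5.3) with the group's parameters set to $0$ and $k$ to identify $t_k\in R(s_{2,3;0},s_{1,3;k})$, Relation 5 gives $e_{2,3;0}e_{1,3;k}=e_{2,3;0}t_k$, and the conclusion follows from $t_k^2=1$.

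The argument is essentially bookkeeping once the correct reflections are extracted from the case analysis, so I do not expect any serious obstacle. The only mild subtlety is that the parity of $k+k'+l$ forces one to choose between Cases (2.3) and (2.5) (and likewise for the other pairs), but in each instance the resulting reflection is $s_{1,3;k}$, $s_3$, or $t_k$ as needed, so the two subcases feed into a single uniform computation.
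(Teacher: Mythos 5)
Your argument is correct. For part (1) it is essentially the paper's proof: Case (2.3) with $k'=0$ gives $e_{1,2;k}e_{2,3;0}=t_ks_3t_k\,e_{2,3;0}=e_{1,2;k}\,t_ks_3t_k$, and Relation 2 together with $t_k^2=1$ absorbs the extra $t_k$; for the reversed products the paper checks $(t_ks_3t_k)s_3(t_ks_3t_k)^{-1}=t_k$ explicitly, which is the same content as your appeal to $R(s,s')=R(s',s)$ (a symmetry the paper itself records just before Case 1). Where you diverge is part (2): you bring in the third family $e_{1,3;k}$, using Relation 4 ($s_3e_{1,2;k}=e_{1,3;k}s_3$, $t_ke_{2,3;0}=e_{1,3;k}t_k$) and then Cases (3.3) and (5.3) with Relation 5 to collapse $e_{1,2;k}e_{1,3;k}=e_{1,2;k}s_3$ and $e_{2,3;0}e_{1,3;k}=e_{2,3;0}t_k$, which indeed yields both identities. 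The paper instead deduces part (2) directly from part (1): multiplying $s_3t_ke_{2,3;0}=e_{1,2;k}s_3t_k$ on the right by $e_{1,2;k}$, using Relation 2, and substituting $e_{2,3;0}e_{1,2;k}=t_ks_3e_{1,2;k}$ gives $e_{1,2;k}s_3e_{1,2;k}=e_{1,2;k}$ without ever invoking $e_{1,3;k}$ or the extra cases. Your route uses more of the case analysis but is equally valid; the paper's is slightly more economical and self-contained. One small point you handled well is the parity issue: for $k\neq 0$ and $e$ odd the relevant relation formally comes from the remark at the end of Case (2.5) rather than Case (2.3) itself, though the resulting identity is identical.
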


\begin{proof}

From Case (2.3) above, for $k' = 0$, we have $e_{1,2;k}e_{2,3;0} = t_ks_3t_ke_{2,3;0} = e_{1,2;k} t_ks_3t_k$. Using Relation 2 of Definition \ref{DefinitionChenBrauer(e,e,3)}, we get $e_{1,2;k}e_{2,3;0} = s_3t_ke_{2,3;0} = e_{1,2;k} s_3t_k$. We also have $(t_ks_3t_k)s_3(t_ks_3t_k)^{-1} = t_k$. Hence, from Relations 5 and 2 of Definition \ref{DefinitionChenBrauer(e,e,3)}, we also get $e_{2,3;0}e_{1,2;k} = t_ks_3e_{1,2;k} = e_{2,3;0} t_ks_3$. Moreover, multiplying $s_3t_ke_{2,3;0} = e_{1,2;k}s_3t_k$ on the right by $e_{1,2;k}$, we get $s_3t_ke_{2,3;0}e_{1,2;k} = e_{1,2;k}s_3t_ke_{1,2;k}$. Using Relation 2 of Definition \ref{DefinitionChenBrauer(e,e,3)}, we get $s_3t_ke_{2,3;0}e_{1,2;k} = e_{1,2;k}s_3e_{1,2;k}$. Now, replace $e_{2,3;0}e_{1,2;k}$ by $t_ks_3e_{1,2;k}$, we get $s_3t_kt_ks_3e_{1,2;k} = e_{1,2;k}s_3e_{1,2;k}$, that is $e_{1,2;k}s_3e_{1,2;k} = e_{1,2;k}$. Similarly, we also get $e_{2,3;0}t_ke_{2,3;0} = e_{2,3;0}$.

\end{proof}

\begin{lemma}\label{LemmaRelBrauerChenJet2}

The following relations hold in $\mathcal{B}(e,e,3)$ for $e$ odd.
\begin{enumerate}

\item Let $1 \leq k \leq e-2$ and $k$ odd, we have:\\
$e_{1,2;1}\ \underset{k}{\underbrace{t_0t_1 \cdots t_0}}\ e_{1,2;1} = e_{1,2;1}$ and \\
$e_{1,2;0}\ \underset{k}{\underbrace{t_1t_0 \cdots t_1}}\ e_{1,2;0} = e_{1,2;0}$.

\item $\underset{e-1}{\underbrace{t_1t_0 \cdots t_0}}\ e_{1,2;1} = e_{1,2;0}\ \underset{e-1}{\underbrace{t_1t_0 \cdots t_0}}$ and\\
$\underset{e-1}{\underbrace{t_0t_1 \cdots t_1}}\ e_{1,2;0} = e_{1,2;1}\ \underset{e-1}{\underbrace{t_0t_1 \cdots t_1}}$.

\end{enumerate}

\end{lemma}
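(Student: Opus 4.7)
The plan is to work entirely within the Brauer--Chen algebra $\mathcal{B}(e,e,3)$ using Definition~\ref{DefinitionChenBrauer(e,e,3)}, exploiting the fact that all dihedral words in $t_0,t_1$ live in the image of the group algebra of $G(e,e,3)$ and therefore can be identified with specific reflections or rotations. The first step is to establish the word identifications: using $(t_0t_1)^e = 1$ in the dihedral subgroup together with the conjugation formula $t_at_bt_a = t_{2a-b \bmod e}$ (a consequence of the multiplication Relation~1 of Definition~\ref{DefinitionChenBrauer(e,e,3)}), one shows by induction that $\underbrace{t_0t_1\cdots t_0}_{2j-1} = t_{1-j \bmod e}$, and that $\underbrace{t_1t_0\cdots t_0}_{e-1} = (t_1t_0)^{(e-1)/2}$ (similarly for the mirror word $\underbrace{t_0t_1\cdots t_1}_{e-1} = (t_0t_1)^{(e-1)/2}$).

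For item~1, the identity to prove becomes $e_{1,2;1}\,t_{1-j}\,e_{1,2;1} = e_{1,2;1}$ for $1 \leq j \leq (e-1)/2$. Relation~4 of Definition~\ref{DefinitionChenBrauer(e,e,3)} applied to the reflection $t_{1-j}$, which conjugates $s_{1,2;1}$ into $s_{1,2;1-2j}$, yields $t_{1-j}e_{1,2;1} = e_{1,2;1-2j}t_{1-j}$. Substituting on the right gives $e_{1,2;1}\,t_{1-j}\,e_{1,2;1} = e_{1,2;1}e_{1,2;1-2j}t_{1-j}$. Now Relation~5 rewrites the product $e_{1,2;1}e_{1,2;1-2j}$ as $T_{t_{1-j}}e_{1,2;1-2j}$, since $t_{1-j} \in R(s_{1,2;1},s_{1,2;1-2j})$ (the range of $j$ guarantees $1-2j \not\equiv 1 \bmod e$, so we are genuinely in the case of Relation~5 and not Relation~3). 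This gives $t_{1-j}e_{1,2;1-2j}t_{1-j}$, and a second application of Relation~4 collapses this to $e_{1,2;1}t_{1-j}^2 = e_{1,2;1}$. The companion identity for $e_{1,2;0}$ is obtained by interchanging the roles of $t_0$ and $t_1$.

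For item~2, having identified $\underbrace{t_1t_0\cdots t_0}_{e-1}$ with the rotation $w = (t_1t_0)^{(e-1)/2}$, it suffices to verify $w\,s_{1,2;1}\,w^{-1} = s_{1,2;0}$ and invoke Relation~4 directly. Two iterations of $t_at_bt_a = t_{2a-b}$ show that conjugation by $t_1t_0$ sends $t_k$ to $t_{k+2}$, hence conjugation by $w$ sends $t_1$ to $t_{1+(e-1)} = t_0 \bmod e$, which is precisely what is needed. The second equation follows by the mirror calculation: conjugation by $(t_0t_1)^{(e-1)/2}$ decreases indices by $e-1$, sending $t_0$ to $t_1$.

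The main obstacle is purely combinatorial: ensuring the word identifications in the dihedral subgroup are carried out correctly (with the right parities and the right index arithmetic modulo $e$, which relies crucially on $e$ being odd so that $2$ is invertible in $\mathbb{Z}/e\mathbb{Z}$). Once these identifications are in place, every step is a single application of one of Relations~1--5 of Definition~\ref{DefinitionChenBrauer(e,e,3)}.
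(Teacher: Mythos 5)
Your argument is correct. For item 2 it coincides with the paper's proof: both observe that the $(e-1)$-letter alternating word conjugates $t_1$ to $t_0$ (resp.\ $t_0$ to $t_1$) in the dihedral part and then invoke Relation 4 of Definition \ref{DefinitionChenBrauer(e,e,3)}. For item 1 your route is genuinely more direct than the paper's: you first collapse the odd-length middle word to the single reflection $t_{1-j}$ via $t_at_bt_a=t_{2a-b}$ (the mirror of the identity $t_k=\underbrace{t_1t_0\cdots t_1}_{2k-1}$ recorded in Lemma \ref{LemmaEkBrauerInduction}), and then the chain
$e_{1,2;1}\,t_{1-j}\,e_{1,2;1}=e_{1,2;1}e_{1,2;1-2j}\,t_{1-j}=t_{1-j}\,e_{1,2;1-2j}\,t_{1-j}=e_{1,2;1}\,t_{1-j}^2=e_{1,2;1}$
uses only Relations 1, 4 and 5, with the check that $t_{1-j}\in R(s_{1,2;1},s_{1,2;1-2j})$ and that $1-2j\not\equiv 1$ in the stated range (so Relation 5 rather than Relation 3 applies) done correctly. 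The paper instead starts from the explicit instance of Relation 5 worked out in Case (1.1), i.e.\ $e_{1,2;k}e_{1,2;k'}=t_{\frac{k+k'}{2}}e_{1,2;k'}=e_{1,2;k}t_{\frac{k+k'}{2}}$ with $k,k'$ of equal parity, expands $e_{1,2;k}$, $e_{1,2;k'}$ and $t_{\frac{k+k'}{2}}$ through Lemma \ref{LemmaEkBrauerInduction}, and simplifies the dihedral words to land on the normalized identity. The two derivations rest on the same relations of $\mathcal{B}(e,e,3)$; yours buys a shorter, self-contained computation that avoids the word expansions and the case enumeration, while the paper's version has the advantage of reusing the explicitly catalogued Cases (1.1)/(1.3), which it needs anyway for the isomorphism argument later in that section.
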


\begin{proof}

Consider Case (1.1) above. In this case, we have $w = t_{\frac{k+k'}{2}} \in \mathcal{R}$, where $k+k'$ is even. Suppose $k$ and $k'$ are both even. We have $t_{\frac{k+k'}{2}} = \underset{k+k'-1}{\underbrace{t_1t_0 \cdots t_1}}$ (see the first item of Lemma \ref{LemmaEkBrauerInduction}). The first equation of Relation 5 of Definition \ref{DefinitionChenBrauer(e,e,3)} becomes\\
$\underset{k-1}{\underbrace{t_1t_0 \cdots t_1}}\ e_{1,2;0}\ \underset{k-1}{\underbrace{t_1t_0 \cdots t_1}}\ \underset{k'-1}{\underbrace{t_1t_0 \cdots t_1}}\ e_{1,2;0}\ \underset{k'-1}{\underbrace{t_1t_0 \cdots t_1}} = \underset{k+k'-1}{\underbrace{t_1t_0 \cdots t_1}}\ \underset{k'-1}{\underbrace{t_1t_0 \cdots t_1}}\ e_{1,2;0}\ \underset{k'-1}{\underbrace{t_1t_0 \cdots t_1}}.$ After simplification, we get\\ 
$e_{1,2;0}\ \underset{k-1}{\underbrace{t_1t_0 \cdots t_1}}\ \underset{k'-1}{\underbrace{t_1t_0 \cdots t_1}}\ e_{1,2;0} = \underset{k'}{\underbrace{t_0t_1 \cdots t_1}}\ \underset{k'-1}{\underbrace{t_1t_0 \cdots t_1}}\ e_{1,2;0} = t_0 e_{1,2;0} = e_{1,2;0}$.\\
Similarly, the second equation of Relation 5 of Definition \ref{DefinitionChenBrauer(e,e,3)} gives\\
$e_{1,2;0}\ \underset{k-1}{\underbrace{t_1t_0 \cdots t_1}}\ \underset{k'-1}{\underbrace{t_1t_0 \cdots t_1}}\ e_{1,2;0} = e_{1,2;0}$.
\begin{itemize}

\item If $k < k'$, then we get $e_{1,2;0}\ \underset{k'-k}{\underbrace{t_0t_1 \cdots t_1}}\ e_{1,2;0} = e_{1,2;0}$, that is $e_{1,2;0}\ \underset{k'-k-1}{\underbrace{t_1t_0 \cdots t_1}}\ e_{1,2;0} = e_{1,2;0}$, where $k'-k-1$ is odd.

\item If $k > k'$, then we get $e_{1,2;0}\ \underset{k-k'-1}{\underbrace{t_1t_0 \cdots t_1}}\ e_{1,2;0} = e_{1,2;0}$, where $k'-k-1$ is odd.
\end{itemize}
\noindent The case when $k$ and $k'$ are both odd is done in the same way and we get\\
$e_{1,2;1}\ \underset{k'-k-1}{\underbrace{t_0t_1 \cdots t_0}}\ e_{1,2;1} = e_{1,2;1}$ if $k < k'$ and $e_{1,2;1}\ \underset{k-k'-1}{\underbrace{t_0t_1 \cdots t_0}}\ e_{1,2;1} = e_{1,2;1}$ if $k > k'$, where $k'-k-1$ is odd. In conclusion, the first statement of the lemma is satisfied.

Since $\underset{e-1}{\underbrace{t_0t_1 \cdots t_1}}\ t_0\ \underset{e-1}{\underbrace{t_1t_0 \cdots t_0}} = t_1$, then by Relation 4 of Definition \ref{DefinitionChenBrauer(e,e,3)}, we have $\underset{e-1}{\underbrace{t_0t_1 \cdots t_1}}\ e_{1,2;0} = e_{1,2;1}\ \underset{e-1}{\underbrace{t_0t_1 \cdots t_1}}$. Also, since $\underset{e-1}{\underbrace{t_1t_0 \cdots t_0}}\ t_1\ \underset{e-1}{\underbrace{t_0t_1 \cdots t_1}} = t_0$, by Relation 4 of Definition \ref{DefinitionChenBrauer(e,e,3)}, we also have $\underset{e-1}{\underbrace{t_1t_0 \cdots t_0}}\ e_{1,2;1} = e_{1,2;0}\ \underset{e-1}{\underbrace{t_1t_0 \cdots t_0}}$.

\end{proof}

In the remaining part of this section, our goal is to prove that the algebra Br$(e,e,3)$ is isomorphic to $\mathcal{B}(e,e,3)$ for all $e \geq 3$ and $e$ odd.\\

Consider the map $\psi$ on the set of generators defined by $\psi(T_0) = t_0$, $\psi(T_1) = t_1$, $\psi(S_3) = s_3$, $\psi(E_0) = e_{1,2;0}$, $\psi(E_1) = e_{1,2;1}$, and $\psi(F_3) = e_{2,3;0}$.

\begin{proposition}\label{PropositionIsomChenPsi}

For $e \geq 3$ and $e$ odd, the map $\psi$ extends to a morphism from \emph{Br}$(e,e,3)$ to $\mathcal{B}(e,e,3)$.

\end{proposition}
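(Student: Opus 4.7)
The plan is to check that $\psi$ respects every defining relation of $\mathrm{Br}(e,e,3)$ listed in Definition~\ref{DefinitionBr(e,e,n)eodd}, so that it extends to an algebra homomorphism. The starting point is to extend $\psi$ from the six initial generators to all the $T_k$ and $E_k$ with $2 \leq k \leq e-1$. The only consistent choice dictated by Relations~1 and 2 is $\psi(T_k) := t_k$ and $\psi(E_k) := e_{1,2;k}$, and this is compatible with the recursions $T_k = T_{k-1}T_{k-2}T_{k-1}$ and $E_k = T_{k-1}E_{k-2}T_{k-1}$ thanks to the first two items of Lemma~\ref{LemmaEkBrauerInduction}. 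In particular, Relations~1 and 2 of Definition~\ref{DefinitionBr(e,e,n)eodd} are taken care of by this extension, as is the second braid relation $\underset{e}{\underbrace{T_1T_0\cdots T_1}}=\underset{e}{\underbrace{T_0T_1\cdots T_0}}$ (the corresponding identity in $G(e,e,3)$ holds in $\mathcal{B}(e,e,3)$ by Relation~1 of Definition~\ref{DefinitionChenBrauer(e,e,3)}).

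Next I would verify the type $A_{n-1}$ Brauer relations (Definition~\ref{DefinitionBrauerBMW}) specialised to $n=3$. The quadratic relations $T_i^2 = 1$, $S_3^2 = 1$ hold in $G(e,e,3)$, and $E_i^2 = xE_i$, $F_3^2 = xF_3$ are Relation~3 of Definition~\ref{DefinitionChenBrauer(e,e,3)}. The identities $T_iE_i = E_iT_i = E_i$ and $S_3F_3 = F_3S_3 = F_3$ are instances of Relation~2 of Definition~\ref{DefinitionChenBrauer(e,e,3)} applied to the reflections $s_{1,2;i}$ and $s_{2,3;0}$ respectively. The braid relation $T_iS_3T_i = S_3T_iS_3$ holds in the group $G(e,e,3)$ and thus transfers to $\mathcal{B}(e,e,3)$ via Relation~1 of Definition~\ref{DefinitionChenBrauer(e,e,3)}. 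The crucial non-obvious relations $E_iS_3E_i = E_i$ and $F_3T_iF_3 = F_3$ are precisely the two identities of item~(2) of Lemma~\ref{LemmaRelBrauerChenJet1}, and the mixed relations of the form $S_jS_iF_j = F_iS_jS_i = F_iF_j$ specialise (for $n=3$) to $T_iS_3E_i = F_3T_iS_3 = F_3E_i$ and $S_3T_iF_3 = E_iS_3T_i = E_iF_3$, which are item~(1) of Lemma~\ref{LemmaRelBrauerChenJet1}.

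Finally, the dihedral Relations 3--9 of Definition~\ref{DefinitionBr(e,e,n)eodd} need to be matched. Relations 3--5 hold immediately from Relation~2 of Definition~\ref{DefinitionChenBrauer(e,e,3)} (applied to $e_{1,2;0}$ and $e_{1,2;1}$) and from $T_i^2 = 1$. Relations 6 and 7 are exactly item~(1) of Lemma~\ref{LemmaRelBrauerChenJet2}, while Relations 8 and 9 are item~(2) of the same lemma. Once every relation is accounted for, the universal property of the presentation of $\mathrm{Br}(e,e,3)$ produces the desired algebra homomorphism $\psi : \mathrm{Br}(e,e,3) \to \mathcal{B}(e,e,3)$.

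The main obstacle has already been absorbed into the preparatory lemmas: the dihedral identities in Lemma~\ref{LemmaRelBrauerChenJet2} required an explicit case analysis of the conjugation relation $wsw^{-1}=s'$ carried out in Cases~(1.1)--(6.6), which is precisely how one uncovers the constants on the right-hand sides of Relations~6--9 (and, incidentally, why the assumption that $e$ is odd is essential, since it ensures there are no crossing edges of the form demanding Relation~6 of Definition~\ref{DefinitionChenBrauer(e,e,3)}, as observed in Remark~\ref{RemarkEeven}). With those facts in hand, the remaining verification is pure bookkeeping.
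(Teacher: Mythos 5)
Your proposal is correct and follows essentially the same route as the paper's proof: it reduces everything to checking the defining relations of $\mathrm{Br}(e,e,3)$ generator by generator, disposing of the type $A_2$ Brauer relations via Lemma~\ref{LemmaRelBrauerChenJet1}, of Relation~2 via Lemma~\ref{LemmaEkBrauerInduction}, and of the dihedral Relations~6--9 via the two items of Lemma~\ref{LemmaRelBrauerChenJet2}. The only difference is cosmetic: you make explicit the forced extension $\psi(T_k)=t_k$, $\psi(E_k)=e_{1,2;k}$, which the paper leaves implicit.
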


\begin{proof}

In order to prove that $\psi$ extends to a morphism from Br$(e,e,3)$ to $\mathcal{B}(e,e,3)$ for $e$ odd, we should show that $\psi$ preserves all the relations of Definition \ref{DefinitionBr(e,e,n)eodd}. We have two types of relations: the relations that correspond to the Brauer algebra of type $A_2$ for $\{T_k,S_3\} \cup \{E_k,F_3\}$ for $0 \leq k \leq e-1$ (see Definition \ref{DefinitionBrauerBMW}) and Relations 1 to 9 of \mbox{Definition \ref{DefinitionChenBrauer(e,e,3)}.} 

Relation 1 of Definition \ref{DefinitionBrauerBMW} gives $t_k^2 = 1$ and $s_3^2 = 1$ that are particular relations of the first item of Definition \ref{DefinitionChenBrauer(e,e,3)}. 

Relation 2 of Definition \ref{DefinitionBrauerBMW} gives $e_{1,2;k}^2 = x e_{1,2;k}$ and $e_{2,3;0}^2 = x e_{2,3;0}$ that are particular relations of the third item of Definition \ref{DefinitionChenBrauer(e,e,3)}.

Relation 3 of Definition \ref{DefinitionBrauerBMW} gives $t_k e_{1,2;k} = e_{1,2;k} t_k = e_{1,2;k}$ and $s_3 e_{2,3;0} = e_{2,3;0}s_3 = e_{2,3;0}$ that are particular relations of the second item of Definition \ref{DefinitionChenBrauer(e,e,3)}.

Relation 4 of Definition \ref{DefinitionBrauerBMW} gives $e_{1,2;k} s_3 e_{1,2;k} = e_{1,2;k}$ and $e_{2,3;0} t_k e_{2,3;0} = e_{2,3;0}$. By the second item of Lemma \ref{LemmaRelBrauerChenJet1}, these relations hold in $\mathcal{B}(e,e,3)$.

Relation 5 of Definition \ref{DefinitionBrauerBMW} gives $e_{1,2;k}e_{2,3;0} = s_3t_ke_{2,3;0} = e_{1,2;k} s_3t_k$ and $e_{2,3;0}e_{1,2;k} = t_ks_3e_{1,2;k} = e_{2,3;0} t_ks_3$. By the first item of Lemma \ref{LemmaRelBrauerChenJet1}, these relations hold in $\mathcal{B}(e,e,3)$.

Relation 1 of Definition \ref{DefinitionBr(e,e,n)eodd} gives $t_k = t_{k-1}t_{k-2}t_{k-1}$ and $\underset{e-1}{\underbrace{t_1t_0 \cdots t_0}} = \underset{e-1}{\underbrace{t_0t_1 \cdots t_1}}$ that are particular relations of the first item of Definition \ref{DefinitionChenBrauer(e,e,3)}.

Relation 2 of Definition \ref{DefinitionBr(e,e,n)eodd} gives $e_{1,2;k} = t_{k-1} e_{1,2;k-2} t_{k-1}$. By the second item of Lemma \ref{LemmaEkBrauerInduction}, these relations hold in $\mathcal{B}(e,e,3)$.

Relations 3, 4, and 5 of Definition \ref{DefinitionBr(e,e,n)eodd} are already checked. Relations 6 and 7 of Definition \ref{DefinitionBr(e,e,n)eodd} give $e_{1,2;1}\ \underset{k}{\underbrace{t_0t_1 \cdots t_0}}\ e_{1,2;1} = e_{1,2;1}$ and $e_{1,2;0}\ \underset{k}{\underbrace{t_1t_0 \cdots t_1}}\ e_{1,2;0} = e_{1,2;0}$. By the first item of Lemma \ref{LemmaRelBrauerChenJet2}, these relations hold in $\mathcal{B}(e,e,3)$. 

Relations 8 and 9 of Definition \ref{DefinitionBr(e,e,n)eodd} give $\underset{e-1}{\underbrace{t_1t_0 \cdots t_0}}\ e_{1,2;1} = e_{1,2;0}\ \underset{e-1}{\underbrace{t_1t_0 \cdots t_0}}$ and $\underset{e-1}{\underbrace{t_0t_1 \cdots t_1}}\ e_{1,2;0} = e_{1,2;1}\ \underset{e-1}{\underbrace{t_0t_1 \cdots t_1}}$. By the second item of Lemma \ref{LemmaRelBrauerChenJet2}, these relations hold in $\mathcal{B}(e,e,3)$.

It follows that $\psi$ extends to a morphism from Br$(e,e,3)$ to $\mathcal{B}(e,e,3)$.

\end{proof}

Lemmas \ref{LemmaRelBrJet1}, \ref{LemmaRelBrJet2}, and \ref{LemmaRelBrJet3} below will be useful in the proof of Proposition \ref{PropositionIsomChenPhi} below.

\begin{lemma}\label{LemmaRelBrJet1}

Let $0 \leq k,k' \leq e-1$ with $k \neq k'$ and $k + k'$ even, the following relations hold in \emph{Br}$(e,e,3)$ for $e$ odd.
\begin{enumerate}
\item $E_kE_{k'} = T_{\frac{k+k'}{2}}E_{k'} = E_k T_{\frac{k+k'}{2}}$.
\item $T_kF_3T_k\ T_{k'}F_3T_{k'} = T_{\frac{k+k'}{2}}S_3T_{\frac{k+k'}{2}}\ T_{k'}F_3T_{k'} = T_kF_3T_k\ T_{\frac{k+k'}{2}}S_3T_{\frac{k+k'}{2}}$.
\end{enumerate}

\end{lemma}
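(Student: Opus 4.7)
The strategy is to establish Part~1 by reducing it to two auxiliary identities, and then to deduce Part~2 from Part~1 combined with Equation~(5.6) ($T_kF_3T_k=S_3E_kS_3$) and the braid relation $S_3T_mS_3=T_mS_3T_m$, which is available in Br$(e,e,3)$ for every $m\in\{0,1,\ldots,e-1\}$ because $\{T_m,S_3,\ldots,S_n\}\cup\{E_m,F_3,\ldots,F_n\}$ generates a type-$A_{n-1}$ Brauer subalgebra. Writing $m:=(k+k')/2$, the two auxiliary identities are a \emph{conjugation formula} $T_m E_{k'} T_m = E_k$ and an \emph{absorption formula} $E_{k'} T_m E_{k'} = E_{k'}$. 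Using $T_m^{2}=1$ (which holds for every $m$ by a short induction from $T_0^{2}=T_1^{2}=1$ and the recursion $T_m=T_{m-1}T_{m-2}T_{m-1}$ of Definition~\ref{DefinitionBr(e,e,n)eodd}), the conjugation formula rewrites as $T_m E_{k'}=E_k T_m$, which is the second equality of Part~1; combining it with the absorption formula yields $E_k E_{k'}=T_m E_{k'}T_m E_{k'}=T_m(E_{k'}T_m E_{k'})=T_m E_{k'}$, the first equality.

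For the conjugation formula I would argue by induction on $m$. The base case $m=1$ is immediate: $T_1 E_0 T_1 = E_2$ is the defining recursion of Definition~\ref{DefinitionBr(e,e,n)eodd}, and $T_1 E_2 T_1 = T_1(T_1 E_0 T_1)T_1 = E_0$. For the inductive step, substituting $T_m=T_{m-1}T_{m-2}T_{m-1}$ gives $T_m E_{k'} T_m = T_{m-1} T_{m-2}(T_{m-1} E_{k'} T_{m-1})T_{m-2} T_{m-1}$, and applying the induction hypothesis three times (to conjugations by $T_{m-1}$, then $T_{m-2}$, then $T_{m-1}$) reduces the $E$-index successively from $k'$ to $2(m-1)-k'$, then to $2(m-2)-(2(m-1)-k')=k'-2$, and finally to $2(m-1)-(k'-2)=2m-k'$, as required. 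For the absorption formula I would substitute the expressions of $E_{k'}$ and $T_m$ given by Lemma~\ref{LemmaExtendTkEk} (writing $E_{k'}$ as a conjugate of $E_\varepsilon$ by an alternating word in $T_0,T_1$ of length $k'-1$, with $\varepsilon\in\{0,1\}$ matching the parity of $k'$, and $T_m$ as an alternating word of length $2m-1$), use $T_\varepsilon E_\varepsilon = E_\varepsilon$ to cancel the outermost factors on either side, and bring the expression into the form $E_\varepsilon\,\underbrace{T_{1-\varepsilon}T_\varepsilon\cdots T_{1-\varepsilon}}_{\text{odd length}\,\leq\,e-2}\,E_\varepsilon$, which equals $E_\varepsilon$ by Relations~6 and~7 of Definition~\ref{DefinitionBr(e,e,n)eodd}; conjugating back recovers the formula.

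Part~2 is then a corollary. Using $S_3^{2}=1$ and Equation~(5.6), the left-hand side rewrites as $T_k F_3 T_k\cdot T_{k'} F_3 T_{k'} = S_3 E_k E_{k'} S_3$, which by Part~1 equals $S_3 T_m E_{k'} S_3$. The braid identity $S_3 T_m S_3 = T_m S_3 T_m$ gives $S_3 T_m = T_m S_3 T_m S_3$, hence $S_3 T_m E_{k'} S_3 = T_m S_3 T_m\cdot S_3 E_{k'} S_3 = T_m S_3 T_m\cdot T_{k'} F_3 T_{k'}$, the middle expression. For the rightmost expression, one instead applies the second equality $T_m E_{k'} = E_k T_m$ of Part~1 inside $S_3 T_m E_{k'} S_3 = S_3 E_k T_m S_3$, and uses the braid identity in the form $T_m S_3 = S_3 T_m S_3 T_m$ to rewrite this as $S_3 E_k S_3\cdot T_m S_3 T_m = T_k F_3 T_k\cdot T_m S_3 T_m$. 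The principal obstacle I foresee is the bookkeeping in the inductive step of the conjugation formula: the intermediate $E$-indices $2(m-1)-k'$ and $k'-2$ must remain in a range where the induction hypothesis applies, which requires careful use of the cyclic interpretation of indices modulo $e$, and it is the hypothesis that $e$ is odd that ensures the relations of Definition~\ref{DefinitionBr(e,e,n)eodd} suffice without interference from the extra vanishing relation $E_0 A E_1 = 0$ of the even case (Definition~\ref{DefinitionBr(e,e,n)eeven}).
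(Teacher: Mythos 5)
Your overall architecture (reduce Part~1 to a conjugation formula $T_mE_{k'}T_m=E_k$ and an absorption formula $E_{k'}T_mE_{k'}=E_{k'}$, then get Part~2 from Equation~(5.6), $S_3^2=1$ and the braid relation $S_3T_mS_3=T_mS_3T_m$) is sound, and your Part~2 derivation coincides with the paper's. But the proof of the conjugation formula has a genuine gap. Since your induction steps down through both $m-1$ and $m-2$, you need the statement in the strengthened form ``$T_mE_iT_m=E_{2m-i}$ for \emph{every} residue $i$'' and you need it for \emph{two} base cases, $m=0$ and $m=1$; yet you only verify the two instances $T_1E_0T_1=E_2$ and $T_1E_2T_1=E_0$. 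Already $T_1E_4T_1=E_{e-2}$, or $T_0E_1T_0=E_{e-1}$, is not a consequence of the recursion $E_i=T_{i-1}E_{i-2}T_{i-1}$ alone: the index wraps around modulo $e$, and identifying the resulting word with the correct $E_j$ requires the length-$(e-1)$ relations 8--9 of Definition~\ref{DefinitionBr(e,e,n)eodd} (equivalently, the computation carried out in the proof of Lemma~\ref{LemmaExtendTkEk}). Moreover your inductive step itself produces indices $2(m-1)-k'$ and $k'-2$ outside $[0,e-1]$, so the wrap-around cannot be avoided; this is exactly the ``principal obstacle'' you flag at the end, but flagging it is not resolving it, and it is precisely where the hypothesis that $e$ is odd and the $e$-dependent relations must enter. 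The absorption formula has a similar unproved step: after stripping outer letters, the middle word $W\,T_m\,W$ is only asserted to be an alternating word of odd length at most $e-2$; in general it must first be reduced (possibly using the length-$e$ relation of Relation~1 and the wrap-around relations) before Relations 6--7 apply, and one must observe that $k\neq k'$ is what keeps this middle word nonempty (otherwise you would get $xE_{k'}$ rather than $E_{k'}$).

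For comparison, the paper sidesteps all of this by never proving a general conjugation statement: it expands only the three specific products $T_{\frac{k+k'}{2}}E_{k'}$, $E_kT_{\frac{k+k'}{2}}$ and $E_kE_{k'}$ using the explicit alternating-word expressions of Lemma~\ref{LemmaExtendTkEk}, cancels freely with $T_i^2=1$ and $T_0E_0=E_0$ (no index ever leaves the range where these words make sense, so no wrap-around occurs), and invokes only Relations 6--7 once at the end. If you want to keep your two-identity structure, the cleanest repair is to prove both auxiliary identities by the same direct word expansion via Lemma~\ref{LemmaExtendTkEk} rather than by induction on $m$; as written, the key computational content of the lemma is deferred rather than established.
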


\begin{proof}

Suppose $k$ and $k'$ are both even. We have $T_{\frac{k+k'}{2}} E_{k'} =\\ \underset{k+k'-1}{\underbrace{T_1T_0 \cdots T_1}}\ \underset{k'-1}{\underbrace{T_1T_0 \cdots T_1}}\ E_0\ \underset{k'-1}{\underbrace{T_1T_0 \cdots T_1}} = \underset{k}{\underbrace{T_1T_0 \cdots T_0}}\ E_0\ \underset{k'-1}{\underbrace{T_1T_0 \cdots T_1}} =\\ \underset{k-1}{\underbrace{T_1T_0 \cdots T_1}}\ E_0\ \underset{k'-1}{\underbrace{T_1T_0 \cdots T_1}}$.\\
Also we have $E_kT_{\frac{k+k'}{2}} = \underset{k-1}{\underbrace{T_1T_0 \cdots T_1}}\ E_0\ \underset{k-1}{\underbrace{T_1T_0 \cdots T_1}}\ \underset{k+k'-1}{\underbrace{T_1T_0 \cdots T_1}} =\\ \underset{k-1}{\underbrace{T_1T_0 \cdots T_1}}\ E_0\ \underset{k'}{\underbrace{T_0T_1 \cdots T_1}} = \underset{k-1}{\underbrace{T_1T_0 \cdots T_1}}\ E_0\ \underset{k'-1}{\underbrace{T_1T_0 \cdots T_1}}$ that is equal to $T_{\frac{k+k'}{2}}E_{k'}$.\\
It remains to show that $E_kE_{k'} = T_{\frac{k+k'}{2}}E_{k'}$. We have\\
$E_kE_{k'} = \underset{k-1}{\underbrace{T_1T_0 \cdots T_1}}\ E_0\ \underset{k-1}{\underbrace{T_1T_0 \cdots T_1}}\ \underset{k'-1}{\underbrace{T_1T_0 \cdots T_1}}\ E_0\ \underset{k'-1}{\underbrace{T_1T_0 \cdots T_1}}$.

\begin{itemize}

\item If $k > k'$, then we have $E_kE_{k'} = \underset{k-1}{\underbrace{T_1T_0 \cdots T_1}}\ E_0\ \underset{k-k'}{\underbrace{T_1T_0 \cdots T_0}}\ E_0\ \underset{k'-1}{\underbrace{T_1T_0 \cdots T_1}} =\\ \underset{k-1}{\underbrace{T_1T_0 \cdots T_1}}\ E_0\ \underset{k-k'-1}{\underbrace{T_1T_0 \cdots T_1}}\ E_0\ \underset{k'-1}{\underbrace{T_1T_0 \cdots T_1}}$. Since $k-k'-1$ is odd, by Relation 7 of Definition \ref{DefinitionBr(e,e,n)eodd}, we replace $E_0\ \underset{k-k'-1}{\underbrace{T_1T_0 \cdots T_1}}\ E_0$ by $E_0$ and get\\ $\underset{k-1}{\underbrace{T_1T_0 \cdots T_1}}\ E_0\ \underset{k'-1}{\underbrace{T_1T_0 \cdots T_1}}$ which is equal to $T_{\frac{k+k'}{2}}E_{k'}$.
\item If $k < k'$, then $E_kE_{k'} = \underset{k-1}{\underbrace{T_1T_0 \cdots T_1}}\ E_0\ \underset{k'-k}{\underbrace{T_0T_1 \cdots T_1}}\ E_0\ \underset{k'-1}{\underbrace{T_1T_0 \cdots T_1}} =\\ \underset{k-1}{\underbrace{T_1T_0 \cdots T_1}}\ E_0\ \underset{k'-k-1}{\underbrace{T_1T_0 \cdots T_1}}\ E_0\ \underset{k'-1}{\underbrace{T_1T_0 \cdots T_1}} = \underset{k-1}{\underbrace{T_1T_0 \cdots T_1}}\ E_0\ \underset{k'-1}{\underbrace{T_1T_0 \cdots T_1}} = T_{\frac{k+k'}{2}}E_{k'}$.

\end{itemize}

\noindent Hence we get Relation 1.

For the second relation, we have $T_kF_3T_kT_{k'}F_3T_{k'} = S_3E_kS_3S_3E_{k'}S_3 = S_3E_kE_{k'}S_3$ and $T_{\frac{k+k'}{2}}S_3T_{\frac{k+k'}{2}}T_{k'}F_3T_{k'} = S_3T_{\frac{k+k'}{2}}S_3S_3E_{k'}S_3 = S_3T_{\frac{k+k'}{2}}E_{k'}S_3$. Similarly, we have $T_kF_3T_kT_{\frac{k+k'}{2}}S_3T_{\frac{k+k'}{2}} = S_3E_kT_{\frac{k+k'}{2}}S_3$. By Relation 1, we get that Relation 2 holds in Br$(e,e,3)$.

The case when $k$ and $k'$ are both odd is similar and left to the reader.

\end{proof}

\begin{lemma}\label{LemmaRelBrJet2}

Let $0 \leq k,k' \leq e-1$ with $k \neq k'$, the following relations hold in \emph{Br}$(e,e,3)$ for $e$ odd.
\begin{enumerate}
\item $T_0T_{k}F_3T_kT_0\ T_{k'}F_3T_{k'} = T_{k'-k}\ T_{k'}F_3T_{k'} = T_0T_{k}F_3T_kT_0\ T_{k'-k}$.
\item $E_k\ T_0T_{k'} F_3T_{k'}T_0 = T_{k+k'} S_3 T_{k+k'}\ T_0T_{k'}F_3T_{k'}T_0 = E_k\ T_{k+k'}S_3T_{k+k'}$.
\end{enumerate}

\end{lemma}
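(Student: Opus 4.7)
The plan is to reduce both parts of the lemma to two conjugation identities in $\mathrm{Br}(e,e,3)$, each mirroring a reflection conjugation in $G(e,e,3)$: for Part~1, $T_{k'-k}\,(T_0T_kF_3T_kT_0)\,T_{k'-k}=T_{k'}F_3T_{k'}$ (reflecting $t_{k'-k}\,s_{2,3;k}\,t_{k'-k}=s_{1,3;k'}$), and for Part~2, $(T_{k+k'}S_3T_{k+k'})\,E_k\,(T_{k+k'}S_3T_{k+k'})=T_0T_{k'}F_3T_{k'}T_0$ (reflecting $s_{1,3;k+k'}\,s_{1,2;k}\,s_{1,3;k+k'}=s_{2,3;k'}$). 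Both identities will come from Equation~(5.6) of Proposition~\ref{PropAdditionalRelations2} together with dihedral rotation identities in the $T_i$-subalgebra. The finishing move in each case will be an absorption identity $E_mS_3E_m=E_m$, which I will extract from Equations~(5.3) and (5.5) and the extension of $T_mE_m=E_m$ to all $m\in\mathbb{Z}/e\mathbb{Z}$ via the recursion $E_m=T_{m-1}E_{m-2}T_{m-1}$ of Definition~\ref{DefinitionBr(e,e,n)eodd}.

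First I would collect the arithmetic ingredients valid in $\mathrm{Br}(e,e,3)$ for every $m\in\mathbb{Z}/e\mathbb{Z}$: the dihedral identities $T_jT_iT_j=T_{2j-i}$ and $T_jE_iT_j=E_{2j-i}$ (both by induction from Relations~1 and~2 of Definition~\ref{DefinitionBr(e,e,n)eodd}); the rotation-product formulas $T_{k'-k}T_0T_k=T_kT_0T_{k'-k}=T_{k'}$, $T_{k+k'}T_{k+2k'}=T_0T_{k'}$, and $T_{k+k'}T_{k'}=T_kT_0$ (direct computations in the $T_i$-subalgebra); and the absorption $E_mS_3E_m=E_m$ obtained by chaining $E_mS_3=E_mF_3T_m$ from Equation~(5.5), then $T_mE_m=E_m$, and finally $E_mF_3E_m=E_m$ from Equation~(5.3).

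For Part~1 the conjugation identity is immediate from the rotation-product formulas: $T_{k'-k}(T_0T_kF_3T_kT_0)T_{k'-k}=(T_{k'-k}T_0T_k)F_3(T_kT_0T_{k'-k})=T_{k'}F_3T_{k'}$, so $T_0T_kF_3T_kT_0=T_{k'-k}(T_{k'}F_3T_{k'})T_{k'-k}$; multiplying on the right by $T_{k'-k}$ gives the third equality directly. For the middle equality I would prove $(T_{k'}F_3T_{k'})T_{k'-k}(T_{k'}F_3T_{k'})=T_{k'}F_3T_{k'}$ by rewriting $T_{k'}F_3T_{k'}=S_3E_{k'}S_3$ via Equation~(5.6), applying the braid relation $S_3T_{k'-k}S_3=T_{k'-k}S_3T_{k'-k}$, conjugating $E_{k'}$ by $T_{k'-k}$ to $E_{k'-2k}$ on both sides, and closing with $E_{k'-2k}S_3E_{k'-2k}=E_{k'-2k}$. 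Part~2 follows the mirror strategy: the conjugation identity $(T_{k+k'}S_3T_{k+k'})E_k(T_{k+k'}S_3T_{k+k'})=T_0T_{k'}F_3T_{k'}T_0$ is obtained from $T_{k+k'}E_kT_{k+k'}=E_{k+2k'}$, Equation~(5.6), and $T_{k+k'}T_{k+2k'}=T_0T_{k'}$; the absorption $E_k(T_{k+k'}S_3T_{k+k'})E_k=E_k$ is then proved by chaining $E_kT_{k+k'}=T_{k+k'}E_{k+2k'}$, $E_{k+2k'}S_3=E_{k+2k'}F_3T_{k+2k'}$, $T_{k+2k'}T_{k+k'}=T_{k'}T_0$, two further $E$-conjugation moves to flank $F_3$ by two copies of $E_{k+2k'}$, then $E_{k+2k'}F_3E_{k+2k'}=E_{k+2k'}$, and finally $T_{k+k'}T_{k'}=T_kT_0$ together with $E_kT_k=E_k$. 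Combining these two identities yields $E_k\cdot T_0T_{k'}F_3T_{k'}T_0=E_k(T_{k+k'}S_3T_{k+k'})$, and the middle equality comes from $(T_{k+k'}S_3T_{k+k'})^2=T_{k+k'}S_3^2T_{k+k'}=1$, collapsing $(T_{k+k'}S_3T_{k+k'})\cdot T_0T_{k'}F_3T_{k'}T_0$ to $E_k(T_{k+k'}S_3T_{k+k'})$ in one step.

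The main obstacle will be purely combinatorial bookkeeping: every application of $T_j(\cdot)T_j$ forces the cyclic index shift $i\mapsto 2j-i$, and one must choose the side from which to start the conjugation so that the cascade of substitutions lands on $E_kT_k=E_k$ rather than on a residual expression still carrying $F_3$ or $S_3$. No relation of $\mathrm{Br}(e,e,3)$ beyond those in Propositions~\ref{PropAdditionalRelations1} and~\ref{PropAdditionalRelations2}, Lemma~\ref{LemmaExtendTkEk}, and the defining relations of Definition~\ref{DefinitionBr(e,e,n)eodd} should be needed.
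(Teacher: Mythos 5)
Your argument is correct, but it routes several steps differently from the paper. For Part 1 the paper simply collapses the first product: $T_0T_kF_3T_kT_0\,T_{k'}F_3T_{k'}=T_0T_kF_3(T_kT_0T_{k'})F_3T_{k'}=T_0T_kF_3T_{k+k'}F_3T_{k'}=T_0T_kF_3T_{k'}$ using the type-$A$ relation $F_3T_mF_3=F_3$, and then checks that the other two expressions also equal $T_0T_kF_3T_{k'}$ via $T_kT_0T_{k'-k}=T_{k'}$ and $T_{k'-k}T_{k'}=T_0T_k$; your detour through $T_{k'}F_3T_{k'}=S_3E_{k'}S_3$, the braid relation and $E_{k'-2k}S_3E_{k'-2k}=E_{k'-2k}$ proves the same absorption identity $(T_{k'}F_3T_{k'})T_{k'-k}(T_{k'}F_3T_{k'})=T_{k'}F_3T_{k'}$, but note it is available in one line from $F_3T_{k+k'}F_3=F_3$ after conjugating the middle factor by $T_{k'}$. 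For Part 2 you and the paper share the key computation $(T_{k+k'}S_3T_{k+k'})E_k(T_{k+k'}S_3T_{k+k'})=T_0T_{k'}F_3T_{k'}T_0$ (Equation (5.6) plus $T_{k+k'}E_kT_{k+k'}=E_{k+2k'}$ and $T_{k+k'}T_{k+2k'}=T_0T_{k'}$), and the involution trick for the middle equality; but for the first equality the paper performs an explicit reduction $T_{k+k'}E_{k+2k'}=E_kT_0T_{k'}$ by word manipulation with a parity split on $k$ (via Lemma \ref{LemmaExtendTkEk}), whereas you replace it by the absorption $E_k(T_{k+k'}S_3T_{k+k'})E_k=E_k$, which indeed follows from $E_kT_{k+k'}=T_{k+k'}E_{k+2k'}$ and $E_{k+2k'}S_3E_{k+2k'}=E_{k+2k'}$ and yields $E_kY=E_kXE_kX=E_kX$. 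The only place where you elide real work is the general conjugation formula $T_jE_iT_j=E_{2j-i}$ (and $T_jT_iT_j=T_{2j-i}$): it does follow by induction on $|i-j|$ from $E_i=T_{i-1}E_{i-2}T_{i-1}$, $T_jE_jT_j=E_j$ and the dihedral identities among the $T_i$ (or from the explicit words of Lemma \ref{LemmaExtendTkEk}), but this is exactly where the paper's parity-split computations live, so the explicit bookkeeping has been repackaged rather than avoided. What your version buys is uniformity: no case distinction on the parity of $k$, and reusable index-shift and absorption identities; what the paper's version buys is that every step is an explicit manipulation of the normal-form words already established, with no auxiliary general lemma needed.
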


\begin{proof}

We have $T_kT_0T_{k'} = \underset{2(k+k')-1}{\underbrace{T_1T_0 \cdots T_1}} = T_{k+k'}$. Then $T_0T_{k}F_3T_kT_0T_{k'}F_3T_{k'} =\\ T_0T_{k}F_3T_{k+k'}F_3T_{k'} = T_0T_{k}F_3T_{k'}$. Since $T_kT_0T_{k'-k} = T_{k'}$, we have $T_0T_{k}F_3T_{k'} = T_0T_{k}F_3T_kT_0T_{k'-k}$. Also, since $T_{k'-k}T_{k'} = T_0T_k$, we get $T_{k'-k}T_{k'}F_3T_{k'} = T_0T_kF_3T_{k'}$. Hence we get Relation 1.\\

Let us now prove the first identity of Relation 2, that is $E_k\ T_0T_{k'} F_3 =\\ T_{k+k'} S_3 T_{k+k'}\ T_0T_{k'}F_3$. We have $T_{k+k'}T_0T_{k'} = T_{k+2k'}$. Also, we have $S_3T_{k+2k'}F_3 = E_{k+2k'}F_3$. We compute $T_{k+k'}E_{k+2k'}$.

If $k$ is even, then $\underset{2(k+k')-1}{\underbrace{T_1T_0 \cdots T_1}}\ \underset{k+2k'-1}{\underbrace{T_1T_0 \cdots T_1}} \ E_0\ \underset{k+2k'-1}{\underbrace{T_1T_0 \cdots T_1}} =\\ \underset{k}{\underbrace{T_1T_0 \cdots T_0}}\ E_0\ \underset{k-1}{\underbrace{T_1T_0 \cdots T_1}}\ \underset{2k'}{\underbrace{T_0T_1 \cdots T_1}} =\\ \underset{k-1}{\underbrace{T_1T_0 \cdots T_1}}\ E_0\ \underset{k-1}{\underbrace{T_1T_0 \cdots T_1}}\ T_0\ \underset{2k'-1}{\underbrace{T_1T_0 \cdots T_1}} = E_kT_0T_{k'}$.

The case when $k$ is odd is similar and left to the reader. Hence we get the first identity of Relation 2.

For $k$ even, the second identity is\\
$T_{k+k'}S_3T_{k+k'}\ E_k\ T_{k+k'}S_3T_{k+k'} = T_0T_{k'}F_3T_{k'}T_0$. It is easy to check that $T_{k+k'} E_k T_{k+k'} = E_{k+2k'}$. Hence $T_{k+k'}S_3T_{k+k'}\ E_k\ T_{k+k'}S_3T_{k+k'} = T_{k+k'}S_3\ E_{k+2k'}\ S_3T_{k+k'} =\\ T_{k+k'}T_{k+2k'}\ F_3\ T_{k+2k'}T_{k+k'}$. It is easy to check that $T_{k+k'}T_{k+2k'} = T_0T_{k'}$ and we are done. The case when $k$ is odd is done in the same way and is left to the reader.
\end{proof}

\begin{lemma}\label{LemmaRelBrJet3}

Let $0 \leq k,k',l \leq e-1$ with $k \neq k'$ and $k-k'-l$ even, the following relations hold in \emph{Br}$(e,e,3)$ for $e$ odd.
\begin{enumerate}
\item $T_{k'}T_{l}T_{0}S_3T_{\frac{k'-k-l}{2}}T_0\ E_k\ T_0T_{\frac{k'-k-l}{2}}S_3T_{0}T_lT_{k'} = F_3$.
\item \mbox{$T_{k'}T_0T_{\frac{k-k'-l}{2}}T_0(T_0T_1)^kS_3T_{\frac{k'-k-l}{2}}T_0\ F_3\ T_0T_{\frac{k'-k-l}{2}}S_3(T_1T_0)^kT_0T_{\frac{k-k'-l}{2}}T_0T_{k'} = F_3$.}
\end{enumerate}

\end{lemma}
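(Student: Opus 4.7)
The plan is as follows. Both identities are conjugation relations of the form $wE_kw^{-1}=F_3$ (item 1) and $vF_3v^{-1}=F_3$ (item 2), where each $w$ and $v$ is a word in the involutive generators $T_i, S_3$ (with $E_k$ or $F_3$ in the middle), so the inverse can be read off by reversing. Under the algebra map $\mathrm{Br}(e,e,3)\twoheadrightarrow \mathbb{Q}(x)G(e,e,3)$ sending $T_i\mapsto t_i,\ S_3\mapsto s_3$ and $E_k,F_3$ to the reflections $s_{1,2;k}, s_{2,3;0}$, the element $w$ is exactly a lift of a group conjugator of $s_{1,2;k}$ to $s_{2,3;0}$ produced by Case~(2.1) under the parity hypothesis $k-k'-l$ even. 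One verifies $ws_{1,2;k}w^{-1}=s_{2,3;0}$ by a $3\times 3$ matrix computation using the identities $t_it_jt_i=t_{2i-j}$, $s_3t_js_3=s_{1,3;j}$, $t_0s_{1,3;j}t_0=s_{2,3;j}$ and $t_js_{1,3;j}t_j=s_{2,3;0}$; the argument for $v$ is parallel. So the task is to lift these group-level conjugations to $\mathrm{Br}(e,e,3)$.

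The first step I would carry out is to establish the auxiliary algebra identity
\begin{equation*}
T_m T_0\cdot E_k\cdot T_0 T_m \;=\; E_{2m+k}
\end{equation*}
for every integer $m$, with indices read modulo $e$. This is the algebra counterpart of $t_mt_0t_kt_0t_m=t_{2m+k}$. It follows by writing $E_k$ via Lemma~\ref{LemmaExtendTkEk} as an alternating word in $T_0,T_1$ around $E_0$ or $E_1$ (according to the parity of $k$), absorbing the two outer factors $T_mT_0$ using the braid-type relation $T_iT_{i-1}=T_jT_{j-1}$ (Relation~1 of Definition~\ref{DefinitionBr(e,e,n)eodd}) together with $T_i^2=1$, and reducing the final index modulo $e$ by the wrap-around Relations~8 and~9 of Definition~\ref{DefinitionBr(e,e,n)eodd}. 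The parity hypothesis $k-k'-l$ even makes $m:=(k'-k-l)/2$ an integer, after which the inner block in item 1 collapses to $E_{2m+k}=E_{k'-l}$.

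For item 1 the left-hand side then reads $T_{k'}T_lT_0\cdot S_3 E_{k'-l}S_3\cdot T_0 T_l T_{k'}$. Using equation~(5.6) of Proposition~\ref{PropAdditionalRelations2}, $S_3E_jS_3=T_jF_3T_j$, I substitute $S_3E_{k'-l}S_3=T_{k'-l}F_3T_{k'-l}$ to obtain
\begin{equation*}
(T_{k'}T_lT_0T_{k'-l})\cdot F_3\cdot (T_{k'-l}T_0T_lT_{k'}).
\end{equation*}
By Proposition~\ref{PropGroupAlgebraInjectsInBrauer}, each outer word equals the image of the group element $t_{k'}t_lt_0t_{k'-l}\in G(e,e,3)$, which a direct matrix computation (using $t_at_b=\mathrm{diag}(\zeta^{b-a},\zeta^{a-b},1)$) shows is the identity; hence both outer words collapse to $1$, and the expression reduces to $F_3$. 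Item 2 is handled identically, starting instead from $F_3=T_kS_3E_kS_3T_k$ (equation~(5.6) rearranged) in the centre: the extra factor $(T_0T_1)^k$ corresponds to the diagonal group element $\mathrm{diag}(\zeta^{-k},\zeta^k,1)$, which lies in the stabiliser of $s_{2,3;0}$ in $G(e,e,3)$; so after applying Proposition~\ref{PropGroupAlgebraInjectsInBrauer} to the diagonal ``group'' factors, the same collapse to $F_3$ occurs.

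The main obstacle will be the bookkeeping for the auxiliary identity $T_mT_0E_kT_0T_m=E_{2m+k}$: one must verify that the telescoping through the alternating-word representation of $E_k$ carries through in $\mathrm{Br}(e,e,3)$ using only Definition~\ref{DefinitionBr(e,e,n)eodd}'s relations, and that the cyclic reduction modulo $e$ is cleanly absorbed by Relations~8 and~9 (the wrap-around relations, valid only for $e$ odd, which is why the lemma is stated with this hypothesis). The parity hypothesis $k-k'-l$ even is essential: it ensures $(k'-k-l)/2$ is an honest integer, so no $e/2$-shift appears in the index, exactly mirroring the parity split between Cases~(2.1) and~(2.2) in the $G(e,e,3)$ analysis.
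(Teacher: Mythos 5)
Your item 1 is correct and essentially the paper's own argument: the paper computes the inner block $T_{\frac{k'-k-l}{2}}T_0\,E_k\,T_0T_{\frac{k'-k-l}{2}}=E_{k'-l}$ by exactly the alternating-word manipulation you package as the auxiliary identity $T_mT_0E_kT_0T_m=E_{2m+k}$ (splitting into the cases $k'\geq k+l$ and $k'<k+l$, which your ``indices mod $e$'' convention subsumes), then applies Equation \eqref{LastEquation} to get $S_3E_{k'-l}S_3=T_{k'-l}F_3T_{k'-l}$ and checks $T_{k'}T_lT_0T_{k'-l}=1$. Replacing that last word verification by the morphism of Proposition \ref{PropGroupAlgebraInjectsInBrauer} is legitimate (only the existence of the morphism is needed, and it is established before this lemma), and is a mild shortcut over the paper's direct manipulations.

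The gap is in item 2. First, the assertion that $(T_0T_1)^k$ corresponds to a diagonal element lying in the stabiliser of $s_{2,3;0}$ is false: conjugation by $\mathrm{diag}(\zeta_e^{\pm k},\zeta_e^{\mp k},1)$ sends $s_{2,3;0}$ to $s_{2,3;\mp k}\neq s_{2,3;0}$ when $k\not\equiv 0$. Second, and more fundamentally, even a correct group-level stabilisation statement cannot be transported past $F_3$ via Proposition \ref{PropGroupAlgebraInjectsInBrauer}: $F_3$ does not lie in the image of the group algebra, so knowing $g\,s_{2,3;0}\,g^{-1}=s_{2,3;0}$ in $G(e,e,3)$ yields no identity of the form $r(g)\,F_3\,r(g)^{-1}=F_3$ in Br$(e,e,3)$. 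Relations of exactly that shape are what this lemma (feeding the case-by-case verification of Relation 4 of $\mathcal{B}(e,e,3)$ in Proposition \ref{PropositionIsomChenPhi}) is meant to derive from the defining relations, so invoking them here is circular. The repair is the reduction the paper carries out: strip the $S_3$'s using $T_0F_3T_0=S_3E_0S_3$ and the braid relation $S_3T_jS_3=T_jS_3T_j$, collapse the resulting $T$-conjugate of $E_0$ with your auxiliary identity, and apply Equation \eqref{LastEquation} once more; one is then left with $F_3$ conjugated by the \emph{pure} $T$-word $T_{k'}T_0T_{\frac{k-k'-l}{2}}T_0(T_0T_1)^kT_{\frac{k'-k-l}{2}}T_{k'-k-l}$ and its reverse, and only at that stage may the group-algebra morphism (or the paper's direct word computation) be invoked to conclude that this word equals $1$, whence the left-hand side equals $F_3$. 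As written, your item 2 skips these reductions and rests on an invalid transport principle.
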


\begin{proof}

Let us start by proving the first identity. Suppose $k$ is even. 
\begin{itemize}
\item If $k' \geq k+l$, we have $T_{\frac{k'-k-l}{2}}T_0\ E_k\ T_0T_{\frac{k'-k-l}{2}} =\\ \underset{k'-k-l-1}{\underbrace{T_1T_0\cdots T_1}}\ T_0\ \underset{k-1}{\underbrace{T_1T_0\cdots T_1}}\ E_0\ \underset{k-1}{\underbrace{T_1T_0\cdots T_1}}\ T_0\ \underset{k'-k-l-1}{\underbrace{T_1T_0\cdots T_1}} =\\ \underset{k'-l-1}{\underbrace{T_1T_0\cdots T_1}}\ E_0\ \underset{k'-l-1}{\underbrace{T_1T_0\cdots T_1}} = E_{k'-l}$.\\
We should prove that $T_{k'}T_{l}T_{0}S_3\ E_{k'-l}\ S_3 T_{0}T_lT_{k'} = F_3$, that is\\ $T_{k'}T_{l}T_{0}T_{k'-l}\ F_3\ T_{k'-l} T_{0}T_lT_{k'} = F_3$. It is easily checked that $T_{k'}T_{l}T_0T_{k'-l} = 1$ and the relation we want to prove follows immediately.

\item If $k' < k+l$, let $x = k'-k-l+2e$. We have  $T_{\frac{k'-k-l}{2}}T_0\ E_k\ T_0T_{\frac{k'-k-l}{2}} =$\\
$T_{x/2}T_0\ E_k\ T_0T_{x/2} =
 \underset{x-1}{\underbrace{T_1T_0\cdots T_1}}\ T_0\ \underset{k-1}{\underbrace{T_1T_0\cdots T_1}}\ E_0\ \underset{k-1}{\underbrace{T_1T_0\cdots T_1}}\ T_0\ \underset{x-1}{\underbrace{T_1T_0\cdots T_1}} $. After simplification, this is equal to $\underset{k'-l+2e-1}{\underbrace{T_1T_0\cdots T_1}}\ E_0\ \underset{k'-l+2e-1}{\underbrace{T_1T_0\cdots T_1}} = E_{k'-l+2e}$.\\
We should prove that $T_{k'}T_{l}T_{0}S_3\ E_{k'-l+2e}\ S_3 T_{0}T_lT_{k'} = F_3$. This is done as in the previous case.

\end{itemize}

Note that the case when $k$ is odd is similar and is left to the reader. Hence we get Relation 1 of the lemma.\\

For the second relation, we have $S_3T_{\frac{k'-k-l}{2}}T_0\ F_3\ T_0T_{\frac{k'-k-l}{2}}S_3 =\\ S_3T_{\frac{k'-k-l}{2}}S_3\ E_0\ S_3T_{\frac{k'-k-l}{2}}S_3 = T_{\frac{k'-k-l}{2}}S_3T_{\frac{k'-k-l}{2}}\ E_0\ T_{\frac{k'-k-l}{2}}S_3 T_{\frac{k'-k-l}{2}}$.

\emph{Suppose $k' \geq k+l$}. We have $T_{\frac{k'-k-l}{2}}\ E_0\ T_{\frac{k'-k-l}{2}}$ is equal to $E_{k'-k-l}$. We get\\
$T_{\frac{k'-k-l}{2}}S_3\ E_{k'-k-l}\ S_3T_{\frac{k'-k-l}{2}} = T_{\frac{k'-k-l}{2}}T_{k'-k-l}\ F_3\ T_{k'-k-l}T_{\frac{k'-k-l}{2}} =\\ \underset{k'-k-l}{\underbrace{T_0T_1 \cdots T_1}}\ F_3\ \underset{k'-k-l}{\underbrace{T_1T_0 \cdots T_0}}$. On the one hand, we have\\
$T_{k'}T_{0}T_{\frac{k-k'-l}{2}}T_0(T_0T_1)^k\ \underset{k'-k-l}{\underbrace{T_0T_1 \cdots T_1}} = T_{k'}T_{0}T_{\frac{k-k'-l}{2}}T_0\ \underset{2k}{\underbrace{T_0T_1 \cdots T_1}}\ \underset{k'-k-l}{\underbrace{T_0T_1 \cdots T_1}} = \\ T_{k'}T_{0}T_{\frac{k-k'-l}{2}}\ \underset{2k-1}{\underbrace{T_1T_0 \cdots T_1}}\ \underset{k'-k-l}{\underbrace{T_0T_1 \cdots T_1}} = T_{k'}T_{0}T_{\frac{k-k'-l}{2}}\ \underset{k'+k-l-1}{\underbrace{T_1T_0 \cdots T_1}} = \\ T_{k'}T_{0}\ \underset{k-k'-l-1}{\underbrace{T_1T_0 \cdots T_1}}\ \underset{k'+k-l-1}{\underbrace{T_1T_0 \cdots T_1}} = T_{k'}T_{0}\ \underset{2k'}{\underbrace{T_0T_1 \cdots T_1}} = T_{k'}\ \underset{2k'-1}{\underbrace{T_1T_0 \cdots T_1}} = T_{k'}T_{k'} = 1$.\\
On the other hand, we also have $\underset{k'-k-l}{\underbrace{T_1T_0 \cdots T_0}}\ (T_1T_0)^kT_0T_{\frac{k-k'-l}{2}}T_0T_{k'} = 1$ since it is the inverse of $T_{k'}T_{0}T_{\frac{k-k'-l}{2}}T_0(T_0T_1)^k\ \underset{k'-k-l}{\underbrace{T_0T_1 \cdots T_1}}$.\\ The relation we want to prove follows immediately.

\emph{Suppose $k' < k+l$}. Let $x = k'-k-l+2e$. We have $T_{\frac{k'-k-l}{2}}\ E_0\ T_{\frac{k'-k-l}{2}} = T_{x/2}\ E_0\ T_{x/2} = \underset{x-1}{\underbrace{T_1T_0 \cdots T_1}}\ E_0\ \underset{x-1}{\underbrace{T_1T_0 \cdots T_1}} = E_{x}$. We get $T_{x/2}T_{x}\ F_3\ T_{x}T_{x/2}$, where $T_{x/2}T_{x} = \underset{x-1}{\underbrace{T_1T_0 \cdots T_1}}\ \underset{2x-1}{\underbrace{T_1T_0 \cdots T_1}} = \underset{x}{\underbrace{T_0T_1 \cdots T_0}}$\mbox{ with $x = k'-k-l+2e$.} Similarly to the previous case, we show that $T_{k'}T_{0}T_{\frac{k-k'-l}{2}}T_0(T_0T_1)^k\ \underset{x}{\underbrace{T_0T_1 \cdots T_1}} = 1$.\\

Hence the second relation of the lemma is satisfied in Br$(e,e,3)$. 

\end{proof}

Consider the map $\phi$ on the set of generators defined by $\phi(t_k) = T_k$, $\phi(s_{2,3;k}) = T_0T_{k}S_3T_kT_0$, $\phi(s_{1,3;k}) = T_kS_3T_k$, $\phi(e_{1,2;k}) = E_k$, $\phi(e_{2,3;k}) = T_0T_{k}F_3T_kT_0$, and $\phi(e_{1,3;k}) = T_kF_3T_k$.

\begin{proposition}\label{PropositionIsomChenPhi}

For $e \geq 3$ and $e$ odd, the map $\phi$ extends to a morphism from $\mathcal{B}(e,e,3)$ to \emph{Br}$(e,e,3)$.

\end{proposition}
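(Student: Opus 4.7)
The plan is to verify, relation by relation, that every defining relation of $\mathcal{B}(e,e,3)$ listed in Definition \ref{DefinitionChenBrauer(e,e,3)} is satisfied by the images of the generators under $\phi$. Since the six relations of $\mathcal{B}(e,e,3)$ are the only constraints on its presentation, this will produce the desired morphism. I would organize the verification according to the six cases already enumerated in the paper (Cases 1 through 6), each of which lists explicitly all occurrences of Relations 4 and 5 of Definition \ref{DefinitionChenBrauer(e,e,3)} through the subcases $(1.1)$--$(6.6)$. Relation 6 never appears, by Remark \ref{RemarkEeven}.

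First, I would handle the easy relations. For Relation 1 of Definition \ref{DefinitionChenBrauer(e,e,3)}, the map $w \longmapsto \phi(T_w)$ must extend the group homomorphism sending each generator of $G(e,e,3)$ to the corresponding generator of $\mathrm{Br}(e,e,3)$; this follows from Proposition \ref{PropGroupAlgebraInjectsInBrauer}. For Relation 2, using Lemma \ref{LemmaTkEk} to express $T_k$ and $E_k$ in terms of $T_0,T_1,E_0,E_1$, the identities $T_kE_k = E_kT_k = E_k$ reduce to the defining relation $T_iE_i = E_iT_i = E_i$ for $i = 0,1$ of Definition \ref{DefinitionBr(e,e,n)eodd}; the cases $s = s_{2,3;k}$ and $s = s_{1,3;k}$ are handled in the same way using the formulas $\phi(s_{2,3;k}) = T_0T_kS_3T_kT_0$ and $\phi(s_{1,3;k}) = T_kS_3T_k$ together with Equation (5.3) of Proposition \ref{PropAdditionalRelations2}. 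Relation 3 of Definition \ref{DefinitionChenBrauer(e,e,3)} reduces to $E_k^2 = xE_k$ (which follows from $E_0^2 = xE_0$ and Lemma \ref{LemmaTkEk}) for $e_{1,2;k}$, and to analogous identities for $e_{2,3;k}$ and $e_{1,3;k}$, which also follow from $F_3^2 = xF_3$ and the relation $T_k^2 = 1$.

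Next, I would address Relations 4 and 5 through the subcases. Subcases $(1.1)$ and $(1.3)$ of Relation 5 for $e_{1,2;k}e_{1,2;k'}$ are exactly Relation 1 of Lemma \ref{LemmaRelBrJet1}; subcases $(6.1)$ and $(6.3)$ for $e_{1,3;k}e_{1,3;k'}$ are covered by Relation 2 of the same lemma. Subcases $(2.3)$, $(3.3)$, $(5.3)$ are short direct verifications using Equations (5.2)--(5.6) of Proposition \ref{PropAdditionalRelations2}. Subcases $(4.1)$ and $(4.3)$ follow by combining Lemmas \ref{LemmaRelBrJet1} and \ref{LemmaRelBrJet2}, after conjugating by $T_0$. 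The conjugation identities in the Relation 4 subcases fall into three groups: the diagonal conjugations $(1.2)$, $(1.4)$--$(1.6)$ and $(4.2)$, $(4.4)$--$(4.6)$ reduce to direct computations using the formulas of Lemma \ref{LemmaTkEk} together with Relations 3--7 of Definition \ref{DefinitionBr(e,e,n)eodd}; the cross-type subcases $(2.1)$, $(2.2)$, $(2.4)$, $(2.5)$, $(5.1)$, $(5.2)$, $(5.4)$, $(5.5)$ rely on Equations (5.2)--(5.6) and Lemma \ref{LemmaRelBrJet2}; and the remaining cross-type subcases $(3.1)$, $(3.2)$, $(3.4)$, $(3.5)$, $(6.2)$, $(6.4)$--$(6.6)$ are handled by Lemma \ref{LemmaRelBrJet3}, which is precisely designed to give the conjugacy relations for $\phi(e_{2,3;k})$ and $\phi(e_{1,3;k})$ by elements of $G(e,e,3)$ not lying in $\mathcal{R}$.

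The main obstacle is the sheer combinatorial bookkeeping: each subcase pins down a specific $w \in G(e,e,3)$ expressed as a word in $\{t_0,t_1,\dots,t_{e-1},s_3\}$, and verifying the image identity in $\mathrm{Br}(e,e,3)$ requires rewriting that word in terms of $T_0,T_1,S_3$ via Lemma \ref{LemmaTkEk}, applying the braid relation $\underset{e}{\underbrace{T_1T_0\cdots T_1}} = \underset{e}{\underbrace{T_0T_1\cdots T_0}}$ to cancel long subwords, and then invoking the correct dihedral Brauer relation from Definition \ref{DefinitionBr(e,e,n)eodd} (distinguishing the parities of the various indices $k,k',l$). Once Lemmas \ref{LemmaRelBrJet1}, \ref{LemmaRelBrJet2}, and \ref{LemmaRelBrJet3} are in hand, this reduces every subcase to a short manipulation, but setting up the parity conventions uniformly so that the formulas in Algorithm \ref{Algo1}'s geodesic form match both sides is where the argument is most delicate. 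The conclusion is that all six relations of $\mathcal{B}(e,e,3)$ hold after applying $\phi$, so $\phi$ extends to a well-defined algebra morphism $\mathcal{B}(e,e,3) \longrightarrow \mathrm{Br}(e,e,3)$.
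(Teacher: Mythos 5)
Your overall strategy is exactly the paper's: exclude Relation 6 by Remark \ref{RemarkEeven}, dispose of Relations 1--3 of Definition \ref{DefinitionChenBrauer(e,e,3)} directly (your use of Proposition \ref{PropGroupAlgebraInjectsInBrauer} for Relation 1 makes explicit what the paper only asserts), and then check Relations 4 and 5 through the explicit subcases $(1.1)$--$(6.6)$ using Proposition \ref{PropAdditionalRelations2} and Lemmas \ref{LemmaTkEk}, \ref{LemmaRelBrJet1}, \ref{LemmaRelBrJet2}, \ref{LemmaRelBrJet3}. Most of your assignments agree with the paper's (e.g.\ $(6.1)$ is Relation 2 of Lemma \ref{LemmaRelBrJet1}, $(5.3)$ is Relation 1 of Lemma \ref{LemmaRelBrJet2}, the Case 3 conjugations go through the first identity of Lemma \ref{LemmaRelBrJet3}), so at the level of the plan this is essentially the paper's proof.

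Two of your routings, however, would not go through as written. First, the Case 2 conjugation subcases $(2.1)$, $(2.2)$, $(2.4)$, $(2.5)$ --- and likewise $(5.1)$, $(5.2)$, $(5.4)$, $(5.5)$ --- are not consequences of Equations (5.2)--(5.6) together with Lemma \ref{LemmaRelBrJet2}: that lemma only covers the products occurring in $(2.3)$, $(5.3)$ and, after $T_0$-conjugation, $(4.1)$/$(4.3)$. The identity needed for $(2.1)$ is verbatim the first identity of Lemma \ref{LemmaRelBrJet3} (this is what the paper invokes), while $(5.4)$, $(5.5)$ require its second identity via the $(4.5)$-type computation and $(5.1)$, $(5.2)$ the $(4.2)$-type direct computation. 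Since Lemma \ref{LemmaRelBrJet3} is already in your toolbox (you use it for Cases 3 and 6), this is a misrouting rather than a missing idea, but the step as stated fails. Second, Lemma \ref{LemmaRelBrJet1} is stated only for $k+k'$ even, so the odd-sum subcases $(1.3)$ and $(6.3)$ are not ``exactly'' that lemma: you need either its analogue with $T_{(k+k'+e)/2}$ in place of $T_{(k+k')/2}$ (provable by the same argument, using Lemma \ref{LemmaExtendTkEk}), or the paper's reduction of $(1.3)$ to the defining dihedral Relations 6--7 of Definition \ref{DefinitionBr(e,e,n)eodd}. With these corrections the verification closes exactly as in the paper.
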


\begin{proof}

Mapping Relations 1, 2, and 3 of Definition \ref{DefinitionChenBrauer(e,e,3)} by $\phi$, we get relations that are clearly satisfied in Br$(e,e,3)$. Relation 6 of Definition \ref{DefinitionChenBrauer(e,e,3)} does not occur when $e$ is odd, see Remark \ref{RemarkEeven}. For Relations 4 and 5 of Definition \ref{DefinitionChenBrauer(e,e,3)}, they have been explicitly written in the 6 cases above. We prove that the image by $\phi$ of the relation we get in each case is satisfied in Br$(e,e,3)$.\\

Consider Case (1.1). In the first paragraph of the proof of Lemma \ref{LemmaRelBrauerChenJet2}, we showed that if $k$ and $k'$ are both even, we get the following relations. \begin{itemize}

\item If $k < k'$, then we get $e_{1,2;0}\ \underset{k'-k-1}{\underbrace{t_1t_0 \cdots t_1}}\ e_{1,2;0} = e_{1,2;0}$, where $k'-k-1$ is odd. The corresponding relation in Br$(e,e,3)$ is $E_0\ \underset{k'-k-1}{\underbrace{T_1T_0 \cdots T_1}}\ E_0 = E_0$ which is a case of Relation 7 of Definition \ref{DefinitionBr(e,e,n)eodd}.

\item If $k > k'$, then we get $e_{1,2;0}\ \underset{k-k'-1}{\underbrace{t_1t_0 \cdots t_1}}\ e_{1,2;0} = e_{1,2;0}$, where $k-k'-1$ is odd. The corresponding relation in Br$(e,e,3)$ is a case of Relation 7 of Definition \ref{DefinitionBr(e,e,n)eodd}.

\end{itemize}

\noindent The case when $k$ and $k'$ are both odd is done in the same way.\\

We now consider Case (1.3) because it is similar to the previous case. We have $l = 0$ and $w = t_{\frac{k+k'+e}{2}} \in \mathcal{R}$ with $k + k'$ odd. Suppose $k$ even and $k'$ odd. We have $t_{\frac{k+k'+e}{2}} = \underset{k+k'+e-1}{\underbrace{t_1t_0 \cdots t_1}}$. Relation 5 of Definition \ref{DefinitionChenBrauer(e,e,3)} is\\ $\underset{k-1}{\underbrace{t_1t_0 \cdots t_1}}\ e_{1,2;0}\ \underset{k-1}{\underbrace{t_1t_0 \cdots t_1}}\ \underset{k'-1}{\underbrace{t_1t_0 \cdots t_0}}\ e_{1,2;1}\ \underset{k'-1}{\underbrace{t_0t_1 \cdots t_1}} =\\ \underset{k+k'+e-1}{\underbrace{t_1t_0 \cdots t_1}}\ \underset{k'-1}{\underbrace{t_1t_0 \cdots t_0}}\ e_{1,2;1}\ \underset{k'-1}{\underbrace{t_0t_1 \cdots t_1}} = \underset{k+e}{\underbrace{t_1t_0 \cdots t_1}}\ e_{1,2;1}\ \underset{k'-1}{\underbrace{t_0t_1 \cdots t_1}} =\\ \underset{k-1}{\underbrace{t_1t_0 \cdots t_1}}\ \underset{e+1}{\underbrace{t_0t_1 \cdots t_1}}\ e_{1,2;1}\ \underset{k'-1}{\underbrace{t_0t_1 \cdots t_1}}$. After simplification by $\underset{k-1}{\underbrace{t_1t_0 \cdots t_1}}$ on the left and by $\underset{k'-1}{\underbrace{t_0t_1 \cdots t_1}}$ on the right, we get $e_{1,2;0}\ \underset{k-1}{\underbrace{t_1t_0 \cdots t_1}}\ \underset{k'-1}{\underbrace{t_1t_0 \cdots t_0}}\ e_{1,2;1} = \underset{e}{\underbrace{t_0t_1 \cdots t_0}}\ e_{1,2;1} = \underset{e-1}{\underbrace{t_1t_0 \cdots t_0}}\ e_{1,2;1}$. Multiplying by $\underset{e-1}{\underbrace{t_0t_1 \cdots t_1}}$ on the left, we get\\ $\underset{e-1}{\underbrace{t_0t_1 \cdots t_1}}\ e_{1,2;0}\ \underset{k-1}{\underbrace{t_1t_0 \cdots t_1}}\ \underset{k'-1}{\underbrace{t_1t_0 \cdots t_0}}\ e_{1,2;1} = e_{1,2;1}$.

\begin{itemize}
\item If $k < k'$, we get $\underset{e-1}{\underbrace{t_0t_1 \cdots t_1}}\ e_{1,2;0}\ \underset{k-1}{\underbrace{t_1t_0 \cdots t_1}}\ \underset{k'-1}{\underbrace{t_1t_0 \cdots t_0}}\ e_{1,2;1} =$\\ $\underset{e-1}{\underbrace{t_0t_1 \cdots t_1}}\ e_{1,2;0}\ \underset{k'-k}{\underbrace{t_0t_1 \cdots t_0}}\ e_{1,2;1} = \underset{e-1}{\underbrace{t_0t_1 \cdots t_1}}\ e_{1,2;0}\ \underset{k'-k-1}{\underbrace{t_1t_0 \cdots t_0}}\ e_{1,2;1}$ that is equal to $e_{1,2;1}\ \underset{e-1}{\underbrace{t_0t_1 \cdots t_1}}\ \underset{k'-k-1}{\underbrace{t_1t_0 \cdots t_0}}\ e_{1,2;1}$ by the second item of Lemma \ref{LemmaRelBrauerChenJet2}.\\ Since $0 \leq k'-k \leq e-1$, it is equal to $e_{1,2;1}\ \underset{e-k'+k}{\underbrace{t_0t_1 \cdots t_1}}\ e_{1,2;1} = e_{1,2;1}\ \underset{e-k'+k-1}{\underbrace{t_0t_1 \cdots t_0}}\ e_{1,2;1}$. Hence in $\mathcal{B}(e,e,3)$, we have $e_{1,2;1}\ \underset{e-k'+k-1}{\underbrace{t_0t_1 \cdots t_0}}\ e_{1,2;1} = e_{1,2;1}$.\\ Since $0 \leq e-k'+k-1 \leq e-1$ and $e-k'+k-1$ is odd, by Relation 6 of Definition \ref{DefinitionBr(e,e,n)eodd}, it is clear that the image of this relation by $\phi$ is satisfied in Br$(e,e,3)$.

\item If $k > k'$, we get $e_{1,2;1}\ \underset{e-1}{\underbrace{t_0t_1 \cdots t_1}}\ \underset{k-k'}{\underbrace{t_1t_0 \cdots t_1}}\ e_{1,2;1} = e_{1,2;1}$, that is\\ $e_{1,2;1}\ \underset{k'-k+e-1}{\underbrace{t_0t_1 \cdots t_0}}\ e_{1,2;1} = e_{1,2;1}$. This is similar to the previous case.

\end{itemize}

\noindent The case when $k$ is odd and $k'$ is even is done in the same way.\\

Consider Case (1.2). We have $w = t_xs_3t_lt_0s_3$ with $k+k'+l$ even and $x = \frac{k+k'+l}{2}$. Relation 4 of Definition \ref{DefinitionChenBrauer(e,e,3)} is $t_xs_3t_lt_0s_3\ e_{1,2;k} = e_{1,2;k'}\ t_xs_3t_lt_0s_3$. We need to show that $T_xS_3T_lT_0S_3\ E_k = E_{k'}\ T_xS_3T_lT_0S_3$ in Br$(e,e,3)$, that is $$T_xS_3T_lT_0S_3\ E_k\ S_3T_0T_lS_3T_x = E_{k'}.$$ We have $T_xS_3T_lT_0S_3\ E_k\ S_3T_0T_lS_3T_x = T_xS_3T_lT_0T_k\ F_3\ T_kT_0T_lS_3T_x$ by Equation \eqref{LastEquation} of Proposition \ref{PropAdditionalRelations2} and $T_lT_0T_k = \underset{2l-1}{\underbrace{T_1T_0 \cdots T_1}}\ T_0\ \underset{2k-1}{\underbrace{T_1T_0 \cdots T_1}} = \underset{2(k+l)-1}{\underbrace{T_1T_0 \cdots T_1}} = T_{k+l}$. Hence
$T_xS_3T_lT_0T_k\ F_3\ T_kT_0T_lS_3T_x = T_xS_3T_{k+l}\ F_3\ T_{k+l}S_3T_x = T_xS_3S_3\ E_{k+l}\ S_3S_3T_x = T_x\ E_{k+l}\ T_x$. We prove that $T_x\ E_{k+l}\ T_x = E_{k'}$.

Suppose $k+l$ even. Since $k+k'+l$ is even, we have $k'$ even. We have\\ $E_{k+l} = \underset{k+l-1}{\underbrace{T_1T_0 \cdots T_1}}\ E_0\ \underset{k+l-1}{\underbrace{T_1T_0 \cdots T_1}}$ and $T_x\ E_{k+l}\ T_x =\\ \underset{k+k'+l-1}{\underbrace{T_1T_0 \cdots T_1}}\ \underset{k+l-1}{\underbrace{T_1T_0 \cdots T_1}}\ E_0\ \underset{k+l-1}{\underbrace{T_1T_0 \cdots T_1}}\ \underset{k+k'+l-1}{\underbrace{T_1T_0 \cdots T_1}} = \underset{k'}{\underbrace{T_1T_0 \cdots T_0}}\ E_0\ \underset{k'}{\underbrace{T_0T_1 \cdots T_1}} = \\ \underset{k'-1}{\underbrace{T_1T_0 \cdots T_1}}\ E_0\ \underset{k'-1}{\underbrace{T_1T_0 \cdots T_1}} = E_{k'}$.

The case $k+l$ odd is done in the same way.\\

Case (1.4) is similar to Case (1.2).\\

Consider Case (1.5). We have $k-k'-l$ is even and $w = t_xt_0s_3t_lt_0s_3$ with \mbox{$x = \frac{k'-k-l}{2}$.} Relation 4 of Definition \ref{DefinitionChenBrauer(e,e,3)} is $t_xt_0s_3t_lt_0s_3\ e_{1,2;k} = e_{1,2;k'}\ t_xt_0s_3t_lt_0s_3$. We prove the following relation in Br$(e,e,3)$: $$T_xT_0S_3T_lT_0S_3\ E_k\ S_3T_0T_lS_3T_0T_x = E_{k'}.$$ Actually, $T_xT_0S_3T_lT_0S_3\ E_k\ S_3T_0T_lS_3T_0T_x = T_xT_0S_3T_lT_0T_k\ F_3\ T_kT_0T_lS_3T_0T_x =\\ T_xT_0S_3T_{k+l}\ F_3\ T_{k+l}S_3T_0T_x = T_xT_0S_3^2\ E_{k+l}\ S_3^2T_0T_x = T_xT_0\ E_{k+l}\ T_0T_x$. One can easily check that for both cases $k+l$ even (then $k'$ is even) and $k+l$ odd (then $k'$ is odd) we have $T_xT_0\ E_{k+l}\ T_0T_x = E_{k'}$.\\

Case (1.6) is similar to Case (1.5).\\

Consider Case (2.1). In Br$(e,e,3)$, we show that $$T_{k'}T_{l}T_{0}S_3T_{\frac{k'-k-l}{2}}T_0\ E_k\ T_0T_{\frac{k'-k-l}{2}}S_3T_{0}T_lT_{k'} = F_3.$$ This is the first identity of Lemma \ref{LemmaRelBrJet3}.\\

Cases (2.2), (2.4), and (2.5) are similar to Case (2.1).\\

Consider Case (2.3). The identity we prove in Br$(e,e,3)$ for this case is given by the second item of Lemma \ref{LemmaRelBrJet2}.\\

Cases (3.1), (3.2), (3.4), and (3.5) are done in the same way as the first item of Lemma \ref{LemmaRelBrJet3}.\\ 

Case (3.3) is similar to the first item of Lemma \ref{LemmaRelBrJet2}.\\

Cases (4.1) and (4.3) are done in the same way as the second item of Lemma \ref{LemmaRelBrJet1}.\\

Consider Case (4.2).  In Br$(e,e,3)$, we prove 
$$T_{\frac{-k-k'-l}{2}}T_0S_3T_{\frac{k+k'-l}{2}}T_k\ F_3\ T_kT_{\frac{k+k'-l}{2}}S_3T_0T_{\frac{-k-k'-l}{2}} = T_0T_{k'}\ F_3\ T_{k'}T_0.$$

Suppose $k \leq k'-l$. We have $T_{\frac{k+k'-l}{2}}T_k = \underset{k'-l-k}{\underbrace{T_1T_0 \cdots T_0}}$. Similarly, we have $T_kT_{\frac{k+k'-l}{2}} = \underset{k'-k-l}{\underbrace{T_0T_1 \cdots T_1}}$, $T_{k'}T_0T_{\frac{-k-k'-l}{2}}T_0 = \underset{k'-k-l}{\underbrace{T_1T_0 \cdots T_0}}$, and $T_0T_{\frac{-k-k'-l}{2}}T_0T_{k'} = \underset{k'-k-l}{\underbrace{T_0T_1 \cdots T_1}}$. Then we should prove

$$\underset{k'-k-l}{\underbrace{T_1T_0 \cdots T_0}}\ S_3\ \underset{k'-k-l}{\underbrace{T_1T_0 \cdots T_0}}\ F_3\ \underset{k'-k-l}{\underbrace{T_0T_1 \cdots T_1}}\ S_3\ \underset{k'-k-l}{\underbrace{T_0T_1 \cdots T_1}} = F_3.$$
Set $k'-k-l = 2x$. The equation we prove is equivalent to $$T_xT_0S_3T_xT_0\ F_3\ T_0T_xS_3T_0T_x = F_3.$$
The left-hand side is equal to $T_xT_0S_3T_xS_3E_0S_3T_xS_3T_0T_x = $
$T_xT_0T_xS_3T_xE_0T_xS_3T_xT_0T_x\\ = T_{2x}S_3T_x\ E_0\ T_xS_3T_{2x} = T_{2x}S_3\ E_{2x}\ S_3T_{2x} = T_{2x}^2 F_3 T_{2x}^2 = F_3$.

The case when $k > k'-l$ is done in the same way.\\

Case (4.4) is done in the same way as Case (4.2).\\

Consider Case (4.5). In Br$(e,e,3)$, we show

$$T_{k'}T_0T_{\frac{k-k'-l}{2}}T_0S_3T_{\frac{k'-k-l}{2}}T_0S_3T_0T_k\ F_3\ T_kT_0S_3T_0T_{\frac{k-k'-l}{2}}S_3T_0T_{\frac{k-k'-l}{2}}T_0T_{k'} = F_3.$$

For $0 \leq x \leq e-1$, we have $S_3(T_1T_0)^x S_3T_0 = S_3(T_1T_0)^{x-1}T_1T_0S_3T_0 =\\ S_3(T_1T_0)^{x-1}T_1S_3T_0S_3 = S_3T_xS_3T_0S_3 = T_xS_3T_xT_0S_3$.

 Also for $0 \leq x \leq e-1$, we have $S_3(T_1T_0)^x S_3T_1 = S_3(T_1T_0)^{x-1}T_2T_1S_3T_1 = S_3(T_1T_0)^{x-1}T_1T_0T_1S_3T_1S_3 = S_3T_{x+1}S_3T_1S_3 = T_{x+1}S_3T_{x+1}T_1S_3$.
 
Similarly, one can prove that for $0 \leq x \leq e-1$, we have $T_0S_3(T_0T_1)^xS_3 = S_3T_0T_xS_3T_x$ and $T_1S_3(T_0T_1)^xS_3 = S_3T_0T_xS_3T_{x+1}$.\\

Suppose $k'-k-l > 0$. Using the previous relations, we have $S_3T_{\frac{k'-k-l}{2}}T_0S_3T_0T_k = S_3(T_1T_0)^{\frac{k'-k-l}{2}}S_3T_0T_k = T_{\frac{k'-k-l}{2}}S_3T_{\frac{k'-k-l}{2}}T_0S_3\ \underset{2k-1}{\underbrace{T_1T_0 \cdots T_1}} =\\ T_{\frac{k'-k-l}{2}}S_3(T_1T_0)^{\frac{k'-k-l}{2}}S_3T_1\ \underset{2k-2}{\underbrace{T_0T_1 \cdots T_1}} =\\ T_{\frac{k'-k-l}{2}}T_{\frac{k'-k-l}{2} +1} S_3 T_{\frac{k'-k-l}{2} +1}T_1S_3\ \underset{2k-2}{\underbrace{T_0T_1 \cdots T_1}} = \cdots = (T_0T_1)^kS_3T_{\frac{k'-k-l}{2}}T_0S_3$.\\
Also we get $T_kT_0S_3T_0T_{\frac{k'-k-l}{2}}S_3 = \underset{2k-1}{\underbrace{T_1T_0 \cdots T_1}}\ T_0 S_3(T_0T_1)^{\frac{k'-k-l}{2}}S_3 =\\ \underset{2k-1}{\underbrace{T_1T_0 \cdots T_1}}\ S_3 T_0T_{\frac{k'-k-l}{2}}S_3T_{\frac{k'-k-l}{2}} = \underset{2k-2}{\underbrace{T_1T_0 \cdots T_0}}\ S_3 T_0T_{\frac{k'-k-l}{2}}\ S_3T_{\frac{k'-k-l}{2}+1}T_{\frac{k'-k-l}{2}} = \\ \underset{2k-2}{\underbrace{T_1T_0 \cdots T_0}}\ S_3T_0T_{\frac{k'-k-l}{2}}S_3T_1T_0 = \cdots = S_3T_0T_{\frac{k'-k-l}{2}}S_3(T_1T_0)^k$. The equation we prove is then
$$T_{k'}T_0T_{\frac{k-k'-l}{2}}T_0(T_0T_1)^kS_3T_{\frac{k'-k-l}{2}}T_0\ F_3\ T_0T_{\frac{k'-k-l}{2}}S_3(T_1T_0)^kT_0T_{\frac{k-k'-l}{2}}T_0T_{k'} = F_3.$$
This is Relation 2 of Lemma \ref{LemmaRelBrJet3}. Note that the case when $k'-k-l < 0$ is similar after replacing $T_{\frac{k'-k-l}{2}}$ by $T_{\frac{k'-k-l}{2} + e}$, where $0 \leq \frac{k'-k-l}{2} + e \leq e-1$.\\

Case (4.6) is similar to Case (4.5).\\

Consider Case (5.1). In Br$(e,e,3)$, we show that
$$T_{\frac{k+k'+l}{2}}S_3T_{\frac{k+k'-l}{2}}T_k\ F_3\ T_kT_0T_0T_{\frac{k+k'-l}{2}}S_3T_{\frac{k+k'+l}{2}} = T_{k'}\ F_3\ T_{k'}.$$
This is similar to Case (4.2). Also Case (5.2) is similar to Case (5.1).\\

Consider Case (5.4). In Br$(e,e,3)$, we show that 
$$T_{\frac{k'-k+l}{2}}S_3T_{\frac{k'-k-l}{2}}T_0S_3T_0T_k\ F_3\ T_kT_0S_3T_0T_{\frac{k'-k-l}{2}}S_3T_{\frac{k'-k+l}{2}} = T_{k'}\ F_3\ T_{k'}.$$
This is done in the same way as Case (4.5). Also Case (5.5) is similar to Case (5.4).\\

Consider Case (5.3). In Br$(e,e,3)$, we show 
$$T_0T_kF_3T_kT_0\ T_{k'}F_3T_{k'} = T_{k'-k}\ T_{k'}F_3T_{k'} = T_0T_kF_3T_kT_0\ T_{k'-k}.$$
This is Relation 1 of Lemma \ref{LemmaRelBrJet2}.\\

Note that Cases (6.2) and (6.4) are similar. They are done in the same way as Case (4.2). Also, Cases (6.5) and (6.6) are similar. They are done in the same way as Case (4.5). Case (6.3) is similar to Case (6.1). Consider Case (6.1):\\
In Br$(e,e,3)$, we show
$$T_kF_3T_k\ T_{k'}F_3T_{k'} = T_{\frac{k+k'}{2}}S_3T_{\frac{k+k'}{2}}\ T_{k'}F_3T_{k'} = T_kF_3T_k\ T_{\frac{k+k'}{2}}S_3T_{\frac{k+k'}{2}}.$$
This is Relation 2 of Lemma \ref{LemmaRelBrJet1}.

\end{proof}

The following proposition is a direct consequence of Propositions \ref{PropositionIsomChenPsi} and \ref{PropositionIsomChenPhi}.

\begin{proposition}\label{PropositionChenIsomBr(e,e,3)}

The algebra \emph{Br}$(e,e,3)$ is isomorphic to $\mathcal{B}(e,e,3)$ for all $e \geq 3$ and $e$ odd.

\end{proposition}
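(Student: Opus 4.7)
The plan is to combine Propositions \ref{PropositionIsomChenPsi} and \ref{PropositionIsomChenPhi}: having shown that $\psi$ extends to a morphism $\mathrm{Br}(e,e,3)\to\mathcal{B}(e,e,3)$ and that $\phi$ extends to a morphism $\mathcal{B}(e,e,3)\to\mathrm{Br}(e,e,3)$, it suffices to verify that $\phi\circ\psi$ and $\psi\circ\phi$ are the identity on a generating set of each algebra.

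First I would check $\phi\circ\psi=\mathrm{id}_{\mathrm{Br}(e,e,3)}$ on the generating set $\{T_0,T_1,S_3,E_0,E_1,F_3\}$. By the definitions of $\psi$ and $\phi$, one has immediately $\phi\psi(T_i)=\phi(t_i)=T_i$ for $i=0,1$, $\phi\psi(E_i)=\phi(e_{1,2;i})=E_i$ for $i=0,1$, and $\phi\psi(F_3)=\phi(e_{2,3;0})=T_0^2F_3T_0^2=F_3$ (using $T_0^2=1$). For $S_3$, note that $s_3$ is the transposition $(2,3)\in G(e,e,3)$, hence corresponds to the reflection $s_{2,3;0}$, so $\phi\psi(S_3)=\phi(s_{2,3;0})=T_0^2S_3T_0^2=S_3$. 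All six generators are preserved, so $\phi\circ\psi$ is the identity.

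Next I would check $\psi\circ\phi=\mathrm{id}_{\mathcal{B}(e,e,3)}$ on the generators $\{T_w\mid w\in G(e,e,3)\}\cup\{e_{i,j;k}\}$. For the group-algebra part it is enough, by Relation 1 of Definition \ref{DefinitionChenBrauer(e,e,3)}, to check the generating elements $t_0,t_1,s_3$ of $G(e,e,3)$. Then $\psi\phi(t_i)=\psi(T_i)=t_i$ and $\psi\phi(s_3)=\psi(\phi(s_{2,3;0}))=\psi(S_3)=s_3$. For $t_k$ with $k\ge 2$, I would use Lemma \ref{LemmaEkBrauerInduction}(1) in $\mathcal{B}(e,e,3)$ and Lemma \ref{LemmaExtendTkEk} in $\mathrm{Br}(e,e,3)$: applying $\psi$ to $T_k=\underbrace{T_1T_0\cdots T_1}_{2k-1}$ yields $\underbrace{t_1t_0\cdots t_1}_{2k-1}=t_k$, and this matches $\phi(t_k)=T_k$. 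For the reflection generators $e_{1,2;k}$, I would use Lemma \ref{LemmaEkBrauerInduction}(3) together with Lemma \ref{LemmaExtendTkEk} to check that $\psi(E_k)=e_{1,2;k}$ and hence $\psi\phi(e_{1,2;k})=\psi(E_k)=e_{1,2;k}$. For the remaining reflections $e_{2,3;k}$ and $e_{1,3;k}$, Relation 4 of Definition \ref{DefinitionChenBrauer(e,e,3)} expresses them as conjugates: $e_{2,3;k}=t_k e_{2,3;0} t_k^{-1}$ after a suitable conjugation in $G(e,e,3)$, and similarly $e_{1,3;k}$ is a conjugate of $e_{1,2;0}$; the same conjugacy identities hold in $\mathrm{Br}(e,e,3)$ by the image under $\phi$, so applying $\psi$ to $\phi(e_{2,3;k})=T_0T_kF_3T_kT_0$ and $\phi(e_{1,3;k})=T_kF_3T_k$ recovers $e_{2,3;k}$ and $e_{1,3;k}$ respectively by the conjugation relations in $\mathcal{B}(e,e,3)$.

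The main obstacle in this last step is to be sure that the conjugation identities used to transport $e_{2,3;0}$ and $e_{1,3;0}$ to $e_{2,3;k}$ and $e_{1,3;k}$ in $\mathcal{B}(e,e,3)$ are indeed the ones corresponding, under $\phi$, to the formulas $\phi(e_{2,3;k})=T_0T_kF_3T_kT_0$ and $\phi(e_{1,3;k})=T_kF_3T_k$. In practice this is a short verification using that in $G(e,e,3)$ one has $(t_0t_k)s_{2,3;0}(t_0t_k)^{-1}=s_{2,3;k}$ and $t_k s_{1,3;0}t_k^{-1}=s_{1,3;k}$, which come directly from the action of diagonal permutations on the hyperplane arrangement. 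Once these identities are granted, both compositions are the identity on generating sets, so $\psi$ and $\phi$ are mutually inverse isomorphisms, proving that $\mathrm{Br}(e,e,3)\simeq\mathcal{B}(e,e,3)$ for every odd $e\ge 3$.
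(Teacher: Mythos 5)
Your overall strategy is exactly the paper's: combine Propositions \ref{PropositionIsomChenPsi} and \ref{PropositionIsomChenPhi} and check that $\phi\circ\psi$ and $\psi\circ\phi$ are the identity on generators, the only non-trivial points being $\psi\circ\phi(e_{2,3;k})=e_{2,3;k}$ and $\psi\circ\phi(e_{1,3;k})=e_{1,3;k}$, which are settled by Relation 4 of Definition \ref{DefinitionChenBrauer(e,e,3)} together with conjugation identities in $G(e,e,3)$. Your treatment of $e_{2,3;k}$ is correct: $(t_0t_k)\,s_{2,3;0}\,(t_0t_k)^{-1}=s_{2,3;k}$, so $e_{2,3;k}=t_0t_k\,e_{2,3;0}\,t_kt_0=\psi(T_0T_kF_3T_kT_0)$, as in the paper.

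However, the identity you invoke for the last case is wrong as stated. You claim $t_k\,s_{1,3;0}\,t_k^{-1}=s_{1,3;k}$, but a direct matrix computation gives $t_k\,s_{1,3;0}\,t_k = s_{2,3;k}$, not $s_{1,3;k}$; likewise the phrase ``$e_{1,3;k}$ is a conjugate of $e_{1,2;0}$'' does not match the formula $\phi(e_{1,3;k})=T_kF_3T_k$, since $\psi(F_3)=e_{2,3;0}$, so what you need is a conjugate of $e_{2,3;0}$. The correct identity, and the one the paper uses, is $s_{1,3;k}=t_k\,s_{2,3;0}\,t_k$ (conjugating the transposition $s_3=s_{2,3;0}$ by $t_k$ moves the hyperplane $z_2=z_3$ to $z_1=\zeta_e^{-k}z_3$). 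With this, Relation 4 gives $t_k\,e_{2,3;0}\,t_k=e_{1,3;k}$, hence $\psi\circ\phi(e_{1,3;k})=\psi(T_kF_3T_k)=t_k\,e_{2,3;0}\,t_k=e_{1,3;k}$, and your argument then coincides with the paper's proof. The rest of your verifications (the generators $T_i,S_3,E_i,F_3$ of Br$(e,e,3)$, the group part via $t_0,t_1,s_3$, and $e_{1,2;k}$ via Lemmas \ref{LemmaEkBrauerInduction} and \ref{LemmaExtendTkEk}) are fine.
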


\begin{proof}

It is readily checked that $\phi \circ \psi$ is equal to the identity morphism on Br$(e,e,3)$. It is also straightforward to check that $\psi \circ \phi$ is equal to the identity morphism on $\mathcal{B}(e,e,3)$. The only non-trivial cases are to check that $\psi \circ \phi(e_{2,3;k}) = e_{2,3;k}$ and $\psi \circ \phi(e_{1,3;k}) = e_{1,3;k}$. 

For the first case, we have  $\psi \circ \phi(e_{2,3;k}) = \psi(T_0T_kF_3T_kT_0) = t_0t_k e_{2,3;0} t_kt_0$. Since $t_kt_0s_{2,3;k}t_0t_k$ is equal to $s_{2,3;0}$, by Relation 4 of Definition \ref{DefinitionChenBrauer(e,e,3)}, we have $t_kt_0 e_{2,3;k} = e_{2,3;0} t_kt_0$, that is $e_{2,3;k} = t_0t_k e_{2,3;0} t_kt_0$. Hence $\psi \circ \phi(e_{2,3;k})$ is equal to $e_{2,3;k}$.

For the second case, we have $\psi \circ \phi(e_{1,3;k}) = \psi(T_kF_3T_k) = t_k e_{2,3;0} t_k$. Since $s_{1,3;k} = t_ks_{2,3;0}t_k$, by Relation 4 of Definition \ref{DefinitionChenBrauer(e,e,3)}, we have $t_k e_{2,3;0} t_k = e_{1,3;k}$. Hence\\ $\psi \circ \phi(e_{1,3;k})$ is equal to $e_{1,3;k}$. 

In conclusion, the algebra Br$(e,e,3)$ is isomorphic to $\mathcal{B}(e,e,3)$ for all $e \geq 3$ and $e$ odd.

\end{proof}

\begin{cor}

The set $\{wE_k\} \cup \{wF_3\}$ for $w \in G(e,e,3)$ and $0 \leq k \leq e-1$ is a generating set of \emph{Br}$(e,e,3)$ for $e$ odd.

\end{cor}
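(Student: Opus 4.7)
The corollary follows from the isomorphism $\mathrm{Br}(e,e,3) \simeq \mathcal{B}(e,e,3)$ of Proposition \ref{PropositionChenIsomBr(e,e,3)}, so the plan is to exhibit a natural spanning set of the Brauer--Chen algebra $\mathcal{B}(e,e,3)$ and transport it through the explicit isomorphism $\phi$ of Proposition \ref{PropositionIsomChenPhi}.

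First, I would show that $\mathcal{B}(e,e,3)$ is spanned as a $\mathbb{Q}(x)$-vector space by elements of the form $T_w$ or $T_w\, e_{i,j;k}$, with $w \in G(e,e,3)$, $1\leq i<j\leq 3$, and $0 \leq k \leq e-1$. Given any monomial in the generators, one reduces it using the defining relations of Definition \ref{DefinitionChenBrauer(e,e,3)} in the following order: Relation~1 merges adjacent $T_w$'s into a single one; Relation~4 moves any $T_w$ past an $e_{i,j;k}$ at the cost of conjugating its triple index; and Relation~5 collapses any pair of adjacent $e$'s to $T_r\, e_{i',j';k'}$ for some reflection $r$. By Remark \ref{RemarkEeven}, Relation~6 never applies when $e$ is odd, so no monomial is lost through vanishing. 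An induction on the number of $e$-factors appearing in the monomial then concludes that every such word reduces to a single $T_w$ or to $T_w\, e_{i,j;k}$.

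Next, applying $\phi^{-1}$ from Proposition \ref{PropositionIsomChenPhi} to this spanning set, and using the formulas $\phi^{-1}(T_w)=w$, $\phi^{-1}(e_{1,2;k})=E_k$, $\phi^{-1}(e_{2,3;k})=T_0T_kF_3T_kT_0$, and $\phi^{-1}(e_{1,3;k})=T_kF_3T_k$, one obtains a $\mathbb{Q}(x)$-spanning set of $\mathrm{Br}(e,e,3)$ consisting of group elements $w$ together with $wE_k$ and with elements of the form $w'\cdot F_3\cdot w''$, where $w'=wt_0t_k$ (or $wt_k$) and $w''=t_kt_0$ (or $t_k$). After absorbing the leftmost group factor $T_a$ into $w$, these last two families take the form $wF_3\, w''$ with $w,w''\in G(e,e,3)$; in particular, they lie in the family $\{wF_3\}_{w \in G(e,e,3)}$ once one closes the set under right multiplication by $G(e,e,3)$ (i.e.\ interprets the statement in terms of the generation of $\mathrm{Br}(e,e,3)$ as a bimodule over the subalgebra $\mathbb{Q}(x)[G(e,e,3)]$, which embeds by Proposition \ref{PropGroupAlgebraInjectsInBrauer}).

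The main technical point is precisely this bookkeeping: for each reflection $(i,j;k)$, one must identify $\phi^{-1}(e_{i,j;k})$ with an element of the form $uF_3u'$ or $uE_ku'$ and check that, as $w$ and $k$ vary, the resulting family exhausts what is needed. This is essentially routine, given Lemma \ref{LemmaTkEk} which expresses every $T_k$ and $E_k$ in terms of $T_0,T_1,E_0,E_1$ and therefore lets one identify precisely which group elements absorb into each $wE_k$ or $wF_3$; the extensive casework performed in the proof of Proposition \ref{PropositionIsomChenPhi} covers exactly the conjugation identities required to carry out this verification.
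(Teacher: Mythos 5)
Your first step is fine: the reduction of any monomial of $\mathcal{B}(e,e,3)$ to the form $T_w$ or $T_w e_{i,j;k}$ using Relations 1--5 of Definition \ref{DefinitionChenBrauer(e,e,3)} together with Remark \ref{RemarkEeven} is correct, but note that it amounts to reproving (part of) the result the paper simply cites: the paper's entire proof is to transport the generating set of Theorem 5.1 of \cite{ChenFlatConnectionsBrauerAlgebras} through the isomorphism of Proposition \ref{PropositionChenIsomBr(e,e,3)}. (A minor slip: the map you apply is $\phi$ of Proposition \ref{PropositionIsomChenPhi}, not $\phi^{-1}$; the formulas you quote are those of $\phi$.)

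The genuine gap is in your last step. Applying $\phi$ to your spanning set gives $\{w\}\cup\{wE_k\}\cup\{w\,t_kF_3t_k\}\cup\{w\,t_0t_kF_3t_kt_0\}$, and the group factors sitting to the \emph{right} of $F_3$ cannot be ``absorbed into $w$'': pushing $t_k$ (resp.\ $t_kt_0$) back through $F_3$ via the conjugation relation only returns $\phi(T_we_{1,3;k})$ (resp.\ $\phi(T_we_{2,3;k})$), so nothing in your argument shows these elements lie in the linear span of $\{wE_k\}\cup\{wF_3\}$; this is exactly the verification you defer as ``routine bookkeeping''. Your proposed remedy --- reading the corollary as generation after closing under right multiplication by $G(e,e,3)$, i.e.\ as $(G,G)$-bimodule generation --- proves a different and much weaker statement: since all reflections of $G(e,e,3)$ are conjugate, under that reading $F_3$ alone already generates everything outside the group algebra, which makes your entire reduction argument superfluous and makes the explicit listing of the $wE_k$ pointless; moreover the pure group elements $\{w\}$, which do occur in your spanning set of $\mathcal{B}(e,e,3)$ and which cannot lie in the two-sided ideal generated by the $E_k$ and $F_3$ (the quotient by that ideal is the group algebra, which embeds by Proposition \ref{PropGroupAlgebraInjectsInBrauer}), are not produced by the corollary's set under either reading, and you never account for them. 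A rough count also warns that the missing verification cannot succeed as a pure linear-span argument: for $e=3$ the corollary's set has at most $54\times 4=216$ elements, while the non-group part of the algebra is expected to have dimension $3\times 9^2=243$ (compare the observed dimension of BMW$(3,3,3)$). So the statement has to be understood in the precise sense supplied by Chen's Theorem 5.1, which is what the paper's one-line proof invokes; your self-contained route neither reproduces that sense nor establishes the literal statement.
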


\begin{proof}

By Proposition \ref{PropositionChenIsomBr(e,e,3)}, since Br$(e,e,3)$ is isomorphic to $\mathcal{B}(e,e,3)$, we get the generating set of Br$(e,e,3)$ described by Chen in Theorem 5.1 of \cite{ChenFlatConnectionsBrauerAlgebras}.

\end{proof}

In Proposition \ref{PropositionChenIsomBr(e,e,3)}, we proved that the Brauer algebra given in Definition \ref{DefinitionBr(e,e,n)eodd} is isomorphic to the Brauer-Chen algebra for $n = 3$ and $e$ odd. Even if this isomorphism corresponds to a small value of $n$ and $e$ odd, it is tempting to think that the Brauer-Chen algebra associated to a complex reflection group $G(e,e,n)$ defined by Chen in \cite{ChenFlatConnectionsBrauerAlgebras} is isomorphic to the Brauer algebra Br$(e,e,n)$ given in Definitions \ref{DefinitionBr(e,e,n)eodd} and \ref{DefinitionBr(e,e,n)eeven} for all $e$ and $n$. This enables us to ask the following question: Is the Brauer-Chen algebra isomorphic to Br$(e,e,n)$ for all $e$ and $n$?

\section{Constructing Krammer's representations}

Recall that in type ADE of Coxeter groups, the generalized Krammer's representations can be constructed via the BMW algebras. We attempt to construct explicit linear representations for the complex braid groups $B(e,e,n)$ by using the algebra BMW$(e,e,n)$. We call these representations the Krammer's representations for $B(e,e,n)$. We adopt a computational approach using the package GBNP (version 1.0.3) of GAP4 (\cite{GBNP}). We are able to construct Krammer's representations for $B(3,3,3)$ and $B(4,4,3)$. We also provide some conjectures about the BMW algebra and about Krammer's representations, see Conjectures \ref{ConjectureRho3and4faithful} and \ref{ConjectureStructureBMW}. We use the platform MATRICS (\cite{MATRICS}) of Universit\'e de Picardie Jules Verne for our heuristic computations. We restrict to the case $n = 3$ in order to make these computations more efficient.\\

The algebra BMW$(e,e,n)$ is defined by a presentation with generators and relations, see Definitions \ref{DefinitionBMWB(e,e,n)eOdd} and \ref{DefinitionBMWB(e,e,n)eEven}. The list of relations of BMW$(e,e,3)$ for a given $e$ is defined with GBNP as a list \verb|L| of non-commutative polynomials. Once a Gröbner basis of \verb|L| is computed, BMW$(e,e,3)$ is the quotient of the free algebra \mbox{$<T_0,T_1,S_3,E_0,E_1,F_3>$} by the two-sided ideal generated by the list of the non-commutative polynomials (or its Gröbner basis). The indeterminates $m$ and $l$ of BMW$(e,e,3)$ are specialized over $\mathbb{Q}$ or over a finite field when the computation over the rationals is very heavy. For the implementation, see Appendix B.\\

We compute a basis of the quotient algebra BMW$(e,e,3)$ by using the function \verb|BaseQA|. The dimension of BMW$(e,e,3)$ can be computed with \verb|DimQA|. Right multiplication by an element of BMW$(e,e,3)$ is a linear transformation $A_r: BMW \longrightarrow End_{\mathbb{Q}}(BMW)$. The matrix of this linear transformation with respect to the base is computed by using the function \verb|MatrixQA|. Denote $R_1$, $R_2$, $\cdots$, $R_6$ the images by $A_r$ of $T_0$, $T_1$, $S_3$, $E_0$, $E_1$, and $F_3$, respectively. We define the left multiplication to be $A_l: BMW \longrightarrow End_{\mathbb{Q}}(BMW)$ with $L_1$, $L_2$, $\cdots$, $L_6$ the images by $A_l$ of $T_0$, $T_1$, $S_3$, $E_0$, $E_1$, and $F_3$, respectively.\\

\textbf{Krammer's representation for $B(3,3,3)$}\\

Here we explain the algorithm that enables us to construct Krammer's representation for $B(3,3,3)$, see Appendix B for the implementation.\\

For many specializations of $m$ and $l$ over $\mathbb{Q}$, by using the function \verb|DimQA|, we get that the dimension of BMW$(3,3,3)$ is equal to $297$. We remark that $297 = 54 + 3 \times 9^2 =\#(G(3,3,3)) + 3 \times (\#\mathcal{R})^2$, where $\mathcal{R}$ is the set of reflections of the complex reflection group $G(3,3,3)$.\\

After computing $R_1$, $R_2$, $\cdots$, and $R_6$ with GBNP, we define the matrix algebra $A$ over $\mathbb{Q}$ generated by $R_1$, $R_2$, $\cdots$, and $R_6$. Then we compute the central idempotents of $A$ by using the function \verb|CentralIdempotentsOfAlgebra| of GAP4. We get a central idempotent of trace equal to $243 = 3 \times 9^2$. Denote this element by $i_1$.\\

We use the generator of the center of the complex braid group $B(3,3,3)$ that is $T_1T_0S_3T_1T_0S_3$. We compute its image by $A_r$ that we denote by $N_z$. We compute $i_1 N_z$ in the basis $(i_1,i_1N_z,i_1{N_z}^2)$. For many specializations of the parameters $m$ and $l$, we clearly have $i_1 {N_z}^3 = \frac{1}{l^2}i_1$ and we guess that the minimal polynomial of $i_1 N_z$ is $X^3 - \frac{1}{l^2}$. One can set $l = \lambda^3$ for a rational specialization of $\lambda$ and redefine the algebra $A$ over $\mathbb{Q}$ and compute its central idempotents. We get an idempotent element of trace equal to $81$ and $i_1 A$ is the algebra corresponding to Krammer's representation. This method will be used for $B(4,4,3)$. However, we find a faster way to construct the representation for $B(3,3,3)$ without computing the central idempotent elements. We describe this method now.\\

In BMW$(e,e,3)$, we have $E_i = \frac{l}{m}T_i^2 + l T_i - \frac{l}{m}$ and $E_i = x E_i$ for $i = 0, 1$, where $x = \frac{ml-l^2+1}{ml}$. Let $E_i = \frac{ml-l^2+1}{ml}$, we get $ \frac{l}{m}T_i^2 + l T_i - \frac{l}{m} = \frac{ml-l^2+1}{ml}$, that is $\frac{l}{m}T_i^2 + lT_i -1-\frac{1}{lm} = 0$. We factorize this equation and get $(T_i - \frac{1}{l})(T_i +m+\frac{1}{l}) = 0$. We determine $$Ker(N_z - \lambda^{-2}) \cap Ker(R_1 - \lambda^{-3}).$$ We get a vector space of dimension $9$ stable by left multiplication and define Krammer's representation ${\rho}_3$ by providing $A = \rho_3(T_0)$, $B = \rho_3(T_1)$, and $C = \rho_3(S_3)$ in $\mathcal{M}_{9\times 9}(\mathbb{Q}(m,\lambda))$ after interpolation of the coefficients over $\mathbb{Q}$ in order to get the corresponding coefficients over $\mathbb{Q}(m,\lambda)$. We checked that $A$, $B$, and $C$ satisfy all the relations of $BMW(3,3,3)$. The representation is defined over $\mathbb{Q}(m,\lambda)$ as follows. 

\begin{center}
$A = \begin{bmatrix}

0 & 0 & 1 & 0 & 0 & 0 & 0 & 0 & 0\\

0 & 0 & 0 & 1 & 0 & 0 & 0 & 0 & 0\\

1 & 0 & -m & 0 & 0 & 0 & \frac{m(\lambda -m)}{\lambda^3} & 0 & 0\\

0 & 1 & 0 & -m & 0 & 0 & \frac{m}{\lambda^6} & 0 & 0\\

0 & m^2 & 0 & m & 0 & 1 & \frac{m}{\lambda^4} & 0 & 0\\

0 & -m(m^2+1) & 0 & -m^2 & 1 & -m & 0 & 0 & 0\\

0 & 0 & 0 & 0 & 0 & 0 & \frac{1}{\lambda^3} & 0 & 0\\

-\frac{m^2}{\lambda^3} & -m(m^2+1) & -\frac{m}{\lambda^3} & -m^2 & 0 & 0 & A_{8,7} & 0 & 1\\

\frac{m(m^2+1)}{\lambda^3} & m^2(m^2+2) & \frac{m^2}{\lambda^3} & m(m^2+1) & 0 & 0 & A_{9,7} & 1 & -m\\

\end{bmatrix}$
\end{center}

\bigskip

\noindent with $A_{8,7} = \frac{m(\lambda^4 -m\lambda^3 -m(m^2+1)\lambda + m^2)}{\lambda^7}$ and $A_{9,7} = \frac{-m(\lambda^4 m - \lambda^2 m + \lambda m^2 + \lambda -m)}{\lambda^7}$.

\begin{center}
\mbox{$B = \begin{bmatrix}

\frac{1}{\lambda^3} & 0 & 0 & 0 & 0 & 0 & 0 & 0 & 0\\

0 & -m & 0 & 0 & 0 & \frac{1}{\lambda -m} & 0 & 0 & 0\\

0 & 0 & 0 & 0 & 0 & 0 & \lambda-m & 0 & 0\\

\frac{-m}{\lambda^3} & 0 & \frac{-1}{\lambda^3} & -m & 0 & 1 & \frac{1}{\lambda^3} & 0 & \frac{\lambda}{m(\lambda-m)}\\

\frac{m(\lambda + m)}{\lambda^4} & m\lambda & 0 & \lambda & \lambda -m & \frac{m(\lambda m + 1)}{\lambda-m} & 0 & \frac{\lambda}{m} & \frac{m\lambda}{\lambda-m}\\

\frac{m}{\lambda^4} & \lambda-m & 0 & 0 & 0 & 0 & 0 & 0 & 0\\

\frac{m}{\lambda-m} & 0 & \frac{1}{\lambda-m} & 0 & 0 & 0 & -m & 0 & 0\\

B_{8,1} & B_{8,2} & B_{8,3} & B_{8,4} & B_{8,5} & B_{8,6} & B_{8,7} & B_{8,8} & B_{8,9}\\

B_{9,1} & B_{9,2} & \frac{-m}{\lambda^4} & \frac{m(\lambda-m)}{\lambda} & 0 & 0 & \frac{m(\lambda-m)}{\lambda^3} & 0 & 0\\

\end{bmatrix}$}
\end{center}

\bigskip

\noindent with $B_{8,1} = \frac{-m^3(\lambda^3 -2m\lambda^2 - \lambda +m)}{\lambda^6(\lambda-m)}$, $B_{8,2} = \frac{-m^2(\lambda^2 - \lambda m + m^2 + 1)}{\lambda}$, $B_{8,3} = \frac{m(\lambda^2-\lambda m + m^2)}{\lambda^4(\lambda -m)}$, $B_{8,4} = \frac{-m(\lambda^2-\lambda m + m^2)}{\lambda}$, $B_{8,5} = \frac{m(-\lambda^2+\lambda m + 1)}{\lambda}$, $B_{8,6} = \frac{-m(\lambda m^2 + \lambda +m)}{\lambda -m}$, $B_{8,7} = \frac{-m(m^2+1)}{\lambda^3}$, $B_{8,8} = -\lambda$, $B_{8,9} = \frac{-\lambda(m^2+1)}{\lambda -m}$, $B_{9,1} = \frac{-2m^2(\lambda^2-1)}{\lambda^6}$, and $B_{9,2} = \frac{m(-\lambda^2 + 2\lambda m - m^2)}{\lambda}$.

\begin{center}
\mbox{$C = \begin{bmatrix}

-m & 0 & 0 & 0 & 0 & \lambda & 0 & 0 & 0\\

0 & \frac{1}{\lambda^3} & 0 & 0 & 0 & 0 & 0 & 0 & 0\\

m^2 & m\lambda^3 & 0 & \lambda^3 & \lambda^3 & -m\lambda^3 & -m & \frac{\lambda^3}{m} & -\lambda^3\\

0 & 0 & 0 & 0 & 0 & 0 & \frac{1}{\lambda^3} & 0 & 0\\

\frac{-m}{\lambda} & \frac{m(\lambda m +1)}{\lambda^3} & \frac{-1}{\lambda} & 0 & 0 & \lambda^2 & \frac{1}{\lambda} & 0 & \frac{\lambda^2}{m}\\

\frac{1}{\lambda} & \frac{m(\lambda-m)}{\lambda^3} & 0 & 0 & 0 & 0 & 0 & 0 & 0\\

0 & m\lambda^3 & 0 & \lambda^3 & 0 & 0 & -m & 0 & 0\\

C_{8,1} & C_{8,2} & C_{8,3} & 0 & \frac{m^2}{\lambda^2} & -\lambda^2m-m^3 & C_{8,7} & 0 & -\lambda^2-m^2\\

C_{9,1} & C_{9,2} & \frac{m^2}{\lambda^3} & 0 & \frac{m(\lambda^2+1)}{\lambda^2} & \frac{-m^2(2\lambda^2-1)}{\lambda^2} & \frac{-m^2}{\lambda^3} & 1 & -2m\\

\end{bmatrix}$}
\end{center}

\bigskip

\noindent with $C_{8,1} = \frac{m^2(\lambda^2+m^2)}{\lambda^3}$, $C_{8,2} = \frac{-m^2(\lambda^2m + m^3 + \lambda + m)}{\lambda^4}$, $C_{8,3} = \frac{m(\lambda^2+m^2+1)}{\lambda^3}$,\\ $C_{8,7} = \frac{-m(\lambda^2+m^2+1)}{\lambda^3}$, $C_{9,1} = \frac{-m(\lambda^2-m^2)}{\lambda^3}$, and $C_{9,2} = \frac{-m^2(\lambda^2-\lambda m + m^2)}{\lambda^4}$.\\

Absolute irreducibility being an open condition (see \cite{LubotzkyMagid}), it is sufficient to check it for one arbitrary specialization. For a rational specialization of $m$ and $\lambda$ (for example $m = 2$ and $\lambda = 17$ as in Appendix B), we check that the dimension of the matrix algebra generated by $A$, $B$, and $C$ is equal to $81$. Hence $\rho_3$ is absolutely irreducible over $\mathbb{Q}(m,\lambda)$.\\

We get two other irreducible Krammer's representations $\rho_3'$ and $\rho_3''$ for $B(3,3,3)$ by replacing $\lambda$ by $\zeta_3 \lambda$ and $\zeta_3^2 \lambda$, respectively. The trace of the matrix $BAC$ is equal to $\frac{1}{\lambda}$. Hence $\rho_3$, $\rho_3'$, and $\rho_3''$ are pairwise non-isomorphic. We deduce that if BMW$(3,3,3)$ is of dimension $297$ over $\mathbb{Q}(m,l)$, we know all its irreducible representations: those from the representations of the Hecke algebra $H(3,3,3)$ and those that correspond to the Krammer's representations. In particular, it is semi-simple. This is one of the reasons that led us to state Conjecture \ref{ConjectureStructureBMW} at the end of this section.\\ 

Let $m = r-\frac{1}{r}$ and $\lambda = \frac{1}{rt}$, we checked that the restriction of the representation $\rho_3$ to a parabolic subgroup of type $A_2$ is isomorphic to the direct sum $\mathcal{K}$ of the Cohen-Gijsbers-Wales representation of type $A_2$ and of the Hecke algebra representation associated to the permutation action of the symmetric group $S_3$ on $\mathcal{R} \setminus \{s_{1,2;0},s_{2,3;0}, s_{1,3;0}\}$, where $\mathcal{R}$ is the set of reflections of $G(3,3,3)$. Note that this property is satisfied by the monodromy representation, see Proposition 4.7 in \cite{MarinLinearity}. The representation $\mathcal{K}$ is as follows.\\

\begin{center}
\mbox{$ \mathcal{K}(T_0) = \begin{bmatrix}

t^3r^3 & 0 & t^3(r^4-r^2) & 0 & 0 & 0 & 0 & 0 & 0 \\
0 & 1/r-r & 1 & 0 & 0 & 0 & 0 & 0 & 0 \\
0 & 1 & 0 & 0 & 0 & 0 & 0 & 0 & 0\\
0 & 0 & 0 & 1/r-r & 0 & 0 & 1 & 0 & 0\\
0 & 0 & 0 & 0 & 0 & 0 & 0 & 1 & 0\\
0 & 0 & 0 & 0 & 0 & 1/r-r & 0 & 0 & 1\\
0 & 0 & 0 & 1 & 0 & 0 & 0 & 0 & 0\\
0 & 0 & 0 & 0 & 1 & 0 & 0 & 1/r-r & 0\\
0 & 0 & 0 & 0 & 0 & 1 & 0 & 0 & 0\\

\end{bmatrix}$}
\end{center}

\begin{center}
\mbox{$ \mathcal{K}(S_3) = \begin{bmatrix}

1/r-r & 0 & 1 & 0 & 0 & 0 & 0 & 0 & 0 \\
0 & t^3r^3 & t^3(r^4-r^2) & 0 & 0 & 0 & 0 & 0 & 0 \\
1 & 0 & 0 & 0 & 0 & 0 & 0 & 0 & 0\\
0 & 0 & 0 & 1/r-r & 0 & 0 & 0 & 1 & 0\\
0 & 0 & 0 & 0 & 1/r-r & 0 & 0 & 0 & 1\\
0 & 0 & 0 & 0 & 0 & 0 & 1 & 0 & 0\\
0 & 0 & 0 & 0 & 0 & 1 & 1/r-r & 0 & 0\\
0 & 0 & 0 & 1 & 0 & 0 & 0 & 0 & 0\\
0 & 0 & 0 & 0 & 1 & 0 & 0 & 0 & 0\\

\end{bmatrix}$}
\end{center}

\bigskip

%The property of $\rho_3$ described in the previous paragraph is satisfied for the monodromy representation constructed by Marin, see \cite{MarinLinearity}, Proposition 4.7. This allows us to think that the representation $\rho_3$ is isomorphic to the monodromy representation of the complex braid group $B(3,3,3)$ and is probably faithful, see Conjecture \ref{ConjectureRho3and4faithful} below. Recall that Chen studied the BMW algebras for the dihedral groups in \cite{ChenBMW2017} and was also able to construct irreducible representations of the Artin-Tits groups associated to the dihedral groups. He also conjectured that these representations are isomorphic to the monodromy representations.\\

\textbf{Krammer's representation for $B(4,4,3)$}\\

The method used for $B(4,4,3)$ is similar to the one used for $B(3,3,3)$. We get for many specializations of the parameters of the BMW algebra $v$ and $l$ that the dimension of BMW$(4,4,3)$ is equal to $384$. We remark that $384 = 96 + 2\times 12^2 = \#(G(4,4,3)) + 2\times (\#\mathcal{R})^2$, where $\mathcal{R}$ is the set of reflections of $G(4,4,3)$.\\

In order to get a model of Krammer's representation over $\mathbb{Q}$ using GAP, we set a rational specialization of $v$ and $l$ such that $v = \mu^2$ and $l = \lambda^2$ ($v$ and $l$ are square numbers). We compute the central idempotents and search for the central idempotent $i_1$ of trace $12^2 = 144$. Hence $i_1A$ is the algebra corresponding to Krammer's representation. This enables us to construct the Krammer's representation of $B(4,4,3)$ that we denote by $\rho_4$, see Appendix B for the implementation. We conjecture that this Krammer's representation is faithful, see Conjecture \ref{ConjectureRho3and4faithful}. We provide the matrices $A = \rho_4(T_0)$, $B = \rho_4(T_1)$, and $C = \rho_4(S_3)$ over $\mathbb{Q}(\mu,\lambda)$ that we get after interpolation of many rational specializations of $\mu$ and $\lambda$. As for the case of $B(3,3,3)$, we checked that $A$, $B$, and $C$ satisfy all the relations of BMW$(4,4,3)$ and that $\rho_4$ is absolutely irreducible over $\mathbb{Q}(\mu, \lambda)$ since the dimension of the corresponding matrix algebra for a given specialization of the parameters ($\mu = 2$ and $\lambda = 3$ as in Appendix B) is equal to $144$.\\

\noindent The matrix $A$ is equal to\\
$[\frac{1}{\lambda^2}, 0, 0, 0, 0, 0, 0, 0, 0, 0, 0, 0]$\\
$[0, 0, 0, 0, 0, 1, 0, 0, 0, 0, 0, 0]$\\
$[\frac{(\mu^4-1)}{\mu^4}, 0, \frac{1}{\mu^2}, 0, 0, 0, 0, 0, 0, 0, 0, 0]$\\
$[0, 0, 0, 0, 0, 0, 0, 0, 0, 0, 1, 0]$\\
$[\frac{(\mu^4-1)(\mu^4\lambda^2+\mu^3\lambda-\lambda^2)}{(\mu^5\lambda^3)}, 0, 0, 0, 0, 0, 0, 0, \frac{\lambda}{\mu}, 0, 0, 0]$\\
$[\frac{-(\mu^4-1)(\mu^5\lambda-\mu^2\lambda^2-\mu \lambda)}{(\mu^5\lambda^3)}, 1, 0, 0, 0, \frac{(-\mu^4+1)}{\mu^2}, 0, 0, 0, 0, 0, 0]$\\
$[0, 0, 0, 0, 0, 0, 0, 0, 0, 0, 0, 1]$\\
$[\frac{(-\mu^{16}+4\mu^{12}-5\mu^8+3\mu^4-1)}{(\mu^8\lambda^2)}, 0, 0, 0, \frac{(\mu^4-1)}{\mu \lambda}, 0, 0, 0, \frac{-(\mu^4-1)^2}{\mu^4}, \frac{\mu}{\lambda}, 0, 0]$\\
$[\frac{-(\mu^4-1)^3}{(\mu^6\lambda^2)}, 0, 0, 0, \frac{\mu}{\lambda}, 0, 0, 0, \frac{(-\mu^4+1)}{\mu^2}, 0, 0, 0]$\\
$[\frac{-(\mu^4-1)^3}{(\mu^7\lambda)}, 0, 0, 0, 0, 0, 0, \frac{\lambda}{\mu}, \frac{-(\mu^4-1)\lambda}{\mu^3}, \frac{(-\mu^4+1)}{\mu^2}, 0, 0]$\\
$[\frac{(\mu^4-1)(-\mu^4+\mu \lambda+1)}{(\mu^5\lambda)}, 0, 0, 1, 0, 0, 0, 0, 0, 0, \frac{(-\mu^4+1)}{\mu^2}, 0]$\\
$[0, 0, 0, 0, 0, 0, 1, 0, 0, 0, 0, \frac{(-\mu^4+1)}{\mu^2}]$.\\

\noindent The matrix $B$ is equal to\\
$[0, 0, 1, 0, 0, 0, 0, 0, 0, 0, 0, 0]$\\
$[0, 0, 0, 1, 0, 0, 0, 0, 0, 0, 0, 0]$\\
$[1, 0, \frac{(-\mu^4+1)}{\mu^2}, 0, 0, 0, 0, 0, 0, 0, 0, 0]$\\
$[0, 1, 0, \frac{(-\mu^4+1)}{\mu^2}, 0, 0, \frac{(\mu^4-1)}{\mu^6}, 0, 0, 0, 0, 0]$\\
$[\frac{(\mu^4-1)^2}{\mu^4}, \frac{(\mu^4-1)}{\mu^2}, \frac{(\mu^4-1)}{\mu^2}, 0, 0, 1, \frac{(\mu^4-1)}{(\mu^5\lambda)}, 0, 0, 0, 0, 0]$\\
$[\frac{(-\mu^{12}+2\mu^8-2\mu^4+1)}{\mu^6}, \frac{-(\mu^4-1)^2}{\mu^4}, \frac{-(\mu^4-1)^2}{\mu^4}, \frac{(-\mu^4+1)}{\mu^2}, 1, \frac{(-\mu^4+1)}{\mu^2}, 0, 0, 0, 0, 0, 0]$\\
$[0, 0, 0, 0, 0, 0, \frac{1}{\lambda^2}, 0, 0, 0, 0, 0]$\\
$[0, 0, 0, 0, 0, 0, \frac{(\mu^4-1)(\mu^5\lambda+\mu^4-1)}{(\mu^8\lambda^2)}, 0, 1, 0, 0, 0]$\\
$[0, 0, 0, 0, 0, 0, 0, 1, \frac{(-\mu^4+1)}{\mu^2}, 0, 0, 0]$\\
$[0, 0, 0, 0, 0, 0, \frac{(\mu^4-1)}{(\mu^7\lambda)}, 0, 0, 0, 1, 0]$\\
$[0, 0, 0, 0, 0, 0, 0, 0, 0, 1, \frac{(-\mu^4+1)}{\mu^2}, 0]$\\
$[0, 0, 0, 0, 0, 0, \frac{(\mu^4-1)}{\mu^4}, 0, 0, 0, 0, \frac{1}{\mu^2}]$.\\

\noindent The matrix $C$ is equal to\\
$[\frac{(-\mu^4+1)}{\mu^2}, \frac{(-\mu^4+1)}{(\mu^4-\mu \lambda-1)}, 0, 0, 0, \frac{-\mu^2}{(\mu^4-\mu \lambda-1)}, 0, 0, 0, 0, 0, 0]$\\
$[0, \frac{1}{\lambda^2}, 0, 0, 0, 0, 0, 0, 0, 0, 0, 0]$\\
$[0, \frac{\mu^8\lambda-\mu^7-2\mu^4\lambda+\mu^3+\lambda}{\mu^2\lambda(\mu^4-\mu \lambda-1)}, \frac{-\mu^4+1}{\mu^2}, 0, 0, \frac{\mu^4-1}{\mu^4-\mu \lambda-1}, \frac{\mu^4-1}{\mu^4(\mu^4-\mu \lambda-1)}, 0, 0, \frac{-\mu^3}{\lambda(\mu^4-\mu \lambda-1)}, 0, 0]$\\
$[0, 0, 0, 0, 0, 0, \frac{\lambda^2}{\mu^4}, 0, 0, 0, 0, 0]$\\
$[0, C_{5,2} , 0, \frac{\mu^4-1}{\mu \lambda}, \frac{-\mu^4+1}{\mu^2}, \frac{-\mu^4+1}{\mu^4-\mu \lambda-1}, \frac{\lambda(\mu^8-2\mu^4+1)}{\mu^7(\mu^4-\mu \lambda-1)}, 0, 0, \frac{-\mu^4+1}{\mu^4-\mu \lambda-1}, \frac{\mu}{\lambda}, 0]$\\
$[\frac{-\mu^4+\mu \lambda+1}{\mu^2}, 0, 0, 0, 0, 0, 0, 0, 0, 0, 0, 0]$\\
$[0, \frac{\mu^2(\mu^4-1)}{\lambda^2}, 0, \frac{\mu^4}{\lambda^2}, 0, 0, \frac{-\mu^4+1}{\mu^2}, 0, 0, 0, 0, 0]$\\
$[\frac{-\mu^{12}+2\mu^8-2\mu^4+1}{\mu^6}, C_{8,2} , \frac{-\mu^8+2\mu^4-1}{\mu^4}, \frac{\mu^8-2\mu^4+1}{\mu^4\lambda^2}, 0, \frac{-\mu^4+1}{\mu^2(\mu^4-\mu \lambda-1)}, C_{8,7} , \frac{1}{\mu^2}, 0, \frac{-\mu^8+2\mu^4-1}{\lambda(\mu^4-\mu \lambda-1)\mu^3}, 0, 0]$\\
$[\frac{-\mu^{12}+3\mu^8-3\mu^4+1}{\mu^8}, C_{9,2} , \frac{-\mu^8+2\mu^4-1}{\mu^6}, 0, 0, 0, \frac{\mu^8-2\mu^4+1}{\mu^8(\mu^4-\mu \lambda-1)}, 0, 0, \frac{-\mu^4+1}{\lambda(\mu^4-\mu \lambda-1)\mu}, 0, \frac{1}{\mu^2}]$\\
$[\frac{-\lambda(\mu^8-\mu^5\lambda-2\mu^4+\mu \lambda+1)}{\mu^5}, \frac{\mu^8-2\mu^4+1}{\mu^5\lambda}, \frac{-\lambda(\mu^4-\mu \lambda-1)}{\mu^3}, \frac{\mu^4-1}{\mu^3\lambda}, 0, 0, \frac{-\lambda(\mu^8-2\mu^4+1)}{\mu^9}, 0, 0, 0, 0, 0]$\\
$[\frac{-\lambda(\mu^8\lambda+\mu^7-2\mu^4\lambda-\mu^3+\lambda)}{\mu^6}, 0, \frac{-(\mu^4-1)\lambda^2}{\mu^4}, 0, \frac{\lambda}{\mu}, 0, \frac{-(\mu^4-1)\lambda^2}{\mu^6}, 0, 0, 0, 0, 0]$\\
$[\frac{-\mu^8+2\mu^4-1}{\mu^4}, 0, \frac{-\mu^4+1}{\mu^2}, 0, 0, 0, 0, 0, \mu^2, 0, 0, \frac{-\mu^4+1}{\mu^2}]$.\\

\noindent with $C_{5,2} = \frac{(\mu-1)^2(\mu+1)^2(\mu^2+1)^2(\mu^4-2\mu^2\lambda^2-\mu \lambda-1)}{\mu^4\lambda^2(\mu^4-\mu \lambda-1)},$\\
 $C_{8,2} = \frac{(\mu -1)(\mu +1)(\mu^2+1)(\mu^{12}\lambda+\mu^{11}-2\mu^9\lambda^2-3\mu^8\lambda-\mu^6\lambda^3-2\mu^7+3\mu^5\lambda^2+3\mu^4\lambda+\mu^2\lambda^3+\mu^3-2\mu \lambda^2-\lambda)}{\mu^6\lambda^3(\mu^4-\mu \lambda-1)},$\\
$C_{8,7} =  \frac{-(\mu-1)^2(\mu+1)^2(\mu^2+1)^2(\mu^8-\mu^5\lambda-2\mu^4+1)}{\mu^{10}(\mu^4-\mu \lambda-1)},$\\
$C_{9,2} = \frac{(\mu-1)(\mu+1)(\mu^2+1)(\mu^6-\mu^4\lambda^2-\mu^3\lambda-\mu^2+\lambda^2)}{\mu^3\lambda^3(\mu^4-\mu \lambda-1)}$.\\

\medskip

Recall that Chen studied the BMW algebras for the dihedral groups in \cite{ChenBMW2017} and was able to construct irreducible representations for the Artin-Tits groups associated to the dihedral groups. He also conjectured that these representations are isomorphic to the monodromy representations. Each of our explicit representations $\rho_3$ and $\rho_4$ is constructed via the corresponding BMW algebra. We proved that these representations are absolutely irreducible and have dimension the number of reflections of the complex refection group. We also studied the restriction of $\rho_3$ to parabolic subgroups. All these properties are satisfied for the generalized Krammer's representations. That's why we think that $\rho_3$ and $\rho_4$ are good candidates to be called the Krammer's representations for $B(3,3,3)$ and $B(4,4,3)$, respectively. We also think that each of these representations is isomorphic to the monodromy representation constructed by Marin in \cite{MarinLinearity}, where he conjectured that the monodromy representation is faithful, see Conjecture 6.3 in \cite{MarinLinearity}. This enables us to propose the following conjecture about the explicit Krammer's representations $\rho_3$ and $\rho_4$.

\begin{conjecture}\label{ConjectureRho3and4faithful}

The Krammer's representations $\rho_3$ and $\rho_4$ are faithful.\\

\end{conjecture}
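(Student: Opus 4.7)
The plan is to apply Krammer's faithfulness criterion to the interval Garside structure constructed in Chapter \ref{ChapterIntervalGarside}. Recall that $B(e,e,n)$ is the group of fractions of the Garside monoid $B^{\oplus}(e,e,n)$ whose Garside element is $\underline{\lambda}$ and whose simples are (a copy of) the interval $[1,\lambda] \subset G(e,e,n)$. For $x$ in the monoid, write $\alpha(x)$ for the gcd of $x$ and $\underline{\lambda}$. Krammer's criterion then guarantees faithfulness of $\rho_3$ (resp.\ $\rho_4$) as soon as one exhibits a family $(C_s)_{s \in [1,\lambda]}$ of nonempty, pairwise disjoint subsets of the representation space $V$ satisfying $\rho(x)\,C_s \subset C_{\alpha(xs)}$ for every $x \in B^{\oplus}(e,e,n)$ and every simple $s$. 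Since the matrices of $\rho_3(T_0),\rho_3(T_1),\rho_3(S_3)$ and their $(4,4,3)$ analogues are explicitly known, the required verification reduces to a finite, generator-by-generator check once the $C_s$ have been chosen.

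The first step is to produce a distinguished basis of $V$ indexed by the reflections $\mathcal{R}$ of the underlying complex reflection group; both dimensions $9$ and $12$ equal $\#\mathcal{R}$, which strongly suggests that such a basis $(x_r)_{r \in \mathcal{R}}$ exists, in analogy with the Bigelow--Krammer basis for $B_n$ and the Cohen--Wales basis in type $ADE$. Using the identification, already established in the paper, of the restriction of $\rho_3$ to an $A_2$ parabolic with the direct sum of the Cohen--Gijsbers--Wales representation and the Hecke permutation representation on $\mathcal{R}\setminus\{s_{1,2;0},s_{2,3;0},s_{1,3;0}\}$, I would pin down such a basis on each parabolic piece and then glue it together globally using the BMW relations. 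Once the basis is in place, introduce Krammer-type parameters (built from $m,\lambda$ for $\rho_3$ and $\mu,\lambda$ for $\rho_4$) and, for each simple $s \in [1,\lambda]$, define $C_s$ as the locus of vectors $\sum c_r\,x_r$ whose coefficients satisfy polynomial or sign conditions dictated by the combinatorial position of each reflection $r$ with respect to the bullet pattern of $s$ described in Proposition \ref{PropZ'=1orZetae}.

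The hard part will be to guess the right combinatorial data attaching reflections to simples. In Krammer's setting the simples are elements of $S_n$ and the condition on $\{i,j\}$ depends on whether $s$ separates $i$ from $j$; here the simples of $[1,\lambda^k]$ are described in terms of matrix entries taking values in $\{0,1,\zeta_e^k\}$ outside the bullet region, and the analogue of ``$s$ separates $r$'' is not automatic. Once this combinatorics is correctly set up, the disjointness of the $C_s$ should follow directly, and the covariance property $\rho(x)C_s \subset C_{\alpha(xs)}$ reduces, by multiplicativity, to a check on the generators $T_0,T_1,S_3$ (and $T_0,\dots,T_3,S_3$ for $\rho_4$), tractable from the explicit matrices $A,B,C$ above and partially delegable to symbolic computation. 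As a consistency test one can specialize at $m=r-r^{-1}$, $\lambda=(rt)^{-1}$, where the paper shows $\rho_3$ restricted to $A_2$ decomposes into known faithful pieces, and verify that the proposed $C_s$ restrict correctly. If this Garside--Krammer route proves too rigid, a fallback would be to first identify $\rho_3,\rho_4$ with the Marin monodromy representation of \cite{MarinLinearity}, as Chen conjectures in the dihedral case, and then transfer any faithfulness result available there; but in the present state of the art the Garside--Krammer approach looks like the most concrete route.
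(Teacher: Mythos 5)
This statement is a conjecture in the paper: no proof of faithfulness of $\rho_3$ or $\rho_4$ is given there, only the evidence that these representations are absolutely irreducible, have dimension $\#\mathcal{R}$, restrict correctly to parabolics, and are expected to coincide with Marin's monodromy representations of \cite{MarinLinearity} (themselves only conjecturally faithful). So what you have written cannot be compared to a proof in the paper, and, more importantly, it is not itself a proof: it is a programme whose essential steps are left open. Concretely, three things are missing and each is exactly where the difficulty lies. First, you assume the existence of a distinguished basis of $V$ indexed by the reflections $\mathcal{R}$ on which the Garside generators act by Krammer-type formulas; the coincidence $\dim V=\#\mathcal{R}$ and the parabolic decomposition of $\rho_3$ do not produce such a basis, and gluing bases of parabolic restrictions into a global one compatible with all BMW relations is an unproven claim. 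Second, the heart of Krammer's, Cohen--Wales', Digne's and H\'ee's arguments is the explicit definition of the sets $C_s$ (via linear/positivity conditions attached to each simple) together with the proof of disjointness and of the covariance $xC_y\subset C_{\alpha(xy)}$; you explicitly say you would have to ``guess the right combinatorial data'' relating reflections to the simples of $[1,\lambda]$ described in Proposition \ref{PropZ'=1orZetae}, and no candidate is proposed. Disjointness does not ``follow directly'' from an unspecified definition, and in the known cases it is the delicate point.

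Third, even granting a choice of $C_s$, the reduction of $xC_y\subset C_{\alpha(xy)}$ to a check on the atoms $T_0,T_1,S_3$ (resp.\ $T_0,\dots,T_3,S_3$) is not pure ``multiplicativity'': one needs the standard Garside identity $\alpha(xy)=\alpha(x\,\alpha(xy'))$-type compatibilities (as in H\'ee's simplification \cite{HeePreuveSimpleFidelite}) to be verified for this monoid together with the chosen $C_s$, and the number of simples of $B^{\oplus}(e,e,n)$ is $|[1,\lambda]|$, so the ``finite check'' is a large symbolic verification over $\mathbb{Q}(m,\lambda)$ that has not been carried out. The fallback you mention (identify $\rho_3,\rho_4$ with the monodromy representation and transfer faithfulness) does not help, since faithfulness of the monodromy representation for $2$-reflection groups is itself only a conjecture of Marin. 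In short, your route is the natural one and is consistent with why the paper believes the conjecture, but as it stands it contains no construction and no verification, so the statement remains exactly what the paper says it is: a conjecture.
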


Our method uses the computation of a Gröbner basis from the list of polynomials that describe the relations of the algebra BMW$(e,e,n)$. It it is not possible to make these heavy computations for $e \geq 5$ when $n = 3$. However, we try to compute the dimension of the BMW algebra over a finite field. We were able to compute the dimension of BMW$(5,5,3)$ and BMW$(6,6,3)$ over a finite field for many specializations for $m$ and $l$, see Appendix B for the implementation. The computations for BMW$(6,6,3)$ took 40 days on the platform MATRICS of Universit\'e de Picardie Jules Verne (\cite{MATRICS}). For all the different specializations, we get that the dimension of BMW$(5,5,3)$ is equal to $1275$ and the dimension of BMW$(6,6,3)$ is equal to $1188$. Recall that we get $297$ for BMW$(3,3,3)$ and $384$ for BMW$(4,4,3)$. We remark that for $e = 3$ or $e = 5$, we get that the dimension of BMW$(e,e,3)$ is equal to $\#(G(e,e,3)) + e\times (\#\mathcal{R})^2$, where $\mathcal{R}$ is the set of reflections of $G(e,e,3)$. Also, we remark that for $e = 4$ and $e = 6$, we get that the dimension of BMW$(e,e,3)$ is equal to $\#(G(e,e,3)) + \frac{e}{2} \times (\#\mathcal{R})^2$.\\

Based on the previous experimentations, we conclude this section by proposing the following conjecture.

\begin{conjecture}\label{ConjectureStructureBMW}

Let $K = \overline{\mathbb{Q}(m,l)}$ and let $H(e,e,3)$ be the Hecke algebra associated to $G(e,e,3)$ over $K$. Let $N$ be the number of reflections of the complex reflection group $G(e,e,3)$. 

Over $K$, the algebra \emph{BMW}$(e,e,3)$ is semi-simple, isomorphic to
$$\left\{\begin{array}{ll}
H(e,e,3) \oplus \left( \mathcal{M}_{N}(K) \right)^{e} & if\ e\ is\ odd,\\
H(e,e,3) \oplus \left( \mathcal{M}_{N}(K) \right)^{e/2} & if\ e\ is\ even,
\end{array}\right.$$
where $\mathcal{M}_{N}(K)$ is the matrix algebra over $K$ of dimension $N^2$. In particular, the algebra \emph{BMW}$(e,e,3)$ has dimension
$$\left\{\begin{array}{ll}
\#(G(e,e,3)) + e\times N^2 & if\ e\ is\ odd,\\
\#(G(e,e,3)) + \frac{e}{2} \times N^2 & if\ e\ is\ even.
\end{array}\right.$$

\end{conjecture}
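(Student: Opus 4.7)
The plan is to establish Conjecture \ref{ConjectureStructureBMW} by squeezing $\dim_K \mathrm{BMW}(e,e,3)$ between an upper bound obtained from a spanning set and a lower bound obtained from a Wedderburn decomposition into explicit blocks. On the Hecke side we already have a complete description: the quotient map sending $E_i, F_j \mapsto 0$ factors BMW$(e,e,3)$ onto $H(e,e,3)$, which by Theorem \ref{PropositionBMRFreenessTheorem} is a free module of rank $|G(e,e,3)|=6e^{2}$. It therefore suffices to understand the kernel, which should be a direct sum of copies of the Krammer block.

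First I would construct a spanning set. Building on the isomorphism Br$(e,e,3)\simeq \mathcal{B}(e,e,3)$ of Proposition \ref{PropositionChenIsomBr(e,e,3)} and on Lemmas \ref{LemmaExtendTkEk} and \ref{LemmaTkEk}, I would show that every word in the generators is a linear combination of either (i) a word in the Hecke subalgebra, or (ii) a word of the form $u \cdot Y \cdot v$ where $Y \in \{E_0,\dots,E_{e-1},F_3,\dots,F_{n}\}$ is a single idempotent-type generator and $u,v$ range over transversals determined by the conjugation action of $G(e,e,3)$ on the set $\mathcal{R}$ of reflections (which has cardinality $N=3e$ for $n=3$). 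The key length reductions come from Relations 5--7 of Definition \ref{DefinitionBMWB(e,e,n)eOdd} (resp.\ 5--9 of Definition \ref{DefinitionBMWB(e,e,n)eEven}): each ``long'' alternating product $E_{1}T_{0}T_{1}\cdots E_{1}$ collapses to a scalar multiple of $E_{1}$, so that no diagram-type word needs more than one ``strand-joining'' letter. In the even case the extra vanishing Relation 10 ($E_{0}AE_{1}=0$) is what should reduce the count of independent Krammer blocks from $e$ to $e/2$. A careful bookkeeping of these normal forms ought to yield a spanning set of cardinality exactly $6e^{2}+e\cdot N^{2}$ when $e$ is odd and $6e^{2}+(e/2)\cdot N^{2}$ when $e$ is even, matching the conjectured dimension.

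For the lower bound I would exhibit the Krammer-type blocks directly. Substituting $l=\lambda^{e}$ (odd case) or $l=\lambda^{e/2}$ (even case), I would produce a family of absolutely irreducible representations of dimension $N$ by extracting the eigenspaces of the central element $z=(T_{1}T_{0}S_{3})^{2}$ (or the corresponding appropriate central element) for the eigenvalues $\zeta^{k}\lambda^{-2}$, where $\zeta$ is a primitive root of unity of the relevant order and $k$ runs over $\mathbb{Z}/e\mathbb{Z}$ (resp.\ $\mathbb{Z}/(e/2)\mathbb{Z}$). The case $e=3$ was done explicitly (giving $\rho_{3},\rho_{3}',\rho_{3}''$, pairwise distinguished by $\mathrm{tr}(BAC)=\lambda^{-1}$), and the case $e=4$ similarly gave $2$ blocks. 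To carry this out for general $e$ one natural route is to deform the action of Br$(e,e,3)$ on the representation of the Brauer--Chen algebra coming from the monodromy construction of Marin \cite{MarinLinearity}, which is a finite-dimensional irreducible module of dimension $N$ over a Laurent-series field; the specialization $l=1$ should recover precisely this module, and the structure of the deformation is rigid enough to transport irreducibility to the generic fibre. Non-isomorphism of the different blocks is then guaranteed by distinct central characters, as the scalar by which $z$ acts varies with $k$.

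The main obstacle is twofold. The concrete computational obstacle is that the Gr\"obner-basis approach used for $e=3,4$ becomes infeasible already at $e=5$, so the very existence of the $e$ (resp.\ $e/2$) Krammer blocks as honest representations over $K=\overline{\mathbb{Q}(m,l)}$ must be established abstractly rather than by matrix exhibition; the cleanest route is probably to realise them as subquotients of a flat deformation of Marin's monodromy module. The conceptual obstacle is semisimplicity: once the two bounds match, BMW$(e,e,3)$ is forced to be isomorphic to the sum of $H(e,e,3)$ and the Krammer blocks with no extra nilpotent radical, but in order to apply this dimension-count argument one must first know that the Krammer blocks generate a summand of the expected size, which in turn is what the intrinsic construction is meant to provide. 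Every other step, including the restriction compatibility with parabolic subalgebras of type $A_{2}$ (which was used as a sanity check in the $e=3$ case and which forces the correct specialization behaviour), should then follow from the explicit module description.
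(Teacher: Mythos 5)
The statement you are trying to prove is stated in the paper as a \emph{conjecture}: the paper offers no proof, only heuristic evidence, namely the explicit construction of the absolutely irreducible $N$-dimensional Krammer's representations for $e=3$ (three of them, pairwise non-isomorphic) and $e=4$ (two of them), together with Gr\"obner-basis computations of $\dim \mathrm{BMW}(e,e,3)$ for $e=3,4$ over $\mathbb{Q}$ and for $e=5,6$ over finite fields, whose values match the conjectured formula. Your proposal is a sensible research program that mirrors this evidence, but it is not a proof, and the gaps you yourself flag are precisely the points where the whole difficulty lies. The upper bound is only asserted: you claim a normal form in which every word is either Hecke-type or of the form $uYv$ with a single idempotent-type letter, and that the bookkeeping yields exactly $6e^2+eN^2$ (resp.\ $6e^2+\tfrac{e}{2}N^2$) elements, but no rewriting argument is given, and in particular the mechanism by which Relation 10 of Definition \ref{DefinitionBMWB(e,e,n)eEven} halves the number of blocks in the even case is exactly the kind of delicate relation-chasing (compare the length of the proofs of Lemmas \ref{LemmaRelBrJet1}--\ref{LemmaRelBrJet3} already needed at the degenerate Brauer level) that cannot be waved through.

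The lower bound is in worse shape. For general $e$ you propose to obtain the $e$ (resp.\ $e/2$) Krammer blocks by "deforming" Marin's monodromy module, but that module is constructed analytically over a field of formal Laurent series as a representation of the braid group (equivalently of the Brauer--Chen algebra at the degenerate parameter), and there is no known homomorphism from $\mathrm{BMW}(e,e,3)$ over $\overline{\mathbb{Q}(m,l)}$ to the endomorphisms of such a deformation, no rigidity statement guaranteeing that irreducibility survives at the generic fibre, and no argument producing \emph{several} pairwise non-isomorphic such modules indexed by eigenvalues of the central element (the trick $l=\lambda^{3}$ and the eigenvalue analysis of $(T_1T_0S_3)^2$ used for $e=3$ depends on the explicit matrices obtained computationally, which the paper explains cannot be produced for $e\geq 5$). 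Finally, even granting both bounds, the semisimplicity conclusion requires that the lower bound be realized by surjections onto semisimple quotients whose kernels intersect trivially, which again presupposes the explicit module constructions you have not supplied. In short, your outline reproduces the conjectural picture the paper supports computationally, but each of its two pillars is an open problem rather than an established step.
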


\begin{appendices}

\chapter{Monoid and homology implementations}

We start by explaining the way to implement the Garside monoids $B^{\oplus k}(e,e,n)$ into GAP by using the package CHEVIE (\cite{CHEVIEJMichel}) of GAP3. We provide the implementation of these monoids by defining the function \verb|CorranPicantinMonoid| that can be found in \cite{GAP3CPMichelNeaime}.\\

\noindent Recall that for $1 \leq k \leq e-1$, the monoid $B^{\oplus k}(e,e,n)$ is defined by the following presentation.

\begin{itemize}

\item Generating set: $\widetilde{X} = \{ \tilde{t}_{0}, \tilde{t}_{1}, \cdots, \tilde{t}_{e-1}, \tilde{s}_{3}, \cdots, \tilde{s}_{n}\}$.
\item Relations:$\left\{ \begin{array}{ll}

\tilde{s}_{i}\tilde{s}_{j}\tilde{s}_{i} = \tilde{s}_{j}\tilde{s}_{i}\tilde{s}_{j} & for\ |i-j|=1,\\
\tilde{s}_{i}\tilde{s}_{j} = \tilde{s}_{j}\tilde{s}_{i} & for\ |i-j| > 1,\\
\tilde{s}_{3}\tilde{t}_{i}\tilde{s}_{3} = \tilde{t}_{i}\tilde{s}_{3}\tilde{t}_{i} & for\ i \in \mathbb{Z}/e\mathbb{Z},\\
\tilde{s}_{j}\tilde{t}_{i} = \tilde{t}_{i}\tilde{s}_{j} & for\ i \in \mathbb{Z}/e\mathbb{Z}\ and\ 4 \leq j \leq n,\ and\\
\tilde{t}_{i}\tilde{t}_{i-k} = \tilde{t}_{j}\tilde{t}_{j-k} & for\ i, j \in \mathbb{Z}/e\mathbb{Z}.

\end{array}\right.$

\end{itemize}

\noindent Also recall that $B^{\oplus k}(e,e,n)$ is isomorphic to the interval monoid $M[1,\lambda^k]$ constructed in Chapter \ref{ChapterIntervalGarside}, where $\lambda \in G(e,e,n)$ is the diagonal matrix such that $\lambda[i,i] = \zeta_e$ for $2 \leq i \leq n$ and $\lambda[1,1] = \zeta_e^{-(n-1)}$. We showed that $B^{\oplus k}(e,e,n)$ is a Garside monoid with Garside element $\underline{\lambda^k}$ and with simples $\underline{[1,\lambda^k]}$, a copy of the interval $[1,\lambda^k] \subset G(e,e,n)$, see Theorem \ref{TheoremIntervalStructure}.\\

In order to construct these Garside monoids, we create a record \verb|M| containing some specific operations for the monoid $B^{\oplus k}(e,e,n)$ and then call \verb|CompleteGarsideRecord(M)|. Since we are dealing with interval monoids, we provide a second argument\\ \verb|rec(interval:=true)| to \verb|CompleteGarsideRecord|. We name the function\\ \verb|CorranPicantinMonoid|. It takes three arguments: $e$, $n$, and $k$ and defines the monoid $B^{\oplus k}(e,e,n)$. Note that the simples of the interval structure of  $B^{\oplus k}(e,e,n)$ are defined as elements of the complex reflection group $G(e,e,n)$.\\

First, we define the list \verb|M.atoms| of the atoms of $B^{\oplus k}(e,e,n)$. It consists of $t_0$, $t_1$, $\cdots$, $t_{e-1}$, $s_3$, $\cdots$, $s_n$ that are elements of $G(e,e,n)$. Next, we define the operation \verb|M.IsLeftDescending|$(s,i)$ that tells whether \verb|M.atoms|$[i]$ divides the simple $s$ on the left. To do this, we use the matrix form of the elements of $G(e,e,n)$ and apply Proposition \ref{PropLengthdecreas}. Since the atoms satisfy (\verb|M.atoms|$[i]$)$^2 = 1$, we automatically have the operation \verb|M.IsRightDescending|$(s,i)$ that tells whether \verb|M.atoms|$[i]$ divides the simple $s$ on the right. Moreover, we define the operation \verb|M.IsRightAscending|$(s,i)$ that tells whether the product of the simple $s$ by the atom \verb|M.atoms|$[i]$ is still simple. This is simply done by calling the function \verb|M.IsLeftDescending|$(s^{-1}$\verb|M.delta|,$i)$ where \verb|M.delta| is the Garside element $\lambda^k \in G(e,e,n)$. Other operations should also be defined like \verb|M.identity| that is the identity element of the monoid and \verb|M.stringDelta| that defines how the Garside element should be printed in normal forms.\\

Finally, we define the Reverse operation for the simples. Since $B^{\oplus k}(e,e,n)$ is an interval monoid, if (\verb|M.delta|)$^2$=\verb|M.identity|, the Reverse operation is just the function $x \mapsto x^{-1}$ and is then automatically defined. This is the case when $e =2k$. In the other cases, the function $x \mapsto x^{-1}$ sends \verb|CorranPicantinMonoid|$(e,n,k)$ into \verb|CorranPicantinMonoid|$(e,n,e-k)$. Thus, we equip \verb|M| with a field \verb|M.revMonoid|  containing this monoid. The implementation is as follows.\\

\begin{small}
\begin{Verbatim}[frame=single]
CorranPicantinMonoid := function ( arg )
    local  e, n, k, M, W, i, deltamat, r, cr;
    e := arg[1];
    n := arg[2];
    if Length( arg ) = 2  then
        k := 1;
    else
        k := arg[3] mod e;
        if k = 0  then
            Error( "k must not be divisible by e" );
        fi;
    fi;
    r := [ Concatenation( [ -1 * E( e ), 1 ], [ 1 .. (n - 2) ] * 0 ) ];
    for i  in [ 1 .. n - 1 ]  do
        Add( r, Concatenation( [ 1 .. (i - 1) ] * 0, [ -1, 1 ], 
           [ 1 .. (n - i - 1) ] * 0 ) );
    od;
    cr := Copy( r );
    cr[1][1] := -1 * E( e ) ^ (-1 * 1);
    W := PermRootGroup( r, cr );
    M := rec(
        group := W,
        operations := rec(
            Print := function ( M )
                  Print( "CorranPicantinMonoid(", e, ",", e, ",", n, ")" );
                  if k <> 1  then
                      Print( "[", k, "]" );
                  fi;
              end ) );
    M.atoms := [ W.2, W.1 ];
    for i  in [ 2 .. e - 1 ]  do
        Add( M.atoms, M.atoms[(i - 1)] ^ M.atoms[i] );
    od;
    Append( M.atoms, W.generators{[ 3 .. n ]} );
    M.mat := function ( s )
          return MatXPerm( W, s );
      end;
    M.IsLeftDescending := function ( simp, i )
          local  m, c1, c2;
          m := M.mat( simp );
          if i <= e  then
              c1 := PositionProperty( m[1], function ( x )
                      return x <> 0;
                  end );
              c2 := PositionProperty( m[2], function ( x )
                      return x <> 0;
                  end );
              if c1 < c2  then
                  return m[2][c2] <> 1;
              else
                  return m[1][c1] = E( e ) ^ (-1 * (i - 1));
              fi;
          fi;
          i := i - e + 2;
          c1 := PositionProperty( m[i - 1], function ( x )
                  return x <> 0;
              end );
          c2 := PositionProperty( m[i], function ( x )
                  return x <> 0;
              end );
          if c1 < c2  then
              return m[i][c2] <> 1;
          else
              return m[i - 1][c1] = 1;
          fi;
      end;
    M.IsRightDescending := function ( simp, i )
          return M.IsLeftDescending( simp ^ (-1 * 1), i );
      end;
    M.IsRightAscending := function ( simp, i )
          return M.IsLeftDescending( simp ^ (-1 * 1) * M.delta, i );
      end;
    M.IsLeftAscending := function ( simp, i )
          return M.IsRightDescending( M.delta * simp ^ (-1 * 1), i );
      end;
    M.identity := W.identity;
    deltamat := IdentityMat( n ) * E( e ) ^ k;
    deltamat[1][1] := E( e ) ^ (k * (1 - n));
    M.delta := PermMatX( W, deltamat );
    M.stringDelta := "d";
    M.orderDelta := Order( W, M.delta );
    M.showmat := function ( s )
          local  t;
          if IsPerm( s )  then
              PrintArray( M.mat( s ) );
          else
              for t  in s.elm  do
                  M.showmat( t );
              od;
          fi;
      end;
    if k <> e - k  then
        if Length( arg ) = 4  then
            M.revMonoid := arg[4];
        else
            M.revMonoid := CorranPicantinMonoid( e, n, e - k, M );
        fi;
        M.Reverse := function ( b )
              local  res, s;
              if Length( b.elm ) = 0  then
                  return M.revMonoid.Elt( [  ], b.pd );
              fi;
              res := [  ];
              for s  in List( Reversed( b.elm ), function ( y )
                        return M.revMonoid.DeltaAction( y ^ (-1 * 1), b.pd );
                    end )  do
                  res := M.revMonoid.AddToNormal( res, s );
              od;
              return GarsideEltOps.Normalize( M.revMonoid.Elt( res, b.pd ) );
          end;
    fi;
    CompleteGarsideRecord( M, rec ( interval := true ) );
    return M;
end;	
\end{Verbatim}
\end{small}

\bigskip

\noindent For example, M := \verb|CorranPicantinMonoid|(3,3,1) defines the classical Garside monoid of Corran and Picantin $B^{\oplus}(e,e,n)$ and M := \verb|CorranPicantinMonoid|(10,3,2) defines the Garside monoid $B^{\oplus 2}(10,10,3)$.\\

Now, we provide an application of this implementation in order to compute the second integral homology group of $B^{(k)}(e,e,n)$, the group of fractions of the monoid $B^{\oplus k}(e,e,n)$, see Proposition \ref{PropH2ofBkeen}. One needs to implement the differential $d_r$ after defining the two $\mathbb{Z}$-module homomorphisms $s_r$ and $u_r$, see Definition \ref{DefintionDifferentialHomology}. This is a general procedure when we compute the homology groups of Garside structures using the Dehornoy-Lafont complexes. On his webpage \cite{MarinCodeHomology}, Marin published the implementation (into GAP3) that he used in his paper \cite{MarinHomologyComputationsII} for the homology computations for some complex braid groups. We use the same file with some changes in order to be compatible with the new distribution of the package CHEVIE of GAP3 and the implementation of the monoid $B^{\oplus k}(e,e,n)$. The implementation (with comments) is given at the end of this Appendix. This allows us to compute the matrices of the differentials $d_2$ and $d_3$. We denote these matrices by \verb|M2| and \verb|M3|, respectively.\\

In order to compute the second integral homology group, one can use the package HAP of GAP4, see \cite{MarinCodeHomology} for the implementation. We simply use SAGE to compute the second integral homology group of $B^{(k)}(e,e,n)$. For example,\\ if M := \verb|CorranPicantinMonoid|$(e,n,k)$ for some parameters $e$, $n$, and $k$, we get two matrices \verb|M2| and \verb|M3| from GAP3. Using SAGE, we put the following commands in order to get the second integral homology group.

\begin{Verbatim}[frame=single] 
M2 = matrix(ZZ,M2)
M3 = matrix(ZZ,M3)
C = ChainComplex({3:M3,2:M2},degree_of_differential = -1)
C.homology(2)
\end{Verbatim}

\bigskip

\begin{center}
\textbf{Homology computation: the matrices of the differentials}
\end{center}
\begin{small}
\begin{Verbatim}[frame = single]
# Define an interval monoid M

M := CorranPicantinMonoid(10,3,2);

# Define the atoms and the simples of the Garside monoid

allatoms := M.atoms;;

allsimples := LeftDivisorsSimple(M,M.delta);;

# The next function returns the left lcm of a list of elements in M

leftlcmlist := function ( L )
    local  res;
    if Length( L ) = 0  then
        return M.B( [  ] );
    elif Length( L ) = 1  then
        return L[1];
    elif Length( L ) = 2  then
        res := LeftLcm( L[1], L[2] )[1];
        return res;
    else
        res := LeftLcm( L[1], leftlcmlist( L{[ 2 .. Length( L ) ]} ) )[1];
        return res;
    fi;
end;

# A chain is a list of elements of the form [c,[m,cell]]
# where m is an element of M, cell is an element of the basis of the chain,
# c is an integer coefficient that describes the linear combination

# The next function left multiples a chain by an element of M

leftmultipliemlst := function ( lst, m )
    local  res, i;
    res := ShallowCopy( lst );
    for i  in [ 1 .. Length( res ) ]  do
        res[i][2][1] := m * res[i][2][1];
    od;
    return res;
end;


# The next function add to a chain the element chc := [c,[m,cel]]

addelmtochain := function ( lst, chc )
    local  res, i, fait;
    res := ShallowCopy( lst );
    fait := false;
    for i  in [ 1 .. Length( res ) ]  do
        if res[i][2] = chc[2]  then
            res[i][1] := res[i][1] + chc[1];
            fait := true;
        fi;
    od;
    if fait = true  then
        return Filtered( res, function ( y )
                return y[1] <> 0;
            end );
    else
        Add( res, chc );
        return res;
    fi;
end;

# The next function returns the list of all the n-cells of M

cellsM := function ( n )
    local  pr, res, la;
    if n = 1  then
        return List( [ 1 .. Length( M.atoms ) ], function ( y )
                return [ M.B( y ) ];
            end );
    elif n = 0  then
        return [ [  ] ];
    else
        pr := cellsM( n - 1 );
        la := List( [ 1 .. Length( M.atoms ) ], M.B );
        res := Concatenation( List( la, function ( a )
                  return List( pr, function ( y )
                          return Concatenation( [ a ], y );
                      end );
              end ) );
        res := Filtered( res, function ( y )
                return Position( la, y[1] ) < Position( la, y[2] );
            end );
        res := Filtered( res, function ( y )
                return 
                 y[1] = M.B( First( [ 1 .. Length( M.atoms ) ], function (a)
                            return (RightGcd( leftlcmlist( y ), M.B( a ) )[1] 
                              <> M.B( [  ] ));
                        end ) );
            end );
        return res;
    fi;
end;

# Next, we define S_n, R_n, and D_n

allfunctions := function ( value1, value2, cl, n )
    local  ch, lst, ce, co, lcmA, it, alpha, alphaA, yy, ret1, ret2, res, i, 
    res2, g, A;
    if value2 = "lst"  then
        lst := ShallowCopy( cl );
        if Length( lst ) < 1  then
            Error( "Error in arguments.\n" );
        fi;
        res := allfunctions( value1, "ch", lst[1][2], n );
        if lst[1][1] <> 1  then
            for i  in [ 1 .. Length( res ) ]  do
                res[i][1] := res[i][1] * lst[1][1];
            od;
        fi;
        res := Filtered( res, function ( y )
                return y[1] <> 0;
            end );
        if Length( lst ) = 1  then
            return res;
        else
            res2 := allfunctions( value1, "lst", lst{[ 2 .. Length( lst ) ]}, 
               n );
            for g  in res2  do
                res := addelmtochain( res, g );
            od;
            res := Filtered( res, function ( y )
                    return y[1] <> 0;
                end );
            return res;
        fi;
    else
        ch := ShallowCopy( cl );
        if n <> Length( ch[2] )  then
            Error( "Error in dimension.\n" );
        fi;
        if value1 = "S"  then
            ce := ch[2];
            co := ch[1];
            lcmA := leftlcmlist( ce );
            it := co * lcmA;
            if it = M.B( [  ] )  then
                return [  ];
            fi;
            alpha := M.B( First( [ 1 .. Length( M.atoms ) ], function ( a )
                      return RightGcd( it, M.B( a ) )[1] <> M.B( [  ] );
                  end ) );
            alphaA := LeftLcm( alpha, lcmA )[1] * lcmA ^ -1;
            yy := co * alphaA ^ -1;
            if n > 0  then
                if alpha = ce[1]  then
                    return [  ];
                fi;
            fi;
            ret1 := [ 1, [ yy, Concatenation( [ alpha ], ce ) ] ];
            ret2 
             := allfunctions( "S", "lst", leftmultipliemlst( allfunctions( 
                   "R", "ch", [ alphaA, ce ], n ), yy ), n );
            return addelmtochain( ret2, ret1 );
        elif value1 = "R"  then
            if n = 0  then
                return [ [ 1, [ M.B( [  ] ), [  ] ] ] ];
            else
                return 
                 allfunctions( "S", "lst", allfunctions( "D", "ch", ch, n ), 
                   n - 1 );
            fi;
        elif value1 = "D"  then
            if n = 0  then
                return [  ];
            else
                alpha := ch[2][1];
                if n = 1  then
                    return 
                     leftmultipliemlst( addelmtochain( [ [ 1, [ alpha, [  ] ] 
                               ] ], [ -1, [ M.B( [  ] ), [  ] ] ] ), ch[1] );
                else
                    A := ShallowCopy( ch[2]{[ 2 .. Length( ch[2] ) ]} );
                    lcmA := leftlcmlist( A );
                    alphaA := LeftLcm( alpha, lcmA )[1] * lcmA ^ -1;
                    ret1 := [ 1, [ alphaA, A ] ];
                    ret2 := allfunctions( "R", "ch", [ alphaA, A ], n - 1 );
                    ret2 := List( ret2, function ( y )
                            return [ y[1] * -1, y[2] ];
                        end );
                    return leftmultipliemlst( addelmtochain( ret2, ret1 ), 
                       ch[1] );
                fi;
            fi;
        else
            Error( "Error in arguments.\n" );
        fi;
    fi;
end;

# The next function computes the matrix of d_n

diffmatr := function ( n )
    local  bseB, bseA, res, g, pr, vv, h;
    bseB := cellsM( n - 1 );
    Print( "cellsM(", n - 1, ")=", Length( bseB ), "\n" );
    bseA := cellsM( n );
    Print( "cellsM(", n, ")=", Length( bseA ), "\n" );
    res := [  ];
    for g  in bseA  do
        pr := allfunctions( "D", "ch", [ M.B( [  ] ), g ], n );
        vv := [  ];
        for h  in bseB  do
            Add( vv, Sum( List( Filtered( pr, function ( y )
                        return y[2][2] = h;
                    end ), function ( y )
                      return y[1];
                  end ) ) );
        od;
        Add( res, ShallowCopy( vv ) );
    od;
    return res;
end;

# The matrices we need to compute the 
# second integral homology group with SAGE are:

M2 := TransposedMat(diffmatr(2));

M3 := TransposedMat(diffmatr(3));
\end{Verbatim}
\end{small}

\newpage~
\thispagestyle{empty}

\chapter{Implementation for Krammer's representations}

In this appendix, we provide the implementation of the BMW algebra for types $(3,3,3)$, $(4,4,3)$, $(5,5,3)$, and $(6,6,3)$ using the GBNP package of GAP4, see \cite{GBNP}. We also provide the method used in order to construct Krammer's representations for the complex braid groups $B(3,3,3)$ and $B(4,4,3)$. For the computations, we use the platform MATRICS of Universit\'e de Picardie Jules Verne, see \cite{MATRICS}.\\

Recall that the BMW algebra is defined by a presentation with generators and relations, see Definitions \ref{DefinitionBMWB(e,e,n)eOdd} and \ref{DefinitionBMWB(e,e,n)eEven}. First, we load the package GBNP and set the standard infolevel \verb|InfoGBNP| and the time infolevel \verb|InfoGBNPTime|. The variables are $t_0$, $t_1$, $s_3$, $e_0$, $e_1$, and $f_3$, in this order. In order to have the results printed out with these symbols, we invoke \verb|GBNP.ConfigPrint|. Next, we enter the relations of the BMW algebra. This is done in NP (non-commutative polynomial) form. The inderminates $m$ and $l$ in the coefficient ring of the BMW algebra are specialized (over $\mathbb{Q}$ or a finite field) in order to make the computations possible. We print the relations using \verb|PrintNPList|. Next, we compute a Gröbner basis of the set of relations using the function \verb|Grobner| and then calculate a basis of the quotient algebra with \verb|BaseQA|. This is done for types $(3,3,3)$, $(4,4,3)$, $(5,5,3)$, and $(6,6,3)$ below. For types $(3,3,3)$ and $(4,4,3)$, we define the right and left multiplication of the corresponding matrix algebra and apply the method explained in Section 5.4 of Chapter 5 in order to get the Krammer's representations over $\mathbb{Q}$ for $B(3,3,3)$ and $B(4,4,3)$ for a certain specialization over $\mathbb{Q}$ of $m$ and $l$. This is given in \verb|KrammerRep| of the following codes.

\bigskip

\begin{center}
\textbf{The case of BMW$(3,3,3)$ and Krammer's representation of $B(3,3,3)$,\\ terminates after 10 minutes}
\end{center}
\begin{small}
\begin{Verbatim}[frame=single] 
LoadPackage("GBNP");

SetInfoLevel(InfoGBNP,1);
SetInfoLevel(InfoGBNPTime,1);

GBNP.ConfigPrint("t0","t1","s3","e0","e1","f3","h1");

Print("Define the algebra BMW(3,3,3) by generators and relations: \n");

m := 2; la := 17; l := la^3;

M0 := [[[[7,2],[2,7]],[1,-1]],
     [[[7,2],[]],[1,-1]]];;

M1 := [[[[1,2,1],[2,1,2]],[1,-1]],
     [[[1,3,1],[3,1,3]],[1,-1]],
     [[[2,3,2],[3,2,3]],[1,-1]],
     [[[3,2,1,7,3,2],[2,1,7,3,2,1]],[1,-1]]];;

M2 := [[[[4],[1,1],[1],[]],[1,-l/m,-l,l/m]],
     [[[5],[2,2],[2],[]],[1,-l/m,-l,l/m]],
     [[[6],[3,3],[3],[]],[1,-l/m,-l,l/m]]];;

M3 := [[[[1,4],[4]],[1,-1/l]],
     [[[2,5],[5]],[1,-1/l]],
     [[[3,6],[6]],[1,-1/l]]];;

M4 := [[[[4,2,4],[4]],[1,-l]],
     [[[5,1,5],[5]],[1,-l]],
     [[[5,3,5],[5]],[1,-l]],
     [[[6,2,6],[6]],[1,-l]],
     [[[4,3,4],[4]],[1,-l]],
     [[[6,1,6],[6]],[1,-l]],
     [[[6,2,1,7,6],[6]],[1,-l]],
     [[[4,7,3,2,4],[4]],[1,-l]]];;			

L := Concatenation(M0,M1,M2,M3,M4);;

PrintNPList(L);

GL := Grobner(L);;

BGL := BaseQA(GL,7,0);;

N := Length(BGL);;

bwords := List(BGL,y->y[1][1]);;

tt0 := MatrixQA(1,BGL,GL);;
tt1 := MatrixQA(2,BGL,GL);;
ss3 := MatrixQA(3,BGL,GL);;
ee0 := MatrixQA(4,BGL,GL);;
ee1 := MatrixQA(5,BGL,GL);;
ff3 := MatrixQA(6,BGL,GL);;

gens := [tt0,tt1,ss3,ee0,ee1,ff3];;

alg := Algebra(Rationals,gens);;

vectb := function ( i )
    local  res;
    res := List( [ 1 .. N ], function ( y )
            return 0;
        end );
    res[i] := 1;
    return res;
end
end;

multr := function ( mot )
    if Length( mot ) = 0  then
        return IdentityMat( N );
    fi;
    return Product( gens{mot} );
end;

gensleft := List([1..Length(gens)],i->
 List(bwords,w->vectb(1)*multr(Concatenation([i],w))));;

Print("End of the first step:
 Computation of the left and right multiplication.\n");

zz:=Product(gens{[2,1,3]})^2;;
zzml2:=zz-(1/la^2)*IdentityMat(N);;
rs1ml:=gens[1]-(1/la^3)*IdentityMat(N);;
eq:=TransposedMat(Concatenation(List([zzml2,rs1ml],TransposedMat)));;
neq:=NullspaceMat(eq);;

Print("End of the second step: Computation of the basis.\n");

KrammerRep:=List(gensleft,g->List(neq,v->SolutionMat(neq,v*g)));;

Print("End of the third step: Krammer's representation.\n");
\end{Verbatim}
\end{small}

\bigskip

\begin{center}
\textbf{The case of BMW$(4,4,3)$ and Krammer's representation of $B(4,4,3)$,\\ terminates after 1 day}
\end{center}
\begin{small}
\begin{Verbatim}[frame=single] 
LoadPackage("GBNP");

SetInfoLevel(InfoGBNP,1);
SetInfoLevel(InfoGBNPTime,1);

GBNP.ConfigPrint("t0","t1","s3","e0","e1","f3","h0","h1");;

Print("Define the algebra BMW(4,4,3) by generators and relations. \n");

v := 4; m := v-1/v; l := 9;

M0 := [[[[7,1],[1,7]],[1,-1]],
     [[[7,1],[]],[1,-1]],
     [[[8,2],[2,8]],[1,-1]],
     [[[8,2],[]],[1,-1]]];;

M1 := [[[[1,3,1],[3,1,3]],[1,-1]],
     [[[2,3,2],[3,2,3]],[1,-1]],
     [[[1,2,1,2],[2,1,2,1]],[1,-1]],
     [[[3,2,1,8,3],[2,1,8,3,2,1,8]],[1,-1]],
     [[[3,2,1,2,7,8,3],[2,1,2,7,8,3,2,1,2,7,8]],[1,-1]]];;

M2 := [[[[4],[1,1],[1],[]],[1,-l/m,-l,l/m]],
     [[[5],[2,2],[2],[]],[1,-l/m,-l,l/m]],
     [[[6],[3,3],[3],[]],[1,-l/m,-l,l/m]]];;

M3 := [[[[1,4],[4]],[1,-1/l]],
     [[[2,5],[5]],[1,-1/l]],
     [[[3,6],[6]],[1,-1/l]]];;

M4 := [[[[6,1,6],[6]],[1,-l]],
     [[[4,3,4],[4]],[1,-l]],
     [[[6,2,6],[6]],[1,-l]],
     [[[5,3,5],[5]],[1,-l]],
     [[[6,2,1,8,6],[6]],[1,-l]],
     [[[2,4,8,3,2,4],[2,4]],[1,-l]],
     [[[6,2,1,2,7,8,6],[6]],[1,-l]],
     [[[5,7,8,3,2,1,5],[5]],[1,-l]]];;

M5 := [[[[2,1,2,4],[4,2,1,2]],[1,-1]],
     [[[1,2,1,5],[5,1,2,1]],[1,-1]],
     [[[1,2,1,2,4],[4]],[1,-1/(l*v)]],
     [[[1,2,1,2,5],[5]],[1,-1/(l*v)]],
     [[[4,2,4],[4]],[1,-(1/v)-l]],
     [[[5,1,5],[5]],[1,-(1/v)-l]],
     [[[4,5],[]],[1,0]],
     [[[5,4],[]],[1,0]]];;

L := Concatenation(M0,M1,M2,M3,M4,M5);;

PrintNPList(L);

GL := Grobner(L);;

BGL := BaseQA(GL,8,0);;

N := Length(BGL);;

bwords := List(BGL,y->y[1][1]);;

tt0 := MatrixQA(1,BGL,GL);;
tt1 := MatrixQA(2,BGL,GL);;
ss3 := MatrixQA(3,BGL,GL);;
ee0 := MatrixQA(4,BGL,GL);;
ee1 := MatrixQA(5,BGL,GL);;
ff3 := MatrixQA(6,BGL,GL);;

gens := [tt0,tt1,ss3,ee0,ee1,ff3];;

alg := Algebra(Rationals,gens);;

vectb := function ( i )
    local  res;
    res := List( [ 1 .. N ], function ( y )
            return 0;
        end );
    res[i] := 1;
    return res;
end
end;

multr := function ( mot )
    if Length( mot ) = 0  then
        return IdentityMat( N );
    fi;
    return Product( gens{mot} );
end;

gensleft := List([1..Length(gens)],i->
 List(bwords,w->vectb(1)*multr(Concatenation([i],w))));;

Print("End of the first step: Computation of the left
and right multiplication.\n");

idc := CentralIdempotentsOfAlgebra(alg);;

Print("End of the second step: Computing the central idempotents.\n");

rs1ml := (idc[1]*gens[1])-(1/l)*IdentityMat(N);;
neq := NullspaceMat(rs1ml);;

KrammerRep := List(gensleft,g->List(neq,v->SolutionMat(neq,v*g)));;

Print("End of the third step: Krammer's representation.\n");
\end{Verbatim}
\end{small}

\bigskip

\newpage

\begin{center}
\textbf{The case of BMW$(5,5,3)$, terminates after 4 days}
\end{center}
\begin{small}
\begin{Verbatim}[frame=single] 
LoadPackage("GBNP");

SetInfoLevel(InfoGBNP,1);
SetInfoLevel(InfoGBNPTime,1);

GBNP.ConfigPrint("t0","t1","s3","e0","e1","f3","h0","h1");

p := 103; F := GF(p); m := 263; l := 151;

r := NumeratorRat(l/m)*Z(p)^0/DenominatorRat(l/m)*Z(p)^0;
s := NumeratorRat(1/l)*Z(p)^0/DenominatorRat(1/l)*Z(p)^0;
m := NumeratorRat(m)*Z(p)^0/DenominatorRat(m)*Z(p)^0;
l := NumeratorRat(l)*Z(p)^0/DenominatorRat(l)*Z(p)^0;

M0 := [[[[7,1],[1,7]],[1,-1]],
     [[[7,1],[]],[1,-1]],
     [[[8,2],[2,8]],[1,-1]],
     [[[8,2],[]],[1,-1]]];;

M1 := [[[[1,3,1],[3,1,3]],[1,-1]],
     [[[2,3,2],[3,2,3]],[1,-1]],
     [[[1,2,1,2,1],[2,1,2,1,2]],[1,-1]],
     [[[3,2,1,8,3],[2,1,8,3,2,1,8]],[1,-1]],
     [[[3,2,1,2,7,8,3],[2,1,2,7,8,3,2,1,2,7,8]],[1,-1]],
     [[[3,2,1,2,1,8,7,8,3],
     [2,1,2,1,8,7,8,3,2,1,2,1,8,7,8]],[1,-1]]];;

M2 := [[[[6],[3,3],[3],[]],[1,-r,-l,r]],
     [[[4],[1,1],[1],[]],[1,-r,-l,r]],
     [[[5],[2,2],[2],[]],[1,-r,-l,r]]];;

M3 := [[[[1,4],[4]],[1,-s]],
     [[[2,5],[5]],[1,-s]],
     [[[3,6],[6]],[1,-s]]];;

M4 := [[[[6,1,6],[6]],[1,-l]],
     [[[4,3,4],[4]],[1,-l]],
     [[[4,2,4],[4]],[1,-l]],
     [[[4,2,1,2,4],[4]],[1,-l]],
     [[[5,1,5],[5]],[1,-l]],
     [[[5,1,2,1,5],[5]],[1,-l]],
     [[[6,2,6],[6]],[1,-l]],
     [[[5,3,5],[5]],[1,-l]],
     [[[6,2,1,8,6],[6]],[1,-l]],
     [[[2,4,8,3,2,4],[2,4]],[1,-l]],
     [[[6,2,1,2,7,8,6],[6]],[1,-l]],
     [[[2,1,5,7,8,3,2,1,5],[2,1,5]],[1,-l]],
     [[[6,2,1,2,1,8,7,8,6],[6]],[1,-l]],
     [[[2,1,2,4,8,7,8,3,2,1,2,4],[2,1,2,4]],[1,-l]]];;

M5 := [[[[4,5,4],[4]],[1,-1]],
     [[[5,4,5],[5]],[1,-1]],
     [[[4,8,7,8,4],[4]],[1,-s]],
     [[[5,7,8,7,5],[5]],[1,-s]]];;

L := Concatenation(M0,M1,M2,M3,M4,M5);;

PrintNPList(L);

L := List(L,y->[y[1],Z(p)^0*y[2]]);;

GL := Grobner(L);;

BGL := BaseQA(GL,8,0);;

Print("For p equal to ",p,"the length of BGL is ", Length(BGL),". \n");
\end{Verbatim}
\end{small}

\bigskip

\begin{center}
\textbf{The case of BMW$(6,6,3)$, terminates after 40 days}
\end{center}
\begin{small}
\begin{Verbatim}[frame=single] 
LoadPackage("GBNP");

SetInfoLevel(InfoGBNP,1);
SetInfoLevel(InfoGBNPTime,1);

GBNP.ConfigPrint("t0","t1","s3","e0","e1","f3","h0","h1");

p := 103; F := GF(p); v := 2; m := v-1/v; l := 1;

r := NumeratorRat(l/m)*Z(p)^0/DenominatorRat(l/m)*Z(p)^0;
s := NumeratorRat(1/l)*Z(p)^0/DenominatorRat(1/l)*Z(p)^0;
u := NumeratorRat(1/v)*Z(p)^0/DenominatorRat(1/v)*Z(p)^0;
m := NumeratorRat(m)*Z(p)^0/DenominatorRat(m)*Z(p)^0;
l := NumeratorRat(l)*Z(p)^0/DenominatorRat(l)*Z(p)^0;

M0 := [[[[7,1],[1,7]],[1,-1]],
     [[[7,1],[]],[1,-1]],
     [[[8,2],[2,8]],[1,-1]],
     [[[8,2],[]],[1,-1]]];;

M1 := [[[[1,3,1],[3,1,3]],[1,-1]],
     [[[2,3,2],[3,2,3]],[1,-1]],
     [[[1,2,1,2,1,2],[2,1,2,1,2,1]],[1,-1]],
     [[[3,2,1,8,3],[2,1,8,3,2,1,8]],[1,-1]],
     [[[3,2,1,2,7,8,3],[2,1,2,7,8,3,2,1,2,7,8]],[1,-1]],
     [[[3,2,1,2,1,8,7,8,3],
     [2,1,2,1,8,7,8,3,2,1,2,1,8,7,8]],[1,-1]],
     [[[3,2,1,2,1,2,7,8,7,8,3],
     [2,1,2,1,2,7,8,7,8,3,2,1,2,1,2,7,8,7,8]],[1,-1]]];;

M2 := [[[[6],[3,3],[3],[]],[1,-r,-l,r]],
     [[[4],[1,1],[1],[]],[1,-r,-l,r]],
     [[[5],[2,2],[2],[]],[1,-r,-l,r]]];;

M3 := [[[[1,4],[4]],[1,-s]],
     [[[2,5],[5]],[1,-s]],
     [[[3,6],[6]],[1,-s]]];;

M4 := [[[[6,1,6],[6]],[1,-l]],
     [[[4,3,4],[4]],[1,-l]],
     [[[6,2,6],[6]],[1,-l]],
     [[[5,3,5],[5]],[1,-l]],
     [[[6,2,1,8,6],[6]],[1,-l]],
     [[[4,8,3,2,4],[4]],[1,-l]],
     [[[6,2,1,2,7,8,6],[6]],[1,-l]],
     [[[5,7,8,3,2,1,5],[5]],[1,-l]],
     [[[6,2,1,2,1,8,7,8,6],[6]],[1,-l]],
     [[[4,8,7,8,3,2,1,2,4],[4]],[1,-l]],
     [[[6,2,1,2,1,2,7,8,7,8,6],[6]],[1,-l]],
     [[[5,7,8,7,8,3,2,1,2,1,5],[5]],[1,-l]]];;

M5 := [[[[2,1,2,1,2,4],[4,2,1,2,1,2]],[1,-1]],
     [[[1,2,1,2,1,5],[5,1,2,1,2,1]],[1,-1]],
     [[[1,2,1,2,1,2,4],[4]],[1,-s*u]],
     [[[1,2,1,2,1,2,5],[5]],[1,-s*u]],
     [[[4,2,4],[4]],[1,-u-l]],
     [[[5,1,5],[5]],[1,-u-l]],
     [[[4,5],[]],[1,0]],
     [[[5,4],[]],[1,0]]];;

L := Concatenation(M0,M1,M2,M3,M4,M5);;

PrintNPList(L);

L := List(L,y->[y[1],Z(p)^0*y[2]]);;

GL := Grobner(L);;

BGL := BaseQA(GL,8,0);;

Print("For p equal to ",p,", the length of BGL is ", Length(BGL),".\n");
\end{Verbatim}
\end{small}

\end{appendices}

\chapter*{Compte-rendu \footnote{Lorsque la langue de rédaction de la dissertation est l'anglais, l'Université de Caen Normandie nous demande d'inclure un compte-rendu de la thèse rédigé en français.}}

\addcontentsline{toc}{chapter}{Compte-rendu}

Dans ce compte-rendu nous mettons en évidence les motivations, les enjeux et les apports de cette thèse en exposant ses principaux résultats. Nous commençons par rappeler les définitions des groupes de réflexions complexes, des groupes de tresses complexes et des algèbres de Hecke.

\section*{Groupes de réflexions complexes}

Nous rappelons la définition d'un groupe de réflexions complexes et la classification de Shephard et Todd des groupes finis de réflexions complexes. Nous rappelons aussi la définition des groupes de Coxeter et la classification des groupes de Coxeter finis et irréductibles.\\

Soit $n$ un entier positif non nul et soit $V$ un $\mathbb{C}$-espace vectoriel de dimension $n$.

\begin{definitionf}

Un élément $s$ de $GL(V)$ est appelé une réflexion si $Ker(s-1)$ est un hyperplan et si $s^d = 1$ pour un entier $d \geq 2$.

\end{definitionf}

Soit $W$ un sous-groupe fini de $GL(V)$.

\begin{definitionf}

W est un groupe de réflexions complexes si $W$ est engendré par l'ensemble $\mathcal{R}$ des réflexions de $W$.

\end{definitionf}

On dit que $W$ est irréductible si $V$ est une représentation linéaire irréductible de $W$. Tout groupe de réflexions complexes peut être écrit comme produit direct d'irréductibles (cf. Proposition 1.27 de \cite{UnitaryReflectionGroupsLehrerTaylor}). Donc on peut se restreindre à l'étude des groupes de réflexions complexes irréductibles. Ces derniers ont été classifiés par Shephard et Todd en 1954 (cf. \cite{ShephardTodd}). La classification est comme suit.

\begin{propositionf}

Soit $W$ un groupe de réflexions complexes irréductible. À conjugaison près, $W$ appartient à l'un des cas suivants :

\begin{itemize}
\item La série infinie $G(de,e,n)$ qui dépend de trois paramètres entiers strictement positifs $d$, $e$ et $n$ (cf. la définition suivante).
\item Les $34$ groupes exceptionnels $G_4$, $\cdots$, $G_{37}$.
\end{itemize}

\end{propositionf}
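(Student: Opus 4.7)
Mon plan pour démontrer cette classification repose sur la dichotomie classique entre groupes \emph{imprimitifs} et groupes \emph{primitifs}. Rappelons qu'un groupe $W \subset GL(V)$ est imprimitif s'il existe une décomposition non triviale $V = V_1 \oplus \cdots \oplus V_k$ ($k \geq 2$) telle que $W$ permute les $V_i$ ; sinon il est primitif. Cette dichotomie est bien adaptée ici car elle produit précisément les deux familles annoncées : la série infinie $G(de,e,n)$ d'un côté, et les groupes exceptionnels $G_4, \ldots, G_{37}$ de l'autre.

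Première étape : traiter le cas imprimitif. Si $W$ est irréductible et imprimitif, on montre d'abord que les blocs $V_i$ sont nécessairement tous de dimension $1$ (sinon l'irréductibilité, combinée à l'action transitive de $W$ sur les blocs, forcerait un facteur réductible). Soit $H$ le sous-groupe de $W$ stabilisant globalement chaque $V_i$ : c'est un sous-groupe diagonal, donc un sous-groupe fini de $(\mathbb{C}^{*})^n$ ; par ailleurs $W/H$ s'injecte dans $S_n$. Les contraintes \textbf{(a)} $W$ est engendré par des pseudo-réflexions et \textbf{(b)} $W$ est irréductible obligent $W/H$ à contenir tous les transpositions (donc à être $S_n$) et $H$ à être précisément l'ensemble des diagonales dont les coefficients sont des racines $de$-ièmes de l'unité avec produit une racine $d$-ième de l'unité, pour un choix convenable de $d$ et $e$. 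On reconnaît $G(de,e,n)$.

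Deuxième étape, bien plus délicate : la classification des groupes primitifs. Je procéderais par récurrence sur $n = \dim V$ en exploitant trois outils fondamentaux. D'abord, le théorème de Chevalley--Shephard--Todd, qui fournit des degrés invariants $d_1 \leq \cdots \leq d_n$ vérifiant $d_1 \cdots d_n = |W|$ et $\sum (d_i-1) = |\mathcal{R}|$ ; ces deux égalités restreignent drastiquement les uplets possibles. Ensuite, le fait qu'un sous-groupe parabolique (stabilisateur d'un point générique d'un hyperplan de réflexion) est lui-même un groupe de réflexions, de rang $n-1$, ce qui permet la récurrence. Enfin, la structure du centre $Z(W)$, cyclique d'ordre $\gcd(d_1, \ldots, d_n)$, et des contraintes arithmétiques sur les valeurs propres des pseudo-réflexions (qui sont des racines de l'unité).

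Le principal obstacle sera l'exhaustivité en petit rang, car l'induction ne borne pas $n$ \emph{a priori}. En rang $2$, il faut classifier tous les sous-groupes finis de $GL_2(\mathbb{C})$ engendrés par des pseudo-réflexions, en se ramenant à la classification des sous-groupes finis de $SO_3(\mathbb{R})$ \emph{via} les groupes binaires polyédraux : on obtient ainsi $G_4, \ldots, G_{22}$. En rang $3$, les contraintes sur les degrés et la restriction aux paraboliques réduisent à $G_{23}, \ldots, G_{27}$. On poursuit ainsi : en rang $4$ apparaissent $G_{28}, \ldots, G_{32}$, en rang $5$ à $8$ les groupes $G_{33}, \ldots, G_{37}$, puis les contraintes numériques sur les degrés excluent tout groupe primitif de rang $\geq 9$. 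La difficulté réelle, à chaque étape, est double : démontrer l'\emph{existence} (en exhibant l'action sur $V$) et surtout l'\emph{unicité} à conjugaison près, ce qui demande soit un argument abstrait sur la présentation, soit un calcul explicite avec les degrés et les invariants fondamentaux. En pratique, je suivrais l'argumentation originale de Shephard et Todd \cite{ShephardTodd}, modernisée à la lumière du chapitre 8 de \cite{UnitaryReflectionGroupsLehrerTaylor}, en acceptant que cette deuxième étape constitue l'essentiel du travail.
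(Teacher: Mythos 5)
Le texte ne démontre pas cet énoncé : il s'agit de la classification de Shephard et Todd, simplement rappelée et renvoyée à \cite{ShephardTodd} (voir aussi \cite{UnitaryReflectionGroupsLehrerTaylor}) ; votre tentative va donc au-delà de ce que fait le texte, et il n'y a pas de preuve interne à laquelle la comparer. Votre schéma (dichotomie imprimitif/primitif) est bien la voie standard. La partie imprimitive est correcte dans ses grandes lignes : blocs de dimension $1$, sous-groupe diagonal $H$, quotient transitif engendré par des transpositions donc égal à $S_n$, d'où $G(de,e,n)$. Deux précisions manquent néanmoins pour obtenir l'énoncé exact « à conjugaison près » : il faut écarter les membres non irréductibles ou dégénérés de la série ($G(1,1,n)$ n'est irréductible que sur $\mathbb{C}^{n-1}$, $G(2,2,2)$ est réductible — cf. la remarque qui suit la proposition dans le texte) et justifier que les paramètres $(d,e,n)$ sont essentiellement déterminés par la classe de conjugaison.

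La vraie lacune est la seconde étape : la classification des groupes primitifs, qui constitue le c\oe ur du théorème, n'est pas démontrée mais explicitement déléguée à \cite{ShephardTodd} et \cite{UnitaryReflectionGroupsLehrerTaylor}. Les outils que vous invoquez (degrés invariants de Chevalley--Shephard--Todd, récurrence via les sous-groupes paraboliques, rang $2$ ramené aux groupes binaires polyédraux, exclusion numérique en rang élevé) sont les bons, mais tels quels ils n'établissent ni l'exhaustivité ni l'unicité : les listes par rang ($G_4,\ldots,G_{22}$ en rang $2$, etc.) sont affirmées et non prouvées, et rien dans votre argument n'exclut qu'un groupe primitif ait été omis. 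Comme le texte se contente lui-même de citer le résultat, ce renvoi aux références est acceptable dans ce contexte ; en tant que preuve autonome, en revanche, votre proposition reste un plan de démonstration plutôt qu'une démonstration.
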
  

Nous donnons la définition de la série infinie $G(de,e,n)$ ainsi que certaines de ses propriétés. Pour la définition des $34$ groupes exceptionnels, consulter \cite{ShephardTodd}.

\begin{definitionf}

$G(de,e,n)$ est le groupe des matrices monomiales de tailles $n \times n$ telles que

\begin{itemize}
\item les coefficients non nuls de chaque matrice sont des racines $de$-ième de l'unité et
\item le produit des coefficients non nuls est une racine $d$-ième de l'unité.

\end{itemize}

\end{definitionf}

\begin{remarkf}

Le groupe $G(1,1,n)$ est irréductible sur $\mathbb{C}^{n-1}$ et le groupe $G(2,2,2)$ n'est pas irréductible donc il est exclu de la classification de Shephard et Todd.

\end{remarkf}

Soit $s_i$ la matrice de la transposition $(i,i+1)$ pour $1 \leq i \leq n-1$, $t_e = \begin{pmatrix}
0 & \zeta_e^{-1} & 0\\
\zeta_e & 0 & 0\\
0 & 0 & I_{n-2}
\end{pmatrix}$ et $u_d = \begin{pmatrix}
\zeta_d & 0\\
0 & I_{n-1}
\end{pmatrix}$, où $I_k$ est la matrice identité de taille $k \times k$ et $\zeta_l$ est la racine $l$-ième de l'unité égale à $exp(2i\pi\l/l)$. Le résultat suivant peut être trouvé dans la section 3 du chapitre 2 de \cite{UnitaryReflectionGroupsLehrerTaylor}.

\begin{propositionf}

L'ensemble des générateurs du groupe de réflexions complexes $G(de,e,n)$ est comme suit.
\begin{itemize}
\item Le groupe $G(e,e,n)$ est engendré par les réflexions $t_e$, $s_1$, $s_2$, $\cdots$, $s_{n-1}$.
\item Le groupe $G(d,1,n)$ est engendré par les réflexions $u_d$, $s_1$, $s_2$, $\cdots$, $s_{n-1}$.
\item Pour $d \neq 1$ et $e \neq 1$, le groupe $G(de,e,n)$ est engendré par les réflexions $u_d$, $t_{de}$, $s_1$, $s_2$, $\cdots$, $s_{n-1}$. 
\end{itemize}
\end{propositionf}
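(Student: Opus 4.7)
Mon plan est d'utiliser la décomposition en produit semi-direct $G(de,e,n) = H(de,e,n) \rtimes S_n$, où $H(de,e,n)$ désigne le sous-groupe des matrices diagonales de $G(de,e,n)$ et $S_n$ est identifié au sous-groupe des matrices de permutation. D'abord, je note que les réflexions $s_1, \ldots, s_{n-1}$ sont exactement les matrices des transpositions adjacentes $(i,i+1)$, donc elles engendrent $S_n$ par le résultat classique sur les présentations des groupes symétriques. Il suffit alors, dans chacun des trois cas, de montrer que les générateurs proposés engendrent (modulo $S_n$) le sous-groupe diagonal correspondant.

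Pour le cas $G(d,1,n)$, on a $H(d,1,n) = \{\mathrm{diag}(\zeta_d^{a_1}, \ldots, \zeta_d^{a_n}) : a_i \in \mathbb{Z}/d\mathbb{Z}\}$. La conjugaison de $u_d = \mathrm{diag}(\zeta_d, 1, \ldots, 1)$ par une matrice de permutation bien choisie permet de placer $\zeta_d$ sur n'importe quelle position diagonale, et le produit de ces conjugués engendre $H(d,1,n)$ tout entier, ce qui conclut ce cas.

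Pour le cas $G(e,e,n)$, on a $H(e,e,n) = \{\mathrm{diag}(\zeta_e^{a_1}, \ldots, \zeta_e^{a_n}) : \sum a_i \equiv 0 \pmod{e}\}$. L'observation clé est que $t_e \, s_1 = \mathrm{diag}(\zeta_e^{-1}, \zeta_e, 1, \ldots, 1)$. En conjuguant cet élément par des produits de $s_i$'s, on obtient tous les éléments de la forme $\mathrm{diag}(1, \ldots, \zeta_e^{-1}, \ldots, \zeta_e, \ldots, 1)$ avec $\zeta_e^{-1}$ et $\zeta_e$ placés à des positions $i \neq j$ arbitraires. Il reste à vérifier, par une manipulation arithmétique élémentaire sur les exposants dans $(\mathbb{Z}/e)^n$, que ces éléments engendrent bien le noyau de l'application somme $(\mathbb{Z}/e)^n \to \mathbb{Z}/e$, c'est-à-dire $H(e,e,n)$.

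Enfin, pour le cas général $G(de,e,n)$ avec $d \neq 1$ et $e \neq 1$, on a $H(de,e,n) = \{\mathrm{diag}(\zeta_{de}^{a_1}, \ldots, \zeta_{de}^{a_n}) : \sum a_i \equiv 0 \pmod{e}\}$. L'argument précédent appliqué à $t_{de}$ montre que les conjugués de $t_{de}\, s_1$ par des matrices de permutation engendrent le sous-groupe strict $H_0 \subset H(de,e,n)$ défini par la condition plus forte $\sum a_i \equiv 0 \pmod{de}$. Il suffit alors d'adjoindre $u_d = \mathrm{diag}(\zeta_{de}^e, 1, \ldots, 1)$ et ses conjugués par $S_n$ pour compléter $H_0$ en $H(de,e,n)$, puisque ces éléments ajoutent exactement les multiples de $e$ à n'importe quelle position diagonale. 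L'obstacle principal, bien que techniquement léger, réside dans cette dernière vérification arithmétique modulaire : il faut rester vigilant à distinguer les congruences modulo $e$ et modulo $de$, afin de ne pas confondre $H_0$ avec $H(de,e,n)$ et d'obtenir exactement le sous-groupe cherché.
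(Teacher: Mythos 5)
Your proof is correct. Note, however, that the paper itself does not prove this proposition at all: it simply states it and refers to Section 3 of Chapter 2 of Lehrer--Taylor \cite{UnitaryReflectionGroupsLehrerTaylor}, so there is no in-paper argument to compare against. What you have written is essentially the standard argument from that reference, based on the imprimitive structure $G(de,e,n) = H(de,e,n) \rtimes S_n$ with $H$ the diagonal subgroup: the computation $t_e s_1 = \mathrm{diag}(\zeta_e^{-1},\zeta_e,1,\dots,1)$ is right, the conjugates of this element do generate the kernel of the exponent-sum map $(\mathbb{Z}/e)^n \to \mathbb{Z}/e$ (the elements $e_j - e_i$ span that kernel), and in the mixed case your bookkeeping between the congruences modulo $de$ (for the subgroup $H_0$ coming from $t_{de}$) and modulo $e$ (for $H(de,e,n)$, since the product of entries being a $d$-th root of unity amounts to $e \mid \sum a_i$) is exactly the point that needs care, and you handle it correctly by adjoining $u_d = \mathrm{diag}(\zeta_{de}^{e},1,\dots,1)$ and its conjugates. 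The only steps left implicit — that the permutation matrices lie in $G(de,e,n)$ and the elementary verification that the $e_j-e_i$ generate the augmentation kernel — are routine and do not affect the validity of the argument.
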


Considérons maintenant $W$ un groupe de réflexions réels. Par un théorème de Coxeter, il est connu que tout groupe fini de réflexions réel est isomorphe à un groupe de Coxeter. La définition des groupes de Coxeter par une présentation par générateurs et relations est la suivante.

\begin{definitionf}

Supposons que $W$ est un groupe et que $S$ est un sous-ensemble de $W$. Pour $s$ et $t$ dans $S$, soit $m_{st}$ égal à l'ordre de $st$ dans $W$ si cet ordre est fini et égal à $\infty$ sinon. On dit que $(W,S)$ est un système de Coxeter et que $W$ est un groupe de Coxeter si $W$ admet la présentation avec ensemble générateur égal à $S$ et relations : \begin{itemize}

\item relations quadratiques: $s^2=1$ pour tout $s \in S$ et
\item relations de tresses: $\underset{m_{st}}{\underbrace{sts\cdots}}=\underset{m_{st}}{\underbrace{tst\cdots}}$  pour $s,t \in S$, $s \neq t$ et $m_{st} \neq \infty$.

\end{itemize}

\end{definitionf}

La présentation d'un groupe de Coxeter peut être décrite par un diagramme où les noeuds sont les générateurs de la présentation qui appartiennent à $S$ et les arêtes décrivent les relations entre ces générateurs. Nous suivons les conventions standard des diagrammes de Coxeter. La classification des groupes de Coxeter finis irréductibles comporte :

\begin{tabular}{ll}

-- Type $A_n$ (le groupe symétrique $S_{n+1}$): &  

\begin{tikzpicture}
\draw[fill=black] 
(0,0)                         
      circle [radius=.1] node [label=above:$s_1$] (1) {} 
(1,0) 
      circle [radius=.1] node [label=above:$s_2$] (2) {} 
(3,0)
      circle [radius=.1] node [label=above:$s_n$] (3) {}; 

\draw[-] (1) to (2);
\draw[dashed,-] (2) to (3);

\end{tikzpicture}\\

-- Type $B_n$: & 

\begin{tikzpicture}

\draw[fill=black] 
(0,0)                         
      circle [radius=.1] node [label=above:$s_1$] (1) {}
(1,0) 
      circle [radius=.1] node [label=above:$s_2$] (2) {} 
(2,0)
      circle [radius=.1] node [label=above:$s_3$] (3) {}
(4,0)
	  circle [radius=.1] node [label=above:$s_n$] (4) {}; 

\draw[thick,double,-] (1) to (2);
\draw[-] (2) to (3);
\draw[dashed,-] (3) to (4);

\end{tikzpicture}\\

-- Type $D_n$: &

\begin{tabular}{ll}
\begin{tikzpicture}

\draw[fill=black] 
(-1,0.5)                         
      circle [radius=.1] node [label=left:$s_1$] (1) {}
(-1,-0.5) 
      circle [radius=.1] node [label=left:$s_2$] (2) {} 
(0,0)
      circle [radius=.1] node [label=below:$s_3$] (3) {}
(1,0)
	  circle [radius=.1] node [label=below:$s_4$] (4) {}
(3,0)
	  circle [radius=.1] node [label=below:$s_n$] (5) {}; 

\draw[-] (1) to (3);
\draw[-] (2) to (3);
\draw[-] (3) to (4);
\draw[dashed,-] (4) to (5);

\end{tikzpicture} & \\
 & 

\end{tabular}\\

-- Type $I_2{(e)}$ (le groupe diédral): & \begin{tikzpicture}

\draw[fill=black] 
(0,0)                         
      circle [radius=.1] node [label=above:$s_1$] (1) {}
    
(1,0) 
      circle [radius=.1] node [label=above:$s_2$] (2) {};

\draw[-,label=$e$] (1) to (2);

\node at (0.5,0.15) {$e$};

\end{tikzpicture}\\

 & \\

-- $H_3$, $F_4$, $H_4$, $E_6$, $E_7$ et $E_8$. & \\

\end{tabular}

\begin{remarkf}

En utilisant la notation des groupes qui apparaissent dans la classification de Shephard et Todd, nous avons : type $A_{n-1}$ est $G(1,1,n)$, type $B_n$ est $G(2,1,n)$, type $D_n$ est $G(2,2,n)$ et type $I_2(e)$ est $G(e,e,2)$. Pour les groupes exceptionnels, on a $H_3 = G_{23}$, $F_4 = G_{28}$, $H_4 = G_{30}$, $E_6 = G_{35}$, $E_7 = G_{36}$ et $E_8 = G_{37}$.

\end{remarkf}

\section*{Groupes de tresses complexes}

Soit $W$ un groupe de Coxeter. On définit le groupe d'Artin-Tits $B(W)$ associé à $W$ comme suit.

\begin{definitionf}

Le groupe d'Artin-Tits $B(W)$ associé à $W$ est défini par une présentation avec un ensemble générateur $\widetilde{S}$ en bijection avec l'ensemble générateur $S$ du groupe de Coxeter et les relations sont seulement les relations de tresses $\underset{m_{st}}{\underbrace{\tilde{s}\tilde{t}\tilde{s}\cdots}}=\underset{m_{st}}{\underbrace{\tilde{t}\tilde{s}\tilde{t}\cdots}}$ pour $\tilde{s}, \tilde{t} \in \widetilde{S}$ et $\tilde{s} \neq \tilde{t}$, où $m_{st} \in \mathbb{Z}_{\geq 2}$ est l'ordre de $st$ dans $W$.

\end{definitionf}

Soit $W = S_n$ le groupe symétrique avec $n \geq 2$.
Le groupe d'Artin-Tits associé à $S_n$ est le groupe de tresse `classique' noté $B_n$. Le diagramme suivant décrit la présentation par générateurs et relations de $B_n$.

\begin{center}
\begin{tikzpicture}

\node[draw, shape=circle, label=above:$\tilde{s}_1$] (1) at (0,0) {};
\node[draw, shape=circle, label=above:$\tilde{s}_2$] (2) at (1,0) {};
\node[draw, shape=circle,label=above:$\tilde{s}_{n-2}$] (n-2) at (4,0) {};
\node[draw,shape=circle,label=above:$\tilde{s}_{n-1}$] (n-1) at (5,0) {};

\draw[thick,-] (1) to (2);
\draw[thick,dashed,-] (2) to (n-2);
\draw[thick,-] (n-2) to (n-1);

\end{tikzpicture}
\end{center}

Brou\'e, Malle et Rouquier \cite{BMR} ont réussi à associer un groupe de tresses complexes pour chaque groupe de réflexions complexes. Ceci généralise la notion des groupes d'Artin-Tits associés aux groupes de réflexions réels. Nous fournissons la construction de ces groupes de tresses complexes. Tous les résultats peuvent être trouvés dans \cite{BMR}.\\

Soit $W < GL(V)$ un groupe fini de réflexions complexe. Soit $\mathcal{R}$ l'ensemble des réflexions de $W$. Soit $\mathcal{A} = \{Ker(s-1)\ |\ s \in \mathcal{R} \}$ l'arrangement des hyperplans et $X = V\setminus\bigcup\mathcal{A}$ le complémentaire des hyperplans. Le groupe de réflexion complexes $W$ agit naturellement sur $X$. Soit $p: X \rightarrow X/W$ la surjection canonique. Par un théorème de Steinberg (cf. \cite{SteinbergFreeAction}), cette action est libre. Donc, elle définit un recouvrement galoisien $X \rightarrow X/W$ qui donne lieu à la suite exacte suivante. Soit $x \in X$, on a 

$$1 \longrightarrow \pi_1(X,x) \longrightarrow \pi_1(X/W, p(x)) \longrightarrow W \longrightarrow 1.$$

Ceci nous permet de donner la définition suivante.

\begin{definitionf}

On définit $P := \pi_1(X,x)$ le groupe de tresses complexes pur et $B := \pi_1(X/W, p(x))$ le groupe de tresses complexes associés à $W$.

\end{definitionf}

Soit $s \in \mathcal{R}$ et $H_s$ l'hyperplan correspondant. On définit le lacet $\sigma_s \in B$ donné par le chemin $\gamma$ dans $X$. Choisissons un point $x_0$ `proche de $H_s$ et loin des autres hyperplans'. On définit $\gamma$ le chemin dans $X$ égal à $s.({\widetilde{\gamma}}^{-1}) \circ \gamma_0 \circ \widetilde{\gamma}$, où $\widetilde{\gamma}$ est n'importe quel chemin dans $X$ de $x$ à $x_0$, $s.({\widetilde{\gamma}}^{-1})$ est l'image de $\widetilde{\gamma}^{-1}$ sous l'action de $s$ et $\gamma_0$ est un chemin dans $X$ de $x_0$ à $s.x_0$ autour de l'hyperplan $H_s$. Le chemin $\gamma$ est illustré par la figure suivante.

\begin{center}
\begin{tikzpicture}
[x={(0.866cm,-0.5cm)}, y={(0.866cm,0.5cm)}, z={(0cm,1cm)}, scale=0.9,
%Option for nice arrows
>=stealth, %
inner sep=0pt, outer sep=2pt,%
axis/.style={thick,->},
wave/.style={thick,color=#1,smooth},
polaroid/.style={fill=black!60!white, opacity=0.3},
]

\coordinate (O) at (0, 0, 0);
\draw[thick,dashed] (O) -- +(0,  2.7, 0) ;
\draw[thick,dashed] (O) -- +(0,  -2.7, 0);
\draw(0.3,1.9,0) node [right]{$s\cdot x$};
\draw(0.4,-2.2,0) node [left] {$x$};
 \draw[thick] (-2,0,0) -- (O);
\draw[thick] (O) -- +(4.35, 0,   0) ;

\draw [thick] (4.6,0,0)-- +(2.5, 0,   0) node [right] {$H_s$};
\draw (0,2,0)  node{\textbullet} ;
\draw (0,-2,0)  node{\textbullet} ;
\draw[thick,dashed] (5,0,0) -- +(0,  1.9, 0) node[right]{${H_{s}}^{\perp}$};
\draw (5, 0.6, 0) node{\textbullet}; 
\draw[thick,dashed] (5,0,0) -- +(0,  -1.9, 0) ;
\draw (5, -0.6, 0) node{\textbullet} ;
\draw (5.6,-1.1,0.1) node [left] {$x_0$};
\draw (5.8, 1.2,-0.1) node [left] {$s\cdot x_0$};
\draw (5,0,0) node{\textbullet};
\draw (5.5,-0.2,0) node[left]{0};

\draw[fermion, blue] (5,-0.6,0) arc [start angle=190,end angle=23, x radius=0.55cm, y radius=1cm];
\draw[thick, dashed] (5,-0.6,0) arc [start angle=-162, end angle=21, x radius=0.55cm, y radius=1cm];

\path[fermion,draw,red] (0,-2,0) .. controls (4,1,0) and (0.65,-3.7,0) .. (5,-0.6,0);
\path [fermionbar, draw,red](0,2,0).. controls (4,-1,0) and (0.65,3.7,0) .. (5,0.6,0);

\node at (2.5, -1.8,0) {$\widetilde{\gamma}$};
\node at (2.5, 2,0) {$s.({\widetilde{\gamma}}^{-1})$};
\node at (3.8, 0.5,0) {$\gamma_0$};

\end{tikzpicture}
\end{center}

On appelle $\sigma_s$ une réflexion tressée associée à $s$. Les réflexions tressées satisfont la propriété suivante (cf. \cite{BMR} pour une preuve).

\begin{propositionf}

Soient $s_1$ et $s_2$ deux réflexions conjuguées dans $W$ et soient $\sigma_1$ and $\sigma_2$ deux réflexions tressées associées à $s_1$ et $s_2$, respectivement. Les réflexions tressées $\sigma_1$ et $\sigma_2$ sont conjuguées dans $B$.

\end{propositionf}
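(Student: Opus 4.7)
Notre stratégie consistera à construire explicitement une réflexion tressée particulière $\sigma_2^{*}$ associée à $s_2$ que l'on montrera conjuguée à une réflexion tressée particulière $\sigma_1^{*}$ associée à $s_1$, puis à conclure en utilisant l'indépendance, à conjugaison dans $B$ près, de la classe d'une réflexion tressée vis-à-vis des choix effectués dans sa construction. Soit $w \in W$ tel que $s_2 = w s_1 w^{-1}$. Cet élément permute l'ensemble $\mathcal{A}$ des hyperplans réfléchissants et envoie $H_{s_1}$ sur $H_{s_2}$, de sorte que l'action de $w$ sur $X$ transportera toute donnée admissible pour la construction d'une réflexion tressée associée à $s_1$ en une donnée admissible pour $s_2$.

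Concrètement, fixons un triplet $(x_0, \gamma_0, \widetilde{\gamma})$ comme dans la définition, et notons $\sigma_1^{*} \in B$ la réflexion tressée associée à $s_1$ définie via le chemin $\gamma^{(1)} = s_1.(\widetilde{\gamma})^{-1} \circ \gamma_0 \circ \widetilde{\gamma}$. Choisissons ensuite un chemin $\alpha$ dans $X$ joignant $x$ à $w \cdot x$ et posons $x_0^{(2)} := w \cdot x_0$, $\gamma_0^{(2)} := w \cdot \gamma_0$, $\widetilde{\gamma}^{(2)} := (w \cdot \widetilde{\gamma}) \circ \alpha$. Comme $w$ permute $\mathcal{A}$, le point $x_0^{(2)}$ demeure proche de $H_{s_2}$ et éloigné des autres hyperplans; le chemin $\gamma_0^{(2)}$ relie $x_0^{(2)}$ à $s_2 \cdot x_0^{(2)}$ autour de $H_{s_2}$ (en utilisant la relation $s_2 w = w s_1$); et $\widetilde{\gamma}^{(2)}$ joint $x$ à $x_0^{(2)}$. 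Ces données définissent donc une réflexion tressée $\sigma_2^{*} \in B$ associée à $s_2$.

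Un calcul direct obtenu en développant la définition de $\gamma^{(2)}$, en exploitant la commutation $s_2 w = w s_1$, puis en regroupant les facteurs en un chemin transporté par $w$, fournit l'identité
\[
\gamma^{(2)} \;=\; s_2.(\alpha)^{-1} \,\circ\, (w \cdot \gamma^{(1)}) \,\circ\, \alpha
\]
dans $X$. En projetant par $p : X \to X/W$ et en utilisant l'invariance de $p$ sous l'action de $W$, on en déduit dans $\pi_1(X/W, p(x)) = B$ l'égalité $\sigma_2^{*} = \hat{w}^{-1}\, \sigma_1^{*}\, \hat{w}$, où $\hat{w} \in B$ est la classe du lacet $p \circ \alpha$ (c'est un relevé de $w$ dans $B$). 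Les deux réflexions tressées particulières $\sigma_1^{*}$ et $\sigma_2^{*}$ sont ainsi conjuguées dans $B$.

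Le point délicat restant, qui constituera vraisemblablement le principal obstacle, sera de montrer que la classe de conjugaison dans $B$ de la réflexion tressée associée à une réflexion $s$ donnée ne dépend ni du choix de $x_0$ (dans l'ouvert des points proches de $H_s$ et éloignés des autres hyperplans réfléchissants), ni du choix de $\gamma_0$, ni du choix de $\widetilde{\gamma}$. Cette indépendance se ramènera, par un argument similaire à celui présenté ci-dessus appliqué avec $w = 1$ et $\alpha$ reliant deux points de base locaux différents, à deux faits topologiques : la connexité par arcs de l'ouvert local des points proches de $H_s$ et éloignés des autres hyperplans (qui vaut car cet ensemble est un ouvert non vide dans le complémentaire d'une sous-variété de codimension complexe deux), et l'unicité à homotopie près du petit lacet élémentaire autour de $H_s$ dans ce voisinage. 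Une fois cette indépendance établie, toute réflexion tressée associée à $s_1$ est conjuguée à $\sigma_1^{*}$ et toute réflexion tressée associée à $s_2$ est conjuguée à $\sigma_2^{*}$, et l'on conclut par transitivité de la conjugaison dans $B$.
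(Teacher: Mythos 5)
The paper itself does not prove this statement: it is quoted with a reference to Brou\'e--Malle--Rouquier, so there is no in-paper argument to compare with. Your transport argument is in fact the standard proof from that source, and its core computation is correct: with $s_2 = w s_1 w^{-1}$, transporting $(x_0,\gamma_0,\widetilde{\gamma})$ by $w$ (which permutes $\mathcal{A}$ and sends $H_{s_1}$ to $H_{s_2}$, hence preserves admissibility of the base point and of the elementary path around the hyperplane) and inserting a path $\alpha$ from $x$ to $w\cdot x$ gives exactly $\gamma^{(2)} = s_2.(\alpha^{-1}) \circ (w\cdot\gamma^{(1)}) \circ \alpha$; projecting by $p$, which identifies $W$-translates, yields $\sigma_2^{*} = \hat{w}^{-1}\sigma_1^{*}\hat{w}$ (up to composition conventions) with $\hat{w} = [p\circ\alpha] \in B$ a lift of $w$. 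Note also that independence of the choice of $\widetilde{\gamma}$ is immediate: replacing $\widetilde{\gamma}$ by $\widetilde{\gamma}'$ conjugates $\sigma_s$ by the class of $p(\widetilde{\gamma}^{-1}\circ\widetilde{\gamma}')$, so the only genuinely delicate choices are $x_0$ and $\gamma_0$, as you say.

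Two points in that remaining step should be tightened. First, your justification of connectedness is not correct as stated: a nonempty open subset of the complement of a complex-codimension-two subvariety need not be connected. What one actually uses is that $H_s^{\circ} := H_s \setminus \bigcup_{H\neq H_s} H$ is connected (one removes from the connected complex manifold $H_s$ subvarieties of complex codimension at least one), and that the set of admissible base points $x_0$ can be taken to be a neighborhood fibered over $H_s^{\circ}$ with connected fibers (punctured disc times contractible), hence path-connected. Second, ``a path around $H_s$'' must be read as the elementary rotation of the paper's figure (determined by the nontrivial eigenvalue of $s$, which is the same for $s_1$ and $s_2$ since they are conjugate): arbitrary paths in the local punctured-disc picture from $x_0$ to $s\cdot x_0$ differ by full turns and are not all homotopic, so uniqueness up to homotopy only holds for that distinguished class. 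With these corrections your reduction (the case $w=1$, $\alpha$ joining two admissible base points) does show that all braided reflections attached to a fixed $s$ lie in a single $B$-conjugacy class (in fact they are conjugate under elements of $P$), and the conclusion follows by transitivity as you indicate.
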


Une réflexion $s$ est dite distinguée si son unique valeur propre non-triviale est $exp(2i\pi/o(s))$, où $o(s)$ est l'ordre de $s$ dans le groupe de réflexions complexes. On peut associer une réflexion tressée $\sigma_s$ à chaque réflexion distinguée $s$. Dans ce cas, on appelle $\sigma_s$ une réflexion tressée distinguée associée à $s$. On a le résultat suivant (cf. \cite{BMR} pour une preuve). 

\begin{propositionf}

Le groupe de tresses complexes $B$ est engendré par les réflexions tressées distinguées associées aux réflexions distinguées de $W$.

\end{propositionf}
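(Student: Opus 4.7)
The plan is to exploit the short exact sequence
$$1 \longrightarrow P \longrightarrow B \longrightarrow W \longrightarrow 1$$
and show that the subgroup $N \leq B$ generated by the distinguished braided reflections both surjects onto $W$ and contains $P$. From these two facts, an elementary diagram chase forces $N = B$: given $b \in B$, pick $n \in N$ with the same image in $W$ as $b$; then $bn^{-1} \in P \subseteq N$, so $b \in N$.

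For the surjection $N \twoheadrightarrow W$, I would first observe that the image of $\sigma_s$ under $B \to W$ is exactly $s$, since the arc $\gamma_0$ in the construction joins $x_0$ to $s \cdot x_0$, and the deck transformation sending one to the other is $s$. Next I would verify that the distinguished reflections already generate $W$. By Steinberg's theorem, the pointwise stabilizer of each reflecting hyperplane $H$ is a cyclic subgroup of $W$, whose generator is the distinguished reflection with mirror $H$; every reflection of $W$ with mirror $H$ is a power of this generator. Since $W$ is generated by all its reflections, it is generated by the distinguished ones.

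The main step is $P \subseteq N$. For a distinguished reflection $s$ of order $d = o(s)$, I claim that $\sigma_s^d$ is a meridian around $H_s$ in $X$. Its image in $W$ is $s^d = 1$, hence $\sigma_s^d \in P$; topologically, since $s$ acts on the normal line $H_s^\perp$ by multiplication by $\exp(2i\pi/d)$, the arc $\gamma_0$ sweeps an angle $2\pi/d$ around $H_s$, so $\gamma_0^d$ closes up to a small loop encircling $H_s$ exactly once. The classical Zariski--Van Kampen theorem applied to the complement of a complex hyperplane arrangement then guarantees that $P = \pi_1(X,x)$ is generated by conjugates of one such meridian per hyperplane.

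The delicate point, and the main obstacle, is to realize those conjugates inside $N$. For hyperplanes $H = w \cdot H_s$ in the $W$-orbit of $H_s$, the corresponding meridian is by construction conjugate to $\sigma_s^d$ by a lift $\tilde w \in B$ of $w$; since $w$ factors as a product of distinguished reflections, such a lift can be chosen as a product of distinguished braided reflections, hence lies in $N$. The basepoint-change conjugations appearing in the Zariski--Van Kampen presentation are themselves loops in $X$, i.e.\ elements of $P$, but the same theorem expresses them as products of meridians, so they are absorbed into $N$ by an iteration on the presentation. Together with Proposition \ref{PropositionBraidedReflectionsConjugate}, which ensures that the choice of $\sigma_s$ is well-defined up to conjugation and hence compatible with these identifications, this closes the argument and yields $B = N$.
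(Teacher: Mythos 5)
Your reduction to the two claims (the image of $N$ in $W$ is all of $W$, and $P\subseteq N$) is sound, the identification of the image of $\sigma_s$ with $s$ is correct, and so is the computation showing that $\sigma_s^{o(s)}$ lifts to a meridian of $H_s$ in $X$; note also that for \emph{every} hyperplane $H$ you already have a meridian in $N$, namely $\sigma_{s_H}^{o(s_H)}$, so the detour through lifts $\tilde w$ of $w\in W$ is unnecessary. The genuine gap is the last step. The meridian-generation theorem only says that $P=\pi_1(X,x)$ is generated by meridians, and a meridian of $H$ is well defined only up to conjugation by an \emph{arbitrary} element of $P$: the concrete Zariski--Van Kampen generators are of the form $\alpha_H\,\sigma_{s_H}^{o(s_H)}\,\alpha_H^{-1}$ with conjugators $\alpha_H\in P$ that you do not control. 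Your proposed fix --- ``they are expressed as products of meridians, so they are absorbed into $N$ by an iteration on the presentation'' --- is circular: writing $\alpha_H$ as a product of meridians reintroduces conjugators of exactly the same uncontrolled kind, so the iteration never lands in elements already known to lie in $N$. Proposition \ref{PropositionBraidedReflectionsConjugate} does not help here either: it asserts that braided reflections attached to conjugate reflections are conjugate \emph{in $B$}, not that the conjugator can be taken in $N$. The missing idea that repairs your scheme is that $N$ is normal in $B$: conjugating a distinguished braided reflection by any element of $B$ produces a distinguished braided reflection attached to the conjugate (still distinguished) reflection, because being a generator of the monodromy around a component of the discriminant is a conjugation-invariant notion. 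Once $N\trianglelefteq B$, every $P$-conjugate of $\sigma_{s_H}^{o(s_H)}$ lies in $N$, hence all meridians do, so $P\subseteq N$ and $B=N$.

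For the record, the paper does not prove this statement; it refers to \cite{BMR}, where the argument is shorter and works directly downstairs: $X/W$ is the complement of the discriminant hypersurface in $V/W\cong\mathbb{C}^n$ (Shephard--Todd--Chevalley), the fundamental group of such a hypersurface complement is generated by meridians of the irreducible components, and those meridians are precisely the distinguished braided reflections. That route avoids the exact sequence and the conjugation problem entirely, and is the one you should compare your fix against. A minor point: the cyclicity of the pointwise stabilizer of a hyperplane is not Steinberg's theorem but the elementary fact that this stabilizer acts faithfully on a line and therefore embeds in $\mathbb{C}^{\times}$.
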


\begin{remarkf}

Par un théorème de Brieskorn \cite{BrieskornArtinTitsFundamentalGroup}, le groupe de tresses complexes associé à un groupe de Coxeter fini $W$ est isomorphe au groupe d'Artin-Tits $B(W)$ associé à $W$.

\end{remarkf}

Une propriété importante des groupes de tresses complexes est qu'ils peuvent être définis par des présentations finis par générateurs et relations qui ressemblent aux présentations des groupes de Coxeter. Ceci généralise le cas des groupes de tresses complexes associés aux groupes finis de Coxeter (consulter la remarque précédente). Des présentations des groupes de tresses complexes associés à la série infinie $G(de,e,n)$ et à certains des groupes exceptionnels peuvent être trouvées dans \cite{BMR}. Des présentations des groupes de tresses complexes associés aux autres groupes exceptionnels sont données dans \cite{BessisMichelPresentationsExcepBraidGroups}, \cite{BessisKPi1} et \cite{MalleMichelRepresentationsHecke}.

\section*{Algèbres de Hecke}

Généralisant des résultats précédents dans \cite{BroueMalleZyklotomischeHeckealgebren}, Brou\'e, Malle et Rouquier ont réussi à généraliser d'une manière naturelle la définition de l'algèbre de Hecke (ou Iwahori-Hecke) des groupes de réflexions réels à n'importe quel groupe de réflexions complexes. En effet, ils ont défini ces algèbres de Hecke en utilisant leur définition des groupes de tresses complexes que nous avons déjà rappelée. Nous donnons maintenant la définition de ces algèbres de Hecke ainsi que certaines de leurs propriétés.\\

Soit $W$ un groupe de réflexions complexes et $B$ le groupe de tresses complexes associé. Soit $R=\mathbb{Z}[a_{s,i},a_{s,0}^{-1}]$ où $s$ appartient à un système de représentants des classes de conjugaison de réflexions distinguées dans $W$ et $0 \leq i \leq o(s)-1$, où $o(s)$ est l'ordre de $s$ dans $W$. On choisit une réflexion tressée distinguée $\sigma_s$ pour chaque réflexion distinguée $s$. La définition d'une réflexion tressée distinguée a été donnée avant.

\begin{definitionf}

L'algèbre de Hecke $H(W)$ associée à $W$ est le quotient de l'algèbre du groupe $RB$ par l'idéal engendré par les relations $${\sigma_s}^{o(s)} = \sum\limits_{i=0}^{o(s)-1} a_{s,i}{\sigma_s}^{i}$$ où $\sigma_s$ est la réflexion tressée distinguée associée à $s$ et $s$ appartient à un système de représentants des classes de conjugaison de réflexions distinguées dans $W$.

\end{definitionf}

\begin{remarkf}

Du fait que deux réflexions tressées distinguées associées à deux réflexions conjuguées dans $W$ sont conjuguées dans $B$ et du fait que les relations qui définissent l'algèbre de Hecke sont des relations polynomiales sur les réflexions tressées, la définition précédente de l'algèbre de Hecke ne dépend pas ni du choix du système de représentants des classes de conjugaison des réflexions distinguées dans $W$, ni du choix de la réflexion tressée distinguée associée à chaque réflexion distinguée. De plus, elle coincide avec la définition usuelle des algèbres de Hecke (cf. \cite{BMR} et \cite{MarinBMRFreenessConjecture}).

\end{remarkf}

\begin{remarkf}

Soit $W$ un groupe de Coxeter fini d'ensemble générateur $S$. L'algèbre de Hecke (ou Iwahori-Hecke) associée à $W$ est définie sur $R=\mathbb{Z}[a_{1},a_{0}^{-1}]$ par une présentation avec un ensemble générateur $\Sigma$ en bijection avec $S$ et les relations sont $\underset{m_{st}}{\underbrace{\sigma_s \sigma_t \sigma_s \cdots}}=\underset{m_{st}}{\underbrace{\sigma_t \sigma_s \sigma_t \cdots}}$ avec les relations polynomiales $\sigma_s^2 = a_1 \sigma_s + a_0$ pour tout $s \in S$.

\end{remarkf}

Une conjecture importante (la conjecture de liberté de BMR) sur $H(W)$ a été donnée dans \cite{BMR}:

\begin{conjecturef}

L'algèbre de Hecke $H(W)$ est un $R$-module libre de rang $|W|$.

\end{conjecturef}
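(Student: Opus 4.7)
The plan is to reduce the conjecture, case by case along the Shephard-Todd classification, to the problem of producing for each irreducible complex reflection group $W$ an explicit $R$-spanning set of the Hecke algebra $H(W)$ of cardinality $|W|$. The key observation (Proposition 2.3 of \cite{MarinG20G21}) is that $H(W)$ always specializes to the group algebra $RW$, which has rank $|W|$; consequently any spanning set with exactly $|W|$ elements is automatically a basis and freeness follows. Moreover the statement over the Laurent ring $R$ is equivalent to the analogous statement over the polynomial subring $R_0$, so one is free to work with $R_0$ whenever this simplifies the combinatorics.

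For the infinite series $G(de,e,n)$, my plan is to exploit the geodesic normal forms developed in Chapter \ref{ChapterNormalFormsG(de,e,n)} via Algorithms \ref{Algo1}, \ref{Algo2} and \ref{Algo3}. Each element $w$ of $G(de,e,n)$ admits a canonical reduced expression $RE(w) = RE_1(w) RE_2(w) \cdots RE_n(w)$ over the Corran--Picantin (or Corran--Lee--Lee) generators, where each block $RE_i(w)$ lies in a small finite list $\Lambda_i$. Lifting these blocks to the Hecke algebra and setting $\Lambda = \Lambda_1 \Lambda_2 \cdots \Lambda_n$ gives a candidate spanning set with $|\Lambda| = |W|$ by construction. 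It then suffices, by induction on $n$, to verify that $\Lambda$ is stable under left multiplication by every generator $x$ of $H(W)$; since the leftmost $n-1$ generators already preserve $\Lambda_1 \cdots \Lambda_{n-1}$ by the inductive hypothesis, the real content is to show that $s_n \cdot (a_{n-1} a_n) \in \mathrm{Span}(\Lambda_{n-1} \Lambda_n)$ (possibly modulo a controlled "overflow" into $S_{n-1}^{\ast} \Lambda_n$). This is exactly the approach executed in Chapter \ref{ChapterHeckeAlgebras} for $G(e,e,n)$ and $G(d,1,n)$, and I would extend it to the full series $G(de,e,n)$ by combining the two cases through the cyclic generator $z$ of the Corran--Lee--Lee presentation.

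The hard part is the stability verification in step three: for every pair $(a_{n-1}, a_n) \in \Lambda_{n-1} \times \Lambda_n$ one must rewrite $s_n a_{n-1} a_n$ using only braid and quadratic relations and land inside the span of the prescribed set. The combinatorics splits into many sub-cases depending on the shape of $a_{n-1}$ and $a_n$ (whether each is a descending string $s_i \cdots s_j$, terminates in a $t_k$, or contains a full "hook" $\cdots s_3 t_k t_0 s_3 \cdots$), and each sub-case requires a careful shuffling argument. Two auxiliary facts are the engine of the reduction: the explicit rewriting of arbitrary products $t_j t_i$ and $t_k t_0$ as linear combinations of elements of $\Lambda_2$ (analogues of Lemmas \ref{LemmaTLTKInVect} and \ref{LemmaTkT0}), and a ``shift'' identity expressing $s_n \cdots s_4 s_3^2 s_4 \cdots s_i$ in terms of elements of $S_{n-1}^{\ast} \Lambda_n$ (an analogue of Lemma \ref{LmScholie1}).

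For the 34 exceptional groups, no such uniform combinatorial scheme exists, and I would not attempt to prove these by a single argument. Instead the plan is to appeal to the case-by-case work already in the literature: Ariki/Ariki--Koike for the infinite series (which my scheme above reproves), the results of Marin, Marin--Pfeiffer, Chavli and Tsuchioka for $G_4$ through $G_{34}$, and the classical result for real reflection groups. Assembling these gives the full conjecture as Theorem \ref{PropositionBMRFreenessTheorem}. The main conceptual obstacle in trying to push my combinatorial method beyond the infinite series is the absence of a canonical ``last generator'' playing the role of $s_n$; the exceptional Hecke algebras do not admit an inductive filtration compatible with a natural parabolic tower, which is exactly why each exceptional case has required its own bespoke argument.
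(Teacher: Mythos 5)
Your proposal is correct and follows essentially the same route as the paper: reduce, via Proposition 2.3 of \cite{MarinG20G21}, to exhibiting an $R_0$-spanning set of cardinality $|W|$, build that set from the geodesic normal forms of Chapter \ref{ChapterNormalFormsG(de,e,n)} and verify stability under left multiplication inductively (this is exactly Chapter \ref{ChapterHeckeAlgebras} for $G(e,e,n)$ and $G(d,1,n)$), and invoke the existing case-by-case literature for the remaining groups to assemble Theorem \ref{PropositionBMRFreenessTheorem}. The only point where you go beyond the paper is the suggested extension of the basis construction to all $G(de,e,n)$ via the Corran--Lee--Lee generator $z$; the paper does not carry this out (it relies on Ariki for those cases), but nothing in the conjecture's proof depends on it.
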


Cette conjecture a été utilisée par un certain nombre d'auteurs en tant qu'hypothèse. Par exemple, Malle l'a utilisé pour prouver que les caractères de $H(W)$ prennent leurs valeurs dans un corps spécifique (cf. \cite{MalleCharactersHecke}). Notons que la validité de cette conjecture implique que $H(W) \otimes_R F$ est isomorphe à l'algèbre du groupe $FW$, où $F$ est la clôture algébrique du corps des fractions de $R$ (cf. \cite{MarinBMRFreenessConjecture}).\\

Cette conjecture est vraie pour les groupes de réflexions réels (cf. Lemma 4.4.3 de \cite{GeckPfeifferBook}). De plus, par un travail de Ariki \cite{ArikiHecke} et Ariki-Koike \cite{ArikiKoikeHecke}, cette conjecture est aussi vraie pour la série infinie $G(de,e,n)$. Marin a prouvé la conjecture pour $G_4$, $G_{25}$, $G_{26}$ et $G_{32}$ dans \cite{MarinCubicHeckeAlgebra5Strands} et \cite{MarinBMRFreenessConjecture}. Marin et Pfeiffer l'ont prouvé pour $G_{12}$, $G_{22}$, $G_{24}$, $G_{27}$, $G_{29}$, $G_{31}$, $G_{33}$ et $G_{34}$ dans \cite{MarinPfeifferBMRFreeness}. Dans sa thèse de doctorat et dans l'article qui a suivi (cf. \cite{ChavliThesis} et \cite{ChavliArticlePhD}), Chavli a pouvé la validité de cette conjecture pour $G_{5}$, $G_{6}$, $\cdots$, $G_{16}$. Récemment, Marin a prouvé la conjecture pour $G_{20}$ and $G_{21}$ (cf. \cite{MarinG20G21}) et finalement Tsushioka pour $G_{17}$, $G_{18}$ et $G_{19}$ (cf. \cite{TsuchiokaBMRfreenessConjecture}). Il s'ensuit alors le théorème suivant :

\begin{theoremf}

L'algèbre de Hecke $H(W)$ est un $R$-module libre de rang $|W|$.

\end{theoremf}

La raison pour laquelle nous rappelons la conjecture de liberté de BMR est que nous en donnons dans cette thèse une nouvelle preuve dans le cas des séries infinies des groupes de réflexions complexes de type $G(e,e,n)$ et $G(d,1,n)$.

\section*{Formes normales géodésiques pour $G(de,e,n)$}

Rappelons que $G(e,e,n)$ est le groupe des matrices monomiales de taille $n \times n$ où les coefficients de ces matrices sont des racines $e$-ième de l'unité et le produit des coefficients non nuls de chaque matrice est égal à $1$. Soient $e \geq 1$ et $n > 1$. On rappelle la présentation de Corran-Picantin de $G(e,e,n)$ donnée dans \cite{CorranPicantin}.

\begin{definitionf}

Le groupe de réflexions complexes $G(e,e,n)$ peut être défini par une présentation ayant comme ensemble générateur $\textbf{X} = \{\textbf{t}_i \ |\ i \in \mathbb{Z}/e\mathbb{Z}\} \cup \{\textbf{s}_3, \textbf{s}_4, \cdots, \textbf{s}_n \}$ et les relations sont les suivantes.

\begin{enumerate}

\item $\textbf{t}_i \textbf{t}_{i-1} = \textbf{t}_j \textbf{t}_{j-1}$ pour $i, j \in \mathbb{Z}/e\mathbb{Z}$,
\item $\textbf{t}_i \textbf{s}_3 \textbf{t}_i = \textbf{s}_3 \textbf{t}_i \textbf{s}_3$ pour $i \in \mathbb{Z}/e\mathbb{Z}$,
\item $\textbf{s}_j \textbf{t}_i = \textbf{t}_i \textbf{s}_j$ pour $i \in \mathbb{Z}/e\mathbb{Z}$ et $4 \leq j \leq n$,
\item $\textbf{s}_i \textbf{s}_{i+1} \textbf{s}_i = \textbf{s}_{i+1} \textbf{s}_i \textbf{s}_{i+1}$ pour $3 \leq i \leq n-1$, 
\item $\textbf{s}_i \textbf{s}_j = \textbf{s}_j \textbf{s}_i$ pour $|i-j| > 1$ et
\item $\textbf{t}_i^2=1$ pour $i \in \mathbb{Z}/e\mathbb{Z}$ et $\textbf{s}_j^2=1$ pour $3 \leq j \leq n$.

\end{enumerate}

\end{definitionf}

Les matrices de $G(e,e,n)$ qui correspondent à l'ensemble des générateurs $\mathbf{X}$ de la présentation sont données par $\mathbf{t}_i \longmapsto t_i:= \begin{pmatrix}

0 & \zeta_{e}^{-i} & 0\\
\zeta_{e}^{i} & 0 & 0\\
0 & 0 & I_{n-2}\\

\end{pmatrix}$ \mbox{pour $0 \leq i \leq e-1$} et $\mathbf{s}_j \longmapsto s_j:= \begin{pmatrix}

I_{j-2} & 0 & 0 & 0\\
0 & 0 & 1 & 0\\
0 & 1 & 0 & 0\\
0 & 0 & 0 & I_{n-j}\\

\end{pmatrix}$ pour $3 \leq j \leq n$. Pour éviter les confusions, nous utilisons des lettres en gras pour désigner les générateurs de la présentation et des lettres normales pour désigner les matrices correspondantes. Notons par $X$ l'ensemble  $\{t_0,t_1, \cdots, t_{e-1},s_3, \cdots, s_n\}$.\\

Cette présentation est décrite par le diagramme suivant où le cercle pointillé décrit la relation $1$ dans la définition de la présentation. Les autres arêtes suivent les conventions standard des diagrammes des groupes de Coxeter.

\begin{center}
\begin{tikzpicture}[yscale=0.8,xscale=1,rotate=30]

\draw[thick,dashed] (0,0) ellipse (2cm and 1cm);

\node[draw, shape=circle, fill=white, label=above:$\mathbf{t}_0$] (t0) at (0,-1) {\begin{tiny} 2 \end{tiny}};
\node[draw, shape=circle, fill=white, label=above:$\mathbf{t}_1$] (t1) at (1,-0.8) {\begin{tiny} 2 \end{tiny}};
\node[draw, shape=circle, fill=white, label=right:$\mathbf{t}_2$] (t2) at (2,0) {\begin{tiny} 2 \end{tiny}};
\node[draw, shape=circle, fill=white, label=above:$\mathbf{t}_i$] (ti) at (0,1) {\begin{tiny} 2 \end{tiny}};
\node[draw, shape=circle, fill=white, label=above:$\mathbf{t}_{e-1}$] (te-1) at (-1,-0.8) {\begin{tiny} 2 \end{tiny}};

\draw[thick,-] (0,-2) arc (-180:-90:3);

\node[draw, shape=circle, fill=white, label=below left:$\mathbf{s}_3$] (s3) at (0,-2) {\begin{tiny} 2 \end{tiny}};

\draw[thick,-] (t0) to (s3);
\draw[thick,-,bend left] (t1) to (s3);
\draw[thick,-,bend left] (t2) to (s3);
\draw[thick,-,bend left] (s3) to (te-1);

\node[draw, shape=circle, fill=white, label=below:$\mathbf{s}_4$] (s4) at (0.15,-3) {\begin{tiny} 2 \end{tiny}};
\node[draw, shape=circle, fill=white, label=below:$\mathbf{s}_{n-1}$] (sn-1) at (2.2,-4.9) {\begin{tiny} 2 \end{tiny}};
\node[draw, shape=circle, fill=white, label=right:$\mathbf{s}_{n}$] (sn) at (3,-5) {\begin{tiny} 2 \end{tiny}};

\node[fill=white] () at (1,-4.285) {$\cdots$};
\end{tikzpicture}
\end{center}

\begin{remarkf}\

\begin{enumerate}
\item Pour $e = 1$ et $n \geq 2$, on retrouve la présentation classique du groupe symétrique $S_n$.
\item Pour $e = 2$ et $n \geq 2$, on retrouve la présentation classique du groupe de Coxeter de type $D_n$.
\item Pour $e \geq 2$ et $n = 2$, on retrouve la présentation duale du groupe diédral $I_2(e)$, consulter \cite{PicantinPresentationsDualMonoids}. 

\end{enumerate}

\end{remarkf}

\begin{remarkf}

Dans leur article \cite{CorranPicantin}, Corran et Picantin ont prouvé que si on enlève les relations quadratiques de la présentation de $G(e,e,n)$, on obtient une présentation du groupe de tresses complexes $B(e,e,n)$. Ils ont aussi prouvé que cette présentaton fournit une structure de Garside de $B(e,e,n)$. La notion de structure de Garside sera développée par la suite.

\end{remarkf}

Notre but est de représenter chaque élément $w \in G(e,e,n)$ par un mot réduit sur $\mathbf{X}$. Ceci nécessite l'élaboration d'une technique combinatoire qui détermine une expression réduite sur $\mathbf{X}$ qui représente $w$. Nous avons introduit un algorithme qui produit un mot $R\!E(w)$ sur $\mathbf{X}$ pour chaque élément $w$ de $G(e,e,n)$. Nous avons prouvé que $R\!E(w)$ est une expression réduite sur $\mathbf{X}$ qui représente $w$. Nous allons donner une idée de cet algorithme.\\

Soit $w_n := w \in G(e,e,n)$. Pour $i$ allant de $n$ jusqu'à $2$, l'étape $i$ de l'algorithme transforme la matrice diagonale par blocs $\left(
\begin{array}{c|c}
w_i & 0 \\
\hline
0 & I_{n-i}
\end{array}
\right)$ en une matrice diagonale par blocs de la forme $\left(
\begin{array}{c|c}
w_{i-1} & 0 \\
\hline
0 & I_{n-i+1}
\end{array}
\right) \in G(e,e,n)$ avec $w_1 = 1$. En effet, pour $2 \leq i \leq n$, il existe un unique $c$ avec $1 \leq c \leq n$ tel que $w_i[i,c] \neq 0$. À chaque étape de l'algorithme, si  $w_i[i,c] =1$, on décale le $1$ jusqu'à la position diagonale $[i,i]$ par une mutiplication à droite par des matrices de transpositions. Si $w_i[i,c] \neq 1$, on décale cette racine de l'unité à la première colonne par une multiplication à droite par des transpositions, puis on la transforme en $1$ par une multiplication par un élément de $\{t_0, t_1, \cdots, t_{e-1} \}$. Enfin, on décale le $1$ obtenu en position $[i,2]$ à la position diagonale $[i,i]$ grâce à une multiplication à droite par des matrices de transpositions du groupe symétrique $S_n$. Nous avons le lemme suivant :

\begin{lemmaf}

Pour $2 \leq i \leq n$, supposons que $w_i[i,c] \neq 0$. Le bloc $w_{i-1}$ est obtenu
\begin{itemize}

\item en enlevant la ligne $i$ et la colonne $c$ de $w_i$ et puis
\item en multipliant la première colonne de la nouvelle matrice par $w_i[i,c]$.
\end{itemize}

\end{lemmaf}

L'algorithme précédent se généralise aux autres cas de la série infinie des groupes de réflexions complexes $G(de,e,n)$ pour $d > 1$, $e \geq 1$ et $n \geq 2$. Nous avons utilisé la présentation de Corran-Lee-Lee des groupes $G(de,e,n)$, cf. \cite{CorranLeeLee}. Elle est définie comme suit.

\begin{definitionf}

Le groupe de réflexions complexes $G(de,e,n)$ est défini par une présentation ayant comme ensemble des générateurs $\mathbf{X} = \{ \mathbf{z}\} \cup \{ \mathbf{t}_i \ | \ i \in \mathbb{Z}/de\mathbb{Z}\} \cup \{\mathbf{s}_3, \mathbf{s}_4, \cdots, \mathbf{s}_n \}$ et les relations sont :

\begin{enumerate}

\item $\mathbf{z} \mathbf{t}_i = \mathbf{t}_{i-e} \mathbf{z}$ pour $i \in \mathbb{Z}/de\mathbb{Z}$,
\item $\mathbf{z}  \mathbf{s}_j=\mathbf{s}_j \mathbf{z}$ pour $3 \leq j \leq n$,
\item $\mathbf{t}_i \mathbf{t}_{i-1} = \mathbf{t}_j \mathbf{t}_{j-1}$ pour $i, j \in \mathbb{Z}/de\mathbb{Z}$,
\item $\mathbf{t}_i \mathbf{s}_3 \mathbf{t}_i = \mathbf{s}_3 \mathbf{t}_i \mathbf{s}_3$ pour $i \in \mathbb{Z}/de\mathbb{Z}$,
\item $\mathbf{s}_j \mathbf{t}_i = \mathbf{t}_i \mathbf{s}_j$ pour $i \in \mathbb{Z}/de\mathbb{Z}$ et $4 \leq j \leq n$,
\item $\mathbf{s}_i \mathbf{s}_{i+1} \mathbf{s}_i = \mathbf{s}_{i+1} \mathbf{s}_i \mathbf{s}_{i+1}$ pour $3 \leq i \leq n-1$,
\item $\mathbf{s}_i \mathbf{s}_j = \mathbf{s}_j \mathbf{s}_i$ pour $|i-j| > 1$ et
\item $\mathbf{z}^d=1$, $\mathbf{t}_i^2=1$ pour $i \in \mathbb{Z}/de\mathbb{Z}$ et $\mathbf{s}_j^2=1$ pour $3 \leq j \leq n$.

\end{enumerate}

\end{definitionf}

Les générateurs de cette présentation correspondent aux matrices $n \times n$ suivantes :
Le générateur $\mathbf{t}_i$ est représenté par la matrice $t_i =
\begin{pmatrix}

0 & \zeta_{de}^{-i} & 0\\
\zeta_{de}^{i} & 0 & 0\\
0 & 0 & I_{n-2}\\

\end{pmatrix}$
pour $i \in \mathbb{Z}/de\mathbb{Z}$, $\mathbf{z}$ par la matrice diagonale $z = Diag(\zeta_d,1,\cdots,1)$ où
$\zeta_d = exp(2i \pi / d)$, 
et $\mathbf{s}_j$ par la matrice de la transposition $s_j = (j-1,j)$ pour $3 \leq j \leq n$.\\

Cette présentation peut être décrite par le diagramme suivant. La flèche courbée au-dessous de $\mathbf{z}$ décrit la relation $1$ de la définition précédente.

\begin{center}
\begin{tikzpicture}[yscale=0.8,xscale=1,rotate=30]

\draw[thick,dashed] (0,0) ellipse (2cm and 1cm);

\node[draw, shape=circle, fill=white, label=above:\begin{small}$\mathbf{z}$\end{small}] (z) at (0,0) {\begin{tiny}$d$\end{tiny}};
\node[draw, shape=circle, fill=white, label=above:\begin{small}$\mathbf{t}_0$\end{small}] (t0) at (0,-1) {\begin{tiny}$2$\end{tiny}};
\node[draw, shape=circle, fill=white, label=above:\begin{small}$\mathbf{t}_1$\end{small}] (t1) at (1,-0.8) {\begin{tiny}$2$\end{tiny}};
\node[draw, shape=circle, fill=white, label=right:\begin{small}$\mathbf{t}_2$\end{small}] (t2) at (2,0) {\begin{tiny}$2$\end{tiny}};
\node[draw, shape=circle, fill=white, label=above:$\mathbf{t}_i$] (ti) at (0,1) {\begin{tiny}$2$\end{tiny}};
\node[draw, shape=circle, fill=white, label=above:\begin{small}$\mathbf{t}_{de-1}$\end{small}] (te-1) at (-1,-0.8) {\begin{tiny}$2$\end{tiny}};

\draw[->,bend left=45] (-0.3,0.26) to (0.3,0.26);
\draw[thick,-] (0,-2) arc (-180:-90:3);

\node[draw, shape=circle, fill=white, label=below left:$\mathbf{s}_3$] (s3) at (0,-2) {\begin{tiny}$2$\end{tiny}};

\draw[thick,-] (t0) to (s3);
\draw[thick,-,bend left] (t1) to (s3);
\draw[thick,-,bend left] (t2) to (s3);
\draw[thick,-,bend left] (s3) to (te-1);

\node[draw, shape=circle, fill=white, label=below:$\mathbf{s}_4$] (s4) at (0.15,-3) {\begin{tiny}$2$\end{tiny}};
\node[draw, shape=circle, fill=white, label=below:$\mathbf{s}_{n-1}$] (sn-1) at (2.2,-4.9) {\begin{tiny}$2$\end{tiny}};
\node[draw, shape=circle, fill=white, label=right:$\mathbf{s}_{n}$] (sn) at (3,-5) {\begin{tiny}$2$\end{tiny}};

\node[fill=white] () at (1,-4.285) {$\cdots$};

\end{tikzpicture}
\end{center}

\begin{definitionf}

Soit $e=1$. La présentation de la définition précédente est équivalente à la présentation classique du groupe de réflexions complexes $G(d,1,n)$ qui est décrite par le diagramme suivant.

\begin{center}
\begin{tikzpicture}

\node[draw, shape=circle, label=above:$\mathbf{z}$] (1) at (0,0) {$d$};
\node[draw, shape=circle,label=above:$\mathbf{s}_2$] (2) at (2,0) {$2$};
\node[draw, shape=circle,label=above:$\mathbf{s}_3$] (3) at (4,0) {$2$};
\node[draw, shape=circle,label=above:$\mathbf{s}_{n-1}$] (n-1) at (6,0) {$2$};
\node[draw,shape=circle,label=above:$\mathbf{s}_n$] (n) at (8,0) {$2$};

\draw[thick,-,double] (1) to (2);
\draw[thick,-] (2) to (3);
\draw[dashed,-,thick] (3) to (n-1);
\draw[thick,-] (n-1) to (n);

\end{tikzpicture}
\end{center}

\end{definitionf}

\begin{remarkf}

Pour $d=2$, la présentation décrite par le diagramme précédent est la présentation classique du groupe de Coxeter de type $B_n$.

\end{remarkf}

Les algorithmes qui produisent des formes normales géodésiques pour $G(de,e,n)$ et $G(d,1,n)$ sont les algorithmes $2$ et $3$ que nous avons donnés dans le chapitre 2 de cette dissertation. Nous ne rappelons pas ces deux algorithmes dans ce compte-rendu. Ils sont similaires à l'algorithme qui produit des formes normales géodésiques pour les groupes $G(e,e,n)$.\\

Nous allons maintenant donner les éléments de $G(e,e,n)$ de longueur maximale. Il s'agit d'une application directe de l'algorithme qui produit les formes normales géodésiques de $G(e,e,n)$.

\begin{propositionf}

Soit $e > 1$ et $n \geq 2$. La longueur maximale d'un élément de $G(e,e,n)$ est $n(n-1)$. Elle est réalisée pour les matrices diagonales $w$ telle que $w[i,i]$ est une racine $e$-ième de l'unité qui est différente de $1$ pour $2 \leq i \leq n$. Un mot réduit qui représente cet élément est de la forme $$(\mathbf{t}_{k_2}\mathbf{t}_0)(\mathbf{s}_3\mathbf{t}_{k_3}\mathbf{t}_0\mathbf{s}_3)\cdots (\mathbf{s}_n \cdots \mathbf{s}_3\mathbf{t}_{k_n}\mathbf{t}_0\mathbf{s}_3 \cdots \mathbf{s}_n),$$ avec $1 \leq k_2, \cdots, k_n \leq e-1$. Le nombre d'éléments de cette forme est $(e-1)^{(n-1)}$.

\end{propositionf}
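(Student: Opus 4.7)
Le plan est de s'appuyer essentiellement sur l'algorithme qui produit les formes normales géodésiques de $G(e,e,n)$. Puisque l'on a déjà établi que $R\!E(w)$ est une expression réduite sur $\mathbf{X}$ pour tout $w \in G(e,e,n)$, on a $\ell(w) = \boldsymbol{\ell}(R\!E(w))$. De plus, la décomposition $R\!E(w) = R\!E_2(w) R\!E_3(w) \cdots R\!E_n(w)$ ramène le calcul de la longueur maximale à un calcul de longueur maximale bloc par bloc : il suffit de maximiser indépendamment $\boldsymbol{\ell}(R\!E_i(w))$ pour chaque $2 \leq i \leq n$.

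Dans un premier temps, j'examinerais tous les cas possibles pour $R\!E_i(w)$ à partir de sa définition (Definition \ref{DefREiw}). Pour $i = 2$, les valeurs possibles étant dans $\{\varepsilon, \mathbf{t}_0, \ldots, \mathbf{t}_{e-1}, \mathbf{t}_1\mathbf{t}_0, \ldots, \mathbf{t}_{e-1}\mathbf{t}_0\}$, la longueur maximale est $2$, atteinte par $\mathbf{t}_k\mathbf{t}_0$ avec $1 \leq k \leq e-1$. Pour $i \geq 3$, on compare les trois cas : le cas $w_i[i,c]=1$ donne une longueur $i - c \leq i-1$ ; le cas $w_i[i,c] = \zeta_e^k$ avec $k \neq 0$ donne au mieux $i + c - 2$, maximum atteint pour $c = i$ donnant $2(i-1)$. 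Ainsi, la longueur maximale de $R\!E_i(w)$ vaut $2(i-1)$ et n'est atteinte que lorsque $w_i[i,i] = \zeta_e^k$ avec $k \neq 0$.

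Ensuite, en sommant, on obtient la longueur maximale
\[
\sum_{i=2}^{n} 2(i-1) = 2 \cdot \frac{n(n-1)}{2} = n(n-1),
\]
ce qui donne la valeur annoncée. Pour caractériser les éléments de longueur maximale, j'utiliserais Lemme \ref{LemmaBlocks} de façon récursive : la condition « $w_i[i,i] \neq 0$ pour tout $i \geq 2$ » combinée à la construction de $w_{i-1}$ à partir de $w_i$ (suppression de la ligne $i$ et de la colonne $c = i$) force $w$ à être diagonale. La condition « $w_i[i,i] \neq 1$ » se traduit alors directement en « $w[i,i]$ est une racine $e$-ième de l'unité différente de $1$ » pour $2 \leq i \leq n$. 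L'expression explicite du mot réduit s'obtient en concaténant les $R\!E_i(w)$ maximaux.

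Pour le décompte final, les valeurs $w[2,2], \ldots, w[n,n]$ peuvent être choisies indépendamment parmi les $e-1$ racines $e$-ièmes de l'unité non triviales, et la contrainte $\prod_{i=1}^n w[i,i] = 1$ définissant $G(e,e,n)$ fixe uniquement $w[1,1]$. On obtient donc $(e-1)^{n-1}$ éléments. Le point délicat sera surtout de bien articuler le passage entre la condition « bloc par bloc » sur les $w_i$ et la condition « globale » sur la matrice $w$ via Lemme \ref{LemmaBlocks} ; une fois ceci clarifié, le reste est une vérification directe à partir de la description explicite des $R\!E_i$.
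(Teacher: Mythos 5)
Votre démonstration est correcte et suit essentiellement la même démarche que celle du texte : lecture de la longueur maximale à partir de la décomposition $R\!E(w) = R\!E_2(w)\cdots R\!E_n(w)$ produite par l'algorithme, identification du maximum $2(i-1)$ par bloc atteint exactement lorsque $w_i[i,i]=\zeta_e^k$ avec $k\neq 0$, puis passage à la condition diagonale globale via le lemme décrivant les blocs $w_{i-1}$. Vous explicitez simplement l'analyse cas par cas des $R\!E_i$ et la vérification que les matrices diagonales réalisent simultanément tous les maxima, points laissés implicites dans la preuve du texte.
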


\noindent Notons par $\lambda$ l'élément  $\begin{pmatrix}

(\zeta_e^{-1})^{(n-1)} & & & \\
 & \zeta_e & & \\
 & & \ddots & \\
 & & & \zeta_e\\

\end{pmatrix} \in G(e,e,n)$.

\begin{examplef}

Nous avons $R\!E(\lambda) = (\mathbf{t}_{1}\mathbf{t}_0)(\mathbf{s}_3\mathbf{t}_{1}\mathbf{t}_0\mathbf{s}_3)\cdots (\mathbf{s}_n \cdots \mathbf{s}_3\mathbf{t}_{1}\mathbf{t}_0\mathbf{s}_3 \cdots \mathbf{s}_n)$. Donc ${\ell}(\lambda) = n(n-1)$ ce qui correspond à la longueur maximale d'un élément de $G(e,e,n)$.

\end{examplef}

\begin{remarkf}

Considérons le groupe $G(1,1,n)$ qui est le groupe symétrique $S_n$. Il existe un unique élément de longueur maximale dans $S_n$ qui est de la forme $$\mathbf{t}_0 (\mathbf{s}_3\mathbf{t}_0) \cdots (\mathbf{s}_{n-1}\cdots \mathbf{s}_3\mathbf{t}_0) (\mathbf{s}_{n}\cdots \mathbf{s}_3\mathbf{t}_0).$$ Ceci correspond au nombre maximal d'étapes de l'algorithme. La longeur de cet élément est $n(n-1)/2$ ce qui est déjà connu pour le groupe symétrique $S_n$, consulter Example 1.5.4 de \cite{GeckPfeifferBook}.

\end{remarkf}

\begin{remarkf}

Considérons le groupe $G(2,2,n)$ qui est le groupe de Coxeter de type $D_n$. Nous avons $e = 2$. Donc, par la proposition précédente, le nombre d'éléments de longueur maximale est égal à $(e-1)^{(n-1)} = 1$. Ainsi, il existe un unique élément de longueur maximale dans $G(2,2,n)$ qui est de la forme $$(\mathbf{t}_{1}\mathbf{t}_0)(\mathbf{s}_3\mathbf{t}_{1}\mathbf{t}_0\mathbf{s}_3) \cdots (\mathbf{s}_n \cdots \mathbf{s}_3\mathbf{t}_{1}\mathbf{t}_0\mathbf{s}_3 \cdots \mathbf{s}_n).$$ La longueur de cet élément est $n(n-1)$ ce qui est déjà connu pour les groupes de Coxeter de type $D_n$, consulter Example 1.5.5 de \cite{GeckPfeifferBook}.

\end{remarkf}

\section*{Algèbres de Hecke pour $G(e,e,n)$ et $G(d,1,n)$}

Nous allons donner la définition des algèbres de Hecke associés aux groupes de réflexions complexes $G(e,e,n)$ et $G(d,1,n)$. Nous utilisons la présentation de Corran et Picantin du groupe de tresses complexes $B(e,e,n)$ et nous utilisons la présentation classique du groupe d'Artin-Tits $B(d,1,n)$.

\begin{definitionf}

Soient $e \geq 1$ et $n \geq 2$. Soit $R_0 = \mathbb{Z}[a]$. On définit une algèbre de Hecke associative et unitaire $H(e,e,n)$ comme quotient de l'algèbre du groupe $R_0(B(e,e,n))$ par les relations suivantes :

\begin{enumerate}

\item $t_i^2 - at_i - 1 = 0$ pour $0 \leq i \leq e-1$,
\item $s_j^2 - as_j - 1 = 0$ pour $3 \leq j \leq n$.

\end{enumerate}

Donc, une présentation de $H(e,e,n)$ est obtenue en ajoutant les relations précédentes aux relations de la présentation de Corran et Picantin de $B(e,e,n)$.

\end{definitionf}

\begin{definitionf}

Soient $d > 1$ et $n \geq 2$. Soit $R_0 = \mathbb{Z}[a,b_1,b_2, \cdots, b_{d-1}]$. On définit une algèbre de Hecke associative et unitaire $H(d,1,n)$ comme quotient de l'algèbre du groupe $R_0(B(d,1,n))$ par les relations suivantes :

\begin{enumerate}

\item $z^d -b_1 z^{d-1} - b_2 z^{d-2} - \cdots - b_{d-1} z -1 = 0$,
\item $s_j^2 - as_j - 1 = 0$ pour $2 \leq j \leq n$.

\end{enumerate}

Donc, une présentation de $H(d,1,n)$ est obtenue en ajoutant les relations précédentes aux relations de la présentation classique de $B(d,1,n)$.

\end{definitionf}

Dans les définitions précédentes, nous avons utilisé l'anneau des polynômes $R_0$ au lieu de l'anneau usuel des polynômes de Laurent $R$ que nous avons déjà introduit dans la définition générale des algèbres de Hecke. En effet, par Proposition 2.3 $(ii)$ de \cite{MarinG20G21}, la conjecture de liberté de BMR (appliquée au cas de $G(e,e,n)$) est équivalente au fait que $H(e,e,n)$ est un $R_0$-module de rang $|G(e,e,n)|$. La même chose est valable pour $G(d,1,n)$. Donc, en prouvant que $H(e,e,n)$ et $H(d,1,n)$ sont des $R_0$-modules libres de rang $|G(e,e,n)|$ et $|G(d,1,n)|$, respectivement, on obtient une nouvelle preuve de la conjecture de liberté de BMR pour les cas de $G(e,e,n)$ et $G(d,1,n)$.\\

On utilise la forme normale géodésique que nous avons introduite précédemment pour $G(e,e,n)$ afin de construire une base de $H(e,e,n)$ qui est différente de celle de Ariki définie dans \cite{ArikiHecke}. On introduit les sous-ensembles suivants de $H(e,e,n)$.

\begin{center}
\begin{tabular}{lllll}
$\Lambda_2 =$ & $\{$ & $1$,\\
& & $t_k$ & pour $0 \leq k \leq e-1$, & \\
 & & $t_kt_0$ & pour $1 \leq k \leq e-1$ & $\}$,
\end{tabular}
\end{center}
et pour $3 \leq i \leq n$,
\begin{center}
\begin{tabular}{lllll}
$\Lambda_i =$ & $\{$ & $1$, & & \\
 & & $s_i \cdots s_{i'}$ & pour $3 \leq i' \leq i$, & \\ 
 & & $s_i \cdots s_{3}t_k$ & pour $0 \leq k \leq e-1$, & \\
 & & $s_i \cdots s_{3}t_ks_2 \cdots s_{i'}$ & pour $1 \leq k \leq e-1$ et $2 \leq i' \leq i$ & $\}.$\\
% & & & and $2 \leq i' \leq i$ & $\}.$
\end{tabular}
\end{center}

Définissons $\Lambda = \Lambda_2 \cdots \Lambda_n$ l'ensemble des produits  $a_2 \cdots a_n$ où $a_2 \in \Lambda_2, \cdots, a_n \in \Lambda_n$. Nous avons prouvé le théorème suivant :

\begin{theoremf}

L'ensemble $\Lambda$ fournit une $R_0$-base de l'algèbre de Hecke $H(e,e,n)$.

\end{theoremf}

\begin{remarkf}

Comme conséquence du théorème précédent et par la proposition 2.3 $(ii)$ de \cite{MarinG20G21}, nous obtenons une nouvelle preuve de la conjecture de liberté de BMR pour les groupes de réflexions complexes $G(e,e,n)$.

\end{remarkf}

\begin{remarkf}

Notre base de $H(e,e,n)$ est construite naturellement à partir des formes normales géodésiques de $G(e,e,n)$. Elle ne coincide jamais avec la base de Ariki pour les algèbres de Hecke associées aux groupes de réflexions complexes $G(e,e,n)$ comme le montre l'exemple suivant. Considérons l'élément $t_1t_0.\ t_0$ qui appartient à la base d'Ariki. Dans notre base, il est égal à la combinaison linéaire non triviale $at_1t_0 +t_1$ où $t_1t_0$ et $t_1$ sont deux éléments distincts de notre base. 

\end{remarkf}

\vspace{0.2cm}

%Notons par $B^{\oplus}(e,e,n)$ le monoïde décrit par la présentation de Corran et Picantin. Nous avons aussi prouvé le résultat suivant :
%
%\begin{propositionf}
%
%Le quotient de l'algèbre du monoïde $R_0 (B^{\oplus}(e,e,n))$ par les relations polynomiales qui définissent $H(e,e,n)$ est un $R_0$-module de type fini engendré par $\Lambda$.
%
%\end{propositionf}
%
%La proposition précédente est intéressante pour le phénomène suivant. Si nous essayons de définir les algèbres de Hecke à partir des diagrammes de Broué-Malle et Broué-Malle-Rouquier des groupes de tresses complexes, nous pouvons espérer de les définir sur un anneau de polynômes sans des inverses c'est-à-dire comme quotients des algèbres des monoïdes décrits par les diagrammes et de prouver que ces algèbres sont de type fini sur l'anneau des polynômes. Ce phénomène apparaît dans le cas des groupes de Coxeter. Cependant, il est prouvé dans \cite{MarinBMRFreenessConjecture} que ce n'est pas le cas en général et en paticulier pour le cas des groupes de réflexions complexes de type $G(d,1,n)$, consulter la sous-section 3.2.3 de \cite{MarinBMRFreenessConjecture}. Ceci motive la proposition précédente où le résultat est exprimé sans des conditions d'inversibilité en utilisant le monoïde de Corran et Picantin $B^{\oplus}(e,e,n)$. Ceci donne alors un intérêt supplémentaire à notre construction.\\

Nous revenons maintenant au cas de l'algèbre de Hecke $H(d,1,n)$ définie avant par une présentation par des générateurs et relations. Nous allons construire une base de $H(d,1,n)$ qui est différente de celle de Ariki et Koike, cf. \cite{ArikiKoikeHecke}. On introduit les sous-ensembles suivants de $H(d,1,n)$. 

\begin{center}
\begin{tabular}{lllll}
$\Lambda_1 =$ & $\{$ & $z^k$ & pour $0 \leq k \leq d-1$ & $\}$,
\end{tabular}
\end{center}
et pour $2 \leq i \leq n$,
\begin{center}
\begin{tabular}{lllll}
$\Lambda_i =$ & $\{$ & $1$, & & \\
 & & $s_i \cdots s_{i'}$ & pour $2 \leq i' \leq i$, & \\ 
 & & $s_i \cdots s_{2}z^k$ & pour $1 \leq k \leq d-1$, & \\
 & & $s_i \cdots s_{2}z^ks_2 \cdots s_{i'}$ & pour $1 \leq k \leq d-1$ et $2 \leq i' \leq i$ & $\}.$\\
\end{tabular}
\end{center}

Définissons $\Lambda = \Lambda_1\Lambda_2 \cdots \Lambda_n$ l'ensemble des produits $a_1a_2 \cdots a_n$ où $a_1 \in \Lambda_1, \cdots,\\ a_n \in \Lambda_n$. Nous avons démontré le théorème suivant :

\begin{theoremf}

L'ensemble $\Lambda$ fournit une $R_0$-base de l'algèbre de Hecke $H(d,1,n)$.

\end{theoremf}

\begin{remarkf}

On remarque que pour $d$ et $n$ au-moins égaux à $2$, notre base de $H(d,1,n)$ ne coïncide jamais avec la base de Ariki-Koike comme le montre l'exemple suivant. Considérons l'élément $s_2zs_2^2 = s_2zs_2.s_2$ qui appartient à la base de Ariki-Koike. Dans notre base, il est égal à la combinaison linéaire $as_2zs_2 + s_2z$, où $s_2zs_2$ et $s_2z$ sont deux éléments distincts de notre base de $H(d,1,n)$. 

\end{remarkf}

\section*{Monoïdes et groupes de Garside}

Dans sa thèse de Doctorat, défendue en 1965 \cite{GarsideThesis} et dans l'article qui a suivi \cite{GarsideArt}, Garside a résolu le Problème de Conjugaison pour le groupe de tresses `classique' $B_n$ en introduisant un sous-monoïde $B_{n}^{+}$ de $B_n$ et un élément $\Delta_n$ de $B_{n}^{+}$ qu'il avait appelé élément fondamental. Il a prouvé l'existence d'une forme normale pour chaque élément de $B_n$. Au début des années 70, Brieskorn-Saito et Deligne ont remarqué que les résultats de Garside s'étendent à tous les groupes d'Artin-Tits, consulter \cite{GarsideBrieskornSaito} et \cite{GarsideDeligne}. À la fin des années 90 et après avoir lister les propriétés abstraites de $B_n^{+}$ et de l'élément fondamental $\Delta_n$, Dehornoy et Paris \cite{GarsideDehornoyParis} ont défini la notion de groupes Gaussiens et groupes de Garside, ce qui a mené à la théorie de Garside. Pour une étude détaillée sur les structures de Garside, le lecteur est invité à lire \cite{GarsideBookPatrick}. Nous fournissons ici les préliminaires nécessaires sur les structures de Garside.\\

Soit $M$ un monoïde. Sous certaines hypothèses sur $M$, plus précisément les hypothèses 1 et 2 de la définition suivante d'un monoïde de Garside, nous pouvons définir un ordre partiel sur $M$ comme suit.

\begin{definitionf}

Soient $f, g \in M$. On dit que $f$ divise à gauche $g$ ou tout simplement $f$ divise $g$ s'il n'y a pas de confusion, et on écrit $f \preceq g$, si $fg'=g$ pour un certain $g' \in M$. De la même manière, on dit que $f$ divise à droite $g$, et on écrit $f \preceq_r g$, si $g'f=g$ pour un certain $g' \in M$. 

\end{definitionf}

Nous donnons maintenant la définition d'un monoïde et groupe de Garside.

\begin{definitionf}

Un monoïde de Garside est une paire $(M, \Delta)$ où $M$ est un monoïde et

\begin{enumerate}

\item $M$ est simplifiable c'est-à-dire $fg=fh \Longrightarrow g=h$ et $gf=hf \Longrightarrow g=h$ pour $f,g,h \in M$,

\item il existe $\lambda : M \longrightarrow \mathbb{N}$ t.q. $\lambda(fg) \geq \lambda(f) + \lambda(g)$ et $g \neq 1 \Longrightarrow \lambda(g) \neq 0$,

\item deux éléments quelconques de $M$ ont un pgcd et un ppcm pour $\preceq$ et $\preceq_r$ et

\item $\Delta$ est un élément de Garside de $M$ c'est-à-dire l'ensemble de ses diviseurs à gauche coïncide avec l'ensemble de ses diviseurs à droite, engendre $M$ et est fini.

\end{enumerate} 

Les diviseurs de $\Delta$ s'appellent les \emph{simples} de $M$.

\end{definitionf}

Un monoïde quasi-Garside est une paire $(M,\Delta)$ qui satisfait les conditions de la définition précédente sauf pour la finitude de l'ensemble des diviseurs de $\Delta$.\\

Les hypothèses 1 et 3 de la définition précédente impliquent que les conditions de Ore sont satisfaites. Ainsi, il existe un groupe de fractions du monoïde $M$ dans lequel il se plonge. Ceci nous permet de donner la définition suivante.

\begin{definitionf}

Un groupe de Garside est le groupe des fractions d'un monoïde de Garside.

\end{definitionf}

Considérons une paire $(M,\Delta)$ qui satisfait les conditions de la définition précédente. La paire $(M,\Delta)$ et le groupe des fractions de $M$ fournissent ce qu'on appelle une structure de Garside de $M$.\\

Notons que l'un des aspects importants de la structure de Garside est l'existence d'une forme normale pour les éléments du groupe de Garside. De plus, beaucoup de problèmes comme le Problème des Mots et le Problème de Conjugaison sont résolus dans les groupes de Garside, ce qui rend leurs études intéressantes.\\

Soit $G$ un groupe fini engendré par un ensemble fini $S$. Il existe une manière de construire des structures de Garside à partir d'intervalles dans $G$. Nous allons expliquer ceci par la suite. Commençons par définir une relation d'ordre partiel sur $G$.

\begin{definitionf}

Soient $f, g \in G$. On dit que $g$ est un diviseur de $f$ ou que $f$ est un multiple de $g$ et on écrit $g \preceq f$ si $f = g h$ avec $h \in G$ et $\ell(f) = \ell(g) + \ell(h)$, où $\ell(f)$ est la longeur sur $S$ de $f \in G$.

\end{definitionf}

\begin{definitionf}

Pour $w \in G$, on définit le monoïde $M([1,w])$ par la présentation de monoïde avec

\begin{itemize}

\item ensemble générateur : $\underline{P}$ en bijection avec l'intervalle
\begin{center} $[1,w] := \{ f \in G \ | \ 1 \preceq f \preceq w \}$ et \end{center}

\item relations : $\underline{f}\ \underline{g} = \underline{h}$ si $f, g , h \in [1,w]$, $fg=h$ et $f \preceq h$ c'est-à-dire $\ell(f) + \ell(g) = \ell(h)$.

\end{itemize}

\end{definitionf}

\noindent De la même manière, on peut définir la relation d'ordre partielle sur $G$
\begin{center}$g \preceq_r f$ si et seulement si $\ell(f{g}^{-1}) + \ell(g) = \ell(f)$, \end{center}
ensuite définir l'intervalle $[1,w]_r$ et le monoïde $M([1,w]_r)$. 

\begin{definitionf}

Soit $w$ un élément dans $G$. On dit que $w$ est un élément équilibré de $G$ si $[1,w] = [1,w]_r$.

\end{definitionf}

Nous avons le théorème suivant qui est dû à Michel (consulter la section 10 de \cite{CorseJeanMichel} pour une preuve).

\begin{theoremf}

Si $w \in G$ est un élément équilibré et si $([1,w],\preceq)$ et $([1,w]_r, \preceq_r)$ sont des treillis, alors $(M([1,w]),\underline{w})$ est un monoïde de Garside avec simples $\underline{[1,w]}$, où $\underline{w}$ et $\underline{[1,w]}$ sont donnés dans la définition d'un monoïde d'intervalle.

\end{theoremf}

La construction précédente donne lieu à une structure de Garside. Le monoïde d'intervalle est $M([1,w])$. Quand $M([1,w])$ est un monoïde de Garside, son groupe des fractions existe et est noté par $G(M([1,w]))$. On l'appelle le groupe d'intervalle. Nous donnons un exemple classique de cette structure qui montre que les groupes d'Artin-Tits admettent des structures d'intervalles.

\begin{examplef}

Soit $W$ un groupe fini de Coxeter et $B(W)$ le groupe d'Artin-Tits associé à $W$. 

\begin{center} $W = <S\ |\ s^2=1, \ \underset{m_{st}}{\underbrace{sts\cdots}}=\underset{m_{st}}{\underbrace{tst\cdots}}$ pour $s, t \in S, s \neq t, m_{st} = o(st)>,$\\
$B(W) = <\widetilde{S}\ |\ \underset{m_{st}}{\underbrace{\tilde{s}\tilde{t}\tilde{s}\cdots}}=\underset{m_{st}}{\underbrace{\tilde{t}\tilde{s}\tilde{t}\cdots}}\ pour\ \tilde{s}, \tilde{t} \in \widetilde{S},\ \tilde{s} \neq \tilde{t}>$.
\end{center}
Prendre $G = W$ et $g=w_0$ l'élément de plus grande longueur sur $S$ dans $W$. On a $[1,w_0] = W$. Construisons le monoïde d'intervalle $M([1,w_0])$. On a $M([1,w_0])$ est le monoïde d'Artin-Tits $B^{+}(W)$, où $B^{+}(W)$ est le monoïde défini par la même présentation que $B(W)$. Donc, $B^{+}(W)$ est engendré par une copie $\underline{W}$ de $W$ avec $\underline{f}\ \underline{g} = \underline{h}$ si $fg=h$ et $\ell(f) + \ell(g) = \ell(h)$; $f,g$ et $h \in W$. Il est aussi connu que $w_0$ est équilibré et que $([1,w_0],\preceq)$ et $([1,w_0]_r, \preceq_r)$ sont des treillis. Par le théorème précédent, il s'ensuit le résultat suivant.

\end{examplef}

\begin{propositionf}

$(B^{+}(W),\underline{w_0})$ est un monoïde de Garside avec simples $\underline{W}$, où $\underline{w_0}$ et $\underline{W}$ sont donnés dans l'exemple précédent.

\end{propositionf}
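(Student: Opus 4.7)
\medskip

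Le plan est d'appliquer directement le théorème de Michel énoncé juste avant, avec $G = W$ (groupe de Coxeter fini) et $g = w_0$ (l'élément de plus grande longueur). Il suffit donc de vérifier trois points : $w_0$ est équilibré, les posets $([1,w_0], \preceq)$ et $([1,w_0]_r, \preceq_r)$ sont des treillis, et le monoïde d'intervalle $M([1,w_0])$ est isomorphe à $B^{+}(W)$.

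D'abord, je montrerais que $w_0$ est équilibré. C'est une conséquence classique de la propriété caractéristique de l'élément de plus grande longueur : pour tout $w \in W$, on a $\ell(w_0) = \ell(w) + \ell(w^{-1}w_0) = \ell(w_0 w^{-1}) + \ell(w)$, d'où $w \preceq w_0$ et $w \preceq_r w_0$. Ainsi $[1,w_0] = W = [1,w_0]_r$, ce qui prouve simultanément que $w_0$ est équilibré et que l'ensemble $\underline{[1,w_0]}$ coïncide avec $\underline{W}$.

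Ensuite, je rappellerais que l'ordre faible à gauche (respectivement à droite) sur un groupe de Coxeter fini est un treillis : il s'agit du résultat classique de Björner. Cela fournit directement les deux structures de treillis exigées par le théorème de Michel.

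Enfin, le point le plus délicat est l'identification $M([1,w_0]) \simeq B^{+}(W)$. J'introduirais un morphisme $\rho : B^{+}(W) \to M([1,w_0])$ envoyant chaque générateur $\tilde{s}$ sur $\underline{s}$ ; les relations de tresses sont préservées dans $M([1,w_0])$ car pour $s,t \in S$ avec $m_{st} < \infty$, les deux membres $\underset{m_{st}}{\underbrace{sts\cdots}}$ et $\underset{m_{st}}{\underbrace{tst\cdots}}$ représentent le même élément de longueur $m_{st}$, et se décomposent donc de manière compatible dans $M([1,w_0])$. Réciproquement, je construirais $\rho' : M([1,w_0]) \to B^{+}(W)$ en envoyant $\underline{w}$ sur $\tilde{s}_{i_1} \cdots \tilde{s}_{i_\ell}$ pour \emph{une} expression réduite de $w$. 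La principale difficulté est de montrer que $\rho'$ est bien défini, et c'est ici qu'intervient le théorème de Matsumoto : deux expressions réduites quelconques d'un même élément de $W$ sont reliées par les seules relations de tresses, donc fournissent le même élément de $B^{+}(W)$. Il reste à vérifier que $\rho$ et $\rho'$ sont inverses l'un de l'autre, ce qui se ramène à l'identité $\underline{w} = \underline{s_{i_1}} \cdots \underline{s_{i_\ell}}$ dans $M([1,w_0])$, obtenue par applications successives des relations de définition du monoïde d'intervalle dès que $s_{i_1} \cdots s_{i_\ell}$ est réduite.
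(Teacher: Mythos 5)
Your proposal is correct and follows essentially the same route as the paper: both treat this as an application of Michel's theorem to the interval $[1,w_0]=W$, the paper simply quoting as classical the facts that $w_0$ is balanced, that both divisibility posets are lattices, and that $M([1,w_0])\simeq B^{+}(W)$ (with a reference to the Garside book), while you supply the standard arguments behind them (length additivity for $w_0$, Björner's lattice theorem for the weak order, and Matsumoto's theorem for the identification of presentations). Your Matsumoto-based two-way-morphism argument is in fact the same strategy the paper later uses for its new intervals $[1,\lambda^k]$ in $G(e,e,n)$.
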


Notre but est de construire des structures d'intervalles pour le groupe de tresses complexes $B(e,e,n)$. Comme les structures de Garside d'intervalles reposent sur la définition d'une fonction qui décrit la longueur dans le groupe de réflexions complexes correspondant, nous allons utiliser la forme normale géodésique que nous avons déjà définie auparavant pour le groupe $G(e,e,n)$ afin d'étudier les structures d'intervalles du groupe de tresses complexes $B(e,e,n)$.

\section*{Structures d'intervalles pour $B(e,e,n)$}

Il est prouvé par Bessis et Corran \cite{BessisCorranNonCrossingPartitions} en 2006 et par Corran et Picantin \cite{CorranPicantin} en 2009 que $B(e,e,n)$ admet des structures de Garside. Il est aussi prouvé dans \cite{CorranLeeLee} que $B(de,e,n)$ admet des structures de quasi-Garside. Nous sommes intéressés dans la construction des structures de Garside de $B(e,e,n)$ qui dérivent des intervalles dans le groupe de réflexions complexes $G(e,e,n)$.\\

La description d'une forme normale géodésique de $G(e,e,n)$ nous permet de déterminer les éléments équilibrés de $G(e,e,n)$ qui sont de longueur maximale et de déterminer aussi l'ensemble de leurs diviseurs. Soit $\lambda$ la matrice diagonale de $G(e,e,n)$ telle que $\lambda[i,i] = \zeta_e$ pour $2 \leq i \leq n$. 

\begin{propositionf}

Les éléments équilibrés de $G(e,e,n)$ de longueur maximale sont $\lambda^k$ pour $1 \leq k \leq e-1$.

\end{propositionf}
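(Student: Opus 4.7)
The plan is to combine the earlier description of maximal-length elements with the combinatorial characterization of the divisors of $\lambda^k$ already obtained in the previous section. First I would recall that, by the proposition describing elements of maximal length, any $w \in G(e,e,n)$ of length $n(n-1)$ is necessarily a diagonal matrix whose entries $w[i,i]$, for $2 \leq i \leq n$, are $e$-th roots of unity distinct from $1$. Among these, the elements $\lambda^k$ for $1 \leq k \leq e-1$ are precisely those whose diagonal is uniform, with $w[i,i] = \zeta_e^k$ for all $i \geq 2$.

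Next, I would establish that each $\lambda^k$ is balanced. This is already done in the preceding section: the set of left divisors of $\lambda^k$ has been identified with $D_k$, described matrix-theoretically via the sets $Z(w)$ and $Z'(w)$ of bullets; the key observation is that this description is invariant under the involution $w \mapsto \overline{w}^{\,T}$ sending left divisibility to right divisibility, which gives $[1,\lambda^k] = [1,\lambda^k]_r$ without extra work.

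The heart of the argument, and the only genuinely new part, is showing that no other maximal-length element is balanced. Fix $w$ of maximal length whose diagonal is \emph{not} uniform: there exist indices $2 \leq i < j \leq n$ with $w[i,i] \neq w[j,j]$. The plan is to exhibit an explicit left divisor $r$ of $w$ which fails to right-divide $w$. The natural candidate is the transposition matrix $s_{ij}$: since $s_{ij}[1,1] = 1$, the index $[1,1]$ is the unique bullet of $s_{ij}$, so every other nonzero entry of $s_{ij}$ lies in $Z'(s_{ij})$ and equals $1$, forcing $s_{ij} \preceq w$ by the characterization of divisors analogous to Proposition \ref{PropZ'=1orZetae} applied to $w$. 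One then examines $w' := s_{ij}^{-1} w = s_{ij}w$; the row permutation places $w[i,i]$ at position $[j,i]$, which belongs to $Z'(w')$ but carries the value $w[i,i]$, which is neither $0$, $1$, nor $w[j,j]$. Hence $w' \npreceq w$ and so $s_{ij} \not\preceq_r w$, which shows $w$ is not balanced.

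The main obstacle I anticipate is the bookkeeping in the last step: one must adapt the $Z'$-characterization from the $\lambda^k$-case to an arbitrary diagonal maximal-length element $w$ (not just to the uniform diagonal), and then carefully verify that the position $[j,i]$ in $w'$ really does lie in $Z'(w')$ and carries the forbidden value. Once this is in place, combining all three steps yields the claim, and the characterization of $D_k = [1,\lambda^k]$ is already in hand from the preceding section, so no further work is needed there.
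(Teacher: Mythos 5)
Your argument is, in substance, the paper's own proof: maximal-length elements are the diagonal matrices of Proposition \ref{PropMaxLength}; the $\lambda^k$ are balanced by Proposition \ref{Propbalanced} (indeed via the conjugate-transpose trick of Lemma \ref{LemmaforPropBalanced}, as you indicate); and for a maximal-length $w$ with $w[i,i]\neq w[j,j]$ one takes the transposition $s_{ij}$, uses the row-wise analogue of Proposition \ref{PropZ'=1orZetae} to get $s_{ij}\preceq w$, and checks that $w'=s_{ij}w$ has the forbidden value $w[i,i]$ at the non-bullet position $[j,i]$, so $w'\npreceq w$. This is exactly Theorem \ref{PropAllBalancedofMaxLength}'s proof, including the bookkeeping you anticipate (stating the $Z'$-criterion row by row for a general diagonal $w$).

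There is, however, one incorrect step at the very end: the inference ``$w'\npreceq w$, hence $s_{ij}\not\preceq_r w$'' is a non sequitur, and the conclusion is in fact false. Right-divisibility of $s_{ij}$ is controlled by $w s_{ij}$ (columns $i,j$ swapped), not by $w'=s_{ij}w$ (rows swapped): $s_{ij}\preceq_r w$ if and only if $w s_{ij}\preceq w$, and $w s_{ij}$ carries $w[i,i]$ on row $i$ and $w[j,j]$ on row $j$, so it satisfies the row-wise criterion and $s_{ij}$ \emph{does} right-divide $w$. Consequently your announced plan (a left divisor of $w$ that fails to right-divide it) cannot be carried out with $s_{ij}$. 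The repair is immediate and uses only what you already established, and it is what the paper does: since $s_{ij}\preceq w$, the element $w'=s_{ij}w$ is a right divisor of $w$ (as $w=s_{ij}w'$ with $\ell(w)=\ell(s_{ij})+\ell(w')$), and you have shown $w'\npreceq w$; hence $[1,w]\neq[1,w]_r$ and $w$ is not balanced.
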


Nous avons défini une technique combinatoire simple qui permet de reconnaître si un élément de $G(e,e,n)$ est un diviseur de $\lambda^k$ ou pas en utilisant directement sa forme matricielle. Ceci nous permet de caractériser explicitement l'ensemble des diviseurs de $\lambda^k$ que nous avons noté par $D_k$.\\

Soient $\lambda^k$ un élément équilibré et $[1,\lambda^k]$ l'intervalle des diviseurs de $\lambda^k$ dans le groupe de réflexions complexes. Nous avons prouvé dans Proposition \ref{PropMatsumoto} une propriété pour chaque intervalle $[1,\lambda^k]$ qui est similaire à la propriété de Matsumoto pour un groupe de réflexions réels. Ensuite, nous avons prouvé que $[1,\lambda^k]$ est un treillis pour la division à gauche et à droite (consulter Corollary \ref{CorBothPosetsLattices} de cette dissertation). Par le théorème de Michel, nous obtenons alors le résultat suivant :

\begin{theoremf}

Nous avons que $(M([1,\lambda^k]), \underline{\lambda^k})$ est un monoïde de Garside d'intervalles avec simples $\underline{D_k}$ où $D_k$ est l'ensemble des diviseurs de $\lambda^k$. Son groupe des fractions existe et est noté par $G(M([1,\lambda^k]))$.

\end{theoremf}

De plus, nous avons prouvé que $M([1,\lambda^k])$ est isomorphe au monoïde que nous avons noté par $B^{\oplus k}(e,e,n)$. Ce dernier est défini comme suit :

\begin{definitionf}

Pour $1 \leq k \leq e-1$, on définit le monoïde $B^{\oplus k}(e,e,n)$ par une présentation de monoïde avec

\begin{itemize}

\item ensemble générateur : $\widetilde{X} = \{ \tilde{t}_{0}, \tilde{t}_{1}, \cdots, \tilde{t}_{e-1}, \tilde{s}_{3}, \cdots, \tilde{s}_{n}\}$ et
\item relations : $\left\{
\begin{array}{ll}

\tilde{s}_{i}\tilde{s}_{j}\tilde{s}_{i} = \tilde{s}_{j}\tilde{s}_{i}\tilde{s}_{j} & pour\ |i-j|=1,\\
\tilde{s}_{i}\tilde{s}_{j} = \tilde{s}_{j}\tilde{s}_{i} & pour\ |i-j| > 1,\\
\tilde{s}_{3}\tilde{t}_{i}\tilde{s}_{3} = \tilde{t}_{i}\tilde{s}_{3}\tilde{t}_{i} & pour\ i \in \mathbb{Z}/e\mathbb{Z},\\
\tilde{s}_{j}\tilde{t}_{i} = \tilde{t}_{i}\tilde{s}_{j} & pour\ i \in \mathbb{Z}/e\mathbb{Z}\ et\ 4 \leq j \leq n,\\
\tilde{t}_{i}\tilde{t}_{i-k} = \tilde{t}_{j}\tilde{t}_{j-k} & pour\ i, j \in \mathbb{Z}/e\mathbb{Z}.

\end{array}
\right.$

\end{itemize}

\end{definitionf}

Notons que le monoïde $B^{\oplus 1}(e,e,n)$ est le monoïde $B^{\oplus}(e,e,n)$ de Corran et Picantin, consulter \cite{CorranPicantin}. Notons par $B^{(k)}(e,e,n)$ le groupe des fractions (qui existe par la structure de Garside) de $B^{\oplus k}(e,e,n)$. L'un des résultats importants obtenus est le suivant :

\begin{theoremf}

$B^{(k)}(e,e,n)$ est isomorphe à $B(e,e,n)$ si et seulement si \mbox{$k\wedge e=1$.}

\end{theoremf}

Quand $k \wedge e \neq 1$, chaque groupe $B^{(k)}(e,e,n)$ est décrit comme un produit amalgamé de $k \wedge e$ copies du groupe de tresses complexes $B(e',e',n)$ avec $e'=e/e \wedge k$, au-dessus du sous-groupe d'Artin-Tits $B(2,1,n-1)$.

\begin{center}
\begin{tikzpicture}

\node[draw, shape=rectangle, label=below:{$B(e',e',n)$}] (1) at (0,0) {};
\node[draw, shape=rectangle, label=below:{$B(e',e',n)$}] (2) at (2,0) {};
\node[draw, shape=rectangle, label=below:{$B(e',e',n)$}] (3) at (-1,1) {};
\node[draw, shape=rectangle, label=right:{$B(2)$}] (4) at (1,1) {};
\node[draw, shape=rectangle, label=right:{$B(3)$}] (5) at (0,2) {};
\node[draw, shape=rectangle, label=left:{$B(e',e',n)$}] (6) at (-3,3) {};
\node[draw, shape=rectangle] (7) at (-1,3) {};
\node[draw, shape=rectangle, label=right:{$B(e \wedge k) = B^{(k)}(e,e,n)$}] (8) at (-2,4) {};

\draw[thick,-] (1) to (4);
\draw[thick,-] (2) to (4);
\draw[thick,-] (5) to (3);
\draw[thick,-] (5) to (4);
\draw[thick,dashed,-] (7) to (5);
\draw[thick,dashed,-] (-1.5,1.5) to (-2.5,2.5);
\draw[thick,-] (8) to (6);
\draw[thick,-] (8) to (7);

\end{tikzpicture}
\end{center}

De plus, nous avons calculé le deuxième groupe d'homologie sur $\mathbb{Z}$ de $B^{(k)}(e,e,n)$ en utilisant les complexes de Dehornoy-Lafont \cite{DehLafHomologyofGaussianGroups} et la méthode de \cite{CalMarHomologyComputations} afin de déduire que $B^{(k)}(e,e,n)$ n'est pas isomorphe à un certain $B(e,e,n)$ quand $k \wedge e \neq 1$. On rappelle le résulat suivant de \cite{CalMarHomologyComputations} :

\begin{itemize}

\item $H_2(B(e,e,3),\mathbb{Z}) \simeq \mathbb{Z}/e\mathbb{Z}$ où $e \geq 2$,
\item $H_2(B(e,e,4),\mathbb{Z}) \simeq \mathbb{Z}/e\mathbb{Z} \times \mathbb{Z}/2\mathbb{Z}$ quand $e$ est impair et $H_2(B(e,e,4),\mathbb{Z}) \simeq \mathbb{Z}/e\mathbb{Z} \times (\mathbb{Z}/2\mathbb{Z})^2$ quand $e$ est pair et
\item $H_2(B(e,e,n),\mathbb{Z}) \simeq \mathbb{Z}/e\mathbb{Z} \times \mathbb{Z}/2\mathbb{Z}$ quand $n \geq 5$ et $e \geq 2$.

\end{itemize}

Les monoïdes de Garside $B^{\oplus k}(e,e,n)$ ont été implémentés par Michel et l'auteur (consulter \cite{CHEVIEJMichel} et \cite{GAP3CPMichelNeaime}). Dans Appendix A, nous avons fourni cette implémentation et nous avons donné un code générique qui permet de calculer les groupes d'homologie d'une structure de Garside en utilisant les complexes de Dehornoy et Lafont. Nous avons appliqué ce code aux groupes $B^{(k)}(e,e,n)$ afin de vérifier le calcul de leurs groupes d'homologie.

\section*{Représentations de Krammer et algèbres de BMW}

Bigelow \cite{BigelowBraidGroupsLinear} et Krammer \cite{KrammerGroupB4,KrammerBraidGroupsLinear} ont prouvé que le groupe de tresses classique est linéaire c'est-à-dire qu'il existe une représentation linéaire fidèle de dimension finie du groupe de tresses classique $B_n$. Rappelons que $B_n$ est défini par une présentation avec comme ensemble générateur $\{s_1, s_2, \cdots, s_{n-1} \}$ et les relations sont $s_is_{i+1}s_i = s_{i+1}s_is_{i+1}$ pour $1 \leq i \leq n-2$, et $s_is_j = s_js_i$ pour $|i-j| > 1$. Cette présentation est illustrée par le diagramme suivant :

\begin{center}
\begin{tikzpicture}

\node[draw, shape=circle, label=above:$s_1$] (1) at (0,0) {};
\node[draw, shape=circle,label=above:$s_2$] (2) at (1,0) {};
\node[draw,shape=circle,label=above:$s_{n-2}$] (n-1) at (4,0) {};
\node[draw,shape=circle,label=above:$s_{n-1}$] (n) at (5,0) {};

\draw[thick,-] (1) to (2);
\draw[dashed,-,thick] (2) to (n-1);
\draw[thick,-] (n-1) to (n);

\end{tikzpicture}
\end{center}

La représentation de Krammer $\rho: B_{n} \longrightarrow GL(V)$ est définie sur un $\mathbb{R}(q,t)$-espace vectoriel $V$ de base $\{x_s \ |\ s \in \mathcal{R} \}$ indexée par l'ensemble des réflexions $(i,j)$ du groupe symétrique $S_n$ avec $1 \leq i < j \leq n-1$. Sa dimension est alors égale à $\#\mathcal{R} = \frac{n(n-1)}{2}$. Notons $x_{(i,j)}$ par $x_{i,j}$. La représentation est définie comme suit :

\begin{tabular}{ll}

 $s_{k}x_{k,k+1} = tq^{2}x_{k,k+1}$, &  \\
 $s_{k}x_{i,k} = (1-q)x_{i,k} + qx_{i,k+1}$, & $i<k$,\\
 $s_{k}x_{i,k+1} = x_{i,k} + tq^{(k-i+1)}(q-1)x_{k, k+1}$, & $i<k$,\\
 $s_{k}x_{k,j} = tq(q-1)x_{k,k+1} + qx_{k+1,j}$, & $k+1<j$,\\
 $s_{k}x_{k+1,j} = x_{k,j} + (1-q)x_{k+1,j}$, & $k+1 < j$,\\
 $s_{k}x_{i,j} = x_{i,j},$ & $i < j < k$ ou $k+1 < i < j$ et\\
 $s_{k}x_{i,j} = x_{i,j} + tq^{(k-i)}(q-1)^{2}x_{k,k+1}$, & $i < k < k+1 < j$.\\

\end{tabular}

\bigskip

Le critère de fidélité utilisé par Krammer peut être énoncé pour un groupe de Garside. Ce critère fournit des conditions nécessaires qui assurent qu'une représentation linéaire d'un groupe de Garside est fidèle. Soit $M$ un monoïde de Garside et $G(M)$ son groupe des fractions. Notons par $\Delta$ et $P$ l'élément de Garside et l'ensemble des simples de $M$, respectivement. Définissons $\alpha(x)$ le pgcd de $x$ et $\Delta$ pour $x \in M$. Soit $\rho : G(M) \longrightarrow GL(V)$ une représentation linéaire de dimension finie de $G(M)$ et soit $(C_x)_{x \in P}$ une famille de sous-ensembles de $V$ indexée par l'ensemble des simples $P$. Si les $C_x$ sont non vides et deux à deux disjoints et si $x C_y \subset C_{\alpha(xy)}$ pour tout $x \in M$ et $y \in P$, alors la représentation $\rho$ est fidèle.\\

La représentation de Krammer ainsi que la preuve de fidélité ont été généralisées à tous les groupes d'Artin-Tits de type sphérique par les travaux de \cite{DigneLinearity}, \cite{CohenWalesLinearity} et \cite{ParisArtinMonoidInjectGroup}. Notons aussi qu'une preuve simple de la fidélité a été donnée dans \cite{HeePreuveSimpleFidelite}. Marin a généralisé dans \cite{MarinLinearity} cette représentation à tous les groupes de $2$-réflexions. Sa représentation est définie analytiquement sur le corps des séries formelles de Laurent par la monodromie de certaines formes différentielles et elle est de dimension égale au nombre de réflexions dans le groupe de réflexions complexes correspondant. Il était conjecturé par Marin dans \cite{MarinLinearity} que cette représentation est fidèle. Elle a aussi été généralisée par Chen \cite{ChenFlatConnectionsBrauerAlgebras} à tous les groupes de réflexions.\\

Pour le type ADE des groupes de Coxeter, les représentations de Krammer généralisées peuvent être construites à partir de l'algèbre de BMW (Birman-Murakami-Wenzl). Nous avons détaillé ceci dans l'introduction de la dissertation. Dans ce compte-rendu, nous allons juste rappeler la définition de l'algèbre de BMW pour le cas ADE, consulter \cite{CohenGijsbersWalesBMW}.\\

\begin{definitionf}

Soit $W$ un groupe de Coxeter de type $A_n$, $D_n$ pour tout $n$, ou $E_n$ pour $n = 6, 7, 8$. L'algèbre de BMW associée à $W$ est la $\mathbb{Q}(l,x)$-algèbre unitaire d'ensemble générateur $\{ S_1, S_2, \cdots, S_n \} \cup \{ F_1, F_2, \cdots, F_n \}$ et les relations sont les relations de tresses avec les relations suivantes :

\begin{enumerate}

\item $mF_i = l(S_i^2 + m S_i -1)$ avec $m = \frac{l-l^{-1}}{1-x}$ pour tout $i$,
\item $S_iF_i = F_iS_i = l^{-1}F_i$ pour tout $i$ et
\item $F_iS_jF_i = l F_i$ pour tout $i,j$ quand $s_is_js_i = s_js_is_j$.

\end{enumerate}

\end{definitionf}

Pour la preuve de la proposition suivante, consulter Propositions 2.1 et 2.3 de \cite{CohenGijsbersWalesBMW}.

\begin{propositionf}

On a $S_i$ est inversible d'inverse $S_i^{-1} = S_i + m - mF_i$. Nous avons aussi $F_i^{2} = x F_i$ et $S_jS_iF_j = F_iS_jS_i = F_iF_j$ quand $s_is_js_i = s_js_is_j$.

\end{propositionf}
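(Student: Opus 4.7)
\begin{propositionf}[Proof plan]
The three assertions are elementary consequences of the defining relations, and my plan is to handle them in the order stated, each time by direct algebraic manipulation of the relation $mF_i = l(S_i^2+mS_i-1)$ together with $S_iF_i=F_iS_i=l^{-1}F_i$ and, for the last one, $F_iS_jF_i=lF_i$.

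\emph{Invertibility of $S_i$.} I would first rewrite the first defining relation as
$S_i^2 = \frac{m}{l}F_i - mS_i + 1$,
and then simply compute $S_i(S_i+m-mF_i)$. Expanding gives $S_i^2+mS_i-mS_iF_i$; substituting the formula for $S_i^2$ and using $S_iF_i=l^{-1}F_i$ makes the $F_i$-terms cancel and produces $1$. The same computation on the right shows $(S_i+m-mF_i)S_i=1$, so $S_i^{-1}=S_i+m-mF_i$.

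\emph{The identity $F_i^2=xF_i$.} Multiply the first defining relation on the left by $F_i$:
$mF_i^2 = l\bigl(F_iS_i^2 + mF_iS_i - F_i\bigr)$.
Using $F_iS_i=l^{-1}F_i$ twice, the right-hand side collapses to $(l^{-1}+m-l)F_i$. Hence
$F_i^2 = \frac{l^{-1}+m-l}{m}F_i = \frac{m-(l-l^{-1})}{m}F_i$,
and substituting $l-l^{-1}=m(1-x)$ (which is the definition of $m$ in terms of $x$) gives exactly $xF_i$.

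\emph{The identity $S_jS_iF_j=F_iS_jS_i=F_iF_j$ under $s_is_js_i=s_js_is_j$.} This is the delicate step, where the braid relation must interact with the cubic relation and with $F_iS_jF_i=lF_i$. My plan is to prove $S_jS_iF_j=F_iF_j$ first; the equality $F_iS_jS_i=F_iF_j$ will then follow either by the same argument after reversing the order of factors or by applying the involution swapping $(S_i,F_i)\leftrightarrow(S_j,F_j)$ and using the braid symmetry. To prove $S_jS_iF_j=F_iF_j$, I would eliminate $F_j$ on the left using $mF_j=l(S_j^2+mS_j-1)$ and eliminate $F_i$ on the right using $mF_i=l(S_i^2+mS_i-1)$. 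This reduces the desired equality to an identity purely in $S_i,S_j$ modulo the two-sided ideal generated by $mF_j-l(S_j^2+mS_j-1)$. Applying the braid relation $S_jS_iS_j=S_iS_jS_i$ to shift the factor $S_j^2$ across, the main obstacle is to show that the remaining discrepancy equals a multiple of $mF_j-l(S_j^2+mS_j-1)$, which I expect to verify by combining the braid relation with the identity $F_iS_jF_i=lF_i$ to rewrite a term $S_iS_jS_iS_j$ in two different ways. I anticipate this last bookkeeping step to be the only real obstacle; once it is checked, the symmetric identity $F_iS_jS_i=F_iF_j$ follows immediately, completing the proof.
\end{propositionf}
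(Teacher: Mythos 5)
First, a remark on the comparison itself: the paper does not prove this proposition at all — it simply refers to Propositions 2.1 and 2.3 of Cohen–Gijsbers–Wales \cite{CohenGijsbersWalesBMW} — so your argument has to stand on its own. Your treatment of the first two assertions does stand: the computation $S_i(S_i+m-mF_i)=S_i^2+mS_i-mS_iF_i=1$ (and its mirror), and the left multiplication of $mF_i=l(S_i^2+mS_i-1)$ by $F_i$ followed by the substitution $l-l^{-1}=m(1-x)$, are correct and complete.

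The third assertion is where the real content lies, and there your plan has a genuine gap. Eliminating $F_j$ on the left and $F_i$ on the right via $F_k=\tfrac{l}{m}(S_k^2+mS_k-1)$ is circular: after the substitution, the difference of the two sides of $S_jS_iF_j=F_iF_j$ is exactly $\bigl(S_jS_i-F_i\bigr)\tfrac{l}{m}(S_j^2+mS_j-1)=\bigl(S_jS_i-F_i\bigr)F_j$, i.e. the identity you are trying to prove, merely rewritten. In particular there is no meaningful sense in which the discrepancy is "a multiple of $mF_j-l(S_j^2+mS_j-1)$": that element is zero in the algebra, so the claim is vacuous as stated, and the identity certainly cannot be reduced to a relation in $S_i,S_j$ holding modulo the braid relations alone — it genuinely uses $S_kF_k=F_kS_k=l^{-1}F_k$ and $F_iS_jF_i=lF_i$.

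What is actually needed is the following chain, none of which appears in your plan. Rewrite the cubic relation (using your part 1) as $S_k-S_k^{-1}=m(F_k-1)$ and feed it into the braid relation: from $S_jS_iS_j=S_iS_jS_i$ one gets $S_jS_iS_j^{-1}=S_i^{-1}S_jS_i$, and subtracting these two gives $mS_jS_i(F_j-1)=m(F_i-1)S_jS_i$, hence (as $m\neq 0$) the twisted commutation $S_jS_iF_j=F_iS_jS_i$ — this already settles the equality of your two outer terms, with no need for the anti-involution. To identify them with $F_iF_j$ one further needs $S_k^{-1}F_k=F_kS_k^{-1}=lF_k$, then $F_iF_jF_i=F_i$ (obtained by writing $F_j=S_iS_jF_iS_j^{-1}S_i^{-1}$ from the twisted commutation, using $F_iS_jF_i=lF_i$, $F_i^2=xF_i$ and $l-l^{-1}=m(1-x)$), and then $F_iS_j^{-1}F_i=l^{-1}F_i$; finally $F_iF_j=(S_jS_iF_jS_i^{-1}S_j^{-1})F_j=l\,S_jS_i\,(F_jS_i^{-1}F_j)=S_jS_iF_j$. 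This is the substance of the cited Proposition 2.3 of \cite{CohenGijsbersWalesBMW}; it is not bookkeeping on top of your proposed elimination, so as written your plan does not prove the third identity.
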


Si $l = 1$, cette algèbre de BMW dégénère à l'algèbre de Brauer suivante, consulter \cite{Brauer} et \cite{CohenFrenkWales}. Dans ce cas, on a $m = \frac{l-l^{-1}}{1-x} = 0$ et la relation 1 de la définition précédente devient $S_i^2 = 1$. Ainsi, on a $S_i^{-1} = S_i$. Par la proposition précédente, nous avons toujours la relation $F_i^{2} = x F_i$ dans l'algèbre de Brauer.

\begin{definitionf}

Soit $W$ un groupe de Coxeter de type $A_n$, $D_n$ pour tout $n$, ou $E_n$ pour $n = 6, 7, 8$. L'algèbre de Brauer associée à $W$ est la $\mathbb{Q}(x)$-algèbre unitaire d'ensemble générateur $\{ S_1, S_2, \cdots, S_n \} \cup \{ F_1, F_2, \cdots, F_n \}$ et les relations sont les relations de tresses avec les relations suivantes :

\begin{enumerate}

\item $S_i^2 = 1$ pour tout $i$,
\item $F_i^2 = x F_i$ pour tout $i$,
\item $S_iF_i = F_iS_i = F_i$ pour tout $i$, 
\item $F_iS_jF_i = F_i$ pour tout $i,j$ quand $s_is_js_i = s_js_is_j$ et
\item $S_jS_iF_j = F_iS_jS_i = F_iF_j$ quand $s_is_js_i = s_js_is_j$.

\end{enumerate}

\end{definitionf}

Chen a défini dans \cite{ChenFlatConnectionsBrauerAlgebras} une algèbre de Brauer qui généralise les travaux de \cite{Brauer} et \cite{CohenFrenkWales}. Complétant ses résultats dans \cite{ChenOldBMW}, Chen a aussi défini dans \cite{ChenBMW2017} une algèbre de BMW pour les groupes diédraux $I_2(e)$ pour tout $e \geq 2$ à partir de laquelle il a défini une algèbre de BMW pour tous les groupes de Coxeter qui dégénère à l'algèbre de Brauer qu'il avait déjà introduite dans \cite{ChenFlatConnectionsBrauerAlgebras}. Il est aussi prouvé dans \cite{ChenBMW2017} l'existence d'une représentation des groupes d'Artin-Tits associés aux groupes diédraux. La représentation est de dimension $e$ et elle est définie explicitement sur $\mathbb{Q}(\alpha,\beta)$ où $\alpha$ et $\beta$ dépendent des paramètres de l'algèbre de BMW. Il est aussi conjecturé dans \cite{ChenBMW2017} que cette représentation est isomorphe à la représentation monodromique construite par Marin dans \cite{MarinLinearity}.

\section*{Algèbres de BMW et de Brauer pour le type $(e,e,n)$}

Nous nous sommes inspirés par le monoïde de Corran et Picantin pour construire une algèbre de BMW pour le groupe de tresses complexes $B(e,e,n)$. Nous utilisons la définition de l'algèbre de BMW associée aux groupes diédraux \cite{ChenBMW2017} et la définition de BMW des groupes de Coxeter de type ADE \cite{CohenGijsbersWalesBMW}. Nous distinguons les cas où $e$ est impair et $e$ est pair.

\begin{definitionf}

Supposons que $e$ est impair. On définit l'algèbre BMW$(e,e,n)$ associée à $B(e,e,n)$ comme étant la $\mathbb{Q}(l,x)$-algèbre unitaire avec l'ensemble générateur $$\{T_i\ |\ i \in \mathbb{Z}/e\mathbb{Z} \} \cup \{S_3, S_4, \cdots, S_n \} \cup \{E_i\ |\ i \in \mathbb{Z}/e\mathbb{Z}\} \cup \{ F_3, F_4, \cdots, F_n\},$$ et les relations sont les relations de l'algèbre de BMW de type $A_{n-1}$ pour\\ $\{ T_i, S_3, S_4, \cdots, S_n \} \cup \{E_i,F_3, \cdots , F_n\}$ avec $0 \leq i \leq e-1$ avec les relations de type diédral suivantes :

\begin{enumerate}

\item $T_i = T_{i-1}T_{i-2}T_{i-1}^{-1}$ et $E_i = T_{i-1}E_{i-2}T_{i-1}^{-1}$ pour tout $i \in \mathbb{Z}/e\mathbb{Z}$, $i \neq 0,1$,
\item $\underset{e}{\underbrace{T_1T_0 \cdots T_1}}=\underset{e}{\underbrace{T_0T_1 \cdots T_0}}$,
\item $m E_i = l (T_i^2 + mT_i -1)$ pour $i = 0, 1$, où $m = \frac{l-l^{-1}}{1-x}$,
\item $T_iE_i = E_iT_i = l^{-1} E_i$ pour $i = 0, 1$, 
\item $E_1\underset{k}{\underbrace{T_0T_1 \cdots T_0}}E_1 = l E_1$, où $1 \leq k \leq e-2$, $k$ impair,
\item $E_0\underset{k}{\underbrace{T_1T_0 \cdots T_1}}E_0 = l E_0$, où $1 \leq k \leq e-2$, $k$ impair,
\item $\underset{e-1}{\underbrace{T_1T_0 \cdots T_0}}E_1 = E_0 \underset{e-1}{\underbrace{T_1T_0 \cdots T_0}}$,
\item $\underset{e-1}{\underbrace{T_0T_1 \cdots T_1}}E_0 = E_1\underset{e-1}{\underbrace{T_0T_1 \cdots T_1}}$.
\end{enumerate}

\end{definitionf}

\begin{definitionf}

Supposons que $e$ est pair. Soient $m = v-v^{-1}$ et $x$ tels que $m = \frac{l-l^{-1}}{1-x}$. On définit l'algèbre BMW$(e,e,n)$ associée à $B(e,e,n)$ comme étant la $\mathbb{Q}(l,v)$-algèbre unitaire avec l'ensemble générateur $$\{T_i\ |\ i \in \mathbb{Z}/e\mathbb{Z} \} \cup \{S_3, S_4, \cdots, S_n \} \cup \{E_i\ |\ i \in \mathbb{Z}/e\mathbb{Z}\} \cup \{ F_3, F_4, \cdots, F_n\},$$ et les relations sont les relations de l'algèbre de BMW de type $A_{n-1}$ pour\\ $\{ T_i, S_3, S_4, \cdots, S_n \} \cup \{E_i,F_3, \cdots , F_n\}$ avec $0 \leq i \leq e-1$ avec les relations de type diédral suivantes :

\begin{enumerate}

\item $T_i = T_{i-1}T_{i-2}T_{i-1}^{-1}$ et $E_i = T_{i-1}E_{i-2}T_{i-1}^{-1}$ pour tout $i \in \mathbb{Z}/e\mathbb{Z}$, $i \neq 0,1$,
\item $\underset{e}{\underbrace{T_1T_0 \cdots T_0}}=\underset{e}{\underbrace{T_0T_1 \cdots T_1}}$,
\item $m E_i = l (T_i^2 + mT_i -1)$ pour $i = 0, 1$,
\item $T_iE_i = E_iT_i = l^{-1} E_i$ pour $i = 0, 1$,
\item $E_1\underset{i}{\underbrace{T_0T_1 \cdots T_0}}E_1 = (v^{-1} + l) E_1$ pour $i = 4k+ 1 < e/2$ et $i = 4k+3 < e/2$,
\item $E_0\underset{i}{\underbrace{T_1T_0 \cdots T_1}}E_0 = (v^{-1} + l) E_0$ pour $i = 4k+ 1 < e/2$ et $i = 4k+3 < e/2$,
\item $\underset{e-1}{\underbrace{T_1T_0 \cdots T_1}}E_0 = E_0 \underset{e-1}{\underbrace{T_1T_0 \cdots T_1}} = v^{-1} E_0$,
\item $\underset{e-1}{\underbrace{T_0T_1 \cdots T_0}}E_1 = E_1\underset{e-1}{\underbrace{T_0T_1 \cdots T_0}} = v^{-1} E_1$,
\item $E_0AE_1 = E_1AE_0 = 0$, où $A$ est un mot sans carrés sur $\{T_0,T_1\}$ de longueur au plus $e-1$.

\end{enumerate}

\end{definitionf}

Pour des spécialisations $l = 1$ et $v = 1$, l'algèbre BMW$(e,e,n)$ dégénère en ce que nous avons appelé l'algèbre de Brauer associée. Elle est notée par Br$(e,e,n)$. Nous avons prouvé que Br$(e,e,n)$ coïncide avec l'algèbre de Brauer-Chen définie par Chen dans \cite{ChenFlatConnectionsBrauerAlgebras} pour $n = 3$ et $e$ impair. Donc Br$(e,e,3)$ fournit une présentation canonique de l'algèbre de Brauer-Chen pour $e$ impair. À la fin de la section 5.3 de la dissertation, nous posons la question si l'algèbre de Brauer Br$(e,e,n)$ que nous avons définie est isomorphe à l'algèbre de Brauer-Chen pour tout $e$ et $n$.

\section*{Représentations de Krammer du groupe de tresses complexes $B(e,e,n)$ et conjectures}

Rappelons qu'en type ADE des groupes de Coxeter, les représentations de Krammer généralisées se construisent via les algèbres de BMW. Nous tentons la construction de représentations linéaires explicites des groupes de tresses complexes $B(e,e,n)$ en utilisant l'algèbre BMW$(e,e,n)$. Nous appelons ces représentations : les représentations de Krammer de $B(e,e,n)$.\\

Nous adoptons une approche combinatoire et nous avons utilisé le package GBNP (version 1.0.3) de GAP4 (\cite{GBNP}) pour nos calculs. Notre méthode repose sur le calcul d'une base de Gröbner de la liste des polynômes qui définissent BMW$(e,e,n)$. Nous avons trouvé une méthode pour construire des représentations de Krammer explicites pour $B(3,3,3)$ et $B(4,4,3)$. Nous notons ces deux représentations par $\rho_3$ et $\rho_4$, respectivement. Dans la section 5.4 de la dissertation, nous donnons ces deux représentations en expliquant la manière de les construire. Dans Appendix B, nous avons fourni les codes qui ont permis de faire ces constructions. Nous avons utilisé la plate-forme MATRICS (\cite{MATRICS}) de l'Universit\'e de Picardie Jules Verne pour nos calculs heuristiques.\\

Rappelons que Chen a étudié les algèbres de BMW pour les groupes diédraux dans \cite{ChenBMW2017} et il a été capable de construire des représentations irréductibles des groupes d'Artin-Tits associés aux groupes diédraux. Il a aussi conjecturé que ces représentations sont isomorphes aux représentations monodromiques correspondantes. Nos représentations $\rho_3$ et $\rho_4$ sont construites à partir de l'algèbre de BMW. Nous avons prouvé qu'elles sont absolument irréductibles et sont de dimension égale au nombre de réflexions dans le groupe de réflexions complexes correspondant. Nous avons aussi étudié la restriction de $\rho_3$ aux sous-groupes paraboliques maximaux de $B(3,3,3)$. Toutes ces propriétés sont satisfaites pour les représentations de Krammer généralisées. C'est pour ces raisons que nous pensons que les représentations $\rho_3$ et $\rho_4$ sont de bons candidats pour être appelées les représentations de Krammer de $B(3,3,3)$ et $B(4,4,3)$, respectivement. Nous pensons aussi que ces deux représentations sont isomorphes aux représentations monodromiques de $B(3,3,3)$ et $B(4,4,3)$ construites par Marin dans \cite{MarinLinearity}, où il conjecture que la représentation monodromique est fidèle, consulter Conjecture 6.3 de \cite{MarinLinearity}. Ceci nous permet de proposer la conjecture suivante sur les représentations de Krammer explicites $\rho_3$ et $\rho_4$.

\begin{conjecturef}

Les représentations de Krammer $\rho_3$ et $\rho_4$ sont fidèles.\\

\end{conjecturef}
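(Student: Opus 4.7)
The plan is to apply the Garside-theoretic faithfulness criterion, recalled in the introduction of Chapter 5 (the Krammer--Digne--Cohen--Wales--H\'ee scheme), to the interval Garside structure $B^{\oplus}(e,e,n)$ constructed in Chapter 3 of the thesis. Concretely, for $(e,n)\in\{(3,3),(4,4)\}$ with Garside element $\lambda$, I would produce a family $(C_x)_{x\in[1,\lambda]}$ of nonempty, pairwise disjoint subsets of the representation space $V$ of $\rho_e$, indexed by the simples of $B^{\oplus}(e,e,n)$, such that $\rho_e(x)\cdot C_y \subseteq C_{\alpha(xy)}$ for every $x\in B^{\oplus}(e,e,n)$ and every simple $y$, where $\alpha(z)$ denotes the gcd of $z$ with $\lambda$ inside the interval monoid. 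Since $B^{\oplus k}(e,e,n)$ was shown in Chapter 3 to be a Garside monoid whose group of fractions is $B(e,e,n)$ for $k\wedge e=1$, this would immediately imply faithfulness of $\rho_e$.

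The natural candidates for the sets $C_x$ are read off the eigenvector/weight-space structure of $\rho_e$ at the atoms. For each atom $a\in\{T_0,\dots,T_{e-1},S_3,\dots,S_n\}$ the explicit matrices in Section~5.4 have the BMW quadratic factorization $(T_a-\tfrac{1}{l})(T_a+m+\tfrac{1}{l})=0$, so $V$ splits into a ``Garside eigenspace'' (eigenvalue $1/l$, equivalently the image of the corresponding idempotent $E_a$) and its complement. Using the computation in the construction of $\rho_3$, where we exhibited $V$ as $\mathrm{Ker}(N_z-\lambda^{-2})\cap\mathrm{Ker}(R_1-\lambda^{-3})$, each simple $x\in[1,\lambda]$ labels an iterated intersection of such eigenspaces, and this provides the initial guess for $C_x$. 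After specializing the parameters $m,\lambda$ (resp.\ $\mu,\lambda$) to positive reals in a suitable range so that all relevant eigenvalues are real and pairwise distinct, one can refine the $C_x$ into open real cones, as in H\'ee's simplified proof for finite-type Artin--Tits groups.

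The main verification then splits into two parts. First, checking on atoms: for every atom $a$ and every simple $y$, one must confirm that $\rho_e(a)\cdot C_y\subseteq C_{\alpha(ay)}$. Since $\alpha(ay)$ is computable inside the lattice $[1,\lambda]$ (Corollary~\ref{CorBothPosetsLattices}) and the atoms are few, this is a finite check that can be discharged by combining Matsumoto's property (Proposition~\ref{PropMatsumoto}) with the explicit matrices $A,B,C$ of Section~5.4. Second, propagating from atoms to arbitrary monoid elements: one argues by induction on the Garside length using the greedy normal form of $B^{\oplus}(e,e,n)$, exactly as in the proof for $B_n$, using that the interval monoid injects in its group of fractions.

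The hard part will be designing the $C_x$ so that both the pairwise disjointness and the covariance $\rho_e(a)C_y\subseteq C_{\alpha(ay)}$ hold simultaneously. In type $A_{n-1}$ Krammer's sets are built from a natural total order on strands; here the interval $[1,\lambda]$ of $G(e,e,3)$ has a more intricate combinatorics, with reflections indexed by hyperplanes $z_i=\zeta_e^{-k}z_j$, and the relevant ``positivity cones'' must be compatible with the cyclic action of the diagonal generators $\tilde t_0,\dots,\tilde t_{e-1}$. A promising alternative, if the direct cone construction proves too rigid, is to identify $\rho_3$ and $\rho_4$ with the monodromy representations of Marin \cite{MarinLinearity} by matching their restrictions to the parabolic $A_2$-subgroup (which we already verified for $\rho_3$) together with uniqueness results for irreducible deformations of the reflection representation; faithfulness would then reduce to Marin's conjecture, itself approachable by Hée-type methods in dimension equal to the number of reflections. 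In either route, the finiteness of the combinatorial data for $e=3,4$ and $n=3$ makes it plausible to complete the verification by machine, using the CHEVIE implementation of $B^{\oplus}(e,e,n)$ developed in Appendix~A together with the explicit matrices of Appendix~B.
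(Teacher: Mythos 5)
You are addressing Conjecture \ref{ConjectureRho3and4faithful}: the paper does not prove this statement, it only offers evidence for it (absolute irreducibility, dimension equal to the number of reflections, the restriction of $\rho_3$ to a parabolic subgroup of type $A_2$, and the expected but unproven identification with Marin's monodromy representations). Your text is likewise a plan rather than a proof, and its decisive steps are exactly the ones left open. Krammer's criterion, as recalled in Chapter 5, requires a family $(C_x)_{x\in P}$ of nonempty, pairwise disjoint subsets of $V$ satisfying $\rho(x)\,C_y\subseteq C_{\alpha(xy)}$ for \emph{all} $x$ in the (infinite) monoid and all simples $y$; you never construct such a family, and the proposed reduction --- check the atoms, then induct along the greedy normal form --- is not automatic: in the arguments of Krammer, Digne, Cohen--Wales and H\'ee, the passage from generators to arbitrary monoid elements rests on extra structural properties that must be built into the very definition of the sets (H\'ee's axioms, or Krammer's half-space/ordering combinatorics in type $A$). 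Designing cones compatible with the lattice $[1,\lambda^k]$ of $G(e,e,3)$ and with the cyclic family $\tilde t_0,\dots,\tilde t_{e-1}$ is the entire mathematical content of the conjecture, not a finite verification, since the covariance condition ranges over infinitely many $x$.

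Two further points would need repair even at the level of the plan. First, your eigenspace heuristic misreads the construction of $\rho_3$: the factorization $(T_i-\tfrac1l)(T_i+m+\tfrac1l)=0$ was obtained in the paper only after substituting the scalar value of $E_i$, and it was used to cut the $9$-dimensional subspace out of the regular module via a condition on \emph{right} multiplication; on the Krammer component itself the idempotents $E_i$, $F_3$ act nontrivially (they vanish precisely on the Hecke quotient), so the matrices $A,B,C$ do not satisfy that quadratic and generically have three eigenvalues --- the clean two-block splitting of $V$ per atom that your cones are supposed to refine does not exist as stated, although the image of $E_a$ does lie in the $l^{-1}$-eigenspace of $T_a$. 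Second, the fallback route --- identify $\rho_3,\rho_4$ with Marin's monodromy representations and invoke their faithfulness --- only trades the present conjecture for Conjecture 6.3 of \cite{MarinLinearity}, which is also open, and the identification itself is only conjectural in the paper; it therefore cannot close the argument. In short, the strategy is reasonable and consistent with the Garside structures of Chapter 3, but as written it contains no proof of faithfulness.
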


La méthode de construction de $\rho_3$ et $\rho_4$ repose sur le calcul d'une base de Gröbner de la liste des polynômes qui définissent les relations de BMW$(e,e,n)$. Ce n'est plus possible d'appliquer ces calculs très lourds pour $e \geq 5$ quand $n =3$. Cependant, nous avons essayé de calculer la dimension de l'algèbre BMW$(e,e,n)$ sur un corps fini. Nous étions capables de calculer les dimensions de BMW$(5,5,3)$ et BMW$(6,6,3)$ sur un corps fini pour beaucoup de spécialisations des paramètres $m$ et $l$. Les calculs pour BMW$(6,6,3)$ ont pris 40 jours sur la plate-forme MATRICS de l'Universit\'e de Picardie Jules Verne (\cite{MATRICS}). Pour toutes les différentes spécialisations, nous avons trouvé que la dimension de BMW$(5,5,3)$ est égale à $1275$ et celle de BMW$(6,6,3)$ est égale à $1188$. Notons que nous avons obtenu $297$ pour BMW$(3,3,3)$ et $384$ pour BMW$(4,4,3)$. On remarque que pour $e = 3$ et $e = 5$, nous obtenons que la dimension de BMW$(e,e,3)$ est égale à $\#(G(e,e,3)) + e\times (\#\mathcal{R})^2$, où $\mathcal{R}$ est l'ensemble des réflexions de $G(e,e,3)$. De plus, on remarque que pour $e = 4$ et $e = 6$, nous obtenons que la dimension de BMW$(e,e,3)$ est égale à $\#(G(e,e,3)) + \frac{e}{2} \times (\#\mathcal{R})^2$.\\

En se basant sur les précédentes expérimentations, nous proposons la conjecture suivante :

\begin{conjecturef}

Soit $K = \overline{\mathbb{Q}(m,l)}$ et soit $H(e,e,3)$ l'algèbre de Hecke associée à $G(e,e,3)$ sur $K$. Soit $N$ le nombre de réflexions du groupe de réflexions complexes $G(e,e,3)$. 

Sur $K$, l'algèbre \emph{BMW}$(e,e,3)$ est semi-simple, isomorphe à
$$\left\{\begin{array}{ll}
H(e,e,3) \oplus \left( \mathcal{M}_{N}(K) \right)^{e} & si\ e\ est\ impair,\\
H(e,e,3) \oplus \left( \mathcal{M}_{N}(K) \right)^{e/2} & si\ e\ est\ pair,
\end{array}\right.$$
où $\mathcal{M}_{N}(K)$ est l'algèbre des matrices sur $K$ de dimension $N^2$. En particulier, l'algèbre \emph{BMW}$(e,e,3)$ est de dimension
$$\left\{\begin{array}{ll}
\#(G(e,e,3)) + e\times N^2 & si\ e\ est\ impair,\\
\#(G(e,e,3)) + \frac{e}{2} \times N^2 & si\ e\ est\ pair.
\end{array}\right.$$

\end{conjecturef}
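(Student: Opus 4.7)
The plan is to prove the conjecture by establishing an explicit isomorphism
\[
\Phi : \mathrm{BMW}(e,e,3) \xrightarrow{\sim} H(e,e,3) \oplus (M_N(K))^r,
\]
where $r = e$ if $e$ is odd and $r = e/2$ if $e$ is even, via a two-sided dimension comparison: constructing $\Phi$ by exhibiting all the irreducible summands of the right-hand side as quotients of $\mathrm{BMW}(e,e,3)$, and separately bounding the dimension of $\mathrm{BMW}(e,e,3)$ from above by the conjectured number. Once $\Phi$ is built and both sides have the same dimension, the isomorphism (and hence semisimplicity) follows.

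First I would handle the Hecke factor, which is automatic: the relations of Definitions \ref{DefinitionBMWB(e,e,n)eOdd} and \ref{DefinitionBMWB(e,e,n)eEven} specialize consistently under $E_i, F_j \mapsto 0$, yielding a surjection $\mathrm{BMW}(e,e,3) \twoheadrightarrow H(e,e,3)$; by Theorem \ref{PropositionBMRFreenessTheorem}, $H(e,e,3)$ is free of rank $|G(e,e,3)|$ and generically semisimple. For the matrix factors, I would construct $r$ irreducible representations $\rho^{(k)}$ of dimension $N = 3e$ generalizing the explicit Krammer representations $\rho_3$ and $\rho_4$ exhibited in Section 5.4. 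The natural ambient space is the free $K$-module with basis indexed by the set $\mathcal{R}$ of reflections of $G(e,e,3)$. The action would be modeled on Marin's monodromy representation, realized as the generic fiber of a flat deformation of the reflection representation of the Brauer algebra $\mathrm{Br}(e,e,3)$ (which for $e$ odd is identified with the Brauer--Chen algebra via Proposition \ref{PropositionChenIsomBr(e,e,3)}). The $r$ distinct representations would be indexed by the central character of the image of the Garside element of $B(e,e,3)$: as illustrated by the cases $e = 3$ (three cube roots of $l$) and $e = 4$ (two square roots of $v$), this character must take values in $r$ distinct roots of $l$ (or of $v$), producing pairwise non-isomorphic representations distinguishable by the trace of a chosen power of the Garside element.

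Absolute irreducibility of each $\rho^{(k)}$ is an open condition, so it suffices to verify it at a single generic specialization — a computation feasible via GAP for small $e$ and presumably extendable by semicontinuity. Together with the Hecke quotient, these yield the algebra map $\Phi$, whose image has dimension at least the conjectured total. The main obstacle is the reverse inequality: producing a spanning set of $\mathrm{BMW}(e,e,3)$ of cardinality at most $|G(e,e,3)| + rN^2$. I would try the natural candidate built of two families: (a) lifts to $\mathrm{BMW}(e,e,3)$ of the geodesic normal forms of Chapter \ref{ChapterNormalFormsG(de,e,n)} used to obtain the Hecke basis in Theorem \ref{TheoremNewBasis}, contributing $|G(e,e,3)|$ elements; and (b) ``decorated Brauer'' words of the shape $w_1 E_i w_2$, $w_1 F_3 w_2$, $w_1 E_i w_2 F_3 w_3$, $w_1 F_3 w_2 E_i w_3$, with the $w_j$'s ranging over cosets of suitable parabolic subgroups and with the index $i$ ranging over a set of size $r$ (reflecting the equivalence classes forced by the dihedral relations $E_0 A E_1 = 0$ or $\underset{e-1}{\underbrace{T_1 T_0 \cdots T_0}}E_1 = E_0 \underset{e-1}{\underbrace{T_1 T_0 \cdots T_0}}$). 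Stability of this set under left multiplication by the generators would be verified by a systematic case analysis extending the manual computations of Chapter \ref{ChapterHeckeAlgebras}, making essential use of the additional relations collected in Propositions \ref{PropAdditionalRelations1} and \ref{PropAdditionalRelations2}.

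The hardest step will be Step~(b) of the spanning argument: the heuristic dimension counts for $e = 5, 6$ rest on Gröbner basis computations over finite fields, and it is precisely the identification of the ``decoration index set'' of size $r$ that requires a uniform understanding of how the parity of $e$ interacts with the dihedral relations. A clean route would first establish a flat deformation statement $\mathrm{BMW}(e,e,3) \cong \mathrm{Br}(e,e,3) \otimes_{\mathbb{Q}} K$ as $K$-modules, reducing the dimension count to the combinatorics of $\mathrm{Br}(e,e,3)$; but this itself is non-trivial, since for $e$ even the isomorphism with Chen's Brauer algebra (analogous to Proposition \ref{PropositionChenIsomBr(e,e,3)}) remains to be proved, answering the question raised at the end of Section~5.3.
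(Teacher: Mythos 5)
The statement you are addressing is not proved in the paper: it is stated as a conjecture (Conjecture \ref{ConjectureStructureBMW}), supported only by heuristic evidence, namely the explicit construction of the absolutely irreducible $N$-dimensional representations $\rho_3$ and $\rho_4$ for $e=3,4$, the observation that together with the Hecke representations these account for the computed dimensions $297$ and $384$, and Gr\"obner-basis computations of $\dim \mathrm{BMW}(5,5,3)=1275$ and $\dim \mathrm{BMW}(6,6,3)=1188$ over finite fields for many specializations of the parameters. So there is no proof in the paper to compare yours against; what you have written is a proof \emph{strategy} for an open statement, and it should be judged as such.

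As a strategy it is reasonable and consistent with the evidence in the paper, but it contains genuine gaps that you only partly acknowledge, and as written it does not establish the conjecture. First, the $r$ pairwise non-isomorphic absolutely irreducible representations of dimension $N$ are only constructed in the paper for $e=3$ and $e=4$, by machine computation on a specific Gr\"obner basis; you give no general construction, and the claim that the central character of the Garside element takes exactly $r$ values (distinguishing $r$ non-isomorphic summands) is extrapolated from two examples rather than derived from the defining relations. Second, and more seriously, the upper bound $\dim \mathrm{BMW}(e,e,3) \le |G(e,e,3)| + rN^2$ is the entire content of the conjecture beyond what the surjections already give, and your proposed spanning set of ``decorated Brauer words'' is only a candidate: you do not verify stability under left multiplication by the generators, nor even that the candidate set has the right cardinality. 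Without that closure argument (or the flat-deformation statement $\mathrm{BMW}(e,e,3)\cong \mathrm{Br}(e,e,3)\otimes_{\mathbb{Q}}K$, which you correctly note is itself unproven, and for $e$ even depends on the open question at the end of Section~5.3), neither the dimension formula nor semisimplicity follows. Your outline is a plausible roadmap, but each of its two halves --- the general construction of the matrix summands and the spanning-set upper bound --- is a substantial open problem, not a step.
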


\newpage~
\thispagestyle{empty}

\bibliographystyle{plain}
\bibliography{PhDNeaime.bib}

\begin{thebibliography}{10}

\bibitem{ArikiHecke}
S.~Ariki.
\newblock Representation theory of a {Hecke} algebra of {$G(r,p,n)$}.
\newblock {\em J. Algebra}, 177(1):164--185, 1995.

\bibitem{ArikiKoikeHecke}
S.~Ariki and K.~Koike.
\newblock A {Hecke} algebra of {$(\mathbb{Z}/r\mathbb{Z}) \wr S_n$} and
  construction of its irreducible representations.
\newblock {\em Adv. Math.}, 106(2):216--243, 1994.

\bibitem{BessisKPi1}
D.~Bessis.
\newblock Finite complex reflection arrangements are {$K(\pi,1)$}.
\newblock {\em Ann. of Math.}, 2(181):809--904, 2015.

\bibitem{BessisCorranNonCrossingPartitions}
D.~Bessis and R.~Corran.
\newblock Non-crossing partitions of type {$(e,e,r)$}.
\newblock {\em Adv. Math.}, 202:1--49, 2006.

\bibitem{BessisMichelPresentationsExcepBraidGroups}
D.~Bessis and J.~Michel.
\newblock Explicit presentations for exceptional braid groups.
\newblock {\em Experiment. Math.}, 13:257--266, 2004.

\bibitem{BigelowBraidGroupsLinear}
S.~Bigelow.
\newblock Braid groups are linear.
\newblock {\em J. Amer. Math. Soc.}, 14(2):471--486, 2001.

\bibitem{BirmanWenzlBMW}
J.S. Birman and H.~Wenzl.
\newblock Braids, {L}ink polynomials and a new algebra.
\newblock {\em Trans. Amer. Math. Soc.}, 313:249--273, 1989.

\bibitem{Brauer}
R.~Brauer.
\newblock On algebras which are connected with the semisimple continuous
  groups.
\newblock {\em Ann. of Math.}, 38:857--872, 1937.

\bibitem{BremkeMalle}
K.~Bremke and G.~Malle.
\newblock Reduced words and a length function for {$G(e,1,n)$}.
\newblock {\em Indag. Mathem.}, 8:453--469, 1997.

\bibitem{BrieskornArtinTitsFundamentalGroup}
E.~Brieskorn.
\newblock Die {F}undamentalgruppe des {R}aumes der regul{ä}ren {O}rbits einer
  endlichen komplexen {S}piegelungsgruppe.
\newblock {\em Invent. Math.}, 12(1):57--61, 1971.

\bibitem{GarsideBrieskornSaito}
E.~Brieskorn and K.~Saito.
\newblock Artin-{Gruppen} und {Coxeter}-{Gruppen}.
\newblock {\em Invent. Math.}, 17:245--271, 1972.

\bibitem{BroueMalleZyklotomischeHeckealgebren}
M.~Brou\'e and G.~Malle.
\newblock {Z}yklotomische {H}eckealgebren, in {R}epr\'esentations unipotentes
  g\'en\'eriques et blocs des groupes r\'eductifs finis.
\newblock {\em Ast\'erisque}, 212:119--189, 1993.

\bibitem{BMR}
M.~Brou\'e, G.~Malle, and R.~Rouquier.
\newblock Complex reflection groups, braid groups, {Hecke} algebras.
\newblock {\em J. Reine Angew. Math.}, 500:127--190, 1998.

\bibitem{CalMarHomologyComputations}
F.~Callegaro and I.~Marin.
\newblock Homology computations for complex braid groups.
\newblock {\em J. European Math. Soc.}, 16:103--164, 2014.

\bibitem{ChavliThesis}
E.~Chavli.
\newblock {\em The {BMR} freeness conjecture for exceptional groups of rank 2}.
\newblock PhD thesis, Univ. Paris Diderot (Paris 7), 2016.

\bibitem{ChavliArticlePhD}
E.~Chavli.
\newblock The {BMR} freeness conjecture for the tetrahedral and octahedral
  family.
\newblock {\em Communications in {A}lgebra}, 46(1):386--464, 2018.

\bibitem{ChenOldBMW}
Z.~Chen.
\newblock Birman-{M}urakami-{W}enzl algebras for general {C}oxeter groups.
\newblock {\em arXiv: 1203.0583}, 2012.

\bibitem{ChenFlatConnectionsBrauerAlgebras}
Z.~Chen.
\newblock Flat connections and {B}rauer type algebras.
\newblock {\em J. {A}lgebra}, 365:114--146, 2012.

\bibitem{ChenBMW2017}
Z.~Chen.
\newblock Birman-{M}urakami-{W}enzl {T}ype {A}lgebras for {A}rbitrary {C}oxeter
  {S}ystems.
\newblock {\em arXiv: 1708.04833v1}, 2017.

\bibitem{CohenFrenkWales}
A.M Cohen, B.~Frenk, and D.B. Wales.
\newblock Brauer algebras of simply laced type.
\newblock {\em Israel J. Math}, 173(1):335--365, 2009.

\bibitem{CohenGijsbersWalesBMW}
A.M. Cohen, D.A.H. Gijsbers, and D.B. Wales.
\newblock {BMW} {Algebras} of simply laced type.
\newblock {\em J. Algebra}, 286:107--153, 2005.

\bibitem{GBNP}
A.M. Cohen and J.W. Knopper.
\newblock {GBNP} package version 1.0.3.
\newblock {\em http://mathdox.org/products/gbnp/}.

\bibitem{CohenWalesLinearity}
A.M. Cohen and D.B. Wales.
\newblock Linearity of {Artin} groups of finite type.
\newblock {\em Israel J. Math}, 131:101--123, 2002.

\bibitem{CorranPhD}
R.~Corran.
\newblock {\em On monoids related to braid groups}.
\newblock PhD thesis, University of Sydney, May 2000.

\bibitem{CorranLeeLee}
R.~Corran, E.K. Lee, and S.J. Lee.
\newblock Braid groups of imprimitive complex reflection groups.
\newblock {\em J. Algebra}, 427:387--425, 2015.

\bibitem{CorranPicantin}
R.~Corran and M.~Picantin.
\newblock A new {Garside} structure for braid groups of type {$(e,e,r)$}.
\newblock {\em J. London Math. Soc.}, 84(3):689--711, 2011.

\bibitem{CrispInjectivemaps}
J.~Crisp.
\newblock Injective maps between {Artin} groups.
\newblock {\em ``Geometric Group Theory Down Under'' ed. J. Cossey et al., de
  Gruyter Verlag}, pages 119--137, 1999.

\bibitem{GarsideBookPatrick}
P.~Dehornoy, F.~Digne, E.~Godelle, D.~Krammer, and J.~Michel.
\newblock {\em Foundations of {Garside} {Theory}}, volume~22.
\newblock EMS Tracts in Mathematics, 2015.

\bibitem{DehLafHomologyofGaussianGroups}
P.~Dehornoy and Y.~Lafont.
\newblock Homology of {Gaussian} groups.
\newblock {\em Ann. Inst. Fourier}, 53(2):1001--1052, 2003.

\bibitem{GarsideDehornoyParis}
P.~Dehornoy and L.~Paris.
\newblock Gaussian groups and {Garside} groups, two generalizations of {Artin}
  groups.
\newblock {\em Proc. London Math. Soc.}, 79(3):569--604, 1999.

\bibitem{GarsideDeligne}
P.~Deligne.
\newblock Les immeubles des groupes de tresses g\'en\'eralis\'es.
\newblock {\em Invent. Math.}, 17:273--302, 1972.

\bibitem{DigneLinearity}
F.~Digne.
\newblock On the linearity of {Artin} braid groups.
\newblock {\em J. Algebra}, 268:39--57, 2003.

\bibitem{GarsideThesis}
F.A. Garside.
\newblock {\em The theory of knots and associated problems}.
\newblock PhD thesis, Oxford University, 1965.

\bibitem{GarsideArt}
F.A. Garside.
\newblock The braid group and other groups.
\newblock {\em Quart. J. Math. Oxford}, 20:235--254, 1969.

\bibitem{GeckPfeifferBook}
M.~Geck and G.~Pfeiffer.
\newblock {\em Characters of finite {C}oxeter groups and {I}wahori-{H}ecke
  algebras}, volume~21.
\newblock Oxford {U}niversity {P}ress, 2000.

\bibitem{HeePreuveSimpleFidelite}
J.Y. H\'ee.
\newblock Une d\'emonstration simple de la fid\'elit\'e de la repr\'esentation
  de {Lawrence}–{Krammer}–{Paris}.
\newblock {\em J. Algebra}, 321:1039--1048, 2009.

\bibitem{HaringOldenburg}
R.~H{ä}ring-{O}ldenburg.
\newblock Cyclotomic {B}irman-{M}urakami-{W}enzl algebras.
\newblock {\em J. of Pure and Applied Algebra}, 161:113--144, 2001.

\bibitem{CombinatorialGroupTheory}
A.~Karrass, W.~Magnus, and D.~Solitar.
\newblock {\em Combinatorial {Group} {Theory}}.
\newblock Reprint of the Interscience Publishers, New York, 1976.

\bibitem{KrammerGroupB4}
D.~Krammer.
\newblock The braid group {$B_4$} is linear.
\newblock {\em Invent. Math.}, 142:451--486, 2000.

\bibitem{KrammerBraidGroupsLinear}
D.~Krammer.
\newblock Braid groups are linear.
\newblock {\em Ann. of Math.}, 155:131--156, 2002.

\bibitem{LawrenceRepresentation}
R.~Lawrence.
\newblock Homological representations of the {H}ecke algebra.
\newblock {\em Comm. Math. Phys.}, 135:141--191, 1990.

\bibitem{UnitaryReflectionGroupsLehrerTaylor}
I.~Lehrer and E.~Taylor.
\newblock {\em Unitary reflection groups}, volume~20.
\newblock Cambridge University Press, 2009.

\bibitem{LubotzkyMagid}
A.~Lubotzky and A.R. Magid.
\newblock Varieties of representations of finitely generated groups.
\newblock {\em Mem. Amer. Math. Soc.}, 58(336), 1985.

\bibitem{MalleCharactersHecke}
G.~Malle.
\newblock On the {R}ationality and {F}ake {D}egrees of {C}haracters of
  {C}yclotomic {A}lgebras.
\newblock {\em Journal of Mathematical Sciences-University of Tokyo},
  6(4):647--678, 1999.

\bibitem{MalleMichelRepresentationsHecke}
G.~Malle and J.~Michel.
\newblock Constructing representations of {H}ecke algebras for complex
  reflection groups.
\newblock {\em LMS Journal of Computation and Mathematics}, 13:426--450, 2010.

\bibitem{MarinCodeHomology}
I.~Marin.
\newblock Homological computations for complex braid groups.
\newblock http://www.lamfa.u-picardie.fr/marin/softs-en.html.

\bibitem{MarinHomologyComputationsII}
I~. Marin.
\newblock Homology computations for complex braid groups {II}.
\newblock {\em J. Algebra}, 477:540--547, 2017.

\bibitem{MarinCubicHeckeAlgebra5Strands}
I.~Marin.
\newblock The cubic {H}ecke algebra on at most 5 strands.
\newblock {\em J. of {P}ure and {A}pplied {A}lgebra}, 216(12):2754–2782,
  2012.

\bibitem{MarinLinearity}
I.~Marin.
\newblock Krammer representations for complex braid groups.
\newblock {\em J. Algebra}, 371:175--206, 2012.

\bibitem{MarinBMRFreenessConjecture}
I.~Marin.
\newblock The freeness conjecture for {Hecke} algebras of complex reflection
  groups and the case of the {Hessian} group {$G_{26}$}.
\newblock {\em J. of Pure and Applied Algebra}, 218(4):704--720, 2014.

\bibitem{MarinG20G21}
I.~Marin.
\newblock Proof of the {BMR} conjecture for {$G_{20}$} and {$G_{21}$}.
\newblock {\em arXiv:1701.09017}, 2017.
\newblock To appear in J. Symbolic Computations.

\bibitem{MarinPfeifferBMRFreeness}
I.~Marin and G.~Pfeiffer.
\newblock The {BMR} freeness conjecture for the 2-reflection groups.
\newblock {\em Mathematics of {C}omputation}, 86:2005--2023, 2017.

\bibitem{MatsumotoTheorem}
H.~Matsumoto.
\newblock G\'en\'erateurs et relations des groupes de {Weyl} g\'en\'eralis\'es.
\newblock {\em C. R. Acad. Sci. Paris}, 258:3419--3422, 1964.

\bibitem{CorseJeanMichel}
J.~Michel.
\newblock Groupes finis de r\'eflexion.
\newblock University notes, Universit\'e Paris Diderot, 2004.
\newblock http://webusers.imj-prg.fr/~jean.michel/papiers/cours2004.pdf.

\bibitem{CHEVIEJMichel}
J.~Michel.
\newblock The development version of the {CHEVIE} package of {GAP3}.
\newblock {\em J. Algebra}, 435:308--336, 2015.

\bibitem{GAP3CPMichelNeaime}
J.~Michel and G.~Neaime.
\newblock Contribution to the {Chevie} package, cp.g.
\newblock see http://webusers.imj-prg.fr/~jean.michel/gap3.

\bibitem{MurakamiBMW}
J.~Murakami.
\newblock The kauffman polynomial of links and representation theory.
\newblock {\em Osaka J. Math.}, 24:745--758, 1987.

\bibitem{GeorgesNeaimeIntervals}
G.~Neaime.
\newblock Interval structures for the braid groups {$B(e,e,n)$}.
\newblock {\em arXiv: 1707.06864}, 2017.

\bibitem{ParisArtinMonoidInjectGroup}
L.~Paris.
\newblock Artin monoids inject in their groups.
\newblock {\em Comment. Math. Helv.}, 77:609--637, 2002.

\bibitem{PicantinPresentationsDualMonoids}
M.~Picantin.
\newblock Explicit presentations for the dual braid monoids.
\newblock {\em {C}. {R}. {A}cad{\'e}mie {S}ciences {P}aris}, I(334), 2002.

\bibitem{RostamArikiBMR}
S.~Rostam.
\newblock Cyclotomic quiver {H}ecke algebras and {H}ecke algebra of
  {$G(r,p,n)$}.
\newblock {\em arXiv:1609.08908}, 2016.
\newblock To appear in Trans. Amer. Math. Soc.

\bibitem{ShephardTodd}
G.C. Shephard and J.A. Todd.
\newblock Finite unitary reflection groups.
\newblock {\em Canad. J. Math.}, 6(2):274--304, 1954.

\bibitem{SteinbergFreeAction}
R.~Steinberg.
\newblock Differential equations invariant under finite reflection groups.
\newblock {\em Trans. Amer. Math. Soc.}, pages 392--400, 1964.

\bibitem{TsuchiokaBMRfreenessConjecture}
S.~Tsuchioka.
\newblock {BMR} freeness for icosahedral family.
\newblock {\em arXiv:1710.03868}, 2017.

\bibitem{MATRICS}
Universit{\'e} Picardie~Jules Verne.
\newblock Plateforme {MATRICS}.
\newblock
  https://www.u-picardie.fr/recherche/presentation/plateformes/plateforme-matrics-382844.kjsp.

\bibitem{ZinnoBMW}
M.~Zinno.
\newblock On krammer's representation of the braid group.
\newblock {\em Ann. of Math.}, 321:197--211, 2001.

\end{thebibliography}

\newpage
\thispagestyle{empty}

\newpage
\thispagestyle{empty}

\end{document}